\newtheorem{thm}{Theorem}[section]
\newtheorem{lem}[thm]{Lemma}
\newtheorem{fact}[thm]{Fact}
\newtheorem{prop}[thm]{Proposition}
\newtheorem{cor}[thm]{Corollary}
\newtheorem{defi}[thm]{Definition}
\newtheorem{rem}[thm]{Remark}
\newtheorem{exa}[thm]{Example}
\newcommand\proof{\emph{Proof.}}
\newcommand\qed{\hfill $\Box$}
\newcommand{\rat}{\mathbb{Q}}
\newcommand{\real}{\mathbb{R}}
\newcommand{\creal}{\overline{\mathbb{R}}_+}
\newcommand{\Rp}{\mathbb{R}_+}
\newcommand{\Rl}{\mathbb{R}_\ell}
\DeclareMathOperator{\upc}{\uparrow\!}
\DeclareMathOperator{\dc}{\downarrow\!}
\newcommand\nat{\mathbb{N}}
\newcommand\dreal{\text{\textsf{\upshape d}}_\real}
\newcommand\dRl{\text{\textsf{\upshape d}}_\ell}
\newcommand\limp{\mathrel{\Rightarrow}}
\newcommand\Lform{\mathcal L}
\newcommand\uuarrow{\rlap{$\uparrow$}\raise.5ex\hbox{$\uparrow$}}
\newcommand\ddarrow{\rlap{$\downarrow$}\raise.5ex\hbox{$\downarrow$}}
\newcommand\Dc{\mathop{\Downarrow}}
\newcommand\identity[1]{\mathrm{id}_{#1}}
\newcommand\Prev{\mathbf{P}}
\newcommand\Angel{{\mathtt{A}}}
\newcommand\Demon{{\mathtt{D}}}
\newcommand\Nature{{\mathtt{P}}}
\newcommand\AN{{\Angel\Nature}}
\newcommand\DN{{\Demon\Nature}}
\newcommand\ADN{{\Angel\Demon\Nature}}
\newcommand\Smyth{{\mathcal Q}}
\newcommand\Hoare{{\mathcal H}}
\newcommand\Hoarez{\Hoare_{0}}
\newcommand\Plotkin{\mathcal P\ell}
\newcommand\Plotkinn{\Plotkin_{\mathcal V}}
\newcommand\PV\Plotkinn 
\newcommand\Val{{\mathbf V}}
\newcommand\Open{{\mathcal O}}
\newcommand\dH{d_{\mathcal H}}
\newcommand\dQ{d_{\mathcal Q}}
\newcommand\mH[1]{{#1}_{\mathcal H}}
\newcommand\mQ[1]{{#1}_{\mathcal Q}}
\newcommand\mP[1]{{#1}_{\mathcal P}}
\newcommand\dP{\mP{d}}
\newcommand\KRH[1]{{#1}_{\text{KR}}}
\newcommand\dKRH{\KRH{d}}
\newcommand\dG{{\text{\textsf{d}}}}
\newcommand\patch{{\text{\textsf{patch}}}}
\newcommand\cl{cl_{\mathbf B}} 
\newcommand\upB{\upc_{\mathbf B}} 
\newcommand\sea[2]{{#1} \nearrow {#2}}
\newcommand\bnd{\mathsf{b}}
\newcommand\extF[1]{\widehat{#1}}
\providecommand{\customgenericname}{}
\newcommand{\newcustomtheorem}[2]{%
  \newenvironment{#1}[1]
  {%
   \renewcommand\customgenericname{#2}%
   \renewcommand\theinnercustomgeneric{##1}%
   \innercustomgeneric
  }
  {\endinnercustomgeneric}
}
\newcommand\ForAuthors[1]
\begin{document}

\title{Complete Quasi-Metrics for Hyperspaces, Continuous
  Valuations, and Previsions}

\author{Jean Goubault-Larrecq}
\affil{Universit\'e Paris-Saclay, CNRS, ENS Paris-Saclay, Laboratoire M\'ethodes Formelles, 91190, Gif-sur-Yvette, France}




\maketitle


\begin{abstract}
  \noindent The Kantorovich-Rubinshte\u\i n metric is an $L^1$-like
  metric on spaces of probability distributions that enjoys several
  serendipitous properties.  It is complete separable if the
  underlying metric space of points is complete separable, and in that
  case it metrizes the weak topology.  We introduce a variant of that
  construction in the realm of quasi-metric spaces, and prove that it
  is algebraic Yoneda-complete as soon as the underlying quasi-metric
  space of points is algebraic Yoneda-complete, and that the
  associated topology is the weak topology.  We do this not only for
  probability distributions, represented as normalized continuous
  valuations, but also for subprobability distributions, for various
  hyperspaces, and in general for different brands of functionals.
  Those functionals model probabilistic choice, angelic and demonic
  non-deterministic choice, and their combinations.  The mathematics
  needed for those results are more demanding than in the simpler case
  of metric spaces.  To obtain our results, we prove a few other
  results that have independent interest, notably: continuous
  Yoneda-complete spaces are consonant; on a continuous
  Yoneda-complete space, the Scott topology on the space of
  $\creal$-valued lower semicontinuous maps coincides with the
  compact-open and Isbell topologies, and the subspace topology on
  spaces of $\alpha$-Lipschitz continuous maps also coincides with the
  topology of pointwise convergence, and is stably compact; we
  introduce and study the so-called Lipschitz-regular quasi-metric
  spaces, and we show that the formal ball functor induces a
  Kock-Z\"oberlein monad, of which all algebras are Lipschitz-regular;
  and we prove a minimax theorem where one of the spaces is not
  compact Hausdorff, but merely compact.

  Keywords: quasi-metric, formal balls.

  Subject classification: mathematics of computing - continuous mathematics -
  topology - point-set topology.
\end{abstract}

\tableofcontents

\section{Introduction}
\label{sec:intro}

A landmark result in the theory of topological measure theory states
that the space of probability distributions $\mathcal M_1 (X)$ on a
Polish space $X$, with the weak topology, is again Polish.  A Polish
space is a separable complete metric space $X, d$, and to prove that
theorem, one must build a suitable metric on $\mathcal M_1 (X)$.

There are several possibilities here.  One may take the
L\'evy-Prohorov metric for example, but we shall concentrate on the
$L^1$-like metric defined by:
\[
\dKRH (\mu, \nu) = \sup_h \left|\int_{x \in X} h (x) d\mu - \int_{x
    \in X} h (x) d\nu\right|,
\]
where $h$ ranges over the $1$-Lipschitz maps from $X$ to $\real$.
This is sometimes called the \emph{Hutchinson metric}, after
J. E. Hutchinson's work on the theory of fractals
\cite{Hutchinson:fractals}.  L. V. Kantorovich had already introduced
the same notion in 1942 in a famous paper \cite{Kantorovich:1942},
where he showed that, on a compact metric space, $\dKRH (\mu, \nu)$ is
also equal to the minimum of $\int_{(x, y) \in X^2} d (x, y) d\tau$,
where $\tau$ ranges over the probability distributions whose first and
second marginals coincide with $\mu$ and $\nu$, respectively.  (That
result also holds on general Polish spaces, assuming that the double
integrals of $d (x, y)$ with respect to $\mu \otimes \mu$ and
$\nu \otimes \nu$ are finite \cite{Fernique:KR}.)  G. P. Rubinshte\u\i
n made important contributions to the theory, and $\dKRH$ is variously
called the Hutchinson metric, the Kantorovich metric, the
Kantorovich-Rubinshte\u\i n metric, and in the latter cases the name
of Leonid Vaser\v ste\u\i n (who considered $L^p$-like variants on the
second form) is sometimes added.

However, the $\dKRH$ metric does not metrize the weak topology unless
the
metric
$d$ is bounded (see \cite[Lemma~3.7]{Kravchenko:complete:K}).  This is
probably the reason why some papers consider a variant of $\dKRH$
where $h$ ranges over the $1$-Lipschitz maps from $X$ to $[-1, 1]$
instead of $\real$, or over the maps with Lipschitz norm at most $1$,
where the Lipschitz norm of $h$ is the sum of the Lipschitz constant
of $h$ and of $\sup_{x \in X} |h (x)|$.

In any case, $\dKRH$ or a variant produces a complete metric on a
space of probability distributions, and its open ball topology
coincides with the weak topology.
The aim of this paper is to generalize those results, in the following
directions:
\begin{enumerate}
\item We consider non-Hausdorff topological spaces, and
  \emph{quasi-metrics} instead of metrics.  A quasi-metric is
  axiomatized just like a metric, except that the axiom of symmetry $d
  (x, y) = d (y, x)$ is dropped.
\item We consider not only measures, but also non-linear variants of
  measures, defined as specific (continuous, positively homogeneous)
  functionals that we have called previsions \cite{Gou-csl07}.
\end{enumerate}
Both generalizations stem from questions from computer science, and
from the semantics of probabilistic and non-deterministic languages,
as well as from the study of transition systems with probabilistic and
non-deterministic transition relations.  In the realm of semantics,
one often considers \emph{domains} \cite{GHKLMS:contlatt}, which are
certain directed complete partial orders.  Seen as topological spaces,
with their Scott topology, those spaces are never Hausdorff (unless
the order is equality).  The question of similarity (as opposed to
bisimilarity) of transition systems also requires quasi-metrics
\cite{Gou-fossacs08b}, leading to non-Hausdorff spaces.  The mixture
of probabilistic choice (where one chooses the next action according
to some known probability distribution) and of non-deterministic
choice (where the next action is chosen by a scheduler by some unknown
but deterministic strategy) calls for previsions, or for other models
such as the powercone model \cite{TKP:nondet:prob}, which are
isomorphic under weak assumptions \cite{JGL-mscs16,KP:mixed}.

Each of these generalizations is non-trivial in itself.  As far as
quasi-metrics are concerned already, there are at least two distinct
notions of completeness, and the one we shall consider here,
Yoneda-completeness, admits several refinements (algebraicity,
continuity).  By comparison, all those notions of completeness
coincide on metric spaces.  The open ball topology is probably not the
right topology on a quasi-metric space either.  The paper
\cite{Gou-fossacs08b} shows that one can do a lot with open ball
topologies, but sometimes under strange assumptions, such as total
boundedness, or symcompactness.  The right topology, or so we claim,
is the so-called $d$-Scott topology, which we shall (re)define below.
Again, in the metric case the $d$-Scott topology and the open ball
topology are the same.

Finally, let us mention that we shall retrieve the classical case of
metrics for probability distributions by restricting to metrics, and
to linear previsions.  This will be up to a small detail: in general,
linear previsions are integration functionals not of measures, but of
continuous valuations \cite{Jones:proba,JP:proba}, a closely related
concept that behaves better with topological notions.  Measures and
continuous valuations are isomorphic concepts in several important
settings that go well beyond Hausdorff spaces \cite{KL:measureext},
and we shall see that this is also true in our case, under natural
assumptions (Proposition~\ref{prop:mes=val}).

This paper is long, to a point that it may discourage the bravest
reader.  I plan to split it into more manageable subunits that I will
attempt to publish separately.  In that context, the role of the
present document will be to describe the big picture, at least, which
those subunits won't convey.

\subsection{Outline}

We recapitulate some of the basic notions we need in
Section~\ref{sec:basics-quasi-metric}, in particular on sober spaces,
quasi-metric spaces, standard quasi-metric spaces, and Lipschitz
continuous functions.

The space of formal balls $\mathbf B (X, d)$ of a quasi-metric space
$X, d$ is probably the single most important artifact that has to be
considered in the study of quasi-metric spaces.  In
Section~\ref{sec:formal-ball-monad} we show that the formal ball
functor $\mathbf B$ is part of a monad, even a Kock-Z\"oberlein monad,
on the category of standard quasi-metric spaces and $1$-Lipschitz
continuous maps.  Although this has independent interest, one may say
that this should be outside the scope of the present paper.  However,
the spaces that are algebras of that monad, in particular the free
algebras $\mathbf B (X, d)$, will be important in the sequel, since,
as we shall show in Section~\ref{sec:lipsch-regul-spac}, they are all
Lipschitz regular.  Lipschitz regularity is a new notion that arises
naturally from the consideration of formal balls, and which will be
instrumental in later sections.

In Section~\ref{sec:comp-subs-quasi}, we characterize the compact
saturated subsets of continuous Yoneda-complete quasi-metric space.
The classical proof that $\dKRH$ is complete, in the Polish case, goes
through the study of tight families of measures, and for that one
usually appeals to a result by Hausdorff stating that, in a complete
metric space, the compact subsets are the closed precompact sets.
I had initially meant the results of Section~\ref{sec:comp-subs-quasi}
to be a preparation for a similar argument, but tightness does not
seem to be a fruitful concept in the quasi-metric setting.

Instead, what worked is a long-winded route that explores the
properties of the various spaces of lower semicontinuous, resp.,
Lipschitz continuous maps from a quasi-metric space $X, d$ to
$\creal$.  We introduce the various spaces of functions that we need
in Section~\ref{sec:cont-lipsch-maps}, and examine several ways of
approximating functions by $\alpha$-Lipschitz continuous maps there.
In Section~\ref{sec:topol-spac-maps}, we show that every continuous
Yoneda-complete quasi-metric space is consonant, and in fact a bit
more.  This allows us to show that the Scott topology coincides with
the compact-open topology on the space of lower semicontinuous maps of
such a space.  We refine this in Section~\ref{sec:topol-lform_-x},
where we show that the induced topologies on the subspaces
$\Lform_\alpha (X, d)$ of $\alpha$-Lipschitz continuous maps coincide
with the topology of pointwise convergence.  This is a key step: it
follows that those spaces $\Lform_\alpha (X, d)$ are stably compact.
That will be crucial in establishing all of our algebraicity results
in the sequel, on spaces of previsions.

In Section~\ref{sec:topol-lform_-x-1}, we examine the question whether
the topology on the space $\Lform_\infty (X, d)$ of all Lipschitz
continuous maps is determined by the subspace topologies on
$\Lform_\alpha (X, d)$, $\alpha > 0$.  This is a nasty subtle issue,
which we explain there.  We show that the topology on
$\Lform_\infty (X, d)$ is indeed determined if $X, d$ is Lipschitz
regular, vindicating our earlier introduction of the notion.

If the topology of $\Lform_\infty (X, d)$ is indeed determined, then
we shall see that our quasi-metric analogue of $\dKRH$, which we again
write as $\dKRH$, is Yoneda-complete on spaces of previsions in
Section~\ref{sec:quasi-metr-prev}.  Since we only know that the
topology of $\Lform_\infty (X, d)$ is determined when $X, d$ is
Lipschitz regular, and that seems too restrictive in practice, we show
that we can obtain Yoneda-completeness under less demanding
conditions, provided that our spaces of previsions satisfy a certain
property on their supports.  That condition again relies crucially on
the space $\mathbf B (X, d)$ of formal balls, and the fact that
$\mathbf B (X, d)$ embeds the original space $X$ and is always
Lipschitz regular.

In the remaining section, we explore various kinds of previsions.  I
have said that previsions were introduced as models of probabilistic
choice, non-deterministic choice, or both.

In Section~\ref{sec:case-cont-valu}, we examine the case of linear
previsions.  Those are isomorphic to the so-called continuous
valuations, and provide models for pure probabilistic (or
subprobabilistic) choice.  Since measures are a more well-known notion
than continuous valuations, we start by showing that, in
$\omega$-continuous Yoneda-complete quasi-metric spaces (in
particular, on all Polish spaces), the (sub)normalized continuous
valuations are the same thing as the (sub)probability measures.  There
is another reason to that: this correspondence between continuous
valuations and measures is precisely the reason why the property on
supports mentioned above is satisfied on linear previsions.  The rest
of the section establishes that the spaces of (sub)normalized
continuous valuations are algebraic Yoneda-complete (resp., continuous
Yoneda-complete) under a bounded variant $\dKRH^a$ of the $\dKRH$
quasi-metric, provided we start from an algebraic Yoneda-complete
(resp., continuous Yoneda-complete) space of points, and that the
$\dKRH^a$-Scott topology coincides with the weak topology.  We shall
also establish a splitting lemma and prove an analogue of the
Kantorovich-Rubinshte\u\i n theorem
$\dKRH (\mu, \nu) = \min_\tau \int_{(x, y) \in X^2} d (x, y) d\tau$
for continuous quasi-metric spaces there.

We follow a similar plan with the so-called \emph{Hoare powerdomain}
of $X$, namely the hyperspace of closed subsets of $X, d$ in
Section~\ref{sec:hoare-powerdomain}, with a quasi-metric $\dH$ that is
close to one half of the definition of the Hausdorff metric.  The
Hoare powerdomain models non-deterministic choice in computer science,
and more precisely \emph{angelic} non-determinism, where one might
think of the scheduler trying to pick the next action that is most
favorable to you.  This choice of a Hausdorff-like metric $\dH$ works
because that hyperspace is isomorphic to the space of so-called
sublinear, discrete previsions, and the isomorphism transports $\dKRH$
over to become $\dH$ on closed sets.  Under the assumption that $X, d$
is algebraic Yoneda-complete (resp., continuous Yoneda-complete), we
show that this hyperspace is algebraic Yoneda-complete (resp.,
continuous Yoneda-complete), and that the $\dH$-Scott topology
coincides with the lower Vietoris topology, a standard topology that
is one half of the well-known Vietoris (a.k.a.\ hit-and-miss) topology.

We again follow a similar plan with the \emph{Smyth powerdomain} of
$X$, that is, the hyperspace of compact saturated subsets of $X, d$,
with a quasi-metric $\dQ$ that is exactly the other half of the
definition of the Hausdorff metric, in
Section~\ref{sec:smyth-powerdomain}.  The Smyth powerdomain models
\emph{demonic} non-determinism in computer science, where the
scheduler tries to pick the worst possible next action.  The Smyth
powerdomain is isomorphic to the space of superlinear, discrete
previsions, and transports $\dKRH$ over to $\dQ$.  Again we show that
the Smyth powerdomain of an algebraic Yoneda-complete (resp.,
continuous Yoneda-complete) space $X, d$ is algebraic Yoneda-complete
(resp., continuous Yoneda-complete), and that its $\dQ$-Scott topology
coincides with the upper Vietoris topology, the other half of the
Vietoris topology.

We turn to the spaces of sublinear and superlinear previsions in
Section~\ref{sec:other-previsions}.  The sublinear, not necessarily
discrete, previsions model all possible mixtures of (sub)probabilistic
and angelic non-deterministic choice.  The superlinear previsions
model all possible mixtures of (sub)probabilistic and demonic
non-deterministic choice.

We again obtain similar results: on an algebraic (resp., continuous)
Yoneda-complete space $X, d$, they form algebraic (resp., continuous)
Yoneda-complete spaces under the $\dKRH^a$ quasi-metrics.  We show
that they are retracts of the Hoare, resp.\ the Smyth powerdomain on
spaces of continuous valuations through $1$-Lipschitz continuous maps.
It follows that they are isomorphic to the upper and lower powercones
of Tix, Keimel, and Plotkin \cite{TKP:nondet:prob}, which are convex
variants of the above Hoare and Smyth powerdomains of valuations.
Those isomorphisms were known when all the spaces considered are dcpos
\cite{KP:mixed}, or topological spaces \cite{JGL-mscs16}, under some
assumptions.  We show that they are also isomorphisms of quasi-metric
spaces, i.e., (continuous) isometries.  Finally, we show that the
spaces of sublinear and superlinear previsions on a continuous
Yoneda-complete space have identical $\dKRH^a$-Scott and weak
topologies.

Section~\ref{sec:other-previsions} relies on essentially everything we
have done until then, and also depends on a minimax theorem in the
style of Ky Fan \cite{KyFan:minimax} or Frenk and Kassay
\cite{FK:minimax}, but which only requires one space to be compact,
\emph{without} the Hausdorff separation axiom.  We had announced that
generalized minimax theorem as Theorem~6 in \cite{Gou-fossacs08b},
without a proof.  We prove it here, in
Section~\ref{sec:minimax-theorem-1}.

Section~\ref{sec:forks-quasi-lenses} is devoted to a similar study on
the Plotkin powerdomain first, on spaces of forks second.  The Plotkin
powerdomain traditionally models erratic non-determinism, namely the
mixture of angelic and demonic non-determinism.  Forks model mixed
probabilistic and erratic non-deterministic choice.  The Plotkin
powerdomain, a.k.a.\ the convex powerdomain, denotes several slightly
different constructions in the literature.  The most natural one in
our case is Heckmann's $\mathbf A$-valuations \cite{Heckmann:absval},
which is directly related to what we call discrete forks.  If the
underlying space is sober, we can interpret elements of the Plotkin
powerdomain as certain compact subsets called \emph{quasi-lenses}, and
the $\dKRH$ metric translates to a slight variant $\dP$ of a two-sided
Hausdorff quasi-metric.  On metric spaces, we retrieve the usual
Hausdorff metric on non-empty compact subsets.  We show that the
Plotkin powerdomain of an algebraic (resp., continuous)
Yoneda-complete quasi-metric space is algebraic (resp., continuous)
Yoneda-complete, and that the $\dP$-Scott topology coincides with a
variant of the usual two-sided Vietoris topology.  We again obtain
similar results for spaces of forks: the space of forks on an
algebraic (resp., continuous) Yoneda-complete quasi-metric space is
algebraic (resp., continuous) Yoneda-complete with the $\dKRH^a$
quasi-metric, and the $\dKRH^a$-Scott topology is the weak topology.

We conclude with some open questions in Section~\ref{sec:open-questions}.

\section{Basics}
\label{sec:basics-quasi-metric}

\subsection{General Topology}
\label{sec:general-topology}

We refer the reader to \cite{JGL-topology} for basic notions and
theorems of topology, domain theory, and in theory of quasi-metric
spaces.  The book \cite{GHKLMS:contlatt} is the standard reference on
domain theory, and I will assume known the notions of directed
complete posets (dcpo), Scott-continuous functions, the way-below
relation $\ll$, and so on.  We write $\uuarrow x$ for the set of
points $y$ such that $x \ll y$.  The Scott topology on a poset
consists of the Scott-open subsets, the upwards-closed subsets $U$
such that every directed family that has a supremum in $U$ must
intersect $U$.  A Scott-continuous map between posets is one that is
monotonic and preserves existing directed suprema, and this is
equivalent to requiring that it is continuous for the underlying Scott
topologies.

The topic of the present paper is on quasi-metric spaces of
previsions.  Chapters~6 and~7 of \cite{JGL-topology} are a recommended
read on that subject.  The paper \cite{JGL:formalballs} gives
additional information on quasi-metric spaces, which we shall also
rely on.  We shall introduce some of the concepts in
Section~\ref{sec:quasi-metric-spaces}, and others as we require them.

As far as topology is concerned, compactness does not imply
separation.  In other words, we call a subset $K$ of a topological
space compact if and only if every open cover of $K$ contains a finite
subcover.  This property is sometimes called quasicompactness.

We shall always write $\leq$ for the specialization preordering of a
topological space: $x \leq y$ if and only if every open neighborhood
of $x$ is also an open neighborhood of $y$, if and only if $x$ is in
the closure of $y$.  As a result, the closure of a single point $y$ is
also its downward closure $\dc y$.  In general, we write $\dc A$ for
the downward closure of any set $A$, $\upc A$ for its upward closure,
and $\upc x = \upc \{x\}$.

A subset $A$ of a topological space is saturated if and only if it is
equal to the intersection of its open neighborhoods.  Equivalently,
$A$ is saturated if and only if it is upwards-closed, i.e., $x \in A$
and $x \leq y$ imply $y \in A$.  We write
$\upc A = \{y \mid \exists x \in A. x \leq y\}$ for the upward closure
of $A$.  This is also the saturation of $A$, defined as the
intersection of the open neighborhoods of $A$.  If $K$ is compact,
then $\upc K$ is compact, too, and saturated.

We shall regularly rely on sobriety.  The sober spaces are exactly the
Stone duals of locales, but here is a more elementary description.  A
closed subset $C$ of a topological space $X$ is \emph{irreducible} if
and only if it is non-empty, and for any two closed subsets $C_1$,
$C_2$ such that $C \subseteq C_1 \cup C_2$, $C$ is included in $C_1$
or in $C_2$.  Equivalently, $C$ is irreducible if and only if for any
two open subsets $U_1$ and $U_2$ that both intersect $C$,
$U_1 \cap U_2$ also intersects $C$.  The closures of single points are
always irreducible.  A $T_0$ space is \emph{sober} if and only if
those are its only irreducible closed subsets.  Every Hausdorff space
is sober, and every continuous dcpo is sober in its Scott topology
\cite[Proposition~8.2.12]{JGL-topology}.

Every sober space is \emph{well-filtered} (see Proposition~8.3.5,
loc.cit.).  A space is well-filtered if and only if, for every
filtered intersection $\bigcap_{i \in I} Q_i$ of compact saturated
subsets that is included in some open set $U$, it is already the case
that $Q_i \subseteq U$ for some $i \in I$.  In a well-filtered space,
filtered intersections of compact saturated sets are compact saturated
(Proposition~8.3.6, loc.cit.).

\subsection{Quasi-Metric Spaces}
\label{sec:quasi-metric-spaces}

Let $\creal$ be the set of extended non-negative reals.  A
\emph{quasi-metric} on a set $X$ is a map
$d \colon X \times X \to \creal$ satisfying: $d (x, x)=0$;
$d (x, z) \leq d (x, y) + d (y, z)$ (triangular inequality);
$d(x,y)=d(y,x)=0$ implies $x=y$.

Given a quasi-metric space $X, d$, the \emph{open ball} $B^d_{x, <r}$
with center $x \in X$ and radius $r \in \Rp$ is
$\{y \in X \mid d (x, y) < r\}$.  The open ball topology is the
coarsest containing all open balls, and is the standard topology on
metric spaces.

In the realm of quasi-metric spaces, we prefer to use the
\emph{$d$-Scott topology}.  This is defined as follows.  A
\emph{formal ball} is a pair $(x, r)$ of a point $x \in X$ (the
center) and a number $r \in \Rp$ (the radius).  Formal balls are
ordered by $(x, r) \leq^{d^+} (y, s)$ iff $d (x, y) \leq r-s$, and
form a poset $\mathbf B (X, d)$.  We equip the latter with its Scott
topology.  There is an injective map $x \mapsto (x, 0)$ from $X$ to
$\mathbf B (X, d)$, and the $d$-Scott topology is the coarsest that
makes it continuous.  This allows us to see $X$ as a topological
subspace of $\mathbf B (X, d)$.

The notation $\leq^{d^+}$ comes from the fact that it is the
specialization ordering of $\mathbf B (X, d), d^+$, where the
quasi-metric $d^+$ is defined by $d^+ ((x, r), (y, s)) = \max (d (x,
y) -r +s, 0)$.

The $d$-Scott topology coincides with the open ball topology when $d$
is a metric \cite[Proposition~7.4.46]{JGL-topology}, or when $X, d$ is
Smyth-complete \cite[Proposition~7.4.47]{JGL-topology}.  It coincides
with the generalized Scott topology of \cite{BvBR:gms} when $X, d$ is
an algebraic Yoneda-complete quasi-metric space
\cite[Exercise~7.4.69]{JGL-topology}.

We shall define all notions when they are required.  Algebraicity will
be defined later.  For now, let us make clear what we understand by a
Yoneda-complete quasi-metric space.  Recall from Section~7.2.1 of
\cite{JGL-topology} that a \emph{Cauchy-weighted net}
${(x_i, r_i)}_{i \in I, \sqsubseteq}$ is a monotone net of formal
balls on $X, d$ (i.e., $i \sqsubseteq j$ implies
$(x_i, r_i) \leq^{d^+} (x_j, r_j)$) such that
$\inf_{i \in I} r_i = 0$.  The underlying net
${(x_i)}_{i \in I, \sqsubseteq}$ is then called
\emph{Cauchy-weightable}.  A point $x \in X$ is a \emph{$d$-limit} of
the latter net if and only if, for every $y \in X$,
$d (x, y) = \limsup_{i \in I, \sqsubseteq} d (x_i, y)$.  This is
equivalent to: for every $y \in X$, $d (x, y)$ is the supremum of the
monotone net ${(d (x_i, y) - r_i)}_{i \in I, \sqsubseteq}$
\cite[Lemma~7.4.9]{JGL-topology}, a formula which we shall prefer for
its simplicity.  The $d$-limit is unique if it exists.

Then $X, d$ is \emph{Yoneda-complete} if and only if every
Cauchy-weightable net has a $d$-limit.  (Or: if and only if every
Cauchy net has a $d$-limit; but Cauchy-weighted nets will be easier to
work with.)  This is also equivalent to requiring that
$\mathbf B (X, d)$ is a dcpo, and in that case, the least upper bound
$(x, r)$ of ${(x_i, r_i)}_{i \in I, \sqsubseteq}$ is given by
$r = \inf_{i \in I} r_i$ and $x$ is the $d$-limit of
${(x_i)}_{i \in I, \sqsubseteq}$.  This is the Kostanek-Waszkiewicz
Theorem \cite[Theorem~7.4.27]{JGL-topology}.

\begin{exa}
  \label{exa:creal}
  $\creal$ comes with a natural quasi-metric $\dreal$, defined by
  $\dreal (x, y) = 0$ if $x \leq y$, $\dreal (+\infty, y) = +\infty$
  if $y \neq +\infty$, $\dreal (x, y) = x-y$ if $x > y$ and
  $x \neq +\infty$.  Then $\leq^{\dreal^+}$ is the usual ordering
  $\leq$.  We check that the Scott topology on $\creal$ coincides with
  the $\dreal$-Scott topology.  To this end, observe that
  $\mathbf B (\creal, \dreal)$ is order-isomorphic to
  $C = \{(a, b) \in (\real \cup \{+\infty\}) \times ]-\infty, 0] \mid
  a-b \geq 0\}$
  through the map $(x, r) \mapsto (x-r, -r)$.  Since $C$ is a
  Scott-closed subset of a continuous dcpo, it is itself a continuous
  dcpo.  A base of the Scott topology on $C$ is given by open subsets
  of the form $\uuarrow (a, b) = \{(c, d) \mid a<c, b<d\}$, hence a
  base of the Scott topology on $\mathbf B (\creal, \dreal)$ is given
  by sets of the form
  $\{(x, r) \in \mathbf B (\creal, \dreal) \mid a < x-r, b <-r\}$,
  $(a, b) \in C$.  The intersections of the latter with $\creal$ are
  intervals of the form $\creal \cap ]a, +\infty]$,
  $a \in \real \cup \{+\infty\}$.  Those are exactly the Scott open
  subsets of $\creal$.
\end{exa}

\begin{exa}
  \label{exa:poset}
  Any poset $X, \leq$ gives rise to a quasi-metric space in a
  canonical way, by defining $d_\leq (x, y)$ as $0$ if $x \leq y$,
  $+\infty$ otherwise.  The $d_\leq$-Scott topology is exactly the
  Scott topology on $X$ \cite[Example~1.8]{JGL:formalballs}.
\end{exa}

To avoid certain pathologies, we shall concentrate on \emph{standard}
quasi-metric spaces \cite[Section~2]{JGL:formalballs}.  $X, d$ is
standard if and only if, for every directed family of formal balls
${(x_i, r_i)}_{i \in I}$, for every $s \in \Rp$,
${(x_i, r_i)}_{i \in I}$ has a supremum in $\mathbf B (X, d)$ if and
only if ${(x_i, r_i+s)}_{i \in I}$ has a supremum in
$\mathbf B (X, d)$.  Writing the supremum of the former as $(x, r)$,
we then have that $r = \inf_{i \in I} r_i$, and that the supremum of
the latter is $(x, r+s)$---this holds not only for $s \in \Rp$, but
for every $s \geq -r$.  In particular, the radius map
$(x, r) \mapsto r$ is Scott-continuous from $\mathbf B (X, d)$ to
$\creal^{op}$ ($\creal$ with the opposite ordering $\geq$), and for
every $s \in \Rp$, the map $\_ + s : (x, r) \mapsto (x, r+s)$ is
Scott-continuous from $\mathbf B (X, d)$ to itself
\cite[Proposition~2.4]{JGL:formalballs}.

Most quasi-metric spaces---not all---are standard: all metric spaces,
all Yoneda-complete quasi-metric spaces, all posets are standard
\cite[Proposition~2.2]{JGL:formalballs}.  $\creal, \dreal$ is
standard, being Yoneda-complete.

Given a map $f$ from a quasi-metric space $X, d$ to a quasi-metric
space $Y, \partial$, $f$ is \emph{$\alpha$-Lipschitz} if and only if
$\partial (f (x), f (y)) \leq \alpha d (x, y)$ for all $x, y \in X$.
(When $\alpha=0$ and $d (x, y) = +\infty$, we take the convention that
$0.+\infty=0$.)

For every $\alpha \in \Rp$, and every map $f \colon X, d \to Y, \partial$,
let $\mathbf B^\alpha (f)$ map $(x, r) \in \mathbf B (X, d)$ to
$(f (x), \alpha r) \in \mathbf B (Y, \partial)$.
Then $f$ is $\alpha$-Lipschitz
if and only if $\mathbf B^\alpha (f)$ is monotonic.

Contrarily to the case of spaces with the open ball topology, a
Lipschitz map need not be continuous.  There is a notion of Lipschitz
\emph{Yoneda-continuous} map, characterized as preserving so-called
$d$-limits.  When both $X, d$ and $Y, \partial$ are standard, $f$ is
$\alpha$-Lipschitz Yoneda-continuous if and only if
$\mathbf B^\alpha (f)$ is Scott-continuous
\cite[Lemma~6.3]{JGL:formalballs}.  We take the latter as our
definition:
\begin{defi}[$\alpha$-Lipschitz continuous]
  \label{defn:Lipcont}
  A map $f \colon X, d \to Y, \partial$ between quasi-metric spaces is
  \emph{$\alpha$-Lipschitz continuous} if and only if $\mathbf
  B^\alpha (f)$ is Scott-continuous.
\end{defi}
The phrase ``$\alpha$-Lipschitz continuous'' should not be read as
``$\alpha$-Lipschitz \emph{and} continuous'', rather as another notion
of continuity.  The two notions are actually equivalent in the case of
standard quasi-metric spaces, as we show in Proposition~\ref{prop:cont}
below.  The proof is similar to Proposition~7.4.52 of
\cite{JGL-topology}, which states a similar result for Yoneda-complete
quasi-metric spaces, and relies on the following lemma, similar to
Lemma~7.4.48 of loc.cit.
\begin{lem}
  \label{lemma:hole}
  Let $X, d$ be a standard quasi-metric space.  Every \emph{open hole}
  $T^d_{x, > \epsilon}$, defined as
  $\{y \in X \mid d (y, x) > \epsilon\}$, where $\epsilon \in \Rp$, is
  open in the $d$-Scott topology: it is the intersection of the
  Scott-open set $T^{d^+}_{(x, 0), > \epsilon}$ with $X$.
\end{lem}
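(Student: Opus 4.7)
The statement has two ingredients: the equality $T^d_{x,>\epsilon} = T^{d^+}_{(x,0),>\epsilon} \cap X$, and the Scott-openness of $T^{d^+}_{(x,0),>\epsilon}$ in $\mathbf B (X, d)$. My plan is to handle the set-theoretic equality first, and then to split Scott-openness into the upward-closure and the directed-sup inaccessibility condition, using the standardness hypothesis only at the very end via the $d$-limit description of suprema in $\mathbf B (X, d)$.

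For the equality, I would observe that for any $y \in X$, $(y, 0) \in T^{d^+}_{(x, 0), >\epsilon}$ iff $d^+ ((y,0), (x,0)) = \max (d (y, x), 0) > \epsilon$, which is just $d (y, x) > \epsilon$, i.e., $y \in T^d_{x, >\epsilon}$. This is immediate from the definition of $d^+$ and requires no standardness.

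Next, upward-closure of $T^{d^+}_{(x, 0), >\epsilon}$: suppose $(y, s)$ lies in this set and $(y, s) \leq^{d^+} (z, t)$, so that $d (y, z) \leq s - t$. Then by the triangular inequality $d (y, x) \leq d (y, z) + d (z, x)$, we get $d (z, x) - t \geq d (y, x) - d (y, z) - t \geq d (y, x) - s > \epsilon$, whence $(z, t) \in T^{d^+}_{(x, 0), >\epsilon}$. The harder direction is inaccessibility by directed suprema: suppose ${(y_i, s_i)}_{i \in I}$ is a directed family with supremum $(y, s)$ satisfying $d (y, x) - s > \epsilon$. Here is where standardness enters: the supremum description tells us $s = \inf_{i \in I} s_i$, and the point $y$ is determined by the $d$-limit formula $d (y, z) = \sup_{i \in I} (d (y_i, z) - (s_i - s))$ for every $z \in X$. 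Applying this with $z = x$ yields $d (y, x) - s = \sup_{i \in I} (d (y_i, x) - s_i) > \epsilon$, so some index $i$ satisfies $d (y_i, x) - s_i > \epsilon$, which is exactly $(y_i, s_i) \in T^{d^+}_{(x, 0), >\epsilon}$.

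The only delicate point is making sure the $d$-limit formula applies. Standardness guarantees that the supremum of a directed family of formal balls, when it exists, has radius $\inf_i s_i$; shifting the radii down by $s$ produces a Cauchy-weighted net whose $d$-limit is precisely $y$, and the supremum-form of the $d$-limit identity cited in the excerpt then gives the desired equation. The result should be no more than a careful combination of the triangular inequality with this identity, so the only real obstacle is bookkeeping about which argument of $d$ gets the $\limsup$ and ensuring the direction of $\leq^{d^+}$ is respected throughout.
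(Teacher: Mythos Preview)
Your argument is correct but proceeds along a different route from the paper. For the Scott-openness step, you invoke the $d$-limit identity $d(y,x)=\sup_i(d(y_i,x)-(s_i-s))$, which amounts to the statement that in a standard space, the center of a directed supremum of formal balls is the $d$-limit of the underlying net (this is Lemma~\ref{lemma:sup=>dlim} in the paper). That lemma appears \emph{after} the present one, although its proof is independent, so there is no circularity; still, you are leaning on more structure than the paper does at this point.

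The paper's proof is more elementary: rather than analyzing the center $y$ via the $d$-limit formula, it shifts all radii by $+2\epsilon$ (using only the defining property of standardness that $\_+s$ preserves directed suprema), argues by contradiction that if no $(y_i,s_i)$ lies in the hole then each $(y_i,s_i+2\epsilon)\leq^{d^+}(x,\epsilon)$, and passes to the supremum $(y,s+2\epsilon)\leq^{d^+}(x,\epsilon)$, yielding $d(y,x)\leq s+\epsilon$. This avoids any mention of $d$-limits and uses nothing beyond the order on formal balls. Your approach is conceptually cleaner once the $d$-limit machinery is in place; the paper's radius-shift trick is more self-contained and explains why the lemma is placed where it is.
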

\proof Let $V$ be the open hole $T^{d^+}_{(x, 0), > \epsilon}$.  This
is the set of formal balls $(y, s)$ such that $d (y, x) > s+\epsilon$.
We claim that $V$ is upwards-closed: for every $(y, s) \in V$ and
every $(z, t)$ such that $(y, s) \leq^{d^+} (z, t)$, we have
$d (y, x) > s+\epsilon$ and $d (y, z) \leq s-t$; by the triangular
inequality $d (y, x) \leq d (y, z) + d (z, x) \leq s-t+d(y,x)$, so
$d (y, x) > t+\epsilon$, showing that $(z, t)$ is in $V$.

Next we claim that $V$ is Scott-open.  Let ${(y_i, s_i)}_{i \in I}$ be
a directed family of formal balls that has a supremum $(y, s)$ in $V$.
Since $X, d$ is standard, $(y, s+2\epsilon)$ is the supremum of the
directed family ${(y_i, s_i+2\epsilon)}_{i \in I}$.  If no
$(y_i, s_i)$ were in $V$, then we would have
$d (y_i, x) \leq s_i + \epsilon$, i.e.,
$(y_i, s_i+2\epsilon) \leq^{d^+} (x, \epsilon)$ for every $i \in I$.
Since $(y, s+2\epsilon)$ is the least upper bound of the family,
$(y, s+2\epsilon) \leq^{d^+} (x, \epsilon)$, so
$d (y, x) \leq s+\epsilon$, contradicting $(y, s) \in V$.  Therefore
$(y_i, s_i)$ is in $V$ for some $i \in V$, showing that $V$ is
Scott-open.

Finally, $V \cap X$ consists of those points $y$ such that $d (y, x) >
0 + \epsilon$, hence is equal to $T^d_{x, > \epsilon}$, whence the
claim.  \qed

\begin{prop}
  \label{prop:cont}
  Let $X, d$ and $Y, \partial$ be two quasi-metric spaces,
  $\alpha > 0$, and $f$ be a map from $X$ to $Y$.  Consider the
  following claims:
  \begin{enumerate}
  \item $f$ is $\alpha$-Lipschitz continuous in the sense of
    Definition~\ref{defn:Lipcont};
  \item $f$ is $\alpha$-Lipschitz and continuous, from $X$ with its
    $d$-Scott topology to $Y$ with its $\partial$-Scott topology.
  \end{enumerate}
  Then (1) implies (2), and (2) implies (1) provided that
  $X, d$ and $Y, \partial$ are standard.
\end{prop}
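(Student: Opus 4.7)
The plan hinges on the characterization of $\alpha$-Lipschitzness as monotonicity of $\mathbf{B}^\alpha(f)$: each implication reduces to passing between continuity on points and Scott-continuity on the full formal-ball poset.

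For (1) $\Rightarrow$ (2), $\alpha$-Lipschitzness is immediate because Scott-continuous maps are monotonic, and monotonicity of $\mathbf{B}^\alpha(f)$ is equivalent to $\alpha$-Lipschitzness of $f$. For topological continuity of $f$, I would observe that the $\partial$-Scott topology on $Y$ is, by definition, the coarsest topology making $y \mapsto (y, 0)$ continuous into $\mathbf{B}(Y, \partial)$ with its Scott topology, so it suffices to check the $f$-preimage of each subbasic open $\{y \in Y \mid (y, 0) \in W\}$, where $W$ is Scott-open in $\mathbf{B}(Y, \partial)$. Since $\mathbf{B}^\alpha(f)(x, 0) = (f(x), 0)$, this preimage equals $\{x \in X \mid (x, 0) \in (\mathbf{B}^\alpha(f))^{-1}(W)\}$, which is $d$-Scott open because $(\mathbf{B}^\alpha(f))^{-1}(W)$ is Scott-open in $\mathbf{B}(X, d)$; note that no standardness is needed for this direction.

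For (2) $\Rightarrow$ (1), $\alpha$-Lipschitzness already gives monotonicity of $\mathbf{B}^\alpha(f)$, so the task is to show it preserves existing directed suprema. Fix a directed family ${(x_i, r_i)}_{i \in I}$ with supremum $(x, r)$ in $\mathbf{B}(X, d)$; the images $(f(x_i), \alpha r_i)$ form a directed family bounded above by $(f(x), \alpha r)$ by $\alpha$-Lipschitzness. I would show that $(f(x), \alpha r)$ is the \emph{least} upper bound. For any other upper bound $(y, s)$, the relation $(f(x_i), \alpha r_i) \leq^{\partial^+} (y, s)$ gives $s \leq \alpha r_i$ for every $i$, hence $s \leq \alpha r$ since $\inf_i r_i = r$ by standardness of $X, d$. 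What remains is the key inequality $\partial(f(x), y) \leq \alpha r - s$.

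This inequality is the main obstacle, since it is where topological continuity of $f$ on points must be lifted to control behaviour at radius $r$. I would argue by contradiction: assume $\partial(f(x), y) > \alpha r - s + \eta$ for some $\eta > 0$. Then $f(x)$ lies in the open hole $T^\partial_{y, > \alpha r - s + \eta/2}$, which is $\partial$-Scott open by Lemma~\ref{lemma:hole} (here we use standardness of $Y, \partial$). By (2), its $f$-preimage is $d$-Scott open, so it is of the form $\{x' \in X \mid (x', 0) \in U'\}$ for some Scott-open $U' \subseteq \mathbf{B}(X, d)$ containing $(x, 0)$. Standardness of $X, d$ now allows the shift: the family ${(x_i, r_i - r)}_{i \in I}$ has supremum $(x, 0)$, so $(x_{i_0}, r_{i_0} - r) \in U'$ for some $i_0$, and since $U'$ is upward-closed, $(x_i, 0) \in U'$ for all $i \geq i_0$, whence $\partial(f(x_i), y) > \alpha r - s + \eta/2$ for such $i$. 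But the upper-bound property forces $\partial(f(x_i), y) \leq \alpha r_i - s$, so $r_i > r + \eta/(2\alpha)$ for all $i \geq i_0$; picking $i \geq i_0$ with $r_i < r + \eta/(2\alpha)$, which exists because $\inf_i r_i = r$ and radii are monotone decreasing along the directed order, produces the required contradiction, concluding that $\mathbf{B}^\alpha(f)$ preserves the supremum and is thus Scott-continuous.
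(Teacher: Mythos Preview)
Your proof is correct and follows essentially the same approach as the paper's: both directions use the characterization of the $d$-Scott topology as the subspace topology from $\mathbf{B}(X,d)$, and for $(2)\Rightarrow(1)$ both argue by contradiction via open holes (Lemma~\ref{lemma:hole}), shifting the directed family to radius zero using standardness of $X,d$ and deriving a contradiction with $\inf_i r_i = r$. Your choice of thresholds ($\alpha r - s + \eta/2$ versus the paper's intermediate $\eta$ between $\alpha r - s$ and $\partial(f(x),y)$) is a cosmetic variation, and your final step combining directedness with the radius bound is the same endgame as the paper's, just phrased slightly differently.
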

\proof (1) $\limp$ (2).  Assume $f$ is $\alpha$-Lipschitz continuous.
Let $V$ be a $\partial$-Scott open subset of $Y$.  By definition, and
equating $Y$ with a subspace of $\mathbf B (Y, \partial)$, $V$ is the
intersection of some Scott-open subset $\mathcal V$ of
$\mathbf B (Y, \partial)$ with $Y$.  Since $\mathbf B^\alpha (f)$ is
Scott-continuous, $\mathcal U = \mathbf B^1 (f)^{-1} (\mathcal V)$ is
Scott-open in $\mathbf B (X, d)$.  Look at $U = \mathcal U \cap X$, a
$d$-Scott open subset of $X$.  We note that $x \in U$ if and only if
$(x, 0) \in \mathcal U$ if and only if
$\mathbf B^1 (f) (x, 0) = (f (x), 0)$ is in $\mathcal V$, if and only
if $f (x) \in V$, so that $U = f^{-1} (V)$.  Hence $f$ is continuous.
(2) $\limp$ (1), assuming $X, d$ and $Y, \partial$ standard.  Assume
$f$ is $\alpha$-Lipschitz and continuous.  Since $f$ is
$\alpha$-Lipschitz, $\mathbf B^\alpha (f)$ is monotonic.  In order to
show that it is Scott-continuous, consider an arbitrary directed
family ${(x_i, r_i)}_{i \in I}$ in $\mathbf B (X, d)$, with a supremum
$(x, r)$.  We see that family as a monotone net, and let
$i \sqsubseteq j$ if and only if $(x_i, r_i) \leq^{d^+} (x_j, r_j)$.
Since $X, d$ is standard, $r = \inf_{i \in I} r_i$ and $(x, 0)$ is the
supremum of the directed family ${(x_i, r_i-r)}_{i \in I}$.

$\mathbf B^\alpha (f) (x, r) = (f (x), \alpha r)$ is an upper bound of
${(f (x_i), \alpha r_i)}_{i \in I}$ by monotonicity.  Assume that it
is not least.  Then there is a formal ball $(y, s)$ such that
$(f (x_i), \alpha r_i) \leq^{\partial^+} (y, s)$ for every $i \in I$,
i.e., $\partial (f (x_i), y) \leq \alpha r_i - s$ for every $i \in I$,
and such that $(f (x), \alpha r)$ is not below $(y, s)$, i.e.,
$\partial (f (x), y) > \alpha r - s$.  Pick a real number $\eta$ such
that $\partial (f (x), y) > \eta > \alpha r - s$.  In particular,
$f (x)$ is in the open hole $T^\partial_{y > \eta}$, which is
$\partial$-Scott open by Lemma~\ref{lemma:hole}.  Since $f$ is
continuous, $U = f^{-1} (T^\partial_{y > \eta})$ is $d$-Scott open,
and contains $x$ by definition.  Let $\mathcal U$ be a Scott-open
subset of $\mathbf B (X, d)$ whose intersection with $X$ is equal to
$U$.  Since $(x, 0) \in \mathcal U$, $(x_i, r_i-r)$ is in $\mathcal U$
for all $i$ large enough; in other words, there is an $i_0 \in I$ such
that $(x_i, r_i-r) \in \mathcal U$ for all $i \in I$ such that
$i_0 \sqsubseteq i$.  Since $\mathcal U$ is upwards-closed, $(x_i, 0)$
is in $\mathcal U$, so $x_i$ is in $U$, which implies that $f (x_i)$
is in $T^\partial_{y > \eta}$, for every $i \sqsupseteq i_0$.  The
latter expands to $\partial (f (x_i), y) > \eta$ for every
$i \sqsupseteq i_0$.  However,
$\partial (f (x_i), y) \leq \alpha r_i - s$ for every $i \in I$, and
since $r = \inf_{i \in I} r_i$ is also equal to
$\inf_{i \in I, i_0 \sqsubseteq i} r_i$ (by directedness of $I$ and
the fact that $i \sqsubseteq j$ implies $r_i \geq r_j$), we obtain
that $\alpha r - s \geq \eta$.  This is impossible since
$\eta > \alpha r - s$. \qed

The latter has the following nice consequence.
\begin{lem}
  \label{lemma:d(_,x)}
  Let $X, d$ be a standard quasi-metric space.  For every point $x'
  \in X$, the function $d (\_, x') \colon x \mapsto d (x, x')$ is
  $1$-Lipschitz continuous from $X, d$ to $\creal, \dreal$.
\end{lem}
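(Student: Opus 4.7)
The plan is to apply Proposition~\ref{prop:cont}: since both $X,d$ and $\creal,\dreal$ are standard (the latter being Yoneda-complete, as noted in Example~\ref{exa:creal}), it suffices to verify that $d(\_,x')$ is $1$-Lipschitz and continuous between $X$ with its $d$-Scott topology and $\creal$ with its $\dreal$-Scott topology.

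First I would check the $1$-Lipschitz condition, namely that $\dreal(d(x,x'),d(y,x')) \leq d(x,y)$ for all $x,y \in X$. The triangular inequality gives $d(x,x') \leq d(x,y) + d(y,x')$. If $d(x,x') \leq d(y,x')$, then $\dreal(d(x,x'),d(y,x')) = 0$, which is trivially below $d(x,y)$. Otherwise $\dreal(d(x,x'),d(y,x')) = d(x,x') - d(y,x')$, and rearranging the triangular inequality gives the bound, with the usual convention $+\infty - a = +\infty$ when $d(y,x') = +\infty$ handled separately (in that case $d(x,x') = +\infty$ forces $d(x,y) = +\infty$ too via the same inequality, so we are safe).

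Next I would check continuity. From Example~\ref{exa:creal}, the $\dreal$-Scott topology on $\creal$ has a subbase (in fact base) consisting of the sets $\creal \cap {]a,+\infty]}$ for $a \in \real \cup \{+\infty\}$. The preimage of such a set under $d(\_,x')$ is exactly $\{x \in X \mid d(x,x') > a\}$, which is precisely the open hole $T^d_{x',>a}$ when $a \geq 0$ (and is either $X$ or $\emptyset$ for other values of $a$). By Lemma~\ref{lemma:hole}, every such open hole is $d$-Scott open, so $d(\_,x')$ is continuous.

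Having verified (2) of Proposition~\ref{prop:cont}, the proposition yields (1), giving $1$-Lipschitz continuity in the sense of Definition~\ref{defn:Lipcont}. The only mildly delicate point is the careful handling of infinite values in the Lipschitz inequality; everything else is a direct combination of the two lemmas already established.
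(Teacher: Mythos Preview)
Your proof is correct and follows essentially the same route as the paper: invoke Proposition~\ref{prop:cont} (both spaces being standard), check $1$-Lipschitz via the triangular inequality, and check continuity by observing that the preimage of a Scott-open $]\epsilon,+\infty]$ is the open hole $T^d_{x',>\epsilon}$, which is $d$-Scott open by Lemma~\ref{lemma:hole}. One small slip: in your parenthetical about infinite values, the delicate case is $d(x,x')=+\infty$ with $d(y,x')<+\infty$ (not $d(y,x')=+\infty$, which falls into the first case), but the reasoning you give---that the triangular inequality then forces $d(x,y)=+\infty$---applies verbatim to that case.
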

\proof It is $1$-Lipschitz because of the triangular inequality.
Relying on Proposition~\ref{prop:cont}, and since $\creal, \dreal$ is
standard, we only need to check that $d (\_, x')$ is continuous.  By
Example~\ref{exa:creal}, the $\dreal$-Scott topology is the Scott
topology on $\creal$, hence it suffices to show that the inverse image
of the Scott open $]\epsilon, +\infty]$ by $d (\_, x')$ is $d$-Scott
open.  That inverse image is the open hole $T^d_{x', > \epsilon}$, and
we conclude by Lemma~\ref{lemma:hole}.  \qed

Of particular interest are the Lipschitz continuous functions from $X,
d$ to $\creal, \dreal$. Recall that $f \colon X, d \to \creal, \dreal$ is $\alpha$-Lipschitz continuous if and only if
$\mathbf B^\alpha (f)$ is Scott-continuous.
$\mathbf B (\creal, \dreal)$ is order-isomorphic with the Scott-closed
set
$C = \{(a, b) \in (\real \cup \{+\infty\}) \times (-\infty, 0] \mid
a-b \geq 0\}$, through the map $(x, r) \mapsto (x-r, -r)$: see Example~\ref{exa:creal}.
Every order isomorphism is Scott-continuous.  Therefore $f$ is
$\alpha$-Lipschitz continuous if and only if the composition
$\xymatrix{X \ar[r]^(0.4){\mathbf B^\alpha (f)} & \mathbf B (X, d)
  \ar[r]^(0.6){\cong} & C}$ is Scott-continuous.  That composition is
$(x, r) \mapsto (f' (x, r), -\alpha r)$, where $f'$ is defined by
$f' (x, r) = f (x) - \alpha r$.  The map $(x, r) \mapsto -\alpha r$ is
Scott-continuous when $X, d$ is standard.  Hence we obtain the second
part of the following result.  The first part is obvious.
\begin{lem}
  \label{lemma:f'}
  Let $X, d$ be a standard quasi-metric space, $\alpha > 0$, and let
  $f$ be a map from $X$ to $\creal$.  Let
  $f' \colon \mathbf B (X, d) \to \real \cup \{+\infty\}$ be defined
  by $f' (x, r) = f (x) - \alpha r$.  Then:
  \begin{enumerate}
  \item $f$ is $\alpha$-Lipschitz if and only if $f'$ is monotonic;
  \item $f$ is $\alpha$-Lipschitz continuous if and only if $f'$ is
    Scott-continuous.  \qed
  \end{enumerate}
\end{lem}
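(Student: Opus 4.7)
The plan is to prove both parts by unpacking definitions, with part (2) essentially already assembled in the paragraph preceding the lemma.

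For part (1), I would spell out what each side says concretely. The $\alpha$-Lipschitz condition $\dreal (f(x), f(y)) \leq \alpha d (x,y)$ reduces, by the formula for $\dreal$ from Example~\ref{exa:creal}, to $f(x) - f(y) \leq \alpha d (x,y)$ (with the usual infinity conventions; the case $f(x) \leq f(y)$ being trivial). Monotonicity of $f'$ with respect to $\leq^{d^+}$ reads: whenever $d (x,y) \leq r - s$ with $r, s \in \Rp$, one has $f(x) - \alpha r \leq f(y) - \alpha s$, i.e.\ $f(x) - f(y) \leq \alpha (r - s)$. The forward direction is then immediate by chaining $f(x) - f(y) \leq \alpha d (x,y) \leq \alpha (r-s)$. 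For the converse, given $x, y \in X$ with $d (x,y) < +\infty$, setting $r = d (x,y)$ and $s = 0$ recovers the Lipschitz inequality; the case $d (x, y) = +\infty$ makes both sides vacuous.

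For part (2), I would invoke the analysis already performed in the preceding paragraph: via the order-isomorphism $\mathbf B (\creal, \dreal) \cong C$ of Example~\ref{exa:creal}, the map $\mathbf B^\alpha (f)$ corresponds to $(x, r) \mapsto (f' (x, r), -\alpha r)$ valued in $C$. Since order-isomorphisms preserve Scott-continuity, $f$ is $\alpha$-Lipschitz continuous iff this composite is Scott-continuous from $\mathbf B (X, d)$ to $C$. Now $C$ is a sub-dcpo of the product $(\real \cup \{+\infty\}) \times (-\infty, 0]$, since the defining inequality $a - b \geq 0$ is preserved under coordinatewise suprema, so directed suprema in $C$ are computed coordinatewise. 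Hence Scott-continuity of the composite amounts to Scott-continuity of each coordinate. Standardness of $X, d$ guarantees that $(x, r) \mapsto r$ is Scott-continuous from $\mathbf B (X, d)$ to $\creal^{op}$, hence that $(x, r) \mapsto -\alpha r$ is Scott-continuous to $(-\infty, 0]$ with its usual order (using $\alpha > 0$). Therefore the composite is Scott-continuous iff its first coordinate $f'$ is, which gives (2).

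The main subtlety I anticipate is justifying that Scott-continuity into the sub-dcpo $C$ of the product agrees with coordinatewise Scott-continuity; this reduces to the observation that directed suprema in $C$ coincide with those in the ambient product. The rest is routine unpacking and the fact, recalled from Example~\ref{exa:creal}, that the isomorphism $\mathbf B (\creal, \dreal) \cong C$ is an order-iso, hence a Scott-homeomorphism.
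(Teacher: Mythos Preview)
Your proposal is correct and follows essentially the same approach as the paper. The paper dismisses part~(1) as ``obvious'' and derives part~(2) exactly as you do, from the paragraph preceding the lemma: compose $\mathbf B^\alpha(f)$ with the order-isomorphism $\mathbf B(\creal,\dreal)\cong C$, obtain the map $(x,r)\mapsto(f'(x,r),-\alpha r)$, and use standardness to dispose of the second coordinate. Your explicit justification that directed suprema in the Scott-closed subset $C$ are computed coordinatewise is a small sharpening of what the paper leaves implicit, but the argument is the same.
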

Lemma~\ref{lemma:f'} is
Lemma~6.4 of \cite{JGL:formalballs}, where Lipschitz Yoneda-continuous
maps are used instead of Lipschitz continuous maps.  The two notions
are equivalent on standard quasi-metric spaces, as we have noticed
before Definition~\ref{defn:Lipcont}.

The Lipschitz continuous functions to $\creal, \dreal$ are closed
under several constructions, which we recapitulate here.
\begin{prop}
  \label{prop:alphaLip:props}
  Let $X, d$ be a standard quasi-metric space,
  $\alpha, \beta \in \Rp$, and $f$, $g$ be maps from $X, d$ to
  $\creal, \dreal$.
  \begin{enumerate}
  \item If $f$ is $\beta$-Lipschitz continuous, then $\alpha f$ is
    $\alpha\beta$-Lipschitz continuous;
  \item If $f$ is $\alpha$-Lipschitz continuous and $g$ is
    $\beta$-Lipschitz continuous then $f+g$ is
    $(\alpha+\beta)$-Lipschitz continuous;
  \item If $f$, $g$ are $\alpha$-Lipschitz continuous, then so are
    $\min (f, g)$ and $\max (f, g)$;
  \item If ${(f_i)}_{i \in I}$ is any family of $\alpha$-Lipschitz
    continuous maps, then the pointwise supremum $\sup_{i \in I} f_i$
    is also $\alpha$-Lipschitz continuous.
  \item If $\alpha \leq \beta$ and $f$ is $\alpha$-Lipschitz
    continuous then $f$ is $\beta$-Lipschitz continuous.
  \item Every constant map is $\alpha$-Lipschitz continuous.
  \end{enumerate}
\end{prop}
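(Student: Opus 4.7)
The plan is to reduce every clause to a Scott-continuity statement about an associated auxiliary map via Lemma~\ref{lemma:f'}. Recall that, given a coefficient $\gamma \in \Rp$ and a map $h \colon X \to \creal$, the map $h$ is $\gamma$-Lipschitz continuous if and only if $h' \colon (x,r) \mapsto h(x) - \gamma r$ is Scott-continuous from $\mathbf{B}(X,d)$ to $\real \cup \{+\infty\}$. So for each item I will exhibit the relevant $h'$, express it in terms of the ``slanted'' versions $f'$ and $g'$ of the ingredients, and invoke the fact that Scott-continuous maps into $\real \cup \{+\infty\}$ are closed under non-negative scalar multiplication, addition, finite min/max, and arbitrary pointwise suprema.

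Concretely, for (1) write $(\alpha f)'(x,r) = \alpha f(x) - \alpha\beta r = \alpha \cdot f'(x,r)$, where $f'$ is computed with coefficient $\beta$. For (2) set $f'$ and $g'$ with coefficients $\alpha$ and $\beta$ respectively; then $(f+g)'(x,r) = (f(x) - \alpha r) + (g(x) - \beta r) = f'(x,r) + g'(x,r)$. For (3), use the identity $\min(f,g)(x) - \alpha r = \min(f'(x,r), g'(x,r))$ (and likewise for $\max$). For (4), directedness and commutation of $\sup$ with subtraction of a fixed quantity give $(\sup_i f_i)'(x,r) = \sup_i f_i'(x,r)$; Scott-continuity is preserved under pointwise suprema. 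In each case the right-hand side is Scott-continuous on $\mathbf{B}(X,d)$, so the left-hand side is too, and Lemma~\ref{lemma:f'} delivers the required Lipschitz continuity. Item (6) is similar: for a constant map $f \equiv c$, $f'(x,r) = c - \alpha r$, which is Scott-continuous by the argument below.

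The only clause that is not a direct algebraic translation is (5), where we have to upgrade the coefficient from $\alpha$ to $\beta \geq \alpha$. Here I would write $f(x) - \beta r = (f(x) - \alpha r) - (\beta-\alpha) r = f'(x,r) - (\beta - \alpha) r$. The first summand is Scott-continuous by hypothesis; the second is Scott-continuous because the radius map $(x,r) \mapsto r$ goes Scott-continuously from $\mathbf{B}(X,d)$ to $\creal^{op}$ by standardness of $X, d$ (the property stated right after the definition of standard, and a direct consequence of Proposition~2.4 of \cite{JGL:formalballs}), so $(x,r) \mapsto -(\beta-\alpha) r$ is Scott-continuous into $\real$. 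Their sum is Scott-continuous into $\real \cup \{+\infty\}$, and Lemma~\ref{lemma:f'} concludes. This invocation of standardness is the only delicate point; everything else is a routine verification that subtraction of a fixed affine function of $r$ commutes with $\alpha \cdot$, $+$, $\min$, $\max$, and $\sup$, and that Scott-continuity of real-valued maps is preserved by these operations.
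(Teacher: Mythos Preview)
Your proof is correct and follows exactly the approach the paper takes: reduce each clause to a Scott-continuity statement about the associated map $h'(x,r) = h(x) - \gamma r$ via Lemma~\ref{lemma:f'}, with items~(5) and~(6) relying on the Scott-continuity of the radius map $(x,r)\mapsto r$ coming from standardness. One harmless quibble: in your treatment of~(4) the word ``directedness'' is out of place, since the identity $(\sup_i f_i)(x) - \alpha r = \sup_i (f_i(x) - \alpha r)$ holds for arbitrary families, and arbitrary pointwise suprema of Scott-continuous $\creal$-valued maps are Scott-continuous.
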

\proof
(1--5) were proved in \cite[Proposition~6.7~(2)]{JGL:formalballs}, and
are easy consequences of Lemma~\ref{lemma:f'}~(2).  For (6), using the
same lemma, we observe that for each constant $a$, the map $(x, r)
\mapsto a - \alpha r$ is Scott-continuous, because in a standard
space, the radius map  is Scott-continuous
    from $\mathbf B (X, d)$ to $\Rp^{op}$.
\qed

We shall also need the following result, which is obvious considering
our definition of Lipschitz continuity.
\begin{lem}
  \label{lemma:comp:Lip}
  Let $X, d$ and $Y, \partial$ and $Z, \mathfrak d$ be three quasi-metric
  spaces.  For every $\alpha$-Lipschitz continuous map
  $f \colon X, d \to Y, \partial$ and every $\beta$-Lipschitz
  continuous map $g \colon Y, \partial \to Z, \mathfrak d$, $g \circ f$
  is $\alpha\beta$-Lipschitz continuous.
\end{lem}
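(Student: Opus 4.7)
The plan is to work directly from Definition~\ref{defn:Lipcont}: $\alpha\beta$-Lipschitz continuity of $g \circ f$ means that $\mathbf{B}^{\alpha\beta}(g \circ f)$ is Scott-continuous from $\mathbf{B}(X,d)$ to $\mathbf{B}(Z, \mathfrak{d})$. The natural approach is to exhibit this map as the composition of the two Scott-continuous maps given by the hypothesis.

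Concretely, I would first compute both sides on a formal ball $(x,r) \in \mathbf{B}(X,d)$. On one hand, by definition,
\[
\mathbf{B}^{\alpha\beta}(g \circ f)(x,r) = \bigl((g \circ f)(x),\, \alpha\beta\, r\bigr) = \bigl(g(f(x)),\, \alpha\beta\, r\bigr).
\]
On the other hand,
\[
\bigl(\mathbf{B}^\beta(g) \circ \mathbf{B}^\alpha(f)\bigr)(x,r) = \mathbf{B}^\beta(g)\bigl(f(x),\, \alpha r\bigr) = \bigl(g(f(x)),\, \beta \cdot \alpha r\bigr),
\]
so the two maps coincide pointwise.

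From here the result is immediate: by hypothesis $\mathbf{B}^\alpha(f) \colon \mathbf{B}(X,d) \to \mathbf{B}(Y,\partial)$ and $\mathbf{B}^\beta(g) \colon \mathbf{B}(Y,\partial) \to \mathbf{B}(Z,\mathfrak{d})$ are both Scott-continuous, and the composition of Scott-continuous maps is Scott-continuous, so $\mathbf{B}^{\alpha\beta}(g \circ f)$ is Scott-continuous, which is exactly what it means for $g \circ f$ to be $\alpha\beta$-Lipschitz continuous in the sense of Definition~\ref{defn:Lipcont}.

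There is no real obstacle here; the only thing to verify carefully is the identity $\mathbf{B}^{\alpha\beta}(g \circ f) = \mathbf{B}^\beta(g) \circ \mathbf{B}^\alpha(f)$, which is the functorial-looking equation that makes the definition of $\mathbf{B}^\alpha$ well suited to tracking Lipschitz constants under composition. As the author remarks in the statement, this really is obvious from the definition, and no appeal to Proposition~\ref{prop:cont} or standardness of the spaces is needed.
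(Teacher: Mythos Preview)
Your proof is correct and is essentially identical to the paper's: both compute $\mathbf{B}^{\alpha\beta}(g\circ f)(x,r) = (g(f(x)),\alpha\beta r) = \mathbf{B}^\beta(g)(\mathbf{B}^\alpha(f)(x,r))$ and conclude by composition of Scott-continuous maps.
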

\proof
$\mathbf B^{\alpha\beta} (g \circ f)$ maps $(x, r)$ to $(g (f (x)),
\alpha\beta r) = \mathbf B^\beta (g) (f (x), \alpha r) = \mathbf
B^\beta (g) (\mathbf B^\alpha (f) (x, r))$.  Since $\mathbf B^\alpha (f)$
and $\mathbf B^\beta (g)$ are Scott-continuous by assumption, so is their
composition $\mathbf B^{\alpha\beta} (g \circ f)$.  \qed

\section{The Formal Ball Monad}
\label{sec:formal-ball-monad}

We now examine the space $\mathbf B (X, d)$, with its quasi-metric
$d^+ ((x, r), (y, s)) = \max (d (x, y) -r +s, 0)$.  The following is
the first part of Exercise~7.4.54 of \cite{JGL-topology}.  It might
seem a mistake that this does not require $X, d$ to be standard: to
dispel any doubt, we give a complete proof.
\begin{lem}
  \label{lemma:mu:cont}
  Let $X, d$ be a quasi-metric space.  The map
  $\mu_X \colon ((x, r), s) \in \mathbf B (\mathbf B (X, d), d^+)
  \mapsto (x, r+s) \in \mathbf B (X, d)$ is Scott-continuous.
\end{lem}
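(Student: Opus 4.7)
My plan is to reduce Scott-continuity of $\mu_X$ to a simple computation of the specialization ordering on $\mathbf B (\mathbf B (X, d), d^+)$, which turns out to mirror that of $\mathbf B (X, d)$ under $\mu_X$ in a precise sense. No standardness assumption is needed, precisely because the argument never requires us to manipulate the radius coordinate separately from the center.

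\medskip

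\noindent\textbf{Step 1: Compute the order.} Unfolding the definition of $d^+$ twice, for any $((x, r), s), ((y, t), u) \in \mathbf B (\mathbf B (X, d), d^+)$, one has
\[
 ((x, r), s) \leq^{(d^+)^+} ((y, t), u)
 \iff d^+ ((x, r), (y, t)) \leq s - u
 \iff d (x, y) \leq (r + s) - (t + u),
\]
where the last equivalence uses $s \geq u$ (forced by the middle inequality). Thus
\[
 ((x, r), s) \leq^{(d^+)^+} ((y, t), u) \iff \mu_X ((x, r), s) \leq^{d^+} \mu_X ((y, t), u).
\]
In particular $\mu_X$ is monotonic.

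\medskip

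\noindent\textbf{Step 2: The lifting trick.} The core observation is the specialization of Step~1 to $((y, u), 0)$: for any formal ball $(y, u) \in \mathbf B (X, d)$,
\[
 ((x, r), s) \leq^{(d^+)^+} ((y, u), 0) \iff (x, r + s) \leq^{d^+} (y, u).
\]
So every formal ball $(y, u) \in \mathbf B (X, d)$ admits a canonical lift $((y, u), 0)$ through $\mu_X$, and upper-boundedness transfers faithfully between the two levels.

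\medskip

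\noindent\textbf{Step 3: Preservation of directed suprema.} Let ${((x_i, r_i), s_i)}_{i \in I}$ be a directed family in $\mathbf B (\mathbf B (X, d), d^+)$ that admits a supremum $((x, r), s)$. By monotonicity, ${(x_i, r_i + s_i)}_{i \in I} = {(\mu_X ((x_i, r_i), s_i))}_{i \in I}$ is directed in $\mathbf B (X, d)$ and $(x, r + s) = \mu_X ((x, r), s)$ is an upper bound. To show it is the least one, let $(y, u) \in \mathbf B (X, d)$ be any upper bound, so $(x_i, r_i + s_i) \leq^{d^+} (y, u)$ for all $i$. By the equivalence of Step~2, applied in reverse, $((x_i, r_i), s_i) \leq^{(d^+)^+} ((y, u), 0)$ for all $i$, so $((y, u), 0)$ is an upper bound of the original directed family. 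Since $((x, r), s)$ is the least such, $((x, r), s) \leq^{(d^+)^+} ((y, u), 0)$, and applying Step~2 once more gives $(x, r + s) \leq^{d^+} (y, u)$, as desired.

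\medskip

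There is no real obstacle here: once one writes out the order on double formal balls, the map $\mu_X$ essentially just "adds radii", and the fact that any target formal ball $(y, u)$ lifts to $((y, u), 0)$ with $0$ excess radius is exactly what makes the supremum argument go through without needing to know how directed suprema are concretely formed in the two formal-ball dcpos. The argument works for arbitrary quasi-metric $X, d$, which explains why standardness is not required.
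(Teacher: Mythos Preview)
Your argument is correct and follows the same pattern as the paper's: verify monotonicity, then lift an arbitrary upper bound $(y,u)$ of the image family back to an upper bound of the original directed family and invoke the supremum property. Your lift $(y,u) \mapsto ((y,u),0)$ is in fact cleaner than the paper's choice $(y,t) \mapsto ((y, \max(t-s,0)), \min(s,t))$; yours works directly because the outer radius $0$ satisfies $0 \leq s_i$ for free.

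One caveat: the full biconditional displayed at the end of Step~1 is false as stated. Take $x=y$, $r=2$, $s=0$, $t=0$, $u=1$: then $\mu_X((x,2),0)=(x,2) \leq^{d^+} (y,1) = \mu_X((y,0),1)$, yet $((x,2),0) \not\leq^{(d^+)^+} ((y,0),1)$ since $s-u = -1 < 0$. In other words, $\mu_X$ is not an order embedding, and your parenthetical ``forced by the middle inequality'' only justifies $s \geq u$ in the forward direction. You never actually use the false direction --- only the forward implication (for monotonicity) and the honest equivalence of Step~2 (where the outer radius is $0$, so $s \geq 0$ is automatic) --- so the proof stands; just weaken Step~1 to the forward implication.
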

\proof The map $\mu_X$ is monotonic: if
$((x, r), s) \leq^{d^{++}} ((x', r'), s')$, then
$d^+ ((x, r), (x', r')) \leq s-s'$, meaning that
$\max (d (x, x') - r + r', 0) \leq s-s'$, and this implies
$d (x, x') \leq r-r'+s-s'$, hence $(x, r+s) \leq^{d^+} (x', r'+s')$.

We claim that $\mu_X$ is Scott-continuous.  Consider a directed family
of formal balls ${((x_i, r_i), \allowbreak s_i)}_{i \in I}$ in
$\mathbf B (\mathbf B (X, d), d^+)$ with a supremum $((x, r), s)$.  We
must show that $(x, r+s)$ is the supremum of the directed family
${(x_i, r_i+s_i)}_{i \in I}$.  It is certainly an upper bound, since
$\mu_X$ is monotonic.  Let $(y, t)$ be another upper bound of
${(x_i, r_i+s_i)}_{i \in I}$.  Let $a = \max (t-s, 0)$,
$b = t-a = \min (s, t)$.

We claim that $((y, a), b)$ is an upper bound of
${((x_i, r_i), s_i)}_{i \in I}$.  For every $i \in I$, by assumption
$(x_i, r_i+s_i) \leq^{d^+} (y, t)$, so $d (x_i, y) \leq r_i+s_i - t$.
We must check that $d^+ ((x_i, r_i), (y, a)) \leq s_i-b$, namely that
$\max (d (x_i, y) - r_i + a, 0) \leq s_i-b$, and this decomposes into
$d (x_i, y) \leq r_i - a + s_i - b$ and $s_i \geq b$.  The latter is
proved as follows: since $b = \min (s, t)$, $b \leq s$, and since
$((x, r), s)$ is an upper bound of ${((x_i, r_i), s_i)}_{i \in I}$,
$s \leq s_i$ for every $i \in I$.  The former is equivalent to
$d (x_i, y) \leq r_i + s_i - t$, since $a+b=t$, and this is our
assumption.

Since $((x, r), s)$ is the least upper bound of ${((x_i, r_i),
  s_i)}_{i \in I}$, $((x, r), s) \leq^{d^{++}} ((y, a), b)$, so $\max
(d (x, y) - r + a, 0) \leq s-b$.  In particular, $d (x, y) \leq
r+s-a-b = r+s-t$, so $(x, r+s) \leq^{d^+} (y, t)$.  This shows that
$(x, r+s)$ is the least upper bound of ${(x_i, r_i+ s_i)}_{i \in
  I}$, hence that $\mu_X$ is Scott-continuous.  \qed

\begin{lem}
  \label{lemma:dScott=Scott}
  For every quasi-metric space $X, d$:
  \begin{enumerate}
  \item the map
    $\eta_{\mathbf B (X, d)} \colon (x, r) \mapsto ((x, r), 0)$ is
    Scott-continuous;
  \item the $d^+$-Scott topology on $\mathbf B (X, d)$ coincides with
    the Scott topology.
  \end{enumerate}
\end{lem}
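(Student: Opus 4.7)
My plan is to prove part~(1) by a direct order-theoretic verification, and then to derive part~(2) from part~(1) together with the Scott-continuity of $\mu_X$ established in Lemma~\ref{lemma:mu:cont}.

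For (1), monotonicity of $\eta_{\mathbf B (X, d)}$ is immediate: if $(x, r) \leq^{d^+} (x', r')$ then $d(x, x') \leq r - r'$, so $d^+((x, r), (x', r')) = 0$, which is exactly $((x, r), 0) \leq^{d^{++}} ((x', r'), 0)$. For preservation of directed suprema I would take a directed family ${(x_i, r_i)}_{i \in I}$ with supremum $(x, r)$ in $\mathbf B (X, d)$ and show that $((x, r), 0)$ is the supremum of ${((x_i, r_i), 0)}_{i \in I}$. Given any upper bound $((y, a), b)$ of the latter, the inequalities $d^+((x_i, r_i), (y, a)) \leq 0 - b = -b$ force $b = 0$ (radii are non-negative) and then reduce to $(x_i, r_i) \leq^{d^+} (y, a)$ for every $i$. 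The universal property of $(x, r)$ then yields $(x, r) \leq^{d^+} (y, a)$, hence $((x, r), 0) \leq^{d^{++}} ((y, a), 0) = ((y, a), b)$.

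For (2), one inclusion is essentially free. The $d^+$-Scott topology on $\mathbf B (X, d)$ is by definition the coarsest making $\eta_{\mathbf B (X, d)}$ continuous into $\mathbf B (\mathbf B (X, d), d^+)$ with its Scott topology, and (1) says this map is already Scott-continuous; consequently every $d^+$-Scott open subset of $\mathbf B (X, d)$ is Scott-open. For the reverse inclusion I would exploit the retraction identity $\mu_X((x, r), 0) = (x, r)$: given a Scott-open $U \subseteq \mathbf B (X, d)$, the set $\mathcal U := \mu_X^{-1}(U)$ is Scott-open in $\mathbf B (\mathbf B (X, d), d^+)$ by Lemma~\ref{lemma:mu:cont}, and $\eta_{\mathbf B (X, d)}^{-1}(\mathcal U) = \{(x, r) \mid \mu_X((x, r), 0) \in U\} = U$, exhibiting $U$ as a $d^+$-Scott open set.

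The only real subtlety is the simultaneous extraction of $b = 0$ and $(x_i, r_i) \leq^{d^+} (y, a)$ from the upper-bound hypothesis in (1); the rest is either a routine calculation or a direct appeal to Lemma~\ref{lemma:mu:cont}. Notice that the argument nowhere requires $X, d$ to be standard, mirroring the situation for Lemma~\ref{lemma:mu:cont}.
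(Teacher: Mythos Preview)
Your proof is correct and follows essentially the same approach as the paper: a direct order-theoretic verification for part~(1), and for part~(2) the use of $\eta_{\mathbf B(X,d)}$'s Scott-continuity for one inclusion together with the identity $\mu_X \circ \eta_{\mathbf B(X,d)} = \identity{\mathbf B(X,d)}$ and Lemma~\ref{lemma:mu:cont} for the other. The paper's proof differs only in presentation, citing the retraction identity as Lemma~\ref{lemma:eta:mu} rather than recomputing it inline.
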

\proof (1) This is \cite[Exercise~7.4.53]{JGL-topology}.  Monotonicity
is clear: if $(x, r) \leq^{d^+} (y, s)$, then $d (x, y) \leq r-s$, so
$d^+ ((x, r), (y, s)) = \max (d (x, y) - r + s, 0) = 0$.  For every
directed family ${(x_i, r_i)}_{i \in I}$ in $\mathbf B (X, d)$, with
supremum $(x, r)$, by monotonicity $((x, r), 0)$ is an upper bound of
${((x_i,r_i), 0)}_{i \in I}$.  Consider another upper bound
$((y, s), t)$.  For every $i \in I$,
$((x_i, r_i), 0) \leq^{d^{++}} ((y, s), t)$, namely
$d^+ ((x_i, r_i), (y, s)) \leq 0 - t$.  That implies $t=0$, and
$d (x_i, y) - r_i + s \leq 0$.  The latter means that
$(x_i, r_i) \leq^{d^+} (y, s)$, and as this holds for every $i \in I$,
$(x, r) \leq^{d^+} (y, s)$.  Therefore
$d^+ ((x, r), (y, s)) = \max (d (x, y) - r + s, 0) = 0$, showing that
$((x, r), 0) \leq ((y, s), 0) = ((y, s), t)$.

(2) This is Exercise~7.4.54 of \cite{JGL-topology}. Using (1), every
$d^+$-Scott open subset $V$ of $\mathbf B (X, d)$ is Scott-open: by
definition, $V = \eta_{\mathbf B (X, d)}^{-1} (\mathcal V)$ for some
Scott-open subset $\mathcal V$ of $\mathbf B (\mathbf B (X, d), d^+)$,
and since $\eta_{\mathbf B (X, d)}$ is Scott-continuous, $V$ is
Scott-open.  To show the converse implication, we observe that
$\mu_X \circ \eta_{\mathbf B (X, d)}$ is the identity map, by
Lemma~\ref{lemma:eta:mu}~$(ii)$.  Then for every Scott-open subset $V$
of $\mathbf B (X, d)$, $V$ is equal to
$(\mu_X \circ \eta_{\mathbf B (X, d)})^{-1} (V)$, hence to
$\eta_{\mathbf B (X, d)}^{-1} (\mathcal V)$ where $\mathcal V$ is the
Scott-open subset $\mu_X^{-1} (V)$.  This exhibits $V$ as a
$d^+$-Scott open subset of $\mathbf B (X, d)$.  \qed

We write $\eta_X \colon X \to \mathbf B (X, d)$ for the embedding $x
\mapsto (x, 0)$.
\begin{lem}
  \label{lemma:eta:lipcont}
  Let $X, d$ be a standard quasi-metric space.  The map $\eta_X \colon
  x \mapsto (x, 0)$ is $1$-Lipschitz continuous from $X, d$ to
  $\mathbf B (X, d), d^+$.
\end{lem}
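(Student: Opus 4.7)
The plan is to apply Definition~\ref{defn:Lipcont} directly: I need to show that the map $\mathbf B^1(\eta_X) \colon \mathbf B(X, d) \to \mathbf B(\mathbf B(X, d), d^+)$ given by $(x, r) \mapsto ((x, 0), r)$ is Scott-continuous. Note that, unlike the map $\eta_{\mathbf B(X, d)}$ of Lemma~\ref{lemma:dScott=Scott}, which sends $(x, r)$ to $((x, r), 0)$, the radius $r$ now sits on the outer layer, so the two constructions should not be confused. Standardness of $X, d$ will not actually be used in the argument below; everything is driven by unfolding the two-level quasi-metric $d^{++}$.

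For monotonicity, suppose $(x, r) \leq^{d^+} (y, s)$ in $\mathbf B(X, d)$, so $d(x, y) \leq r - s$. Since $d^+((x, 0), (y, 0)) = \max(d(x, y), 0) = d(x, y)$, this yields $d^+((x, 0), (y, 0)) \leq r - s$, which is exactly $((x, 0), r) \leq^{d^{++}} ((y, 0), s)$. In passing, the same computation shows the $1$-Lipschitz property $d^+(\eta_X(x), \eta_X(y)) = d(x, y)$.

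For directed suprema, I would pick a directed family $(x_i, r_i)_{i \in I}$ with supremum $(x, r)$ in $\mathbf B(X, d)$ and check that $((x, 0), r)$ is the least upper bound of $((x_i, 0), r_i)_{i \in I}$. Monotonicity already makes it an upper bound, so the content lies in leastness. The key calculation is to unfold an inequality $((x_i, 0), r_i) \leq^{d^{++}} ((y, s), t)$ as $\max(d(x_i, y) + s, 0) \leq r_i - t$, that is, $d(x_i, y) \leq r_i - (s + t)$, which is precisely $(x_i, r_i) \leq^{d^+} (y, s+t)$ inside $\mathbf B(X, d)$. In other words, upper bounds of the image family of the form $((y, s), t)$ correspond bijectively to upper bounds of $(x_i, r_i)_i$ of the form $(y, s+t)$ in $\mathbf B(X, d)$. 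The supremum property of $(x, r)$ then gives $(x, r) \leq^{d^+} (y, s+t)$, namely $d(x, y) \leq r - s - t$, which refolds to $((x, 0), r) \leq^{d^{++}} ((y, s), t)$, as required. The only possible obstacle is this two-level bookkeeping; once the correspondence between upper bounds is noticed, Scott-continuity of $\mathbf B^1(\eta_X)$ is immediate.
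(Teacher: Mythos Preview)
Your argument is correct, and indeed standardness plays no role in it: once you reach $(x,r)\leq^{d^+}(y,s+t)$, the inequality $d(x,y)\leq r-s-t$ automatically gives $r-t\geq s\geq 0$, so the refolding to $((x,0),r)\leq^{d^{++}}((y,s),t)$ goes through without any appeal to $r=\inf_i r_i$.

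Your route differs from the paper's. The paper argues in two short strokes: $\eta_X$ is $1$-Lipschitz since $d^+((x,0),(y,0))=d(x,y)$, and $\eta_X$ is continuous by the very definition of the $d$-Scott topology together with Lemma~\ref{lemma:dScott=Scott}(2); then Proposition~\ref{prop:cont} (the implication $(2)\Rightarrow(1)$, which needs both spaces standard) upgrades ``$1$-Lipschitz and continuous'' to ``$1$-Lipschitz continuous''. By contrast, you bypass Proposition~\ref{prop:cont} entirely and verify Scott-continuity of $\mathbf B^1(\eta_X)$ by hand, using the correspondence between upper bounds $((y,s),t)$ of the image family and upper bounds $(y,s+t)$ of the original family in $\mathbf B(X,d)$ --- a trick in the same spirit as the proof of Lemma~\ref{lemma:mu:cont}. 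What this buys you is a slightly stronger statement (no standardness hypothesis needed) and a self-contained argument that does not rely on the equivalence of Proposition~\ref{prop:cont}; what the paper's approach buys is brevity, since continuity of $\eta_X$ is literally built into the definition of the $d$-Scott topology.
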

\proof It is $1$-Lipschitz, because $d^+ ((x, 0), (y, 0)) = d (x, y)$.
It is continuous by definition.  Now apply
Proposition~\ref{prop:cont}.  \qed

\begin{lem}
  \label{lemma:eta:mu}
  Let $X, d$ be a quasi-metric space.  The following relations hold:
  $(i)$
  $\mu_X \circ \eta_{\mathbf B (X, d)} = \identity {\mathbf B (X,
    d)}$; $(ii)$
  $\mu_X \circ \mathbf B^1 (\eta_X) = \identity {\mathbf B (X, d)}$;
  $(iii)$
  $\mu_X \circ \mu_{\mathbf B (X, d)} = \mu_X \circ \mathbf B^1
  (\mu_X)$; $(iv)$
  $\eta_{\mathbf B (X, d)} \circ \mu_X \geq \identity {\mathbf B
    (\mathbf B (X, d), d^+)}$.
\end{lem}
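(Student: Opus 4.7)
My plan is that all four parts reduce to routine unfolding of the definitions of $\mu_X$, $\eta_X$, $\mathbf B^1(-)$, and $d^+$; the only part with any residual content is $(iv)$, where one must check a formal ball inequality rather than equality.

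For $(i)$, I would observe that $\eta_{\mathbf B (X, d)}$ sends $(x, r)$ to $((x, r), 0)$ by definition, and applying $\mu_X$ to this adds the two radii, returning $(x, r + 0) = (x, r)$. For $(ii)$, the same style of calculation: $\mathbf B^1 (\eta_X)$ applies $\eta_X$ to the center while leaving the radius untouched (since the scalar is $1$), so $\mathbf B^1 (\eta_X) (x, r) = ((x, 0), r)$; then $\mu_X$ yields $(x, 0 + r) = (x, r)$.

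For $(iii)$, I would take an arbitrary element $(((x, r), s), t)$ of $\mathbf B (\mathbf B (\mathbf B (X, d), d^+), d^{++})$ and evaluate both sides explicitly. The left side first collapses the outer two levels: $\mu_{\mathbf B (X, d)} (((x, r), s), t) = ((x, r), s + t)$, and then $\mu_X$ gives $(x, r + s + t)$. The right side first collapses the inner two levels through $\mathbf B^1 (\mu_X)$, giving $((x, r + s), t)$, and then $\mu_X$ again gives $(x, r + s + t)$. Associativity of addition in $\Rp$ finishes it.

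The only nontrivial point is $(iv)$. Here I compute $\eta_{\mathbf B (X, d)} (\mu_X ((x, r), s)) = \eta_{\mathbf B (X, d)} (x, r + s) = ((x, r + s), 0)$, and I must verify $((x, r), s) \leq^{d^{++}} ((x, r + s), 0)$, i.e.\ that $d^+ ((x, r), (x, r + s)) \leq s - 0 = s$. Unfolding the definition of $d^+$ gives $\max (d (x, x) - r + (r + s), 0) = \max (s, 0) = s$, so the inequality becomes $s \leq s$, which holds. The main potential pitfall is remembering that smaller radius corresponds to higher in the $\leq^{d^+}$ order, so that $\eta_{\mathbf B (X, d)} \circ \mu_X$ strictly increases the outer radius downward (from $s$ to $0$) while absorbing it into the inner ball, producing an inequality, not an identity; this is precisely the Kock--Zöberlein condition that keeps $\mathbf B$ from being an ordinary monad and yields its KZ structure.
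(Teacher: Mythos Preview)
Your proof is correct and follows the same approach as the paper: all four parts are verified by direct unfolding of the definitions, with the computation for $(iv)$ reducing to $d^+((x,r),(x,r+s)) = \max(s,0) = s \leq s$, exactly as in the paper. One small remark on your closing commentary: the KZ inequality in $(iv)$ does not prevent $\mathbf B$ from being an ordinary monad---it \emph{is} a monad (that is what $(i)$--$(iii)$ establish); the inequality is an \emph{additional} feature that makes it a left KZ-monad.
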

\proof $(i)$
$\mu_X (\eta_{\mathbf B (X, d)} (x, r)) = \mu_X ((x, r), 0) = (x, r+0)
= (x, r)$.

$(ii)$
$\mu_X (\mathbf B^1 (\eta_X) (x, r)) = \mu_X (\eta_X (x), r) = \mu_X
((x, 0), r) = (x, 0+r) = (x, r)$.

$(iii)$
$\mu_X (\mu_{\mathbf B (X, d)} (((x, r), s), t)) = \mu_X ((x, r), s+t)
= (x, r+s+t)$, while
$\mu_X (\mathbf B^1 (\mu_X) (((x, r), \allowbreak s), t)) = \mu_X
(\mu_X ((x, r), s), t) = \mu_X ((x, r+s), t) = (x, r+s+t)$.

$(iv)$
$\eta_{\mathbf B (X, d)} (\mu_X ((x, r), s)) = \eta_{\mathbf B (X, d)}
(x, r+s) = ((x, r+s), 0)$.  We must check that this is larger than or
equal to $((x, r), s)$, namely that
$d^+ ((x, r), (x, r+s)) \leq s - 0$.  Since
$d^+ ((x, r), (x, r+s)) = \max (d (x, x) -r +r+s, 0) = s$, this is
clear.  \qed

A \emph{monad} on a category is the data of an endofunctor $T$, two
natural transformations $\eta \colon \identity\relax \to T$ and
$\mu \colon T^2 \to T$ satisfying:
$\mu_X \circ \eta_{TX} = \identity {TX}$,
$\mu_X \circ T \eta_X = \identity {TX}$, and
$\mu_X \circ \mu_{TX}= \mu_X \circ T (\mu_X)$.  The first three
statements of Lemma~\ref{lemma:eta:mu} seem to indicate that
$T = \mathbf B$ gives rise to a monad, where the functor $\mathbf B$
maps every quasi-metric space $X, d$ to the quasi-metric space
$\mathbf B (X, d), d^+$, and every $1$-Lipschitz continuous map
$f \colon X, d \to Y, \partial$ to
$\mathbf B^1 (f) \colon (x, r) \mapsto (f (x), r)$.

The devil hides in details, one says.  We must work in a category of
standard, not arbitrary quasi-metric spaces, for $\eta_X$ to be a
morphism (see Lemma~\ref{lemma:eta:lipcont}).  Then we must show that
$\mathbf B$ maps standard spaces to standard spaces.  This is done in
several steps.

\begin{lem}
  \label{lemma:sup=>dlim}
  Let $X, d$ be a standard quasi-metric space, and let
  ${(x_i, r_i)}_{i \in I, \sqsubseteq}$ be a monotone net of formal
  balls on $X, d$ with supremum $(x, r)$.  Then
  $r = \inf_{i \in I} r_i$ and $x$ is the $d$-limit of
  ${(x_i)}_{i \in I, \sqsubseteq}$.
\end{lem}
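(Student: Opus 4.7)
The strategy is to derive both halves of the conclusion from the Scott-continuity of suitable auxiliary maps on the dcpo $\mathbf{B}(X, d)$. The monotone net $(x_i, r_i)_{i \in I, \sqsubseteq}$ shares its supremum with the directed set of its values in $\mathbf{B}(X, d)$, so I treat it as a directed family throughout. The equation $r = \inf_{i \in I} r_i$ is then immediate from the very definition of a standard quasi-metric space, recalled in Section~\ref{sec:quasi-metric-spaces}, which asserts precisely this for any directed family of formal balls admitting a supremum.

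For the $d$-limit claim, fix an arbitrary $y \in X$. By Lemma~\ref{lemma:d(_,x)} the map $d(\_, y) \colon X \to \creal$ is $1$-Lipschitz continuous, hence by Lemma~\ref{lemma:f'}(2) the auxiliary map $f'(z, t) = d(z, y) - t$ from $\mathbf{B}(X, d)$ to $\real \cup \{+\infty\}$ is Scott-continuous. Applied to the directed family $(x_i, r_i)$ with supremum $(x, r)$ this yields
$$d(x, y) - r \;=\; \sup_{i \in I}\bigl(d(x_i, y) - r_i\bigr),$$
which rearranges to $d(x, y) = \sup_{i \in I}\bigl(d(x_i, y) - (r_i - r)\bigr)$. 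By the first half of the lemma, the shifted net $(x_i, r_i - r)_{i \in I, \sqsubseteq}$ is a Cauchy-weighting of the underlying net $(x_i)_{i \in I, \sqsubseteq}$ (monotonicity transports, since the shift by $-r$ preserves $\leq^{d^+}$, and $\inf_i (r_i - r) = 0$), and the displayed identity is exactly the equivalent characterization of ``$x$ is the $d$-limit of $(x_i)_{i \in I, \sqsubseteq}$'' recalled in Section~\ref{sec:quasi-metric-spaces} from \cite[Lemma~7.4.9]{JGL-topology}.

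I expect no real obstacle beyond some bookkeeping with infinities when rearranging the last display: this is legitimate because $r \in \Rp$ is a finite non-negative real, so adding or subtracting $r$ on both sides is unambiguous regardless of whether $d(x, y)$ or the supremum on the right equals $+\infty$. The only mild conceptual point is that the statement ``$x$ is the $d$-limit of $(x_i)_{i \in I, \sqsubseteq}$'' must be interpreted with respect to the Cauchy-weighting $(r_i - r)$ rather than the original weights $(r_i)$, and it is precisely the standardness of $X, d$ that makes this reweighting compatible with the supremum structure.
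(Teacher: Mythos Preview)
Your proof is correct and takes a genuinely different route from the paper's. The paper establishes the key equality $d(x,y) = \sup_{i}(d(x_i,y) - r_i + r)$ by hand: one inequality via the triangle inequality (using $d(x_i,x)\le r_i-r$), the other by contradiction, assuming a strict gap $s < d(x,y)$, using standardness to shift the family to $(x_i, r_i-r+s)$, and deriving $(x,s)\le^{d^+}(y,0)$ from the supremum property. You instead recognize that this equality is nothing but the Scott-continuity of $(z,t)\mapsto d(z,y)-t$, which is already packaged in Lemma~\ref{lemma:d(_,x)} together with Lemma~\ref{lemma:f'}(2), and apply it in one line. Your argument is shorter and more modular, exploiting machinery Section~\ref{sec:quasi-metric-spaces} has already built; the paper's argument is self-contained and makes the role of standardness visible at the point of use rather than hidden inside the cited lemmas. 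There is no circularity: Lemma~\ref{lemma:d(_,x)} rests on Proposition~\ref{prop:cont} and Lemma~\ref{lemma:hole}, neither of which invokes the present lemma.
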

\proof This is similar to the proof of
\cite[Lemma~7.4.26]{JGL-topology}, which assumes that
$\mathbf B (X, d)$ is a dcpo, whereas we only assume that $X, d$ is
standard.  Since $X, d$ is standard, $r = \inf_{i \in I} r_i$.

Since $(x_i, r_i) \leq^{d^+} (x, r)$ for each $i \in I$, $d (x_i, x)
\leq r_i - r$.  For every $y \in X$, $d (x_i, y) \leq d (x_i, x) + d
(x, y) \leq r_i - r + d (x, y)$.  Taking suprema over $i \in I$,
$\sup_{i \in I} (d (x_i, y) - r_i + r) \leq d (x, y)$.

Assume that the inequality were strict.  Let
$s = \sup_{i \in I} (d (x_i, y) - r_i + r) < d (x, y)$.  In
particular, $s < +\infty$.  For every $i \in I$,
$d (x_i, y) - r_i + r \leq s$, so $d (x_i, y) \leq r_i-r+s$, i.e.,
$(x_i, r_i-r+s) \leq^{d^+} (y, 0)$.  Since $X, d$ is standard, and
$(x, r)$ is the supremum of ${(x_i, r_i)}_{i \in I}$, $(x, s)$ is the
supremum of ${(x_i, r_i-r+s)}_{i \in I}$.  It follows that
$(x, s) \leq^{d^+} (y, 0)$, that is, $d (x, y) \leq s$, which is
impossible.  \qed

This has the following interesting consequence (which we shall not
use, however).  Standardness says that if
${(x_i, r_i)}_{i \in I, \sqsubseteq}$ and
${(x_i, s_i)}_{i \in I, \sqsubseteq}$ are two monotone nets of formal
balls with the same underlying net ${(x_i)}_{i \in I, \sqsubseteq}$,
then one of them has a supremum if and only if the other one has,
\emph{provided} that $r_i$ and $s_i$ differ by a constant.  In that
case, those suprema are of the form $(x, r)$ and $(x, s)$ for the same
point $x$ (and where $r$ and $s$ differ by the same constant).  The
following lemma shows that this holds without any condition on $r_i$
and $s_i$. That might be used to (re)define the notion of $d$-limit
$x$ of a net ${(x_i)}_{i \in I, \sqsubseteq}$, as the center of the
supremum of ${(x_i, r_i)}_{i \in I, \sqsubseteq}$, for some family of
radii $r_i$ that make ${(x_i, r_i)}_{i \in I, \sqsubseteq}$ a monotone
net of formal balls.  The following lemma shows that that definition
is independent of the chosen radii $r_i$, assuming just standardness.
\begin{prop}
  \label{prop:std:dlim}
  Let $X, d$ be a standard quasi-metric space.  Let
  ${(x_i, r_i)}_{i \in I, \sqsubseteq}$ and
  ${(x_i, s_i)}_{i \in I, \sqsubseteq}$ be two monotone nets of formal
  balls with the same underlying net ${(x_i)}_{i \in I, \sqsubseteq}$.
  If ${(x_i, r_i)}_{i \in I, \sqsubseteq}$ has a supremum $(x, r)$,
  then $r = \inf_{i \in I} r_i$ and
  ${(x_i, s_i)}_{i \in I, \sqsubseteq}$ also has a supremum, which is
  equal to $(x, s)$, where $s = \inf_{i \in I} s_i$.
\end{prop}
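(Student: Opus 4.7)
From the hypothesis that $(x_i, s_i)_{i \in I, \sqsubseteq}$ is a monotone net of formal balls, I first observe that $(x_i, s_i) \leq^{d^+} (x_j, s_j)$ for $i \sqsubseteq j$ forces $s_j \leq s_i$, so $s := \inf_{i \in I} s_i$ is well-defined as the limit of a decreasing net. Applying Lemma~\ref{lemma:sup=>dlim} to the hypothesis on $(x_i, r_i)_{i \in I, \sqsubseteq}$ immediately gives $r = \inf_i r_i$ and tells us that $x$ is the $d$-limit of $(x_i)_{i \in I, \sqsubseteq}$, that is, $d(x, y) = \limsup_{i \in I, \sqsubseteq} d(x_i, y)$ for every $y \in X$. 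These two consequences of the hypothesis will carry the whole argument; it remains to prove that $(x, s)$ is the supremum of $(x_i, s_i)_{i \in I, \sqsubseteq}$.

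\textbf{Step 1: $(x, s)$ is an upper bound.} I will show that $d(x_i, x) \leq s_i - s$ for every $i \in I$. Fix $i$ and $\epsilon > 0$. Using $r = \inf_j r_j$ together with the directedness of $I$, I can choose $j \sqsupseteq i$ with $r_j < r + \epsilon$. The $r$-supremum assumption gives $(x_j, r_j) \leq^{d^+} (x, r)$, so $d(x_j, x) \leq r_j - r < \epsilon$; and monotonicity of the $s$-net combined with $s_j \geq s$ gives $d(x_i, x_j) \leq s_i - s_j \leq s_i - s$. The triangle inequality then yields $d(x_i, x) < (s_i - s) + \epsilon$, and letting $\epsilon \to 0$ closes this step.

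\textbf{Step 2: $(x, s)$ is the least upper bound.} Let $(y, t)$ be any other upper bound of $(x_i, s_i)_{i \in I, \sqsubseteq}$, so that $d(x_i, y) \leq s_i - t$ for every $i$. For every $i_0 \in I$ and every $j \sqsupseteq i_0$ we have $d(x_j, y) \leq s_j - t \leq s_{i_0} - t$, whence $\sup_{j \sqsupseteq i_0} d(x_j, y) \leq s_{i_0} - t$. Taking the infimum over $i_0$ yields $d(x, y) = \limsup_{i \in I, \sqsubseteq} d(x_i, y) \leq s - t$, i.e., $(x, s) \leq^{d^+} (y, t)$. Combined with Step 1, this shows that $(x, s)$ is the supremum of $(x_i, s_i)_{i \in I, \sqsubseteq}$, and in particular $s = \inf_i s_i$, as required.

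\textbf{Main obstacle.} The delicate step is Step 1. The $r$-supremum hypothesis only controls $d(x_j, x)$ by $r_j - r$, whereas I need a bound in terms of $s_j - s$; the remedy is to interpose a carefully chosen intermediate center $x_j$ whose $r$-weight is close to $r$, and to route $d(x_i, x)$ through it, using $s$-monotonicity for the $(x_i, x_j)$ leg and the $r$-data for the $(x_j, x)$ leg. Step 2, by contrast, is a direct consequence of the $\limsup$ characterization of the $d$-limit, which is weight-independent; standardness is used only through Lemma~\ref{lemma:sup=>dlim}, which is what grants access to that characterization in the first place.
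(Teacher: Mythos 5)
Your proof is correct and follows essentially the same route as the paper: both arguments hinge on Lemma~\ref{lemma:sup=>dlim} to obtain $r=\inf_{i\in I}r_i$ and the $d$-limit property of $x$. The only difference is that the paper then simply cites Lemma~7.4.25 of \cite{JGL-topology} to conclude, whereas you verify the least-upper-bound property directly --- your Step~1 cleverly routes $d(x_i,x)$ through an intermediate $x_j$ with $r_j$ close to $r$, and your Step~2 is exactly the $\limsup$ computation that the cited lemma encapsulates.
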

\proof If ${(x_i, r_i)}_{i \in I, \sqsubseteq}$ has a supremum
$(x, r)$, then $r = \inf_{i \in I} r_i$ because $X, d$ is standard.
By Lemma~\ref{lemma:sup=>dlim}, $x$ is the $d$-limit of
${(x_i)}_{i \in I, \sqsubseteq}$.  Lemma~7.4.25 of \cite{JGL-topology}
states that if ${(x_i, s_i)}_{i \in I, \sqsubseteq}$ is a monotone net
of formal balls and if ${(x_i)}_{i \in I, \sqsubseteq}$ has a
$d$-limit $x$, then $(x, s)$ is the supremum of
${(x_i, s_i)}_{i \in I, \sqsubseteq}$, where $s = \inf_{i \in I} s_i$.
\qed

\begin{prop}
  \label{prop:B:std}
  For every standard quasi-metric space $X, d$,
  $\mathbf B (X, d), d^+$ is standard.
\end{prop}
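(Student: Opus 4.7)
The plan is to reduce standardness of $\mathbf B (X, d), d^+$ to standardness of $X, d$ itself by exploiting the Scott-continuous map $\mu_X$ of Lemma~\ref{lemma:mu:cont}. Given a directed family $((x_i, r_i), s_i)_{i \in I}$ in $\mathbf B (\mathbf B (X, d), d^+)$, monotonicity of $\mu_X$ yields a directed family $(x_i, r_i + s_i)_{i \in I}$ in $\mathbf B (X, d)$. I would set $\sigma := \inf_{i \in I} s_i$ (well-defined since $i \sqsubseteq j$ forces $s_i \geq s_j$) and $\tau := \inf_{i \in I} (r_i + s_i)$, and note that $\tau \geq \sigma$ because every $r_i \geq 0$.

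The heart of the argument is the following claim: $((x_i, r_i), s_i)_{i \in I}$ has a supremum in $\mathbf B (\mathbf B (X, d), d^+)$ if and only if $(x_i, r_i + s_i)_{i \in I}$ has one in $\mathbf B (X, d)$, and when the latter supremum is $(x, \tau)$ (standardness of $X, d$ forces its radius to be $\tau$), the former is $((x, \tau - \sigma), \sigma)$. The ``only if'' half is immediate from Scott-continuity of $\mu_X$. For the ``if'' half I would directly verify the upper bound and least upper bound properties of $((x, \tau - \sigma), \sigma)$. The upper-bound inequality $d^+ ((x_i, r_i), (x, \tau - \sigma)) \leq s_i - \sigma$ unfolds to $(x_i, r_i + s_i) \leq^{d^+} (x, \tau)$, which is given. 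For the least-upper-bound part, take an arbitrary upper bound $((y, a), b)$; the defining constraints force $b \leq \sigma$ together with $(x_i, r_i + s_i) \leq^{d^+} (y, a + b)$ for each $i$, and passing to the supremum in $\mathbf B (X, d)$ gives $d (x, y) \leq \tau - a - b$, which rearranges to $d^+ ((x, \tau - \sigma), (y, a)) \leq \sigma - b$, as desired.

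Once this formula for the supremum is in hand, the full statement of standardness drops out mechanically. The shift $s_i \mapsto s_i + t$ preserves the defining relations of the family, translates via $\mu_X$ to the shift $r_i + s_i \mapsto r_i + s_i + t$ on the downstairs family, and sends $\sigma$ to $\sigma + t$ and $\tau$ to $\tau + t$. Standardness of $X, d$, combined with the claim above applied separately to the original and shifted families, then tells us that $((x_i, r_i), s_i)$ has a supremum iff $((x_i, r_i), s_i + t)$ does, and that the supremum shifts from $((x, \tau - \sigma), \sigma)$ to $((x, \tau - \sigma), \sigma + t)$, which is exactly the content of standardness. The identity $s = \sigma = \inf_{i \in I} s_i$ for the sup-radius is then a free byproduct, by uniqueness of suprema.

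The main obstacle I expect is the careful bookkeeping in the least-upper-bound verification: one must confirm that every radius appearing is genuinely nonnegative (notably $\tau - \sigma \geq 0$ from $r_i \geq 0$, and $\sigma - b \geq 0$ from the first constraint on $b$) so that all the formal balls being manipulated are legitimate, and so that the $\max$ inside the definition of $d^+$ may be dropped at the appropriate moment.
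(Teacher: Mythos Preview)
Your proposal is correct and follows essentially the same route as the paper: both use Scott-continuity of $\mu_X$ to transport the directed family down to $\mathbf B(X,d)$, identify the supremum there via standardness of $X,d$, and then verify by hand that $((x,\tau-\sigma),\sigma)$ is the least upper bound upstairs. Your packaging as a clean existence biconditional (upstairs $\Leftrightarrow$ downstairs, with explicit formula) is slightly tidier than the paper's organization, which first assumes an upstairs supremum $((x,r),s)$ and argues separately that $s=\inf_i s_i$ before handling the shift, but the mathematical content is the same.
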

\proof Let ${((x_i, r_i), s_i)}_{i \in I}$ be a directed family of
formal balls on $\mathbf B (X, d), d^+$.  This is a monotone net,
provided we define $\sqsubseteq$ by $i \sqsubseteq j$ if and only if
$((x_i, r_i), s_i) \leq^{d^{++}} ((x_j, r_j), s_j)$.

Assume that ${((x_i, r_i), s_i)}_{i \in I}$ has a supremum
$((x, r), s)$.  Since $\mu_X$ is Scott-continuous
(Lemma~\ref{lemma:mu:cont}), ${(x_i, r_i+s_i)}_{i \in I}$ is a
directed family with supremum $(x, r+s)$.  We use the fact that $X, d$
is standard and apply Lemma~\ref{lemma:sup=>dlim} to obtain that
$r+s = \inf_{i \in I} (r_i + s_i)$ and that $x$ is the $d$-limit of
${(x_i)}_{i \in I, \sqsubseteq}$.

Since ${((x_i, r_i), s_i)}_{i \in I}$ is directed, ${(s_i)}_{i \in I}$
is filtered.  Let $s_\infty = \inf_{i \in I} s_i$.  Since $\mu_X$ is
Scott-continuous hence monotonic, ${(x_i, r_i+s_i)}_{i \in I}$ is also
directed, so ${(r_i+s_i)}_{i \in I}$ is filtered.  Its infimum is
$r+s$.  Let
$r_\infty = \inf_{i \in I} (r_i + s_i) - s_\infty = r+s-s_\infty$.

Consider $((x, r_\infty), s_\infty)$.  For every $i \in I$, we claim
that $((x_i, r_i), s_i) \leq^{d^{++}} ((x, r_\infty), s_\infty)$.  For
that, we compute
$d^+ ((x_i, r_i), (x, r_\infty)) = \max (d (x_i, x) - r_i + r_\infty,
0)$, and we check that this is less than or equal to $s_i - s_\infty$.
Since $s_i - s_\infty \geq 0$ by definition of $s_\infty$, it remains
to verify that $d (x_i, x) - r_i + r_\infty \leq s_i - s_\infty$.
Using the equality $r_\infty + s_\infty = r+s$, obtained as a
consequence of the definition of $r_\infty$, we have to verify the
equivalent inequality $d (x_i, x) \leq r_i + s_i - r - s$.  That one
is obvious, since $(x_i, r_i+s_i)$ is below the supremum $(x, r+s)$.

Since $((x_i, r_i), s_i) \leq^{d^{++}} ((x, r_\infty), s_\infty)$ for
every $i \in I$,
$((x, r), s) \leq^{d^{++}} ((x, r_\infty), s_\infty)$.  However, we
claim that the reverse inequality also holds.  Indeed, we start by
observing that $s \leq s_i$ for every $i \in I$, since
$((x_i, r_i), s_i) \leq^{d^{++}} ((x, r), s)$.  Hence
$s \leq s_\infty$.  Since $r_\infty = r+s-s_\infty$,
$r_\infty \leq r$.  Therefore
$d^+ ((x, r_\infty), (x, r)) = \max (d (x, x) - r_\infty + r, 0) = r -
r_\infty$, and the latter is equal to, hence less than or equal to
$s_\infty-s$.  This means that
$((x, r_\infty), s_\infty) \leq^{d^{++}} ((x, r), s)$.

Having inequalities in both directions, we conclude that $((x, r), s)
= ((x, r_\infty), s_\infty)$.  This entails the important fact that $s
= s_\infty = \inf_{i \in I} s_i$.

We use that to show that for any $a \geq -s$, $((x, r), s+a)$
is the supremum of ${((x_i, r_i), s_i+a)}_{i \in I}$.  Since
$((x_i, r_i), s_i) \leq^{d^{++}} ((x, r), s)$, we have
$((x_i, r_i), s_i+a) \leq^{d^{++}} ((x, r), s+a)$.  Now consider any
other upper bound $((x', r'), s')$ of
${((x_i, r_i), s_i+a)}_{i \in I}$.  We have $s' \leq s_i + a$ for
every $i \in I$, whence using the equality $s = \inf_{i \in I} s_i$,
$s' \leq s+a$.  We wish to check that
$((x, r), s+a) \leq^{d^{++}} ((x', r'), s')$, equivalently
$d^+ ((x, r), (x', r')) \leq s+a-s'$, and that reduces to $s+a-s'$
(which we have just shown) and $d (x, x') \leq r+s+a-r'-s'$.  In order
to establish the latter, recall that $(x, r+s)$ is the supremum of
${(x_i, r_i+s_i)}_{i \in I}$.  Since $X, d$ is standard (and since
$a \geq -s \geq -r-s$), $(x, r+s+a)$ is also the supremum of
${(x_i, r_i+s_i+a)}_{i \in I}$.  Since $\mu_X$ is monotonic,
$(x', r'+s')$ is an upper bound of ${(x_i, r_i+s_i+a)}_{i \in I}$, so
$(x, r+s+a) \leq^{d^+} (x', r'+s')$, or equivalently
$d (x, x') \leq r+s+a-r'-s'$: that is exactly what we wanted to prove.

Let us recap: for every directed family
${((x_i, r_i), s_i)}_{i \in I}$ with supremum $((x, r), s)$, then
$s = \inf_{i \in I} s_i$ and for every $a \geq -s$, $((x, r), s+a)$ is
the supremum of ${((x_i, r_i), s_i+a)}_{i \in I}$.  This certainly
implies that if ${((x_i, r_i), s_i)}_{i \in I}$ has a supremum, then
${((x_i, r_i), s_i+a)}_{i \in I}$ also has one for every $a \in \Rp$.
Conversely, if ${((x_i, r_i), s_i+a)}_{i \in I}$ has a supremum for
some $a \in \Rp$, then it is of the form $((x, r), s+a)$ where
$s = \inf_{i \in I} s_i$, and for every $a' \geq -s-a$,
$((x, r), s+a+a')$ is the supremum of
${((x_i, r_i), s_i+a+a')}_{i \in I}$.  In particular, for $a' = -a$,
${((x_i, r_i), s_i)}_{i \in I}$ has a supremum.  \qed

\begin{lem}
  \label{lemma:mu:lipcont}
  Let $X, d$ be a standard quasi-metric space.  The map
  $\mu_X \colon ((x, r), s) \mapsto (x, r+s)$ is $1$-Lipschitz
  continuous from $\mathbf B (\mathbf B (X, d), d^+), d^{++}$ to
  $\mathbf B (X, d), d^+$.
\end{lem}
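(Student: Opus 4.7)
The plan is to invoke Proposition~\ref{prop:cont}, which reduces $1$-Lipschitz continuity to the conjunction of the $1$-Lipschitz inequality and topological continuity, provided that both source and target are standard.

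First I would observe that both spaces are standard: $\mathbf B (X, d), d^+$ is standard by Proposition~\ref{prop:B:std} applied to the standard space $X, d$, and then $\mathbf B (\mathbf B (X, d), d^+), d^{++}$ is standard by a second application of Proposition~\ref{prop:B:std}. So the hypotheses of Proposition~\ref{prop:cont} will be available.

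Next I would verify the $1$-Lipschitz inequality directly, namely
\[
  d^+ \bigl(\mu_X ((x, r), s),\, \mu_X ((x', r'), s')\bigr) \leq d^{++} \bigl(((x, r), s),\, ((x', r'), s')\bigr).
\]
Writing $a = d (x, x') - r + r'$ and $b = s' - s$, the left-hand side equals $\max (a + b, 0)$ while the right-hand side equals $\max (\max (a, 0) + b, 0)$. A short case analysis on the sign of $a$ shows the desired inequality: if $a \geq 0$ the two sides are equal; if $a < 0$ then $a + b \leq b \leq \max (b, 0)$ and $0 \leq \max (b, 0)$, so the left-hand side is bounded by $\max (b, 0)$, which is the right-hand side. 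This is the only computation in the proof and is the step I expect to take the most care, though it is still routine.

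For topological continuity, I would combine Lemma~\ref{lemma:mu:cont}, which already establishes that $\mu_X$ is Scott-continuous as a map between the underlying posets, with Lemma~\ref{lemma:dScott=Scott}(2), which identifies the $d^+$-Scott topology on $\mathbf B (X, d)$ with its Scott topology, and likewise the $d^{++}$-Scott topology on $\mathbf B (\mathbf B (X, d), d^+)$ with its Scott topology. Hence $\mu_X$ is continuous from the $d^{++}$-Scott topology to the $d^+$-Scott topology. Together with the $1$-Lipschitz inequality and standardness of both spaces, Proposition~\ref{prop:cont} then yields that $\mu_X$ is $1$-Lipschitz continuous in the sense of Definition~\ref{defn:Lipcont}, completing the proof.
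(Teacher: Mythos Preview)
Your proposal is correct and follows essentially the same route as the paper's own proof: verify the $1$-Lipschitz inequality by direct computation, invoke Lemma~\ref{lemma:mu:cont} together with Lemma~\ref{lemma:dScott=Scott}(2) for topological continuity, establish standardness of both spaces via Proposition~\ref{prop:B:std}, and conclude by Proposition~\ref{prop:cont}. The only cosmetic difference is that the paper handles the $1$-Lipschitz inequality by expanding $d^{++}$ as $\max(d(x,x')-r+r'-s+s',\,-s+s',\,0)$ and reading off that it dominates $d^+(\mu_X(\cdot),\mu_X(\cdot))$, whereas you phrase the same fact as a short case analysis on the sign of $a$.
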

\proof
We first check that $\mu_X$ is $1$-Lipschitz:
\begin{eqnarray*}
  d^+ (\mu_X ((x, r), s), \mu_X ((x', r'), s'))
  & = & d^+ ((x, r+s), (x', r'+s')) \\
  & = & \max (d (x, x') - r - s + r' + r', 0),
\end{eqnarray*}
while
\begin{eqnarray*}
  d^{++} (((x, r), s), ((x', r'), s'))
  & = & \max (d^+ ((x, r), (x', r')) - s + s', 0) \\
  & = & \max (\max (d (x, x') - r + r', 0) - s + s', 0) \\
  & = & \max (d (x, x') - r + r' - s + s', -s+s', 0) \\
  & = & \max (d^+
        (\mu_X ((x, r), s), \mu_X ((x', r'), s')), -s+s'),
\end{eqnarray*}
which implies
$d^{++} (((x, r), s), ((x', r'), s')) \geq d^+ (\mu_X ((x, r), s),
\mu_X ((x', r'), s'))$.

Next, $\mu_X$ is Scott-continuous from
$\mathbf B (\mathbf B (X, d), d^+)$ to $\mathbf B (X, d)$ by
Lemma~\ref{lemma:mu:cont}.  The Scott topology on the former coincides
with its $d^{++}$-Scott topology and the Scott topology on the latter
coincides with its $d^+$-Scott topology, by
Lemma~\ref{lemma:dScott=Scott}~(2).  Hence $\mu_X$ is continuous from
$\mathbf B (\mathbf B (X, d), d^+)$ to $\mathbf B (X, d)$, with their
$d^{++}$-Scott, resp.\ $d^+$-Scott topologies.

Since $\mathbf B (X, d), d^+$ and
$\mathbf B (\mathbf B (X, d), d^+), d^{++}$ are standard, by
Proposition~\ref{prop:B:std}, we can apply the (2) $\limp$ (1)
direction of Proposition~\ref{prop:cont}, and we obtain that $\mu_X$
is $1$-Lipschitz continuous.  \qed

\begin{lem}
  \label{lemma:Balpha:f}
  Let $X, d$ and $Y, \partial$ be two standard quasi-metric spaces,
  and $f$ be an $\alpha$-Lipschitz continuous map from $X, d$ to
  $Y, \partial$, with $\alpha > 0$.  Then $\mathbf B^\alpha (f)$ is
  $\alpha$-Lipschitz continuous from $\mathbf B (X, d), d^+$ to
  $\mathbf B (Y, \partial), \partial^+$.
\end{lem}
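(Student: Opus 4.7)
The plan is to apply Proposition~\ref{prop:cont} in the ``(2) $\limp$ (1)'' direction. Both $\mathbf B (X, d), d^+$ and $\mathbf B (Y, \partial), \partial^+$ are standard by Proposition~\ref{prop:B:std}, so it will suffice to verify that $\mathbf B^\alpha (f)$ is $\alpha$-Lipschitz with respect to $d^+$ and $\partial^+$, and continuous from $\mathbf B (X, d)$ equipped with its $d^+$-Scott topology to $\mathbf B (Y, \partial)$ equipped with its $\partial^+$-Scott topology.

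First I would handle the $\alpha$-Lipschitz property by a short direct calculation. Since $f$ is $\alpha$-Lipschitz continuous, $\mathbf B^\alpha (f)$ is in particular monotonic, which (as noted right after the definition of $\mathbf B^\alpha$) is equivalent to $f$ being $\alpha$-Lipschitz, i.e.\ $\partial (f (x), f (x')) \leq \alpha\, d (x, x')$. Then
\[
\partial^+ (\mathbf B^\alpha (f) (x, r), \mathbf B^\alpha (f) (x', r')) = \max (\partial (f (x), f (x')) - \alpha r + \alpha r', 0),
\]
which is bounded above by $\max (\alpha d (x, x') - \alpha r + \alpha r', 0) = \alpha \max (d (x, x') - r + r', 0) = \alpha\, d^+ ((x, r), (x', r'))$, as required.

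For continuity, I would invoke Lemma~\ref{lemma:dScott=Scott}~(2): the $d^+$-Scott topology on $\mathbf B (X, d)$ agrees with the Scott topology of the poset $\mathbf B (X, d)$, and likewise on the $Y$-side. So showing that $\mathbf B^\alpha (f)$ is continuous for the $d^+$-Scott and $\partial^+$-Scott topologies reduces to showing that it is Scott-continuous between the two posets of formal balls. But this is precisely the hypothesis that $f$ is $\alpha$-Lipschitz continuous in the sense of Definition~\ref{defn:Lipcont}.

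There is no real obstacle here; the lemma is essentially a bookkeeping step that lines up the definition of $\alpha$-Lipschitz continuity on the base spaces with the same notion transported to the formal ball spaces. The only point that requires attention is making the $\alpha$-Lipschitz inequality for $d^+$ explicit, so as to apply Proposition~\ref{prop:cont} cleanly; everything else is supplied by Lemma~\ref{lemma:dScott=Scott}~(2), Proposition~\ref{prop:B:std}, and Definition~\ref{defn:Lipcont}.
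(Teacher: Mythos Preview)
Your proposal is correct and follows essentially the same approach as the paper: verify the $\alpha$-Lipschitz inequality for $d^+$ and $\partial^+$ directly, use Lemma~\ref{lemma:dScott=Scott}~(2) to identify the $d^+$-Scott topology with the Scott topology so that Scott-continuity of $\mathbf B^\alpha(f)$ (which is the hypothesis) gives continuity, and then invoke Proposition~\ref{prop:B:std} and Proposition~\ref{prop:cont} to conclude.
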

\proof
We verify that $\mathbf B^\alpha (f)$ is $\alpha$-Lipschitz:
\begin{eqnarray*}
  \partial^+ (\mathbf B^\alpha (f) (x, r), \mathbf B^\alpha (y, s))
  & = & \partial^+ ((f (x), \alpha r), (f (y), \alpha s)) \\
  & = & \max (\partial (f (x), f (y)) - \alpha r + \alpha s, 0) \\
  & \leq & \max (\alpha d (x, y) - \alpha r + \alpha s, 0) \\
  & = & \alpha \max (d (x, y) - r +  s, 0) = \alpha d^+ ((x, r), (y, s)).
\end{eqnarray*}
By definition of $\alpha$-Lipschitz continuity, $\mathbf B^\alpha (f)$
is Scott-continuous.  Since the Scott topology on $\mathbf B (X, d)$
coincides with the $d^+$-Scott topology, and similarly for $Y$, thanks
to Lemma~\ref{lemma:dScott=Scott}~(2), $\mathbf B^\alpha (f)$ is
continuous with respect to the $d^+$-Scott and $\partial^+$-Scott
topologies.  Now use that $\mathbf B (X, d), d^+$ and
$\mathbf B (Y, \partial), \partial^+$ are standard, owing to
Proposition~\ref{prop:B:std}, and apply Proposition~\ref{prop:cont} to
conclude that $\mathbf B^\alpha (f)$ is $\alpha$-Lipschitz continuous.
\qed

\begin{prop}
  \label{prop:B:monad}
  The triple $(\mathbf B, \eta, \mu)$ is a monad on the category of
  standard quasi-metric spaces and $1$-Lipschitz continuous maps.
\end{prop}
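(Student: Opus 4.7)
The plan is to check, one by one, the three data and three equations that constitute a monad, recognizing that essentially every nontrivial point has already been handled in the preceding lemmas, so that the proof amounts to assembly plus a couple of routine naturality squares.

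First I would verify that $\mathbf B$ is indeed an endofunctor on the category in question. On objects, Proposition~\ref{prop:B:std} guarantees that $\mathbf B(X, d), d^+$ is standard whenever $X, d$ is. On morphisms, Lemma~\ref{lemma:Balpha:f} applied with $\alpha = 1$ shows that $\mathbf B^1(f) \colon (x, r) \mapsto (f(x), r)$ is $1$-Lipschitz continuous whenever $f$ is. Functoriality, that is $\mathbf B^1(\identity{X}) = \identity{\mathbf B(X, d)}$ and $\mathbf B^1(g \circ f) = \mathbf B^1(g) \circ \mathbf B^1(f)$, is immediate from the formula defining $\mathbf B^1$.

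Next I would check that $\eta$ and $\mu$ have components that are morphisms of the category, and that they are natural. Lemma~\ref{lemma:eta:lipcont} gives $1$-Lipschitz continuity of $\eta_X$, and Lemma~\ref{lemma:mu:lipcont} gives $1$-Lipschitz continuity of $\mu_X$. Naturality of $\eta$ at a $1$-Lipschitz continuous map $f \colon X, d \to Y, \partial$ reads $\mathbf B^1(f) \circ \eta_X = \eta_Y \circ f$, which holds pointwise since both sides send $x$ to $(f(x), 0)$. Naturality of $\mu$ reads $\mathbf B^1(f) \circ \mu_X = \mu_Y \circ \mathbf B^1(\mathbf B^1(f))$, and both sides send $((x, r), s)$ to $(f(x), r+s)$.

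Finally, the three monad equations $\mu_X \circ \eta_{\mathbf B(X, d)} = \identity{\mathbf B(X, d)}$, $\mu_X \circ \mathbf B^1(\eta_X) = \identity{\mathbf B(X, d)}$ and $\mu_X \circ \mu_{\mathbf B(X, d)} = \mu_X \circ \mathbf B^1(\mu_X)$ are exactly parts $(i)$, $(ii)$, $(iii)$ of Lemma~\ref{lemma:eta:mu}. There is no real obstacle: the substantive work, namely standardness of $\mathbf B(X, d), d^+$ and Lipschitz continuity of $\eta_X$ and $\mu_X$, was already done, and the only genuine verifications left are the two naturality squares above, each a one-line pointwise computation.
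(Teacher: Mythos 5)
Your proof is correct and complete: every ingredient you cite does exactly the job you assign it, and the two naturality squares you supply are indeed the only verifications not already present in the preceding lemmas. The paper, however, organizes the argument differently: instead of checking the monad axioms directly, it proves the equivalent statement that $(\mathbf B, \eta, \_^\dagger)$ is a Kleisli triple, defining $f^\dagger (x, r) = (y, r+s)$ where $(y, s) = f (x)$, observing that $f^\dagger = \mu_Y \circ \mathbf B^1 (f)$ is a morphism by Lemmas~\ref{lemma:mu:lipcont} and~\ref{lemma:Balpha:f}, and then checking the three Kleisli equations. The Kleisli route buys you something small but real: naturality of $\eta$ and $\mu$, and functoriality of $\mathbf B$, need not be verified at all, since they are automatic consequences of the Kleisli axioms once one passes back to the monad presentation. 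Your direct route pays for those with the two pointwise computations you give, which cost essentially nothing here because $\mathbf B^1 (f)$, $\eta$, and $\mu$ are all defined by explicit formulas on pairs. Either decomposition is legitimate; the substantive content in both cases is Proposition~\ref{prop:B:std} (closure of standardness under $\mathbf B$) and the Lipschitz continuity of $\eta_X$ and $\mu_X$, which you correctly identify as the real work.
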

\proof We shall show the equivalent claim that
$(\mathbf B, \eta, \_^\dagger)$ is a Kleisli triple, that is: $(i)$
$\mathbf B$ maps objects of the category (standard quasi-metric
spaces) to objects of the category; $(ii)$ $\eta_X$ is a morphism from
$X, d$ to $\mathbf B (X, d), d^+$ (a $1$-Lipschitz continuous map);
$(iii)$ for every morphism
$f \colon X, d \to \mathbf B (Y, \partial), \partial^+$, $f^\dagger$
is a morphism from $\mathbf B (X, d), d^+$ to
$\mathbf B (Y, \partial), \partial^+$ such that: $(a)$
$\eta_X^\dagger = \identity {\mathbf B (X, d)}$; $(b)$
$f^\dagger \circ \eta_X = f$; $(c)$
$f^\dagger \circ g^\dagger = (f^\dagger \circ g)^\dagger$.  For that,
we define $f^\dagger$ as mapping $(x, r)$ to $(y, r+s)$, where
$(y, s) = f (x)$.

Proposition~\ref{prop:B:std} gives us $(i)$, and
Lemma~\ref{lemma:eta:lipcont} gives us $(ii)$.  We devote the rest of
this proof to $(iii)$.

We must start by checking that $f^\dagger$ is a morphism for every
morphism $f \colon X, d \to \mathbf B (Y, \partial), \partial^+$.  We
have defined $f^\dagger (x, r)$ as $(y, r+s)$ where $(y, s) = f (x)$,
and we notice that $f^\dagger$ is equal to
$\mu_Y \circ \mathbf B^1 (f)$.  This is $1$-Lipschitz continuous
because $\mu_Y$ and $\mathbf B^1 (f)$ both are, by
Lemma~\ref{lemma:mu:lipcont} and Lemma~\ref{lemma:Balpha:f}
respectively.

The equalities $(a)$, $(b)$, $(c)$ are easily checked.

Any Kleisli triple $(T, \eta, \_^\dagger)$ gives rise to a monad
$(T, \eta, m)$ by letting $m_X = \identity X^\dagger$.  Here $m_X$
maps $((x, r), s)$ to $(x, r+s)$, hence coincides with $\mu_X$,
finishing the proof.  \qed

A \emph{left KZ-monad} \cite[Definition~4.1.2,
Lemma~4.1.1]{Escardo:properly:inj} (short for \emph{Kock-Z\"oberlein
  monad}) is a monad $(T, \eta, \mu)$ on a poset-enriched category
such that $T$ is monotonic on homsets, and either one of the following
equivalent conditions hold:
\begin{enumerate}
\item $T\eta_X \leq \eta_{TX}$ for every object $X$;
\item a morphism $\alpha \colon TX \to X$ is the structure map of a
  $T$-algebra if and only if $\alpha \circ \eta_X = \identity X$ and
  $\identity {TX} \leq \eta_X \circ \alpha$;
\item $\mu_X \dashv \eta_{TX}$ for every object $X$;
\item $T\eta_X \dashv \mu_X$ for every object $X$.
\end{enumerate}
The notion stems from work by A. Kock on doctrines in 2-categories
\cite{Kock:KZmonad}, and the above equivalence is due to Kock, in the
more general case of 2-categories.  The notation $f \dashv g$ means
that the two morphisms $f$ and $g$ are \emph{adjoint}, namely,
$f \circ g \leq \identity \relax$ and
$\identity \relax \leq g \circ f$.  A \emph{$T$-algebra} is an object
$X$ together with a morphism $\alpha \colon TX \to X$ called its
\emph{structure map}, such that $\alpha \circ \eta_X = \identity X$
and $\alpha \circ \mu_X = \alpha \circ T\alpha$.  $TX$ is always a
$T$-algebra with structure map $\mu_X$, called the \emph{free
  $T$-algebra} on $X$.

The category of standard quasi-metric spaces and $1$-Lipschitz
continuous maps is poset-enriched.  Each homset is ordered by: for
$f, g \colon X, d \to Y, \partial$, $f \leq g$ if and only if for
every $x \in X$, $f (x) \leq^\partial g (y)$.  If $f \leq g$, then
$\mathbf B^1 (f) \leq \mathbf B^1 (g)$, since for every
$(x, r) \in \mathbf B (X, d)$,
$\mathbf B^1 (f) (x, r) = (f (x), r) \leq^{\partial^+} (g (x), r) =
\mathbf B^1 (g) (x, r)$.

Condition (3) of a left KZ-monad reads:
$\mu_X \circ \eta_{TX} \leq \identity {TX}$ and
$\identity X \leq \eta_{TX} \circ \mu_X$.  For $T = \mathbf B$, those
follow from Lemma~\ref{lemma:eta:mu}~(1) and (4).  Hence:
\begin{prop}
  \label{prop:B:KZ}
  The triple $(\mathbf B, \eta, \mu)$ is a left KZ-monad on the
  category of standard quasi-metric spaces and $1$-Lipschitz
  continuous maps.  \qed
\end{prop}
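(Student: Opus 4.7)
The plan is to exploit the fact that almost all of the work has already been done in the preceding lemmas. Proposition~\ref{prop:B:monad} establishes that $(\mathbf B, \eta, \mu)$ is a monad on the category of standard quasi-metric spaces and $1$-Lipschitz continuous maps, and the paragraph preceding the proposition already checks that this category is poset-enriched (via the pointwise specialization order on morphisms) and that $\mathbf B^1$ is monotonic on homsets. So the only remaining task is to verify one of the four equivalent KZ conditions.

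I would verify condition~(3), namely $\mu_X \dashv \eta_{\mathbf B (X, d)}$ for every standard quasi-metric space $X, d$. Unpacking the adjunction in this poset-enriched setting, this amounts to the two inequalities $\mu_X \circ \eta_{\mathbf B (X, d)} \leq \identity{\mathbf B (X, d)}$ and $\identity{\mathbf B (\mathbf B (X, d), d^+)} \leq \eta_{\mathbf B (X, d)} \circ \mu_X$. Both are immediate consequences of Lemma~\ref{lemma:eta:mu}: item~(1) gives the first with equality, since $\mu_X \circ \eta_{\mathbf B (X, d)} = \identity{\mathbf B (X, d)}$; and item~(4) gives the second, since $\eta_{\mathbf B (X, d)} \circ \mu_X \geq \identity{\mathbf B (\mathbf B (X, d), d^+)}$. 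Once condition~(3) is in hand, the equivalent formulations (1), (2), (4) follow automatically by the general equivalence recalled just above the proposition.

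Because the substantive content was packaged into Lemma~\ref{lemma:eta:mu}, Lemma~\ref{lemma:mu:lipcont}, Lemma~\ref{lemma:eta:lipcont} and Proposition~\ref{prop:B:std}, there is no real obstacle to this final assembly step. The only point requiring care is notational bookkeeping: one must keep the specialization preorder on $\mathbf B (X, d)$ (inherited pointwise from $\leq^{d^+}$) aligned with the abstract $\leq$ appearing in the general definition of a left KZ-monad, and check that the inequality in Lemma~\ref{lemma:eta:mu}~(4) is indeed the one demanded by condition~(3) and not its reverse. With that verified, the proof reduces to citing Proposition~\ref{prop:B:monad} for the monad structure and Lemma~\ref{lemma:eta:mu}~(1) and (4) for the KZ inequality.
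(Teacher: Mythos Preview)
Your proposal is correct and follows essentially the same route as the paper: verify condition~(3) of the KZ-monad definition by citing Lemma~\ref{lemma:eta:mu}~$(i)$ and~$(iv)$, having already established the monad structure in Proposition~\ref{prop:B:monad} and the poset-enrichment in the paragraph preceding the statement. The paper's proof is the same two-line argument.
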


Kock's theorem between the equivalence of the four conditions defining
KZ-monads yields the following immediately.
\begin{prop}
  \label{prop:B:alg}
  Let $X, d$ be a standard quasi-metric space, and
  $\alpha \colon \mathbf B (X, d), d^+ \to X, d$ be a $1$-Lipschitz
  continuous map.  The following are equivalent:
  \begin{enumerate}
  \item $\alpha$ is the structure map of a $\mathbf B$-algebra;
  \item for every $x \in X$, $\alpha (x, 0) = x$ and for all $r, s \in
    \Rp$, $\alpha (x, r+s) = \alpha (\alpha (x, r), s)$;
  \item for every $x \in X$, and every $r \in \Rp$, $\alpha (x, r)$ is
    a point in the closed ball $B^d_{x, \leq r}$, which is equal to
    $x$ if $r=0$;
  \item for every $x \in X$, $\alpha (x, 0) = x$.
  \end{enumerate}
\end{prop}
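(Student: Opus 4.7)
The plan is to deduce everything from Proposition~\ref{prop:B:KZ}, which shows that $\mathbf B$ is a left KZ-monad, together with the four equivalent characterizations of left KZ-monads stated just before it. Each categorical condition will be unfolded into its concrete meaning on formal balls using the defining formula $d^+((x,r),(y,s)) = \max(d(x,y)-r+s,0)$ and the action of $\mu_X$ and $\mathbf B^1$.

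For (1) $\liff$ (2), I would simply unwind the definition of a $\mathbf B$-algebra. The unit law $\alpha \circ \eta_X = \identity X$ reads $\alpha(x,0) = x$, and the associativity law $\alpha \circ \mu_X = \alpha \circ \mathbf B^1(\alpha)$, applied to an arbitrary element $((x,r),s)$ and using $\mu_X((x,r),s) = (x,r+s)$ and $\mathbf B^1(\alpha)((x,r),s) = (\alpha(x,r),s)$, becomes $\alpha(x,r+s) = \alpha(\alpha(x,r),s)$. This gives exactly (2).

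For (1) $\liff$ (3), I would invoke condition~(2) of the left KZ-monad characterization: $\alpha$ is a structure map precisely when $\alpha \circ \eta_X = \identity X$ and $\identity{\mathbf B(X,d)} \leq \eta_{\mathbf B(X,d)} \circ \alpha$ (note: in the hom-set order, which is pointwise $\leq^{d^+}$). The first condition again reads $\alpha(x,0) = x$. The second, applied at $(x,r)$, reads $(x,r) \leq^{d^+} (\alpha(x,r),0)$, which by the definition of $\leq^{d^+}$ unfolds to $d(x,\alpha(x,r)) \leq r - 0 = r$, i.e.\ $\alpha(x,r) \in B^d_{x,\leq r}$. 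Together with $\alpha(x,0) = x$ at $r=0$, this is exactly (3).

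Finally, (3) $\limp$ (4) is immediate by setting $r = 0$. The converse (4) $\limp$ (3) is where the hypothesis that $\alpha$ is $1$-Lipschitz (not merely that $\mathbf B^1(\alpha)$ is monotone) pays off: applying the Lipschitz inequality to $(x,0)$ and $(x,r)$ yields $d(\alpha(x,0),\alpha(x,r)) \leq d^+((x,0),(x,r)) = r$, and under (4) this becomes $d(x,\alpha(x,r)) \leq r$, giving (3). No substantive obstacle remains; the only subtle point worth flagging is that the apparently weak condition (4) already forces the full contraction property (3) once one remembers that $\alpha$ is $1$-Lipschitz.
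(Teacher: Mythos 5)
Your proof is correct and follows essentially the same route as the paper: (1)$\liff$(2) by unfolding the algebra laws, (1)$\liff$(3) via the second equivalent condition for left KZ-monads applied to Proposition~\ref{prop:B:KZ}, and (4)$\limp$(3) from the $1$-Lipschitz inequality $d(\alpha(x,0),\alpha(x,r)) \leq d^+((x,0),(x,r)) = r$. The only blemish is the subscript in ``$\eta_{\mathbf B(X,d)} \circ \alpha$'', which should be $\eta_X \circ \alpha$ (as your own computation $(x,r) \leq^{d^+} (\alpha(x,r),0)$ confirms).
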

\proof The equivalence between (1) and (2) is the definition of an
algebra of a monad.  Look at the second equivalent condition defining
left KZ-monads, applied to the left KZ-monad $\mathbf B$
(Proposition~\ref{prop:B:KZ}).  This implies that (1) is equivalent to
$\alpha \circ \eta_X = \identity X$ (i.e., $\alpha (x, 0)=x$ for every
$x \in X$), and to $\identity X \leq \eta_X \circ \alpha$; the latter
means that for every $x \in X$ and every $r \in \Rp$,
$(x, r) \leq^{d^+} \eta_X (\alpha (x, r))$, equivalently,
$d (x, \alpha (x, r)) \leq r$, i.e.,
$\alpha (x, r) \in B^d_{x, \leq r}$.  Finally, clearly (3) implies
(4).  In the reverse direction, note that since $\alpha$ is
$1$-Lipschitz,
$d (\alpha (x, 0), \alpha (x, r)) \leq d^+ ((x, 0), (x, r)) = r$.
Since $\alpha (x, 0)=x$, this implies that $\alpha (x, r)$ is in
$B^d_{x, \leq r}$.  \qed

\section{Lipschitz Regular Spaces}
\label{sec:lipsch-regul-spac}

For every open subset $U$ of $X$ in its $d$-Scott topology, there is a
largest open subset $\widehat U$ of $\mathbf B (X, d)$ such that
$\widehat U \cap X \subseteq U$.  Then $\widehat U \cap X = U$.  This
was used in \cite[Definition~6.10]{JGL:formalballs} in order to define
the distance $d (x, \overline U)$ of any point $x$ to the complement
$\overline U$ of $U$ as
$\sup \{r \in \Rp \mid (x, r) \in \widehat U\}$.

We shall write $\Open Y$ for the lattice of open subsets of a
topological space $Y$.

The assignment $U \mapsto \widehat U$ is
monotonic.  Being a right adjoint to the frame homomorphism that maps
every open subset $V$ of $\mathbf B (X, d)$ to $V \cap X$, it also
preserves arbitrary meets, namely interiors of arbitrary
intersections; but it satisfies no other remarkable property in
general.
One property that we will need in a number of places is the following.
\begin{defi}[Lipschitz regular]
  \label{defn:lipreg}
  A quasi-metric space $X, d$ is \emph{Lipschitz regular} if and only
  if the map
  $U \in \Open X \mapsto \widehat U \in \Open \mathbf B (X, d)$ is
  Scott-continuous.
\end{defi}
The name stems from a result that we shall see later,
Proposition~\ref{prop:lipreg:Lip}.
In general, a subspace $X$ of a topology space $Y$ is \emph{finitarily
  embedded} in $Y$ if and only if the map $V \in \Open Y \mapsto V
\cap X$ is Scott-continuous, see \cite{Escardo:properly:inj}.

\begin{lem}
  \label{lemma:lipreg:dist}
  The following are equivalent for a standard quasi-metric space
  $X, d$:
  \begin{enumerate}
  \item $X, d$ is Lipschitz regular;
  \item for every point $x \in X$, the map $U \in \Open X \mapsto d
    (x, \overline U)$ is Scott-continuous.
  \end{enumerate}
\end{lem}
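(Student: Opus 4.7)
The plan is to go through a single bridging observation: for every $U \in \Open X$ and every $x \in X$,
\[(x, r) \in \widehat U \text{ if and only if } r < d (x, \overline U).\]
Granting this, both implications become essentially formal, because $U \mapsto \widehat U$ and $U \mapsto d (x, \overline U)$ are visibly monotone (the first by maximality of $\widehat{(\cdot)}$, the second because it is defined from $\widehat{(\cdot)}$), so Scott-continuity reduces in each direction to preservation of directed suprema, that is, to commutation with unions of open subsets.

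To prove the bridging observation I would set $S_x = \{r \in \Rp \mid (x, r) \in \widehat U\}$. Since $\widehat U$ is upward closed in $\leq^{d^+}$ and $(x, s) \geq^{d^+} (x, r)$ whenever $s \leq r$, the set $S_x$ is downward closed in $\Rp$, so it is an interval with supremum $R := d (x, \overline U)$; what remains is to show $R \notin S_x$ when $R < +\infty$. Suppose on the contrary that $(x, R) \in \widehat U$. One verifies directly, using only $d (x, x) = 0$, that $(x, R)$ is the least upper bound in $\mathbf B (X, d)$ of the directed family $(x, R + 1/n)_{n \geq 1}$. By Lemma~\ref{lemma:dScott=Scott}(2) the Scott and $d^+$-Scott topologies on $\mathbf B (X, d)$ coincide, hence $\widehat U$ is Scott-open and must therefore contain some $(x, R + 1/n)$, forcing $R + 1/n \in S_x$ and contradicting $R = \sup S_x$.

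With the bridging observation in hand, the two implications fall out. For (1) $\Rightarrow$ (2): for a directed family $(U_i)_{i \in I}$ with union $U$, (1) gives $\widehat U = \bigcup_i \widehat{U_i}$, and the observation then computes
\[d (x, \overline U) \;=\; \sup\{r \in \Rp \mid \exists i,\ (x, r) \in \widehat{U_i}\} \;=\; \sup_{i \in I} d (x, \overline{U_i}).\]
For (2) $\Rightarrow$ (1): the inclusion $\bigcup_i \widehat{U_i} \subseteq \widehat U$ is monotonicity; for the reverse, if $(x, r) \in \widehat U$, the observation together with (2) gives $r < \sup_i d (x, \overline{U_i})$, so $r < d (x, \overline{U_i})$ for some $i$, whence $(x, r) \in \widehat{U_i}$. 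The only delicate point in the whole proof is therefore the non-attainment of the supremum defining $d (x, \overline U)$; no deeper machinery is required beyond Lemma~\ref{lemma:dScott=Scott}(2) and the elementary calculation with $\leq^{d^+}$.
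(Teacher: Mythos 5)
Your proof is correct and follows essentially the same route as the paper's: the forward direction is the same routine computation, and the backward direction turns on exactly the same key step, namely that $(x,r)$ is the supremum of the chain ${(x, r+1/2^n)}_{n}$ and that $\widehat U$ is Scott-open, which you have merely repackaged as the clean characterization $(x,r) \in \widehat U \iff r < d(x, \overline U)$. (The appeal to Lemma~\ref{lemma:dScott=Scott}~(2) is not even needed, since $\widehat U$ is Scott-open by its very definition.)
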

\proof
(1) $\limp$ (2).  The map $U \mapsto d (x, \overline U)$ is the
composition of $U \mapsto \widehat U$ and of the map $\mathcal U \in
\Open \mathbf B (X, d) \mapsto \sup \{r \in \Rp \mid (x, r) \in
\mathcal U\}$.  The latter is easily seen to be Scott-continuous, and
the former is Scott-continuous by (1).


(2) $\limp$ (1).  Let $U$ be the union of a directed family of open
subsets ${(U_i)}_{i \in I}$.  We only have to show that every
$(x, r) \in \widehat U$ is in some $\widehat U_i$.  By
\cite[Lemma~3.4]{JGL:formalballs}, $(x, r)$ is the supremum of the
chain of formal balls $(x, r+1/2^n)$, $n \in \nat$, so one of them is
in $\widehat U$.  This implies that
$d (x, \overline U) \geq r + 1/2^n > r$.  Using (2),
$d (x, \overline U_i) > r$ for some $i \in I$, and that implies the
existence of a real number $s > r$ such that
$(x, s) \in \widehat U_i$.  Since $(x, s) \leq^{d^+} (x, r)$, $(x, r)$
is also in $\widehat U_i$.  \qed

The following Proposition~\ref{prop:compactballs} gives a further
explanation of Lipschitz regularity, in the special case of algebraic
quasi-metric spaces.

A point $x$ in a standard quasi-metric space $X, d$ is a
center point if and only if, for every $\epsilon > 0$, the open ball
$B^{d^+}_{(x, 0), <\epsilon} = \{(y, s) \in \mathbf B (X, d) \mid d
(x, y) < \epsilon-s\}$ is Scott-open in $\mathbf B (X, d)$.  This is
equivalent to requiring that $x$ be a finite point in $X, d$, a notion
that has a more complicated definition
\cite[Lemma~5.7]{JGL:formalballs}.

$X, d$ itself is called \emph{algebraic} if and only if every point
$x$ is the $d$-limit of a Cauchy (or even Cauchy-weightable, see
loc.cit.) net of center points, or equivalently, for every $x \in X$,
there is a directed family of formal balls $(x_i, r_i)$, $i \in I$,
where every $x_i$ is a center point, such that
$\sup_{i \in I} (x_i, r_i) = (x, 0)$ (Lemma~5.15, loc.cit.).

Every metric space is (standard and) algebraic, since in a metric
space every point is a center point, as a consequence of results by
Edalat and Heckmann \cite{EH:comp:metric}.  Indeed, the poset of
formal balls of a metric space $X, d$ is continuous, and $(x, r) \ll
(y, s)$ if and only if $d (x, y) < r-s$ (Proposition~7 and
Corollary~10, loc.\ cit.): then $B^{d^+}_{(x, 0), <\epsilon}$ is equal
to $\uuarrow (x, \epsilon)$, hence is Scott-open.

Every standard algebraic quasi-metric space $X, d$ is
\emph{continuous} \cite[Proposition~5.18]{JGL:formalballs}, where a
continuous quasi-metric space is a standard quasi-metric space $X, d$
whose space of formal balls $\mathbf B (X, d)$ is a continuous poset
(Definition~3.10, loc.\ cit.)\@ Moreover, when $X, d$ is standard
algebraic, $\mathbf B (X, d)$ has a basis of formal balls whose
centers are center points, and for a center point $x$,
$(x, r) \ll (y, s)$ if and only if $d (x, y) < r-s$.  This is the same
relation as in metric spaces, but beware that we only require it when
$x$ is a center point.

In general, we shall call a \emph{strong basis} of a standard
quasi-metric space $X, d$ any set $\mathcal B$ of center points of $X$
such that, for every $x \in X$, $(x, 0)$ is the supremum of a directed
family of formal balls with center points in $\mathcal B$.  (Given
that $X, d$ is standard, this is equivalent to Definition~7.4.66 of
loc.\ cit.)  Hence $X, d$ is algebraic if and only if it has a strong
basis.

\begin{rem}
  \label{rem:strong:basis}
  In metric spaces, a strong basis is nothing else than the familiar
  concept of a dense subset (Exercise~7.4.67, loc.\ cit.)  Strong
  bases are the correct generalization of dense subsets in the realm
  of quasi-metric spaces.
\end{rem}

\begin{prop}
  \label{prop:compactballs}
  Let $X, d$ be a standard algebraic quasi-metric space.   The following are equivalent:
  \begin{enumerate}
  \item $X, d$ is Lipschitz regular;
  \item $X, d$ \emph{has relatively compact balls}, namely: for every
    center point $x$ of $X$, for all $r, s \in \Rp$ with $s < r$,
    every open cover of the open ball $B^d_{x, < r}$ contains a finite
    subcover of the closed ball $B^d_{x, \leq s}$;
  \item for every center point $x$ of $X$, for all $r, s \in \Rp$ with
    $s < r$, for every directed family of open subsets
    ${(U_i)}_{i \in I}$ of open subsets of $X$ such that
    $B^d_{x, < r} \subseteq \bigcup_{i \in I} U_i$, there is an
    $i \in I$ such that $B^d_{x, \leq s} \subseteq U_i$.
  \end{enumerate}
\end{prop}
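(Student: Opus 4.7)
The plan is to prove $(2) \liff (3)$ separately from the main equivalence $(1) \liff (3)$. For $(2) \liff (3)$: the implication $(2) \limp (3)$ is immediate since a directed family is an open cover. Conversely, given any open cover $\{V_j\}_{j \in J}$ of $B^d_{x, <r}$, the finite unions $\bigcup_{j \in F} V_j$ (indexed by finite $F \subseteq J$) form a directed family with the same union, so $(3)$ applies and returns a single finite union containing $B^d_{x, \leq s}$, which is a finite subcover.

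For $(1) \limp (3)$, let $x$ be a center point, $s < r$, and $\{U_i\}_{i \in I}$ a directed family with union $U \supseteq B^d_{x, <r}$. Since $x$ is a center point, $B^{d^+}_{(x, 0), <r}$ is Scott-open in $\mathbf B (X, d)$, and its intersection with $X$ is $B^d_{x, <r} \subseteq U$; hence $B^{d^+}_{(x, 0), <r} \subseteq \widehat U$, and in particular $(x, s) \in \widehat U$ since $d(x,x) = 0 < r - s$. By Lipschitz regularity, $\widehat U = \bigcup_i \widehat{U_i}$, so $(x, s) \in \widehat{U_i}$ for some $i$. For every $y \in B^d_{x, \leq s}$ we have $(x, s) \leq^{d^+} (y, 0)$, whence $(y, 0) \in \widehat{U_i}$ by upward-closedness and $y \in \widehat{U_i} \cap X = U_i$.

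For $(3) \limp (1)$, Lemma~\ref{lemma:lipreg:dist} reduces the task to showing that $U \mapsto d (x, \overline U)$ preserves directed suprema for every $x \in X$. The technical heart is the following characterization, which I would prove as a separate lemma: for any center point $y$, $(y, \sigma) \in \widehat U$ if and only if $B^d_{y, <\sigma'} \subseteq U$ for some $\sigma' > \sigma$. The ``$\Leftarrow$'' direction uses that $B^{d^+}_{(y, 0), <\sigma'}$ is Scott-open because $y$ is a center point, together with $B^{d^+}_{(y, 0), <\sigma'} \cap X = B^d_{y, <\sigma'} \subseteq U$ and $d(y,y) = 0 < \sigma' - \sigma$. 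For ``$\Rightarrow$'', Scott-openness of $\widehat U$ applied to the chain $(y, \sigma + 1/2^n)$ (Lemma~3.4 of \cite{JGL:formalballs}) produces $\sigma' > \sigma$ with $(y, \sigma') \in \widehat U$, and then upward-closedness gives $(z, 0) \in \widehat U$, i.e., $z \in U$, for every $z$ with $d(y, z) < \sigma'$.

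Given the characterization, let $\{U_i\}_{i \in I}$ be directed with union $U$ and fix $t < d(x, \overline U)$: I need $i$ with $t < d(x, \overline{U_i})$. Pick $r > t$ with $(x, r) \in \widehat U$. Algebraicity yields a directed family of center points $x_j$ with formal balls $(x_j, s_j)$ of supremum $(x, 0)$; by standardness $(x, r) = \sup_j (x_j, s_j + r)$, so Scott-openness of $\widehat U$ produces $j_0$ with $(x_{j_0}, s_{j_0} + r) \in \widehat U$. Setting $y = x_{j_0}$ and $\sigma = s_{j_0} + r$, the characterization provides $\sigma' > \sigma$ with $B^d_{y, <\sigma'} \subseteq \bigcup_i U_i$. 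Applying $(3)$ at the center point $y$ with any $\rho_1 \in (\sigma, \sigma')$ yields $i$ with $B^d_{y, \leq \rho_1} \subseteq U_i$, hence $B^d_{y, <\rho_1} \subseteq U_i$, and by the characterization again $(y, \sigma) \in \widehat{U_i}$. Finally $d(y, x) \leq s_{j_0} = \sigma - r$ gives $(y, \sigma) \leq^{d^+} (x, r)$, so $(x, r) \in \widehat{U_i}$ and $d(x, \overline{U_i}) \geq r > t$. The main obstacle is the center-point characterization and the careful juggling between strict and non-strict ball radii required to invoke it twice (once to extract $\sigma'$, once to re-enter $\widehat{U_i}$ at $\sigma$); the passage from arbitrary $x$ to center points is then a mechanical appeal to algebraicity and standardness.
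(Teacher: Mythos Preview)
Your proof is correct. The treatments of $(2)\liff(3)$ and $(1)\limp(3)$ are essentially identical to the paper's (the paper writes $\uuarrow(x,r)$ where you write $B^{d^+}_{(x,0),<r}$, but for a center point $x$ these coincide).

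For $(3)\limp(1)$ you take a slightly more roundabout route: you first pass through Lemma~\ref{lemma:lipreg:dist} to reduce to Scott-continuity of $U\mapsto d(x,\overline U)$, then develop a separate characterization of membership in $\widehat U$ at center points, and finally use algebraicity to reduce arbitrary $x$ to center points. The paper instead works directly with the map $U\mapsto\widehat U$: given $(y,s)\in\widehat U$, it approximates $(y,s)$ from below by some $(x,r)$ with $x$ a center point (algebraicity), observes $B^d_{x,<r}\subseteq U$, applies $(3)$, and then uses the maximality of $\widehat{U_i}$ inline (via $\widehat{U_i}\cup\uuarrow(x,r-\epsilon)$) to place $(y,s)$ in $\widehat{U_i}$. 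Your characterization lemma is exactly this maximality argument repackaged, so the two proofs rest on the same mechanism; the paper's version is simply shorter because it avoids the detour through $d(x,\overline U)$ and back.
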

\proof
The equivalence of (2) and (3) is a standard exercise.  In the
difficult direction, notice that any union of open sets can be written
as a directed union of finite unions.

(3) $\limp$ (1).  It is easy to see that $U \mapsto \widehat U$ is
monotonic.  Let ${(U_i)}_{i \in I}$ be a directed family of $d$-Scott
open subsets of $X$, and $U = \bigcup_{i \in I} U_i$.
Pick an arbitrary element $(y, s)$ in $\widehat U$.  Our task is to
show that $(y, s)$ lies in some $\widehat U_i$.

Since $X, d$ is algebraic, $(y, s)$ is the supremum of a directed
family of formal balls $(x, r)$ way-below $(y, s)$, where each $x$ is
a center point.  Since $\widehat U$ is Scott-open, one of them is in
$\widehat U$.  From $(x, r) \ll (y, s)$ we obtain $d (x, y) < r-s$.
Find a real number $\epsilon > 0$ so that $d (x, y) < r-s-\epsilon$.

The open ball $B^d_{x, < r}$ is the intersection of $\uuarrow (x, r)$
with $X$, and $\uuarrow (x, r)$ is included in $\widehat U$ because
$(x, r)$ is in $\widehat U$ and $\widehat U$ is upwards-closed.  Hence
$B^d_{x, < r}$ is included in
$\widehat U \cap X = U = \bigcup_{i \in I} U_i$.  By (3),
$B^d_{x, \leq r-\epsilon}$ is included in some $U_i$, $i \in I$.

Consider $\widehat U_i \cup \uuarrow (x, r-\epsilon)$.  This is an
open subset of $\mathbf B (X, d)$, and its intersection with $X$ is
$U_i \cup B^d_{x, < r-\epsilon} = U_i$.  
By the maximality of $\widehat U_i$,
$\widehat U_i = \widehat U_i \cup \uuarrow (x, r-\epsilon)$, meaning
that $\uuarrow (x, r-\epsilon)$ is included in $\widehat U_i$.  Since
$d (x, y) < r-s-\epsilon$, $(x, r-\epsilon) \ll (y, s)$.  It follows
that $(y, s)$ is in $\widehat U_i$.

(1) $\limp$ (3).  Fix a center point $x$, two real numbers $r$ and $s$
such that $0 < s < r$, and assume that $B^d_{x, < r}$ is included
in the union $U$ of some directed family of open subsets $U_i$ of $X$.

We claim that $(x, s)$ must be in $\widehat U$.  The argument is one
we have just seen.  Indeed, $\widehat U \cup \uuarrow (x, r)$ is an
open subset of $\mathbf B (X, d)$ whose intersection with $X$ equals
$U \cup B^d_{x, <r} = U$. 
By maximality $\widehat U \cup \uuarrow (x, r) = \widehat U$.
However, since $x$ is a center point and $d (x, x) < r - s$, we have
$(x, r) \ll (x, s)$, so $(x, s)$ is in $\widehat U$.

By (1), $(x, s)$ is in some $\widehat U_i$, so
$\upc (x, s) \subseteq \widehat U_i$, hence, taking intersections with
$X$, $B^d_{x, \leq s}$ is included in $U_i$.
\qed

\begin{rem}
  \label{rem:compactballs:metric}
  As a special case, every metric case in which closed balls are
  compact is Lipschitz regular.  Indeed, recall that every metric
  space is standard and algebraic, and compactness immediately implies
  the relatively compact ball property.
\end{rem}

Having relatively compact balls is a pretty strong requirement.  Any
standard algebraic quasi-metric space with that property must be
core-compact, for example: for every point $y$ and every open
neighborhood $U$ of $y$, $y$ is in some open ball $B^d_{x, <r}$
included in $U$, where $x$ is a center point and $r > 0$.  Hence
$d (x, y) < r$, so that $d (x, y) < r-\epsilon$ for some
$\epsilon > 0$.  Then $y$ is also in the open neighborhood
$B^d_{x, < r-\epsilon}$ of $x$, and
$B^d_{x, < r-\epsilon} \subseteq B^d_{x, \leq r-\epsilon}$ is
way-below $B^d_{x, <\epsilon}$, using property (3).

Another argument consists in using the definition of Lipschitz
regularity directly: then $\Open X$ is a retract of
$\Open \mathbf B (X, d)$, and when $\mathbf B (X, d)$ is a continuous
poset, $\Open \mathbf B (X, d)$ is a completely distributive lattice,
in particular a continuous lattice; any retract of a continuous
lattice is again continuous, so $\Open X$ is continuous, meaning that
$X$ is core-compact.

Not all standard algebraic quasi-metric spaces have relatively compact
balls.  For example, $\rat$ with its usual metric is not core-compact,
hence does not have relatively compact balls.

\begin{rem}
  \label{rem:lipreg}
  Lipschitz regularity is therefore a pretty strong requirement---in the
  case of standard algebraic quasi-metric spaces.  On the contrary, we
  shall see below that spaces of formal balls are always Lipschitz
  regular (Theorem~\ref{thm:B:lipreg}), even when not core-compact
  (Remark~\ref{rem:N2}).
\end{rem}

The following lemma shows that the construction $U \mapsto \widehat U$
admits a particularly simple form on $\mathbf B$-algebras.
\begin{lem}
  \label{lemma:Uhat:alpha}
  Let $X, d$ be a quasi-metric space, and assume that there is a
  continuous map $\alpha \colon \mathbf B (X, d) \to X$ (with respect
  to the $d^+$-Scott and $d$-Scott topologies) such that
  $\alpha (x, r) \in B^d_{x, \leq r}$ and $\alpha (x, 0) = x$ for all
  $x \in X$ and $r \in \Rp$.  Then:
  \begin{enumerate}
  \item For every $d$-Scott open subset $U$ of $X$, $\widehat U$ is
    equal to $\alpha^{-1} (U)$;
  \item $X, d$ is Lipschitz regular.
  \end{enumerate}
\end{lem}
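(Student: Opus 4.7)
The plan is to prove (1) first; claim (2) then follows essentially formally, since the formula $\widehat U = \alpha^{-1}(U)$ automatically respects unions. Throughout I identify $X$ with the subspace $\{(x,0) \mid x \in X\}$ of $\mathbf B(X,d)$, and I freely use Lemma~\ref{lemma:dScott=Scott}(2) to identify the $d^+$-Scott topology on $\mathbf B(X,d)$ with its Scott topology.

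For the inclusion $\alpha^{-1}(U) \subseteq \widehat U$, I would observe that $\alpha^{-1}(U)$ is open in $\mathbf B(X,d)$ by continuity of $\alpha$. Its trace on $X$ consists of the $x \in X$ such that $\alpha(x,0) \in U$, and the hypothesis $\alpha(x,0) = x$ makes this set equal to $U$ itself. In particular $\alpha^{-1}(U) \cap X \subseteq U$, so by the maximality property defining $\widehat U$, $\alpha^{-1}(U) \subseteq \widehat U$. For the reverse inclusion, pick $(x,r) \in \widehat U$ and set $y = \alpha(x,r)$. The hypothesis $\alpha(x,r) \in B^d_{x,\leq r}$ gives $d(x,y) \leq r$, which is exactly $(x,r) \leq^{d^+} (y,0)$; since $\widehat U$ is upwards-closed, $(y,0) \in \widehat U$, whence $y \in \widehat U \cap X = U$, i.e., $(x,r) \in \alpha^{-1}(U)$.

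For (2), let ${(U_i)}_{i \in I}$ be a directed family of $d$-Scott open subsets of $X$ with union $U$. Using (1) and the fact that inverse images commute with arbitrary unions,
\[
\widehat U \;=\; \alpha^{-1}(U) \;=\; \bigcup_{i \in I} \alpha^{-1}(U_i) \;=\; \bigcup_{i \in I} \widehat{U_i},
\]
which is the preservation of directed suprema required by Definition~\ref{defn:lipreg}; monotonicity of $U \mapsto \widehat U$ is obvious from the maximality characterization. I expect no real obstacle here: everything hinges on the identity $\widehat U = \alpha^{-1}(U)$, which in turn rests on the two basic facts $\alpha(x,0) = x$ and $\alpha(x,r) \in B^d_{x,\leq r}$, used respectively to compute the trace of $\alpha^{-1}(U)$ on $X$ and to exhibit each $(x,r) \in \widehat U$ as below a point of $U$ in the $\leq^{d^+}$ order.
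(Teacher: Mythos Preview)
Your proof is correct and follows essentially the same approach as the paper's: establish $\alpha^{-1}(U)\subseteq\widehat U$ via openness and the trace computation using $\alpha(x,0)=x$, then $\widehat U\subseteq\alpha^{-1}(U)$ by pushing each $(x,r)\in\widehat U$ up to $(\alpha(x,r),0)$ via $\alpha(x,r)\in B^d_{x,\leq r}$, and deduce (2) from the fact that inverse images commute with unions. The invocation of Lemma~\ref{lemma:dScott=Scott}(2) is harmless but not needed, since the hypothesis already gives continuity with respect to the $d^+$-Scott topology directly.
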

\proof Since $\alpha$ is continuous, $\alpha^{-1} (U)$ is $d^+$-Scott
open in $\mathbf B (X, d)$.  Its intersection with $X$ is equal to
$U$, since $(x, 0) \in \alpha^{-1} (U)$ is equivalent to
$\alpha (x, 0) \in U$, and $\alpha (x, 0) = x$.  By the definition of
$\widehat U$ as largest, $\alpha^{-1} (U)$ is included in
$\widehat U$.  To show the converse implication, let $(x, r)$ be an
arbitrary element of $\widehat U$.  Since $\alpha (x, r)$ is an
element of $B^d_{x, \leq r}$, $d (x, \alpha (x, r)) \leq r$, so
$(x, r) \leq^{d^+} (\alpha (x, r), 0)$.  Since $\widehat U$ is
upwards-closed, $(\alpha (x, r), 0)$ is in $\mathcal U$.  It follows
that $\alpha (x, r)$ is in $U$, so that $(x, r)$ is in
$\alpha^{-1} (U)$.

(2) follows from (1), since $\alpha^{-1}$ commutes with unions.  \qed

\begin{rem}
  \label{rem:Uhat:alpha}
  Lemma~\ref{lemma:Uhat:alpha} in particularly applies when $X, d$ is
  a (standard) $\mathbf B$-algebra, with structure map $\alpha$.
  Indeed, by the (1) $\limp$ (2) direction of
  Proposition~\ref{prop:cont}, $\alpha$ is continuous, and the
  remaining assumptions are item~(3) of Proposition~\ref{prop:B:alg}.
\end{rem}

\begin{rem}
  \label{rem:Balg}
  By Lemma~\ref{lemma:Uhat:alpha}~(1), the standard quasi-metric
  spaces that are $\mathbf B$-algebras are much more than Lipschitz
  regular: the map $U \mapsto \widehat U$ must preserve \emph{all}
  unions, not just the directed unions, and all intersections.
\end{rem}

However rare as $\mathbf B$-algebras may appear to be, recall that
(when $X, d$ is standard) $\mathbf B (X, d), d^+$ is itself a
$\mathbf B$-algebra, with structure map $\mu_X$.  Hence the following
is clear under a standardness assumption.  However, this even holds
without standardness.
\begin{thm}
  \label{thm:B:lipreg}
  For every quasi-metric space, the quasi-metric space
  $\mathbf B (X, d), d^+$ is Lipschitz regular.  For every $d^+$-Scott
  open subset $U$ of $\mathbf B (X, d)$,
  $\widehat U = \mu_X^{-1} (U)$.
\end{thm}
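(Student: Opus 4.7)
The plan is to obtain the theorem as a direct corollary of Lemma~\ref{lemma:Uhat:alpha}, applied not to $X,d$ itself but to the quasi-metric space $\mathbf B(X,d), d^+$, with the monad multiplication $\mu_X$ playing the role of the map $\alpha$. The crucial observation that makes the proof work in full generality is that neither Lemma~\ref{lemma:Uhat:alpha} nor its supporting facts, Lemma~\ref{lemma:mu:cont} and Lemma~\ref{lemma:dScott=Scott}, ever require standardness; so the whole argument goes through without assuming $X,d$ is standard, which is exactly the point of the theorem.

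Concretely, I would verify the three hypotheses of Lemma~\ref{lemma:Uhat:alpha} in this setting. First, continuity: Lemma~\ref{lemma:mu:cont} asserts that $\mu_X$ is Scott-continuous as a map of posets from $\mathbf B(\mathbf B(X,d), d^+)$ to $\mathbf B(X,d)$. Applying Lemma~\ref{lemma:dScott=Scott}(2) to $X,d$ identifies the Scott topology on $\mathbf B(X,d)$ with its $d^+$-Scott topology, and applying it to $\mathbf B(X,d), d^+$ identifies the Scott topology on $\mathbf B(\mathbf B(X,d), d^+)$ with its $d^{++}$-Scott topology. Hence $\mu_X$ is continuous in precisely the form demanded by Lemma~\ref{lemma:Uhat:alpha}. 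Second, $\mu_X((x,r),0) = (x, r+0) = (x,r)$. Third, one checks that $\mu_X((x,r),s) = (x, r+s)$ lies in the closed ball $B^{d^+}_{(x,r), \leq s}$, since
\[
d^+((x,r), (x, r+s)) = \max(d(x,x) - r + (r+s), 0) = s.
\]

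Once those three checks are in place, Lemma~\ref{lemma:Uhat:alpha}(1) immediately gives $\widehat U = \mu_X^{-1}(U)$ for every $d^+$-Scott open $U \subseteq \mathbf B(X,d)$, and Lemma~\ref{lemma:Uhat:alpha}(2) gives Lipschitz regularity of $\mathbf B(X,d), d^+$. There is no real obstacle: the only substantive subtlety is the observation that Lemma~\ref{lemma:Uhat:alpha} is formulated without any standardness requirement, so we do \emph{not} need to route through Proposition~\ref{prop:B:std} in order to know that $\mathbf B(X,d), d^+$ is standard. In effect, the formal-ball multiplication $\mu_X$ provides, uniformly in $U$, the canonical ``retraction'' witnessing that $\widehat U$ exists and is computed by taking inverse images along $\mu_X$.
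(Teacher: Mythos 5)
Your proposal is correct and follows essentially the same route as the paper: the paper also obtains the theorem by applying Lemma~\ref{lemma:Uhat:alpha} with $\alpha = \mu_X$, using Lemma~\ref{lemma:mu:cont} and Lemma~\ref{lemma:dScott=Scott}~(2) for continuity, and the identities you verify by hand are exactly items~$(i)$ and~$(iv)$ of Lemma~\ref{lemma:eta:mu}. Your observation that no standardness assumption is needed anywhere in this chain is precisely the point the paper makes as well.
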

\proof This is Lemma~\ref{lemma:Uhat:alpha} with $\alpha = \mu_X$.
This is a continuous map because it is Scott-continuous by
Lemma~\ref{lemma:mu:cont} and because the Scott topologies on
$\mathbf B (X, d)$ and on $\mathbf B (\mathbf B (X, d), d^+)$ coincide
with the $d^+$-Scott topology and with the $d^{++}$-Scott topology
respectively, by Lemma~\ref{lemma:dScott=Scott}~(2).  The other two
assumptions are Lemma~\ref{lemma:eta:mu}, items~$(i)$ and~$(iv)$.
\qed

\begin{rem}
  \label{rem:N2}
  We exhibit a Lipschitz regular, standard quasi-metric space that is
  not core-compact.  Necessarily, that quasi-metric space cannot be
  algebraic, by Proposition~\ref{prop:compactballs}.  We build that
  quasi-metric space as $\mathbf B (X, d)$ for some quasi-metric
  space $X, d$, so that Theorem~\ref{thm:B:lipreg} will give us
  Lipschitz regularity for free.

  Every poset $X$ can be turned into a quasi-metric space by letting
  $d (x, y) = 0$ if $x \leq y$, $+\infty$ otherwise.  Then
  $\mathbf B (X, d)$ is order-isomorphic with the poset
  $X \times ]-\infty, 0]$ \cite[Example~1.6]{JGL:formalballs}.

  Consider the dcpo $X = (\nat \times \nat) \cup \{\omega\}$, with the
  ordering defined by $(i, n) \leq (i', n')$ iff $i=i'$ and
  $n \leq n'$, and where $\omega$ is larger than any other element.
  The non-empty upwards-closed subsets of $X$ are the subsets of the
  form $\{\omega\} \cup \bigcup_{i \in S} \upc (i, n_i)$, where
  $S \subseteq \nat$ and for each $i$, $n_i \in \nat$.  Those that are
  compact are exactly those such that $S$ is finite, and those that
  are Scott-open are exactly those such that $S = \nat$.  In
  particular, note that all compact saturated subsets have empty
  interior.  The same happens in $X \times ]-\infty, 0]$.  Indeed,
  assume a compact saturated subset $Q$ of $X \times ]-\infty, 0]$
  with non-empty interior $U$.  Since $Q$ is compact, $\pi_1 [Q]$ is
  compact, too, and we see that $\pi_1 [Q]$ is also upwards-closed,
  hence of the form $\{\omega\} \cup \bigcup_{i \in S} \upc (i, n_i)$,
  with $S$ finite.  Pick some $j \in \nat$ outside of $S$.  Since $U$
  is non-empty, it must contain $(\omega, 0)$.  However, $(\omega, 0)$
  is the supremum of the chain of points $((j, n), 0)$, $n \in \nat$,
  so one of them is in $U$, hence in $Q$.  This is impossible since
  $j \not \in S$.  Since all compact saturated subsets of
  $X \times ]-\infty, 0]$ have empty interior, it follows that
  $X \times ]-\infty, 0]$ is not locally compact.

  Note that $X$ is sober.  Indeed, consider a non-empty closed subset
  $C$.  If its complement is empty, then $C = \dc \omega$.  Otherwise,
  $C$ is the complement of an open set
  $\{\omega\} \cup \bigcup_{i \in \nat} \upc (i, n_i)$, hence is equal
  to $\bigcup_{i \in S} \dc (i, n_i-1)$, where $S$ is the set of
  indices $i$ such that $n_i \geq 1$.  $S$ is non-empty since we have
  assumed $C$ non-empty.  Pick $i_0$ from $S$.  Then $C$ is included
  in the union of $\dc (i_0, n_{i_0}-1)$ and
  $C' = \bigcup_{i \in S \smallsetminus \{i_0\}} \dc (i, n_i-1)$.
  Note that $C$ is not included in $C'$, so if $C$ is irreducible,
  then $C \subseteq \dc (i_0, n_{i_0}-1)$, from which we obtain
  $C = \dc (i_0, n_{i_0}-1)$.  In any case, we have shown that every
  irreducible closed subset of $X$ is the downward closure of a unique
  point, hence $X$ is sober.

  Since $]-\infty, 0]$ is a continuous dcpo, it is sober in its Scott
  topology \cite[Proposition~8.2.12~$(b)$]{JGL-topology}.  Products of
  sober spaces are sober (Theorem~8.4.8, loc.\ cit.), so
  $X \times ]-\infty, 0]$ is sober.  Since every sober core-compact
  space is locally compact (Theorem~8.3.10, loc.\ cit.), we
  conclude that $X \times ]-\infty, 0]$ is not core-compact.

  We conclude that $\mathbf B (X, d) \cong X \times ]-\infty, 0]$ is
  Lipschitz regular but not core-compact.  \qed
\end{rem}

\section{Compact Subsets of Quasi-Metric Spaces}
\label{sec:comp-subs-quasi}

Let us characterize the compact saturated subsets of quasi-metric
spaces.  For that, we concentrate on the case where $X, d$ is a
continuous Yoneda-complete space, i.e., where $\mathbf B (X, d)$ is a
continuous dcpo \cite[Definition~7.4.72]{JGL-topology}; the notion was
initially introduced by Kostanek and Waszkiewicz
\cite{KW:formal:ball}.  In general, a \emph{continuous} quasi-metric
space is a standard quasi-metric space whose space of formal balls is
a continuous poset, see \cite[Section~3]{JGL:formalballs}.
(Standardness is automatically implied by Yoneda-completeness, but we
have to require it explicitly if we do not assume
Yoneda-completeness.)

We shall need the following useful (and certainly well-known) lemma.
\begin{lem}
  \label{lemma:compact:subspace}
  Let $Y$ be a subspace of a topological space $X$.  The compact
  subsets of $Y$ are exactly the compact subsets of $X$ that are
  included in $Y$.  If $Y$ is upwards-closed in $X$, then the compact
  saturated subsets of $Y$ are exactly the compact saturated subsets
  of $X$ that are included in $Y$.
\end{lem}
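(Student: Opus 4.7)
My plan is to derive both statements directly from the definitions of the subspace topology and of the specialization preorder. Recall that a subset of $Y$ is open in the subspace topology if and only if it has the form $U \cap Y$ for some $U \in \Open X$, and that the specialization preorder on $Y$ coincides with the restriction of the specialization preorder on $X$ to $Y$. Both facts are completely general and do not rely on any separation assumption.

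For the first claim, fix $K \subseteq Y$. If $K$ is compact in $X$, then any open cover ${(V_i)}_{i \in I}$ of $K$ by open subsets of $Y$ can be written as $V_i = U_i \cap Y$ with $U_i \in \Open X$; since $K \subseteq Y$, the family ${(U_i)}_{i \in I}$ already covers $K$ in $X$, so a finite subcover exists, and the corresponding $V_i$'s then form a finite subcover of $K$ in $Y$. Conversely, if $K$ is compact in $Y$, then every open cover ${(U_i)}_{i \in I}$ of $K$ by open subsets of $X$ yields a cover ${(U_i \cap Y)}_{i \in I}$ of $K$ in $Y$, from which a finite subcover may be extracted; the corresponding $U_i$'s then cover $K$ in $X$.

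For the second claim, assume $Y$ is upwards-closed in $X$ and let $K \subseteq Y$. In light of the first claim, it suffices to show that $K$ is saturated in $Y$ if and only if it is saturated in $X$. One direction is immediate, since every upward-closed subset of $X$ contained in $Y$ is a fortiori upwards-closed in $Y$. For the other direction, suppose $K$ is upwards-closed in $Y$; if $x \in K$ and $x \leq y$ in $X$, then $x \in Y$ and $Y$ being upwards-closed gives $y \in Y$, after which upwards-closure of $K$ in $Y$ yields $y \in K$.

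This is essentially routine and I do not expect any genuine obstacle; the only subtlety worth flagging is that the hypothesis that $Y$ is upwards-closed in $X$ is exactly what allows saturation in $Y$ to be transferred to saturation in $X$. Without this assumption, a saturated subset of $Y$ need not remain upwards-closed once viewed inside $X$, which is why the second half of the statement requires the extra condition while the first half does not.
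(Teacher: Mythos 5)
Your proof is correct and follows essentially the same route as the paper: a direct cover-chasing argument for the compactness claim (the paper phrases one direction via continuity of the inclusion map, which is the same thing), and the restriction of the specialization preorder to $Y$ for the saturation claim. No gaps.
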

\proof Take a compact subset $K$ of $Y$.  $K$ is also compact in $X$,
as the image of $K$ by the inclusion map, which is continuous by
definition of subspaces.  Conversely, let $K$ be a compact subset of
$X$ that is included in $Y$.  Consider an open cover
${(U_i)}_{i \in I}$ of $K$ in $Y$.  For each $i \in I$, there is an
open subset $V_i$ of $X$ such that $U_i = V_i \cap Y$.  Then
${(V_i)}_{i \in I}$ is an open cover of $K$ in $X$.  Extract a finite
subcover ${(V_i)}_{i \in J}$ of $K$ in $X$.  Then ${(U_i)}_{i \in J}$
is a finite subcover of $K$ in $Y$.

For the second part, we first note that the specialization preordering
of $Y$ is the restriction of that of $X$ to $Y$
\cite[Proposition~4.9.5]{JGL-topology}.  If $Y$ is upwards-closed in
$X$, it follows that the saturated subsets of $Y$ are exactly the
saturated subsets of $X$ that are included in $Y$.  \qed

Let $X, d$ be a quasi-metric space.  Up to the embedding
$x \mapsto (x, 0)$, $X$ is a subspace of $\mathbf B (X, d)$.  $X$ also
happens to be upwards-closed in $\mathbf B (X, d)$.
Lemma~\ref{lemma:compact:subspace} will help us characterize the
compact saturated subsets of $X$ through this embedding.

For every $\epsilon > 0$, let $V_\epsilon$ be the set of formal balls
$(x, r)$ with $r < \epsilon$.
\begin{lem}
  \label{lemma:Veps}
  If $X, d$ is standard, then $V_\epsilon$ is Scott-open in $\mathbf B
  (X, d)$, and $X = \bigcap_{\epsilon > 0} V_\epsilon = \bigcap_{n \in
    \nat} V_{1/2^n}$; in particular, $X$ is a $G_\delta$-subset of
  $\mathbf B (X, d)$.
\end{lem}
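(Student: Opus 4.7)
The plan is to verify separately that $V_\epsilon$ is upwards-closed and inaccessible by directed suprema, then to observe that a formal ball $(x,r)$ lies in every $V_\epsilon$ if and only if $r=0$.

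First I would show $V_\epsilon$ is upwards-closed in $\mathbf{B}(X,d)$. If $(x,r) \leq^{d^+} (y,s)$ with $r < \epsilon$, then $d(x,y) \leq r - s$, which forces $s \leq r < \epsilon$, so $(y,s) \in V_\epsilon$. This step does not require standardness.

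Next I would show that $V_\epsilon$ is inaccessible by directed suprema, which is where standardness enters. Let ${(x_i,r_i)}_{i \in I}$ be a directed family in $\mathbf{B}(X,d)$ with supremum $(x,r) \in V_\epsilon$. Since $X,d$ is standard, $r = \inf_{i \in I} r_i$. From $r < \epsilon$ we get some $i \in I$ with $r_i < \epsilon$, i.e.\ $(x_i, r_i) \in V_\epsilon$, as required. Combined with the first step, this gives Scott-openness.

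For the intersection claim, inclusion of $X$ (via $x \mapsto (x,0)$) in every $V_\epsilon$ is obvious. Conversely, if $(x,r)$ lies in $V_\epsilon$ for every $\epsilon > 0$, then $r < \epsilon$ for all $\epsilon > 0$, which forces $r=0$, so $(x,r) \in X$. The same reasoning with $\epsilon = 1/2^n$ gives $X = \bigcap_{n \in \nat} V_{1/2^n}$, exhibiting $X$ as a countable intersection of open subsets and hence a $G_\delta$-subset of $\mathbf{B}(X,d)$. The only subtle point is the use of standardness to ensure that the radius of a directed supremum is the infimum of the radii; without standardness, one could not exclude a jump in the radius at the limit and the Scott-openness of $V_\epsilon$ could fail.
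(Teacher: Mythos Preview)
Your proof is correct and is essentially the same as the paper's. The paper argues that $V_\epsilon$ is the inverse image of the Scott-open set $[0,\epsilon[$ under the radius map $(x,r)\mapsto r\colon \mathbf B(X,d)\to \Rp^{op}$, which is Scott-continuous precisely because $X,d$ is standard; your direct verification of upwards-closure and inaccessibility by directed suprema simply unpacks what this Scott-continuity means, using the same key fact that $r=\inf_{i\in I} r_i$ for a directed supremum in a standard space.
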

\proof
This is a more explicit version of Proposition~2.6 of
\cite{JGL:formalballs}.  The key is that $V_\epsilon$ is the inverse
image of $[0, \epsilon[$ by the radius map $(x, r) \mapsto r$, which
is Scott-continuous from $\mathbf B (X, d)$ to $\Rp^{op}$ by
Proposition~2.4~(3) of loc.cit., owing to the fact that $X, d$ is
standard.  \qed

\begin{lem}
  \label{lemma:Q:radius}
  Let $X, d$ be a standard quasi-metric space.  For every non-empty
  compact saturated subset $Q$ of $\mathbf B (X, d)$, there is a
  largest $r \in \Rp$ such that $Q$ contains a point of the form
  $(x, r)$.
\end{lem}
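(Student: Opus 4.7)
The plan is to apply compactness twice to the chain of Scott-open sets $V_\epsilon = \{(x, r) \in \mathbf B(X, d) : r < \epsilon\}$ from Lemma~\ref{lemma:Veps}. Let $R := \{r \in \Rp \mid \exists x \in X,\ (x, r) \in Q\}$ be the set of radii realised in $Q$; the task is to prove that $R$ has a maximum. Note that $R$ is non-empty because $Q$ is.

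First, I would establish that $R$ is bounded above. Since every formal ball has a finite radius, the directed family $\{V_n\}_{n \in \nat}$ covers $\mathbf B(X, d)$, and in particular $Q$. Compactness of $Q$, together with the fact that $V_m \subseteq V_n$ whenever $m \leq n$, yields $Q \subseteq V_N$ for some integer $N$, so every radius appearing in $Q$ is strictly smaller than $N$. Consequently $s := \sup R$ is a finite non-negative real.

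Second, I would show that $s$ is attained. If $s = 0$ then any $(x, r) \in Q$ satisfies $r = 0$, so $s \in R$ trivially. Otherwise, suppose for contradiction that $s > 0$ but $s \notin R$. Then every $(x, r) \in Q$ has $r < s$, so $Q \subseteq V_s$, and since $V_s = \bigcup_{0 < r < s} V_r$ is a directed union of Scott-open sets, a second compactness argument produces some $r_0 < s$ with $Q \subseteq V_{r_0}$, contradicting $s = \sup R$. Hence $s \in R$, providing a point $(x, s) \in Q$ with $s$ the largest possible radius.

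I do not expect a serious obstacle here: the argument rests solely on the standardness of $X, d$ (invoked through Lemma~\ref{lemma:Veps} to guarantee that each $V_\epsilon$ is Scott-open) and on the elementary principle that a compact subset included in a directed union of opens already sits inside one of them.
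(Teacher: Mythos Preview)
Your proof is correct. The paper takes a slightly more conceptual route: it observes that the radius map $(x,r)\mapsto r$ is Scott-continuous from $\mathbf B(X,d)$ to $\creal^{op}$ (this is exactly what gives openness of the $V_\epsilon$), pushes $Q$ forward along it, and then uses that a non-empty compact subset of $\creal^{op}$ attains its minimum in $\creal^{op}$, i.e., its maximum in $\creal$; finiteness follows since radii are never $+\infty$. Your two-step compactness argument with the directed covers $\{V_n\}_n$ and $\{V_r\}_{r<s}$ is precisely the unpacked version of that single step (the paper even mentions your first step parenthetically as an alternative for finiteness). Both approaches ultimately rest on standardness via the openness of $V_\epsilon$; the paper's version is a one-liner once the radius map is on the table, while yours has the virtue of being entirely self-contained.
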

We shall call the largest such $r \in \Rp$ the \emph{radius} $r (Q)$
of $Q$.

\proof The radius map $(x, r) \mapsto r$ is Scott-continuous from
$\mathbf B (X, d)$ to $\creal^{op}$, since $X, d$ is standard.  Since
continuous images of compact subsets are compact, this map reaches a
minimum $r$ in $\creal^{op}$, hence a maximum in $\creal$.  That
maximum cannot be equal to $+\infty$ since radii of formal balls are
all different from $+\infty$, so $r$ is in $\Rp$.  (Alternatively, $Q$
is contained in
$\mathbf B (X, d) = \bigcup_{\epsilon > 0} V_\epsilon$, so
$Q \subseteq V_\epsilon$ for some $\epsilon > 0$.)  \qed

The \emph{closed ball} $B^d_{x, \leq r}$ of center $x$ and radius $r$
is the set of points $y \in X$, such that $d (x, y) \leq r$.  Despite
the name, this is not a closed set in general, except when $X, d$ is
metric: closed balls are upwards-closed, whereas closed sets must be
downwards-closed.

Closed balls need not be compact either.  However, $B^d_{x, \leq r}$
is the intersection of $\upc (x, r)$ with $X$, considering the latter
as a subspace of $\mathbf B (X, d)$, and $\upc (x, r)$ is compact
saturated in $\mathbf B (X, d)$.  (The upward closure of any point
with respect to the specialization preordering of a topological space
is always compact saturated.)  Note that the radius of $\upc (x, r)$,
as introduced in Lemma~\ref{lemma:Q:radius}, is $r$.

In general, for every finite set of formal balls
$(x_1, r_1)$, \ldots, $(x_n, r_n)$, the upward closure
$Q = \upc \{(x_1, r_1), \cdots, (x_n, r_n)\}$ is compact saturated in
$\mathbf B (X, d)$.  The upward closures of finite sets, which are
compact saturated in any topological space, are called
\emph{finitary compact} in \cite[Proposition~4.4.21]{JGL-topology}.
When $n \neq 0$, the radius of $Q$ is $r (Q) = \max_{j=1}^n r_j$.

The intersection of $\upc \{(x_1, r_1), \cdots, (x_n, r_n)\}$ with $X$
is the union $\bigcup_{j=1}^n B^d_{x_j, \leq r_j}$ of finitely many
closed balls.  This may fail to be compact in $X$, but if we take the
intersection of a filtered family of such finite unions of formal
balls, in such a way that the radii go to $0$, we will obtain a
compact subset of $\mathbf B (X, d)$ that is included in $X$, hence a
compact subset of $X$.
\begin{lem}
  \label{lemma:Xd:compact:1}
  Let $X, d$ be a continuous Yoneda-complete quasi-metric space.  For
  every filtered family ${(Q_i)}_{i \in I}$ of non-empty compact
  saturated subsets of $\mathbf B (X, d)$ such that
  $\inf_{i \in I} r (Q_i) = 0$, $\bigcap_{i \in I} Q_i$ is a non-empty
  compact saturated subset of $X$ in its $d$-Scott topology.
\end{lem}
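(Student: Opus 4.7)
The plan is to work entirely inside $\mathbf B(X,d)$ first, take the intersection there using sobriety plus well-filteredness, then show that the resulting compact saturated set sits inside the embedded copy of $X$ and transfer it down via Lemma~\ref{lemma:compact:subspace}.

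First I would invoke continuity of $X,d$: since $\mathbf B(X,d)$ is a continuous dcpo, it is sober in its Scott topology (Proposition~8.2.12 of \cite{JGL-topology}, cited in Section~\ref{sec:general-topology}), hence well-filtered. Applying well-filteredness to the filtered family ${(Q_i)}_{i\in I}$ of compact saturated subsets yields that $Q \;=\; \bigcap_{i\in I} Q_i$ is compact saturated in $\mathbf B(X,d)$. For non-emptiness, I would take the open set $U = \emptyset$ in the defining property of well-filteredness: if $Q \subseteq \emptyset$, then some $Q_i \subseteq \emptyset$, contradicting the assumption that each $Q_i$ is non-empty.

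Next I would show $Q \subseteq X$ using the radius hypothesis $\inf_{i\in I} r(Q_i) = 0$. By Lemma~\ref{lemma:Veps}, $X = \bigcap_{\epsilon > 0} V_\epsilon$, where $V_\epsilon$ is the Scott-open set of formal balls with radius $<\epsilon$. Fix $\epsilon > 0$; by hypothesis there is an $i_0 \in I$ with $r(Q_{i_0}) < \epsilon$, and then by the very definition of $r(Q_{i_0})$ (Lemma~\ref{lemma:Q:radius}), every $(x,r) \in Q_{i_0}$ satisfies $r \leq r(Q_{i_0}) < \epsilon$, i.e.\ $Q_{i_0} \subseteq V_\epsilon$. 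Since $Q \subseteq Q_{i_0}$, we conclude $Q \subseteq V_\epsilon$ for every $\epsilon > 0$, hence $Q \subseteq X$.

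Finally I would transfer to the subspace $X$ of $\mathbf B(X,d)$. The embedding $x \mapsto (x,0)$ realises $X$ as an upwards-closed subset of $\mathbf B(X,d)$: if $(x,0) \leq^{d^+} (y,s)$ then $d(x,y) \leq -s$, which forces $s = 0$. Thus Lemma~\ref{lemma:compact:subspace} applies and $Q$, being compact saturated in $\mathbf B(X,d)$ and contained in $X$, is compact saturated in $X$ with its $d$-Scott topology (which by definition is the subspace topology inherited from $\mathbf B(X,d)$). The one subtle point worth verifying explicitly is that non-emptiness genuinely follows from well-filteredness with $U = \emptyset$; everything else is bookkeeping combining the standard monotonicity of the radius map (used to make $V_\epsilon$ Scott-open) with the $G_\delta$-representation of $X$ inside $\mathbf B(X,d)$.
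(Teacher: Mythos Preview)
Your proof is correct and follows essentially the same route as the paper: sobriety/well-filteredness of the continuous dcpo $\mathbf B(X,d)$ gives compactness, saturation, and non-emptiness of the intersection; the radius hypothesis together with Lemma~\ref{lemma:Veps} forces the intersection into $X$; and Lemma~\ref{lemma:compact:subspace} transfers the conclusion to the subspace. Your explicit verification that $X$ is upwards-closed in $\mathbf B(X,d)$ is a nice touch that the paper only states without proof.
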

This is in particular the case when $Q_i$ is taken to consist of
non-empty finitary compact sets.

\proof Since $X, d$ is continuous, $\mathbf B (X, d)$ is a continuous
dcpo.  Recall that every continuous dcpo is sober in its Scott
topology, that every sober space is well-filtered, and that in a
well-filtered space every filtered intersection of compact saturated
subsets is compact saturated.  Hence $\bigcap_{i \in I} Q_i$ is
compact saturated in $\mathbf B (X, d)$.

In a well-filtered space, filtered intersections of non-empty compact
saturated subsets are non-empty as well.  Indeed, if
$\bigcap_{i \in I} Q_i$ is empty, then it is included in the open set
$\emptyset$, and that implies $Q_i \subseteq \emptyset$ for some
$i \in I$.

For every $\epsilon > 0$, there is an $i \in I$ such that
$r (Q_i) < \epsilon$.  That implies $Q_i \subseteq V_\epsilon$.
Hence $\bigcap_{i \in I} Q_i$ is included in $\bigcap_{\epsilon > 0}
V_\epsilon = X$, using Lemma~\ref{lemma:Veps}.

We now conclude that $\bigcap_{i \in I} Q_i$ is a compact saturated
subset of $X$ by Lemma~\ref{lemma:compact:subspace}.  \qed

Since $\bigcap_{i \in I} Q_i$ is included in $X$, it is also equal to
$\bigcap_{i \in I} (Q_i \cap X)$.  For a finitary compact
$Q = \upc \{(x_1, r_1), \cdots, (x_n, r_n)\}$, $Q \cap X$ is the
finite union of closed balls $\bigcup_{j=1}^n B^d_{x_j, \leq r_j}$.
Hence Lemma~\ref{lemma:Xd:compact:1} implies the following (for
non-empty finitary compacts; if one of them is empty, the claim is
obvious).  Condition~(1) means that
$Q_i = \upc \{(x_{ij}, r_{ij}) \mid 1\leq j\leq n_i\}$ contains
$Q_{i'}$ for all $i \sqsubseteq i'$ in $I$, so that
${(Q_i)}_{i \in I}$ is filtered.  Condition~(2) states that
$\inf_{i \in I} r (Q_i)=0$.
\begin{cor}
  \label{corl:Xd:compact:1}
  Let $X, d$ be a continuous Yoneda-complete quasi-metric space.  Let
  $I, \sqsubseteq$ be a directed preordered set.  Assume that:
  \begin{enumerate}
  \item for all $i \sqsubseteq i'$ in $I$, for every $j'$, $1\leq j'
    \leq n_{i'}$, there is a $j$, $1\leq j\leq n_i$, such that $d
    (x_{ij}, x_{i'j'}) \leq r_{ij} - r_{i'j'}$;
  \item for every $\epsilon > 0$, there is an $i \in I$ such that for
    every $j$, $1\leq j\leq n_i$, $r_{ij} \leq \epsilon$,
  \end{enumerate}
  then
  $\bigcap_{i \in I} \bigcup_{j=1}^{n_i} B^d_{x_{ij}, \leq r_{ij}}$ is
  compact saturated in $X$.  \qed
\end{cor}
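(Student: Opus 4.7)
The plan is to reduce the claim directly to Lemma~\ref{lemma:Xd:compact:1} by setting $Q_i = \upc \{(x_{ij}, r_{ij}) \mid 1 \leq j \leq n_i\}$ in $\mathbf B (X, d)$. The bulk of the argument is then the verification that the family ${(Q_i)}_{i \in I}$ satisfies the hypotheses of Lemma~\ref{lemma:Xd:compact:1}, after which the conclusion about $\bigcap_{i \in I} \bigcup_{j=1}^{n_i} B^d_{x_{ij}, \leq r_{ij}}$ will follow by taking intersections with $X$.

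First I would dispatch the degenerate case: if $n_i = 0$ for some $i$, then $\bigcup_{j=1}^{n_i} B^d_{x_{ij}, \leq r_{ij}}$ is empty, the whole intersection is empty, and the claim is trivial. So assume every $Q_i$ is non-empty; then each $Q_i$ is finitary compact, hence compact saturated in $\mathbf B (X, d)$, with radius $r (Q_i) = \max_{j=1}^{n_i} r_{ij}$.

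Next I would check filteredness of ${(Q_i)}_{i \in I}$. Given $i \sqsubseteq i'$, condition~(1) says that for every $j'$ there exists $j$ with $d (x_{ij}, x_{i'j'}) \leq r_{ij} - r_{i'j'}$, that is, $(x_{ij}, r_{ij}) \leq^{d^+} (x_{i'j'}, r_{i'j'})$. Hence every generator of $Q_{i'}$ lies in $\upc (x_{ij}, r_{ij}) \subseteq Q_i$, and since $Q_{i'}$ is the upward closure of its generators, $Q_{i'} \subseteq Q_i$. Because $I$ is directed, for any $i_1, i_2 \in I$ one can pick $i \in I$ above both, whence $Q_i \subseteq Q_{i_1} \cap Q_{i_2}$, so the family is filtered. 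Condition~(2) translates immediately to $\inf_{i \in I} r (Q_i) = 0$.

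Applying Lemma~\ref{lemma:Xd:compact:1} yields that $\bigcap_{i \in I} Q_i$ is compact saturated in $X, d$. It remains to identify this intersection with the one in the statement: since $\bigcap_{i \in I} Q_i \subseteq X$, we have $\bigcap_{i \in I} Q_i = \bigcap_{i \in I} (Q_i \cap X)$, and $Q_i \cap X = \bigcup_{j=1}^{n_i} B^d_{x_{ij}, \leq r_{ij}}$ because $(y, 0) \in \upc (x_{ij}, r_{ij})$ is equivalent to $d (x_{ij}, y) \leq r_{ij}$. I do not anticipate any real obstacle; the only non-routine bookkeeping is the derivation of $Q_{i'} \subseteq Q_i$ from condition~(1), which hinges on recognising condition~(1) as precisely the statement that each generator of $Q_{i'}$ is dominated (in $\leq^{d^+}$) by some generator of $Q_i$.
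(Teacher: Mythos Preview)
Your proposal is correct and follows essentially the same approach as the paper: define $Q_i = \upc \{(x_{ij}, r_{ij}) \mid 1 \leq j \leq n_i\}$, dispose of the empty case, read condition~(1) as $Q_{i'} \subseteq Q_i$ (filteredness) and condition~(2) as $\inf_{i \in I} r(Q_i) = 0$, apply Lemma~\ref{lemma:Xd:compact:1}, and identify $Q_i \cap X$ with $\bigcup_{j=1}^{n_i} B^d_{x_{ij}, \leq r_{ij}}$. The paper's argument is the same, only more tersely stated in the paragraph preceding the corollary.
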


\begin{lem}
  \label{lemma:cont:Q}
  Let $X, d$ be a continuous quasi-metric space.  For every compact
  saturated subset $Q$ of $X$, for every open neighborhood $U$ of $Q$,
  for every $\epsilon > 0$, one can find a finite union $A$ of closed
  balls $B^d_{x_i, \leq r_i}$ with $r_i < \epsilon$, and such that
  $Q \subseteq int (A) \subseteq A \subseteq U$.
\end{lem}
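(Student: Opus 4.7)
The plan is to exploit the continuity of $\mathbf B (X, d)$ by approximating each point of $Q$ from below by a formal ball of radius strictly less than $\epsilon$ that already sits inside $\widehat U$, and then to extract a finite subcover using compactness of $Q$.

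First, I would consider the Scott-open subset $\widehat U \subseteq \mathbf B (X, d)$ associated with $U$, which satisfies $\widehat U \cap X = U$, together with the Scott-open set $V_\epsilon$ of formal balls of radius strictly less than $\epsilon$ (Scott-open by Lemma~\ref{lemma:Veps}). Their intersection $\mathcal W = \widehat U \cap V_\epsilon$ is Scott-open. For every $y \in Q$ we have $y \in U$ and $0 < \epsilon$, so $(y, 0)$ lies in $\mathcal W$. Since $\mathbf B (X, d)$ is a continuous dcpo, $(y, 0)$ is the supremum of the directed family of formal balls way-below it, and $\mathcal W$ being Scott-open forces some $(x_y, r_y) \ll (y, 0)$ to lie in $\mathcal W$; in particular $r_y < \epsilon$ and $(x_y, r_y) \in \widehat U$.

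Next, each $\uuarrow (x_y, r_y)$ is Scott-open in $\mathbf B (X, d)$, so its intersection with $X$ is $d$-Scott open in $X$ and contains $y$ because $(x_y, r_y) \ll (y, 0)$. The family $\{\uuarrow (x_y, r_y) \cap X \mid y \in Q\}$ therefore covers $Q$ in $X$, and by compactness I extract a finite subcover indexed by $y_1, \ldots, y_n$; writing $x_i = x_{y_i}$ and $r_i = r_{y_i}$, I set $A = \bigcup_{i=1}^n B^d_{x_i, \leq r_i}$.

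It remains to check the three conditions. Each $r_i < \epsilon$ by construction. For $Q \subseteq int (A)$, observe that whenever $(x_i, r_i) \ll (z, 0)$ one has $(x_i, r_i) \leq^{d^+} (z, 0)$, hence $d (x_i, z) \leq r_i$, so $\uuarrow (x_i, r_i) \cap X \subseteq B^d_{x_i, \leq r_i}$; consequently $\bigcup_{i=1}^n (\uuarrow (x_i, r_i) \cap X)$ is a $d$-Scott open set wedged between $Q$ and $A$, so $Q$ lies in $int (A)$. For $A \subseteq U$, each $(x_i, r_i)$ lies in $\widehat U$ and $\widehat U$ is upwards-closed, so $\upc (x_i, r_i) \subseteq \widehat U$; intersecting with $X$ gives $B^d_{x_i, \leq r_i} \subseteq U$. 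No serious obstacle is anticipated; the only genuinely useful trick is intersecting $\widehat U$ with $V_\epsilon$, which simultaneously enforces the radius bound and the containment in $U$.
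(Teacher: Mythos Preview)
Your proof is correct and follows essentially the same approach as the paper: intersect $\widehat U$ with $V_\epsilon$, use continuity of $\mathbf B(X,d)$ to cover $Q$ by sets of the form $\uuarrow(x_i,r_i)$ with $(x_i,r_i)\in\widehat U\cap V_\epsilon$, extract a finite subcover by compactness, and check the inclusions. One small terminological slip: the hypothesis only gives that $\mathbf B(X,d)$ is a continuous \emph{poset}, not a continuous dcpo (Yoneda-completeness is not assumed in this lemma), but your argument only uses that each point is the directed supremum of elements way-below it, which holds in any continuous poset, so the proof goes through unchanged.
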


\proof The image of $Q$ by the embedding $x \mapsto (x, 0)$ of $X$
into $\mathbf B (X, d)$ is compact.  If we agree to equate $x$ with
$(x, 0)$, then $Q$ is compact not only in $X$, but also in
$\mathbf B (X, d)$.  Recall that $\widehat U$ is a Scott-open subset
of $\mathbf B (X, d)$ and that $\widehat U \cap X = U$.  By
intersecting it with
$V_\epsilon = \{(x, r) \mid x \in X, r < \epsilon\}$ (a Scott-open
subset, owing to the fact that $X, d$ is standard, see
Lemma~\ref{lemma:Veps}), we obtain an open neighborhood
$\widehat U \cap V_\epsilon$ of $Q$ in $\mathbf B (X, d)$.

Since $\mathbf B (X, d)$ is a continuous poset,
$\widehat U \cap V_\epsilon$ is the union of all open subsets
$\uuarrow (x, r)$, $(x, r) \in \widehat U \cap V_\epsilon$.  By
compactness, $Q$ is therefore included in some finite union
$\bigcup_{i=1}^n \uuarrow (x_i, r_i)$, where every $(x_i, r_i)$ is in
$\widehat U \cap V_\epsilon$.  In particular, $r_i < \epsilon$ for
each $i$.  Note also that $\upc (x_i, r_i)$ is included in
$\widehat U$, since $\widehat U$ is upwards-closed, so
$\upc (x_i, r_i) \cap X = B^d_{x_i, \leq r_i}$ is included in $U$.
Therefore $A = \bigcup_{i=1}^n B^d_{x_i, \leq r_i}$, whose interior
contains the open neighborhood
$\bigcup_{i=1}^n (\uuarrow (x_i, r_i) \cap X)$ of $Q$, fits.  \qed

\begin{prop}
  \label{prop:Xd:compact}
  Let $X, d$ be a continuous Yoneda-complete quasi-metric space.  The
  compact saturated subsets of $X$ with its $d$-Scott topology are
  exactly the sets
  $\bigcap_{i \in I} \bigcup_{j=1}^{n_i} B^d_{x_{ij}, \leq r_{ij}}$
  that satisfy the conditions of Corollary~\ref{corl:Xd:compact:1}, or
  equivalently, the filtered intersections $\bigcap_{i \in I} Q_i$ of
  finitary compacts $Q_i$ of $\mathbf B (X, d)$ with
  $\inf_{i \in I} r (Q_i)=0$.
\end{prop}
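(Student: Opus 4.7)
Corollary~\ref{corl:Xd:compact:1} and Lemma~\ref{lemma:Xd:compact:1} already yield the easy direction: for any filtered family ${(Q_i)}_{i \in I}$ of finitary compacts of $\mathbf{B}(X,d)$ with $\inf_i r(Q_i)=0$, writing $Q_i = \upc\{(x_{ij}, r_{ij})\}_{j=1}^{n_i}$, the intersection $\bigcap_i Q_i$ lies inside $X$ (by Lemma~\ref{lemma:Xd:compact:1}) hence coincides with $\bigcap_i \bigcup_j B^d_{x_{ij}, \leq r_{ij}}$, and conditions~(1)--(2) of Corollary~\ref{corl:Xd:compact:1} are literally the filteredness of ${(Q_i)}_i$ and $\inf_i r(Q_i)=0$. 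Only the converse remains: every compact saturated $Q \subseteq X$ is of this form.

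Fix such a $Q$. Since $X$ is upwards-closed in $\mathbf{B}(X,d)$ (any $(x,0) \leq^{d^+} (y,s)$ forces $s=0$), Lemma~\ref{lemma:compact:subspace} makes $Q$ compact saturated in $\mathbf{B}(X,d)$ as well. Let $\mathcal{F}$ be the family of finite subsets $F \subseteq \mathbf{B}(X,d)$ such that $Q \subseteq \bigcup_{(x,r) \in F} \uuarrow (x,r)$, ordered by $F \preceq F'$ iff $\upc F \supseteq \upc F'$. For each $F \in \mathcal{F}$, the set $Q_F := \upc F$ is a finitary compact containing $Q$. The proposition will follow once I verify that $\mathcal{F}$ is directed, $\inf_{F \in \mathcal{F}} r(Q_F)=0$, and $\bigcap_{F \in \mathcal{F}} Q_F = Q$.

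For directedness, given $F_1, F_2 \in \mathcal{F}$, the Scott-open set $O := \bigl(\bigcup_{(x,r) \in F_1} \uuarrow(x,r)\bigr) \cap \bigl(\bigcup_{(y,s) \in F_2} \uuarrow(y,s)\bigr)$ contains $Q$ and is included in the upwards-closed set $\upc F_1 \cap \upc F_2$. Because $\mathbf{B}(X,d)$ is continuous, each $(x,0) \in Q$ is the directed supremum of points way-below it, so some $(y_x, s_x) \ll (x,0)$ lies in $O$; compactness of $Q$ then yields finitely many $\uuarrow(y_{x_k}, s_{x_k})$ covering $Q$, and $F_3 := \{(y_{x_k}, s_{x_k})\}_k$ is an element of $\mathcal{F}$ with $\upc F_3 \subseteq O \subseteq \upc F_1 \cap \upc F_2$.

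For $\inf r(Q_F)=0$, Lemma~\ref{lemma:cont:Q} applied with $U = X$ and any $\epsilon>0$ supplies a finite set of formal balls $(x_i, r_i)$ with all $r_i < \epsilon$ and $Q \subseteq \bigcup_i \uuarrow(x_i, r_i)$, so $F = \{(x_i, r_i)\} \in \mathcal{F}$ and $r(Q_F) < \epsilon$. Finally, $Q \subseteq \bigcap_F Q_F$ is immediate, and for the reverse inclusion Lemma~\ref{lemma:Xd:compact:1} places $\bigcap_F Q_F$ inside $X$, so any element has the form $(y,0)$; if $y \notin Q$, saturation of $Q$ in $X$ produces an open $U \supseteq Q$ avoiding $y$, and Lemma~\ref{lemma:cont:Q} applied to this $U$ yields some $F \in \mathcal{F}$ with $Q_F \cap X \subseteq U$, forcing the contradiction $y \in U$. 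The main obstacle is the directedness step, which genuinely needs the continuity of $\mathbf{B}(X,d)$ together with compactness of $Q$ in order to extract a single finitary refinement inside $Q_{F_1} \cap Q_{F_2}$; the radius-control and separation claims are then quick consequences of Lemma~\ref{lemma:cont:Q}.
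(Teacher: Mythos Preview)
Your proof is correct, and the approach differs from the paper's in an interesting way. The paper builds, for each open neighborhood $U$ of $Q$, a nested sequence $A_{U,0} \supseteq V_{U,0} \supseteq A_{U,1} \supseteq V_{U,1} \supseteq \cdots$ by repeated application of Lemma~\ref{lemma:cont:Q}, where $A_{U,n}$ is a finite union of closed balls of radius $<1/2^n$ and $V_{U,n}$ is its interior; the resulting doubly-indexed family ${(A_{U,n})}_{U,n}$ is then shown to be filtered using the concrete nesting, and its intersection is $Q$ because it lies between $Q$ and every open neighborhood of $Q$. You instead take the \emph{universal} family $\mathcal F$ of all finite $F \subseteq \mathbf B(X,d)$ whose way-below cones cover $Q$, and prove directedness directly by intersecting the two open covers and extracting a finite way-below refinement via continuity of $\mathbf B(X,d)$ and compactness of $Q$. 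Your route is arguably cleaner and more conceptual---it avoids the iterative construction and the bookkeeping with interiors---while the paper's route is slightly more explicit about the shape of the approximating family (radii shrinking geometrically). Both ultimately hinge on Lemma~\ref{lemma:cont:Q} for radius control and for separating $Q$ from points outside it.

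One triviality you leave implicit: when $Q=\emptyset$, your family $\mathcal F$ contains $F=\emptyset$, for which $r(Q_F)$ is undefined; but then the intersection is empty and the claim is vacuous (take any $n_i=0$, as the paper also remarks before Corollary~\ref{corl:Xd:compact:1}).
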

\proof Let $Q$ be compact saturated in $X$, and $U$ be an open
neighborhood of $Q$.  By Lemma~\ref{lemma:cont:Q}, we can build a
subset $A_{U,0}$ of $U$, which is a finite union of closed balls of
radius $< 1$ and whose interior $V_{U,0}$ contains $Q$.  Reusing that
lemma, we can find a subset $A_{U,1}$ of $V_{U,0}$, which is a finite
union of closed balls of radius $<1/2$ and whose interior $V_{U,1}$
contains $Q$.  We continue in this way to build $A_{U,n}$ for every
$n \in \nat$, together with its interior $V_{U,n}$ containing $Q$, in
such a way that $A_{U,n}$ is a finite union of closed balls of radius
$<1/2^n$.  The family of all sets $A_{U, n}$, where $U$ ranges over
the open neighborhoods of $Q$ and $n \in \nat$ is filtered, because
$A_{U, m}$ and $A_{U', n}$ both contain
$A_{V_{U,m} \cap V_{U',n}, 0}$.  Their intersection contains $Q$, and
is included in any open neighborhood $U$ of $Q$, hence equals $Q$,
since $Q$ is saturated.


In the reverse direction, we use Lemma~\ref{lemma:Xd:compact:1}.
\qed

\begin{rem}
  \label{rem:Hausdorff}
  Proposition~\ref{prop:Xd:compact} is a quasi-metric analogue of a
  result by Hausdorff stating that, in a complete metric space, the
  compact subsets are exactly the closed, precompact sets.  A set is
  precompact if and only if, for every $\epsilon > 0$, it can be
  covered by a finite union of open balls of radius $\epsilon$.  In
  particular, any set of the form
  $\bigcap_{i \in I} \bigcup_{j=1}^{n_i} B^d_{x_{ij}, \leq r_{ij}}$ as
  above, in a metric space, is both closed and precompact, hence
  compact.  In a quasi-metric space, beware that the closed balls
  $B^d_{x_{ij}, \leq r_{ij}}$ may fail to be closed.

  Another analogue of Hausdorff's result can be found in
  \cite[Proposition~7.2.22]{JGL-topology}: the symcompact quasi-metric
  spaces, namely the quasi-metric spaces $X, d$ that are compact in
  the open ball topology of $d^{sym}$ (where
  $d^{sym} (x, y) = \max (d (x, y), d (y, x))$) are exactly the
  Smyth-complete totally bounded quasi-metric spaces ($X, d$ is
  totally bounded if and only if $X, d^{sym}$ is precompact).  That is
  less relevant to our setting.
\end{rem}
It turns out I will not need Proposition~\ref{prop:Xd:compact} in the
following.  It has independent interest, and we shall definitely
require some of the auxiliary lemmas that led us to it.

\section{Spaces of Continuous and Lipschitz Maps}
\label{sec:cont-lipsch-maps}

We equip $\creal$ with the Scott topology of its ordering $\leq$, or
equivalently, with the $\dreal$-Scott topology.  For a topological
space $X$, the continuous maps from $X$ to $\creal$ are usually called
\emph{lower semicontinuous}.
\begin{defi}[$\Lform X$]
  \label{defn:LX}
  Let $\Lform X$ denote the set of lower semicontinuous maps from $X$
  to $\creal$.  We give it the Scott topology of the pointwise ordering.
\end{defi}

\begin{lem}
  \label{lemma:Lalpha:LX}
  Let $X, d$ be a standard quasi-metric space, and $\alpha > 0$.  Every
  $\alpha$-Lipschitz continuous map from $X, d$ to $\creal, \dreal$ is
  lower semicontinuous.
\end{lem}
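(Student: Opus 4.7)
The plan is to reduce this to the implication (1) $\Rightarrow$ (2) of Proposition~\ref{prop:cont}, combined with the identification of the $\dreal$-Scott topology with the Scott topology on $\creal$.

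First I would recall that by Definition~\ref{defn:LX}, lower semicontinuity of $f \colon X \to \creal$ means continuity when $X$ carries its $d$-Scott topology and $\creal$ carries its Scott topology. By Example~\ref{exa:creal}, the Scott topology on $\creal$ coincides with the $\dreal$-Scott topology, so it suffices to show that $f$ is continuous from $X$ with its $d$-Scott topology to $\creal$ with its $\dreal$-Scott topology.

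Next I would invoke Proposition~\ref{prop:cont}, direction (1) $\Rightarrow$ (2). Its hypothesis is that $f$ is $\alpha$-Lipschitz continuous in the sense of Definition~\ref{defn:Lipcont}, which is exactly our assumption, and the conclusion is that $f$ is continuous from $X$ with its $d$-Scott topology to $\creal$ with its $\dreal$-Scott topology (in particular $\alpha$-Lipschitz, which we do not need here). The implication (1) $\Rightarrow$ (2) of Proposition~\ref{prop:cont} requires no standardness hypothesis, but in any case $\creal, \dreal$ is standard since it is Yoneda-complete, and $X, d$ is standard by assumption of the lemma.

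Combining these two steps yields that $f$ is continuous from $X$ with the $d$-Scott topology to $\creal$ with the Scott topology, i.e., $f \in \Lform X$. There is no real obstacle: the content of the lemma is entirely packaged in Proposition~\ref{prop:cont} and Example~\ref{exa:creal}, so the proof amounts to stringing these together.
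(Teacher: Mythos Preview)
Your proof is correct. The paper takes a slightly different route: instead of invoking Proposition~\ref{prop:cont} together with Example~\ref{exa:creal}, it appeals to Lemma~\ref{lemma:f'}, which is specific to $\creal$-valued maps. From the Scott-continuity of $f' \colon (x,r) \mapsto f(x) - \alpha r$ it then observes directly that $f^{-1}(]t,+\infty]) = X \cap {f'}^{-1}(]t,+\infty])$ is $d$-Scott open for every $t \in \real$. Your argument is more conceptual and works uniformly for any quasi-metric codomain (modulo knowing its $d$-Scott topology), at the cost of a detour through Example~\ref{exa:creal}; the paper's argument is a one-line computation tailored to the target $\creal$. Either is perfectly adequate here.
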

\proof Recall that $f$ is $\alpha$-Lipschitz continuous if and only if
$f' \colon (x, r) \mapsto f (x) - \alpha r$ is Scott-continuous, by
Lemma~\ref{lemma:f'}.  Then
$f^{-1} (]t, +\infty]) = X \cap {f'}^{-1} (]t, +\infty])$ for every
$t \in \real$, showing that $f$ itself is lower semicontinuous.  \qed

\begin{defi}[$\Lform_\alpha (X, d)$, $\Lform_\infty (X, d)$]
  \label{defn:Lalpha}
  Let $\Lform_\alpha (X, d)$ be the set of $\alpha$-Lipschitz
  continuous maps from $X, d$ to $\creal, \dreal$, and let
  $\Lform_\infty (X, d) = \bigcup_{\alpha \in \Rp} \Lform_\alpha (X,
  d)$ be the set of all Lipschitz continuous maps from $X, d$ to
  $\creal, \dreal$.  We give those spaces the subspace topology from
  $\Lform X$ (which makes sense, by Lemma~\ref{lemma:Lalpha:LX}).
\end{defi}

We also write $\Lform_\infty X$ for $\Lform_\infty (X, d)$, and
$\Lform_\alpha X$ for $\Lform_\alpha (X, d)$.

Beware that there is no reason why the topologies on
$\Lform_\alpha (X, d)$ and $\Lform_\infty (X, d)$ would be the Scott
topology of the pointwise ordering.  We shall see a case where those
topologies coincide in Proposition~\ref{prop:Lalpha:retract}.

\begin{defi}[$\Lform_\alpha^a (X, d)$, $\Lform_\infty^a (X, d)$]
  \label{defn:Lalpha:bnd}
  Let $\Lform_\alpha^a X$ or $\Lform_\alpha^a (X, d)$ be the space of
  all $\alpha$-Lipschitz continuous maps from $X, d$ to
  $[0, \alpha a], \dreal$, for $\alpha \in \Rp$, where $a \in \Rp$,
  $a > 0$.  Give it the subspace topology from $\Lform_\alpha X$, or
  equivalently, from $\Lform X$.
\end{defi}

\begin{defi}[$\Lform_\infty^\bnd (X, d)$]
  \label{defn:Lbnd}
  Let $\Lform_\infty^\bnd (X, d)$ be the subspace of all
  \emph{bounded} maps in $\Lform_\infty (X, d)$, and
  $\Lform_\alpha^\bnd (X, d)$ be the corresponding subspace of all
  bounded maps in $\Lform_\alpha (X, d)$, with the subspace
  topologies.
\end{defi}
Since the specialization ordering on $\Lform X$ is the pointwise
ordering ($f \leq g$ if and only if for every $x \in X$, $f (x) \leq g
(x)$), the same holds for all the spaces defined above as well.
We shall always write $\leq$ for that ordering.

We note the following, although we shall only use it much later.
\begin{lem}
  \label{lemma:Linfa}
  Let $X, d$ be a quasi-metric space.  For every $a > 0$,
  $\Lform_\infty^\bnd (X, d) = \bigcup_{\alpha > 0} \Lform_\alpha^a
  (X, d)$.
\end{lem}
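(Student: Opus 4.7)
The plan is to prove the two inclusions $\supseteq$ and $\subseteq$ directly, since both are essentially bookkeeping given Proposition~\ref{prop:alphaLip:props}.

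For the inclusion $\bigcup_{\alpha > 0} \Lform_\alpha^a (X, d) \subseteq \Lform_\infty^\bnd (X, d)$, I would fix any $f \in \Lform_\alpha^a (X, d)$ with $\alpha > 0$. By definition $f$ is $\alpha$-Lipschitz continuous, so $f \in \Lform_\alpha (X, d) \subseteq \Lform_\infty (X, d)$; and since $f$ takes values in $[0, \alpha a]$ with $\alpha a < +\infty$, $f$ is bounded. Hence $f \in \Lform_\infty^\bnd (X, d)$.

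For the reverse inclusion, suppose $f \in \Lform_\infty^\bnd (X, d)$. Then there is some $\alpha_0 \in \Rp$ such that $f \in \Lform_{\alpha_0} (X, d)$, and there is some $M \in \Rp$ such that $f(x) \leq M$ for all $x \in X$. The idea is to pick $\alpha$ large enough to simultaneously absorb both constraints: take for instance $\alpha = \max(\alpha_0, M/a, 1)$, which is strictly positive. By item~(5) of Proposition~\ref{prop:alphaLip:props}, $\alpha \geq \alpha_0$ implies that $f$ is $\alpha$-Lipschitz continuous. By construction $\alpha a \geq M$, so $f$ takes its values in $[0, \alpha a]$. Thus $f \in \Lform_\alpha^a (X, d) \subseteq \bigcup_{\alpha > 0} \Lform_\alpha^a (X, d)$.

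There is no real obstacle here: the only subtle point is being slightly careful to force $\alpha > 0$ (in case $\alpha_0 = 0$ and $M = 0$, i.e., when $f$ is constantly $0$), which is handled by the extra $\max$ with a positive number. The rest is a direct appeal to the monotonicity of the family $\Lform_\alpha (X, d)$ in $\alpha$ established in Proposition~\ref{prop:alphaLip:props}(5).
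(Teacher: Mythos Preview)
Your proof is correct and follows essentially the same approach as the paper: both show the nontrivial inclusion by choosing $\alpha$ large enough (at least $\alpha_0$ and at least $M/a$) and invoking Proposition~\ref{prop:alphaLip:props}(5). Your extra care in forcing $\alpha > 0$ via the $\max$ with $1$ is a minor explicit detail the paper handles implicitly.
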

\proof Consider any bounded map $f$ from $\Lform_\infty (X, d)$.  By
definition, $f \leq b.\mathbf 1$ for some $b \in \Rp$, where
$\mathbf 1$ is the constant map equal to $1$, and
$f \in \Lform_\alpha (X, d)$ for some $\alpha > 0$.  Since
$\Lform_\alpha (X, d)$ grows as $\alpha$ increases, by
Proposition~\ref{prop:alphaLip:props}~(5), we may assume that
$\alpha \geq b/a$.  Then $f$ is in $\Lform_\alpha^a (X, d)$.  The
reverse inclusion is obvious. \qed

Assuming $X, d$ standard, for each $\alpha \in \Rp$, there is a
largest $\alpha$-Lipschitz continuous map $f^{(\alpha)}$ below any
lower semicontinuous map $f \in \Lform X$.  Moreover, the family
${(f^{(\alpha)})}_{\alpha \in \Rp}$ is a chain, and
$\sup_{\alpha \in \Rp} f^{(\alpha)} = f$, where suprema are taken
pointwise \cite[Theorem~6.17]{JGL:formalballs}.

We shall examine what $f^{(\alpha)}$ may be, and what properties it
may have, in two important cases: when $f$ is lower semicontinuous
and $X, d$ is Lipschitz regular; and when $f$ is already
$\alpha$-Lipschitz but not necessarily continuous, and $X, d$ is
algebraic.  Next, while $f^{(\alpha)}$ is largest, we shall introduce
functions that are smallest among the $1$-Lipschitz continuous maps
mapping some given center points to values above some specified
numbers.

\subsection{$f^{(\alpha)}$ for $f$ Lower Semicontinuous}
\label{sec:falpha-f-lower}

Let $X, d$ be a standard quasi-metric space.  We know that, for every
$d$-Scott open subset $U$ of $X$, for all $\alpha, r \in \Rp$,
$(r\chi_U)^{(\alpha)}$ is the map
$x \mapsto \min (r, \alpha d (x, \overline U))$ (Proposition~6.14,
loc.cit.).  We shall extend that below.  Also, for every
$\alpha$-Lipschitz continuous map $g \colon X \to \creal$, for every
$t \in \Rp$, $t g$ is $t \alpha$-Lipschitz continuous
(Proposition~\ref{prop:alphaLip:props}~(1)).

Let us call \emph{step function} any function from $X$ to $\creal$ of
the form $\sup_{i=1}^m a_i \chi_{U_i}$, where
$0 < a_1 < \cdots < a_m < +\infty$ and
$U_1 \supseteq U_2 \supseteq \cdots \supseteq U_m$ form a finite
antitone family of open subsets of $X$.
\begin{lem}
  \label{lemma:f(alpha):step}
  Let $X, d$ be a standard quasi-metric space.  For a step function
  $f = \sup_{i=1}^m a_i \chi_{U_i}$, and $\alpha > 0$, $f^{(\alpha)}$
  is the function that maps every $x \in X$ to
  $\min (\alpha d (x, \overline U_1), a_1 + \alpha d (x, \overline
  U_2), \cdots, \allowbreak a_{i-1} + \alpha d (x, \overline U_i),
  \cdots, a_{m-1} + \alpha d (x, \overline U_m), a_m)$.
\end{lem}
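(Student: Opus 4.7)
The plan is to prove the equality in three steps: verify that the proposed $g(x) := \min(\alpha d(x, \overline{U_1}), a_1 + \alpha d(x, \overline{U_2}), \ldots, a_{m-1} + \alpha d(x, \overline{U_m}), a_m)$ is itself $\alpha$-Lipschitz continuous, check that $g \leq f$, and finally show that any $\alpha$-Lipschitz continuous $h \leq f$ satisfies $h \leq g$.

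For step one, I would argue that each single term $a_{i-1} + \alpha d(\cdot, \overline{U_i})$ is $\alpha$-Lipschitz continuous. Since $(r\chi_{U_i})^{(\alpha)} = \min(r, \alpha d(\cdot, \overline{U_i}))$ by the cited Proposition 6.14, the family $\{(r\chi_{U_i})^{(\alpha)}\}_{r \in \Rp}$ is a chain of $\alpha$-Lipschitz continuous maps whose pointwise supremum is $\alpha d(\cdot, \overline{U_i})$; by Proposition~\ref{prop:alphaLip:props}~(4) this supremum is $\alpha$-Lipschitz continuous. Adding the constant $a_{i-1}$, which is $0$-Lipschitz continuous (item~(6)), preserves $\alpha$-Lipschitz continuity (item~(2)). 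The constant $a_m$ is trivially $\alpha$-Lipschitz continuous. Finally, taking the minimum of finitely many $\alpha$-Lipschitz continuous maps remains $\alpha$-Lipschitz continuous by item~(3).

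For step two, I would split according to which ring $x$ lies in: if $x \notin U_1$, then $d(x, \overline{U_1}) = 0$, so the first term of $g$ is $0 = f(x)$. If $x \in U_j \setminus U_{j+1}$ for some $1 \leq j \leq m-1$, then $x \notin U_{j+1}$ gives $d(x, \overline{U_{j+1}}) = 0$, so the $(j+1)$-th term equals $a_j = f(x)$. If $x \in U_m$, the final constant term is $a_m = f(x)$. In each case $g(x) \leq f(x)$. (Here I use that $\sup\{r : (x,r) \in \widehat{U_i}\} = 0$ whenever $x \notin U_i$, which follows from $\widehat{U_i} \cap X = U_i$ together with upwards-closedness of $\widehat{U_i}$.)

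Step three is the crux, and I would use the formal-ball characterization from Lemma~\ref{lemma:f'}. Let $h$ be $\alpha$-Lipschitz continuous with $h \leq f$. Then $h' \colon (x,r) \mapsto h(x) - \alpha r$ is Scott-continuous from $\mathbf B(X,d)$ to $\real \cup \{+\infty\}$. Fix $i$ and consider the Scott-open set $\mathcal{V}_i = \{(x,r) \in \mathbf B(X,d) : h(x) - \alpha r > a_{i-1}\}$. Its intersection with $X$ is $\{x : h(x) > a_{i-1}\} \subseteq \{x : f(x) > a_{i-1}\} = U_i$, the last equality because $f(y) > a_{i-1}$ precisely when $y \in U_j$ for some $j \geq i$, i.e.\ when $y \in U_i$. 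By the maximality of $\widehat{U_i}$, we obtain $\mathcal{V}_i \subseteq \widehat{U_i}$. Now for any $x$ with $h(x) > a_{i-1}$ and any $r < (h(x)-a_{i-1})/\alpha$, we have $(x,r) \in \mathcal{V}_i \subseteq \widehat{U_i}$, so $d(x, \overline{U_i}) \geq (h(x) - a_{i-1})/\alpha$, giving $h(x) \leq a_{i-1} + \alpha d(x, \overline{U_i})$; if instead $h(x) \leq a_{i-1}$ the inequality is automatic. Combined with $h(x) \leq f(x) \leq a_m$ for the last term, taking the minimum over all $i$ gives $h \leq g$.

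The main obstacle is step three, and within it the slightly delicate point is that $d(x, \overline{U_i})$ is defined via formal balls rather than as an infimum of distances, so one must exploit the maximality property of $\widehat{U_i}$ through a Scott-open subset of $\mathbf B(X,d)$ built from $h$, rather than any direct metric estimate.
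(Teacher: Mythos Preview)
Your proposal is correct and follows essentially the same three-step structure as the paper's proof: show $g$ is $\alpha$-Lipschitz continuous, show $g \leq f$ by the ring decomposition, then show any $\alpha$-Lipschitz continuous $h \leq f$ lies below $g$ via the Scott-open set $h'^{-1}(]a_{i-1},+\infty])$ and the maximality of $\widehat{U_i}$. The only minor difference is in step one: the paper directly cites that $d(\_, \overline U)$ is $1$-Lipschitz continuous (Lemma~6.11(3) of \cite{JGL:formalballs}), whereas you recover $\alpha d(\_, \overline{U_i})$ as a supremum of the chain $\{(r\chi_{U_i})^{(\alpha)}\}_r$; both are valid.
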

\proof Let
$g (x) = \min (\alpha d (x, \overline U_1), a_1 + \alpha d (x,
\overline U_2), \cdots, a_{i-1} + \alpha d (x, \overline U_i), \cdots,
\allowbreak a_{m-1} + \alpha d (x, \overline U_m), \allowbreak a_m)$.
Each of the maps $x \mapsto a_{i-1} + \alpha d (x, \overline U_i)$
(where, for convenience, we shall assume $a_0 = 0$, so as not to make
a special case for $i=1$) is $\alpha$-Lipschitz continuous, and
therefore $g$ is $\alpha$-Lipschitz continuous.  Indeed, the map
$d (\_, \overline U)$ is $1$-Lipschitz (Yoneda-)continuous, as shown
in Lemma~6.11~(3) of \cite{JGL:formalballs}; the rest of the argument
relies on Proposition~\ref{prop:alphaLip:props}.

We claim that $g (x) \leq f (x)$ for every $x \in X$.  Let $U_0=X$, so
that $U_i$ makes sense also when $i=0$, and let $U_{m+1}=\emptyset$.
The latter allows us to write $g (x)$ as
$\min_{i=0}^{m} (a_i + \alpha d (x, \overline U_{i+1}))$, noticing
that $d (x, \overline U_{m+1}) = 0$.  Indeed, by
\cite[Lemma~6.11~(1)]{JGL:formalballs}, for every open subset $U$,
$d (x, \overline U)=0$ if and only if $x \not\in U$.

There is a unique index $j$, $0\leq j \leq m$, such that $x \in U_j$
and $x \not\in U_{j+1}$.  Then
$g (x) \leq a_j + d (x, \overline U_{j+1}) = a_j$.  Noticing that
$f (x) = a_j$, it follows that $g (x) \leq f (x)$.

Now consider any $\alpha$-Lipschitz continuous map $h \leq f$, and let
us show that $h \leq g$.  We fix $x \in X$ and $i$ with
$0 \leq i \leq m$, and we claim that
$h (x) \leq a_i + \alpha d (x, \overline U_{i+1})$.  Since $h$ is
$\alpha$-Lipschitz continuous,
$h' \colon (x, r) \mapsto h (x) - \alpha r$ is Scott-continuous, so
$V = {h'}^{-1} (]a_i, +\infty])$ is open in $\mathbf B (X, d)$.

For every element of the form $(y, 0)$ in $V \cap X$,
$h' (y, 0) = h (y) > a_i$, hence $f (y) \geq h (y) > a_i$, which
implies that $y$ is in $U_{i+1}$.  We have just shown that
$V \cap X \subseteq U_{i+1}$, and that implies
$V \subseteq \widehat U_{i+1}$, by maximality of $\widehat U_{i+1}$.

Now, for every $s \in \Rp$ such that $s < (h (x) - a_i)/\alpha$, i.e.,
such that $h' (x, s) = h (x) - \alpha s$ is strictly larger than
$a_i$, by definition $(x, s)$ is in $V$, hence in $\widehat U_{i+1}$.
By definition, this means that $s \leq d (x, \overline U_{i+1})$.
Taking suprema over $s$, we obtain
$(h (x) - a_i) / \alpha \leq d (x, \overline U_{i+1})$, equivalently
$h (x) \leq a_i + \alpha d (x, \overline U_{i+1})$.  Since that holds
for every $i$, $0\leq i\leq m$, $h (x) \leq g (x)$.  Hence $g$ is the
largest $\alpha$-Lipschitz continuous map below $f$, in other words,
$g = f^{(\alpha)}$.  \qed

Given any topological space $X$, every lower semicontinuous function
$f \colon X \to \creal$ is the pointwise supremum of a chain of step
functions:
\begin{equation}
  \label{eq:fK}
  f_K (x) = \frac 1 {2^K} \sup\nolimits_{k=1}^{K2^K} k \chi_{f^{-1}
    (]k/2^K, +\infty])} (x)
\end{equation}
where $K \in \nat$.  If $X, d$ is a standard quasi-metric space,
$f_K^{(\alpha)}$ is given by Lemma~\ref{lemma:f(alpha):step}, namely:
\begin{equation}
  \label{eq:fK(alpha)}
  f_K^{(\alpha)} (x) = \min (\min\nolimits_{k=1}^{K2^K} (\frac {k-1} {2^K} + \alpha d (x,
  \overline {f^{-1} (]k/2^K, +\infty])}), K).
\end{equation}

\begin{prop}
  \label{prop:f(alpha)}
  Let $X, d$ be a standard quasi-metric space.  For every lower
  semicontinuous map $f \colon X \to \creal$, for every $\alpha \in
  \Rp$, for every $x \in X$,
  \[
    f^{(\alpha)} (x) = \sup_{K \in \nat} f_K^{(\alpha)} (x).
  \]
\end{prop}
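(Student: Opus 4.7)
My plan is to prove the two inequalities separately. The easy direction $\sup_K f_K^{(\alpha)} \leq f^{(\alpha)}$ follows at once: the sequence $(f_K)_K$ is ascending with $f_K \leq f$ pointwise, so applying the maximality characterization of $(\cdot)^{(\alpha)}$ (that $g^{(\alpha)}$ is the largest $\alpha$-Lipschitz continuous map $\leq g$) yields both $f_K^{(\alpha)} \leq f_{K+1}^{(\alpha)}$ and $f_K^{(\alpha)} \leq f^{(\alpha)}$. In particular $\sup_K f_K^{(\alpha)}$ is a directed supremum of $\alpha$-Lipschitz continuous maps, hence $\alpha$-Lipschitz continuous by Proposition~\ref{prop:alphaLip:props}~(4), and pointwise bounded above by $f^{(\alpha)}$.

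For the reverse inequality $f^{(\alpha)} \leq \sup_K f_K^{(\alpha)}$, I would prove the stronger statement that every $h \in \Lform_\alpha(X, d)$ with $h \leq f$ satisfies $h \leq \sup_K f_K^{(\alpha)}$, then specialize to $h = f^{(\alpha)}$. To approximate $h$ from below by maps dominated by $f_K$, I plan a \emph{truncate-and-shift} device: define $\phi_K \colon \creal \to \creal$ by
\[
  \phi_K(t) = \max(\min(t, K) - 1/2^K, 0).
\]
This map is monotone, Scott-continuous, and $1$-Lipschitz from $\creal, \dreal$ to itself, hence $1$-Lipschitz continuous by Proposition~\ref{prop:cont} applied to the standard space $\creal, \dreal$. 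Setting $g_K := \phi_K \circ h$, Lemma~\ref{lemma:comp:Lip} gives $g_K \in \Lform_\alpha(X, d)$.

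The heart of the argument is then to verify two pointwise claims. First, $g_K \leq f_K$, by case split on $f(y)$: if $f(y) \leq 1/2^K$ then $g_K(y) = 0 \leq f_K(y)$; if $1/2^K < f(y) \leq K$ then inspection of the definition of $f_K$ shows $f_K(y) \geq f(y) - 1/2^K \geq h(y) - 1/2^K \geq g_K(y)$; and if $f(y) > K$ then $f_K(y) = K \geq g_K(y)$. Hence $g_K \leq f_K^{(\alpha)}$ by maximality. Second, $\sup_K g_K = h$: $g_K \leq h$ always, and a trichotomy on $h(y)$ shows $g_K(y) \to h(y)$ (the middle case $0 < h(y) < +\infty$ gives $g_K(y) = h(y) - 1/2^K$ for all $K \geq h(y)$; the cases $h(y) = 0$ and $h(y) = +\infty$ are immediate). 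Combining, $h = \sup_K g_K \leq \sup_K f_K^{(\alpha)}$. The main obstacle is the calibration of the shift: the $1/2^K$ subtracted in $\phi_K$ must exactly offset the discretization gap built into $f_K$, while the truncation at $K$ is what tames $h$ where $f$ blows up; arranging all of this so that $\phi_K$ remains \emph{$1$-Lipschitz continuous} (and therefore enters Lemma~\ref{lemma:comp:Lip} with the correct Lipschitz constant) is the one nontrivial point.
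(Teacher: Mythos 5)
Your proof is correct, and it takes a genuinely different route from the paper's. The paper first computes $f_K^{(\alpha)}$ in closed form via Lemma~\ref{lemma:f(alpha):step}, as a minimum of terms $\frac{k-1}{2^K}+\alpha d(x,\overline{f^{-1}(]k/2^K,+\infty])})$, and then, for $f$ already $\alpha$-Lipschitz continuous, extracts the quantitative bound $f_K^{(\alpha)}\geq f_K-1/2^K$ by an argument with the sets $\widehat U$ and the extension $f'$ to formal balls; the general lower semicontinuous case is then reduced to that one by monotonicity of $g\mapsto g_K$ and $g\mapsto g^{(\alpha)}$. You never compute $f_K^{(\alpha)}$ at all: instead you exhibit, for each $\alpha$-Lipschitz continuous $h\leq f$, explicit $\alpha$-Lipschitz continuous minorants $g_K=\phi_K\circ h$ of $f_K$ with $\sup_K g_K=h$, using only the maximality property defining $(\cdot)^{(\alpha)}$, closure under composition (Lemma~\ref{lemma:comp:Lip}), and the arithmetic of the dyadic discretization. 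This is softer and shorter, and implicitly yields the same kind of quantitative information ($f_K^{(\alpha)}\geq\max(\min(f^{(\alpha)},K)-1/2^K,0)$); what it forgoes is formula~(\ref{eq:fK(alpha)}) itself, which the paper reuses in the proof of Proposition~\ref{prop:lipreg:Lip}, so the explicit computation there is not wasted. One cosmetic slip in your middle case: the last link of the chain $f_K(y)\geq f(y)-1/2^K\geq h(y)-1/2^K\geq g_K(y)$ fails when $h(y)<1/2^K$, since then $g_K(y)=0>h(y)-1/2^K$; the conclusion $f_K(y)\geq g_K(y)$ survives because $f_K(y)\geq 0$, so replacing the chain by $f_K(y)\geq\max(f(y)-1/2^K,0)\geq\max(\min(h(y),K)-1/2^K,0)=g_K(y)$ makes the step exact. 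This is a one-line repair, not a gap.
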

\proof
  We first deal with the case where $f$ is already $\alpha$-Lipschitz
  continuous.  In that case, we claim the equivalent statement: $(*)$
  if $f$ is $\alpha$-Lipschitz continuous, then for every $x \in X$,
  $f (x) = \sup_{K \in \nat} f_K^{(\alpha)} (x)$.

  Fix $j \in \nat$ and $k$ such that $1 \leq k \leq K2^K$, and note
  that if $(x, j/(\alpha 2^K)) \in \widehat U_k$, where
  $U_k = {f^{-1} (]k/2^K, +\infty]}$, then
  $\alpha d (x, \overline U_k) \geq j/2^K$.  This is by definition of
  $d (x, \overline U_k)$.

  Recall that $f' (x, r) = f (x) - \alpha r$ defines a
  Scott-continuous map.  For every $(y, 0)$ in
  $X \cap {f'}^{-1} (]k/2^K, +\infty])$, $f' (y, 0) = f (y) > k/2^K$,
  so $X \cap {f'}^{-1} (]k/2^K, +\infty])$ is included in
  $f^{-1} (]k/2^K, +\infty]) = U_k$.  By maximality,
  ${f'}^{-1} (]k/2^K, +\infty])$ is included in $\widehat
  U_k$. 

  Hence if $(x, j/(\alpha 2^K))$ is in ${f'}^{-1} (]k/2^K,
  +\infty])$, then $\alpha d (x, \overline U_k) \geq
  j/2^K$.  That happens when $f (x) - j/2^K > k / 2^K$, i.e., when
  $x$ is in $f^{-1} (](k+j)/2^K, +\infty])$.  Therefore $\alpha d (x,
  \overline U_k) \geq j/2^K \chi_{f^{-1} (](k+j)/2^K, +\infty])}
  (x)$ for all $j \in \nat$ and $k$ such that $1\leq k\leq K2^K$.

  Now fix $k_0$ with $1\leq k_0\leq K2^K$.  For every $k$ with
  $1\leq k \leq k_0$, letting $j = k_0-k$, we obtain that
  $(k-1)/2^K + \alpha d (x, \overline U_k) \geq (k-1)/2^K +
  (k_0-k)/2^K \chi_{f^{-1} (]k_0/2^K, +\infty])} (x) \geq (k_0-1)/2^K
  \chi_{f^{-1} (]k_0/2^K, +\infty])} (x)$.  For every $k$ such that
  $k_0 < k \leq K2^K$,
  $(k-1)/2^K + \alpha d (x, \overline {U_k}) \geq k_0/2^K$ is larger
  than the same quantity already, and similarly for $K$, which is also
  larger than or equal to $k_0/2^K$.  Using (\ref{eq:fK(alpha)}), we
  obtain
  $f_K^{(\alpha)} (x) \geq (k_0-1)/2^K \chi_{f^{-1} (]k_0/2^K,
    +\infty])} (x)$, and therefore
  $f_K^{(\alpha)} (x) \geq k_0/2^K \chi_{f^{-1} (]k_0/2^K, +\infty])}
  (x) -1/2^K$.  Since that holds for every $k_0$ between $1$ and
  $K2^K$, it follows that $f_K^{(\alpha)} (x) \geq f_K (x) - 1/2^K$.
  Taking suprema over $K \in \nat$, we obtain
  $\sup_{K \in \nat} f_K^{(\alpha)} (x) \geq \sup_{K \in \nat} (f_K
  (x) - 1/2^K) = f (x)$, proving $(*)$.

  In the general case, where $f$ is only assumed to be lower
  semicontinuous, we note that $f \geq f^{(\alpha)}$ implies that
  $f_K \geq (f^{(\alpha)})_K$.  Indeed, that follows from formula
  (\ref{eq:fK}) and the fact that
  $(f^{(\alpha)})^{-1} (]k/2^K, +\infty])$ is included in
  $f^{-1} (]k/2^K, +\infty])$ for every $k$.  The mapping
  $g \mapsto g^{(\alpha)}$ is also monotonic, since $g^{(\alpha)}$ is
  defined as the largest $\alpha$-Lipschitz continuous map below $g$.
  Therefore $f_K^{(\alpha)} \geq (f^{(\alpha)})_K^{(\alpha)}$.  Taking
  suprema, we obtain that
  $\sup_{K \in \nat} f_K^{(\alpha)} (x) \geq \sup_{K \in \nat}
  (f^{(\alpha)})_K^{(\alpha)} (x) = f^{(\alpha)} (x)$, where the last
  equality follows from statement $(*)$ (first part of the proof),
  applied to the $\alpha$-Lipschitz continuous function
  $f^{(\alpha)}$.

  The reverse inequality
  $\sup_{K \in \nat} f_K^{(\alpha)} \leq f^{(\alpha)}$ is easy: for
  every $K \in \nat$, $f_K \leq f$, so
  $f_K^{(\alpha)} \leq f^{(\alpha)}$.  \qed

\begin{prop}
  \label{prop:lipreg:Lip}
  Let $X, d$ be a standard quasi-metric space.  The following are
  equivalent:
  \begin{enumerate}
  \item $X, d$ is Lipschitz regular;
  \item for every $\alpha \in \Rp$, the map $f \in \Lform X \mapsto
    f^{(\alpha)} \in \Lform_\alpha (X, d)$ is Scott-continuous;
  \item for some $\alpha > 0$, the map $f \in \Lform X \mapsto
    f^{(\alpha)} \in \Lform_\alpha (X, d)$ is Scott-continuous.
  \end{enumerate}
\end{prop}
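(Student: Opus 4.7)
The plan is to treat the three implications separately, with (2)$\limp$(3) being immediate.

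For (1)$\limp$(2), I would first note that $f \mapsto f^{(\alpha)}$ is automatically monotone, since $f^{(\alpha)}$ is defined as the largest $\alpha$-Lipschitz continuous map below $f$. To check preservation of directed suprema, take a directed family ${(f_i)}_{i \in I}$ in $\Lform X$ with pointwise supremum $f$. Because each $f_i$ is lower semicontinuous and the supremum is pointwise, for each $k$, $K$ one has $f^{-1}(]k/2^K, +\infty]) = \bigcup_i f_i^{-1}(]k/2^K, +\infty])$, a directed union of $d$-Scott opens. Hypothesis (1) combined with Lemma~\ref{lemma:lipreg:dist} makes $U \mapsto d(x, \overline U)$ Scott-continuous at every $x$, so $d(x, \overline{f^{-1}(]k/2^K, +\infty])}) = \sup_i d(x, \overline{f_i^{-1}(]k/2^K, +\infty])})$. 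Plugging this into formula~(\ref{eq:fK(alpha)}), the only $i$-dependent quantities sit inside a finite minimum $\min_{k=1}^{K 2^K}$ followed by a cap at $K$. A finite minimum commutes with directed suprema of monotone nets on a common directed index (a routine $\epsilon$-approximation using directedness of $I$), and $\min(\cdot, K)$ is Scott-continuous on $\creal$, which gives $f_K^{(\alpha)}(x) = \sup_i (f_i)_K^{(\alpha)}(x)$. Taking the supremum over $K \in \nat$, exchanging the two suprema, and invoking Proposition~\ref{prop:f(alpha)} then yields $f^{(\alpha)}(x) = \sup_i f_i^{(\alpha)}(x)$.

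For (3)$\limp$(1), fix $x \in X$; by Lemma~\ref{lemma:lipreg:dist} it suffices to show that $U \mapsto d(x, \overline U)$ is Scott-continuous on $\Open X$, and monotonicity is clear. Given a directed family ${(U_i)}_{i \in I}$ of open subsets with union $U$, set $t = d(x, \overline U)$ and $s = \sup_i d(x, \overline{U_i}) \leq t$. For any $r > 0$ in $\Rp$, the family ${(r \chi_{U_i})}_{i \in I}$ is directed in $\Lform X$ with supremum $r \chi_U$. Applying hypothesis~(3) at $x$ and using the identity $(r \chi_V)^{(\alpha)}(y) = \min(r, \alpha d(y, \overline V))$ recalled in the text, we obtain
\[ \min(r, \alpha t) \;=\; \sup_i \min(r, \alpha d(x, \overline{U_i})) \;=\; \min(r, \alpha s). \]
If $t < +\infty$, taking $r = \alpha t$ (which lies in $\Rp$ and is positive when $t > 0$) forces $\alpha t = \alpha s$, hence $t = s$; if $t = +\infty$, the identity reads $r = \min(r, \alpha s)$ for every $r > 0$, which forces $\alpha s = +\infty$, so $s = +\infty = t$.

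The main delicate point is the commutation of the finite $\min_k$ with the directed $\sup_i$ in the first implication: it succeeds only because, for each fixed $k$, the distances $d(x, \overline{f_i^{-1}(]k/2^K, +\infty])})$ form a monotone net over the common directed index set $I$, which is exactly what lets the finite-min/directed-sup exchange go through.
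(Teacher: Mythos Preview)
Your proof is correct and follows essentially the same route as the paper's: the (1)$\limp$(2) direction uses the explicit formula~(\ref{eq:fK(alpha)}) together with Lemma~\ref{lemma:lipreg:dist} and Scott-continuity of the finite $\min$ to push $\sup_i$ through, then Proposition~\ref{prop:f(alpha)} to conclude; and (3)$\limp$(1) tests against the step functions $r\chi_{U_i}$ and uses the formula $(r\chi_V)^{(\alpha)}(x) = \min(r, \alpha d(x,\overline V))$. The only cosmetic difference is that in the last step the paper simply lets $r \to +\infty$ in $\min(r,\alpha t) = \min(r,\alpha s)$ to obtain $\alpha t = \alpha s$ directly, whereas you do a case split on whether $t$ is finite.
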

\proof (1) $\limp$ (2).  Clearly $f \mapsto f^{(\alpha)}$ is
monotonic.  Let ${(f_i)}_{i \in I}$ be a directed family of lower
semicontinuous maps from $X$ to $\creal$, and $f$ be their
(pointwise) supremum.  Note that, for every $t \in \Rp$,
$f^{-1} (]t, +\infty])$ is the union of the directed family of open
sets $f_i^{-1} (]t, +\infty])$, $i \in I$.  Then, for every $x \in X$,
and every $K \in \nat$:
\begin{eqnarray*}
  f_K^{(\alpha)} (x)
  & = &
        \min (\min\nolimits_{k=1}^{K2^K} (\frac {k-1} {2^K} + \alpha d (x,
        \overline {f^{-1} (]k/2^K, +\infty])}), K) \\
  && \qquad \text{by Formula~(\ref{eq:fK(alpha)})} \\
  & = &
        \min (\min\nolimits_{k=1}^{K2^K} (\frac {k-1} {2^K} + \alpha d (x,
        \overline {\bigcup_{i \in I} f_i^{-1} (]k/2^K, +\infty])}), K)
        \\
  & = &
        \min (\min\nolimits_{k=1}^{K2^K} (\frac {k-1} {2^K} +
        \alpha \sup_{i \in I} d (x,
        \overline {f_i^{-1} (]k/2^K, +\infty])}), K)
        \\
  && \qquad\text{by Lipschitz-regularity
        (Lemma~\ref{lemma:lipreg:dist}~(2))} \\
  & = &
        \sup_{i \in I} \min (\min\nolimits_{k=1}^{K2^K} (\frac {k-1} {2^K} +
        \alpha d (x,
        \overline {f_i^{-1} (]k/2^K, +\infty])}), K)
        \\
  & = & \sup_{i \in I} {(f_i)}_K^{(\alpha)} (x)
\end{eqnarray*}
since multiplication by $\alpha$, addition, and $\min$ are
Scott-continuous.  Using Proposition~\ref{prop:f(alpha)}, it follows
that
$f^{(\alpha)} (x) = \sup_{K \in \nat} \sup_{i \in I}
{(f_i)}_K^{(\alpha)} (x) = \sup_{i \in I} \sup_{K \in \nat}
{(f_i)}_K^{(\alpha)} (x) = \sup_{i \in I} f_i^{(\alpha)} (x)$.

(2) $\limp$ (3): obvious.

(3) $\limp$ (1).  (3) applies notably to the family of maps
$r \chi_{U_i}$, where ${(U_i)}_{i \in I}$ is an arbitrary directed
family of open subsets of $X$, and $r \in \Rp$.  Let
$U = \bigcup_{i \in I} U_i$, so that
$\sup_{i \in I} r \chi_{U_i} = r \chi_U$.  Then (3) entails that
$(r\chi_U)^{(\alpha)} = \sup_{i \in I} (r\chi_{U_i})^{(\alpha)}$.
This means that for every $x \in X$,
$\min (r, \alpha d (x, \overline U)) = \sup_{i \in I} \min (r, \alpha
d (x, \overline U_i)) = \min (r, \alpha \sup_{i \in I} d (x, \overline
U_i))$.  Since $r$ is arbitrary, we make it tend to $+\infty$, leaving
$x$ fixed.  We obtain that
$\alpha d (x, \overline U)) = \alpha \sup_{i \in I} d (x, \overline
U_i)$, and since $\alpha > 0$, that
$d (x, \overline U) = \sup_{i \in I} d (x, \overline U_i)$.  Hence
$X, d$ is Lipschitz regular by Lemma~\ref{lemma:lipreg:dist}~(2).
\qed

\begin{cor}
  \label{corl:Lalpha:retract}
  Let $\alpha > 0$, and $X, d$ be a Lipschitz regular standard
  quasi-metric space.  Then the canonical injection
  $i_\alpha \colon \Lform_\alpha (X, d) \to \Lform X$ and the map
  $r_\alpha \colon f \in \Lform X \mapsto f^{(\alpha)} \in
  \Lform_\alpha (X, d)$ form an embedding-projection pair, viz.,
  $r_\alpha$ and $i_\alpha$ are continuous,
  $r_\alpha \circ i_\alpha = \identity {\Lform_\alpha X}$ and
  $i_\alpha \circ r_\alpha \leq \identity {\Lform X}$.
\end{cor}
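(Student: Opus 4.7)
The plan is to verify the four required properties of an embedding-projection pair, and to observe that only one of them needs real work; the rest are either definitional or amount to rewriting what $f^{(\alpha)}$ means.

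First I would handle the two inequalities/equalities on the composites. The equality $r_\alpha \circ i_\alpha = \identity{\Lform_\alpha X}$ is immediate: if $f$ is already $\alpha$-Lipschitz continuous, then $f$ itself is the largest $\alpha$-Lipschitz continuous map below $f$, so $f^{(\alpha)} = f$. The inequality $i_\alpha \circ r_\alpha \leq \identity{\Lform X}$ is simply the assertion that $f^{(\alpha)} \leq f$ for every lower semicontinuous $f$, which is built into the very definition of $f^{(\alpha)}$ as a largest map \emph{below} $f$. These two facts are recorded in Theorem~6.17 of \cite{JGL:formalballs}, which is the source of the notation $f^{(\alpha)}$.

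Next I would dispatch the continuity of $i_\alpha$: since $\Lform_\alpha (X, d)$ carries the subspace topology inherited from $\Lform X$ (Definition~\ref{defn:Lalpha}), the inclusion $i_\alpha \colon \Lform_\alpha (X, d) \to \Lform X$ is continuous by the very definition of the subspace topology.

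The only substantive point is the continuity of $r_\alpha$. By the universal property of the subspace topology on $\Lform_\alpha (X, d)$, it suffices to show that $i_\alpha \circ r_\alpha \colon \Lform X \to \Lform X$, namely the map $f \mapsto f^{(\alpha)}$ viewed with codomain $\Lform X$, is continuous. Since $\Lform X$ carries the Scott topology of the pointwise ordering, this reduces to Scott-continuity of $f \mapsto f^{(\alpha)}$, and this is precisely the content of the implication (1)~$\limp$~(2) of Proposition~\ref{prop:lipreg:Lip}, which applies because $X, d$ is assumed Lipschitz regular. This is the main (and only) obstacle, and it has already been overcome, its proof resting on the chain-based formula of Proposition~\ref{prop:f(alpha)} together with the Scott-continuity of $U \mapsto d(x, \overline U)$ granted by Lemma~\ref{lemma:lipreg:dist}~(2).
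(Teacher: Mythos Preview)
Your proposal is correct and follows essentially the same route as the paper: the two composite conditions are dismissed as definitional, continuity of $i_\alpha$ is by the subspace topology, and continuity of $r_\alpha$ is reduced to the Scott-continuity of $f \mapsto f^{(\alpha)}$ from Proposition~\ref{prop:lipreg:Lip}. The only cosmetic difference is that the paper unpacks the universal property of the subspace topology explicitly (writing $V = W \cap \Lform_\alpha(X,d)$ and noting $r_\alpha^{-1}(V) = r_\alpha^{-1}(W)$), whereas you invoke it by name.
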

\proof We know that $i_\alpha$ is continuous (by definition of the
subspace topology), the equalities
$r_\alpha \circ i_\alpha = \identity {\Lform_\alpha X}$ and
$i_\alpha \circ r_\alpha \leq \identity {\Lform X}$ are clear, and
$r_\alpha$ is Scott-continuous by Proposition~\ref{prop:lipreg:Lip}.
Recall however that the topology we have taken on
$\Lform_\alpha (X, d)$ is not the Scott topology.  In order to show
that $r_\alpha$ is continuous, we therefore proceed as follows.  Given
any open subset $V$ of $\Lform_\alpha (X, d)$, by definition of the
subspace topology there is a Scott-open subset $W$ of $\Lform X$ such
that $V = W \cap \Lform_\alpha (X, d)$.  Then
$r_\alpha^{-1} (V) = r_\alpha^{-1} (W)$ is Scott-open in $\Lform X$,
showing that $r_\alpha$ is continuous from $\Lform X$ to
$\Lform_\alpha (X, d)$.  \qed

In the proof of Corollary~\ref{corl:Lalpha:retract}, we have paid
attention to the fact that the subspace topology on
$\Lform_\alpha (X, d)$ might fail to coincide with the Scott topology.
However, when $X, d$ is Lipschitz regular and standard, this is
unnecessary:
\begin{prop}
  \label{prop:Lalpha:retract}
  Let $X, d$ be a Lipschitz regular standard quasi-metric space.  Then
  the subspace topology on $\Lform_\alpha (X, d)$ induced by the Scott
  topology on $\Lform X$ coincides with the Scott topology.
\end{prop}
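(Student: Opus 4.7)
The plan is to show the two inclusions between the subspace topology $\tau_{\mathrm{sub}}$ (inherited from the Scott topology on $\Lform X$) and the Scott topology $\tau_{\mathrm{Scott}}$ on $\Lform_\alpha(X,d)$ separately, with Lipschitz regularity entering only in one direction.

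For the inclusion $\tau_{\mathrm{sub}}\subseteq\tau_{\mathrm{Scott}}$, I would first observe that directed suprema in $\Lform_\alpha(X,d)$ agree with those taken in $\Lform X$: both are pointwise, and a pointwise directed sup of $\alpha$-Lipschitz continuous maps is again $\alpha$-Lipschitz continuous by Proposition~\ref{prop:alphaLip:props}~(4). Consequently, if $W$ is Scott-open in $\Lform X$, then $W\cap \Lform_\alpha(X,d)$ is upwards-closed in the pointwise order, and it is inaccessible by directed suprema in $\Lform_\alpha(X,d)$, because any such supremum is simultaneously the supremum in $\Lform X$. This gives $W\cap\Lform_\alpha(X,d)\in\tau_{\mathrm{Scott}}$, with no use of Lipschitz regularity.

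The real content is the converse inclusion $\tau_{\mathrm{Scott}}\subseteq\tau_{\mathrm{sub}}$, and here I would invoke Proposition~\ref{prop:lipreg:Lip}, which is available precisely because $X,d$ is Lipschitz regular. Let $V$ be Scott-open in $\Lform_\alpha(X,d)$. By Proposition~\ref{prop:lipreg:Lip}, the map $r_\alpha\colon f\mapsto f^{(\alpha)}$ is Scott-continuous from $\Lform X$ to $\Lform_\alpha(X,d)$ viewed as posets, hence continuous with respect to the Scott topologies on both sides. Therefore $r_\alpha^{-1}(V)$ is Scott-open in $\Lform X$. Now use the retraction identity $r_\alpha\circ i_\alpha=\identity{\Lform_\alpha(X,d)}$ from Corollary~\ref{corl:Lalpha:retract}: for every $f\in\Lform_\alpha(X,d)$, $f^{(\alpha)}=f$, so $r_\alpha^{-1}(V)\cap\Lform_\alpha(X,d)=V$. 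This exhibits $V$ as the intersection of a Scott-open subset of $\Lform X$ with $\Lform_\alpha(X,d)$, i.e., as a $\tau_{\mathrm{sub}}$-open set.

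The only subtle step is making sure that the Scott continuity delivered by Proposition~\ref{prop:lipreg:Lip}—which is phrased purely order-theoretically as preservation of directed suprema—translates into topological continuity between the Scott topologies. This is the standard equivalence for monotone maps between posets and requires no additional hypothesis. Once that is recognized, the two inclusions fit together to give the claimed equality of topologies. I do not anticipate any other obstacle; all the machinery (retraction, Scott continuity of $r_\alpha$, stability of $\Lform_\alpha$ under directed suprema) has already been assembled in the preceding results.
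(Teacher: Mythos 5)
Your proof is correct and follows essentially the same route as the paper: Scott-continuity of $r_\alpha$ from Proposition~\ref{prop:lipreg:Lip}, Scott-continuity of the inclusion because directed suprema in $\Lform_\alpha(X,d)$ are computed pointwise as in $\Lform X$, and the retraction identity. The only difference is that you unfold explicitly the standard fact that the section of a Scott-continuous section-retraction pair is a topological embedding, which the paper simply cites.
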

\proof $r_\alpha$ is Scott-continuous by
Proposition~\ref{prop:lipreg:Lip}~(2), and $i_\alpha$ is also
Scott-continuous, since suprema are computed in the same way in
$\Lform_\alpha (X, d)$ and in $\Lform X$.  In a section-retraction
pair, the section is a topological embedding, so $i_\alpha$ is an
embedding of $\Lform_\alpha (X, d)$, with its Scott topology, into
$\Lform X$.  That implies that the Scott topology on
$\Lform_\alpha (X, d)$ coincides with the subspace topology.
\qed

A similar argument allows us to establish the following.  To show that
$\min (\alpha a . \mathbf 1, f^{(\alpha)}) \in \Lform_\alpha^a (X,
d)$, we use Proposition~\ref{prop:alphaLip:props}~(3) and (6), which
state that the pointwise min of two $\alpha$-Lipschitz continuous
maps is $\alpha$-Lipschitz continuous and that constant maps are
$\alpha$-Lipschitz continuous.  We write $\mathbf 1$ for the constant
map equal to $1$.
\begin{cor}
  \label{corl:Lalpha:retract:bnd}
  Let $\alpha > 0$, $a > 0$, and $X, d$ be a Lipschitz regular
  standard quasi-metric space.  Then the canonical injection
  $i_\alpha^a \colon \Lform_\alpha^a (X, d) \to \Lform X$ and the map
  $r_\alpha^a \colon f \in \Lform X \mapsto \min (a\alpha.\mathbf 1,
  f^{(\alpha)}) \in \Lform_\alpha^a (X, d)$ form an embedding-projection
  pair, viz., $r_\alpha^a$ and $i_\alpha^a$ are continuous,
  $r_\alpha^a \circ i_\alpha^a = \identity {\Lform_\alpha^a X}$ and
  $i_\alpha^a \circ r_\alpha^a \leq \identity {\Lform X}$.  \qed
\end{cor}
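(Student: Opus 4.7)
The plan is to mimic the proof of Corollary~\ref{corl:Lalpha:retract} step by step, making the small adjustments needed to accommodate the bound $a\alpha$. I will first verify that $r_\alpha^a$ is well-defined, i.e., that $\min(a\alpha.\mathbf 1, f^{(\alpha)})$ really lies in $\Lform_\alpha^a(X,d)$: the constant $a\alpha.\mathbf 1$ is $\alpha$-Lipschitz continuous by Proposition~\ref{prop:alphaLip:props}~(6), $f^{(\alpha)}$ is $\alpha$-Lipschitz continuous by construction, their pointwise minimum is $\alpha$-Lipschitz continuous by Proposition~\ref{prop:alphaLip:props}~(3), and it takes values in $[0, \alpha a]$ by construction.

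Next I would check the two algebraic identities. For $r_\alpha^a \circ i_\alpha^a = \identity{\Lform_\alpha^a X}$, pick $f \in \Lform_\alpha^a(X,d)$; then $f$ is itself $\alpha$-Lipschitz continuous and $\leq a\alpha.\mathbf 1$, so $f^{(\alpha)} = f$ (largest $\alpha$-Lipschitz continuous map below $f$ is $f$), and $\min(a\alpha.\mathbf 1, f) = f$. For $i_\alpha^a \circ r_\alpha^a \leq \identity{\Lform X}$, observe that for any $f \in \Lform X$, $\min(a\alpha.\mathbf 1, f^{(\alpha)}) \leq f^{(\alpha)} \leq f$.

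For continuity, $i_\alpha^a$ is continuous by the definition of the subspace topology. To handle $r_\alpha^a$, note that the composite $f \mapsto f^{(\alpha)} \mapsto \min(a\alpha.\mathbf 1, f^{(\alpha)})$, viewed as a map from $\Lform X$ into $\Lform X$, is Scott-continuous: the first factor is Scott-continuous by Proposition~\ref{prop:lipreg:Lip} using Lipschitz regularity, and pointwise $\min$ with a fixed function preserves directed suprema. Then, exactly as in the proof of Corollary~\ref{corl:Lalpha:retract}, given any open subset $V$ of $\Lform_\alpha^a(X,d)$, there is a Scott-open $W \subseteq \Lform X$ with $V = W \cap \Lform_\alpha^a(X,d)$, and ${r_\alpha^a}^{-1}(V) = {r_\alpha^a}^{-1}(W)$ is Scott-open in $\Lform X$.

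There is no real obstacle here; the only point requiring a moment of care is ensuring that the truncation by $a\alpha.\mathbf 1$ does not destroy Scott-continuity of $r_\alpha^a$, which it does not since the map $g \mapsto \min(a\alpha.\mathbf 1, g)$ preserves pointwise directed suprema in $\Lform X$. Combined with Proposition~\ref{prop:lipreg:Lip}, this yields the required embedding-projection pair. \qed
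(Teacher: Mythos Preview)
Your proof is correct and follows essentially the same approach as the paper: the paper simply says ``A similar argument allows us to establish the following,'' referring back to Corollary~\ref{corl:Lalpha:retract}, and notes exactly as you do that well-definedness of $r_\alpha^a$ comes from Proposition~\ref{prop:alphaLip:props}~(3) and~(6). You have spelled out in full the details that the paper leaves implicit, including the Scott-continuity of the truncation map, and there is nothing to add.
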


\begin{rem}
  \label{rem:Lalpha:retract:bnd}
  As in Proposition~\ref{prop:Lalpha:retract}, this also shows that, when
  $X, d$ is Lipschitz regular and standard, the subspace topology
  (induced by the inclusion into $\Lform X$) coincides with the Scott
  topology on $\Lform_\alpha^a (X, d)$.
\end{rem}


\subsection{$f^{(\alpha)}$ for $f$ $\alpha$-Lipschitz}
\label{sec:falpha-f-alpha}

We no longer assume $f$ lower semicontinuous.  Finding the largest
$\alpha$-Lipschitz (not necessarily continuous) map below $f$ is easy:
\begin{lem}
  \label{lemma:largestLip}
  Let $X, d$ be a quasi-metric space, and $\alpha \in \Rp$.  The
  largest $\alpha$-Lipschitz map below an arbitrary function
  $f \colon X \to \creal$ is given by:
  \begin{equation}
    \label{eq:falpha}
    f^\alpha (x) = \inf_{z \in X} (f (z) + \alpha d (x, z)).
  \end{equation}
\end{lem}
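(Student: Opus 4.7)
The plan is to verify three things: that $f^\alpha \leq f$ pointwise, that $f^\alpha$ is $\alpha$-Lipschitz, and that $f^\alpha$ dominates every $\alpha$-Lipschitz map $g \leq f$. None of the steps is deep; the only care required is in handling the asymmetry of $\dreal$ and the value $+\infty$.

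First, the bound $f^\alpha \leq f$ is immediate from taking $z = x$ in the infimum, using $d(x,x) = 0$.

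Next, I would check that $f^\alpha$ is $\alpha$-Lipschitz, meaning $\dreal(f^\alpha(x), f^\alpha(y)) \leq \alpha\, d(x,y)$ for all $x, y \in X$. The key computation uses the triangular inequality $d(x,z) \leq d(x,y) + d(y,z)$, which gives, for every $z$,
\[
  f(z) + \alpha\, d(x,z) \leq \alpha\, d(x,y) + f(z) + \alpha\, d(y,z).
\]
Taking the infimum over $z$ on both sides yields $f^\alpha(x) \leq f^\alpha(y) + \alpha\, d(x,y)$. By the definition of $\dreal$ (and the convention $0\cdot(+\infty) = +\infty$ when $\alpha = 0$), this inequality is exactly $\dreal(f^\alpha(x), f^\alpha(y)) \leq \alpha\, d(x,y)$; one checks the two edge cases separately, namely when $f^\alpha(x) \leq f^\alpha(y)$ (trivial) and when $f^\alpha(x) = +\infty$ (forcing $\alpha\, d(x,y) = +\infty$ as soon as $f^\alpha(y) < +\infty$).

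For maximality, let $g \colon X \to \creal$ be an arbitrary $\alpha$-Lipschitz map with $g \leq f$. For every $x, z \in X$, the Lipschitz inequality $\dreal(g(x), g(z)) \leq \alpha\, d(x,z)$ unpacks, by the definition of $\dreal$, into $g(x) \leq g(z) + \alpha\, d(x,z)$ (again one must split cases according as $g(z) = +\infty$ or not, but each case is immediate). Since $g \leq f$, we obtain $g(x) \leq f(z) + \alpha\, d(x,z)$, and taking the infimum over $z$ gives $g(x) \leq f^\alpha(x)$.

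The only potential obstacle is bookkeeping with $+\infty$, where $\dreal$ is defined asymmetrically; once one checks the two short case distinctions above, the argument goes through uniformly and yields the claimed formula.
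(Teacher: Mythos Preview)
Your proof is correct and follows essentially the same three-step argument as the paper: take $z=x$ for the bound $f^\alpha \leq f$, use the triangular inequality to establish $\alpha$-Lipschitzness, and unfold the Lipschitz condition on any competitor $g \leq f$ for maximality. Your handling of the $+\infty$ edge cases is in fact more explicit than the paper's, which simply writes the inequality $g(x) \leq g(z) + \alpha\, d(x,z)$ without further comment.
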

\proof For all $x, y \in X$,
$f^\alpha (y) + \alpha d (x, y) = \inf_{z \in X} (f (z) + \alpha d (x,
y) + \alpha d (y, z)) \geq \inf_{z \in X} (f (z) + \alpha d (x, z)) =
f^\alpha (x)$, so $f^\alpha$ is $\alpha$-Lipschitz.  It is clear that
$f^\alpha$ is below $f$: take $z=x$ in the infimum defining
$f^\alpha$.

Assume another $\alpha$-Lipschitz map $g$ below $f$.  For all
$x, z \in X$, $g (x) \leq g (z) + d (x, z) \leq f (z) + d (x, z)$,
hence by taking infima over all $z \in X$, $g (x) \leq f^\alpha (x)$.
\qed

Every monotonic map $g$ from a space $Y$ to $\creal$ has a lower
semicontinuous envelope $\overline g$, defined as the (pointwise)
largest map below $g$ that is lower semicontinuous.  When $Y$ is a
continuous poset, one can define $\overline g (y)$ as the directed
supremum $\sup_{y' \ll y} g (y')$ (see for example
\cite[Corollary~5.1.61]{JGL-topology}).  This is sometimes called
Scott's formula.

Now assume $f \colon X \to \creal$ is already $\alpha$-Lipschitz, but
not necessarily continuous.  There are two ways one can find an
$\alpha$-Lipschitz continuous map below $f$: either consider
$f^{(\alpha)}$, the largest possible such map, or, if $X, d$ is
continuous, extend $f$ to $f' \colon (x, r) \mapsto f (x) - \alpha r$,
apply Scott's formula to obtain $\overline {f'}$, then restrict the
latter to the subspace $X$ of $\mathbf B (X, d)$.  We show that the
two routes lead to the same function.
\begin{lem}
  \label{lemma:largestLipcont}
  Let $X, d$ be a continuous quasi-metric space, and $\alpha \in \Rp$.
  For any $\alpha$-Lipschitz map $f \colon X \to \creal$, the largest
  $\alpha$-Lipschitz continuous map below $f$, $f^{(\alpha)}$, is
  given by:
  \begin{equation}
    \label{eq:f(alpha)}
    f^{(\alpha)} (x)  = \sup_{(y, s) \ll (x, 0)} (f (y) - \alpha s).
  \end{equation}
  Moreover, $(f^{(\alpha)})'$, defined as mapping $(x, r)$ to
  $f^{(\alpha)} (x) - \alpha r$, is the largest lower semicontinuous
  map $\overline {f'}$ from $\mathbf B (X, d)$ to $\creal$ below
  $f' \colon (x, r) \mapsto f (x) - \alpha r$.
\end{lem}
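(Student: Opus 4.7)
The plan is to transfer the problem into the continuous poset $\mathbf{B}(X,d)$. Extending $f$ to $f' \colon \mathbf{B}(X,d) \to \real \cup \{+\infty\}$ by $f'(x,r) = f(x) - \alpha r$, Lemma~\ref{lemma:f'}~(1) tells us $f'$ is monotonic since $f$ is $\alpha$-Lipschitz. As $\mathbf{B}(X,d)$ is a continuous poset, Scott's formula then delivers the largest Scott-continuous (equivalently, lower semicontinuous) map below $f'$ as $\overline{f'}(x,r) = \sup_{(y,s) \ll (x,r)} (f(y) - \alpha s)$.

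I would prove the stronger second assertion $(f^{(\alpha)})' = \overline{f'}$, from which the explicit formula for $f^{(\alpha)}$ drops out by setting $r = 0$. The inequality $(f^{(\alpha)})' \leq \overline{f'}$ is immediate: $(f^{(\alpha)})'$ is Scott-continuous by Lemma~\ref{lemma:f'}~(2), and is below $f'$ since $f^{(\alpha)} \leq f$, so maximality of $\overline{f'}$ supplies the inequality. For the reverse, define $g(x) := \overline{f'}(x,0)$ and establish the factorization $\overline{f'}(x,r) = g(x) - \alpha r$; once this identity is in hand, Lemma~\ref{lemma:f'}~(2) makes $g$ itself $\alpha$-Lipschitz continuous, and since $g \leq f$ (evaluate at $r = 0$), maximality of $f^{(\alpha)}$ forces $g \leq f^{(\alpha)}$, whence $\overline{f'} = g' \leq (f^{(\alpha)})'$.

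The factorization breaks into two inequalities. For the upper bound $\overline{f'}(x,r) \leq g(x) - \alpha r$: the shift $\tau_r \colon (y,s) \mapsto (y, s+r)$ is Scott-continuous by standardness, so $\phi^{(r)}(y,s) := \overline{f'}(y, s+r) + \alpha r$ is Scott-continuous; a direct check using $\overline{f'} \leq f'$ gives $\phi^{(r)} \leq f'$, whence by maximality $\phi^{(r)} \leq \overline{f'}$, which at $(x,0)$ specializes to $\overline{f'}(x,r) \leq g(x) - \alpha r$. The lower bound $\overline{f'}(x,r) \geq g(x) - \alpha r$ reduces to the shift implication $(y,s) \ll (x,0) \limp (y, s+r) \ll (x,r)$ for $r \geq 0$: granted this, each $(y, s') \ll (x, 0)$ yields $(y, s'+r) \ll (x, r)$, contributing $f(y) - \alpha(s'+r)$ to the sup defining $\overline{f'}(x,r)$ and giving $\overline{f'}(x,r) \geq g(x) - \alpha r$.

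The main obstacle is the shift implication itself. Given a directed family $(w_i, u_i)$ with supremum $(w, u)$ satisfying $(x, r) \leq^{d^+} (w, u)$, so that $d(x, w) \leq r - u$ and $u \leq r$, one must produce some index $i$ with $(y, s+r) \leq^{d^+} (w_i, u_i)$. The easy case $u = r$ forces $d(x, w) = 0$, so the family $(w_i, u_i - r)$ (whose radii are valid since $u_i \geq u = r$) has supremum $(w, 0) \geq^{d^+} (x, 0)$ by standardness, and $(y, s) \ll (x, 0)$ supplies the required $i$. The delicate case $u < r$ cannot be handled by a uniform inverse shift, since the $u_i$ descend to $u < r$ and most $u_i$ eventually fall below $r$; here one must exploit the strict slack $r - u > 0$ together with the continuity of $\mathbf{B}(X,d)$, for example by augmenting the given family with basis approximations $(y', s') \ll (x, 0)$ to form a directed family above $(x, 0)$ suitable for invoking the way-below hypothesis.
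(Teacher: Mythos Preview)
Your overall strategy matches the paper's: extend to $f'$ on $\mathbf B(X,d)$, take the Scott-continuous envelope $\overline{f'}$ via Scott's formula, set $g(x)=\overline{f'}(x,0)$, establish the factorization $\overline{f'}(x,r)=g(x)-\alpha r$, and conclude $g=f^{(\alpha)}$.

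The gap is in your proof of the factorization. The paper dispatches this in one line by invoking a known result: in a continuous quasi-metric space, the way-below relation on $\mathbf B(X,d)$ is \emph{standard}, meaning $(y,s)\ll(x,r)$ if and only if $(y,s+a)\ll(x,r+a)$ for every $a\in\Rp$ \cite[Proposition~3.6]{JGL:formalballs}. With this in hand, the substitution $s'=s-r$ gives
\[
\overline{f'}(x,r)=\sup_{(y,s)\ll(x,r)}(f(y)-\alpha s)=\sup_{(y,s')\ll(x,0)}(f(y)-\alpha(s'+r))=g(x)-\alpha r
\]
immediately. Your ``shift implication'' $(y,s)\ll(x,0)\limp(y,s+r)\ll(x,r)$ is precisely one direction of this standardness property, and you are attempting to reprove it from scratch. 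Your case $u<r$ is not a proof: ``augmenting the given family with basis approximations $(y',s')\ll(x,0)$'' does not produce a directed family with supremum above $(x,r)$ in any obvious way, and you have not shown how the way-below hypothesis on $(y,s)$ gets discharged. Just cite the result.

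Two smaller points. First, your $\phi^{(r)}$ argument for the upper bound is correct and is a pleasant alternative to the paper's approach for that direction, though once you have standardness of $\ll$ it becomes redundant. Second, you have not checked that $g$ takes values in $\creal$ rather than $\real\cup\{+\infty\}$; the paper verifies $g(x)\geq 0$ by noting that for each $\epsilon>0$ some $(y,s)\ll(x,0)$ lies in $V_\epsilon$ (so $s<\epsilon$), whence $g(x)\geq -\alpha\epsilon$ for all $\epsilon>0$.
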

\proof Take $Y=\mathbf B (X, d)$.  Since $f$ is $\alpha$-Lipschitz,
the map $f' \colon \mathbf B (X, d) \to \real \cup \{+\infty\}$
defined by $f' (x, r) = f (x) - \alpha r$ is monotonic, by
Lemma~\ref{lemma:f'}.  Then
$\overline {f'} (x, r) = \sup_{(y, s) \ll (x, r)} (f (y) - \alpha s)$.

By definition, $\overline {f'}$ is Scott-continuous.  Note that
$\overline {f'} (x, 0)$ is exactly the right-hand side of
(\ref{eq:f(alpha)}).  For clarity, let $g (x) = \overline {f'} (x, 0)
= \sup_{(y, s) \ll (x, 0)} (f (y) - \alpha s)$.

We check that for every $r \in \Rp$,
$\overline {f'} (x, r) = g (x) - \alpha r$.  For that, we use the fact
that, when $X, d$ is a continuous quasi-metric space, the way-below
relation $\ll$ on $\mathbf B (X, d)$ is \emph{standard}
\cite[Proposition~3.6]{JGL:formalballs}, meaning that, for every
$a \in \Rp$, for all formal balls $(x, r)$ and $(y, s)$,
$(y, s) \ll (x, r)$ if and only if $(y, s+a) \ll (x, r+a)$.  It
follows that
$\overline{f'} (x, r) = \sup_{s\geq r, (y, s-r) \ll (x, 0)} (f (y) -
\alpha s) = \sup_{(y, s') \ll (x, 0)} (f (y) - \alpha (s'+r)) =
\sup_{(y, s') \ll (x, 0)} (f (y) - \alpha s')- \alpha r = g (x) -
\alpha r$.

In other words, $\overline {f'} = g'$.  Since $\overline {f'}$ is
Scott-continuous, $g$ is $\alpha$-Lipschitz continuous.

We check that $g$ takes its values in $\creal$, not just
$\real \cup \{+\infty\}$.  For every $x \in X$, for every
$\epsilon > 0$, $(x, 0)$ is in the open set $V_\epsilon$
(Lemma~\ref{lemma:Veps}), and since $(x, 0)$ is the supremum of the
directed family of all formal balls $(y, s) \ll (x, 0)$, one of them
is in $V_\epsilon$; this implies that $g (x) \geq -\alpha\epsilon$,
and as $\epsilon$ is arbitrary, that $g (x) \geq 0$.

Also, $g \leq f$, since for every $x \in X$,
$g (x) = \overline {f'} (x, 0) \leq f' (x, 0) = f (x)$.  Hence $g$ is
an $\alpha$-Lipschitz continuous map from $X$ to $\creal$ below $f$,
from which we deduce that it must also be below the largest such map,
$f^{(\alpha)}$.

We show that $g$ is equal to $f^{(\alpha)}$.  To that end, we take any
$\alpha$-Lipschitz continuous map $h \colon X \to \creal$ below $f$,
and we show that $h \leq g$.  Since $h \leq f$, $h' \leq f'$.  Since
$h$ is $\alpha$-Lipschitz continuous, $h'$ is Scott-continuous.  We
use the fact that $\overline {f'}$ is the largest Scott-continuous map
below $f'$ to obtain $h' \leq \overline {f'}$, and apply both sides of
the inequality to $(x, 0)$ to obtain
$h (x) = h' (x) \leq \overline {f'} (x, 0) = g (x)$.

Since $g=f^{(\alpha)}$ and $g (x) = \sup_{(y, s) \ll (x, 0)} (f (y) -
\alpha s)$ by definition, (\ref{eq:f(alpha)}) follows.

Finally, we have seen that $\overline {f'} = g'$, namely that
$\overline {f'} = (f^{(\alpha)})'$, and that is the final part of the
lemma.  \qed

\begin{cor}
  \label{corl:ha=hb}
  Let $X, d$ be a continuous quasi-metric space, and
  $\alpha, \beta \in \Rp$.  For every
  $f \in L_\alpha (X, d) \cap L_\beta (X, d)$,
  $f^{(\alpha)} = f^{(\beta)}$.
\end{cor}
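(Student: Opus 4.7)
The plan is to prove the two inequalities $f^{(\alpha)} \leq f^{(\beta)}$ and $f^{(\beta)} \leq f^{(\alpha)}$ separately, after reducing by symmetry to the case $\alpha \leq \beta$. The statement becomes essentially a routine consequence of the formula for $f^{(\alpha)}$ established in Lemma~\ref{lemma:largestLipcont} together with the monotonicity in $\alpha$ provided by Proposition~\ref{prop:alphaLip:props}(5). There is no substantial obstacle; the only point one has to keep in mind is that the formula from Lemma~\ref{lemma:largestLipcont} requires $f$ to be $\alpha$-Lipschitz (respectively $\beta$-Lipschitz) for it to produce $f^{(\alpha)}$ (respectively $f^{(\beta)}$), and that is exactly what the hypothesis $f \in L_\alpha(X,d) \cap L_\beta(X,d)$ buys us.

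First I would record the reduction: assume without loss of generality that $\alpha \leq \beta$ (the statement is symmetric in $\alpha$ and $\beta$). For the inequality $f^{(\alpha)} \leq f^{(\beta)}$, I would invoke Proposition~\ref{prop:alphaLip:props}(5) to note that $f^{(\alpha)}$, being $\alpha$-Lipschitz continuous with $\alpha \leq \beta$, is automatically $\beta$-Lipschitz continuous. Since $f^{(\alpha)} \leq f$ by construction, $f^{(\alpha)}$ is a $\beta$-Lipschitz continuous map lying below $f$; by the maximality of $f^{(\beta)}$ among such maps, $f^{(\alpha)} \leq f^{(\beta)}$ follows.

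For the reverse inequality $f^{(\beta)} \leq f^{(\alpha)}$, I would appeal to formula~(\ref{eq:f(alpha)}) from Lemma~\ref{lemma:largestLipcont}, applicable to both $\alpha$ and $\beta$ since $X,d$ is continuous and $f$ is simultaneously $\alpha$-Lipschitz and $\beta$-Lipschitz. This gives
\[
  f^{(\alpha)}(x) = \sup_{(y,s) \ll (x,0)} (f(y) - \alpha s), \qquad
  f^{(\beta)}(x) = \sup_{(y,s) \ll (x,0)} (f(y) - \beta s).
\]
Since radii of formal balls are non-negative and $\alpha \leq \beta$, we have $f(y) - \alpha s \geq f(y) - \beta s$ for every formal ball $(y,s) \ll (x,0)$ (with the convention that this trivializes to equality when $f(y) = +\infty$). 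Taking suprema over the same indexing set yields $f^{(\alpha)}(x) \geq f^{(\beta)}(x)$ for every $x \in X$, i.e.\ $f^{(\alpha)} \geq f^{(\beta)}$.

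Combining the two inequalities yields $f^{(\alpha)} = f^{(\beta)}$, completing the proof.
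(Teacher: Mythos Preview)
Your proof is correct but takes a different route from the paper's. The paper does not reduce to $\alpha \leq \beta$; instead it proves $f^{(\alpha)}(x) \leq f^{(\beta)}(x)$ (and then the reverse by symmetry) via an $\epsilon$-argument: for each $(y,s) \ll (x,0)$ it climbs, using directedness and the fact that formal balls way-below $(x,0)$ have arbitrarily small radii, to some $(z,t) \ll (x,0)$ with $t < \epsilon$, and then estimates $f(y) - \alpha s \leq f(z) - \alpha t = f(z) - \beta t + (\beta-\alpha)t \leq f^{(\beta)}(x) + |\beta-\alpha|\epsilon$. Your argument is more elementary: one inequality comes straight from the maximality definition of $f^{(\beta)}$ together with Proposition~\ref{prop:alphaLip:props}(5) (and would hold for any $f$, not just an $\alpha$-Lipschitz one), and the other is a direct termwise comparison $f(y) - \alpha s \geq f(y) - \beta s$ in formula~(\ref{eq:f(alpha)}). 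Your approach avoids the small-radius observation and the $\epsilon$ bookkeeping; the paper's approach is uniform in the two directions and makes the role of arbitrarily small radii explicit, which resonates with other arguments later in the paper.
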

\proof Since $\mathbf B (X, d)$ is a continuous poset, for every
$x \in X$, $(x, 0)$ is the supremum of the directed family of formal
balls $(y, s) \ll (x, 0)$, and since $X, d$ is standard, such formal
balls have arbitrarily small radii $s$.

By (\ref{eq:f(alpha)}),
$f^{(\alpha)} (x) = \sup_{(y, s) \ll (x, 0)} (f (y) - \alpha s)$.  For
every $(y, s) \ll (x, 0)$, for every $\epsilon > 0$, we can find
another formal ball $(z, t) \ll (x, 0)$ such that $t < \epsilon$, by
the remark we have just made.  Using directedness, we can require
$(z, t)$ to be above $(y, s)$.  Then
$f (y) - \alpha s \leq f (z) - \alpha t$ since $f$ is
$\alpha$-Lipschitz, and
$f (z) - \alpha t = f (z) - \beta t + (\beta-\alpha) t$ is less than
or equal to $f^{(\beta)} (x) + |\beta-\alpha| \epsilon$.  Taking
suprema over $(y, s) \ll (x, 0)$, we obtain
$f^{(\alpha)} (x) \leq f^{(\beta)} (x) + |\beta-\alpha| \epsilon$.
Since $\epsilon$ can be made arbitrarily small,
$f^{(\alpha)} (x) \leq f^{(\beta)} (x)$.  We show
$f^{(\beta)} (x) \leq f^{(\alpha)} (x)$ symmetrically, using the fact
that $f$ is $\beta$-Lipschitz.  The equality follows.  \qed

\begin{lem}
  \label{lemma:largestLipcont:alg}
  Let $X, d$ be a standard algebraic quasi-metric space, and
  $\alpha \in \Rp$.  For any $\alpha$-Lipschitz map
  $f \colon X \to \creal$, the largest $\alpha$-Lipschitz continuous
  map below $f$, namely $f^{(\alpha)}$, is given by:
  \begin{equation}
    \label{eq:f(alpha):alg}
    f^{(\alpha)} (x)  = \sup_{\substack{z \text{ center point}\\
        t > d (z, x)}} (f (z) - \alpha t).
  \end{equation}
  Moreover, $(f^{(\alpha)})'$, defined as mapping $(x, r)$ to
  $f^{(\alpha)} (x) - \alpha r$, is the largest lower semicontinuous
  map $\overline {f'}$ from $\mathbf B (X, d)$ to $\creal$ below
  $f' \colon (x, r) \mapsto f (x) - \alpha r$.
\end{lem}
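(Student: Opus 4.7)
The plan is to reduce to Lemma~\ref{lemma:largestLipcont}, which already gives a formula for $f^{(\alpha)}$ in any continuous quasi-metric space via the way-below relation on $\mathbf{B}(X,d)$. Since every standard algebraic quasi-metric space is continuous (cited explicitly above from \cite[Proposition~5.18]{JGL:formalballs}), that lemma applies and yields
\[
f^{(\alpha)}(x) \;=\; \sup_{(y,s)\,\ll\,(x,0)} (f(y) - \alpha s),
\]
together with the identity $(f^{(\alpha)})' = \overline{f'}$. The ``moreover'' part of the present lemma is then immediate, so it only remains to re-express the right-hand side above as a supremum ranging over center points with $t > d(z,x)$.

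Call the candidate right-hand side $g(x) = \sup_{z \text{ center},\, t > d(z,x)} (f(z) - \alpha t)$. For the inequality $g(x) \leq f^{(\alpha)}(x)$: when $z$ is a center point, the characterization of $\ll$ recalled in the paragraph on strong bases says $(z,t) \ll (x,0)$ iff $d(z,x) < t$; so every pair $(z,t)$ appearing in the sup defining $g(x)$ already satisfies $(z,t) \ll (x,0)$, and the inequality is visible by comparing suprema.

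For the reverse inequality, fix an arbitrary $(y,s) \ll (x,0)$. Because $X,d$ is algebraic, the set $\{(z,t) : z \text{ center point},\, (z,t) \ll (x,0)\}$ is a directed family with supremum $(x,0)$ (this is the content of the basis property stated just before Remark~\ref{rem:strong:basis}). By the definition of the way-below relation, there exists such a pair $(z,t)$ with $(y,s) \leq^{d^+} (z,t)$, i.e.\ $d(y,z) \leq s - t$. Since $f$ is $\alpha$-Lipschitz,
\[
f(y) \;\leq\; f(z) + \alpha\, d(y,z) \;\leq\; f(z) + \alpha(s-t),
\]
so $f(y) - \alpha s \leq f(z) - \alpha t$. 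As $z$ is a center point and $t > d(z,x)$, this term is $\leq g(x)$. Taking the sup over $(y,s) \ll (x,0)$ gives $f^{(\alpha)}(x) \leq g(x)$, concluding the proof.

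There is no real obstacle: the only delicate point is invoking the basis property in the exact form needed (that is, producing a center-point ball $(z,t)$ \emph{sitting between $(y,s)$ and $(x,0)$ in the $\leq^{d^+}$ sense while still being $\ll (x,0)$}), and matching the strict inequality $t > d(z,x)$ to the characterization of $\ll$ at center points. Everything else is an appeal to the continuous case (Lemma~\ref{lemma:largestLipcont}) and to the $\alpha$-Lipschitz inequality for $f$.
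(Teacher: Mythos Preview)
Your proof is correct and follows essentially the same route as the paper. The paper's proof is a one-liner invoking the characterization of $\ll$ in standard algebraic spaces from \cite[Proposition~5.18]{JGL:formalballs}, namely that $(y,s)\ll(x,0)$ iff there exist a center point $z$ and $t$ with $(y,s)\leq^{d^+}(z,t)$ and $d(z,x)<t$; you unpack exactly this via the basis property and the definition of $\ll$, and then close with the same $\alpha$-Lipschitz estimate $f(y)-\alpha s\leq f(z)-\alpha t$.
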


(\ref{eq:f(alpha):alg}) simplifies to $f^{(\alpha)} (x)  = \sup_{z
  \text{ center point}} (f (z) - \alpha d (z, x))$ when $d (z, x) \neq
+\infty$ for all center points $z$, or when $f (z) \neq +\infty$.

\proof Easy consequence of Lemma~\ref{lemma:largestLipcont}, using the
fact that, in a standard algebraic quasi-metric space,
$(y, s) \ll (x, r)$ if and only if there is a center point $z$ and
some $t \in \Rp$ such that $(y, s) \leq^{d^+} (z, t)$ and
$d (z, x) < t-r$ \cite[Proposition~5.18]{JGL:formalballs}.  \qed

\subsection{The Functions $\sea x b$}
\label{sec:functions-sea-x}

\begin{defi}[$\sea x b$]
  \label{defn:sea}
  Let $X, d$ be a quasi-metric space.  For each center point $x \in X$
  and each $b \in \creal$, let $\sea x b \colon X \to \creal$ map
  every $y \in X$ to the smallest element $t \in \creal$ such that
  $b \leq t + d (x, y)$.
\end{defi}
When $b \neq +\infty$, we might have said, more simply:
$(\sea x b) (y) = \max (b - d (x, y), 0)$.

The definition caters for the general situation.  When $b=+\infty$,
$(\sea x {+\infty}) (y)$ is equal to $0$ if $d (x, y) = +\infty$, and
to $+\infty$ if $d (x, y) < +\infty$.

\begin{lem}
  \label{lemma:sea:cont}
  Let $X, d$ be a standard quasi-metric space, $x$ be a center point
  of $X, d$ and $b \in \creal$.  The function $\sea x b$ is
  $1$-Lipschitz continuous.
\end{lem}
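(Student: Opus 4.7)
The plan is to verify the two properties of being $1$-Lipschitz and of being continuous separately, then combine them via the $(2) \limp (1)$ direction of Proposition~\ref{prop:cont}, which applies since both $X, d$ and $\creal, \dreal$ are standard (the latter by Example~\ref{exa:creal}). I would first rewrite $\sea x b$ in closed form: when $b < +\infty$, $(\sea x b) (y) = \max (b - d (x, y), 0)$; when $b = +\infty$, $(\sea x b) (y) = +\infty$ if $d (x, y) < +\infty$ and $0$ otherwise. This explicit form underlies all subsequent computations.

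For the $1$-Lipschitz property, I would run a short case analysis driven by the triangular inequality $d (x, y') \leq d (x, y) + d (y, y')$. When $b < +\infty$, the only substantial subcase is the one where both $(\sea x b) (y)$ and $(\sea x b) (y')$ are strictly positive, where the required bound collapses to $d (x, y') - d (x, y) \leq d (y, y')$; the remaining subcases are either symmetric or immediate. For $b = +\infty$, the only delicate subcase is $d (x, y) < +\infty$ while $d (x, y') = +\infty$, in which the triangular inequality forces $d (y, y') = +\infty$, making the Lipschitz bound trivial.

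For continuity, I would check that the inverse image under $\sea x b$ of each basic Scott-open $]a, +\infty]$ of $\creal$ is $d$-Scott open (the $\dreal$-Scott and Scott topologies on $\creal$ agreeing by Example~\ref{exa:creal}). When $b < +\infty$, this inverse image is $X$ if $a < 0$, empty if $a \geq b$, and otherwise equal to the open ball $B^d_{x, <b-a}$. Because $x$ is a center point, $B^{d^+}_{(x, 0), <b-a}$ is Scott-open in $\mathbf B (X, d)$, and its intersection with $X$, which is precisely $B^d_{x, <b-a}$, is therefore $d$-Scott open by the very definition of the $d$-Scott topology. When $b = +\infty$, the inverse image is either $X$ or $\bigcup_{n \in \nat} B^d_{x, <n}$, again $d$-Scott open as a union of such sets. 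The two properties together give the claim by Proposition~\ref{prop:cont}; the only mild obstruction along the way is the uniform handling of the $b = +\infty$ corner case.
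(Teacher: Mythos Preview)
Your proof is correct. Both your argument and the paper's use the same key ingredient---that $x$ being a center point makes the open balls $B^{d^+}_{(x,0),<\epsilon}$ Scott-open in $\mathbf B(X,d)$---but they organize the verification differently. You separate the claim into ``$1$-Lipschitz'' (by direct case analysis on $X$) and ``continuous'' (by computing inverse images of Scott-opens of $\creal$ as open balls in $X$), then invoke Proposition~\ref{prop:cont}. The paper instead works directly with $f'(y,r)=f(y)-r$ on $\mathbf B(X,d)$ and shows each ${f'}^{-1}(]s,+\infty])$ is Scott-open, expressing it as a union of $V_{-s}$ and a formal-ball open ball; this establishes $1$-Lipschitz continuity in one stroke via Lemma~\ref{lemma:f'}, so the paper's mention of Proposition~\ref{prop:cont} is somewhat incidental. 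Your route is slightly more elementary and makes the $1$-Lipschitz property explicit, at the cost of a small extra case analysis; the paper's route is more compact but hides the Lipschitz verification inside the Scott-continuity of $f'$.
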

\proof It is enough to show that it is continuous, by
Proposition~\ref{prop:cont}.  Let $f = \sea x b$, and
$f' (y, r) = f (y) - r$.  For every $s \in \real$, we wish to show
that ${f'}^{-1} (]s, +\infty])$ is Scott-open.

If $b \neq +\infty$, a formal ball $(y, r)$ is in
${f'}^{-1} (]s, +\infty])$ if and only if
$\max (b - d (x, y), 0) - r > s$.  This is equivalent to
$d (x, y) + r < b - s$ or $r < -s$.  The set
$V_{-s} = \{(y, r) \mid r < -s\}$ is Scott-open since $X, d$ is
standard, by Lemma~\ref{lemma:Veps}.  The condition
$d (x, y) + r < b - s$ is vacuously false if $b \leq s$, and is
equivalent to $d^+ ((x, 0), (y, r)) < b-s$ otherwise.  It follows that
${f'}^{-1} (]s, +\infty])$ is equal to $V_{-s}$ if $b \leq s$, or to
$V_{-s} \cup B^{d^+}_{(x, 0), < b-s}$ otherwise.  This is Scott-open
in any case, because $x$ is a center point.

If $b = +\infty$, then $(y, r) \in {f'}^{-1} (]s, +\infty])$ entails
that $-r > s$ if $d (x, y) = +\infty$, or $d (x, y) < +\infty$.
Conversely, if $-r > s$, then $f' (y, r) > s$, whether
$d (x, y) = +\infty$ or $d (x, y) < +\infty$, and if
$d (x, y) < +\infty$, then $f' (y, r) = +\infty > s$.  Also,
$d (x, y) < +\infty$ if and only if $d^+ ((x, 0), (y, r)) < +\infty$,
if and only if $d^+ ((x, 0), (y, r)) < N$ for some natural number $N$.
Therefore ${f'}^{-1} (]s, +\infty])$ is equal to
$V_{-s} \cup \bigcup_N B^{d^+}_{(x, 0), < N}$, which is Scott-open.  \qed

\begin{lem}
  \label{lemma:sea:min}
  Let $X, d$ be a standard quasi-metric space, $x_i$ be center points
  of $X, d$ and $b_i \in \creal$, $1\leq i \leq n$.  The function
  $\bigvee_{i=1}^n \sea {x_i} {b_i}$, which maps every $y \in X$ to
  $\max \{(\sea {x_i} {b_i}) (y) \mid 1\leq i\leq n\}$, is the smallest
  $1$-Lipschitz map $f$ (hence also the smallest function in
  $\Lform_1 (X, d)$) such that $f (x_i) \geq b_i$ for every $i$,
  $1\leq i\leq n$.
\end{lem}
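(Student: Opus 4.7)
The plan is to verify in three movements: that $g := \bigvee_{i=1}^n \sea{x_i}{b_i}$ lies in $\Lform_1(X,d)$, that $g(x_i)\geq b_i$ for each $i$, and that every $1$-Lipschitz map $f$ (not a priori continuous) satisfying $f(x_i)\geq b_i$ dominates $g$ pointwise. The parenthetical remark in the statement — ``hence also the smallest function in $\Lform_1(X,d)$'' — will then follow for free, since $\Lform_1(X,d)$ sits inside the class of $1$-Lipschitz maps, while $g$ itself is already a member of $\Lform_1(X,d)$.

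For the first movement, each $\sea{x_i}{b_i}$ is $1$-Lipschitz continuous by Lemma~\ref{lemma:sea:cont}, using the hypothesis that the $x_i$ are center points. Applying Proposition~\ref{prop:alphaLip:props}(3) inductively, the binary (hence finite) max of $1$-Lipschitz continuous maps is $1$-Lipschitz continuous, so $g \in \Lform_1(X,d)$.

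For the second movement, we need to check that $(\sea{x_j}{b_j})(x_j) \geq b_j$. By definition, $(\sea{x_j}{b_j})(x_j)$ is the least $t\in\creal$ with $b_j\leq t+d(x_j,x_j)=t$, so $(\sea{x_j}{b_j})(x_j)=b_j$. Therefore $g(x_j)\geq b_j$.

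The main (though still short) task is the minimality. Suppose $f\colon X\to\creal$ is $1$-Lipschitz and $f(x_i)\geq b_i$ for each $i$. Fix $y\in X$ and $i\in\{1,\dots,n\}$. The $1$-Lipschitz condition $\dreal(f(x_i),f(y))\leq d(x_i,y)$ unfolds, using the definition of $\dreal$ from Example~\ref{exa:creal}, to the inequality $f(x_i)\leq f(y)+d(x_i,y)$ in $\creal$ (trivially when $f(x_i)\leq f(y)$; when $f(x_i)>f(y)$ finite it is the case $f(x_i)-f(y)\leq d(x_i,y)$; and when $f(x_i)=+\infty$ it forces $d(x_i,y)=+\infty$, which again gives the displayed bound). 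Combining with $f(x_i)\geq b_i$, we obtain $b_i\leq f(y)+d(x_i,y)$. But $(\sea{x_i}{b_i})(y)$ was defined as the smallest $t\in\creal$ with $b_i\leq t+d(x_i,y)$, so $(\sea{x_i}{b_i})(y)\leq f(y)$. Taking the max over $i$ gives $g(y)\leq f(y)$, as required. The only subtlety I anticipate is bookkeeping the infinity cases in the definition of $\dreal$ and of $\sea{x_i}{b_i}$ cleanly; once that is handled, the argument is essentially the triangle inequality.
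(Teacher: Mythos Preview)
Your proof is correct and follows essentially the same approach as the paper: membership in $\Lform_1(X,d)$ via Lemma~\ref{lemma:sea:cont} and Proposition~\ref{prop:alphaLip:props}(3), then minimality by unfolding the $1$-Lipschitz condition to get $b_i\leq f(y)+d(x_i,y)$ and invoking the defining minimality of $\sea{x_i}{b_i}(y)$. You add an explicit check that $g(x_j)\geq b_j$ and a careful case analysis for the $\dreal$ infinity cases, which the paper leaves implicit; your treatment of the case $f(x_i)=+\infty$ is slightly imprecise (if $f(y)=+\infty$ as well it does not force $d(x_i,y)=+\infty$, but the bound holds trivially anyway), though this does not affect correctness.
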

\proof First, $f = \bigvee_{i=1}^n \sea {x_i} {b_i}$ is in
$\Lform_1 (X, d)$, by Lemma~\ref{lemma:sea:cont} and
Proposition~\ref{prop:alphaLip:props}~(3), which states that the
pointwise max of two $\alpha$-Lipschitz continuous maps is
$\alpha$-Lipschitz continuous.  For any other $1$-Lipschitz map $h$
such that $h (x_i) \geq b_i$, $1\leq i\leq n$, for every $y \in X$,
$h (x_i) \leq h (y) + d (x_i, y)$.  In other words, $h (y)$ is a
number $t \in \creal$ such that $b_i \leq t + d (x_i, y)$.  The
smallest such $t$ is $\sea {x_i} {b_i} (y)$ by definition, so
$h (y) \geq (\sea {x_i} {b_i}) (y)$.  Since that holds for every $i$
and every $y$, $h \geq f$.  \qed

\section{Topologies on $\Lform X$}
\label{sec:topol-spac-maps}

Several results in the sequel will rely on a fine analysis of the
(Scott) topology of $\Lform X$ and of the (subspace) topology of
$\Lform_\infty X$, $\Lform_\alpha (X, d)$, and a few other subspaces.

We shall notably see that, in case $X, d$ is continuous
Yoneda-complete, the Scott topology on $\Lform X$, the compact-open
topology, and the Isbell topology all coincide.  We already know the
Scott topology, and recall that we order $\Lform X$ pointwise.  The
compact-open topology has a subbase of sets of the form
$[Q \in V] = \{f \mid f \text{ maps } Q \text{ to }V\}$, for $Q$ a
compact subset of the domain and $V$ an open subset of the codomain.
In the case of $\Lform X$, the codomain is $\creal$ with its Scott
topology, and we shall write $[Q > a]$ for the subbasic open set
$\{f \in \Lform X \mid \forall x \in Q . f (x) > a\}$.  The Isbell
topology has a subbase of sets of the form
$N (\mathcal U, V) = \{f \mid f^{-1} (V) \in \mathcal U\}$, where $V$
is open in the codomain and $\mathcal U$ is Scott-open in the lattice
of open subsets of the domain.

We will prove this result of coincidence of topologies in several
steps, concentrating first on the coincidence of the compact-open and
Isbell topologies.  The compact-open topology is always coarser than
the Isbell topology (which is coarser than the Scott topology), and
the converse is known to hold when $X$ is consonant
\cite{DGL:consonant}.

For a subset $Q$ of $X$, let $\blacksquare Q$ be the family of open
neighborhoods of $Q$.  A space is \emph{consonant} if and only if,
given any Scott-open family $\mathcal U$ of open sets, and given any
$U \in \mathcal U$, there is a compact saturated set $Q$ such that
$U \in \blacksquare Q \subseteq \mathcal U$.  Equivalently, if and
only if every Scott-open family of opens is a union of sets of the
form $\blacksquare Q$, $Q$ compact saturated.  The latter generate the
compact-open topology on the space $\Open X$ of open subsets of $X$,
if we equate $\Open X$ with the family of continuous maps from $X$ to
Sierpi\'nski space.  Using the same identification, the Isbell
topology on $\Open X$ coincides with the Scott topology.  The latter
Scott topology is always finer than the compact-open topology.  A
space is consonant if and only if the two topologies coincide.

In a locally compact space, every open subset $U$ is the union of the
interiors $int (Q)$ of compact saturated subsets of $U$, and that
family is directed.  It follows immediately that every locally compact
space is consonant, as we have already claimed.

In the following, a \emph{well-filtered} space is a topological space
in which the following property holds (see
\cite[Proposition~8.3.5]{JGL-topology}): if a filtered intersection of
compact saturated sets $Q_i$ is included in some open set $U$, then
some $Q_i$ is already included in $U$.  For a $T_0$ locally compact
space, well-filteredness is equivalent to the more well-known property
of \emph{sobriety} (Propositions~8.3.5 and 8.3.8, loc.cit.).  Hence,
under the additional assumption of being $T_0$, the following
theorem could also be stated thus: every $G_\delta$ subspace of a sober
locally compact space is consonant.

\begin{thm}
  \label{thm:Gdelta=>cons}
  Every $G_\delta$ subspace of a well-filtered locally compact space
  is consonant.
\end{thm}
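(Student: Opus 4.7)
The plan is to reduce consonance in $Y$ to an iterated application of Scott-openness plus well-filteredness on $X$. Write $Y = \bigcap_{n \in \nat} G_n$ with each $G_n$ open in $X$. Since open sets are upwards-closed in the specialization preorder, $Y$ is itself upwards-closed in $X$, so Lemma~\ref{lemma:compact:subspace} will identify the compact saturated subsets of $Y$ with those of $X$ contained in $Y$. Given a Scott-open family $\mathcal{V}$ of open subsets of $Y$ and $V \in \mathcal{V}$, I fix an open $W_0 \subseteq X$ with $W_0 \cap Y = V$, and set
\[
  \widehat{\mathcal{V}} := \{W \in \Open X \mid W \cap Y \in \mathcal{V}\}.
\]
The map $W \mapsto W \cap Y$ is Scott-continuous from $\Open X$ to $\Open Y$, so $\widehat{\mathcal{V}}$ is Scott-open in $\Open X$, and $W_0 \in \widehat{\mathcal{V}}$.

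Next I will build a decreasing chain $Q_0 \supseteq Q_1 \supseteq \cdots$ of compact saturated subsets of $X$ with
\[
  Q_n \subseteq W_0 \cap G_0 \cap \cdots \cap G_n \qquad \text{and} \qquad \interior{Q_n} \in \widehat{\mathcal{V}}.
\]
Local compactness lets one write any open set as the directed union of interiors of its compact saturated subsets (finite unions of compact saturated sets are again compact saturated), so Scott-openness of $\widehat{\mathcal{V}}$ applied to $W_0 \cap G_0 \in \widehat{\mathcal{V}}$ produces a suitable $Q_0$. For the inductive step, the key remark is that $(\interior{Q_n} \cap G_{n+1}) \cap Y = \interior{Q_n} \cap Y$ because $Y \subseteq G_{n+1}$, whence $\interior{Q_n} \cap G_{n+1} \in \widehat{\mathcal{V}}$, and local compactness together with Scott-openness of $\widehat{\mathcal{V}}$ then yields $Q_{n+1} \subseteq \interior{Q_n} \cap G_{n+1}$ with $\interior{Q_{n+1}} \in \widehat{\mathcal{V}}$.

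Finally, set $K := \bigcap_n Q_n$. Well-filteredness of $X$ makes $K$ compact saturated in $X$, and $K \subseteq \bigcap_n G_n = Y$, so by Lemma~\ref{lemma:compact:subspace} $K$ is compact saturated in $Y$. From $K \subseteq W_0$ we get $K \subseteq V$, i.e.\ $V \in \blacksquare K$. Conversely, any open neighborhood of $K$ in $Y$ has the form $W \cap Y$ with $W$ open in $X$ and $W \supseteq K$ (since $K \subseteq Y$); well-filteredness gives some $Q_n \subseteq W$, hence $\interior{Q_n} \subseteq W$, and upward closure of $\widehat{\mathcal{V}}$ forces $W \in \widehat{\mathcal{V}}$, i.e.\ $W \cap Y \in \mathcal{V}$. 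Thus $V \in \blacksquare K \subseteq \mathcal{V}$, establishing consonance.

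The one genuinely delicate point is sustaining the induction: one has to shrink the $Q_n$ both to fit inside $G_{n+1}$ and to keep $\interior{Q_n}$ in $\widehat{\mathcal{V}}$. The observation that makes this shrinkage free is precisely that $\widehat{\mathcal{V}}$ only sees the trace on $Y$, which is unchanged under intersection with any open set $G \supseteq Y$; without this, the Scott-openness of $\widehat{\mathcal{V}}$ could not be exploited at successive stages.
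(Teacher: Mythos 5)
Your proof is correct and follows essentially the same route as the paper's: write $Y$ as a countable intersection of opens, use local compactness plus Scott-openness to build a nested chain of compact saturated sets $Q_n$ squeezed inside the successive opens, intersect them via well-filteredness, and invoke well-filteredness once more to get $\blacksquare K \subseteq \mathcal{V}$. The only (harmless) cosmetic difference is that you pull $\mathcal{V}$ back to the Scott-open family $\widehat{\mathcal{V}}$ on $\Open X$ and work there, where the paper intersects with $Y$ at each stage instead.
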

Before we start the proof, let us recall that every locally compact
space is consonant.  But consonance is not preserved under the
formation of $G_\delta$ subsets \cite[Proposition~7.3]{DGL:consonant}.
We shall also use the following easy fact: in a well-filtered space,
the intersection of a filtered family of compact saturated subsets is
compact saturated (Proposition~8.3.6, loc.cit.).

\proof Let $Y$ be a $G_\delta$ subset of $X$, where $X$ is
well-filtered and locally compact.  We may write $Y$ as
$\bigcap_{n \in \nat} V_n$, where each $V_n$ is open in $X$ and
$V_0 \supseteq V_1 \supseteq \cdots \supseteq V_n \supseteq \cdots$.
Let $\mathcal U$ be a Scott-open family of open subsets of $Y$, and
$U \in \mathcal U$.

By the definition of the subspace topology, there is an open subset
$\widehat U$ of $X$ such that $\widehat U \cap Y = U$.  By local
compactness, $\widehat U \cap V_0$ is the union of the directed family
of the sets $int (Q)$, where $Q$ ranges over the family $\mathcal Q_0$
of compact saturated subsets of $\widehat U \cap V_0$.  We have
$\bigcup_{Q \in \mathcal Q_0} int (Q) \cap Y = \widehat U \cap V_0
\cap Y = \widehat U \cap Y = U$.
Since $U$ is in $\mathcal U$ and $\mathcal U$ is Scott-open,
$int (Q) \cap Y$ is in $\mathcal U$ for some $Q \in \mathcal Q_0$.
Let $Q_0$ be this compact saturated set $Q$,
$\widehat U_0 = int (Q_0)$, and $U_0 = \widehat U_0 \cap Y$.  Note
that $U_0 \in \mathcal U$,
$\widehat U_0 \subseteq Q_0 \subseteq \widehat U \cap V_0$.

Do the same thing with $\widehat U_0 \cap V_1$ instead of
$\widehat U \cap V_0$.  There is a compact saturated subset $Q_1$ of
$\widehat U_0 \cap V_1$ such that $int (Q_1) \cap Y$ is in
$\mathcal U$.  Then, letting $\widehat U_1 = int (Q_1)$,
$U_1 = \widehat U_1 \cap Y$, we obtain that $U_1 \in \mathcal U$,
$\widehat U_1 \subseteq Q_1 \subseteq \widehat U_0 \cap V_1$.

Iterating this construction, we obtain for each $n \in \nat$ a compact
saturated subset $Q_n$ and an open subset $\widehat U_n$ of $X$, and
an open subset $U_n$ of $Y$ such that $U_n \in \mathcal U$ for each
$n$, and $\widehat U_{n+1} \subseteq Q_{n+1} \subseteq \widehat U_n
\cap V_n$.

Let $Q = \bigcap_{n \in \nat} Q_n$.  Since $X$ is well-filtered, 
$Q$ is compact saturated.

Since $Q \subseteq \bigcap_{n \in \nat} V_n = Y$, $Q$ is a compact
saturated subset of $X$ that is included in $Y$, hence a compact
saturated subset of $Y$ by Lemma~\ref{lemma:compact:subspace}.  We
have
$Q \subseteq Q_0 \subseteq \widehat U \cap V_0 \subseteq \widehat U$,
and $Q \subseteq Y$, so $Q \subseteq \widehat U \cap Y = U$.
Therefore $U \in \blacksquare Q$.

For every $W \in \blacksquare Q$, write $W$ as the intersection of
some open subset $\widehat W$ of $X$ with $Y$.  Since $Q = \bigcap_{n
  \in \nat} Q_n \subseteq \widehat W$, by well-filteredness some $Q_n$
is included in $\widehat W$.   Hence $\widehat U_n \subseteq Q_n
\subseteq \widehat W$.  Taking intersections with $Y$, $U_n \subseteq
W$.  Since $U_n$ is in $\mathcal U$, so is $W$.  \qed

Every continuous dcpo is sober in its Scott topology
\cite[Proposition~8.2.12~$(b)$]{JGL-topology} ---hence
well-filtered---, and also locally compact (Corollary~5.1.36,
loc.cit.)  Hence when $X, d$ is a continuous Yoneda-complete
quasi-metric space, i.e., when $X, d$ is standard and
$\mathbf B (X, d)$ is a continuous dcpo, then $\mathbf B (X, d)$ is
well-filtered and locally compact.  Recall that every Yoneda-complete
space is standard, so that Lemma~\ref{lemma:Veps} applies, and we
obtain that $X$ is a $G_\delta$-subset of $\mathbf B (X, d)$.  Hence
Theorem~\ref{thm:Gdelta=>cons} immediately yields:

\begin{prop}
  \label{prop:cont=>cons}
  Every continuous Yoneda-complete quasi-metric space $X, d$ is
  consonant in its $d$-Scott topology.  \qed
\end{prop}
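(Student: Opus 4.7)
The proof essentially writes itself from the paragraph immediately preceding the proposition, which is a helpful scaffolding. The plan is to deduce Proposition~\ref{prop:cont=>cons} as a direct corollary of Theorem~\ref{thm:Gdelta=>cons}, by exhibiting $X$, with its $d$-Scott topology, as a $G_\delta$ subspace of a well-filtered, locally compact space, namely $\mathbf{B}(X,d)$ with its Scott topology.

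First I would unpack the hypothesis: $X, d$ continuous Yoneda-complete means by definition that $X, d$ is standard (Yoneda-completeness implies standardness) and that $\mathbf{B}(X,d)$ is a continuous dcpo. From continuity of $\mathbf{B}(X,d)$ we get two things needed for Theorem~\ref{thm:Gdelta=>cons}: (i) local compactness in the Scott topology (a standard fact about continuous dcpos, cited above as Corollary~5.1.36 of \cite{JGL-topology}), and (ii) sobriety in the Scott topology (cited as Proposition~8.2.12$(b)$ of \cite{JGL-topology}), which in turn implies well-filteredness (Proposition~8.3.5, loc.\ cit.).

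Second, I would invoke Lemma~\ref{lemma:Veps}, whose applicability requires precisely the standardness of $X, d$: it yields that $X$ equals the countable intersection $\bigcap_{n \in \nat} V_{1/2^n}$ of Scott-open subsets of $\mathbf{B}(X,d)$, and hence that $X$ is a $G_\delta$ subset of $\mathbf{B}(X,d)$. Moreover, by the very definition of the $d$-Scott topology recalled in Section~\ref{sec:quasi-metric-spaces}, the $d$-Scott topology on $X$ is the subspace topology inherited from the Scott topology on $\mathbf{B}(X,d)$.

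Finally, Theorem~\ref{thm:Gdelta=>cons} applies verbatim to $Y = X$ sitting inside the well-filtered, locally compact space $\mathbf{B}(X,d)$, and yields consonance of $X$ in its $d$-Scott topology. There is no real obstacle here, since every ingredient has already been lined up; the only point requiring a second's care is making sure that the ambient topology on $X$ that comes out of Theorem~\ref{thm:Gdelta=>cons} is the $d$-Scott topology we want, which is guaranteed by the subspace characterization of the $d$-Scott topology recalled above.
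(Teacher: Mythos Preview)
Your proposal is correct and follows essentially the same approach as the paper: apply Theorem~\ref{thm:Gdelta=>cons} with $\mathbf{B}(X,d)$ as the ambient well-filtered locally compact space (via sobriety and local compactness of continuous dcpos) and use Lemma~\ref{lemma:Veps} to exhibit $X$ as a $G_\delta$ subset. Your explicit check that the $d$-Scott topology is the subspace topology is a nice point that the paper leaves implicit.
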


\begin{rem}
  The \emph{Sorgenfrey line} $\Rl$ is $\real, \dRl$, where
  $\dRl (x, y)$ is equal to $+\infty$ if $x > y$, and to $y-x$ if
  $x \leq y$ \cite{KW:formal:ball}.  Its specialization ordering
  $\leq^{\dRl}$ is equality.  Its open ball topology is the topology
  generated by the half-open intervals $[x, x+r)$, and is a well-known
  counterexample in topology.  In particular, $\Rl$ is not consonant
  in its open ball topology \cite{AC:consonant}.  However, $\Rl$ is a
  continuous Yoneda-complete quasi-metric space, since the map
  $(x, r) \mapsto (-x-r, x)$ is an order isomorphism from
  $\mathbf B (\real, \dRl)$ to the continuous dcpo
  $C_\ell = \{(a, b) \in \real^2 \mid a+b \leq 0\}$.
  Proposition~\ref{prop:cont=>cons} implies that $\Rl$ \emph{is}
  consonant in the $\dRl$-Scott topology.  You may confirm this by
  checking that the latter topology is just the usual metric topology
  on $\real$, which is locally compact hence consonant.
\end{rem}
The topological coproduct of two consonant spaces is not in general
consonant \cite[Example~6.12]{NS:consonant}.  I do not know whether,
given a consonant space $X$, its $n$th \emph{copower} $n \odot X$ (the
coproduct of $n$ copies of $X$) is consonant, but that seems unlikely.
\begin{defi}[$\odot$-consonant]
  \label{defn:realcons}
  A topological space $X$ is called \emph{$\odot$-consonant} if and only
  if, for every $n \in \nat$, $n \odot X$ is consonant.
\end{defi}
In particular, every $\odot$-consonant space is consonant.  Since
coproducts of locally compact spaces are locally compact, every
locally compact space is $\odot$-consonant.
\begin{lem}
  \label{lemma:coprod:cont}
  For every two continuous Yoneda-complete quasi-metric spaces $X, d$
  and $Y, \partial$, the quasi-metric space $X+Y$, with quasi-metric
  $d+\partial$ defined by $(d+\partial) (x, x') = d (x, x')$ if $x, x'
  \in X$, $(d+\partial) (y, y') = \partial (y, y')$ if $y, y' \in Y$,
  $(d + \partial) (z, z') = +\infty$ in all other cases, is continuous
  Yoneda-complete.  With the $(d+\partial)$-Scott topology, $X+Y$ is
  the topological coproduct of $X$ (with the $d$-Scott topology) and
  $Y$ (with the $\partial$-Scott topology).
\end{lem}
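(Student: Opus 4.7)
\medskip\noindent\textbf{Proof plan.} The idea is that the $+\infty$ values assigned to cross-pairs by $d+\partial$ force the poset of formal balls of $X+Y, d+\partial$ to split cleanly into the two summands. I would first observe that for any $x\in X$, $y\in Y$, and $r,s\in\Rp$, we have $(d+\partial)(x,y)=(d+\partial)(y,x)=+\infty$, so neither $(x,r)\leq^{(d+\partial)^+}(y,s)$ nor $(y,s)\leq^{(d+\partial)^+}(x,r)$ ever holds. Hence the map sending $(z,r)$ to the formal ball $(z,r)$ in $\mathbf B(X,d)$ or $\mathbf B(Y,\partial)$ according to whether $z\in X$ or $z\in Y$ is an order-isomorphism $\mathbf B(X+Y, d+\partial)\cong \mathbf B(X,d)\sqcup\mathbf B(Y,\partial)$ (disjoint union of posets).

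Next I would transfer directedness and way-belowness across this isomorphism. Any non-empty directed family in a poset that is a disjoint union of two posets must lie entirely inside one summand: two elements in different summands have no common upper bound. Therefore directed suprema in $\mathbf B(X+Y, d+\partial)$ exist exactly when the corresponding (eventually constant-side) family has a supremum in $\mathbf B(X,d)$ or in $\mathbf B(Y,\partial)$, and are computed there. Since $X,d$ and $Y,\partial$ are Yoneda-complete, both $\mathbf B(X,d)$ and $\mathbf B(Y,\partial)$ are dcpos by the Kostanek--Waszkiewicz theorem, so $\mathbf B(X+Y,d+\partial)$ is a dcpo; thus $X+Y, d+\partial$ is Yoneda-complete. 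The same observation about directed families shows that for $u,v$ in the same summand, $u\ll v$ in the coproduct iff $u\ll v$ in that summand; and if $u,v$ lie in different summands, then $u\not\ll v$. Continuity of each summand then yields continuity of $\mathbf B(X+Y,d+\partial)$, and hence $X+Y, d+\partial$ is continuous Yoneda-complete.

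For the topological statement, recall that on a coproduct of two dcpos the Scott topology coincides with the coproduct of the individual Scott topologies: a subset $U_X\cup U_Y$ is Scott-open iff $U_X$ and $U_Y$ are Scott-open, because directed sets again live in one summand. Pulling this back along the embedding $z\mapsto(z,0)\colon X+Y\hookrightarrow\mathbf B(X+Y,d+\partial)$ and using that this embedding restricts to $x\mapsto(x,0)$ on $X$ and $y\mapsto(y,0)$ on $Y$, the $(d+\partial)$-Scott open subsets of $X+Y$ are exactly the sets of the form $U\cup V$ with $U$ $d$-Scott open in $X$ and $V$ $\partial$-Scott open in $Y$, which is the coproduct topology.

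\medskip\noindent\textbf{Main obstacle.} There is no serious obstacle: everything is driven by the single observation that $(d+\partial)(x,y)=+\infty$ when $x$ and $y$ lie in different summands, which eliminates all cross-interaction at the level of formal balls, directed families, and the way-below relation. The only point that needs mild care is that the Scott topology on a coproduct of dcpos is the coproduct of the Scott topologies, and this follows again from the fact that directed sets in the coproduct are eventually in one summand.
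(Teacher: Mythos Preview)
Your proposal is correct and follows essentially the same approach as the paper: identify $\mathbf B(X+Y,d+\partial)$ with the poset coproduct $\mathbf B(X,d)\sqcup\mathbf B(Y,\partial)$ via the $+\infty$ cross-distances, then use that a coproduct of continuous dcpos is a continuous dcpo and that the Scott topology on a poset coproduct is the coproduct topology. You give more detail than the paper (which dispatches this in three sentences), but the structure is identical.
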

\proof It is easy to see that $\mathbf B (X+Y, d+\partial)$ is the
poset coproduct of $\mathbf B (X, d)$ and of
$\mathbf B (Y, \partial)$: $(x, r) \leq^{(d+\partial)^+} (x', r')$ iff
$(x, r) \leq^{d^+} (x', r')$ if $x, x' \in X$,
$(y, s) \leq^{(d+\partial)^+} (y', s')$ iff
$(y, s) \leq^{\partial^+} (y', s')$ if $y, y' \in Y$, and for
$x \in X$ and $y \in Y$, $(x, r)$ and $(y, s)$ are incomparable.  The
poset coproduct of two continuous dcpos is a continuous dcpo, so
$\mathbf B (X+Y, d+\partial)$ is a continuous dcpo, which shows the
first part of the lemma.  Also, the Scott topology on a poset
coproduct is the coproduct topology, which shows the second part of
the lemma.  \qed

Together with Proposition~\ref{prop:cont=>cons}, this yields the
following corollary.
\begin{prop}
  \label{prop:cont=>realcons}
  Every continuous Yoneda-complete quasi-metric space $X, d$ is
  $\odot$-consonant in its $d$-Scott topology.
\end{prop}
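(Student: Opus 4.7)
The plan is to proceed by a straightforward induction on $n$, using the two ingredients already at hand: Proposition~\ref{prop:cont=>cons} (every continuous Yoneda-complete quasi-metric space is consonant in its $d$-Scott topology) and Lemma~\ref{lemma:coprod:cont} (the coproduct of two continuous Yoneda-complete quasi-metric spaces, equipped with the obvious coproduct quasi-metric, is again continuous Yoneda-complete, and its $d$-Scott topology is the topological coproduct of the factors).

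More precisely, I would show by induction on $n \in \nat$ that $n \odot X$ carries a continuous Yoneda-complete quasi-metric whose associated $d$-Scott topology is the topological coproduct of $n$ copies of $X$ equipped with the $d$-Scott topology. The base cases $n=0$ (the empty space, which is vacuously consonant and trivially continuous Yoneda-complete since $\mathbf B(\emptyset) = \emptyset$) and $n=1$ (the space $X$ itself) are immediate. For the inductive step, write $(n+1) \odot X = (n \odot X) + X$; by the induction hypothesis $n \odot X$ is continuous Yoneda-complete, and so Lemma~\ref{lemma:coprod:cont} applies, yielding a continuous Yoneda-complete quasi-metric on $(n+1) \odot X$ whose $d$-Scott topology matches the coproduct topology.

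Having established that every $n \odot X$ is continuous Yoneda-complete as a quasi-metric space (in the appropriate sense), Proposition~\ref{prop:cont=>cons} delivers consonance of $n \odot X$ in its $d$-Scott topology, which by the second half of Lemma~\ref{lemma:coprod:cont} is precisely the topological coproduct of $n$ copies of $X$. This is exactly the statement that $X$ is $\odot$-consonant in the sense of Definition~\ref{defn:realcons}.

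There is no real obstacle here: both the inductive construction of the coproduct quasi-metric and the transfer of consonance from $n \odot X$ to the topological coproduct are already packaged in the cited results. The only thing to be slightly careful about is to verify the trivial base case $n=0$, and to note that we are free to iterate Lemma~\ref{lemma:coprod:cont}, since that lemma outputs a continuous Yoneda-complete quasi-metric space of the same kind that it takes as input.
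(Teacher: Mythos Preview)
Your proposal is correct and follows essentially the same approach as the paper: iterate Lemma~\ref{lemma:coprod:cont} to see that $n \odot X$ is continuous Yoneda-complete with $d$-Scott topology equal to the topological coproduct, then apply Proposition~\ref{prop:cont=>cons}. The paper simply compresses this into one line (``for every $n \in \nat$, $n \odot X$ is continuous Yoneda-complete, hence consonant''), while you spell out the induction explicitly.
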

\proof For every $n \in \nat$, $n \odot X$ is continuous
Yoneda-complete, hence consonant.  \qed

\begin{prop}
  \label{prop:co=Scott}
  Let $X$ be a $\odot$-consonant space, for example, a continuous
  Yoneda-complete quasi-metric space.  The compact-open topology on
  $\Lform X$ is equal to the Scott topology on $\Lform X$.
\end{prop}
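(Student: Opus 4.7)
The plan is to prove both inclusions. That the compact-open topology is contained in the Scott topology is direct: each subbasic compact-open set $[Q > a]$ (with $Q$ compact and $a \in \Rp$) is upward-closed, and if a directed family $(f_i)_i$ has pointwise supremum $f \in [Q > a]$, then the opens $\{x \in X : f_i (x) > a\}$ form a directed open cover of $Q$, so compactness of $Q$ delivers some $f_{i_0} \in [Q > a]$.

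For the converse, let $\mathcal W$ be Scott-open in $\Lform X$ and $f \in \mathcal W$; the goal is to exhibit a compact-open neighborhood of $f$ inside $\mathcal W$. Every $f \in \Lform X$ is the directed pointwise supremum of the finite pointwise maxima $s = \sup_{i=1}^n b_i \chi_{V_i}$ that lie below it, with $n \in \nat$, $b_i > 0$, and $V_i \in \Open X$ (for each $r$ with $0 < r < f (x)$, the function $r \chi_{\{f > r\}}$ qualifies, and finite pointwise maxima of such functions form a directed family). By Scott-openness, some such $s_0 = \sup_{i=1}^n b_i \chi_{V_i}$ lies in $\mathcal W$, and applying Scott-openness once more to a chain obtained by shrinking the coefficients (e.g.\ $s_0 = \sup_m \sup_i \max (b_i - 1/m, 0) \chi_{V_i}$), I may further assume that there exist $0 < b_i' < b_i$ with $\sup_i b_i' \chi_{V_i}$ still in $\mathcal W$. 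This coefficient slack is the crucial technical device.

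Transferring to the copower, the map $\Phi : (\Open X)^n \to \Lform X$, $(W_1, \ldots, W_n) \mapsto \sup_i b_i' \chi_{W_i}$, is Scott-continuous, so $\mathcal W' := \Phi^{-1} (\mathcal W)$ is Scott-open in $(\Open X)^n$, which I identify with $\Open (n \odot X)$. By $\odot$-consonance, $n \odot X$ is consonant, so $\mathcal W'$ contains $\blacksquare Q$ for some compact saturated $Q \subseteq V_1 \sqcup \cdots \sqcup V_n$ in $n \odot X$; writing $Q = Q_1 \sqcup \cdots \sqcup Q_n$ with each $Q_i \subseteq V_i$ compact saturated in $X$, this yields $\sup_i b_i' \chi_{W_i} \in \mathcal W$ whenever $W_i \supseteq Q_i$ for all $i$. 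Then $\mathcal N := \bigcap_{i=1}^n [Q_i > b_i']$ is the desired compact-open neighborhood: $f \in \mathcal N$ because $f \geq b_i > b_i'$ on $Q_i \subseteq V_i$, and for any $h \in \mathcal N$ the opens $W_i' := \{x \in X : h (x) > b_i'\}$ contain $Q_i$, whence $\sup_i b_i' \chi_{W_i'} \in \mathcal W$; since this function lies pointwise below $h$, upward-closedness of $\mathcal W$ gives $h \in \mathcal W$. The main obstacle is precisely the need for the coefficient shift $b_i' < b_i$: if one retained $b_i$, the test opens $\{x : h (x) > b_i\}$ would not necessarily contain $Q_i$, and the induced simple function would fail to lie below $h$; correspondingly, $\odot$-consonance (rather than plain consonance) is used because the opens $V_i$ must be approximated simultaneously, which is naturally encoded as the consonance of the single coproduct space $n \odot X$.
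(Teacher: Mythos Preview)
Your proof is correct and follows essentially the same approach as the paper: approximate $f$ from below by a simple function, use $\odot$-consonance on the induced Scott-open family of $n$-tuples of opens to extract compact sets $Q_i$, and verify that $\bigcap_i [Q_i > \cdot]$ is the required compact-open neighborhood. The only cosmetic differences are that the paper uses nested-support \emph{sums} $\frac{1}{2^N}\sum_i \chi_{U_i}$ with $U_i = f^{-1}(]i/2^N,+\infty])$ (so the strict inequality $f > i/2^N$ on $U_i$ is built in), whereas you use finite \emph{maxima} $\sup_i b_i \chi_{V_i}$ and an explicit coefficient-shrinking step $b_i \mapsto b_i'$ to manufacture the needed strictness; your variant is arguably slightly cleaner when verifying $\sup_i b_i' \chi_{W_i'} \leq h$.
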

\proof Let $f [Q]$ denote the image of $Q$ by $f$.  Write
$[Q > a] = \{f \in \mathcal L X \mid f [Q] \subseteq ]a, +\infty]\} =
\{f \in \mathcal L X \mid Q \subseteq f^{-1} (]a, +\infty])$, where
$Q$ is compact saturated in $X$ and $a \in \real$, for the subbasic
open subsets of the compact-open topology.

It is easy to see that $[Q > a]$ is Scott-open, so that the Scott
topology is finer than the compact-open topology.  In the reverse
direction, let $\mathcal W$ be a Scott-open subset of $\Lform X$,
$f \in \mathcal W$.  Our task is to find an open neighborhood of $f$
in the compact-open topology that would be included in $\mathcal W$.

The function $f$ is the
pointwise supremum of a chain of so-called \emph{step functions}, of
the form $\frac 1 {2^N} \sum_{i=1}^{N2^N} \chi_{U_i}$, where $U_1$,
$U_2$, \ldots, $U_{N2^N}$ are open subsets, and $N \in \nat$.
Concretely, $U_i = f^{-1} (]i/2^N, +\infty])$.
Hence some $\frac 1 {2^N} \sum_{i=1}^{N2^N} \chi_{U_i}$ is in
$\mathcal W$, where we can even require $U_1 \supseteq U_2 \supseteq
\cdots \supseteq U_{N2^N}$.

Let $\mathcal V$ be the set of $N2^N$-tuples
$(V_1, V_2, \cdots, V_{N2^N})$ of open subsets of $X$ such that
$\frac 1 {2^N} \sum_{i=1}^{N2^N} \chi_{V_i}$ is in $\mathcal W$.
Ordering those tuples componentwise, where each open is compared
through inclusion, $\mathcal V$ is Scott-open.  Equating those tuples
with open subsets of $N2^N \odot X$ allows us to apply
$\odot$-consonance: there is a compact saturated subset of
$N2^N \odot X$, or equivalently, there are $N2^N$ compact saturated
subsets $Q_1$, $Q_2$, \ldots, $Q_{N2^N}$ of $X$ such that
$Q_i \subseteq U_i$ for each $i$, and such that for all open
neighborhoods $V_i$ of $Q_i$, $1\leq i\leq N2^N$,
$\frac 1 {2^N} \sum_{i=1}^{N2^N} \chi_{V_i}$ is in $\mathcal W$.

Clearly, $\frac 1 {2^N} \sum_{i=1}^{N2^N} \chi_{U_i}$ is in
$\bigcap_{i=1}^{N2^N} [Q_i > i/2^N]$, hence also the larger function
$f$.  Consider any $g \in \bigcap_{i=1}^{N2^N} [Q_i > i/2^N]$.  Let
$V_i = g^{-1} (]i/2^N, +\infty])$.  By definition, $Q_i$ is included
in $V_i$ for every $i$, so
$\frac 1 {2^N} \sum_{i=1}^{N2^N} \chi_{V_i}$ is in $\mathcal W$.  

For every $x \in X$, let $i$ be the smallest number between $1$ and
$N2^N$ such that $g (x) \leq (i+1)/2^N$, and $N2^N$ if there is none.
Then $x$ is in $V_1$, $V_2$, \ldots, $V_i$, and not in $V_{i+1}$,
\ldots, $V_{N2^N}$, hence is mapped to $i/2^N$ by
$\frac 1 {2^N} \sum_{i=1}^{N2^N} \chi_{V_i}$.  That is less than or
equal to $g (x)$.  It follows that $g$ is pointwise above
$\frac 1 {2^N} \sum_{i=1}^{N2^N} \chi_{V_i}$, and is therefore also in
$\mathcal W$, finishing the proof that
$\bigcap_{i=1}^{N2^N} [Q_i > i/2^N]$ is included in $\mathcal W$.
\qed

\section{Topologies on $\Lform_\alpha (X, d)$ and $\Lform_\alpha^a (X,
d)$}
\label{sec:topol-lform_-x}

The topology we have taken on $\Lform_\alpha (X, d)$ and on
$\Lform_\alpha^a (X, d)$ is the subspace topology from $\Lform X$
(with its Scott topology).  Proposition~\ref{prop:co=Scott}
immediately implies that, when $X, d$ is continuous Yoneda-complete,
the topologies of $\Lform_\alpha (X, d)$ and $\Lform_\alpha^a (X, d)$
are the compact-open topology, i.e., the topology generated by the
subsets of the form
$\{f \in \Lform_\alpha X \mid f [Q] \subseteq ]a, +\infty]\}$ (resp.,
$\{f \in \Lform_\alpha^a X \mid f [Q] \subseteq ]a, +\infty]\}$), $Q$
compact saturated in $X$, and which we again write as $[Q > a]$.

Let us refine our understanding of the shape of compact saturated
subsets $Q$ of $X$.  Recall that we write $B^d_{x, \leq r}$ for the closed ball
$\{y \in X \mid d (x, y) \leq r\}$, and beware that, despite their
names, closed balls are not closed.

The topology of pointwise convergence, on any set of functions from
$X$ to $Y$, is the subspace topology from the ordinary product
topology on $Y^X$.  It is always coarser than the compact-open
topology.  When $Y = \creal$ or $Y = [0, a]$, with its Scott-topology,
a subbase for the pointwise topology is given by the subsets
$[x > r] = \{f \mid f (x) > r\}$, where $x \in X$ and $r \in \Rp$.
\begin{prop}
  \label{prop:cont:Lalpha:pw}
  Let $X, d$ be a continuous Yoneda-complete quasi-metric space.  For
  every $\alpha \in \Rp$, the topology of $\Lform_\alpha (X, d)$
  (resp., $\Lform_\alpha^a (X, d)$, for any $a > 0$) coincides with
  the topology of pointwise convergence.
\end{prop}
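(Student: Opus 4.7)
My plan is to prove both inclusions between the ambient topology on $\Lform_\alpha(X, d)$ (the subspace topology from $\Lform X$, which coincides with the restriction of the compact-open topology by Proposition~\ref{prop:co=Scott}) and the pointwise convergence topology. The pointwise topology is always coarser than the compact-open topology, since singletons are compact saturated and $[x > r] = [\{x\} > r]$. The content lies in the reverse inclusion: given $f \in \Lform_\alpha(X, d)$ and a subbasic compact-open neighborhood $[Q > a] \cap \Lform_\alpha(X, d)$ of $f$ (with $Q$ compact saturated in $X$ and $a \in \real$), I will exhibit finitely many points $x_1, \ldots, x_n \in X$ and numbers $b_1, \ldots, b_n \in \Rp$ with $f \in \bigcap_i [x_i > b_i]$ and $\bigcap_i [x_i > b_i] \cap \Lform_\alpha(X, d) \subseteq [Q > a]$.

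The first ingredient is a slack estimate: since $f$ is lower semicontinuous (Lemma~\ref{lemma:Lalpha:LX}), the sets $f^{-1}(]a + 1/k, +\infty])$ form an increasing open cover of $Q$, so compactness of $Q$ yields $\epsilon > 0$ with $f(y) > a + 2\epsilon$ for every $y \in Q$. The second ingredient is Lemma~\ref{lemma:cont:Q}, applied to the open neighborhood $U = f^{-1}(]a + \epsilon, +\infty])$ of $Q$ with radius bound $\epsilon' = \epsilon/(2\alpha)$ when $\alpha > 0$. This produces a finite union of closed balls $A = \bigcup_{i=1}^n B^d_{x_i, \leq r_i}$ with $Q \subseteq A \subseteq U$ and every $r_i < \epsilon'$, and in particular $f(x_i) > a + \epsilon$ for each $i$.

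Setting $b_i = a + \epsilon/2$, the set $V = \bigcap_{i=1}^n [x_i > b_i]$ is a pointwise-open neighborhood of $f$. For any $\alpha$-Lipschitz continuous $g \in V$ and any $y \in Q$, pick an index $i$ with $y \in B^d_{x_i, \leq r_i}$; the $\alpha$-Lipschitz condition gives $g(x_i) \leq g(y) + \alpha d(x_i, y)$, whence
\[
g(y) \;\geq\; g(x_i) - \alpha r_i \;>\; b_i - \alpha\epsilon' \;=\; a + \epsilon/2 - \epsilon/2 \;=\; a.
\]
The edge case $\alpha = 0$ is simpler: Lemma~\ref{lemma:cont:Q} still supplies a cover (with any $\epsilon' > 0$), and the $0$-Lipschitz condition at finite $d(x_i, y)$ yields $g(x_i) \leq g(y)$ directly, so one may take $b_i = a$. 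The same argument transfers verbatim to $\Lform_\alpha^a(X, d)$, since it is a subspace of $\Lform_\alpha(X, d)$ whose pointwise and compact-open topologies are just the corresponding restrictions.

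The main subtlety I expect is the calibration of radii: one must choose the covering balls small enough (of order $\epsilon/\alpha$) that the $\alpha$-Lipschitz property can transport the strict lower bound $g(x_i) > b_i$ across each ball to all nearby points of $Q$ without overshooting the threshold $a$. The fact that Lemma~\ref{lemma:cont:Q} delivers covers of arbitrarily small radius is where the continuous Yoneda-complete hypothesis enters crucially.
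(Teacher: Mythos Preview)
Your proof is correct and follows essentially the same approach as the paper: reduce to the compact-open topology via Proposition~\ref{prop:co=Scott}, cover $Q$ by finitely many small closed balls using Lemma~\ref{lemma:cont:Q}, and transport the strict lower bound across each ball via the $\alpha$-Lipschitz inequality. The only differences are cosmetic---the paper obtains the slack by observing that $f[Q]$ is compact in $\creal_\sigma$ and hence has a minimum $b>r$, and uses slightly different constants ($\eta$ and $\epsilon=\eta/\alpha$, with $W=\bigcap_i [x_i > r+\eta]$)---but the argument is the same.
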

\proof We already know that the topology of $\Lform_\alpha (X, d)$,
and of $\Lform_\alpha^a (X, d)$, is the compact-open topology, as a
consequence of Proposition~\ref{prop:co=Scott}.  It therefore suffices
to show that every subset $[Q > r]$ of $\Lform_\alpha (X, d)$ or
$\Lform_\alpha^a (X, d)$, $Q$ compact saturated in $X$ and
$r \in \Rp$, is open in the topology of pointwise convergence.  We
assume without loss of generality that $Q$ is non-empty.  Let
$f \in \Lform_\alpha (X, d)$, resp.\ $f \in \Lform_\alpha^a (X, d)$,
be an arbitrary element of $[Q > r]$.  Since $f$ is lower
semicontinuous, i.e., continuous from $X$ to $\creal$ (resp., to
$[0, \alpha a]$), the image of $Q$ by $f$ is compact, and its upward
closure is compact saturated.  The non-empty compact saturated subsets
of $\creal$ (resp., $[0, \alpha a]$), in its Scott topology, are the
intervals $[b, +\infty]$, $b \in \creal$ (resp., $[b, \alpha a]$,
$b \in [0, \alpha a]$); hence the image of $Q$ by $f$ is of this form,
and $b$ is necessarily the minimum value attained by $f$ on $Q$,
$\min_{x \in X} f (x)$.  Since $f \in [Q > r]$, we must have $b > r$,
and therefore there is an $\eta > 0$ such that $b > r+\eta$.

Let $\epsilon > 0$ be such that $\alpha \epsilon \leq \eta$.  Put in a
simpler form, let $\epsilon = \eta/\alpha$ if $\alpha\neq 0$,
otherwise let $\epsilon > 0$ be arbitrary.

Apply Lemma~\ref{lemma:cont:Q}, and find finitely many closed balls
$B_{x_i, \leq r_i}$, $1\leq i\leq n$, included in
$U = f^{-1} (]r+\eta, +\infty])$, with $r_i < \epsilon$, and whose
union contains $Q$.  Now consider
$W = \bigcap_{i=1}^n [x_i > r + \eta]$.  Since each $x_i$ is in $U$,
$f$ is in $W$.  For every $g \in \Lform_\alpha (X, d)$ (resp.,
$g \in \Lform_\alpha^a (X, d)$) that is in $W$, we claim that $g$ is in
$[Q > r]$.  For every $x \in Q$, there is an index $i$ such that
$x \in B_{x_i, \leq r_i}$, and since $g$ is $\alpha$-Lipschitz,
$g (x_i) \leq g (x) + \alpha r_i$.  By assumption $g (x_i) > r+\eta$,
and $r_i < \epsilon$, so $g (x) > r$, proving the claim.  Since $W$ is
open in the topology of pointwise convergence, we have proved that
every element of $[Q > r]$ is an open neighborhood, for the topology
of pointwise convergence, of each of its elements, which proves the
result.  \qed

We shall now use some results in the theory of stably compact spaces
\cite[Chapter~9]{JGL-topology}.  A \emph{stably compact} space is a sober,
locally compact, compact and coherent space, where coherence means
that the intersection of any two compact saturated subsets is compact.
For a compact Hausdorff space $Z$ equipped with an ordering $\preceq$
whose graph is closed in $Z^2$ (a so-called \emph{compact pospace}),
the space $Z$ with the upward topology, whose opens are by definition
the open subsets of $Z$ that are upwards-closed with respect to
$\preceq$, is stably compact.  More: all stably compact spaces can be
obtained this way.  If $X$ is stably compact, then form a second
topology, the \emph{cocompact} topology, whose closed subsets are the
compact saturated subsets of the original space.  With the cocompact
topology, we obtain a space called the \emph{de Groot dual} of $X$,
$X^\dG$, which is also stably compact.  We have $X^{\dG\dG} = X$.
Also, with the join of the original topology on $X$ and of the
cocompact topology, one obtains a compact Hausdorff space $X^\patch$,
the \emph{patch space} of $X$.  Together with the specialization
ordering $\leq$ of $X$, $X^\patch$ is then a compact pospace.
Moreover, passing from $X$ to its compact pospace, and conversely, are
mutually inverse operations.

We shall need to go through the following auxiliary notion.
\begin{defi}[$L_\alpha (X, d)$, $L_\alpha^a (X, d)$]
  \label{defn:Lalpha:notcont}
  Let $L_\alpha (X, d)$ denote the space of all $\alpha$-Lipschitz
  (not necessarily $\alpha$-Lipschitz continuous) maps from $X, d$ to
  $\creal$.  Let also $L_\alpha^a (X, d)$ be the subspace of all
  $h \in L_\alpha (X, d)$ such that $h \leq \alpha a$.
\end{defi}
Please pay attention to the change of font compared with $\Lform_\alpha (X, d)$.

$\creal$ (resp., $[0, \alpha a]$), with its Scott topology, is stably
compact, and its patch space $\creal^\patch$ (resp.,
$[0, \alpha a]^\patch$) has the usual Hausdorff topology.  Since the
product of stably compact spaces is stably compact (see
\cite[Proposition~9.3.1]{JGL-topology}), $\creal^X$ (resp.,
$[0, \alpha a]^X$) is stably compact.  Moreover, the patch operation
commutes with products, so $(\creal^X)^\patch = (\creal^\patch)^X$
(resp., $([0, \alpha a]^X)^\patch = ([0, \alpha a]^\patch)^X$), and
the specialization ordering is pointwise.

The subset $L_\alpha (X, d)$ (resp., $L_\alpha^a (X, d)$) is then
patch-closed in $\creal^X$ (resp., $[0, \alpha a]^X$), meaning that it
is closed in $(\creal^X)^\patch = (\creal^\patch)^X$ (resp., in
$([0, \alpha a]^X)^\patch = ([0, \alpha a]^\patch)^X$).  Indeed,
$L_\alpha (X, d) = \{f \in \creal^X \mid \forall x, y \in X . f (x)
\leq f (y) + \alpha d (x, y)\}$ is the intersection of the sets
$\{f \in \creal^X \mid f (x) \leq f (y) + \alpha d (x, y)\}$,
$x, y \in X$, and each is patch-closed, because the graph of $\leq$ is
closed in $(\creal^\patch)^2$ and the maps $f \mapsto f (x)$ are
continuous from $(\creal^\patch)^X$ to $\creal^\patch$---similarly
with $[0, \alpha a]$ in lieu of $\creal$ and $L_\alpha^a (X, d)$
instead of $L_\alpha (X, d)$.  It is well-known (Proposition~9.3.4,
loc.cit.) that the patch-closed subsets $C$ of stably compact spaces
$Y$ are themselves stably compact, and that the topology of $C^\patch$
is the subspace topology of $Y^\patch$.  It follows that:
\begin{lem}
  \label{lemma:cont:Lalpha:patchclos}
  Let $X, d$ be a continuous quasi-metric space.  $L_\alpha (X, d)$
  and $L_\alpha^a (X, d)$, for every $a \in \Rp$, $a > 0$, are stably
  compact in the topology of pointwise convergence.  \qed
\end{lem}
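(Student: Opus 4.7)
The plan is to carry out exactly the argument sketched in the paragraph immediately preceding the lemma, packaging it as a clean application of the ``patch-closed subspace of a stably compact space is stably compact'' principle to $L_\alpha(X,d)$, seen as a subspace of the power $\creal^X$ with pointwise Scott topology, and to $L_\alpha^a(X,d)$, seen as a subspace of $[0,\alpha a]^X$.

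First I would observe that $\creal$ with its Scott topology is stably compact (its patch topology is the usual Hausdorff topology on the compact interval $[0,+\infty]$), and similarly for $[0,\alpha a]$. Since stably compactness is preserved under arbitrary products (Proposition~9.3.1 of \cite{JGL-topology}) and the patch functor commutes with products, $\creal^X$ is stably compact with patch $(\creal^\patch)^X$, and likewise $[0,\alpha a]^X$ is stably compact with patch $([0,\alpha a]^\patch)^X$; the specialization ordering on each product is pointwise.

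Next I would show patch-closedness by writing
\[
  L_\alpha(X,d) \;=\; \bigcap_{(x,y)\in X^2}\, \{f\in \creal^X \mid f(x) \leq f(y) + \alpha\, d(x,y)\}.
\]
Each factor is patch-closed since the evaluations $f\mapsto f(x)$ and $f\mapsto f(y)$ are continuous from $(\creal^\patch)^X$ to $\creal^\patch$, translation by the constant $\alpha\, d(x,y)\in \creal$ is continuous on $\creal^\patch$, and the graph of $\leq$ is closed in $(\creal^\patch)^2$ (it being the specialization order of a stably compact space). For $L_\alpha^a(X,d)$ one either intersects further with the patch-closed set $\{f \mid f\leq \alpha a\}$, or, equivalently, runs the same argument inside $[0,\alpha a]^X$. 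Finally I would invoke Proposition~9.3.4 of \cite{JGL-topology}, which states that patch-closed subspaces of stably compact spaces are stably compact and that their patch topology coincides with the subspace topology induced from the ambient patch space; this yields stably compactness of $L_\alpha(X,d)$ and $L_\alpha^a(X,d)$ in the subspace topology from the Scott product, which is by definition the topology of pointwise convergence.

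The only point that needs a moment's care is the patch-closedness of each defining inequality, i.e.\ the continuity of the operations $f\mapsto f(y)+\alpha\,d(x,y)$ on $\creal^\patch$; this is immediate once one remembers that $\creal^\patch$ is just the Hausdorff interval $[0,+\infty]$. No other step is delicate. As a remark, the continuity hypothesis on $X,d$ is not actually used in this particular lemma; the conclusion holds for any quasi-metric space, the continuity assumption being inherited from the ambient discussion in this section.
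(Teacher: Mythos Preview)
Your proposal is correct and follows essentially the same approach as the paper: the argument in the paragraph preceding the lemma is precisely to exhibit $L_\alpha(X,d)$ (resp.\ $L_\alpha^a(X,d)$) as a patch-closed subset of the stably compact product $\creal^X$ (resp.\ $[0,\alpha a]^X$), using the same references and the same decomposition into an intersection of preimages of the graph of $\leq$. Your closing remark that the continuity hypothesis on $X,d$ is not actually used is also correct.
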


\begin{lem}
  \label{lemma:cont:Lalpha:retr}
  Let $X, d$ be a continuous quasi-metric space, $\alpha \in \Rp$, and
  $a \in \Rp$, $a > 0$.  Let $\rho_\alpha$ be the map
  $f \in L_\alpha (X, d) \mapsto f^{(\alpha)} \in \Lform_\alpha (X,
  d)$.  Then:
  \begin{enumerate}
  \item $\rho_\alpha$ is continuous from $L_\alpha (X, d)$ to
    $\Lform_\alpha (X, d)$ (resp., from $L_\alpha^a (X, d)$ to
    $\Lform_\alpha^a (X, d)$), both spaces being equipped with the
    topology of pointwise convergence;
  \item $\Lform_\alpha (X, d)$ is a retract of $L_\alpha (X, d)$
    (resp., $\Lform_\alpha^a (X, d)$ is a retract of
    $L_\alpha^a (X, d)$), with $\rho_\alpha$ the retraction and where
    inclusion serves as section; again both spaces are assumed to have
    the topology of pointwise convergence here;
  \item $\Lform_\alpha (X, d)$ and $\Lform_\alpha^a (X, d)$, with the
    topology of pointwise convergence, are stably compact;
  \item If $X, d$ is also Yoneda-complete, then $\Lform_\alpha (X, d)$
    and $\Lform_\alpha^a (X, d)$ are stably compact (in the subspace
    topology of the Scott topology on $\Lform X$).
  \end{enumerate}
\end{lem}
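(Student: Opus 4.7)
The plan is to dispose of the four claims in order, leveraging the explicit formula for $f^{(\alpha)}$ from Lemma~\ref{lemma:largestLipcont}, the classical fact that retracts of stably compact spaces are stably compact, and Proposition~\ref{prop:cont:Lalpha:pw} identifying the subspace and pointwise topologies.

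For (1), recall that, since $X, d$ is continuous, Lemma~\ref{lemma:largestLipcont} gives $f^{(\alpha)}(x) = \sup_{(y,s) \ll (x,0)} (f(y) - \alpha s)$ for every $f$ that is $\alpha$-Lipschitz. The pointwise topology on $\Lform_\alpha(X,d)$ (resp.\ $\Lform_\alpha^a(X,d)$) is generated by the subbasic opens $[x > r] = \{g \mid g(x) > r\}$. Hence I compute
\[
\rho_\alpha^{-1}([x > r]) = \{f \in L_\alpha(X,d) \mid f^{(\alpha)}(x) > r\}
  = \bigcup_{(y,s) \ll (x,0)} \{f \in L_\alpha(X,d) \mid f(y) > r + \alpha s\},
\]
which is a union of pointwise-open sets, hence open. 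The same computation works with $L_\alpha^a(X,d)$ and $\Lform_\alpha^a(X,d)$ in place of $L_\alpha(X,d)$ and $\Lform_\alpha(X,d)$ (noting that $f \leq \alpha a$ forces $f^{(\alpha)} \leq f \leq \alpha a$, so $\rho_\alpha$ really does land in $\Lform_\alpha^a(X,d)$).

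For (2), the inclusion $i \colon \Lform_\alpha(X,d) \hookrightarrow L_\alpha(X,d)$ is continuous because both carry the subspace topology inherited from $\creal^X$. Moreover, $\rho_\alpha \circ i$ is the identity: if $f$ is itself $\alpha$-Lipschitz continuous, then the largest $\alpha$-Lipschitz continuous map below $f$ is $f$, so $f^{(\alpha)} = f$. Combined with (1), this exhibits $\Lform_\alpha(X,d)$ (resp.\ $\Lform_\alpha^a(X,d)$) as a continuous retract of $L_\alpha(X,d)$ (resp.\ $L_\alpha^a(X,d)$).

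For (3), invoke Lemma~\ref{lemma:cont:Lalpha:patchclos}, which tells us that $L_\alpha(X,d)$ and $L_\alpha^a(X,d)$ are stably compact in the topology of pointwise convergence. A standard fact (see \cite[Section~9.3]{JGL-topology}) is that a continuous retract of a stably compact space is stably compact: sobriety, local compactness, compactness, and coherence are all inherited through a retraction. Applied to the retraction from (2), this yields stable compactness of $\Lform_\alpha(X,d)$ and $\Lform_\alpha^a(X,d)$ in the pointwise topology.

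For (4), under the added assumption of Yoneda-completeness, Proposition~\ref{prop:cont:Lalpha:pw} says that the subspace topology induced from $\Lform X$ on $\Lform_\alpha(X,d)$ (resp.\ $\Lform_\alpha^a(X,d)$) coincides with the topology of pointwise convergence, so (4) is simply (3) restated under this identification. The only potential obstacle is the verification in (1), but it reduces to directedness of $\{(y,s) \mid (y,s) \ll (x,0)\}$ in the continuous dcpo $\mathbf B(X,d)$, which is automatic; everything else is bookkeeping with well-known facts about stably compact spaces. \qed
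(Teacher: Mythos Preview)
Your proof is correct and follows essentially the same approach as the paper: the explicit formula from Lemma~\ref{lemma:largestLipcont} for part~(1), the observation $f^{(\alpha)}=f$ when $f$ is already $\alpha$-Lipschitz continuous for part~(2), Lemma~\ref{lemma:cont:Lalpha:patchclos} together with the preservation of stable compactness under retracts for part~(3), and Proposition~\ref{prop:cont:Lalpha:pw} for part~(4). One very minor quibble: in your closing remark you call $\mathbf B(X,d)$ a continuous \emph{dcpo}, but in parts~(1)--(3) only continuity of $X,d$ is assumed, so $\mathbf B(X,d)$ is merely a continuous poset; this does not affect the argument, since the union computation in~(1) does not actually require directedness.
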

\proof Recall from Lemma~\ref{lemma:largestLipcont} that
$f^{(\alpha)} (x) = \sup_{(y, s) \ll (x, 0)} (f (y) - \alpha s)$.

(1) The inverse image of $[x > r]$ by $\rho_\alpha$ is the set of
$\alpha$-Lipschitz maps $f$ such that, for some $(y, s) \ll (x, 0)$,
$f (y) - \alpha s > r$, hence is the union of the open sets $[y >
r+\alpha s]$ over the elements $(y, s)$ way-below $(x, 0)$.

(2) If $\iota$ denotes the inclusion of $\Lform_\alpha (X, d)$ into
$L_\alpha (X, d)$ (resp., of $\Lform_\alpha^a (X, d)$ into
$L_\alpha^a (X, d)$), we must show that $\rho_\alpha \circ \iota$ is
the identity map.  For every $f \in \Lform_\alpha (X, d)$,
$\rho_\alpha (\iota (f)) = f^{(\alpha)}$.  Since $f$ is already
$\alpha$-Lipschitz continuous, $f^{(\alpha)} = f$, and we conclude.

(3) follows from Lemma~\ref{lemma:cont:Lalpha:patchclos}, from (2),
and the fact that retracts of stably compact spaces are stably compact
(see Proposition~9.2.3 in loc.cit.; the result is due to Jimmie Lawson
\cite[Proposition, bottom of p.153, and subsequent
discussion]{Lawson:versatile}).

(4) follows from (3) and Proposition~\ref{prop:cont:Lalpha:pw}.  \qed

The proof of (3) above is inspired by a similar argument due to Achim
Jung \cite{Jung:scs:prob}.

\begin{lem}
  \label{lemma:cont:Lalpha:compact}
  Let $X, d$ be a continuous Yoneda-complete quasi-metric space, and
  $\alpha \in \Rp$.  For every center point $x$ in $X$ and every
  $b \in \Rp$, the set $[x \geq b]$ of all
  $f \in \Lform_\alpha (X, d)$ (resp., $f \in \Lform_\alpha^a (X, d)$)
  such that $f (x) \geq b$ is compact saturated in
  $\Lform_\alpha (X, d)$ (resp., in $\Lform_\alpha^a (X, d)$, for
  every $a \in \Rp$, $a > 0$).
\end{lem}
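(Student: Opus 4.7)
The plan is to prove compactness by realizing $[x \geq b]$ as the continuous image of a compact saturated set in the larger stably compact space $L_\alpha(X,d)$ (respectively $L_\alpha^a(X,d)$), via the continuous retraction $\rho_\alpha$ provided by Lemma~\ref{lemma:cont:Lalpha:retr}. Saturation comes for free: by Proposition~\ref{prop:cont:Lalpha:pw}, the topology on $\Lform_\alpha(X,d)$ (resp.\ $\Lform_\alpha^a(X,d)$) is the topology of pointwise convergence, so its specialization preordering is the pointwise order, under which $[x \geq b]$ is visibly upward-closed.

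For compactness inside the ambient space, I would observe that $[x \geq b] \cap L_\alpha(X,d)$ is patch-closed in $L_\alpha(X,d)$. Indeed, evaluation at $x$ is a continuous projection from $L_\alpha(X,d)^\patch$ (the subspace of $(\creal^\patch)^X$, which is how $L_\alpha(X,d)$ acquires its stably compact structure in Lemma~\ref{lemma:cont:Lalpha:patchclos}) to $\creal^\patch$, and $[b, +\infty]$ is closed in $\creal^\patch$, since the latter carries the usual Hausdorff topology on $[0, +\infty]$. A patch-closed subset of a stably compact space is compact in the original topology, so $[x \geq b] \cap L_\alpha(X,d)$ is compact saturated in $L_\alpha(X,d)$; the same argument applies verbatim to $L_\alpha^a(X,d)$.

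The crux is then to show $\rho_\alpha([x \geq b] \cap L_\alpha(X,d)) = [x \geq b] \cap \Lform_\alpha(X,d)$, and this is where the center-point hypothesis is used essentially. I would prove the identity $f^{(\alpha)}(x) = f(x)$ for every $f \in L_\alpha(X,d)$ whenever $x$ is a center point. By formula~(\ref{eq:f(alpha)}) of Lemma~\ref{lemma:largestLipcont},
\[
  f^{(\alpha)}(x) = \sup_{(y,s) \ll (x,0)} (f(y) - \alpha s),
\]
so it suffices to establish $(x, s) \ll (x, 0)$ in $\mathbf{B}(X,d)$ for every $s > 0$: that would give $f^{(\alpha)}(x) \geq \sup_{s > 0}(f(x) - \alpha s) = f(x)$, while $f^{(\alpha)} \leq f$ is automatic. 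To prove the way-below relation, I take any directed family $(x_i, r_i)_{i \in I}$ with supremum $(x,0)$. Since $x$ is a center point, $B^{d^+}_{(x,0), < s} = \{(y,t) \mid d(x,y) < s - t\}$ is Scott-open in $\mathbf{B}(X,d)$ and contains $(x,0)$, so some $(x_i, r_i)$ lies in this open set, which translates to $d(x, x_i) < s - r_i$, i.e., $(x, s) \leq^{d^+} (x_i, r_i)$; this gives $(x, s) \ll (x, 0)$.

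Once the identity $f^{(\alpha)}(x) = f(x)$ is in hand, $\rho_\alpha$ maps $[x \geq b] \cap L_\alpha(X,d)$ into $[x \geq b] \cap \Lform_\alpha(X,d)$, and the reverse inclusion is trivial since $\rho_\alpha$ restricts to the identity on $\Lform_\alpha(X,d)$. Continuity of $\rho_\alpha$ (Lemma~\ref{lemma:cont:Lalpha:retr}(1)) turns compactness of the domain into compactness of the image, and together with saturation this completes the proof for $\Lform_\alpha(X,d)$. The bounded case $\Lform_\alpha^a(X,d)$ is handled identically via $L_\alpha^a(X,d)$, with the trivial observation that $[x \geq b]$ is empty (hence trivially compact saturated) when $b > \alpha a$. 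The main conceptual step is the derivation $(x,s) \ll (x,0)$ from the center-point property, which is what ultimately encodes the hypothesis of the lemma.
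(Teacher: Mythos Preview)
Your proof is correct and follows essentially the same route as the paper: realize $[x \geq b]$ as the image under the continuous retraction $\rho_\alpha$ of the patch-closed (hence compact saturated) set $\pi_x^{-1}([b,+\infty])$ in $L_\alpha(X,d)$, using the key identity $f^{(\alpha)}(x)=f(x)$ at a center point $x$. The only cosmetic difference is that the paper quotes the known equivalence ``$(x,r)\ll(y,s)$ iff $d(x,y)<r-s$ when $x$ is a center point'' to get $(x,\epsilon)\ll(x,0)$, whereas you rederive this directly from Scott-openness of $B^{d^+}_{(x,0),<s}$; note that in your way-below verification you should quantify over directed families with supremum \emph{at least} $(x,0)$, but your argument goes through unchanged since Scott-open sets are upward-closed.
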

\proof We deal with the case of $\Lform_\alpha (X, d)$.  The case of
$\Lform_\alpha^a (X, d)$ is entirely similar.

Let us start with the following observation.  The projection map
$\pi_x \colon L_\alpha (X, d) \to \creal$ that maps $f$ to $f (x)$ is
patch-continuous, that is, continuous from $L_\alpha^\patch (X, d)$ to
$\creal^\patch$, where $L_\alpha (X, d)$ has the topology of pointwise
convergence.  (Beware: $L_\alpha$, not $\Lform_\alpha$.)  The reason
was given before Lemma~\ref{lemma:cont:Lalpha:patchclos}: the map
$f \mapsto f (x)$ is continuous from $(\creal^\patch)^X$ to
$\creal^\patch$, and therefore restricts to a continuous map on
$L_\alpha^\patch (X, d)$, which is a subspace of $(\creal^\patch)^X$,
since $L_\alpha (X, d)$ is patch-closed in the product space
$\creal^X$.

Since $[b, +\infty]$ is closed in $\creal^\patch$,
$\pi_x^{-1} ([b, +\infty])$ is also closed in
$L_\alpha (X, d)^\patch$.  It is clearly upwards-closed, and in any
stably compact space $Y$, the closed upwards-closed subsets of
$Y^\patch$ are the compact saturated subsets of $Y$ (see
\cite[Proposition~9.1.20]{JGL-topology}): so
$\pi_x^{-1} ([b, +\infty])$ is compact saturated in $L_\alpha (X, d)$.

Note that $\pi_x^{-1} ([b, +\infty])$ is the set of all
$\alpha$-Lipschitz (not necessarily $\alpha$-Lipschitz continuous)
maps $f$ such that $f (x) \geq b$.  We claim that its image by
$\rho_\alpha$ is exactly $[x \geq b]$.  This will prove that the
latter is compact as well.  The fact that it is upwards-closed
(saturated) is obvious.

For every $f \in \pi_x^{-1} ([b, +\infty])$, $\rho_\alpha (f) (x)$ is
clearly less than or equal to $f (x)$.  Because $x$ is a center point,
for all $y$, $r$, and $s$, $(x, r) \ll (y, s)$ is equivalent to
$d (x, y) < r-s$; in particular, $(x, \epsilon) \ll (x, 0)$.
Therefore $\rho_\alpha (f) (x) \geq f (x) - \epsilon$.  Since
$\epsilon$ is arbitrary, $\rho_\alpha (f) (x) = f (x)$, and since
$f (x) \geq b$, $\rho_\alpha (f) (x) \geq b$.  We have proved that
$\rho_\alpha (f)$ is in $[x \geq b]$.

Conversely, every $f \in [x \geq b]$ is of the form $\rho_\alpha (g)$ for
some $g \in \pi_x^{-1} ([b, +\infty])$, namely $g=f$, because $\rho_\alpha$
is a retraction.  Hence $[x \geq b]$ is the image of the compact set
$\pi_x^{-1} ([b, +\infty])$, as claimed.

Our whole argument works provided all considered spaces of functions
have the topology of pointwise convergence.  We use
Proposition~\ref{prop:cont:Lalpha:pw} to conclude.  \qed

\begin{rem}
  \label{rem:cont:Lalpha:compact}
  Lemma~\ref{lemma:cont:Lalpha:compact} does not hold for non-center
  points $x$, as we now illustrate.  Let $X = \nat_\omega$ be the dcpo
  obtained by adding a top element $\omega$ to $\nat$, with its usual
  ordering $\leq$.  We consider it as a quasi-metric space with
  the quasi-metric $d_\leq$.  On formal balls,
  $(x, r) \leq^{d_\leq} (y, s)$ if and only if $x \leq y$ and
  $r \geq s$, hence its space of formal balls is isomorphic to the
  continuous dcpo $\nat_\omega \times ]-\infty, 0]$, through the map
  $(x, r) \mapsto (x, -r)$.  We consider the point $x = \omega$, and
  $b=1$, say.  The elements of $\Lform_\alpha (X, d)$ are exactly the
  Scott-continuous maps, so $[x \geq b]$ is the set of
  Scott-continuous maps $f \colon \nat_\omega \to \creal$ such that
  $f (\omega) \geq 1$.  Let us pick a fixed element $f$ of
  $[x \geq b]$, for example, the constant function with value
  $+\infty$.  For every $n \in \nat$, let $f_n \colon X \to \creal$ be
  defined by $f_n (m) = 0$ if $m \leq n$, $f_n (m) = f (m)$ otherwise.
  The family ${(f_n)}_{n \in \nat}$ is a decreasing sequence of
  elements of $[x \geq b]$.  If $[x \geq b]$ were compact, then the
  intersection of the decreasing family of closed sets $\dc f_n$ would
  intersect $[x \geq b]$; but the only element in that intersection is
  the constant zero map.
\end{rem}

\begin{cor}
  \label{corl:cont:Lalpha:open}
  Let $X, d$ be a continuous Yoneda-complete quasi-metric space,
  $\alpha \in \Rp$, and $a \in \Rp$, $a > 0$.
  \begin{enumerate}
  \item For every center point $x$ in $X$ and every $b \in \Rp$, the
    set $[x < b]$ of all $f \in \Lform_\alpha (X, d)$ (resp.,
    $f \in \Lform_\alpha^a (X, d)$) such that $f (x) < b$ is open and
    downwards-closed in $\Lform_\alpha (X, d)^\patch$ (resp., in
    $\Lform_\alpha^a (X, d)^\patch$).
  \item For every center point $x \in X$, the map $f \mapsto f (x)$ is
    continuous from $\Lform_\alpha (X, d)^\dG$ to $(\creal)^\dG$, and
    from $\Lform_\alpha^a (X, d)^\dG$ to $[0, \alpha a]^\dG$.
  \end{enumerate}
\end{cor}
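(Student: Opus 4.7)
The plan is to derive both parts as essentially formal consequences of Lemma~\ref{lemma:cont:Lalpha:compact}, which already carries the substantial content: the set $[x \geq b]$ is compact saturated in $\Lform_\alpha(X,d)$ (resp.\ $\Lform_\alpha^a(X,d)$). I shall also use that these spaces are stably compact (Lemma~\ref{lemma:cont:Lalpha:retr}~(4)), so the cocompact topology, the de Groot dual $\_^\dG$, and the patch space $\_^\patch$ are well-defined on them, and I will use the standard fact that the patch topology is the join of the original topology and the cocompact topology, the latter having by definition as closed sets precisely the compact saturated subsets of the original space.

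For part (1), I would observe that $[x < b]$ is the complement in $\Lform_\alpha(X,d)$ (resp.\ $\Lform_\alpha^a(X,d)$) of the compact saturated set $[x \geq b]$. By the definition of the cocompact topology it is cocompact-open, and since the patch topology refines the cocompact topology it is also patch-open. That $[x < b]$ is downwards-closed is immediate from the fact that $[x \geq b]$ is upwards-closed (the specialization ordering on $\Lform_\alpha(X,d)$ and $\Lform_\alpha^a(X,d)$ being the pointwise ordering $\leq$).

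For part (2), I would identify the opens of $\creal^\dG$ and $[0, \alpha a]^\dG$ explicitly. The non-empty compact saturated subsets of $\creal$, with its Scott topology, are the intervals $[c, +\infty]$ for $c \in \creal$, and similarly for $[0, \alpha a]$ those are $[c, \alpha a]$. Hence a subbase (in fact a base together with the total space) for $\creal^\dG$ is given by the sets $[0, c) = \{t \in \creal \mid t < c\}$, $c \in \creal$; analogously for $[0, \alpha a]^\dG$. The preimage of $[0, c)$ under the evaluation map $f \mapsto f(x)$ is exactly $[x < c]$, which is the complement of the compact saturated set $[x \geq c]$ provided by Lemma~\ref{lemma:cont:Lalpha:compact}, and is therefore open in $\Lform_\alpha(X,d)^\dG$ (resp.\ $\Lform_\alpha^a(X,d)^\dG$) by the very definition of the de Groot dual topology. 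Continuity of $f \mapsto f(x)$ follows.

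I do not foresee any real obstacle: once Lemma~\ref{lemma:cont:Lalpha:compact} is available (which is where center-pointhood of $x$ is genuinely used, via the formula $f^{(\alpha)}(x) = \sup_{(y,s)\ll(x,0)} (f(y) - \alpha s)$ evaluated at $(x,\epsilon) \ll (x,0)$), the present corollary is purely a matter of unwinding the definitions of the cocompact topology, the patch topology, and the de Groot dual for stably compact spaces.
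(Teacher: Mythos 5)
Your proposal is correct and follows essentially the same route as the paper: part (1) is the observation that $[x < b]$ is the complement of the compact saturated set $[x \geq b]$ from Lemma~\ref{lemma:cont:Lalpha:compact}, and part (2) is a reformulation of (1) via the explicit description of the opens of $(\creal)^\dG$. (The only cosmetic remark: for $c = +\infty$ in the $\creal$ case one should note that $[x < +\infty] = \bigcup_{b \in \Rp} [x < b]$ is open as a union of opens, since the lemma only gives compactness of $[x \geq b]$ for $b \in \Rp$.)
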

\proof (1) $[x < b]$ is the complement of $[x \geq b]$.  Since
$[x \geq b]$ is compact saturated
(Lemma~\ref{lemma:cont:Lalpha:compact}), it is closed and
upwards-closed in $\Lform_\alpha (X, d)^\patch$ (resp., in
$\Lform_\alpha^a (X, d)^\patch$).  (2) is just a reformulation of
(1). \qed

\begin{cor}
  \label{corl:cont:Lalpha:simple}
  Let $X, d$ be a continuous Yoneda-complete quasi-metric space,
  $\alpha \in \Rp$, and $a \in \Rp$, $a > 0$.  For every $n \in \nat$,
  for all $a_1, \ldots, a_n \in \Rp$ and every $n$-tuple of center
  points $x_1$, \ldots, $x_n$ in $X, d$, the maps:
  \begin{enumerate}
  \item $f \mapsto \sum_{i=1}^n a_i f (x_i)$,
  \item $f \mapsto \max_{i=1}^n f (x_i)$,
  \item $f \mapsto \min_{i=1}^n f (x_i)$
  \end{enumerate}
  are continuous from $\Lform_\alpha (X, d)^\dG$ (resp.,
  $\Lform_\alpha^a (X, d)^\dG$) to $(\creal)^\dG$.
\end{cor}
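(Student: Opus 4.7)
The plan is to leverage Corollary~\ref{corl:cont:Lalpha:open}~(2), which already yields continuity of each individual evaluation $\pi_{x_i} \colon f \mapsto f(x_i)$ from $\Lform_\alpha (X,d)^\dG$ (resp.\ from $\Lform_\alpha^a (X,d)^\dG$) to $(\creal)^\dG$, and then to compose it with appropriate continuous aggregation maps on the de Groot dual of a finite power of $\creal$. The cases of $\Lform_\alpha (X,d)$ and $\Lform_\alpha^a (X,d)$ are entirely parallel, so I argue for the former.

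Tupling the evaluations $\pi_{x_i}$ produces a continuous map $\vec\pi \colon f \mapsto (f(x_1), \ldots, f(x_n))$ from $\Lform_\alpha (X,d)^\dG$ into $((\creal)^\dG)^n$. Since the de Groot dual commutes with finite products of stably compact spaces, this target coincides with $((\creal)^n)^\dG$. It therefore suffices to show that each of the three aggregation maps $\sigma, M, m \colon (\creal)^n \to \creal$ given by $\sigma(\vec b) = \sum_i a_i b_i$, $M(\vec b) = \max_i b_i$, and $m(\vec b) = \min_i b_i$ is continuous as a map from $((\creal)^\dG)^n$ to $(\creal)^\dG$; composing with $\vec\pi$ then yields the claim.

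The non-empty open subsets of $(\creal)^\dG$ are precisely the sets $[0, c) = \{b \in \creal \mid b < c\}$ with $c \in \creal$, since the closed sets of $(\creal)^\dG$ are (together with $\emptyset$) the compact saturated subsets of $\creal$ under its Scott topology, namely the intervals $[c, +\infty]$. So it is enough to check that the preimage of each $[0, c)$ is open in $((\creal)^\dG)^n$. For $M$, one has $M^{-1}([0, c)) = \prod_i [0, c)$, a basic open. For $m$, $m^{-1}([0, c)) = \bigcup_i \pi_i^{-1}([0, c))$, a union of subbasic opens. The main care point is $\sigma$: after discarding indices with $a_i = 0$ on which $\sigma$ does not depend, we may assume all $a_i > 0$; given $\vec b$ with $\sigma(\vec b) < c$, each $b_i$ must then be finite, so one may choose $\epsilon > 0$ small enough that the $c_i := b_i + \epsilon$ satisfy $\sum_i a_i c_i \leq c$, and the resulting open box $\prod_i [0, c_i)$ is a neighborhood of $\vec b$ contained in $\sigma^{-1}([0, c))$.
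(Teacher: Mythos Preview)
Your proof is correct and follows essentially the same approach as the paper: invoke Corollary~\ref{corl:cont:Lalpha:open}~(2) for the individual evaluations at center points, then use that the aggregation operations (scalar multiplication, addition, max, min) are continuous on $(\creal)^\dG$. The paper's own proof is a one-liner to this effect; you have simply made the verification of continuity of the aggregation maps explicit by computing preimages of the subbasic opens $[0,c)$, and factored cleanly through the product $((\creal)^\dG)^n \cong ((\creal)^n)^\dG$.
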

\proof
By Corollary~\ref{corl:cont:Lalpha:open}, using the fact that scalar
multiplication and addition are continuous on $(\creal)^\dG$.  The
latter means that those operations are monotonic and preserve filtered
infima.  \qed

The same argument also shows the following, and an endless variety of
similar results.
\begin{cor}
  \label{corl:cont:Lalpha:simple:2}
  Let $X, d$ be a continuous Yoneda-complete quasi-metric space,
  $\alpha \in \Rp$, and $a \in \Rp$, $a > 0$.  For every family of
  non-negative reals $a_{ij}$ and of center points $x_{ij}$,
  $1\leq i \leq m$, $1\leq j\leq n_i$, the maps:
  \begin{enumerate}
  \item $f \mapsto \min_{i=1}^m \sum_{j=1}^{n_i} a_{ij} f (x_{ij})$,
  \item $f \mapsto \max_{i=1}^m \sum_{j=1}^{n_i} a_{ij} f (x_{ij})$,
  \end{enumerate}
  are continuous from $\Lform_\alpha (X, d)^\dG$ (resp.,
  $\Lform_\alpha^a (X, d)^\dG$) to $(\creal)^\dG$.
\end{cor}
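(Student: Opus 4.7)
The plan is to deduce the statement from Corollary~\ref{corl:cont:Lalpha:simple} by post-composing with the $m$-ary $\min$ and $\max$ operations on $\creal$, once those operations are shown to be continuous as maps on $(\creal)^\dG$. The argument rests on two facts: first, the cocompact topology on $\creal$ is generated by the opens $[0, b[$ with $b \in \,]0, +\infty]$, since the compact saturated subsets of $\creal$ in its Scott topology are the intervals $[b, +\infty]$; and second, for stably compact spaces the patch functor, and hence also the de Groot dual, commutes with finite products, so $((\creal)^\dG)^m = (\creal^m)^\dG$.

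First I would verify the continuity of the binary $\min$ and $\max$ as maps $(\creal)^\dG \times (\creal)^\dG \to (\creal)^\dG$. This is immediate from
\[
\min\nolimits^{-1}([0, b[) = ([0, b[ \times \creal) \cup (\creal \times [0, b[), \qquad \max\nolimits^{-1}([0, b[) = [0, b[ \times [0, b[,
\]
both of which are open in the product. A straightforward induction on $m$ then extends this to the $m$-ary operations $\min, \max \colon (\creal^m)^\dG \to (\creal)^\dG$.

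Then I would appeal to Corollary~\ref{corl:cont:Lalpha:simple}~(1): for each $i$ with $1\leq i\leq m$, the map $f \mapsto \sum_{j=1}^{n_i} a_{ij} f (x_{ij})$ is continuous from $\Lform_\alpha (X, d)^\dG$, resp.\ $\Lform_\alpha^a (X, d)^\dG$, to $(\creal)^\dG$. Tupling these $m$ maps gives a continuous map into $((\creal)^\dG)^m = (\creal^m)^\dG$, and post-composition with $\min_{i=1}^m$, resp.\ $\max_{i=1}^m$, yields the two maps of the statement. I expect no real obstacle here, as the author indicates with the phrase \emph{the same argument}; the only point requiring a moment's care is the identification of products of de Groot duals with the de Groot dual of the product, a standard consequence of the commutation of patch with finite products for stably compact spaces.
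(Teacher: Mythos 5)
Your proposal is correct and follows essentially the same route as the paper: the paper's ``same argument'' reduces the claim to the continuity of the evaluation/sum maps into $(\creal)^\dG$ (Corollaries~\ref{corl:cont:Lalpha:open} and~\ref{corl:cont:Lalpha:simple}) together with the continuity of the lattice operations on $(\creal)^\dG$. The only cosmetic difference is that you check continuity of $\min$ and $\max$ on the de~Groot dual by computing preimages of the subbasic opens $[0,b[$, whereas the paper phrases it as the operations being monotonic and preserving filtered infima; these are the same verification.
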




\section{Topologies on $\Lform_\infty (X, d)$ Determined by
  $\Lform_\alpha (X, d)$, $\alpha > 0$}
\label{sec:topol-lform_-x-1}

Recall that
$\Lform_\infty (X, d) = \bigcup_{\alpha \in \Rp} \Lform_\alpha (X,
d)$.  As usual, we equip $\Lform_\infty (X, d)$ with the subspace
topology from $\Lform X$, the latter having the Scott topology.

A nasty subtle issue we will have to face is the following.  Imagine
we have a function $F \colon \Lform_\infty (X, d) \to \creal$, and we
wish to show that $F$ is continuous.  One might think of showing that
the restriction of $F$ to $\Lform_\alpha (X, d)$ is continuous to that
end.  This is certainly necessary, but by no means sufficient.

For sufficiency, we would need the topology of $\Lform_\infty (X, d)$
to be \emph{coherent with}, or \emph{determined by}, the topologies of
its subspaces $\Lform_\alpha (X, d)$, $\alpha \in \Rp$.  The name
``coherent with'' is unfortunate, for a coherent space also means a
space in which the intersection of any two compact saturated subsets
is compact, as we have already mentioned.  We shall therefore prefer
``determined by''.

Let us recall some basic facts about topologies determined by families
of subspaces.  Fix a topological space $Y$, and a chain of subspaces
$Y_i$, $i \in I$, where $I$ is equipped with some ordering $\leq$, in
such a way that $i \leq j$ implies $Y_i \subseteq Y_j$.  Let also
$e_{ij} \colon Y_i \to Y_j$ and $e_i \colon Y_i \to Y$ be the
canonical embeddings, and assume that $Y = \bigcup_{i \in I} Y_i$.
Then there is a unique topology $\Open_c$ on $Y$ that is determined by
the topologies of the subspaces $Y_i$, $i \in I$.

We call it the \emph{determined topology} on $Y$.
Standard names for such a topology include the \emph{weak topology} or
the \emph{inductive topology}.
Its open subsets are the subsets $A$ of $Y$ such that $A \cap Y_i$ is
open in $Y_i$ for every $i \in I$.  In particular, every open subset
of $Y$, with its original topology, is open in $\Open_c$.
Categorically, $Y$ with the $\Open_c$ topology, together with the map
$e_i \colon Y_i \to Y$, is the inductive limit, a.k.a., the colimit,
of the diagram given by the arrows
$\xymatrix{Y_i \ar[r]^{e_{ij}} & Y_j}$, $i \leq j$.

It is known that $Y$ is automatically determined by the topologies of
$Y_i$, $i \in I$, in the following cases: when every $Y_i$ is open;
when every $Y_i$ is closed and the cover ${(Y_i)}_{i \in I}$ is
locally finite; when $Y$ has the discrete topology.  The case of
$\Lform_\infty (X, d)$ and its subspaces $\Lform_\alpha (X, d)$ falls
into none of those subcases.

Here is another case of a determined topology.
\begin{prop}
  \label{prop:Y:det:ep}
  Let ${(Y_i)}_{i \in I, \sqsubseteq}$ be a monotone net of subspaces of a
  topological space $Y$, forming a cover of $Y$.  Assume there are
  continuous projections of $Y$ onto $Y_i$, namely continuous maps
  $p_i \colon Y \to Y_i$ such that $p_i (x) \leq x$ for every
  $x \in Y$ and $p_i (x)=x$ for every $x \in Y_i$, for every
  $i \in I$.  Then the topology of $Y$ is determined by the topologies
  of $Y_i$, $i \in I$.
\end{prop}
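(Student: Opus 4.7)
The plan is to prove the nontrivial direction: assuming $A \subseteq Y$ satisfies $A \cap Y_i \in \Open Y_i$ for every $i \in I$, show that $A$ is open in $Y$. The reverse implication holds automatically from the definition of the subspace topology. I would carry this out in two short steps, preceded by a lemma on the specialization order.

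\emph{Step 1: $A$ is upwards-closed in $Y$.} Given $x \in A$ and $x \leq y$ in the specialization order of $Y$, use the fact that $Y = \bigcup_{i\in I} Y_i$ together with directedness of the net to find a single $k \in I$ with $x, y \in Y_k$: pick $i, j \in I$ with $x \in Y_i$, $y \in Y_j$, and choose $k \geq i, j$, so that $Y_i \cup Y_j \subseteq Y_k$. Since the specialization order of the subspace $Y_k$ is the restriction of that of $Y$ (\cite[Proposition~4.9.5]{JGL-topology}), we still have $x \leq y$ in $Y_k$. By hypothesis, $A \cap Y_k$ is open in $Y_k$, hence upwards-closed there, so from $x \in A \cap Y_k$ we deduce $y \in A \cap Y_k \subseteq A$.

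\emph{Step 2: producing open neighborhoods.} Fix any $x \in A$, and choose $i \in I$ with $x \in Y_i$. Then $p_i(x) = x$, so $p_i(x) \in A \cap Y_i$. Since $p_i \colon Y \to Y_i$ is continuous and $A \cap Y_i$ is open in $Y_i$, the set $U := p_i^{-1}(A \cap Y_i)$ is open in $Y$ and contains $x$. I claim $U \subseteq A$: for every $y \in U$, we have $p_i(y) \in A \cap Y_i \subseteq A$, and by hypothesis $p_i(y) \leq y$ in $Y$; the upwards-closedness of $A$ established in Step~1 then yields $y \in A$. Thus every point of $A$ admits an open neighborhood in $Y$ contained in $A$, so $A$ is open.

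The main (mild) obstacle is conceptual: the continuous projections $p_i$ only give an open set $U$ above $x$ whose points map \emph{into} $A$ via $p_i$, and since $p_i(y)\leq y$ we learn nothing directly about $y$ itself unless $A$ is known to be upwards-closed. Noticing that upwards-closedness of $A$ follows from the assumption via directedness of the net is the key observation; after that, the rest is a one-line application of continuity of each $p_i$.
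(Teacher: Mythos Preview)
Your proof is correct and follows essentially the same approach as the paper: both show that $A$ coincides with the open set $\bigcup_{i}p_i^{-1}(A\cap Y_i)$. The paper's argument for the inclusion $p_i^{-1}(A\cap Y_i)\subseteq A$ invokes ``open subsets are upwards closed'' together with $p_i(x)\leq x$, but as written it concludes $x\in A\cap Y_i$, which is not quite right since $x$ need not lie in $Y_i$. Your explicit Step~1, establishing first that $A$ is upwards-closed in $Y$ via directedness of the net, is precisely what fills this in; so your version is the more carefully justified of the two.
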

What the assumption means is that we do not just have arrows
$\xymatrix{Y_i \ar[r]^{e_i} & Y}$, $i \in I$, but embedding-projection
pairs $\xymatrix{Y_i \ar@<1ex>[r]^{e_i} & Y \ar@<1ex>[l]^{p_i}}$,
$i \in I$.  This is a standard situation in domain theory.  The
inequality $e_i \circ p_i \leq \identity Y$ should be read by keeping
in mind the specialization preorder of $Y$.

\proof
Let $A$ be an open subset in the topology on $Y$ determined by the
subspaces $Y_i$.  For every $i \in I$, $A \cap Y_i$ is open in $Y_i$,
and since $p_i$ is continuous, $U_i = p_i^{-1} (A \cap Y_i)$ is open
in $Y$.

We claim that $A = \bigcup_{i \in I} U_i$, which will show that $A$ is
open, allowing us to conclude.

For every $x \in A$, we find some $i \in I$ such that $x$ is in $Y_i$,
using the fact that ${(Y_j)}_{j \in I, \sqsubseteq}$ is a cover.
Then $x$ is in $A \cap Y_i$.  Since $p_i$ is a projection, $p_i (x) =
x$, so $x$ is in $p_i^{-1} (A \cap Y_i) = U_i$.

Conversely, for every element $x$ of any $U_i$, $i \in I$, $p_i (x)$
is in the open subset $A \cap Y_i$ of $Y_i$.  Since
${(Y_j)}_{j \in I, \sqsubseteq}$ is a cover, there is a $j \in I$ such
that $x \in Y_j$, and since that is also a monotone net, we may assume
that $i \sqsubseteq j$.  Hence $Y_i \subseteq Y_j$, so that $p_i (x)$
is also in $A \cap Y_j$.  Moreover, $p_i (x) \leq x$.  Since the
specialization preordering on a subspace (here, $Y_j$) coincides with
the restriction of the specialization preordering on the superspace
(here, $Y$), and $A \cap Y_j$ is open hence upwards-closed in $Y_j$,
$x$ is in $A \cap Y_j$.  In particular, $x$ is in $A$.  \qed

Using the fact that $\Lform_\infty (X, d)$ has the subspace topology
from $\Lform X$, Corollary~\ref{corl:Lalpha:retract} implies that
there is an embedding-projection pair $i_\alpha, r_\alpha$ between
$\Lform_\alpha (X, d)$ and $\Lform_\infty (X, d)$.  When $\alpha > 0$
varies, the spaces $\Lform_\alpha (X, d)$ form a cover of
$\Lform_\infty (X, d)$.  Hence we can apply
Proposition~\ref{prop:Y:det:ep}, and we obtain:
\begin{prop}
  \label{prop:Linfty:determined}
  Let $X, d$ be a Lipschitz regular standard quasi-metric space.  The
  topology of $\Lform_\infty (X, d)$ is determined by those of the
  subspaces $\Lform_\alpha (X, d)$, $\alpha > 0$.  \qed
\end{prop}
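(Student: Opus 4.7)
The plan is to apply Proposition~\ref{prop:Y:det:ep} directly, with $Y = \Lform_\infty (X, d)$ and the monotone net of subspaces $Y_\alpha = \Lform_\alpha (X, d)$ indexed by $\alpha > 0$. What remains is to verify the hypotheses of that proposition.

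First, I would check that the family $\{\Lform_\alpha (X, d)\}_{\alpha > 0}$ is monotone and covers $\Lform_\infty (X, d)$. Monotonicity ($\alpha \leq \beta$ implies $\Lform_\alpha (X, d) \subseteq \Lform_\beta (X, d)$) is immediate from Proposition~\ref{prop:alphaLip:props}~(5), and the covering property $\Lform_\infty (X, d) = \bigcup_{\alpha > 0} \Lform_\alpha (X, d)$ holds by definition. I would also note that the topology on $\Lform_\alpha (X, d)$ as a subspace of $\Lform_\infty (X, d)$ coincides with its topology as a subspace of $\Lform X$, since $\Lform_\infty (X, d)$ itself carries the subspace topology from $\Lform X$; this ensures that the ambient subspace structures are consistent.

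Next, I would build the required continuous projections. For each $\alpha > 0$, define $p_\alpha \colon \Lform_\infty (X, d) \to \Lform_\alpha (X, d)$ as the restriction of the map $r_\alpha \colon f \mapsto f^{(\alpha)}$ from Corollary~\ref{corl:Lalpha:retract}. Since $X, d$ is Lipschitz regular and standard, that corollary tells us $r_\alpha$ is continuous from $\Lform X$ to $\Lform_\alpha (X, d)$; restricting domain to the subspace $\Lform_\infty (X, d)$ preserves continuity, so $p_\alpha$ is continuous. The projection inequality $p_\alpha (f) \leq f$ holds because $f^{(\alpha)}$ is by definition below $f$ (namely, $i_\alpha \circ r_\alpha \leq \identity{\Lform X}$). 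The fixed-point condition $p_\alpha (f) = f$ for $f \in \Lform_\alpha (X, d)$ is precisely $r_\alpha \circ i_\alpha = \identity{\Lform_\alpha (X, d)}$, also from Corollary~\ref{corl:Lalpha:retract}.

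With all hypotheses verified, Proposition~\ref{prop:Y:det:ep} yields that the topology of $\Lform_\infty (X, d)$ is determined by the topologies of the subspaces $\Lform_\alpha (X, d)$, $\alpha > 0$. There is essentially no hard step here: the entire argument is a packaging of already-established facts, and the only mildly delicate point is making sure that the embedding-projection structure furnished by Lipschitz regularity (Corollary~\ref{corl:Lalpha:retract}) transfers cleanly from the ambient $\Lform X$ down to $\Lform_\infty (X, d)$, which it does because subspace topologies compose.
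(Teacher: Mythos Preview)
Your proposal is correct and follows essentially the same approach as the paper: invoke Corollary~\ref{corl:Lalpha:retract} to obtain the embedding-projection pairs between $\Lform_\alpha(X,d)$ and $\Lform_\infty(X,d)$, then apply Proposition~\ref{prop:Y:det:ep}. You are slightly more explicit than the paper in verifying the hypotheses (monotonicity, covering, and the compatibility of subspace topologies), but the argument is the same.
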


We have a similar result for bounded maps, using
Corollary~\ref{corl:Lalpha:retract:bnd} instead of
Corollary~\ref{corl:Lalpha:retract}, and relying on
Lemma~\ref{lemma:Linfa} to ensure that the spaces
$\Lform_\alpha^a (X, d)$, $\alpha > 0$, form a cover of
$\Lform_\infty^\bnd (X, d)$.
\begin{prop}
  \label{prop:Linfty:determined:bnd}
  Let $X, d$ be a Lipschitz regular standard quasi-metric space.  Fix
  $a > 0$.  Then the topology of $\Lform_\infty^\bnd (X, d)$ is
  determined by those of the subspaces $\Lform_\alpha^a (X, d)$,
  $\alpha > 0$.  \qed
\end{prop}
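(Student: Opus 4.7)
The plan is to argue exactly in parallel with Proposition~\ref{prop:Linfty:determined}, by applying Proposition~\ref{prop:Y:det:ep} to the ambient space $Y = \Lform_\infty^\bnd (X, d)$ with the chain of subspaces $Y_\alpha = \Lform_\alpha^a (X, d)$ indexed by $\alpha > 0$. All the bounded-case ingredients that replace their unbounded counterparts are already in place, so the whole matter reduces to checking the hypotheses of Proposition~\ref{prop:Y:det:ep}.

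First I would verify the set-theoretic prerequisites. For $\alpha \leq \beta$, every $\alpha$-Lipschitz continuous map is $\beta$-Lipschitz continuous by Proposition~\ref{prop:alphaLip:props}~(5), and trivially $[0, \alpha a] \subseteq [0, \beta a]$, so $\Lform_\alpha^a (X, d) \subseteq \Lform_\beta^a (X, d)$. Hence $(\Lform_\alpha^a (X, d))_{\alpha > 0}$ is a monotone net of subspaces of $\Lform_\infty^\bnd (X, d)$, and Lemma~\ref{lemma:Linfa} guarantees that it covers $\Lform_\infty^\bnd (X, d)$.

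Second, I would supply the required continuous projections. Corollary~\ref{corl:Lalpha:retract:bnd} (which is precisely where Lipschitz regularity is used) provides, for each $\alpha > 0$, a continuous retraction $r_\alpha^a \colon \Lform X \to \Lform_\alpha^a (X, d)$ with $r_\alpha^a \circ i_\alpha^a = \identity {\Lform_\alpha^a (X, d)}$ and $i_\alpha^a \circ r_\alpha^a \leq \identity {\Lform X}$. Since $\Lform_\infty^\bnd (X, d)$ carries the subspace topology from $\Lform X$, the restriction $p_\alpha$ of $r_\alpha^a$ to $\Lform_\infty^\bnd (X, d)$ is continuous into $\Lform_\alpha^a (X, d)$; the two displayed identities translate, respectively, to $p_\alpha (f) = f$ for every $f \in \Lform_\alpha^a (X, d)$, and $p_\alpha (f) \leq f$ pointwise (equivalently, in the specialization preorder, which on these subspaces is pointwise by construction) for every $f \in \Lform_\infty^\bnd (X, d)$. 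These are exactly the projection hypotheses of Proposition~\ref{prop:Y:det:ep}.

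Third, I would conclude by invoking Proposition~\ref{prop:Y:det:ep}. I do not anticipate any real obstacle: this is essentially a transcription of the proof of Proposition~\ref{prop:Linfty:determined}, the one point that deserves a moment's attention being that the bounded analogue of Corollary~\ref{corl:Lalpha:retract} delivers a retraction \emph{into} $\Lform_\alpha^a (X, d)$ and not merely into $\Lform_\alpha (X, d)$, so that the image of $p_\alpha$ lands in $Y_\alpha$ as required.
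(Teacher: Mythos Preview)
Your proposal is correct and follows exactly the paper's approach: the paper explicitly states that the result is obtained by replacing Corollary~\ref{corl:Lalpha:retract} with Corollary~\ref{corl:Lalpha:retract:bnd} and invoking Lemma~\ref{lemma:Linfa} for the cover, then applying Proposition~\ref{prop:Y:det:ep}. Your write-up spells out the verification of the hypotheses more explicitly than the paper's one-line proof, but the content is identical.
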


\section{The Kantorovich-Rubinshte\u\i n-Hutchinson Quasi-Metric on Previsions}
\label{sec:quasi-metr-prev}

At last we come to our main object of study.  This was introduced in
\cite{Gou-csl07}, and is a common generalization of continuous
valuations (a concept close to that of measure), spaces of closed,
resp., compact saturated sets, resp.\ of lenses, and more.
\begin{defi}[Prevision]
  \label{defn:prev}
  A \emph{prevision} on a topological space $X$ is a Scott-continuous
  map $F$ from $\Lform X$ to $\creal$ that is \emph{positively
    homogeneous}, namely: $F (\alpha h) = \alpha F (h)$ for all
  $\alpha \in \Rp$, $h \in \Lform X$.

  It is:
  \begin{itemize}
  \item \emph{sublinear} if $F (h+h') \leq F (h) + F (h')$ holds for
    all $h, h' \in \Lform X$,
  \item \emph{superlinear} if $F (h+h') \geq F (h) + F (h')$ holds,
  \item \emph{linear} if it is both sublinear and superlinear,
  \item \emph{subnormalized} if
    $F (\alpha.\mathbf 1 + h) \leq \alpha + F (h)$ holds, where
    $\mathbf 1$ is the constant $1$ map,
  \item \emph{normalized} if
    $F (\alpha.\mathbf 1 + h) = \alpha + F (h)$ holds,
  \item \emph{discrete} if $F (f \circ h) = f (F (h))$ for every
    $h \in \Lform X$ and every strict $f \in \Lform \creal$---a map
    $f$ is strict if and only if $f (0) = 0$.
  \end{itemize}
\end{defi}
We write $\Prev X$ for the set of previsions on $X$.

Let now $X, d$ be a quasi-metric space, with its $d$-Scott topology.

\subsection{The Unbounded Kantorovich-Rubinshte\u\i n-Hutchinson
  Quasi-Metric}
\label{sec:unbo-kant-rubinsht}

\begin{defi}[$\dKRH$]
  \label{defn:KRH}
  Let $X, d$ be a quasi-metric space.  The
  \emph{Kantorovich-Rubinshte\u\i n-Hutchinson quasi-metric} on the
  space of previsions on $X$ is defined by:
  \begin{equation}
    \label{eq:dKRH}
    \dKRH (F, F') = \sup_{h \in \Lform_1 X} \dreal (F (h), F' (h)).
  \end{equation}
\end{defi}
This is a quasi-metric on any standard quasi-metric space, as remarked
at the end of Section~6 of \cite{JGL:formalballs}.  We give a slightly
expanded argument in Lemma~\ref{lemma:KRH:qmet} below.

\begin{rem}
  \label{rem:dKRH:classical}
  The name of that quasi-metric stems from analogous definitions of
  metrics on spaces of measures.  The \emph{classical}
  Kantorovich-Rubinshte\u\i n-Hutchinson metric between two bounded
  measures $\mu$ and $\mu'$ is given as
  $\sup_{h \in L_1 X, h \leq \mathbf 1} |\int_{x \in X} h (x) d\mu -
  \int_{x \in X} h (x) d\mu'|$.  Writing $F$ for the functional
  $h \mapsto \int_{x \in X} h (x) d\mu$ and $F'$ for
  $h \mapsto \int_{x \in X} h (x) d\mu'$, this can be rewritten as
  $\sup_{h \in L_1 X, h \leq \mathbf 1} \dreal^{sym} (F (h), F' (h))$.
  In the case of metric (as opposed to quasi-metric) spaces, we shall
  see in Lemma~\ref{lemma:dKRH:metric} that the use of the symmetrized
  metric $\dreal^{sym}$ is irrelevant, and the distance between $\mu$
  and $\mu'$ is equal to
  $\sup_{h \in L_1 X, h \leq \mathbf 1} \dreal (F (h), F' (h))$.
  Additionally, on a metric space, all $1$-Lipschitz maps $h$ are
  automatically continuous.  In the end, on metric spaces, the only
  difference between our definition and the latter is that $h$ ranges
  over functions bounded above by $1$ in the latter.  We shall also
  explore this variant in Section~\ref{sec:bound-kant-rubinsht}.
\end{rem}

The $\dKRH$ quasi-metric is interesting mostly on subspaces of
subnormalized, resp.\ normalized previsions, as we shall illustrate in
Remark~\ref{rem:KRH:useless} in the case of linear previsions.

\begin{lem}
  \label{lemma:KRH:qmet}
  Let $X, d$ be a standard quasi-metric space.  For all previsions
  $F$, $F'$ on $X$, the following are equivalent: $(a)$ $F \leq F'$;
  $(b)$ $\dKRH (F, F') = 0$.  In particular, $\dKRH$ is a
  quasi-metric.
\end{lem}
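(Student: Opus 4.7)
The plan is to establish the equivalence $(a) \Leftrightarrow (b)$ directly, and then derive the quasi-metric axioms as easy corollaries.

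The direction $(a) \Rightarrow (b)$ is immediate: if $F(h) \leq F'(h)$ for every $h \in \Lform X$, then in particular $\dreal(F(h), F'(h)) = 0$ for every $h \in \Lform_1 X$, so the supremum defining $\dKRH(F, F')$ is $0$.

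For $(b) \Rightarrow (a)$, the main work, assume $\dKRH(F, F') = 0$, so that $F(h) \leq F'(h)$ for every $h \in \Lform_1 X$. The task is to lift this from $1$-Lipschitz continuous maps to all of $\Lform X$. I would use the result cited just before Section~7.1, namely Theorem~6.17 of \cite{JGL:formalballs}, which states that for a standard quasi-metric space every $h \in \Lform X$ is the pointwise supremum of the chain ${(h^{(\alpha)})}_{\alpha \in \Rp}$, where $h^{(\alpha)}$ is the largest $\alpha$-Lipschitz continuous map below $h$. For each $\alpha > 0$, the map $h^{(\alpha)}/\alpha$ lies in $\Lform_1 X$: indeed, by Lemma~\ref{lemma:f'}~(2), $(x,r) \mapsto h^{(\alpha)}(x) - \alpha r$ is Scott-continuous, and dividing by the positive constant $\alpha$ preserves Scott-continuity, so $(x,r) \mapsto h^{(\alpha)}(x)/\alpha - r$ is Scott-continuous, which is precisely the $1$-Lipschitz continuity criterion of Lemma~\ref{lemma:f'}~(2). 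Hence $F(h^{(\alpha)}/\alpha) \leq F'(h^{(\alpha)}/\alpha)$, and positive homogeneity of previsions yields $F(h^{(\alpha)}) \leq F'(h^{(\alpha)})$. Since ${(h^{(\alpha)})}_{\alpha > 0}$ is a chain, Scott-continuity of $F$ and $F'$ gives
\[
F(h) = \sup_{\alpha > 0} F(h^{(\alpha)}) \leq \sup_{\alpha > 0} F'(h^{(\alpha)}) = F'(h),
\]
so $F \leq F'$.

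For the final assertion that $\dKRH$ is a quasi-metric, the three axioms follow easily. Reflexivity $\dKRH(F,F) = 0$ is the $(a) \Rightarrow (b)$ direction applied to $F = F'$. The triangle inequality is inherited from $\dreal$: for every $h \in \Lform_1 X$, $\dreal(F(h), G(h)) \leq \dreal(F(h), F'(h)) + \dreal(F'(h), G(h)) \leq \dKRH(F, F') + \dKRH(F', G)$, and taking the supremum over $h$ gives $\dKRH(F, G) \leq \dKRH(F, F') + \dKRH(F', G)$. Finally, if $\dKRH(F, F') = \dKRH(F', F) = 0$, then by the equivalence both $F \leq F'$ and $F' \leq F$, so $F = F'$.

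The only delicate step is making $h^{(\alpha)}/\alpha$ legitimately an element of $\Lform_1(X,d)$, which requires standardness of $X, d$ (to invoke the characterization of Lipschitz continuity through Scott-continuity of the $f'$ map) and the chain-representation theorem for lower semicontinuous maps. Once these are in place, positive homogeneity and Scott-continuity of previsions do the rest.
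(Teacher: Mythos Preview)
Your proof is correct and follows essentially the same route as the paper: both directions of the equivalence are argued identically, with the key step $(b)\Rightarrow(a)$ passing through $h^{(\alpha)}/\alpha \in \Lform_1(X,d)$, positive homogeneity, and Scott-continuity applied to the chain $(h^{(\alpha)})_{\alpha>0}$ with supremum $h$. You supply slightly more detail than the paper (justifying the $1$-Lipschitz continuity of $h^{(\alpha)}/\alpha$ via Lemma~\ref{lemma:f'}, and spelling out the quasi-metric axioms), but the substance is the same.
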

\proof $(a) \limp (b)$.  For every $h \in \Lform_1 (X, d)$,
$F (h) \leq F' (h)$, so $\dreal (F (h), F' (h)) = 0$.
$(b) \limp (a)$.  Let $f$ be an arbitrary element of $\Lform X$.  For
every $\alpha > 0$, $1/\alpha f^{(\alpha)}$ is in $\Lform_1 (X, d)$,
so $F (1/\alpha f^{(\alpha)}) \leq F' (1/\alpha f^{(\alpha)})$.
Multiply by $\alpha$: $F (f^{(\alpha)}) \leq F' (f^{(\alpha)})$.  The
family ${(f^{(\alpha)})}_{\alpha > 0}$ is a chain whose supremum is
$f$.  Using the fact that $F$ and $F'$ are Scott-continuous,
$F (f) \leq F' (f)$, and as $f$ is arbitrary, $(a)$ follows.  \qed

The following shows that we can restrict to bounded maps $h \in
\Lform_1 (X, d)$.  A map $h$ is \emph{bounded} if and only if there is
a constant $a \in \Rp$ such that for every $x \in X$, $h (x) \leq a$.
\begin{lem}
  \label{lemma:KRH:bounded}
  Let $X, d$ be a quasi-metric space.  For all previsions $F$, $F'$ on
  $X$,
  \[
    \dKRH (F, F') = \sup_{h \text{ bounded }\in \Lform_1 X} \dreal (F (h), F' (h)).
  \]
\end{lem}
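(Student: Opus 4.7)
The plan is to prove the equality by showing both inequalities, with the non-trivial direction being that the supremum over all $h \in \Lform_1 X$ is bounded by the supremum over \emph{bounded} $h \in \Lform_1 X$. The $\geq$ direction is immediate, since bounded $1$-Lipschitz continuous maps form a subset of $\Lform_1 X$.

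For the $\leq$ direction, the key idea is truncation. Given any $h \in \Lform_1 X$, I would define $h_n = \min(h, n.\mathbf 1)$ for each $n \in \nat$, where $\mathbf 1$ denotes the constant map equal to $1$. By Proposition~\ref{prop:alphaLip:props}, items (6) and (3), each $h_n$ is $1$-Lipschitz continuous (constants are $1$-Lipschitz continuous and $\min$ preserves $1$-Lipschitz continuity), and $h_n$ is bounded by $n$. The family ${(h_n)}_{n \in \nat}$ is a chain whose pointwise supremum is $h$, so by Scott-continuity of $F$ and $F'$ on $\Lform X$, we have $F(h_n) \nearrow F(h)$ and $F'(h_n) \nearrow F'(h)$.

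It then suffices to show that $\dreal(F(h), F'(h)) \leq \sup_{n \in \nat} \dreal(F(h_n), F'(h_n))$. I would split this into cases according to the definition of $\dreal$ from Example~\ref{exa:creal}. If $F(h) \leq F'(h)$, the left-hand side is $0$ and there is nothing to prove. If $F(h) > F'(h)$ and $F(h) < +\infty$, then for every $\epsilon > 0$, take $n$ large enough that $F(h_n) > F(h) - \epsilon$; since $F'(h_n) \leq F'(h)$, one has $F(h_n) - F'(h_n) > F(h) - F'(h) - \epsilon$, and using $\dreal(x, y) \geq x - y$ for finite values, this gives $\sup_n \dreal(F(h_n), F'(h_n)) \geq \dreal(F(h), F'(h)) - \epsilon$. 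If $F(h) = +\infty$ and $F'(h) < +\infty$, then $F(h_n) \to +\infty$ while $F'(h_n)$ stays bounded by $F'(h)$, so $\dreal(F(h_n), F'(h_n)) \to +\infty$ as required. The case $F(h) = F'(h) = +\infty$ is trivial since $\dreal(+\infty, +\infty) = 0$.

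The main (minor) obstacle is the case analysis for $\dreal$ in the presence of infinite values; the subtlety is that $\dreal$ is not continuous jointly on $\creal \times \creal$, so one cannot simply pass to the limit. However, because truncation only affects the upper argument via a monotone chain and because $\dreal(x, y) \geq \max(x - y, 0)$ whenever $x$ is finite, the required inequality follows directly by monitoring $F(h_n) - F'(h_n)$ from below, as above.
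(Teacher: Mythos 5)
Your proof is correct and follows essentially the same route as the paper's: truncate $h$ to $\min(h, a)$, observe that these truncations form a chain of bounded $1$-Lipschitz continuous maps with pointwise supremum $h$, and invoke the Scott-continuity of $F$ and $F'$. The only difference is presentational---the paper fixes $r < \dKRH(F, F')$ and exhibits a single bounded witness $h' = \min(h,a)$, whereas you compute $\sup_{n} \dreal(F(h_n), F'(h_n))$ directly via a case analysis on the infinite values---but the mathematical content is identical.
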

\proof For every $h \in \Lform_1 (X, d)$, $h$ is the pointwise
supremum of the chain ${(\min (h, a))}_{a \in \Rp}$, where
$\min (h, a) \colon x \mapsto \min (h (x), a)$.  For the final claim,
it suffices to show that for every $r \in \Rp$ such that
$r < \dKRH (F, F')$, there is a bounded map $h' \in \Lform_1 (X, d)$
such that $r < \dreal (F (h'), F' (h'))$.  Since $r < \dKRH (F, F')$,
there is an $h \in \Lform_1 (X, d)$ such that
$r < \dreal (F (h), F' (h))$.  If $F (h) < +\infty$, this implies that
$F (h) > F' (h) + r$.  Since $F$ is Scott-continuous, there is an
$a \in \Rp$ such that
$F (\min (h, a)) > F' (h) + r \geq F' (\min (h, a)) + r$, so we can
take $h' = \min (h, a)$.  If $F (h) = +\infty$,
then note that since
$0 \leq r < \dreal (F (h), F' (h))$, we must have
$F' (h) < +\infty$.  By Scott-continuity again,
there is an $a \in \Rp$ such that $F (\min (h, a)) > r + F' (h)$, and
then we can again take $h' = \min (h, a)$.  \qed


\begin{lem}
  \label{lemma:LPrev:ord}
  Let $X, d$ be a standard quasi-metric space, let $F$, $F'$ be two
  previsions on $X$, and $r$, $r'$ be two elements of $\Rp$.  The
  following are equivalent:
  \begin{enumerate}
  \item $(F, r) \leq^{\dKRH^+} (F', r')$;
  \item $r \geq r'$ and, for every $h \in \Lform_1 (X, d)$,
    $F (h) - r \leq F' (h) - r'$;
  \item $r \geq r'$ and, for every $h \in \Lform_\alpha (X, d)$,
    $\alpha > 0$, $F (h) - \alpha r \leq F' (h) - \alpha r'$.
  \end{enumerate}
\end{lem}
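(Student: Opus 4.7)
The plan is to unpack the definition of $\leq^{\dKRH^+}$ and then rewrite each condition as a uniform inequality on function values. By the definition of the quasi-metric on formal balls, condition (1) reads $\dKRH (F, F') \leq r - r'$. Since $\dKRH$ takes values in $\creal$, this inequality forces $r \geq r'$. Substituting the definition $\dKRH (F, F') = \sup_{h \in \Lform_1 (X,d)} \dreal (F (h), F' (h))$, condition (1) is equivalent to: $r \geq r'$ and $\dreal (F (h), F' (h)) \leq r - r'$ for every $h \in \Lform_1 (X, d)$.

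For (1) $\Leftrightarrow$ (2), I would argue, under the standing hypothesis $r \geq r'$, that the inequality $\dreal (F (h), F' (h)) \leq r - r'$ is equivalent to $F (h) - r \leq F' (h) - r'$. The two cases $F (h) \leq F' (h)$ and $F (h) > F' (h)$ are handled directly from the definition of $\dreal$ in Example~\ref{exa:creal}, and the boundary cases with $+\infty$ need a short check: if $F (h) = +\infty$ and $F' (h) < +\infty$, then both $\dreal (F (h), F' (h)) = +\infty$ and $F (h) - r = +\infty > F' (h) - r'$ fail, while if both values are $+\infty$ the two statements are trivially true. This gives the equivalence.

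For (2) $\Leftrightarrow$ (3), the direction (3) $\Rightarrow$ (2) is immediate by specialising to $\alpha = 1$, noting that $r \geq r'$ is common to both statements. For (2) $\Rightarrow$ (3), I take an arbitrary $h \in \Lform_\alpha (X, d)$ with $\alpha > 0$ and set $h' := (1/\alpha) h$. By Proposition~\ref{prop:alphaLip:props}~(1) applied with scaling factor $1/\alpha$, $h'$ lies in $\Lform_1 (X, d)$, so (2) gives $F (h') - r \leq F' (h') - r'$. Positive homogeneity of previsions yields $F (h') = (1/\alpha) F (h)$ and $F' (h') = (1/\alpha) F' (h)$, and multiplying the inequality by $\alpha > 0$ delivers $F (h) - \alpha r \leq F' (h) - \alpha r'$. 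There is no genuine obstacle here; the only care required is again with the $+\infty$ arithmetic in $\creal$, which is routine since $\alpha > 0$ and $r, r'$ are finite.
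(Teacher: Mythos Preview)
Your proof is correct and follows essentially the same route as the paper: unpack $\dKRH$ and $\dreal$ to get the equivalence between (1) and (2), then pass between (2) and (3) via the scaling $h \mapsto (1/\alpha)h$ and positive homogeneity. The only cosmetic difference is that the paper argues the cycle $(1)\Rightarrow(2)\Rightarrow(3)\Rightarrow(1)$ whereas you prove the two biconditionals directly; the content is identical.
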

\proof (1) $\limp$ (2).  If $(F, r) \leq^{\dKRH^+} (F', r')$, then for
every $h \in \Lform_1 (X, d)$, $\dreal (F (h), F' (h)) \leq r-r'$.
This implies that $r \geq r'$, on the one hand, and on the other hand
that either $F (h) = F' (h) = +\infty$ or $F (h) \neq +\infty$ and
$F (h) - F' (h) \leq r-r'$; in both cases,
$F (h) - r \leq F' (h) - r'$.

(2) $\limp$ (3).  For every $h \in \Lform_\alpha (X, d)$, $\alpha >
0$, $1/\alpha h$ is in $\Lform_1 (X, d)$, so (2) implies $F (1/\alpha
h) - r \leq F' (1/\alpha h) - r'$.  Multiplying by $\alpha$, and using
positive homogeneity, we obtain (3).

(3) $\limp$ (1).  For $\alpha=1$, we obtain that for every
$h \in \Lform_1 (X, d)$, $F (h) - r \leq F' (h) - r'$.  If
$F' (h) = +\infty$, then $\dreal (F (h), F' (h)) = 0 \leq r-r'$.  If
$F' (h) \neq +\infty$, then $F (h)$ cannot be equal to $+\infty$, so
$\dreal (F (h), F' (h)) = \max (F (h) - F' (h), 0) \leq \max (r-r', 0)
= r-r'$.  \qed

Recall that $\Lform_\infty X$ has the subspace topology from
$\Lform X$.  Let us introduce the following variant on the notion of
prevision.  We do this, because our completeness theorem will
naturally produce $\Lform$-previsions, not previsions, as
$\dKRH$-limits.  Showing that every $\Lform$-prevision defines a
unique prevision will be the subject of Proposition~\ref{prop:Gbar}.
\begin{defi}[$\Lform$-prevision]
  \label{defn:Lprev}
  Let $X, d$ be a quasi-metric space.  An \emph{$\Lform$-prevision} on
  $X$
  is
  any continuous map $G$ from $\Lform_\infty (X, d)$ to $\creal$ such
  that $G (\alpha h) = \alpha G (h)$ for all $\alpha \in \Rp$ and
  $h \in \Lform_\infty (X, d)$.
\end{defi}
The notions of sublinearity, superlinearity, linearity, normalization,
subnormalization, discreteness, carry over to $\Lform$-previsions, taking
care to quantify over $h, h' \in \Lform_\infty X$ and over $f$ strict
in $\Lform_\infty \creal$.  We write $\Lform\Prev X$ for the set of all
$\Lform$-previsions on $X$, and equip it with a quasi-metric defined by the
same formula as Definition~\ref{defn:KRH}, and which we denote by
$\dKRH$ again.

Every $F \in \Prev X$ defines an element $F_{|\Lform_\infty X}$ of
$\Lform\Prev X$ by restriction.  Conversely, for every $G \in \Lform\Prev X$,
let $\overline G (h) = \sup_{\alpha \in \Rp} G (h^{(\alpha)})$.
\begin{prop}
  \label{prop:Gbar}
  Let $X, d$ be a standard quasi-metric space.
  \begin{enumerate}
  \item For every $G \in \Lform\Prev X$, $\overline G$ is a prevision.
  \item The maps $G \in \Lform\Prev X \mapsto \overline G \in \Prev X$  and
    $F \in \Prev X \mapsto F_{|\Lform_\infty X} \in \Lform\Prev X$ are
    inverse of each other.
  \item If $G$ is sublinear, resp.\ superlinear, resp.\ linear, resp.\
    subnormalized, resp.\ normalized, resp.\ discrete, then so is
    $\overline G$.
  \item Conversely, if $\overline G$ is sublinear, resp.\ superlinear,
    resp.\ linear, resp.\ subnormalized, resp.\ normalized, resp.\
    discrete, then so is $G$.
  \end{enumerate}
\end{prop}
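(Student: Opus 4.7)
The plan is to treat the four parts in order, the crux being Scott-continuity of $\overline G$ in part~(1); once that is established, parts (2)--(4) follow by routine manipulations based on the identity $\sup_\alpha h^{(\alpha)} = h$ and the fact (Proposition~\ref{prop:alphaLip:props}(5)) that $h^{(\alpha)} = h$ whenever $h$ is already $\alpha$-Lipschitz continuous. The hard part of (1) is that the naive attempt to compare $(\sup_i h_i)^{(\alpha)}$ with $\sup_i h_i^{(\alpha)}$ would demand Lipschitz regularity of $X, d$ via Proposition~\ref{prop:lipreg:Lip}, which is not assumed here; the workaround is to reason at the level of Scott-open sets in the subspace topology instead.

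For part~(1), positive homogeneity rests on the identity $(\alpha h)^{(\beta)} = \alpha \cdot h^{(\beta/\alpha)}$ for $\alpha > 0$, which is immediate from the maximality in the definition of $f^{(\gamma)}$, with the case $\alpha = 0$ handled by positive homogeneity of $G$ applied to the zero function. Monotonicity of $\overline G$ is clear. For Scott-continuity, take a directed supremum $h = \sup_i h_i$ in $\Lform X$ and any $t < G (h^{(\alpha)})$ for some $\alpha$. By continuity of $G$ for the subspace topology, there is a Scott-open $\mathcal W \subseteq \Lform X$ with $\mathcal W \cap \Lform_\infty (X, d) = G^{-1} (]t, +\infty])$. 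Then $h^{(\alpha)} \in \mathcal W$ and $h^{(\alpha)} \leq h$ force $h \in \mathcal W$; Scott-openness yields some $i$ with $h_i \in \mathcal W$; and since $h_i = \sup_\beta h_i^{(\beta)}$ is a directed supremum in $\Lform X$, some $h_i^{(\beta)}$ lies in $\mathcal W \cap \Lform_\infty (X, d)$, so $G (h_i^{(\beta)}) > t$ and $\overline G (h_i) > t$. Letting $t$ approach $\overline G (h)$ yields $\sup_i \overline G (h_i) \geq \overline G (h)$, and the reverse inequality is monotonicity.

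Part~(2) is a two-line calculation. For $F \in \Prev X$, Scott-continuity of $F$ applied to the chain $(h^{(\alpha)})_\alpha$ gives $\overline {F_{|\Lform_\infty X}} (h) = \sup_\alpha F (h^{(\alpha)}) = F (h)$. Conversely, for $G \in \Lform\Prev X$ and $h \in \Lform_\beta (X, d)$, Proposition~\ref{prop:alphaLip:props}(5) yields $h^{(\alpha)} = h$ for every $\alpha \geq \beta$, so $\sup_\alpha G (h^{(\alpha)}) = G (h)$ upon using monotonicity of $G$ (which follows from continuity and the pointwise specialization order on $\Lform_\infty X$).

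For part~(3), each property transfers by the same recipe: express the argument of $\overline G$ as a directed supremum of Lipschitz continuous maps, apply Scott-continuity of $\overline G$ from part~(1) together with the identity $\overline G|_{\Lform_\infty (X, d)} = G$ from part~(2), and invoke the corresponding property of $G$. For sublinearity, $h + h' = \sup_\beta (h^{(\beta)} + h'^{(\beta)})$ is directed in $\Lform X$, each summand is $2\beta$-Lipschitz continuous by Proposition~\ref{prop:alphaLip:props}(2), so $\overline G (h + h') = \sup_\beta G (h^{(\beta)} + h'^{(\beta)}) \leq \sup_\beta (G (h^{(\beta)}) + G (h'^{(\beta)})) = \overline G (h) + \overline G (h')$; superlinearity, linearity, and (sub)normalization are handled identically (using $\alpha \mathbf 1 + h = \sup_\beta (\alpha \mathbf 1 + h^{(\beta)})$, with $\alpha \mathbf 1 + h^{(\beta)} \in \Lform_\beta (X, d)$). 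Discreteness needs a double approximation: writing $f \circ h = \sup_{\alpha, \beta} f^{(\alpha)} \circ h^{(\beta)}$ directedly in $\Lform X$ (valid because $f$ is Scott-continuous on $\creal$), observing that $f^{(\alpha)} \circ h^{(\beta)}$ is Lipschitz continuous by Lemma~\ref{lemma:comp:Lip} and that $f^{(\alpha)}$ is strict since $0 \leq f^{(\alpha)} (0) \leq f (0) = 0$, one computes
\[
  \overline G (f \circ h)
  = \sup_{\alpha, \beta} G (f^{(\alpha)} \circ h^{(\beta)})
  = \sup_{\alpha, \beta} f^{(\alpha)} (G (h^{(\beta)}))
  = \sup_\alpha f^{(\alpha)} (\overline G (h))
  = f (\overline G (h)),
\]
using discreteness of $G$, Scott-continuity of $f^{(\alpha)}$, and $\sup_\alpha f^{(\alpha)} = f$. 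Part~(4) is then immediate: since $\Lform_\infty (X, d)$ is closed under sums, scalar shifts, and composition with Lipschitz continuous strict self-maps of $\creal$, any defining identity or inequality for $\overline G$ on these operations restricts directly to $G$ via $G = \overline G|_{\Lform_\infty (X, d)}$.
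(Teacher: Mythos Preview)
Your proof is correct and follows essentially the same approach as the paper. The key idea for Scott-continuity in part~(1)---lifting $G^{-1}(]t,+\infty])$ to a Scott-open subset $\mathcal W$ of $\Lform X$ via the subspace topology, then using $h^{(\alpha)} \leq h$ and $h_i = \sup_\beta h_i^{(\beta)}$---is exactly what the paper does (it phrases it as showing $\overline G^{-1}(]a,+\infty])$ is open rather than directly verifying preservation of directed suprema, but this is the same argument). Your treatment of part~(3) is slightly more uniform than the paper's: you run the single ``Scott-continuity plus $\overline G|_{\Lform_\infty} = G$'' recipe for all six properties, whereas the paper handles superlinearity and the $\geq$ half of normalization by a direct inequality $g^{(\alpha)} + h^{(\beta)} \leq (g+h)^{(\alpha+\beta)}$ instead; both routes are valid, and your single chain of equalities for discreteness is a clean consolidation of the paper's two-inequality argument.
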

\proof
%
(1) We must first show that $\overline G$ is Scott-continuous, i.e.,
continuous from $\Lform X$ to $\creal$, both with their Scott
topologies.  Let $h \in \Lform X$ such that $\overline G (h) > a$.
For some $\alpha \in \Rp$, $G (h^{(\alpha)}) > a$, so $h^{(\alpha)}$
is in the open subset $G^{-1} (]a, +\infty])$ of $\Lform_\infty X$.
By definition of a subspace topology, there is a (Scott-)open subset
$W$ of $\Lform X$ such that
$G^{-1} (]a, +\infty]) = W \cap \Lform_\infty X$.  Then $h^{(\alpha)}$
is in $W$, and since $h \geq h^{(\alpha)}$, $h$ is also in $W$.
Moreover, for every $g \in W$, $g^{(\beta)}$ is in $W$ for some
$\beta \in \Rp$, hence in $G^{-1} (]a, +\infty])$.  It follows that
$G (g^{(\beta)}) > a$, and therefore $\overline G (g) > a$: hence $W$
is an open neighborhood of $h$ contained in
${\overline G}^{-1} (]a, +\infty])$.  We conclude that the latter is
open in $\Lform X$, which implies that $\overline G$ is continuous.

Before we proceed, we note the following.
\begin{lem}
  \label{lemma:ah:(alpha)}
  Let $X, d$ be a standard quasi-metric space.  For every map
  $h \colon X \to \creal$, for all $a, \alpha \in \Rp$,
  $a h^{(\alpha)} = {(ah)}^{(a\alpha)}$.
\end{lem}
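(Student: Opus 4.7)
The plan is to handle the case $a=0$ separately (both sides collapse to the constant zero map) and then, for $a>0$, to prove the two inequalities $ah^{(\alpha)} \le (ah)^{(a\alpha)}$ and $(ah)^{(a\alpha)} \le ah^{(\alpha)}$ by exploiting the fact that scalar multiplication by a positive constant is a bijection from $\Lform_\alpha(X,d)$ onto $\Lform_{a\alpha}(X,d)$, thanks to Proposition~\ref{prop:alphaLip:props}~(1).

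For the first inequality, assume $a>0$. By definition, $h^{(\alpha)}$ is $\alpha$-Lipschitz continuous and satisfies $h^{(\alpha)} \le h$. Applying Proposition~\ref{prop:alphaLip:props}~(1) with scalar $a$ shows that $ah^{(\alpha)}$ is $(a\alpha)$-Lipschitz continuous, and multiplying the inequality $h^{(\alpha)}\le h$ by $a$ gives $ah^{(\alpha)} \le ah$. Since $(ah)^{(a\alpha)}$ is defined as the largest $(a\alpha)$-Lipschitz continuous map below $ah$, we conclude $ah^{(\alpha)} \le (ah)^{(a\alpha)}$.

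For the reverse inequality, the key trick is to divide by $a$. The map $(ah)^{(a\alpha)}$ is $(a\alpha)$-Lipschitz continuous and bounded above by $ah$. Applying Proposition~\ref{prop:alphaLip:props}~(1) with scalar $1/a$, the function $\tfrac{1}{a}(ah)^{(a\alpha)}$ is $\alpha$-Lipschitz continuous, and dividing the pointwise inequality $(ah)^{(a\alpha)} \le ah$ by $a$ yields $\tfrac{1}{a}(ah)^{(a\alpha)} \le h$. Maximality of $h^{(\alpha)}$ therefore gives $\tfrac{1}{a}(ah)^{(a\alpha)} \le h^{(\alpha)}$, i.e.\ $(ah)^{(a\alpha)} \le ah^{(\alpha)}$, closing the argument.

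Finally, the boundary case $a=0$: here $ah$ is the constant zero map and $a\alpha = 0$, so $(ah)^{(a\alpha)}$ is the largest $0$-Lipschitz continuous map below $0$, which is the constant $0$, equal to $ah^{(\alpha)} = 0\cdot h^{(\alpha)}$. There is no serious obstacle anywhere: the only point requiring a little care is ensuring that multiplication by $1/a$ really preserves the ``$\alpha$-Lipschitz continuous'' property, which is exactly the content of Proposition~\ref{prop:alphaLip:props}~(1). Standardness of $X,d$ is used implicitly through that proposition.
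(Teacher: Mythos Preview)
Your proof is correct and follows essentially the same approach as the paper: both directions use Proposition~\ref{prop:alphaLip:props}~(1) to move between $\alpha$-Lipschitz and $a\alpha$-Lipschitz continuous maps via multiplication by $a$ and by $1/a$, with the case $a=0$ handled trivially. Your write-up is simply more detailed than the paper's.
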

\proof We have $a h^{(\alpha)} \leq {(ah)}^{(a\alpha)}$, because
$a h^{(\alpha)}$ is $a\alpha$-Lipschitz continuous
by Proposition~\ref{prop:alphaLip:props}~(1), below $ah$, and because
${(ah)}^{(a\alpha)}$ is the largest $a\alpha$-Lipschitz continuous
function below $ah$.  When $a > 0$, by the same argument,
$1/a {(ah)}^{(a\alpha)} \leq h^{(\alpha)}$, so
$a h^{(\alpha)} = {(ah)}^{(a\alpha)}$.  The same equality holds,
trivially, when $a=0$.  \qed

We return to the proof of the theorem.  When $a > 0$,
$\overline G (ah) = \sup_{\beta \in \Rp} G ({(ah)}^{(\beta)}) =
\sup_{\alpha \in \Rp} G ({(ah)}^{(a\alpha)}) = \sup_{\alpha \in \Rp} G
(a h^{(\alpha)})$ (by Lemma~\ref{lemma:ah:(alpha)})
$= a \overline G (h)$.  When $a=0$, $\overline G (0) = 0$.  Therefore
$\overline G$ is a prevision.

(2) For every $G \in \Lform\Prev X$, the restriction of $\overline G$ to
$\Lform_\infty X$ maps every Lipschitz (say, $\alpha$-Lipschitz)
continuous map $g \colon X \to \creal$ to
$\sup_{\beta \in \Rp} G (g^{(\beta)})$.  For every
$\beta \geq \alpha$, $g$ is $\beta$-Lipschitz continuous, and
$g^{(\beta)}$ is the largest $\beta$-Lipschitz continuous map below
$g$, so $g^{(\beta)} = g$.  It follows that
$\sup_{\beta \in \Rp} G (g^{(\beta)}) = G (g)$, showing that
$\overline G_{|\Lform_\infty X} = G$.

Note that this says that $G$ and $\overline G$ coincide on Lipschitz
continuous maps, a fact that we will use several times below.

In the reverse direction, for every $F \in \Prev X$,
$\overline {(F_{|\Lform_\infty X})}$ maps every function
$h \in \Lform X$ to $\sup_{\alpha \in \Rp} F (h^{(\alpha)}) = F$,
because $F$ is Scott-continuous and
$\sup_{\alpha \in \Rp} h^{(\alpha)} = h$ (see
\cite[Theorem~6.17]{JGL:formalballs}).

(3) For all $g, h \in \Lform X$, for all $\alpha, \beta \in \Rp$,
$g^{(\alpha)} + h^{(\beta)} \leq (g+h)^{(\alpha+\beta)}$, because the
left-hand side is an $(\alpha+\beta)$-Lipschitz continuous map below
$g+h$, using Proposition~\ref{prop:alphaLip:props}~(2), and the
right-hand side is the largest.  If $G$ is superlinear, it follows
that
$\overline G (g) + \overline G (h) = \sup_{\alpha, \beta \in \Rp} G
(g^{(\alpha)}) + G (h^{(\beta)}) \leq \sup_{\alpha, \beta \in \Rp} G
(g^{(\alpha)} + h^{(\beta)}) \leq \sup_{\alpha, \beta \in \Rp} G
((g+h)^{(\alpha+\beta)}) = \overline G (g+h)$, hence that
$\overline G$ is superlinear, too.

If $G$ is sublinear, then we need another argument.  We wish to show
that $\overline G (g+h) \leq \overline G (g) + \overline G (h)$.  To
this end, let $a$ be an arbitrary element of $\Rp$ such that
$a < \overline G (g+h)$.  Since $\overline G$ is Scott-continuous
(item~(1) above), $g+h$ is in the Scott-open set
${\overline G}^{-1} (]a, +\infty])$.  Now
$g+h = \sup_{\alpha \in \Rp} g^{(\alpha)} + \sup_{\alpha \in \Rp}
h^{(\alpha)} = \sup_{\alpha \in \Rp} (g^{(\alpha)} + h^{(\alpha)})$,
since addition is Scott-continuous on $\creal$, so
$g^{(\alpha)} + h^{(\alpha)}$ is in
${\overline G}^{-1} (]a, +\infty])$ for some $\alpha \in \Rp$.  The
function $g^{(\alpha)} + h^{(\alpha)}$ is $2\alpha$-Lipschitz
continuous (this is again Proposition~\ref{prop:alphaLip:props}~(2)).
Item~(2)
above shows that $\overline G$ coincides with $G$ on Lipschitz
continuous maps, so
$\overline G (g^{(\alpha)} + h^{(\alpha)}) = G (g^{(\alpha)} +
h^{(\alpha)})$.  Since $G$ is sublinear, the latter is less than or
equal to $G (g^{(\alpha)}) + G (h^{(\alpha)})$, so
$G (g^{(\alpha)}) + G (h^{(\alpha)}) > a$.  Since
$\overline G (g) \geq G (g^{(\alpha)})$, and similarly with $h$, we
obtain that $\overline G (g) + \overline G (h) > a$.  Since $a$ is
arbitrary, $\overline G (g) + \overline G (h) \geq \overline G (g+h)$.

If $G$ is subnormalized, then we show that $\overline G$ is
subnormalized, too, by a similar argument.  Let $\alpha \in \Rp$,
$h \in \Lform X$.  Fix $a \in \Rp$ such that
$a < \overline G (\alpha.\mathbf 1 + h)$.  Then
$\alpha . \mathbf 1 + h$ is in the Scott-open set
$\overline G^{-1} (]a, +\infty])$.  We observe that
$\alpha . \mathbf 1 + h$ is the pointwise supremum of the chain of
maps $\alpha . \mathbf 1 + h^{(\beta)}$, $\beta \in \Rp$, and that
those maps are $\beta$-Lipschitz continuous, by
Proposition~\ref{prop:alphaLip:props}~(2) and~(6).  Therefore
$\alpha . \mathbf 1 + h^{(\beta)}$ is also in
$\overline G^{-1} (]a, +\infty])$, for some $\beta \in \Rp$.  Since
that function is $\beta$-Lipschitz continuous, $\overline G$ maps it
to $G (\alpha . \mathbf 1 + h^{(\beta)})$, which is less than or equal
to $\alpha + G (h^{(\beta)})$ since $G$ is subnormalized.  In
particular,
$a < \overline G (\alpha . \mathbf 1 + h^{(\beta)}) \leq \alpha + G
(h)$.  Taking suprema over $a$ proves the claim.

If $G$ is normalized, it remains to show that
$\overline G (\alpha . \mathbf 1 + h) \geq \alpha + G (h)$.  This is
similar to the argument for the preservation of superlinearity.  We
use the fact that
$\alpha . \mathbf 1 + h^{(\beta)} \leq {(\alpha . \mathbf 1 +
  h)}^{(\beta)}$, which follows from the fact that the left-hand side
is $\beta$-Lipschitz continuous below $\alpha . \mathbf 1 + h$.  Then
$\overline G (\alpha . \mathbf 1 + h) = \sup_{\beta \in \Rp} G
({(\alpha . \mathbf 1 + h)}^{(\beta)}) \geq \sup_{\beta \in \Rp} G
(\alpha . \mathbf 1 + h^{(\beta)}) = \sup_{\beta \in \Rp} (\alpha + G
(h^{(\beta)})) = \alpha + \overline G (h)$.

If $G$ is discrete, then we need the following auxiliary lemma.
\begin{lem}
  \label{lemma:comp:alphaLip}
  Let $X, d$ be a standard quasi-metric space, $\alpha, \beta \in \Rp$.
  \begin{enumerate}
  \item for all $h \in \Lform_\alpha (X, d)$,
    $f \in \Lform_\beta (\creal, \dreal)$, $f \circ h$ is in
    $\Lform_{\alpha\beta} (X, d)$;
  \item for all $h \in \Lform X$, $f \in \Lform \creal$, $f^{(\alpha)}
    \circ h^{(\beta)} \leq (f \circ h)^{(\alpha\beta)}$.
  \end{enumerate}
\end{lem}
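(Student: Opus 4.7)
My plan is to derive both items as immediate consequences of the composition lemma for Lipschitz continuous maps (Lemma~\ref{lemma:comp:Lip}), using the maximality characterization of $g \mapsto g^{(\gamma)}$ for part (2).

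For (1), I would simply invoke Lemma~\ref{lemma:comp:Lip}: since $h$ is $\alpha$-Lipschitz continuous from $X, d$ to $\creal, \dreal$ and $f$ is $\beta$-Lipschitz continuous from $\creal, \dreal$ to $\creal, \dreal$, the composition $f \circ h$ is $\alpha\beta$-Lipschitz continuous from $X, d$ to $\creal, \dreal$, and therefore belongs to $\Lform_{\alpha\beta} (X, d)$.

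For (2), the idea is that $f^{(\alpha)} \circ h^{(\beta)}$ is itself an $\alpha\beta$-Lipschitz continuous map lying pointwise below $f \circ h$, which by maximality forces it to lie below $(f \circ h)^{(\alpha\beta)}$. The Lipschitz continuity follows from part (1) applied to $f^{(\alpha)} \in \Lform_\alpha (\creal, \dreal)$ and $h^{(\beta)} \in \Lform_\beta (X, d)$. For the pointwise inequality, I would argue that $f^{(\alpha)}$ is continuous from $\creal, \dreal$ to $\creal, \dreal$ (by Proposition~\ref{prop:cont}), hence continuous for the Scott topology on $\creal$ in view of Example~\ref{exa:creal}, and therefore monotonic with respect to the specialization order $\leq$. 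Combining $h^{(\beta)} \leq h$ with monotonicity yields $f^{(\alpha)} \circ h^{(\beta)} \leq f^{(\alpha)} \circ h$, and the pointwise inequality $f^{(\alpha)} \leq f$ gives $f^{(\alpha)} \circ h \leq f \circ h$.

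I do not foresee any real obstacle here; the only side remark one needs is that $f \circ h$ is lower semicontinuous, so that $(f \circ h)^{(\alpha\beta)}$ is well-defined via the construction of Section~\ref{sec:falpha-f-lower}, but this is immediate as the composition of two lower semicontinuous maps. Both parts then reduce to bookkeeping of already-established facts.
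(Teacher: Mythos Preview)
Your proposal is correct and follows essentially the same route as the paper: part~(1) is a direct invocation of Lemma~\ref{lemma:comp:Lip}, and part~(2) uses part~(1) to see that $f^{(\alpha)} \circ h^{(\beta)}$ is $\alpha\beta$-Lipschitz continuous, then establishes the pointwise bound $f^{(\alpha)} \circ h^{(\beta)} \leq f \circ h$ and appeals to maximality of $(f \circ h)^{(\alpha\beta)}$. The only cosmetic difference is that the paper chains the pointwise inequalities in the order $f^{(\alpha)}(h^{(\beta)}(x)) \leq f(h^{(\beta)}(x)) \leq f(h(x))$ (using $f^{(\alpha)} \leq f$ first, then monotonicity of $f$), whereas you swap the two steps; both are valid.
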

\proof (1) Direct consequence of Lemma~\ref{lemma:comp:Lip}.

(2) By (1), $f^{(\alpha)} \circ h^{(\beta)}$ is
$\alpha\beta$-Lipschitz continuous.  For every $x \in X$,
$f^{(\alpha)} (h^{(\beta)} (x)) \leq f (h^{(\beta)} (x)) \leq f (h
(x))$, so $f^{(\alpha)} \circ h^{(\beta)}$ is less than or equal to
$f \circ h$, hence also to the largest $\alpha\beta$-Lipschitz
continuous map below $f \circ h$, $(f \circ h)^{(\alpha\beta)}$.  \qed

Let us resume our proof of Proposition~\ref{prop:Gbar}~(3), where $G$
is discrete.  Fix $h \in \Lform X$, and let $f \in \Lform \creal$ be
strict.  Note that for every $\beta \in \Rp$, $f^{(\beta)}$ is strict,
too, since $0 \leq f^{(\beta)} \leq f$.

We have
$\overline G (f \circ h) = \sup_{\alpha \in \Rp} G ((f \circ
h)^{(\alpha)})$.  The map $f^{(\sqrt\alpha)} \circ h^{(\sqrt \alpha)}$
is $\alpha$-Lipschitz continuous and below $(f \circ h)^{(\alpha)}$ by
Lemma~\ref{lemma:comp:alphaLip}~(1, 2).  Therefore
$\overline G (f \circ h) \geq \sup_{\alpha \in \Rp} G
(f^{(\sqrt\alpha)} \circ h^{(\sqrt\alpha)})$.  For all $\beta$,
$\gamma$ in $\Rp$, $\beta$ and $\gamma$ are less than or equal to
$\sqrt \alpha$ for $\alpha$ sufficiently large, so
$\overline G (f \circ h) \geq \sup_{\beta, \gamma \in \Rp} G
(f^{(\beta)} \circ h^{(\gamma)}) = \sup_{\gamma\in \Rp} \sup_{\beta
  \in \Rp} f^{(\beta)} ( G (h^{(\gamma)}))$ (using the fact that $G$
is discrete)
$= \sup_{\gamma\in \Rp} f (G (h^{(\gamma)})) = f (\sup_{\gamma\in \Rp}
G (h^{(\gamma)}))$ (since $f$ is lower semicontinuous from $\creal$ to
itself, hence Scott-continuous) $= f (\overline G (h))$.

Conversely, $f \circ h$ is the supremum of the directed family
${(f^{(\beta)} \circ h^{(\gamma)})}_{\beta, \gamma \in \Rp}$.  This is
directed because both ${(f^{(\beta)})}_{\beta \in \Rp}$ and
${(h^{(\gamma)})}_{\gamma \in \Rp}$ are chains, and every
$f^{(\beta)}$ is monotonic (remember that
$\Lform_\beta (X, d) \subseteq \Lform X$ by
Lemma~\ref{lemma:Lalpha:LX}, and that every lower semicontinuous map
is monotonic).  Moreover, for every $x \in X$,
$\sup_{\beta, \gamma \in \Rp} f^{(\beta)} (h^{(\gamma)} (x)) =
\sup_{\gamma \in \Rp}\sup_{\beta \in \Rp} f^{(\beta)} (h^{(\gamma)}
(x)) = \sup_{\gamma \in \Rp} f (h^{(\gamma)} (x)) = f (\sup_{\gamma
  \in \Rp} h^{(\gamma)} (x)) = f (h (x))$.

Using that, we show that
$\overline G (f \circ h) \leq f (\overline G (h))$, from which the
equality will follow.  For every $a < \overline G (f \circ h)$, since
$\overline G$ is Scott-continuous by (1), and using the previous
observation, there are $\beta, \gamma \in \Rp$ such that
$a < \overline G (f^{(\beta)} \circ h^{(\gamma)})$.  Since
$f^{(\beta)} \circ h^{(\gamma)}$ is in $\Lform_{\beta\gamma} (X, d)$
(Lemma~\ref{lemma:comp:alphaLip}~(1)),
$\overline G (f^{(\beta)} \circ h^{(\gamma)}) = G (f^{(\beta)} \circ
h^{(\gamma)})$.  Since $G$ is discrete, the latter is equal to
$f^{(\beta)} (G (h^{(\gamma)}))$, which is less than or equal to
$f (G (h^{(\gamma)}))$, hence to $f (\overline G (h))$ (recall that
$f$ is lower semicontinuous hence monotonic).  Since $a$ is arbitrary,
$\overline G (f \circ h) \leq f (\overline G (h))$.

(4) is obvious, since by (2) $G$ is the restriction of $\overline G$
to $\Lform_\infty (X, d)$.  \qed

\begin{cor}
  \label{corl:=:L1}
  Let $X, d$ be a standard quasi-metric space, and $F$, $F'$ be two
  previsions on $X$.  Then $F=F'$ if and only if $F (h) = F' (h)$ for
  every $h \in \Lform_1 (X, d)$.
\end{cor}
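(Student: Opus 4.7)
The forward direction is trivial, so the content is the converse: if $F$ and $F'$ agree on $\Lform_1 (X, d)$, then they agree on all of $\Lform X$. My plan is to reduce the statement to Lemma~\ref{lemma:KRH:qmet}, which has already done the essential work.

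Indeed, Lemma~\ref{lemma:KRH:qmet} states that for previsions $F$ and $F'$ on a standard quasi-metric space, $F \leq F'$ if and only if $\dKRH (F, F') = 0$, i.e., if and only if $\dreal (F (h), F' (h)) = 0$ for every $h \in \Lform_1 (X, d)$. So under our hypothesis $F (h) = F' (h)$ for every $h \in \Lform_1 (X, d)$, we get $\dreal (F (h), F' (h)) = 0$ (and symmetrically $\dreal (F' (h), F (h)) = 0$) for every such $h$, hence $\dKRH (F, F') = \dKRH (F', F) = 0$. Applying Lemma~\ref{lemma:KRH:qmet} in both directions yields $F \leq F'$ and $F' \leq F$, so $F = F'$.

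Since Lemma~\ref{lemma:KRH:qmet} already encapsulates the delicate step (extending an inequality holding on $\Lform_1$ to one holding on all of $\Lform X$, via the chain ${(f^{(\alpha)})}_{\alpha > 0}$ whose supremum is $f$ together with Scott-continuity of $F$ and $F'$), there is no real obstacle: the corollary is just the symmetric version of that lemma.
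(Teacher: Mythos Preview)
Your proof is correct, and it is actually a bit more economical than the paper's own argument. The paper does not invoke Lemma~\ref{lemma:KRH:qmet}; instead it argues directly: for $h \in \Lform_\alpha (X, d)$ with $\alpha > 0$, the map $\tfrac{1}{\alpha} h$ lies in $\Lform_1 (X, d)$, so $F(\tfrac{1}{\alpha} h) = F'(\tfrac{1}{\alpha} h)$ by hypothesis, and positive homogeneity gives $F(h) = F'(h)$. Thus $F$ and $F'$ agree on $\Lform_\infty (X, d)$, and Proposition~\ref{prop:Gbar}~(2) (the bijection $F \mapsto F_{|\Lform_\infty X}$) forces $F = F'$.

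The two routes are essentially repackagings of the same ingredients --- positive homogeneity to pass from $\Lform_1$ to $\Lform_\alpha$, and Scott-continuity together with $\sup_\alpha f^{(\alpha)} = f$ to reach all of $\Lform X$ --- but they cite different already-proved results. Your approach has the virtue of being a one-line reduction to Lemma~\ref{lemma:KRH:qmet} applied symmetrically; the paper's approach instead exploits the immediately preceding Proposition~\ref{prop:Gbar}, which is natural in context since that proposition is precisely what the corollary is meant to illustrate.
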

\proof The only if direction is trivial.  In the if direction, for
every $\alpha > 0$, for every $h \in \Lform_\alpha (X, d)$,
$1/\alpha h$ is in $\Lform_1 (X, d)$ by
Proposition~\ref{prop:alphaLip:props}~(1).  We have
$F (1/\alpha h) = F' (1/\alpha h)$ by assumption.  Multiplying by
$\alpha$ and relying on positive homogeneity, $F (h) = F (h')$.  This
shows that $F_{|\Lform_\infty X} = F'_{|\Lform_\infty X}$, whence $F=F'$
by Proposition~\ref{prop:Gbar}~(2).  \qed

Recall that $\Lform_\infty^\bnd (X, d)$ is the space of all bounded
Lipschitz continuous maps from $X$ to $\creal$.

\begin{defi}[$\Lform^\bnd$-prevision]
  \label{defn:Lbnd:prev}
  Let $X, d$ be a quasi-metric space.  An \emph{$\Lform^\bnd$-prevision} on
  $X$ is any continuous map $H$ from $\Lform_\infty^\bnd X$ to $\creal$
  such that $H (\beta h) = \beta H (h)$ for all $\beta \in \Rp$ and
  $h \in \Lform_\infty^\bnd X$.
\end{defi}
Let $\Lform^\bnd\Prev X$ be the set of all $\Lform^\bnd$-previsions on
$X$.  We define linear, superlinear, sublinear, subnormalized, and
normalized $\Lform^\bnd$-previsions in the usual way.  Finally, we say
that $H$ is \emph{discrete} iff it satisfies
$H (f \circ h) = f (H (h))$ for every strict map
$f \in \Lform_\infty^\bnd \creal$ (not $\Lform_\infty \creal$) and
every $h \in \Lform_\infty^\bnd (X, d)$.

Every $G \in \Lform\Prev X$ defines an element $G_{|\Lform_\infty^\bnd X}$
of $\Lform^\bnd\Prev X$ by restriction.  Conversely, for every
$H \in \Lform^\bnd\Prev X$, let
$\overline{\overline H} (h) = \sup_{\beta > 0} H (\min (h, \beta))$
for every $h \in \Lform_\infty (X, d)$.

\begin{lem}
  \label{lemma:Hbar}
  Let $X, d$ be a standard quasi-metric space, and $a > 0$.
  \begin{enumerate}
  \item For every $H \in \Lform^\bnd\Prev X$, $\overline{\overline H}$ is an
    $\Lform$-prevision.
  \item The maps
    $G \in \Lform\Prev X \mapsto G_{|\Lform_\infty^\bnd X} \in \Lform^\bnd\Prev
    X$ and
    $H \in \Lform^\bnd\Prev X \mapsto \overline{\overline H} \in \Lform\Prev X$
    are inverse of each other.
  \item For every $G \in \Lform\Prev X$, $G$ is linear, resp.\ superlinear,
    resp.\ sublinear, resp.\ subnormalized, resp.\ normalized, resp.\
    discrete, if and only if $G_{|\Lform_\infty^\bnd X}$ is.
  \end{enumerate}
\end{lem}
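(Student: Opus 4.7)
The plan is to mirror the structure of Proposition~\ref{prop:Gbar}, replacing the Lipschitz approximation $h \mapsto h^{(\alpha)}$ by the truncation $h \mapsto \min(h, \beta)$, $\beta > 0$. The key facts I would establish first are: (a) if $h \in \Lform_\alpha(X, d)$ then $\min(h, \beta) \in \Lform_\alpha^{\beta/\alpha}(X, d) \subseteq \Lform_\infty^\bnd(X, d)$, by Proposition~\ref{prop:alphaLip:props}~(3) and~(6); (b) the family ${(\min(h, \beta))}_{\beta > 0}$ is a chain whose pointwise supremum is $h$; and (c) $\min(\alpha h, \beta) = \alpha \min(h, \beta/\alpha)$ for $\alpha > 0$. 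I will also need a small continuity principle: because $\Lform_\infty^\bnd(X, d)$ carries the subspace topology from the Scott topology on $\Lform X$ and because Scott-open sets in $\Lform X$ are upwards-closed and inaccessible by directed suprema, any continuous $H \colon \Lform_\infty^\bnd(X, d) \to \creal$ is monotonic and preserves directed pointwise suprema of families in $\Lform_\infty^\bnd(X, d)$ whose supremum also lies there.

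For (1), positive homogeneity follows from (c) by a routine change of variable. For continuity of $\overline{\overline H}$, note that $h \mapsto \min(h, \beta)$ is Scott-continuous on $\Lform X$ and maps $\Lform_\infty$ into $\Lform_\infty^\bnd$, so each $h \mapsto H(\min(h, \beta))$ is continuous from $\Lform_\infty(X, d)$ to $\creal$; the pointwise supremum $\overline{\overline H}$ is then lower semicontinuous, i.e., continuous into $\creal$. Part~(2) is a direct double-check: for $G \in \Lform\Prev X$, continuity of $G$ together with the small principle above gives $G(h) = \sup_\beta G(\min(h, \beta)) = \overline{\overline {G_{|\Lform_\infty^\bnd X}}}(h)$; for $H \in \Lform^\bnd\Prev X$ and a bounded $h$, one has $\min(h, \beta) = h$ for all $\beta$ large enough, so $\overline{\overline H}_{|\Lform_\infty^\bnd X} = H$.

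Part~(3) unfolds into six pairs of implications. The ``only if'' direction is trivial in every case (restriction preserves the algebraic identities and inequalities). The ``if'' direction for linearity/sublinearity/superlinearity rests on the elementary pointwise inequalities $\min(g+h, \beta) \leq \min(g, \beta) + \min(h, \beta) \leq 2\min(g+h, \beta)$ and, more usefully, $\min(g, \beta/2) + \min(h, \beta/2) \leq \min(g+h, \beta)$, combined with monotonicity of $H$ and the directedness of the resulting $\beta$-indexed families. The (sub)normalization clauses use $\min(\alpha \mathbf 1 + h, \beta) = \alpha \mathbf 1 + \min(h, \beta-\alpha)$ for $\beta \geq \alpha$, and a one-sided variant $\min(\alpha \mathbf 1 + h, \beta) \leq \alpha \mathbf 1 + \min(h, \beta)$ otherwise. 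The main obstacle is discreteness: to show $\overline{\overline H}(f \circ h) = f(\overline{\overline H}(h))$ from discreteness of $H$, for strict $f \in \Lform_\infty \creal$ and $h \in \Lform_\infty X$, I would rewrite $\min(f \circ h, \beta) = \min(f, \beta) \circ h$ and then further approximate the possibly unbounded $h$ by $\min(h, \gamma)$. Each $\min(f, \beta) \circ \min(h, \gamma)$ lies in $\Lform_\infty^\bnd X$ by Lemma~\ref{lemma:comp:Lip}, so discreteness of $H$ yields $H(\min(f, \beta) \circ \min(h, \gamma)) = \min(f, \beta)(H(\min(h, \gamma)))$. Taking $\sup_\gamma$ (using the continuity principle of paragraph~1 to commute $H$ with the directed sup, and lower semicontinuity of $\min(f, \beta)$ on $\creal$ to pull the sup out of $\min(f,\beta)(\cdot)$) gives $H(\min(f, \beta) \circ h) = \min(f, \beta)(\overline{\overline H}(h))$. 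A final $\sup_\beta$, with lower semicontinuity of $f$ to interchange $\sup_\beta$ with evaluation at $\overline{\overline H}(h)$, produces $\overline{\overline H}(f \circ h) = f(\overline{\overline H}(h))$, as desired.
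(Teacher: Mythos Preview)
Your proposal is correct and follows essentially the same route as the paper: approximate $h$ by the truncations $\min(h,\beta)$, verify positive homogeneity and continuity, check the two compositions are the identity, and transfer each algebraic property through the truncations.

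The one noteworthy difference is how you justify equalities of the form $G(h)=\sup_\beta G(\min(h,\beta))$ (and the analogous step inside the discrete case). You argue directly via your ``small continuity principle'': since $\Lform_\infty(X,d)$ and $\Lform_\infty^\bnd(X,d)$ carry the subspace topology from the Scott topology on $\Lform X$, any continuous map into $\creal$ preserves directed pointwise suprema that remain in the subspace. The paper instead routes through Proposition~\ref{prop:Gbar}: it extends $G$ (or $H$) to a genuine prevision $F$ on all of $\Lform X$, which is Scott-continuous by definition, and reads off the identity from that. Your argument is more self-contained and avoids the dependence on Proposition~\ref{prop:Gbar}; the paper's is shorter once that proposition is in hand. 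For part~(3), the paper dismisses the non-discrete clauses as ``obvious'' and only spells out discreteness, whereas you supply the explicit truncation inequalities; your version is more informative but the content is the same. One small presentational point: your continuity principle is stated for $\Lform_\infty^\bnd$ but you also use it for $G$ on $\Lform_\infty$ in part~(2); the proof is identical, but it would be cleaner to state it once for an arbitrary subspace of $\Lform X$.
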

\proof We first show that the map
$t_{\beta} \colon \Lform_\infty (X, d) \to \Lform_\infty^\bnd (X, d)$
defined by $t_{\beta} (h) = \allowbreak \min (h, \beta)$ is continuous
for every $\beta > 0$.  This follows from the definition of subspace
topologies, and the fact that $t_{\beta}$ is the restriction of the
Scott-continuous map
$h \in \Lform X \mapsto \min (h, \beta) \in \Lform X$.

(1) $\overline{\overline H}$ is continuous, since
$(\overline{\overline H})^{-1} (]r, +\infty]) = \{h \in \Lform_\infty (X, d)
\mid \exists \beta > 0 , H (\min (h, \beta)) > r\} = \bigcup_{\beta >
  0} \{h \in \Lform_\infty (X, d) \mid \min (h, \beta) \in H^{-1} (]r,
+\infty])\} = \bigcup_{\beta > 0} t_{\beta}^{-1} (H^{-1} (]r,
+\infty]))$, which is open since $H$ and $t_{\beta}$ are continuous.

We must show that
$\overline{\overline H} (\alpha h) = \alpha \overline{\overline H} (h)$ for every
$\alpha \in \Rp$.  When $\alpha > 0$,
$\overline{\overline H} (\alpha h) = \sup_{\beta > 0} H (\min (\alpha h,
\beta)) = \sup_{\beta' > 0} H (\min (\alpha h, \alpha \beta')) =
\sup_{\beta' > 0} \alpha H (\min (h, \beta')) = \alpha \overline{\overline H}
(h)$.  When $\alpha = 0$,
$\overline{\overline H} (0) = \sup_{\beta > 0} H (0) = 0$.

(2) For every $h \in \Lform_\infty^\bnd (X, d)$,
$(\overline{\overline H})_{|\Lform_\infty^\bnd X} (h) =
\overline{\overline H} (h) = \sup_{\beta > 0} H (\min (h, \beta)) = H
(h)$, since $\min (h, \beta) = h$ for $\beta$ large enough.  In the
other direction, for every $h \in \Lform_\infty (X, d)$,
$\overline {\overline {G_{|\Lform_\infty^\bnd X}}} (h) = \sup_{\beta >
  0} G_{|\Lform_\infty^\bnd X} (\min (h, \beta)) = \sup_{\beta > 0} G
(\min (h, \beta))$.  Recall now from Proposition~\ref{prop:Gbar} that
$G$ is the restriction to $\Lform_\infty X$ of a prevision $F$ on $X$,
which is Scott-continuous.  So
$\overline {\overline {G_{|\Lform_\infty^\bnd X}}} (h) = \sup_{\beta >
  0} F (\min (h, \beta)) = F (h) = G (h)$.

(3)
All the claims except the one on discreteness follow from the fact
that
$\min (g, \beta) + \min (h, \gamma) \leq \min (g+h, \beta + \gamma)$
and $\min (g+h, \beta) \leq \min (g, \beta) + \min (h, \beta)$ for all
maps $g$, $h$ and all $\beta, \gamma > 0$.  We turn to discreteness.
If
$G \in \Lform\Prev X$ is discrete, then clearly $G_{|\Lform_\infty^\bnd X}$
is discrete, too.  Conversely, let $H \in \Lform^\bnd\Prev X$ be discrete.
To show that $\overline{\overline H}$ is discrete, let
$f \in \Lform_\infty \creal$ be strict, $h \in \Lform_\infty (X, d)$,
and let us show that
$\overline{\overline H} (f \circ h) = f (\overline {\overline H}
(h))$.  The left-hand side, $\overline{\overline H} (f \circ h)$, is
equal to $\sup_{\beta > 0} H (\min (f \circ h, \beta))$, hence to
$\sup_{\beta > 0} H (f_\beta \circ h)$, where
$f_\beta = \min (f, \beta)$.  Since $f_\beta$ is Scott-continuous and
$H$ is the restriction of a Scott-continuous map
$F \colon \Lform (X, d) \to \creal$,
$\overline{\overline H} (f \circ h)$ is also equal to
$\sup_{\beta, \gamma > 0} H (f_\beta \circ \min (h, \gamma))$.  We
know that $\min (h, \gamma)$ is in $\Lform_\infty^\bnd (X, d)$, and
that$f_\beta$ is in $\Lform_\infty^\bnd \creal$.  Since $H$ is
discrete, $\overline{\overline H} (f \circ h)$ is therefore equal to
$\sup_{\beta, \gamma> 0} f_\beta (H (\min (h, \gamma))) = \sup_{\gamma
  > 0} f (H (\min (h, \gamma)))$.  Now $f$ is Scott-continuous, so
this is equal to
$f (\sup_{\gamma > 0} H (\min (h, \gamma))) = f (\overline {\overline
  H} (h))$.  \qed

\begin{cor}
  \label{corl:=:Lbnd1}
  Let $X, d$ be a standard quasi-metric space, and $F$, $F'$ be two
  previsions on $X$, and $a > 0$.  Then $F=F'$ if and only if
  $F (h) = F' (h)$ for every $h \in \Lform_1^a (X, d)$.
\end{cor}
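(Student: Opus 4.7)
The ``only if'' direction is trivial, so I focus on the converse. The plan is to reduce, via Corollary~\ref{corl:=:L1}, to showing that $F(h)=F'(h)$ for every $h\in\Lform_1(X,d)$, and then to approximate such an $h$ from below by a chain of functions lying in $\Lform_1^a(X,d)$, on which $F$ and $F'$ agree by hypothesis.

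Fix $h\in\Lform_1(X,d)$. For each $n\in\nat$ with $n\geq 1$, consider the map $g_n=\min(h/n,\,a)$. By Proposition~\ref{prop:alphaLip:props}~(1), $h/n$ is $(1/n)$-Lipschitz continuous; the constant map $a$ is also $(1/n)$-Lipschitz continuous by Proposition~\ref{prop:alphaLip:props}~(6); so by Proposition~\ref{prop:alphaLip:props}~(3), $g_n$ is $(1/n)$-Lipschitz continuous, hence $1$-Lipschitz continuous by Proposition~\ref{prop:alphaLip:props}~(5). Since moreover $0\leq g_n\leq a$, we have $g_n\in\Lform_1^a(X,d)$. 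By assumption, $F(g_n)=F'(g_n)$, and multiplying by $n$ and invoking positive homogeneity gives
\[
F(n\,g_n)=n\,F(g_n)=n\,F'(g_n)=F'(n\,g_n).
\]
But $n\,g_n=\min(h,na)$, so $F(\min(h,na))=F'(\min(h,na))$ for every $n\geq 1$.

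The family ${(\min(h,na))}_{n\in\nat,n\geq 1}$ is a chain of lower semicontinuous maps whose pointwise supremum is $h$. Since $F$ and $F'$ are Scott-continuous from $\Lform X$ to $\creal$, it follows that
\[
F(h)=\sup_{n\geq 1} F(\min(h,na))=\sup_{n\geq 1} F'(\min(h,na))=F'(h).
\]
As $h\in\Lform_1(X,d)$ was arbitrary, Corollary~\ref{corl:=:L1} gives $F=F'$.

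There is no real obstacle here: the only subtle point is that a bound of the form $h\leq a$ does not survive under the natural truncation $\min(h,na)$, so one must instead first scale $h$ down by $1/n$, truncate at $a$, and then scale back up using positive homogeneity of the previsions. Everything else is a direct appeal to the closure properties of $\Lform_1(X,d)$ recorded in Proposition~\ref{prop:alphaLip:props} and to the Scott-continuity built into Definition~\ref{defn:prev}.
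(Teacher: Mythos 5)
Your proof is correct. Every step checks out: $g_n=\min(h/n,a)$ is $(1/n)$-Lipschitz continuous by Proposition~\ref{prop:alphaLip:props}~(1), (6) and (3), hence $1$-Lipschitz continuous by (5), and takes values in $[0,a]$, so it lies in $\Lform_1^a(X,d)$; positive homogeneity turns agreement on $g_n$ into agreement on $n g_n=\min(h,na)$; and Scott-continuity of previsions passes to the supremum of the chain ${(\min(h,na))}_{n\geq 1}$, which is $h$. Corollary~\ref{corl:=:L1} then finishes the job.

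The route is genuinely different from the paper's, though the underlying mechanism is the same. The paper first upgrades agreement on $\Lform_1^a(X,d)$ to agreement on $\Lform_\alpha^a(X,d)$ for every $\alpha>0$ (same scaling trick you use), identifies the union of these spaces with $\Lform_\infty^\bnd(X,d)$ via Lemma~\ref{lemma:Linfa}, and then invokes the bijection machinery of Lemma~\ref{lemma:Hbar}~(2) and Proposition~\ref{prop:Gbar}~(2) to conclude that a prevision is determined by its restriction to bounded Lipschitz continuous maps. Your argument instead reduces directly to Corollary~\ref{corl:=:L1} by an explicit truncation $h\mapsto\min(h,na)$ combined with Scott-continuity. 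In effect you are inlining the one piece of Lemma~\ref{lemma:Hbar} that matters here (the identity $\overline{\overline H}(h)=\sup_\beta H(\min(h,\beta))$), which makes your proof more self-contained and arguably more elementary: it bypasses the $\Lform^\bnd$-prevision formalism entirely, at the cost of redoing a small computation that the paper gets for free from its general restriction/extension correspondences. Both are perfectly sound; yours is the better choice if one wants a proof readable without Section~\ref{sec:unbo-kant-rubinsht}'s bijection lemmas, while the paper's emphasizes that the corollary is an instance of a more general determination principle.
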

\proof The only if direction is trivial.  In the if direction, for
every $\alpha > 0$, for every $h \in \Lform_\alpha^a (X, d)$,
$1/\alpha h$ is in $\Lform_1^a (X, d)$ by
Proposition~\ref{prop:alphaLip:props}~(1).  We have
$F (1/\alpha h) = F' (1/\alpha h)$ by assumption.  Multiplying by
$\alpha$ and relying on positive homogeneity, $F (h) = F (h')$.  Using
Lemma~\ref{lemma:Linfa}, this shows that
$F_{|\Lform_\infty^\bnd X} = F'_{|\Lform_\infty^\bnd X}$, whence
$F=F'$ by Lemma~\ref{lemma:Hbar}~(2) and
Proposition~\ref{prop:Gbar}~(2).  \qed


We can equip $\Lform\Prev X$ and $\Lform^\bnd\Prev X$ with a quasi-metric
defined by formula (\ref{eq:dKRH}), which we shall again denote by
$\dKRH$.  Recall that an \emph{isometry} is a map that preserves
distances on the nose.  That the maps
$F \in \Prev X \mapsto F_{|\Lform_\infty X} \in \Lform\Prev X$ and
$G \in \Lform\Prev X \mapsto G_{|\Lform_\infty^\bnd X} \in \Lform^\bnd\Prev X$
are isometries follows by definition.
\begin{lem}
  \label{lemma:dKRH:iso}
  Let $X, d$ be a quasi-metric space.  The maps:
  \[
    \xymatrix@R=0pt{
      F \ar@{|->}[r] & F_{|\Lform_\infty X}&& G \ar@{|->}[r] &
      G_{\Lform_\infty^a X} \\
      \Prev X & \Lform\Prev X & & \Lform\Prev X & \Lform^\bnd\Prev X \\
      \overline G & G \ar@{|->}[l] && \overline{\overline H} & H \ar@{|->}[l]
    }
  \]
  are isometries between $\Prev X, \dKRH$ and $\Lform\Prev X, \dKRH$, and
  between $\Lform\Prev X, \dKRH$ and $\Lform^\bnd\Prev X, \dKRH$ for every
  $a > 0$.  \qed
\end{lem}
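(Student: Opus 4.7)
The plan is essentially a definition chase. The bijectivity of the pairs $(F \mapsto F_{|\Lform_\infty X},\, G \mapsto \overline G)$ and $(G \mapsto G_{|\Lform_\infty^\bnd X},\, H \mapsto \overline{\overline H})$ has already been supplied by Proposition~\ref{prop:Gbar}~(2) and Lemma~\ref{lemma:Hbar}~(2); what remains is to verify that restriction preserves $\dKRH$. I will treat the two isometries in turn.

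For the first pair, the identity is tautological. Formula~(\ref{eq:dKRH}) defines $\dKRH(F, F')$ as $\sup_{h \in \Lform_1(X, d)} \dreal(F(h), F'(h))$, and since $\Lform_1(X, d) \subseteq \Lform_\infty(X, d)$, this supremum only depends on the values of $F$ and $F'$ at elements of $\Lform_\infty(X, d)$, that is, on their restrictions $F_{|\Lform_\infty X}$ and $F'_{|\Lform_\infty X}$. Because $\dKRH$ on $\Lform\Prev X$ is defined by the same formula, we obtain $\dKRH(F, F') = \dKRH(F_{|\Lform_\infty X}, F'_{|\Lform_\infty X})$ verbatim, so restriction is an isometry (and $\overline{(\cdot)}$ its isometric inverse by Proposition~\ref{prop:Gbar}~(2)).

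For the second pair, the inequality $\dKRH(G_{|\Lform_\infty^\bnd X}, G'_{|\Lform_\infty^\bnd X}) \leq \dKRH(G, G')$ is immediate, since the left-hand side takes a supremum over the smaller class $\Lform_1^\bnd(X, d) \subseteq \Lform_1(X, d)$. The converse is exactly what Lemma~\ref{lemma:KRH:bounded} asserts for ordinary previsions: the sup over $\Lform_1(X, d)$ coincides with the sup over its bounded members. To import that result to the $\Lform$-prevision setting, I would pass through the extensions supplied by Proposition~\ref{prop:Gbar}: using the first isometry, $\dKRH(G, G') = \dKRH(\overline G, \overline{G'})$, and Lemma~\ref{lemma:KRH:bounded} rewrites this latter quantity as $\sup_{h\text{ bounded}\in \Lform_1(X, d)} \dreal(\overline G(h), \overline{G'}(h))$. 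By Definition~\ref{defn:Lbnd}, bounded maps in $\Lform_1(X, d)$ are precisely the elements of $\Lform_1^\bnd(X, d) \subseteq \Lform_\infty^\bnd(X, d)$, and on such $h$ the extensions $\overline G$, $\overline{G'}$ agree with $G$, $G'$ respectively. Hence the expression equals $\dKRH(G_{|\Lform_\infty^\bnd X}, G'_{|\Lform_\infty^\bnd X})$, completing the second isometry (with $\overline{\overline{(\cdot)}}$ as inverse via Lemma~\ref{lemma:Hbar}~(2)).

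No serious obstacle is anticipated; all the substantive work has been done in Proposition~\ref{prop:Gbar}, Lemma~\ref{lemma:Hbar}, and above all in Lemma~\ref{lemma:KRH:bounded}, whose truncation argument $h \mapsto \min(h, a)$ is precisely what lets one restrict attention to bounded test functions. The mild point to keep in mind is merely notational: to recognize that the class of ``bounded $h \in \Lform_1(X, d)$'' appearing in Lemma~\ref{lemma:KRH:bounded} is literally $\Lform_1^\bnd(X, d)$, the correct domain for the $\dKRH$ formula defining the quasi-metric on $\Lform^\bnd\Prev X$. The parenthetical ``for every $a > 0$'' in the statement plays no role in the proof since the bounded Lipschitz objects appearing here do not depend on any choice of upper bound.
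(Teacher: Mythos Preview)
Your proposal is correct and follows the paper's approach. The paper dismisses the lemma as following ``by definition'' (hence the $\qed$ immediately after the statement), while you spell out the details: the first isometry is indeed tautological, and for the second you correctly identify that Lemma~\ref{lemma:KRH:bounded} supplies the needed equality between the sup over $\Lform_1(X,d)$ and the sup over its bounded members, routing through $\overline G$ to apply that lemma (which is stated for previsions, not $\Lform$-previsions).
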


If two maps $f \colon X, d \mapsto Y, \partial$ and
$g \colon Y, \partial \to X, d$ are mutually inverse isometries, then
they are not only $1$-Lipschitz but also $1$-Lipschitz continuous.
Indeed, $\mathbf B^1 (f)$ and $\mathbf B^1 (g)$ are
order-isomorphisms, and therefore are both Scott-continuous.  (This is
a standard exercise.)

It follows that, for the purpose of quasi-metrics, of the underlying
$d$-Scott topologies, and for the underlying specialization orderings,
$\Lform\Prev X$, $\Lform^\bnd\Prev X$, and $\Prev X$ can be regarded as the
same space under $\dKRH$.

The following proposition \emph{almost} shows that our spaces of
previsions are Yoneda-complete, and gives a simple formula for the
supremum $(G, r)$ of a directed family of formal balls
${(G_i, r_i)}_{i \in I}$.  I say ``almost'' because $G$ is not
guaranteed to be continuous.  We address that problem in the
subsequent Theorem~\ref{thm:LPrev:sup}.
\begin{prop}
  \label{prop:LPrev:simplesup}
  Let $X, d$ be a standard quasi-metric space, and
  ${(G_i, r_i)}_{i \in I}$ be a directed family in
  $\mathbf B (\Lform\Prev X, \dKRH)$.  Let $r = \inf_{i \in I} r_i$
  and, for each $h \in \Lform_\alpha (X, d)$ with $\alpha > 0$, define
  $G (h)$ as the directed supremum
  $\sup_{i \in I} (G_i (h) + \alpha r - \alpha r_i)$.  Then:
  \begin{enumerate}
  \item $\overline G$ is a well-defined, positively homogeneous function from
    $\Lform X$ to $\creal$;
  \item for any upper bound $(G', r')$ of ${(G_i, r_i)}_{i \in I}$,
    $r' \leq r$ and, for every $h \in \Lform_\alpha (X, d)$, $\alpha >
    0$, $G (h) \leq G' (h) + \alpha r - \alpha r'$;
  \item if $G$ is continuous, then $(G, r)$ is the supremum of
    ${(G_i, r_i)}_{i \in I}$;
  \item if every $G_i$ is sublinear, resp.\ superlinear, resp.\
    linear, resp.\ subnormalized, resp.\ normalized, resp.\ discrete,
    then so is $G$.
  \end{enumerate}
\end{prop}
We call $(G, r)$ the \emph{naive supremum} of
${(G_i, r_i)}_{i \in I}$.

\proof We check that
$\sup_{i \in I} (G_i (h) + \alpha r - \alpha r_i)$ is a directed
supremum, for every $h \in \Lform_\alpha (X, d)$.  Define
$\sqsubseteq$ by $i \sqsubseteq j$ if and only if
$(G_i, r_i) \leq^{\dKRH^+} (G_j, r_j)$.
Then $\sqsubseteq$ turns $I$ into
a directed preordered set, and the family ${(G_i, r_i)}_{i \in I}$
into a monotone net ${(G_i, r_i)}_{i \in I, \sqsubseteq}$.  It remains
to show that $i \sqsubseteq j$ implies
$G_i (h) + \alpha r - \alpha r_i \leq G_j (h) + \alpha r - \alpha
r_j$.  Since
$(G_i, r_i) \leq^{\dKRH^+} (G_j, r_j)$,
and $1/\alpha h$ is in $\Lform_1 (X, d)$,
$G_i (1/\alpha h) \leq G_j (1/\alpha h) + r_i - r_j$.  We obtain the
result by multiplying both sides by $\alpha$ and adding $\alpha r$.

(1) We must first show that $G$ is well-defined, in the following
sense.  When $h \in \Lform_\alpha X$, $h$ is also in $\Lform_\beta X$
for every $\beta \geq \alpha$ by
Proposition~\ref{prop:alphaLip:props}~(5), and our tentative
definition is not unique, apparently: we have defined $G (h)$ both as
$\sup_{i \in I} (G_i (h) + \alpha r - \alpha r_i)$ and as
$\sup_{i \in I} (G_i (h) + \beta r - \beta r_i)$.  This is not a
problem: the two suprema coincide, since
$G_i (h) + \alpha r - \alpha r_i$ and $G_i (h) + \beta r - \beta r_i$
only differ by $(\beta- \alpha) (r_i - r)$, which can be made
arbitrarily small as $i$ varies in $I$.  In fact, both ``definitions''
are equal to $\lim_{i \in I, \sqsubseteq} G_i (h)$, where the limit is
taken in $\creal$ with its usual Hausdorff topology, a base of which
is given by the intervals $[0, b[$, $]a, b[$ and $]a, +\infty]$,
$0 < a < b < +\infty$.  This remark will be helpful in the sequel.  In
particular, taking the definition
$G (h) = \lim_{i \in I, \sqsubseteq} G_i (h)$, it is easy to show that
$G$ commutes with products with non-negative constants, which finishes
to prove (1).

We proceed with (4), and we will return to (2) and (3) later.

(4) Using again the formula
$G (h) = \lim_{i \in I, \sqsubseteq} G_i (h)$, if every $G_i$ is
sublinear, resp.\ superlinear, resp.\ linear, resp.\ subnormalized,
resp.\ normalized, then so is $G$, because because $+$ and products by
scalars are continuous on $\creal$ with its usual Hausdorff topology.
The same argument works to show that $G$ is discrete when every $G_i$
is, because of the following lemma.

\begin{lem}
  \label{lemma:Lbeta:creal}
  Every function $f \in \Lform_\infty \creal$ is continuous from
  $\creal$ to $\creal$, with its usual Hausdorff topology.
\end{lem}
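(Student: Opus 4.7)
The plan is to exploit two facts about $f \in \Lform_\infty \creal$: it is $\alpha$-Lipschitz continuous for some $\alpha \in \Rp$, and, by Lemma~\ref{lemma:Lalpha:LX}, it is lower semicontinuous. The latter, via Example~\ref{exa:creal}, means $f$ is Scott-continuous from $\creal$ (with the ordinary $\leq$) to itself, so it is monotonic and preserves directed suprema. The strategy is then to establish one-sided continuity on each side separately and glue the two together.

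First I would establish left-continuity in the Hausdorff topology at every $y > 0$ (and at $y = +\infty$). Since $\{x \in \creal \mid x < y\}$ is directed with supremum $y$, Scott-continuity gives $f (y) = \sup_{x < y} f (x)$, which is precisely Hausdorff-continuity from the left at $y$. At $y = +\infty$ no right-approach exists, so this already disposes of continuity there.

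Next I would treat right-continuity at finite $y$, which is where the Lipschitz hypothesis is essential. For $x > y$ with $x$ finite, $\dreal (x, y) = x-y$, whence $\dreal (f (x), f (y)) \leq \alpha (x-y)$. Combined with $f (x) \geq f (y)$ from monotonicity, this yields $f (y) \leq f (x) \leq f (y) + \alpha (x-y)$ whenever $f (y) < +\infty$, and hence $\lim_{x \to y^+} f (x) = f (y)$. When $f (y) = +\infty$, monotonicity alone forces $f$ to be identically $+\infty$ on $[y, +\infty]$, which gives right-continuity trivially.

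The only minor obstacle is bookkeeping with infinite values: one must check that if $f (y) < +\infty$ at a finite $y$, then $f$ cannot jump to $+\infty$ just to the right of $y$. This falls out directly from the definition of $\dreal$, since $\dreal (+\infty, f (y)) = +\infty$ while $\alpha \dreal (x, y)$ is finite for $x$ finite, which would contradict the Lipschitz inequality. Combining the left- and right-continuity statements at every point shows that preimages of basic Hausdorff-open subsets $[0, b)$, $(a, b)$, and $(a, +\infty]$ are open in $\creal$, so $f$ is continuous with respect to the usual Hausdorff topology.
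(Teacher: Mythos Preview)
Your proposal is correct and uses essentially the same ingredients as the paper: lower semicontinuity handles preimages of sets $(a,+\infty]$ (your left-continuity), and the $\alpha$-Lipschitz inequality handles preimages of sets $[0,b)$ (your right-continuity). The paper's proof is slightly more direct in that it works with preimages of the subbasic opens $[0,b)$ and $(a,+\infty]$ immediately rather than passing through one-sided limits, but the content is the same.
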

\proof Let $f \in \Lform_\beta \creal$, $\beta > 0$.  The Hausdorff
topology on $\creal$ is generated by the Scott-open subsets
$]a, +\infty]$ and the subsets of the form $[0, b[$.  Since
$\Lform_\beta \creal \subseteq \Lform \creal$ (owing to the fact that
$\creal, \dreal$ is standard), $f^{-1} (]a, +\infty])$ is Scott-open
hence open in $\creal$.  And $f^{-1} ([0, b[)$ is open because $f$ is
$\beta$-Lipschitz: for every $x \in f^{-1} ([0, b[)$, let
$\epsilon > 0$ be such that $f (x) + \epsilon < b$; for every
$x' \in [0, x + \epsilon/\beta[$,
$f (x') \leq f (x) + \beta \dreal (x', x) < f (x) + \epsilon < b$, so
$[0, x+\epsilon/\beta[$ is an open neighborhood of $x$ contained in
$f^{-1} ([0, b[)$.  \qed

We return to the proof of (2) and of (3).

(2) Let $(G', r')$ be any upper bound of ${(G_i, r_i)}_{i \in I}$.
For every $i \in I$, for every $h \in \Lform_1 (X, d)$ the inequality
$(G_i, r_i) \leq^{\dKRH^+} (G', r')$, equivalently
$\dKRH (G_i, G') \leq r_i-r'$, implies that
$G_i (h) \leq G' (h) + r_i - r'$ (and $r_i \geq r'$).  Then
$G_i (h) - r_i \leq G' (h) - r'$.  Taking suprema over $i \in I$,
$G (h) - r \leq G' (h) - r'$, hence $G (h) \leq G' (h) + r - r'$ (and,
by taking infima, $r = \inf_{i \in I} r_i \geq r'$).  Now take any
$h \in \Lform_\alpha (X, d)$, $\alpha > 0$.  Then $1/\alpha h$ is in
$\Lform_1 (X, d)$, and by positive homogeneity
$G (h) \leq G' (h) + \alpha r - \alpha r'$.

(3) Assume $G$ continuous.  The definition of $G$ ensures that
$(G, r)$ is an element of $\mathbf B (\Lform\Prev X, \dKRH)$, and is an
upper bound of ${(G_i, r_i)}_{i \in I}$.  For every upper bound
$(G', r')$ of ${(G_i, r_i)}_{i \in I}$, (2) entails that $r \geq r'$
and, for every $h \in \Lform_1 (X, d)$, $G (h) \leq G' (h) + r - r'$,
whence $\dKRH (G, G') \leq r-r'$.  It follows that
$(G, r) \leq^{\dKRH^+} (G', r')$, showing that $(G, r)$ is the least
upper bound.  \qed

\begin{thm}[Yoneda-completeness]
  \label{thm:LPrev:sup}
  Let $X, d$ be a standard quasi-metric space, and assume that the
  topology of $\Lform_\infty (X, d)$ is determined by those of
  $\Lform_\alpha (X, d)$, $\alpha > 0$ (e.g., if $X, d$ is standard
  and Lipschitz regular, see
  Proposition~\ref{prop:Linfty:determined}).

  Then $\Prev X, \dKRH$ is Yoneda-complete, and all suprema of
  directed families of formal balls of previsions are naive suprema,
  as described in Proposition~\ref{prop:LPrev:simplesup}.

  The same result holds for the subspace of previsions satisfying any
  given set of properties among: sublinearity, superlinearity,
  linearity, subnormalization, normalization, and discreteness.
\end{thm}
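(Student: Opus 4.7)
The plan is to reduce, via the isometry of Lemma~\ref{lemma:dKRH:iso}, to proving Yoneda-completeness of $\Lform\Prev X, \dKRH$, and then to invoke Proposition~\ref{prop:LPrev:simplesup}. Given a directed family ${(G_i, r_i)}_{i \in I, \sqsubseteq}$ in $\mathbf B (\Lform\Prev X, \dKRH)$, that proposition produces a candidate ``naive supremum'' $(G, r)$ with $r = \inf_{i \in I} r_i$ and $G (h) = \sup_{i \in I} (G_i (h) + \alpha r - \alpha r_i)$ for $h \in \Lform_\alpha (X, d)$. Parts~(1) and~(4) of that proposition take care of positive homogeneity and all the algebraic side-conditions (sublinearity, superlinearity, linearity, (sub)normalization, discreteness), so the whole problem boils down to verifying the continuity of $G$ from $\Lform_\infty (X, d)$ to $\creal$; then part~(3) will give $(G, r)$ as the least upper bound.

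This is where the determined-topology hypothesis enters. By assumption, a map $G \colon \Lform_\infty (X, d) \to \creal$ is continuous iff its restriction to each $\Lform_\alpha (X, d)$, $\alpha > 0$, is continuous. So I fix $\alpha > 0$ and study $G \restriction \Lform_\alpha (X, d)$. For each fixed $h \in \Lform_\alpha (X, d)$, the net ${(G_i (h) + \alpha r - \alpha r_i)}_{i \in I, \sqsubseteq}$ is monotone: for $i \sqsubseteq j$, $\dKRH (G_i, G_j) \leq r_i - r_j$ applied to $(1/\alpha) h \in \Lform_1 (X, d)$ and multiplied by $\alpha$ yields $G_i (h) \leq G_j (h) + \alpha r_i - \alpha r_j$. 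Hence the supremum is directed.

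Each summand $h \mapsto G_i (h) + \alpha r - \alpha r_i$ is continuous on $\Lform_\alpha (X, d)$: it is the post-composition of the restriction $G_i \restriction \Lform_\alpha (X, d)$ (continuous, as $G_i \in \Lform\Prev X$ is continuous on $\Lform_\infty (X, d)$ and $\Lform_\alpha (X, d)$ carries the subspace topology) with the Scott-continuous truncation $x \mapsto \max (x - (\alpha r_i - \alpha r), 0)$ on $\creal$. A directed pointwise supremum of continuous maps into $\creal$ (with its Scott topology) is continuous, so $G \restriction \Lform_\alpha (X, d)$ is continuous; patching over $\alpha$ using the determined-topology hypothesis gives continuity of $G$ on $\Lform_\infty (X, d)$. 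Combined with Proposition~\ref{prop:LPrev:simplesup}~(3), $(G, r)$ is the supremum, which proves Yoneda-completeness. The subspace versions follow at once, because Proposition~\ref{prop:LPrev:simplesup}~(4) shows that each of the listed properties is preserved by passing to $G$, so every such subspace of $\Prev X$ is closed under directed formal-ball suprema.

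The main obstacle, and the delicate point to get right, is the interpretation of the ``negative shift'' $\alpha r - \alpha r_i \leq 0$: reading $G_i (h) + \alpha r - \alpha r_i$ as $\max (G_i (h) - (\alpha r_i - \alpha r), 0)$ (which is consistent with the Hausdorff-limit viewpoint already adopted in the proof of Proposition~\ref{prop:LPrev:simplesup}) makes the shift a Scott-continuous operation on $\creal$, and is precisely what allows the directed-sup argument to yield continuity in the Scott topology. Everything else is formal; the genuine content of the theorem is exactly the interplay between the Lipschitz estimate, the truncation, and the determined topology.
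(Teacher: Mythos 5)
Your proposal is correct and follows essentially the same route as the paper: reduce to $\Lform\Prev X$, use the determined-topology hypothesis to restrict attention to each $\Lform_\alpha (X, d)$, and observe that $G$ there is a (directed) pointwise supremum of continuous maps into $\creal$ with its Scott topology, hence continuous — the paper phrases this by directly exhibiting $\{h \mid G (h) > a\}$ as the union of the open sets $G_i^{-1} (]a + \alpha r_i - \alpha r, +\infty])$, which is the same argument. Your truncation $\max (\cdot - c, 0)$ handles the negative shift harmlessly, since the supremum is unchanged.
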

\proof Use the assumptions, notations and results of
Proposition~\ref{prop:LPrev:simplesup}.  It remains to show that the
naive supremum $G$, as defined there, \emph{is} continuous from
$\Lform_\infty (X, d)$ to $\creal$.  For that, by the assumption that
the topology of $\Lform_\infty (X, d)$ is determined, it suffices to
show that the restriction $G_{|\Lform_\alpha X}$ of $G$ is continuous
from $\Lform_\alpha (X, d)$ to $\creal$, for any $\alpha > 0$.  Fix
$a \in \real$.  We must show that
$\mathcal U = \{h \in \Lform_\alpha (X, d) \mid G (h) > a\}$ is open
in $\Lform_\alpha (X, d)$.  Using the definition of $G$, we write
$\mathcal U$ as the set of maps $h \in \Lform_\alpha (X, d)$ such that
$G_i (h) + \alpha r - \alpha r_i > a$ for some $i \in I$.  Therefore
$\mathcal U = \bigcup_{i \in I} G_{i|\Lform_\alpha X}^{-1} (]a +
\alpha r_i - \alpha r, +\infty])$, which is open.  \qed

\subsection{The Bounded Kantorovich-Rubinshte\u\i n-Hutchinson
  Quasi-Metrics}
\label{sec:bound-kant-rubinsht}

We shall sometimes consider the following bounded variant $\dKRH^a$ of
the $\dKRH$ quasi-metric.  Typically, we shall be able to show that,
in certain contexts, the weak topology coincides with the
$\dKRH^a$-Scott topology, but not with the $\dKRH$-Scott topology.

\begin{defi}[$\dKRH^a$]
  \label{defn:KRH:bounded}
  Let $X, d$ be a quasi-metric space, and $a \in \Rp$, $a > 0$.  The
  \emph{$a$-bounded Kantorovich-Rubinshte\u\i n-Hutchinson quasi-metric}
  on the space of previsions on $X$ is defined by:
  \begin{equation}
    \label{eq:dKRHa}
    \dKRH^a (F, F') = \sup_{h \in \Lform_1^a X} \dreal (F (h), F' (h)).
  \end{equation}
\end{defi}
The single point to pay attention to is that $h$ ranges over
$\Lform_1^a (X, d)$, not $\Lform_1 (X, d)$, in the definition of $\dKRH^a$.


\begin{lem}
  \label{lemma:KRHa:qmet}
  Let $X, d$ be a standard quasi-metric space, and $a \in \Rp$,
  $a > 0$.  For all previsions $F$, $F'$ on $X$, the following are
  equivalent: $(a)$ $F \leq F'$; $(b)$ $\dKRH^a (F, F') = 0$.  In
  particular, $\dKRH^a$ is a quasi-metric.
\end{lem}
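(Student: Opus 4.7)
The plan is to mirror the proof of Lemma~\ref{lemma:KRH:qmet}, but working with bounded approximants in $\Lform_1^a (X, d)$ instead of the unbounded approximants $(1/\alpha) f^{(\alpha)}$ used there. The direction $(a) \limp (b)$ is immediate: if $F \leq F'$ then $F (h) \leq F' (h)$ for every $h \in \Lform_1^a (X, d)$, so $\dreal (F (h), F' (h)) = 0$ and thus $\dKRH^a (F, F') = 0$. This also yields $\dKRH^a (F, F) = 0$. The triangle inequality for $\dKRH^a$ follows pointwise (for each fixed $h \in \Lform_1^a (X, d)$) from the triangle inequality for $\dreal$, since $\dreal (F (h), F'' (h)) \leq \dreal (F (h), F' (h)) + \dreal (F' (h), F'' (h))$, and these are both bounded above by $\dKRH^a (F, F')$ and $\dKRH^a (F', F'')$. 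Finally, the separation axiom follows from the equivalence $(a) \liff (b)$ applied in both directions.

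For $(b) \limp (a)$, I would fix an arbitrary $f \in \Lform X$ and build bounded $1$-Lipschitz continuous approximants below $f$ whose pointwise supremum recovers $f$. Specifically, for each $\alpha > 0$ set
\[
  h_\alpha = \min\bigl(a.\mathbf 1,\; (1/\alpha)\, f^{(\alpha)}\bigr).
\]
By Proposition~\ref{prop:alphaLip:props}(1), $(1/\alpha) f^{(\alpha)}$ is $1$-Lipschitz continuous, and then by parts (3) and (6) so is $h_\alpha$; moreover $h_\alpha \leq a.\mathbf 1$, so $h_\alpha \in \Lform_1^a (X, d)$. Hypothesis (b) yields $F (h_\alpha) \leq F' (h_\alpha)$, hence multiplying by $\alpha$ and using positive homogeneity,
\[
  F\bigl(\min(\alpha a.\mathbf 1, f^{(\alpha)})\bigr) \leq F'\bigl(\min(\alpha a.\mathbf 1, f^{(\alpha)})\bigr).
\]

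The key remaining step is to show that the family ${(\min (\alpha a.\mathbf 1, f^{(\alpha)}))}_{\alpha > 0}$ is a chain whose pointwise supremum equals $f$. Monotonicity in $\alpha$ is clear because both $\alpha a$ and $f^{(\alpha)}$ are monotonic in $\alpha$ (the latter being the chain property from \cite[Theorem~6.17]{JGL:formalballs}), and $\min$ is monotonic in each argument. For the supremum: fix $x \in X$; since $\sup_{\alpha > 0} f^{(\alpha)} (x) = f (x)$ by the same theorem, and since $\alpha a \to +\infty$ as $\alpha \to +\infty$, the value $\min (\alpha a, f^{(\alpha)} (x))$ tends to $f (x)$ along the chain. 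Applying Scott-continuity of $F$ and $F'$ to this directed supremum yields $F (f) \leq F' (f)$, and since $f \in \Lform X$ was arbitrary, $F \leq F'$.

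The main obstacle (a minor one) is justifying the truncation step: one must check that introducing the bound $\alpha a.\mathbf 1$ does not destroy either the chain structure or the property that the supremum equals $f$, since this is the only point where the proof genuinely departs from that of Lemma~\ref{lemma:KRH:qmet}. Everything else is verbatim the earlier argument, the role previously played by $\Lform_1 (X, d)$ being taken over by $\Lform_1^a (X, d)$, justified precisely by the bounded truncation just described.
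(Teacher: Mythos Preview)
Your proof is correct and follows essentially the same approach as the paper's: both use the bounded approximants $\min(a.\mathbf 1, (1/\alpha) f^{(\alpha)}) \in \Lform_1^a(X, d)$, multiply by $\alpha$, and pass to the supremum via Scott-continuity. Your write-up is slightly more detailed (explicitly justifying the chain property and the supremum), but the argument is identical.
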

\proof $(a) \limp (b)$.  For every $h \in \Lform_1^a (X, d)$,
$F (h) \leq F' (h)$, so $\dreal (F (h), F' (h)) = 0$.
$(b) \limp (a)$.  Let $f$ be an arbitrary element of $\Lform X$.  For
every $\alpha > 0$, $1/\alpha f^{(\alpha)}$ is in $\Lform_1 (X, d)$,
so $\min (1/\alpha f^{(\alpha)}, a)$ is also in $\Lform_1 (X, d)$, and
is below $a.\mathbf 1$.  By $(b)$,
$F (\min (1/\alpha f^{(\alpha)}, a)) \leq F' (\min (1/\alpha
f^{(\alpha)}, a))$.  Multiply by $\alpha$:
$F (\min (f^{(\alpha)}, \alpha a)) \leq F' (\min (f^{(\alpha)}, \alpha
a))$.  The family ${(f^{(\alpha)})}_{\alpha > 0}$ is a chain whose
supremum is $f$, so ${(\min (f^{(\alpha)}, \alpha a))}$ is also a
chain whose supremum is $f$.  Using the fact that $F$ and $F'$ are
Scott-continuous, $F (f) \leq F' (f)$, and as $f$ is arbitrary, $(a)$
follows.  \qed

The relation with the $\dKRH$ quasi-metric is as follows.
\begin{lem}
  \label{lemma:KRH:KRHa}
  Let $X, d$ be a quasi-metric space.  Order quasi-metrics on any
  space of previsions pointwise.  Then ${(\dKRH^a)}_{a \in \Rp, a >
    0}$ is a chain, and for all previsions $F$, $F'$, $\dKRH (F, F') =
  \sup_{a \in \Rp, a > 0} \dKRH^a (F, F')$.
\end{lem}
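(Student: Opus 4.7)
The plan is to verify two things: that ${(\dKRH^a)}_{a>0}$ forms a chain (in the pointwise ordering on quasi-metrics) and that its pointwise supremum equals $\dKRH$.

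For the chain property, I would observe that if $0<a\leq a'$, then any map $h\colon X\to[0,a]$ also takes its values in $[0,a']$, so $\Lform_1^a(X,d)\subseteq\Lform_1^{a'}(X,d)$. Taking the supremum of $\dreal(F(h),F'(h))$ over a larger class of test functions only increases the result, giving $\dKRH^a(F,F')\leq\dKRH^{a'}(F,F')$ for all previsions $F,F'$.

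For the supremum formula, the inequality $\sup_{a>0}\dKRH^a(F,F')\leq\dKRH(F,F')$ follows from the same inclusion $\Lform_1^a(X,d)\subseteq\Lform_1(X,d)$. The converse inequality is where Lemma~\ref{lemma:KRH:bounded} enters decisively: it allows us to compute $\dKRH(F,F')$ using only the bounded $1$-Lipschitz continuous maps as test functions. For any such bounded $h\in\Lform_1(X,d)$, one can find $a\in\Rp$, $a>0$, with $h\leq a$ pointwise (take $a$ to be any upper bound of $h$); then $h$ maps $X$ into $[0,a]$ while remaining $1$-Lipschitz continuous, so by definition $h\in\Lform_1^a(X,d)$. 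Consequently $\dreal(F(h),F'(h))\leq\dKRH^a(F,F')\leq\sup_{b>0}\dKRH^b(F,F')$. Taking the supremum over all bounded $h\in\Lform_1(X,d)$ and using Lemma~\ref{lemma:KRH:bounded} yields $\dKRH(F,F')\leq\sup_{a>0}\dKRH^a(F,F')$.

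There is essentially no obstacle here: the lemma is a routine consequence of the observation that $\bigcup_{a>0}\Lform_1^a(X,d)$ coincides with the set of bounded members of $\Lform_1(X,d)$, together with the already-established Lemma~\ref{lemma:KRH:bounded}, which handled the one nontrivial point (that unbounded Lipschitz continuous maps can be dispensed with when computing $\dKRH$, by truncation and Scott-continuity of $F$ and $F'$).
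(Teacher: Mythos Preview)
Your proof is correct and follows the same approach as the paper: both arguments establish the chain property via the inclusion $\Lform_1^a(X,d)\subseteq\Lform_1^{a'}(X,d)$ for $a\leq a'$, and both obtain the supremum formula by observing that $\bigcup_{a>0}\Lform_1^a(X,d)$ is exactly the set of bounded maps in $\Lform_1(X,d)$ and then invoking Lemma~\ref{lemma:KRH:bounded}.
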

\proof Clearly, for $0 < a \leq a'$, $\Lform_1^a (X, d)$ is included
in $\Lform_1^{a'} (X, d)$, so
$\dKRH^a (F, F') \leq \dKRH^{a'} (F, F')$.  Therefore
${(\dKRH^a)}_{a \in \Rp, a > 0}$ is a chain.  Similarly,
$\dKRH^a (F, F') \leq \dKRH (F, F')$.  Finally,
$\sup_{a \in \Rp, a > 0} \dKRH^a (F, F')$ is equal to
$\sup_{a \in \Rp, a > 0, h \in \Lform_1^a (X, d)} \dreal (F (h),F'
(h))$, in other words to
$\sup_{h \text{ bounded }\in\Lform_1 (X, d)} \dreal (F (h),F' (h))$,
and that is equal to $\dKRH (F, F')$ by Lemma~\ref{lemma:KRH:bounded}.
\qed

\begin{lem}
  \label{lemma:LaPrev:ord}
  Let $X, d$ be a standard quasi-metric space, $a > 0$, let $F$, $F'$
  be two previsions on $X$, and $r$, $r'$ be two elements of $\Rp$.
  The following are equivalent:
  \begin{enumerate}
  \item $(F, r) \leq^{\dKRH^{a+}} (F', r')$;
  \item $r \geq r'$ and, for every $h \in \Lform_1^a (X, d)$,
    $F (h) - r \leq F' (h) - r'$;
  \item $r \geq r'$ and, for every $h \in \Lform_\alpha^a (X, d)$,
    $\alpha > 0$, $F (h) - \alpha r \leq F' (h) - \alpha r'$.
  \end{enumerate}
\end{lem}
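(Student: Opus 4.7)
The proof will run exactly in parallel with that of Lemma~\ref{lemma:LPrev:ord}, cycling $(1)\limp(2)\limp(3)\limp(1)$. The only substantive new point is to verify that the scaling $h\mapsto h/\alpha$ sends $\Lform_\alpha^a(X,d)$ into $\Lform_1^a(X,d)$, not merely into $\Lform_1(X,d)$.

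For $(1)\limp(2)$, I unfold $(F,r)\leq^{\dKRH^{a+}}(F',r')$ as $\dKRH^a(F,F')\leq r-r'$, which by Definition~\ref{defn:KRH:bounded} says $\dreal(F(h),F'(h))\leq r-r'$ for every $h\in\Lform_1^a(X,d)$. In particular $r\geq r'$, and a case split on whether $F(h)=+\infty$ (in which case $F'(h)=+\infty$ as well, since $r,r'\in\Rp$) gives $F(h)-r\leq F'(h)-r'$ in all cases.

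For $(2)\limp(3)$, let $h\in\Lform_\alpha^a(X,d)$ with $\alpha>0$. By Proposition~\ref{prop:alphaLip:props}~(1), the map $h/\alpha$ is $1$-Lipschitz continuous; and since $h\leq\alpha a$ we have $h/\alpha\leq a$, so $h/\alpha\in\Lform_1^a(X,d)$. Apply (2) to obtain $F(h/\alpha)-r\leq F'(h/\alpha)-r'$, multiply through by $\alpha$, and invoke positive homogeneity of $F$ and $F'$ to conclude $F(h)-\alpha r\leq F'(h)-\alpha r'$.

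For $(3)\limp(1)$, take $\alpha=1$ to recover (2) for every $h\in\Lform_1^a(X,d)$. Fixing such an $h$: if $F'(h)=+\infty$, then $\dreal(F(h),F'(h))=0\leq r-r'$; otherwise $F(h)-r\leq F'(h)-r'<+\infty$ forces $F(h)<+\infty$, whence $\dreal(F(h),F'(h))=\max(F(h)-F'(h),0)\leq\max(r-r',0)=r-r'$. Taking the supremum over $h\in\Lform_1^a(X,d)$ yields $\dKRH^a(F,F')\leq r-r'$, i.e., $(F,r)\leq^{\dKRH^{a+}}(F',r')$. No part of the argument is genuinely obstructive; the only thing to keep track of is that the bound $h\leq\alpha a$ rescales correctly alongside the Lipschitz constant.
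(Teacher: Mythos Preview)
Your proof is correct and follows exactly the same approach as the paper, which simply says ``The proof is exactly as for Lemma~\ref{lemma:LPrev:ord}.'' You have merely spelled out the details, including the one extra bookkeeping point that the rescaling $h\mapsto h/\alpha$ preserves the upper bound $a$ so that $h/\alpha\in\Lform_1^a(X,d)$.
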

\proof The proof is exactly as for Lemma~\ref{lemma:LPrev:ord}.  \qed



Equip $\Lform\Prev X$ and $\Lform^\bnd\Prev X$ with the quasi-metric defined by
formula (\ref{eq:dKRHa}), which we shall again denote by $\dKRH^a$.
The maps $F \in \Prev X \mapsto F_{|\Lform_\infty X} \in \Lform\Prev X$ and
$G \in \Lform\Prev X \mapsto G_{|\Lform_\infty^\bnd X} \in \Lform^\bnd\Prev X$
are isometries, by definition.
\begin{lem}
  \label{lemma:dKRHa:iso}
  Let $X, d$ be a quasi-metric space.  The maps:
  \[
    \xymatrix@R=0pt{
      F \ar@{|->}[r] & F_{|\Lform_\infty X}&& G \ar@{|->}[r] &
      G_{\Lform_\infty^\bnd X} \\
      \Prev X & \Lform\Prev X & & \Lform\Prev X & \Lform^\bnd\Prev X \\
      \overline G & G \ar@{|->}[l] && \overline{\overline H} & H \ar@{|->}[l]
    }
  \]
  are isometries between $\Prev X, \dKRH^a$ and $\Lform\Prev X, \dKRH^a$,
  and between $\Lform\Prev X, \dKRH^a$ and $\Lform^\bnd\Prev X, \dKRH^a$ for every
  $a > 0$.  \qed
\end{lem}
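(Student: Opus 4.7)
\proof
The plan mirrors the argument for Lemma~\ref{lemma:dKRH:iso}, with the small observation that the supremum in the definition (\ref{eq:dKRHa}) of $\dKRH^a$ is taken over $\Lform_1^a (X, d)$, and this set is contained in every one of the spaces $\Lform_\infty^\bnd (X, d)$, $\Lform_\infty (X, d)$, $\Lform X$. The rest is a bookkeeping exercise using the bijections already established.

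First I would verify that the two restriction maps preserve $\dKRH^a$ on the nose. For $F, F' \in \Prev X$, every $h \in \Lform_1^a (X, d)$ lies in $\Lform_\infty (X, d)$, so $F(h) = F_{|\Lform_\infty X}(h)$ and $F'(h) = F'_{|\Lform_\infty X}(h)$; taking the sup of $\dreal(F(h), F'(h))$ over $h \in \Lform_1^a (X, d)$ thus yields the same value whether we compute it in $\Prev X$ or in $\Lform\Prev X$. Similarly, $\Lform_1^a (X, d) \subseteq \Lform_\infty^\bnd (X, d)$, so the restriction $G \mapsto G_{|\Lform_\infty^\bnd X}$ preserves $\dKRH^a$ between $\Lform\Prev X$ and $\Lform^\bnd\Prev X$.

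Next I would invoke Proposition~\ref{prop:Gbar}~(2) and Lemma~\ref{lemma:Hbar}~(2), which state respectively that $F \mapsto F_{|\Lform_\infty X}$ and $G \mapsto \overline G$ are mutually inverse bijections between $\Prev X$ and $\Lform\Prev X$, and that $G \mapsto G_{|\Lform_\infty^\bnd X}$ and $H \mapsto \overline{\overline H}$ are mutually inverse bijections between $\Lform\Prev X$ and $\Lform^\bnd\Prev X$. Combined with the distance-preservation in the previous paragraph, this forces the inverse maps $G \mapsto \overline G$ and $H \mapsto \overline{\overline H}$ also to preserve $\dKRH^a$: if the restriction $F \mapsto F_{|\Lform_\infty X}$ is a distance-preserving bijection, its inverse is too, and identically for the second pair.

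There is no genuine obstacle here; the only care needed is to note that although $\dKRH^a$ is defined by a supremum over a small class of test functions, the same class sits inside the domain of every flavour of prevision under consideration, which is what makes the restriction trivially isometric. The bijectivity, which is the substantive input, has already been done in Proposition~\ref{prop:Gbar} and Lemma~\ref{lemma:Hbar}.
\qed
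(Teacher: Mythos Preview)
Your proposal is correct and follows exactly the paper's approach: the paper simply notes (in the text preceding the lemma) that the restriction maps are isometries ``by definition'' because the supremum defining $\dKRH^a$ ranges over $\Lform_1^a(X,d)$, which sits inside all three function spaces, and then relies on the bijections of Proposition~\ref{prop:Gbar}~(2) and Lemma~\ref{lemma:Hbar}~(2) to get the inverses. Your write-up is just a more explicit version of this same argument.
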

Hence we can regard $\Prev X$, $\Lform\Prev X$, and $\Lform^\bnd\Prev X$ as the
same quasi-metric spaces, when equipped with $\dKRH^a$.

\begin{prop}
  \label{prop:LaPrev:simplesup}
  Fix $a > 0$.  Let $X, d$ be a standard quasi-metric space, and
  ${(H_i, r_i)}_{i \in I}$ be a directed family in
  $\mathbf B (\Lform^\bnd\Prev X, \dKRH^a)$.  Let $r = \inf_{i \in I} r_i$ and,
  for each $h \in \Lform_\alpha^a (X, d)$, $\alpha > 0$, define $H (h)$
  as $\sup_{i \in I} (H_i (h) + \alpha r - \alpha r_i)$.  Then:
  \begin{enumerate}
  \item $H$ is a well-defined, positively homogeneous functional from
    $\Lform^\bnd\Prev X$ to $\creal$;
  \item for any upper bound $(H', r')$ of ${(H_i, r_i)}_{i \in I}$,
    $r' \leq r$ and, for every $h \in \Lform_\alpha^a (X, d)$,
    $\alpha > 0$, $H (h) \leq H' (h) + \alpha r - \alpha r'$;
  \item if $H$ is continuous, then $(H, r)$ is the supremum of
    ${(H_i, r_i)}_{i \in I}$;
  \item if every $H_i$ is sublinear, resp.\ superlinear, resp.\
    linear, resp.\ subnormalized, resp.\ normalized, resp.\ discrete,
    then so is $H$.
  \end{enumerate}
\end{prop}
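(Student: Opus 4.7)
The plan is to transcribe the proof of Proposition~\ref{prop:LPrev:simplesup} almost verbatim, replacing $\Lform_1 (X, d)$ by $\Lform_1^a (X, d)$ and $\Lform_\alpha (X, d)$ by $\Lform_\alpha^a (X, d)$ throughout, and invoking Lemma~\ref{lemma:LaPrev:ord} in place of Lemma~\ref{lemma:LPrev:ord}. Concretely, I would define $\sqsubseteq$ on $I$ by $i \sqsubseteq j$ iff $(H_i, r_i) \leq^{\dKRH^{a+}} (H_j, r_j)$, turning $I$ into a directed set and the family into a monotone net. To confirm that $\sup_{i \in I} (H_i (h) + \alpha r - \alpha r_i)$ is a directed supremum for each $h \in \Lform_\alpha^a (X, d)$, I would observe that $1/\alpha \cdot h$ lies in $\Lform_1^a (X, d)$ (it is $1$-Lipschitz continuous by Proposition~\ref{prop:alphaLip:props}(1) and bounded by $a$), so $i \sqsubseteq j$ yields $H_i (1/\alpha \cdot h) \leq H_j (1/\alpha \cdot h) + r_i - r_j$, and multiplication by $\alpha$ gives monotonicity of the defining net.

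For (1), the well-definedness issue arises because an $h$ in $\Lform_\alpha^a (X, d)$ also lies in $\Lform_\beta^a (X, d)$ for every $\beta \geq \alpha$ (since $\Lform_\alpha \subseteq \Lform_\beta$ by Proposition~\ref{prop:alphaLip:props}(5) and $\alpha a \leq \beta a$), and the two candidate values $\sup_{i} (H_i (h) + \alpha r - \alpha r_i)$ and $\sup_{i} (H_i (h) + \beta r - \beta r_i)$ must coincide. As in the proof of Proposition~\ref{prop:LPrev:simplesup}, they differ termwise by $(\beta - \alpha)(r_i - r)$, which can be made arbitrarily small along $I$, so both are equal to the limit $\lim_{i \in I, \sqsubseteq} H_i (h)$ in $\creal$ with its usual Hausdorff topology. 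Positive homogeneity of $H$ is then immediate from the continuity of scalar multiplication on $\creal^\patch$.

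Part (4) transfers by exactly the same argument as in Proposition~\ref{prop:LPrev:simplesup}: sublinearity, superlinearity, subnormalization and normalization follow because $+$ and scalar multiplication are Hausdorff-continuous on $\creal$, and discreteness follows because every $f \in \Lform_\infty^\bnd \creal \subseteq \Lform_\infty \creal$ is Hausdorff-continuous by Lemma~\ref{lemma:Lbeta:creal} (here we only need discreteness tested against strict $f \in \Lform_\infty^\bnd \creal$, which is a subset of the class used before, so the argument still goes through). For (2), given any upper bound $(H', r')$, the inequality $\dKRH^a (H_i, H') \leq r_i - r'$ yields $H_i (h) \leq H' (h) + r_i - r'$ for every $h \in \Lform_1^a (X, d)$; taking the supremum over $i$ gives $H (h) \leq H' (h) + r - r'$, together with $r \geq r'$, and rescaling by $1/\alpha$ extends this to $h \in \Lform_\alpha^a (X, d)$, $\alpha > 0$.

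Finally, (3) is immediate: if $H$ is continuous, then $(H, r) \in \mathbf{B} (\Lform^\bnd\Prev X, \dKRH^a)$ is by construction an upper bound of ${(H_i, r_i)}_{i \in I}$, and (2) combined with Lemma~\ref{lemma:LaPrev:ord} shows that it is least. I do not expect any serious obstacle here; the only point requiring care is the well-definedness in (1) above, where one must notice that the $\alpha$-dependent definition is actually $\alpha$-independent, matching the treatment in the unbounded case.
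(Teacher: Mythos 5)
Your proposal is correct and follows exactly the route the paper takes: the paper's own proof simply says to mimic Proposition~\ref{prop:LPrev:simplesup}, replacing $\Lform_1 (X, d)$ and $\Lform_\alpha (X, d)$ by their bounded counterparts and quoting Lemma~\ref{lemma:LaPrev:ord}, with the same observation that $H (h)$ is the Hausdorff-topology limit $\lim_{i \in I, \sqsubseteq} H_i (h)$, which settles both well-definedness and item (4). You in fact supply slightly more detail than the paper does (e.g.\ checking that $1/\alpha\cdot h \in \Lform_1^a (X, d)$ and that the discreteness test class still works), and all of it is sound.
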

Note that $H$ is indeed defined on the whole of
$\Lform_\infty^\bnd (X, d)$, by Lemma~\ref{lemma:Linfa}.  We again
call $(H, r)$ the \emph{naive supremum} of ${(H_i, r_i)}_{i \in I}$.

\proof We mimic Proposition~\ref{prop:LPrev:simplesup}.  Again, this
definition does not depend on $\alpha$, really, because $H (h)$ is the
limit of ${(H_i (h))}_{i \in I, \sqsubseteq}$ taken in $\creal$ with
its usual topology (not the Scott topology).  Using the formula
$H (h) = \lim_{i \in I, \sqsubseteq} H_i (h)$, where the limit is
taken in $\creal$ with its usual, Hausdorff topology, it is easy to
show that $H$ is sublinear, resp.\ superlinear, resp.\ linear, resp.\
subnormalized, resp.\ normalized, when every $H_i$ is, because $+$ and
products by scalars are continuous on $\creal$ with its usual
topology.  Similarly, $H$ is discrete whenever every $H_i$ is, using
Lemma~\ref{lemma:Lbeta:creal}.  This shows (1) and (4).  (2) and (3)
are proved as in Proposition~\ref{prop:LPrev:simplesup}, taking care to
quantify over $h$ in $\Lform_1^a (X, d)$, or in $\Lform_\alpha^a (X,
d)$.  \qed

We deduce the following theorem with exactly the same proof as for
Theorem~\ref{thm:LPrev:sup}, using
Proposition~\ref{prop:LaPrev:simplesup} instead of
Proposition~\ref{prop:LPrev:simplesup}.
\begin{thm}[Yoneda-completeness]
  \label{thm:LaPrev:sup}
  Let $X, d$ be a standard quasi-metric space, and fix $a > 0$.
  Assume that the topology of $\Lform_\infty^\bnd (X, d)$ is
  determined by those of $\Lform_\alpha^a (X, d)$, $\alpha > 0$ (e.g.,
  if $X, d$ is standard and Lipschitz regular, see
  Proposition~\ref{prop:Linfty:determined:bnd}).

  Then $\Prev X, \dKRH^a$ is Yoneda-complete, and all suprema of
  directed families of formal balls of previsions are naive suprema,
  as described in Proposition~\ref{prop:LaPrev:simplesup}.

  The same result holds for the subspace of previsions satisfying any
  given set of properties among: sublinearity, superlinearity,
  linearity, subnormalization, normalization, and discreteness.  \qed
\end{thm}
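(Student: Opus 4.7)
The plan is to mirror the proof of Theorem~\ref{thm:LPrev:sup} but with the bounded ingredients. By Lemma~\ref{lemma:dKRHa:iso}, the spaces $\Prev X$, $\Lform\Prev X$, and $\Lform^\bnd\Prev X$ are isometric under $\dKRH^a$, so it suffices to work in $\Lform^\bnd\Prev X$. I would take an arbitrary directed family ${(H_i, r_i)}_{i \in I}$ in $\mathbf B(\Lform^\bnd\Prev X, \dKRH^a)$ and form its naive supremum $(H, r)$ via Proposition~\ref{prop:LaPrev:simplesup}. That proposition already gives me: $H$ is a well-defined, positively homogeneous functional on $\Lform_\infty^\bnd(X,d)$; $(H,r)$ dominates every $(H_i,r_i)$ and sits below every other upper bound; and the relevant algebraic properties (sublinearity, superlinearity, linearity, subnormalization, normalization, discreteness) transfer from the $H_i$'s to $H$. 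So the entire Yoneda-completeness claim reduces to showing that $H$ is continuous from $\Lform_\infty^\bnd(X,d)$ to $\creal$.

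This is where the determination hypothesis on the topology of $\Lform_\infty^\bnd(X,d)$ enters, and it is the heart of the proof. Since the topology on $\Lform_\infty^\bnd(X,d)$ is determined by those of its subspaces $\Lform_\alpha^a(X,d)$, $\alpha>0$, continuity of $H$ is equivalent to continuity of each restriction $H_{|\Lform_\alpha^a(X,d)}$. I would fix $\alpha>0$ and $b\in\real$ and show that the set $\mathcal U=\{h\in\Lform_\alpha^a(X,d)\mid H(h)>b\}$ is open in $\Lform_\alpha^a(X,d)$. Using the definition of $H$ on $\Lform_\alpha^a(X,d)$ from Proposition~\ref{prop:LaPrev:simplesup}, namely $H(h)=\sup_{i\in I}(H_i(h)+\alpha r-\alpha r_i)$, $H(h)>b$ holds iff $H_i(h)>b+\alpha r_i-\alpha r$ for some $i\in I$. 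Hence
\[
\mathcal U=\bigcup_{i\in I}(H_{i|\Lform_\alpha^a(X,d)})^{-1}(\,]b+\alpha r_i-\alpha r,+\infty]\,),
\]
which is a union of open sets, since each $H_i$ is continuous on $\Lform_\infty^\bnd(X,d)$ and the subspace inclusion $\Lform_\alpha^a(X,d)\hookrightarrow\Lform_\infty^\bnd(X,d)$ is continuous by definition.

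Once $H$ is continuous, Proposition~\ref{prop:LaPrev:simplesup}(3) delivers that $(H,r)$ is the supremum of ${(H_i,r_i)}_{i\in I}$ in $\mathbf B(\Lform^\bnd\Prev X,\dKRH^a)$, which by the Kostanek-Waszkiewicz characterization cited in Section~\ref{sec:quasi-metric-spaces} is equivalent to Yoneda-completeness; and the formula for the supremum is exactly the naive one. For the subspaces (sublinear, superlinear, linear, subnormalized, normalized, discrete previsions), Proposition~\ref{prop:LaPrev:simplesup}(4) shows $H$ inherits the property from the $H_i$'s, so that such a subspace is closed under suprema in $\mathbf B$ and the suprema are computed the same way; these subspaces are therefore also Yoneda-complete with the induced $\dKRH^a$ quasi-metric.

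The only step with real content is the continuity argument in the middle paragraph, and the main obstacle is genuinely the hypothesis that the topology of $\Lform_\infty^\bnd(X,d)$ is determined by the subspaces $\Lform_\alpha^a(X,d)$, since without it there is no obvious reason why the pointwise/limit formula defining $H$ should yield a continuous functional on all of $\Lform_\infty^\bnd(X,d)$, even though it is clearly continuous on each $\Lform_\alpha^a(X,d)$. Everything else is bookkeeping inherited from Proposition~\ref{prop:LaPrev:simplesup} and the earlier isometries.
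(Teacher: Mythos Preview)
Your proposal is correct and follows essentially the same route as the paper: the paper's proof simply says to reuse the argument of Theorem~\ref{thm:LPrev:sup} with Proposition~\ref{prop:LaPrev:simplesup} in place of Proposition~\ref{prop:LPrev:simplesup}, and your write-up spells out exactly that argument (reduce to continuity of the naive supremum $H$, use the determined-topology hypothesis to check continuity on each $\Lform_\alpha^a(X,d)$, and express the relevant preimage as a union of opens).
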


\subsection{Supports}
\label{sec:supports}

Our previous Yoneda-completeness theorems for spaces of previsions
make a strong assumption, Lipschitz regularity.  We show that we also
obtain Yoneda-completeness under different assumptions, which have to
do with supports.

\begin{defi}[Support]
  \label{defn:support}
  Let $X$ be a topological space.  A subset $A$ of $X$ is called a
  \emph{support} of a prevision $F$ if and only if, for all
  $g, h \in \Lform X$ with the same restriction to $A$ (i.e., such
  that $g_{|A} = h_{|A}$), then $F (g) = F (h)$.
\end{defi}
We also say that $F$ is \emph{supported on $A$} in that case.

We do not know whether $\Prev$ defines a functor on the category of
standard quasi-metric spaces and $1$-Lipschitz continuous maps,
however, we have the following partial result.
\begin{lem}
  \label{lemma:Pf:lip}
  Let $X, d$ and $Y, \partial$ be two quasi-metric spaces, and
  $f \colon X, d \mapsto Y, \partial$ be an $\alpha$-Lipschitz
  continuous map, where $\alpha > 0$.  Then
  $\Prev f \colon \Prev X, \dKRH \to \Prev Y, \KRH\partial$, defined
  by:
  \begin{equation}
    \label{eq:Prevf}
    \Prev f (F) (k) = F (k \circ f)
  \end{equation}
  for every $k \in \Lform Y$, is a well-defined, $\alpha$-Lipschitz
  map, which maps previsions to previsions, preserving sublinearity,
  superlinearity, linearity, subnormalization, normalization, and
  discreteness.

  A similar result holds for $\Prev f$, seen as a map from $\Prev X,
  \dKRH^a$ to $\Prev Y, \KRH\partial^a$, for every $a \in \Rp$, $a > 0$.
\end{lem}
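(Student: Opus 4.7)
The plan is to verify three things in order: (a) $\Prev f(F)$ is genuinely a prevision on $Y$ for every prevision $F$ on $X$; (b) $\Prev f$ preserves each listed algebraic property; (c) $\Prev f$ satisfies the $\alpha$-Lipschitz bound.

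For (a), I would first invoke Proposition~\ref{prop:cont} to note that $f$ is continuous between the $d$-Scott and $\partial$-Scott topologies, so that $k \circ f \in \Lform X$ whenever $k \in \Lform Y$, making the definition $\Prev f(F)(k) = F(k \circ f)$ meaningful. Scott-continuity of $\Prev f(F) \colon \Lform Y \to \creal$ then follows since directed pointwise suprema in $\Lform Y$ are preserved both by precomposition with $f$ and by the Scott-continuous $F$. Positive homogeneity is immediate from $(\beta k) \circ f = \beta (k \circ f)$ together with positive homogeneity of $F$.

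For (b), each property passes through the identities $(k+k') \circ f = k \circ f + k' \circ f$ and $(c \cdot \mathbf 1) \circ f = c \cdot \mathbf 1$. For example, sublinearity of $F$ yields $\Prev f(F)(k+k') = F(k \circ f + k' \circ f) \leq F(k \circ f) + F(k' \circ f) = \Prev f(F)(k) + \Prev f(F)(k')$; superlinear, linear, subnormalized, and normalized cases are identical up to replacing $\leq$ by $\geq$ or $=$. For discreteness, applying the defining identity of $F$ to $k \circ f \in \Lform X$ gives $\Prev f(F)(g \circ k) = F(g \circ k \circ f) = g(F(k \circ f)) = g(\Prev f(F)(k))$ for every strict $g \in \Lform \creal$.

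For (c), the central input will be Lemma~\ref{lemma:comp:Lip}: for every $k \in \Lform_1(Y, \partial)$, the composition $k \circ f$ lies in $\Lform_\alpha(X, d)$, so $(1/\alpha)(k \circ f) \in \Lform_1(X, d)$. Combining positive homogeneity of $F$ and $F'$ with the elementary identity $\dreal(\alpha u, \alpha v) = \alpha \dreal(u, v)$ (for $\alpha > 0$, with the usual convention at $+\infty$), I obtain
\[
  \dreal(F(k \circ f), F'(k \circ f)) = \alpha\, \dreal\bigl(F((1/\alpha)(k \circ f)), F'((1/\alpha)(k \circ f))\bigr) \leq \alpha \dKRH(F, F').
\]
Taking the supremum over $k \in \Lform_1(Y, \partial)$ yields $\KRH\partial(\Prev f(F), \Prev f(F')) \leq \alpha \dKRH(F, F')$, proving that $\Prev f$ is $\alpha$-Lipschitz.

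The main subtlety will be the $\dKRH^a$-version: the scaling trick needs $(1/\alpha)(k \circ f)$ to lie in $\Lform_1^a(X, d)$ for every $k \in \Lform_1^a(Y, \partial)$. Since $k$ is bounded by $a$, so is $k \circ f$, hence $(1/\alpha)(k \circ f)$ is bounded by $a/\alpha$; this is $\leq a$ precisely when $\alpha \geq 1$, in which case exactly the argument above delivers the bound $\alpha \dKRH^a(F, F')$. For $\alpha < 1$ I would instead observe, via Proposition~\ref{prop:alphaLip:props}~(5), that $k \circ f$ is itself $1$-Lipschitz continuous and bounded by $a$, hence already lies in $\Lform_1^a(X, d)$; applying the definition of $\dKRH^a$ directly then yields a $1$-Lipschitz bound on $\Prev f$, which is the natural ``similar result'' in that regime.
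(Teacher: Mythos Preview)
Your treatment of (a), (b), and the $\dKRH$ part of (c) matches the paper's proof essentially verbatim. The paper compresses (a) and (b) into ``it is easy to check'', and for (c) argues in the equivalent form $\dKRH(F,F') \leq r \Rightarrow \KRH\partial(\Prev f(F), \Prev f(F')) \leq \alpha r$, using the same key step $(1/\alpha)(k \circ f) \in \Lform_1(X,d)$ via Lemma~\ref{lemma:comp:Lip} and positive homogeneity.

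For the $\dKRH^a$ case you have in fact been more careful than the paper. The paper's proof says only ``the proof is the same, noticing that if $k$ is bounded by $a$, then $k \circ f$ is also bounded by $a$''. But, as you spotted, the function one must place in $\Lform_1^a(X,d)$ to invoke the hypothesis is $(1/\alpha)(k\circ f)$, which is bounded by $a/\alpha$, not $a$; so the paper's argument only goes through when $\alpha \geq 1$. Your case split is therefore warranted, and your conclusion for $\alpha < 1$ --- a $1$-Lipschitz bound rather than $\alpha$-Lipschitz --- is the best one can get: the $\alpha$-Lipschitz claim is actually false for $\alpha < 1$. For a concrete witness, take $X = Y = \creal$, $f(x) = x/2$ (which is $\tfrac12$-Lipschitz continuous), and $a = 3/2$; then $\dKRH^a(\delta_2,\delta_0) = 3/2$ while $\KRH\partial^a(\Prev f(\delta_2),\Prev f(\delta_0)) = \KRH\partial^a(\delta_1,\delta_0) = 1 > \tfrac12 \cdot \tfrac32$. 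This gap in the paper's proof is harmless for the rest of the paper, since every subsequent use of the lemma takes $\alpha = 1$.
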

\proof It is easy to check that $\Prev f (F)$ is a prevision, which is
sublinear, resp.\ superlinear, resp.\ linear, resp.\ subnormalized,
resp.\ normalized, resp.\ discrete, whenever $F$ is.

To show that $\Prev f$ is $\alpha$-Lipschitz from $\Prev X,
  \dKRH$ to $\Prev Y, \KRH\partial$, we assume that
$\dKRH (F, F') \leq r$, and we wish to show that
$\KRH{\partial} (\Prev f (F), \Prev f (F')) \leq \alpha r$.  To this end,
we let $k \in \Lform_1 (\mathbf B (Y, \partial), \partial^+)$ be arbitrary, and we
attempt to show that
$\Prev f (F) (k) \leq \Prev f (F') (k) + \alpha r$.  By
Lemma~\ref{lemma:comp:Lip}, $k \circ f$ is in $\Lform_\alpha (X, d)$,
so $1/\alpha (k \circ f)$ is in $\Lform_1 (X, d)$.  We expand the
definition of $\dKRH$ in $\dKRH (F, F') \leq r$, and we obtain that
$F (1/\alpha (k \circ f)) \leq F' (1/\alpha (k \circ f)) + r$.  Using
positive homogeneity, we deduce
$F (k \circ f) \leq F' (k \circ f) + \alpha r$, which is what we
wanted to prove.

The proof that $\Prev f$ is $\alpha$-Lipschitz from $\Prev X, \dKRH^a$
to $\Prev Y, \KRH\partial^a$ is the same, noticing that if $k$ is
bounded by $a$, then $k \circ f$ is also bounded by $a$.
%
\qed

The one thing that is missing in order to show that $\Prev$ is a
functor, in a category whose morphisms are $1$-Lipschitz continuous
maps, is continuity.  We have the following approximation, which shows
that naive suprema are preserved.
\begin{lem}
  \label{lemma:Pf:lipcont}
  Let $X, d$ and $Y, \partial$ be two quasi-metric spaces, and
  $f \colon X, d \mapsto Y, \partial$ be a $1$-Lipschitz continuous
  map.  For every directed family of formal balls $(G_i, r_i)$,
  $i \in I$, in $\mathbf B (\Prev X, \dKRH)$ (resp., $\dKRH^a$), with
  naive supremum $(G, r)$, $(\Prev f (G), r)$ is the naive supremum of
  ${(\Prev f (G_i), r_i)}_{i \in I}$ in
  $\mathbf B (\Prev Y, \KRH{\partial})$ (resp., $\KRH{\partial}^a$).
\end{lem}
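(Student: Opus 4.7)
The plan is to unfold the definition of the naive supremum on the target side and verify that it coincides with $\Prev f(G)$ by direct calculation, using that precomposition by $f$ respects Lipschitz classes.

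First I would check that $(\Prev f(G_i),r_i)_{i\in I}$ is in fact a directed family in $\mathbf B(\Lform\Prev Y,\KRH\partial)$ (respectively $\mathbf B(\Lform^\bnd\Prev Y,\KRH\partial^a)$). By Lemma~\ref{lemma:Pf:lip}, since $f$ is $1$-Lipschitz continuous, $\Prev f$ is $1$-Lipschitz, so $\mathbf B^1(\Prev f)$ is monotonic on $\mathbf B(\Lform\Prev X,\dKRH)$; the image of the directed family $(G_i,r_i)_{i\in I}$ is therefore monotone, and is directed under $\sqsubseteq$ because the underlying index set is directed.

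Next, let $(G',r')$ denote the naive supremum of $(\Prev f(G_i),r_i)_{i\in I}$, as prescribed by Proposition~\ref{prop:LPrev:simplesup} (or Proposition~\ref{prop:LaPrev:simplesup} in the bounded case). Clearly $r'=\inf_{i\in I} r_i = r$. For the functional part, fix $\alpha>0$ and an arbitrary $k\in \Lform_\alpha(Y,\partial)$ (respectively $k\in \Lform_\alpha^a(Y,\partial)$). By Lemma~\ref{lemma:comp:Lip}, the composite $k\circ f$ lies in $\Lform_\alpha(X,d)$, since $f$ is $1$-Lipschitz continuous and $k$ is $\alpha$-Lipschitz continuous; in the bounded variant, boundedness is preserved under precomposition, so $k\circ f\in \Lform_\alpha^a(X,d)$. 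Therefore the defining formula for the naive supremum gives
\[
G'(k) \;=\; \sup_{i\in I}\bigl(\Prev f(G_i)(k) + \alpha r - \alpha r_i\bigr)
       \;=\; \sup_{i\in I}\bigl(G_i(k\circ f) + \alpha r - \alpha r_i\bigr)
       \;=\; G(k\circ f)
       \;=\; \Prev f(G)(k),
\]
where the second equality is the definition of $\Prev f$ and the third is the definition of the naive supremum of $(G_i,r_i)_{i\in I}$ applied to $k\circ f\in\Lform_\alpha(X,d)$.

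Since $\alpha>0$ was arbitrary and $\Lform_\infty(Y,\partial)=\bigcup_{\alpha>0}\Lform_\alpha(Y,\partial)$ (respectively $\Lform_\infty^\bnd(Y,\partial)=\bigcup_{\alpha>0}\Lform_\alpha^a(Y,\partial)$ by Lemma~\ref{lemma:Linfa}), the functionals $G'$ and $\Prev f(G)$ agree on the whole domain, so $(G',r)=(\Prev f(G),r)$ as formal balls. No step is really a bottleneck: everything reduces to the compositional Lipschitz bookkeeping of Lemma~\ref{lemma:comp:Lip} together with the explicit formula for naive suprema, and the only thing to keep an eye on is the boundedness condition in the $\dKRH^a$-variant, which is automatic because $k\circ f\le a\alpha$ whenever $k\le a\alpha$.
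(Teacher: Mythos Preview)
Your proof is correct and follows essentially the same route as the paper's: unfold the naive supremum, use Lemma~\ref{lemma:comp:Lip} to see that $k\circ f\in\Lform_\alpha(X,d)$ (respectively $\Lform_\alpha^a(X,d)$) whenever $k\in\Lform_\alpha(Y,\partial)$, and compute. The paper's proof is slightly terser but otherwise identical.
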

\proof By definition, $r = \inf_{i \in I} r_i$, and
$G (h) = \sup_{i \in I} (G_i (h) + \alpha r - \alpha r_i)$ for every
$h \in \Lform_\alpha (X, d)$ (resp., in $\Lform_\alpha^a (X, d)$), for
every $\alpha \in \Rp$, $\alpha > 0$.  It follows that, for every
$k \in \Lform_\alpha (Y, \partial)$ (resp., in
$\Lform_\alpha^a (Y, \partial)$), and using the fact that $k \circ f$
is in $\Lform_\alpha (X, d)$ by Lemma~\ref{lemma:comp:Lip} (resp., in
$\Lform_\alpha^a (X, d)$, since $k \circ f$ is bounded by $\alpha a$,
just like $k$),
$\Prev f (G) (k) = G (k \circ f) = \sup_{i \in I} (G_i (k \circ f) +
\alpha r - \alpha r_i) = \sup_{i \in I} (\Prev f (G_i) (k) + \alpha r
- \alpha r_i)$.  \qed


If $X, d$ is standard, then Lemma~\ref{lemma:eta:lipcont} states that
$\eta_X$ is $1$-Lipschitz continuous.  Hence Lemma~\ref{lemma:Pf:lip}
implies that $\Prev \eta_X (F)$ is a prevision on
$\mathbf B (X, d), d^+$, which one can see as an extension of $F$ from
$X$ to the superspace $\mathbf B (X, d)$.  Moreover, $\Prev \eta_X$ is
$1$-Lipschitz, so $\mathbf B^1 (\Prev \eta_X)$ is monotonic.  This
implies that for every directed family ${(F_i, r_i)}_{i \in I}$ in
$\mathbf B (\Prev X, \dKRH)$, ${(\Prev \eta_X (F_i), r_i)}_{i \in I}$
is also a directed family, this time in
$\mathbf B (\Prev (\mathbf B (X, d)), \KRH{d^+})$.  Because
$\mathbf B (X, d), d^+$ is Lipschitz regular
(Theorem~\ref{thm:B:lipreg}) and standard
(Proposition~\ref{prop:B:std}),
${(\Prev \eta_X (F_i), r_i)}_{i \in I}$ has a supremum $(G, r)$, as an
application of Theorem~\ref{thm:LPrev:sup}.  That supremum $(G, r)$ is
the naive supremum, which is to say that $r = \inf_{i \in I} r_i$,
and:
\begin{equation}
  \label{eq:G}
  G (k) = \sup_{i \in I} (F_i (k \circ \eta_X) + \alpha r - \alpha
  r_i)
\end{equation}
for every $k \in \Lform_\alpha (\mathbf B (X, d), d^+)$, $\alpha > 0$.

Recall that $V_\epsilon$ is the set of formal balls of radius strictly
less than $\epsilon$.
\begin{lem}
  \label{lemma:G:supp:Veps}
  Let $X, d$ be a standard quasi-metric space, ${(F_i, r_i)}_{i \in I}$
  be a directed family in $\mathbf B (\Prev X, \dKRH)$, let
  $r = \inf_{i \in I} r_i$ and $G$ be the prevision on
  $\mathbf B (X, d)$ defined in (\ref{eq:G}).  For every
  $\epsilon > 0$, $G$ is supported on $V_\epsilon$.
\end{lem}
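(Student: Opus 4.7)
The plan is to combine two approximations: Scott-continuity of $G$ reduces $g$ to a chain of step functions $g_K$, and each $g_K$ is in turn approximated by its Lipschitz envelopes $g_K^{(\alpha)}$. Write $\mathcal{X} = \mathbf{B}(X,d)$ throughout. Since $G$ is Scott-continuous on $\Lform \mathcal{X}$, and since both $g = \sup_K g_K$ (with $g_K$ as in (\ref{eq:fK})) and $g_K = \sup_\alpha g_K^{(\alpha)}$ by Theorem~6.17 of \cite{JGL:formalballs}, I obtain $G(g) = \sup_K \sup_\alpha G(g_K^{(\alpha)})$, and similarly for $h$. Because $g_K^{(\alpha)} \in \Lform_\alpha \mathcal{X}$, the defining equation~(\ref{eq:G}) gives $G(g_K^{(\alpha)}) = \sup_i (F_i(g_K^{(\alpha)} \circ \eta_X) + \alpha r - \alpha r_i)$, and analogously for $h$. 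So it suffices to prove, for each fixed $K$, that $\sup_\alpha G(g_K^{(\alpha)}) = \sup_\alpha G(h_K^{(\alpha)})$.

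The crux is a pointwise identity: for every $K \in \nat$, every $\alpha > 0$ with $\alpha \epsilon \geq K$, and every $x \in X$, $g_K^{(\alpha)}(x,0) = h_K^{(\alpha)}(x,0)$. Writing $U_i^g = g^{-1}(\,]i/2^K, +\infty]\,)$ and $U_i^h$ analogously, Lemma~\ref{lemma:f(alpha):step} expresses $g_K^{(\alpha)}(x,0)$ as the outer minimum with $K$ of the quantities $\tfrac{i-1}{2^K} + \alpha\, d^+((x,0), \overline{U_i^g})$, $1 \leq i \leq K 2^K$, and Theorem~\ref{thm:B:lipreg} identifies $\widehat{U_i^g} = \mu_X^{-1}(U_i^g)$, whence $d^+((x,0), \overline{U_i^g}) = \sup\{u \geq 0 : g(x,u) > i/2^K\}$ (and likewise for $h$). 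Since $u \mapsto g(x,u)$ is non-increasing by lower semicontinuity and $g = h$ on $V_\epsilon$, a short case analysis shows that either both of these suprema are strictly less than $\epsilon$, in which case they coincide, or both are at least $\epsilon$; in the latter case the corresponding term is at least $\alpha \epsilon \geq K$ and is absorbed by the outer clamp at $K$. In either subcase the $i$-th term contributes the same value for $g$ as for $h$, so $g_K^{(\alpha)}(x,0) = h_K^{(\alpha)}(x,0)$.

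From the pointwise identity one has $g_K^{(\alpha)} \circ \eta_X = h_K^{(\alpha)} \circ \eta_X$ in $\Lform X$, so $F_i(g_K^{(\alpha)} \circ \eta_X) = F_i(h_K^{(\alpha)} \circ \eta_X)$ for every $i$ and $G(g_K^{(\alpha)}) = G(h_K^{(\alpha)})$ whenever $\alpha \geq K/\epsilon$. The family $\alpha \mapsto G(g_K^{(\alpha)})$ is non-decreasing (since $g_K^{(\alpha)}$ grows in $\alpha$ by Proposition~\ref{prop:alphaLip:props}~(5) and $G$ is monotonic), so its supremum equals its tail supremum $\sup_{\alpha \geq K/\epsilon} G(g_K^{(\alpha)})$; this yields $G(g_K) = G(h_K)$, and taking the sup over $K$ gives $G(g) = G(h)$.

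The main obstacle is that the Lipschitz envelope $g^{(\alpha)}$ of an arbitrary lower semicontinuous $g$ is a global construction, so $g^{(\alpha)} \circ \eta_X$ need not depend only on $g|_{V_\epsilon}$: a direct attack on this object fails. Descending first to the step functions $g_K$ is what unlocks the argument, because for them the explicit formula from Lemma~\ref{lemma:f(alpha):step} combined with $\widehat{U} = \mu_X^{-1}(U)$ shows that any index whose associated distance exceeds $\epsilon$ can be neutralized by the outer clamp at $K$, provided $\alpha$ is chosen sufficiently large relative to $K/\epsilon$, rendering the values of $g$ outside $V_\epsilon$ invisible to $g_K^{(\alpha)}|_X$.
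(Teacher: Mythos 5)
Your proof is correct. It reaches the same conclusion as the paper but by a genuinely more explicit route. The paper's own proof is a two-line truncation argument: for $k \in \Lform (\mathbf B (X, d))$ it sets $k_\epsilon = k \cdot \chi_{V_\epsilon}$ (lower semicontinuous because $V_\epsilon$ is open), notes $k \circ \eta_X = k_\epsilon \circ \eta_X$, and asserts that equation~(\ref{eq:G}) then gives $G (k) = G (k_\epsilon)$, whence $G (g) = G (g_\epsilon) = G (h_\epsilon) = G (h)$ whenever $g$ and $h$ agree on $V_\epsilon$. The obstacle you identify is real: (\ref{eq:G}) only describes $G$ on Lipschitz continuous maps, $k_\epsilon$ is not Lipschitz continuous even when $k$ is, and the envelopes do not match on $X$ for fixed $\alpha$ — already on a one-point space with $k \equiv c$ one finds $(k_\epsilon)^{(\alpha)} (\ast, 0) = \min (c, \alpha\epsilon)$ while $k^{(\alpha)} (\ast, 0) = c$, so passing from (\ref{eq:G}) to arbitrary lower semicontinuous arguments requires an interchange of $\sup_\alpha$ with the directed limit over $i$ that is not automatic. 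Your two-level approximation supplies exactly the missing quantitative control: descending to the step functions $g_K$ and taking $\alpha \geq K/2^0\cdot\epsilon^{-1}\cdot K$, i.e.\ $\alpha\epsilon \geq K$, the explicit formula~(\ref{eq:fK(alpha)}) together with $\widehat U = \mu_X^{-1} (U)$ (Theorem~\ref{thm:B:lipreg}) shows that every term sensitive to the values of $g$ outside $V_\epsilon$ exceeds the clamp $K$ and is discarded, so $g_K^{(\alpha)} \circ \eta_X = h_K^{(\alpha)} \circ \eta_X$; the tail-supremum observation then closes the argument using only (\ref{eq:G}) on genuinely Lipschitz continuous maps. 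In short, the paper's truncation buys brevity, while your argument buys a complete justification of the step the paper compresses into the word ``implies.''
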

\proof


Formally, $G$, as defined in (\ref{eq:G}), is merely an
$\Lform$-prevision, and the prevision we are considering is
$\overline G$, not $G$.  We wish to show that $\overline G$ is
supported on $V_\epsilon$.  In order to do so, we will prove that
$\overline G (f) = \overline G (f \chi_{V_\epsilon})$ for every
$f \in \Lform (\mathbf B (X, d))$.
Using (\ref{eq:G}) and the definition of $\overline G$, then
Proposition~\ref{prop:f(alpha)}, we have:
\begin{align*}
  \overline G (f)
  & =  \sup_{\alpha \in \Rp, i \in I} \left(
    F_i (f^{(\alpha)} \circ \eta_X) + \alpha r - \alpha r_i
    \right) \\
  & =  \sup_{\alpha \in \Rp, i \in I, K \in \nat} \left(
    F_i (f_K^{(\alpha)} \circ \eta_X) + \alpha r - \alpha r_i
    \right) \\
\end{align*}
which is a directed supremum.  We claim that, for $\alpha$ large
enough,
$f_K^{(\alpha)} \circ \eta_X = {(f \chi_{V_\epsilon})}_K^{(\alpha)}
\circ \eta_X$, equivalently that
$f_K^{(\alpha)} (x, 0) = {(f \chi_{V_\epsilon})}_K^{(\alpha)} (x, 0)$
for every $x \in X$.  This will prove the desired result.

Let us fix $K$, and let us write $f_K$ as
$\frac 1 {2^K} \sup_{k=1}^{K2^K} k \chi_{U_k}$, where
$U_k = f^{-1} (]k/2^K, +\infty])$.  Since
${(f \chi_{V_\epsilon})}^{-1} (]k/2^K, +\infty]) = U_k \cap
V_\epsilon$, we have
$({f \chi_{V_\epsilon})}_K = \frac 1 {2^K} \sup_{k=1}^{K2^K} k
\chi_{U_k \cap V_\epsilon}$.  Using Lemma~\ref{lemma:f(alpha):step},
we have:
\begin{equation}
  \label{eq:G:supp:Veps}
  f_K^{(\alpha)} (x, 0)
  = \min \left(\min\nolimits_{k=1}^{K2^K} \left(\frac {k-1} {2^K} + \alpha d^+ ((x, 0),
    \overline {U_k})\right), K \right),
\end{equation}
and similarly for $({f \chi_{V_\epsilon})}_K^{(\alpha)}$, replacing
each $U_k$ by $U_k \cap V_\epsilon$.

We pick $\alpha > K/\epsilon$.  For every $x \in X$, for every $k$
($1\leq k \leq K2^K$),
$d^+ ((x, 0), \overline {U_k}) = \sup \{s \in \Rp \mid ((x, 0), s) \in \widehat
{U_k})$, where, for each $d^+$-Scott open subset $V$ of
$\mathbf B (X, d)$, $\widehat V$ denotes the largest Scott-open subset
of $\mathbf B (\mathbf B (X, d), d^+)$ whose intersection with
$\mathbf B (X, d)$ is equal to $V$.  We apply
Lemma~\ref{lemma:Uhat:alpha} (see also Remark~\ref{rem:Uhat:alpha}) to
the case of the free $\mathbf B$-algebra on $X$, whose structure map
is $\mu_X$, and we obtain that $\widehat V = \mu_X^{-1} (V)$.  Hence
$d^+ ((x, 0), U_k) = \sup \{s \in \Rp \mid \mu_X ((x, 0), s) \in U_k\}
= \sup \{s \in \Rp \mid (x, s) \in U_k\}$.  Hence we can rewrite
equation~\ref{eq:G:supp:Veps} as:
\[
  f_K^{(\alpha)} (x, 0)
  = \min \left(\min\nolimits_{k=1}^{K2^K}
    \left(\sup \left\{\frac {k-1} {2^K} + \alpha s \in \Rp \mid s \in \Rp, (x, s) \in
      U_k\right\}\right),
    K \right),
\]
In the supremum over $s$, we can ignore all the values $s > \epsilon$,
since they will contribute $\frac {k-1} {2^K} + \alpha s > K$ to the
min, and the latter formula defines $f_K^{(\alpha)} (x, r)$ as a min
with $K$ anyway.  Hence:
\begin{align*}
  f_K^{(\alpha)} (x, 0)
  & = \min \left(\min\nolimits_{k=1}^{K2^K}
    \left(\sup \left\{\frac {k-1} {2^K} + \alpha s \in \Rp \mid s \in [0, \epsilon[, (x, s) \in
    U_k\right\}\right),
    K \right) \\
  & = \min \left(\min\nolimits_{k=1}^{K2^K}
    \left(\sup \left\{\frac {k-1} {2^K} + \alpha s \in \Rp \mid s \in \Rp, (x, s) \in
    U_k \cap V_\epsilon\right\}\right),
    K \right) \\
  & = ({f \chi_{V_\epsilon})}_K^{(\alpha)} (x, 0),
\end{align*}
which finishes to prove the claim.  \qed

So $G$ is supported on every $V_\epsilon$, $\epsilon > 0$.  If we
could prove the slightly stronger statement that $G$ is supported on
$X$, seen as a subset of $\mathbf B (X, d)$ (formally, on the set of
open balls of radius $0$), then we claim that $G$ would be a prevision
on $X$; i.e., that $G = \Prev \eta_X (F)$ for some prevision $F$ on
$X$.  Moreover, $(F, r)$ would be the supremum of
${(F_i, r_i)}_{i \in I}$ in $\mathbf B (\Prev X, \dKRH)$.
\begin{prop}
  \label{prop:supp:X}
  Let $X, d$ be a standard quasi-metric space, ${(F_i, r_i)}_{i \in I}$ be a
  directed family in $\mathbf B (\Prev X, \dKRH)$, let
  $r = \inf_{i \in I} r_i$ and $G$ be the prevision on
  $\mathbf B (X, d)$ defined in (\ref{eq:G}).  If $G$ is supported on
  $X$, then:
  \begin{enumerate}
  \item $G = \Prev \eta_X (F)$ for some unique prevision $F$ on $X$;
  \item if $G$ is superlinear, resp.\ sublinear, resp.\ linear, resp.\
    subnormalized, resp.\ normalized, resp.\ discrete, then so is $F$;
  \item $(F, r)$ is the naive supremum of ${(F_i, r_i)}_{i \in I}$ in
    $\mathbf B (\Prev X, \dKRH)$;
  \item $(F, r)$ is the supremum of ${(F_i, r_i)}_{i \in I}$ in
    $\mathbf B (\Prev X, \dKRH)$.
  \end{enumerate}
\end{prop}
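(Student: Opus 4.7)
The plan is to construct $F$ on $\Lform X$ by extending each $h \in \Lform X$ to an $\tilde h \in \Lform \mathbf B (X,d)$ with $\tilde h \circ \eta_X = h$, and setting $F(h) := G(\tilde h)$. Since $G$ is supported on $X$, $G(\tilde h)$ does not depend on the choice of extension, which gives well-definedness. A canonical extension may be defined by $\tilde h (x,r) := \sup \{a \in \Rp \mid (x,r) \in \widehat {h^{-1} (]a, +\infty])}\}$; its superlevel sets are precisely the sets $\widehat {h^{-1} (]a, +\infty])}$, which are $d^+$-Scott open, so $\tilde h \in \Lform \mathbf B (X,d)$; and $\tilde h \circ \eta_X = h$ because $\widehat U \cap X = U$ for every $d$-Scott open $U$.

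With $F$ in hand, positive homogeneity, Scott-continuity, and the identification $G = \Prev \eta_X (F)$ are proved by an ``extend, evaluate on $\mathbf B (X,d)$, use support to swap extensions'' pattern. For instance, $\widetilde {\alpha h}$ and $\alpha \tilde h$ coincide on $X$, so $F(\alpha h) = G(\widetilde {\alpha h}) = G(\alpha \tilde h) = \alpha F(h)$. For a directed family ${(h_j)}_{j \in J}$ with pointwise supremum $h$, the maps $\widetilde {\sup_j h_j}$ and $\sup_j \tilde {h_j}$ agree on $X$, so Scott-continuity of $F$ follows from that of $G$. The identity $G = \Prev \eta_X (F)$ holds because, for every $k \in \Lform \mathbf B (X,d)$, $\widetilde {k \circ \eta_X}$ and $k$ coincide on $X$, giving $\Prev \eta_X (F)(k) = F(k \circ \eta_X) = G(\widetilde {k \circ \eta_X}) = G(k)$. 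Uniqueness in (1) is immediate: any $F'$ satisfying $G = \Prev \eta_X (F')$ obeys $F'(h) = \Prev \eta_X (F')(\tilde h) = G(\tilde h) = F(h)$. The linearity, normalization, subnormalization, and discreteness properties listed in (2) transfer in the same way; for discreteness one uses that $\widetilde {f \circ h}$ and $f \circ \tilde h$ agree on $X$.

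For (3), I must show that for $h \in \Lform_\alpha (X,d)$, $\alpha > 0$, $F(h) = \sup_{i \in I}(F_i(h) + \alpha r - \alpha r_i)$. Here I switch to a second, more specific extension: $\tilde h (x,r) := \max (h(x) - \alpha r, 0)$. The main technical step, and the only real obstacle, is to verify that this $\tilde h$ lies in $\Lform_\alpha (\mathbf B (X,d), d^+)$. By Lemma~\ref{lemma:f'}, this reduces to Scott-continuity of $((x,r),s) \mapsto \max (h(x) - \alpha (r+s), -\alpha s)$ on $\mathbf B (\mathbf B (X,d), d^+)$. Scott-continuity of $\mu_X$ (Lemma~\ref{lemma:mu:cont}) and standardness of $\mathbf B (X,d), d^+$ (Proposition~\ref{prop:B:std}) yield, for any directed net $((x_i, r_i), s_i)$ with supremum $((x,r),s)$, both $(x, r+s) = \sup_i (x_i, r_i + s_i)$ and $s = \inf_i s_i$; combined with Lemma~\ref{lemma:f'} applied to $h$, this gives $h(x) - \alpha(r+s) = \sup_i (h(x_i) - \alpha (r_i+s_i))$ and $-\alpha s = \sup_i (-\alpha s_i)$, from which Scott-continuity of the $\max$ follows at once. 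Once $\tilde h \in \Lform_\alpha (\mathbf B (X,d), d^+)$ is established, equation~(\ref{eq:G}) applied to $\tilde h$ yields $F(h) = G(\tilde h) = \sup_i (F_i (\tilde h \circ \eta_X) + \alpha r - \alpha r_i) = \sup_i (F_i(h) + \alpha r - \alpha r_i)$, so $(F, r)$ is the naive supremum. Claim (4) then follows from Proposition~\ref{prop:LPrev:simplesup}(3): since $F$ is continuous, the naive supremum is the actual supremum.
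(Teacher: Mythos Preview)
Your proof is correct and follows essentially the same approach as the paper's: define $F$ via a canonical lower-semicontinuous extension $\widehat h$ using the $\widehat{(\cdot)}$ construction on open sets, transfer all properties from $G$ to $F$ using the ``agree on $X$'' argument, and for (3) switch to an $\alpha$-Lipschitz continuous extension so that equation~(\ref{eq:G}) applies directly. The only noteworthy variation is in part~(3): you use $\tilde h (x,r) = \max(h(x)-\alpha r,0)$ and verify its $\alpha$-Lipschitz continuity via standardness of $\mathbf B(X,d),d^+$ and Scott-continuity of $\mu_X$, whereas the paper uses the signed map $h'(x,r)=h(x)-\alpha r$ and the slicker identity $\mathbf B^\alpha(h') = h'\circ\mu_X$; your version is a bit longer but has the minor advantage that $\tilde h$ genuinely lands in $\creal$, so the support argument $F(h)=G(\tilde h)$ applies verbatim from Definition~\ref{defn:support}.
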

\proof (1) For every $h \in \Lform X$, let
$\widehat h \colon \mathbf B (X, d) \to \creal$ be defined by
$\widehat h (x, r) = \sup \{a \in \Rp \mid (x, r) \in \widehat {h^{-1}
  (]a, +\infty])}\}$.  For every $b \in \real$,
${\widehat h}^{-1} (]b, +\infty]) = \bigcup_{a > b} \widehat {h^{-1}
  (]a, +\infty])}$ is open, so $\widehat h$ is (Scott-)continuous.
For $r=0$,
$\widehat h (x, 0) = \sup \{a \in \Rp \mid (x, 0) \in \widehat {h^{-1}
  (]a, +\infty])}\} = \sup \{a \in \Rp \mid x \in h^{-1} (]a,
+\infty])\} = \sup \{a \in \Rp \mid h (x) > a\} = h (x)$.  Therefore
$\widehat h$ is a Scott-continuous extension of $h$ to
$\mathbf B (X, d)$.

If $G = \Prev \eta_X (F)$ for some $F$, then for every
$h \in \Lform X$,
$G (\widehat h) = F (\widehat h \circ \eta_X) = F (h)$, and this shows
that $F$ is unique if it exists.

Now define $F (h)$ as $G (\widehat h)$.  One can show that
$\widehat {ah} = a \widehat h$ for every $h \in \Lform X$ and
$a \in \Rp$, and this will imply that $F$ is positively homogeneous.
But $h \mapsto \widehat h$ does not commute with directed suprema
(unless $X, d$ is Lipschitz regular, an assumption we do not make), so
that a similar argument cannot show that $F$ is Scott-continuous.

Instead, we observe that $\widehat {ah}$ and $a \widehat h$ have the
same restriction to $X$:
$\widehat {ah} (x, 0) = (ah) (x) = a h (x) = a \widehat h (x, 0)$.
Since $G$ is supported on $X$, $G (\widehat {ah}) = G (a \widehat h)$,
and since the latter is equal to $a G (\widehat h)$, we obtain that
$F (ah) = a F (h)$.

We can show that $F$ is Scott-continuous by a similar argument.
First, $h \mapsto \widehat h$ is monotonic, owing to the fact that
$U \mapsto \widehat U$ is itself monotonic.  It follows that $F$ is
monotonic.  Let now ${(h_i)}_{i \in I}$ be a directed family in
$\Lform X$, and $h$ be its supremum.  Then
$\widehat h (x, 0) = h (x) = \sup_{i \in I} h_i (x) = \sup_{i \in I}
\widehat {h_i} (x, 0)$, so $\widehat h$ and
$\sup_{i \in I} \widehat {h_i}$ have the same restriction to $X$.
Hence
$F (h) = G (\widehat h) = G (\sup_{i \in I} \widehat {h_i}) = \sup_{i
  \in I} G (\widehat {h_i}) = \sup_{i \in I} F (h_i)$.

(2) One may observe that (2) is a consequence of (3), using
Proposition~\ref{prop:LPrev:simplesup}~(4) to show that naive suprema
of directed families of formal balls of previsions on
$\mathbf B (X, d)$ preserve superlinearity, sublinearity, etc.
However, a direct proof is equally simple.  Using the fact that
$\widehat g + \widehat h$ and $\widehat {g+h}$ have the same
restriction $g+h$ to $X$, we obtain that $F$ is superlinear, resp.\
sublinear, resp.\ linear, resp.\ subnormalized, resp.\ normalized if
$G$ is.  It remains to show that If $G$ is discrete, then $F$ is
discrete as well.  Let $f$ be any strict map in $\Lform \creal$.  Then
$F (f \circ h) = G (\widehat {f \circ h})$.  We have
$\widehat {f \circ h} (x, 0) = f (h (x)) = f (\widehat h (x, 0))$, so
$\widehat {f \circ h}$ and $f \circ \widehat h$ have the same
restriction to $X$.  It follows that
$F (f \circ h) = G (f \circ \widehat h) = f (G (\widehat h)) = f (F
(h))$, showing that $F$ is discrete.

(3) For every $h \in \Lform_\alpha (X, d)$, $\alpha > 0$,
$F (h) = G (\widehat h)$ by definition.


However, we can build another map $h'$ which coincides with $h$ on $X$
as follows.  Since $\creal, \dreal$ is Yoneda-complete hence standard,
$\mathbf B (\creal, \dreal), \dreal^+$ is also standard by
Proposition~\ref{prop:B:std}.  We can therefore apply
Lemma~\ref{lemma:d(_,x)}: $\dreal^+ (\_, (y, s))$ is $1$-Lipschitz
continuous for every $(y, s) \in \mathbf B (\creal, \dreal)$.  Taking
$y=s=0$, the map
$m = \dreal^+ (\_, (0, 0)) \colon (x, r) \mapsto \max (x-r, 0)$ is
$1$-Lipschitz continuous.  We define $h'$ as
$m \circ \mathbf B^\alpha (h)$.  Note that $\mathbf B^\alpha (h)$ is
$\alpha$-Lipschitz continuous by Lemma~\ref{lemma:Balpha:f}, hence
$h'$ is also $\alpha$-Lipschitz continuous, by
Lemma~\ref{lemma:comp:Lip}.

Explicitly, $h' (x, r) = \max (h (x) - \alpha r, 0)$.  Taking $r=0$,
we see that $h'$ coincides with $h$ on $X$, or more precisely, $h'
\circ \eta_X = h$.

Since $h'$ is $\alpha$-Lipschitz continuous,
$G (h') = \sup_{i \in I} (F_i (h' \circ \eta_X) + \alpha r - \alpha
r_i)$.  We rewrite $h' \circ \eta_X$ as $h$ and recall that
$F (h) = G (h')$, whence
$F (h) = \sup_{i \in I} (F_i (h) + \alpha r - \alpha r_i)$.  That
shows that $F$ is the naive supremum of ${(F_i)}_{i \in I}$.

(4) By Proposition~\ref{prop:LPrev:simplesup}~(3) (and the isomorphism
between $\Lform\Prev X$ and $\Prev X$ that ensues from
Proposition~\ref{prop:Gbar}),
any naive supremum of a directed family that happens to be continuous,
such as $F$, is the supremum.  \qed

To recap, we would be able to show that $\Prev X$ (or any of its
subspaces $Y$ of interest, defined by any combination of
superlinearity, sublinearity, linearity, subnormalization,
normalization, discreteness) is Yoneda-complete, assuming $X, d$
standard, as follows.  Consider a directed family
${(F_i, r_i)}_{i \in I}$ in $\mathbf B (Y, \dKRH)$.  We make a detour
through $\mathbf B (X, d)$, and realize that
${(\Prev \eta_X (F_i), r_i)}_{i \in I}$ is directed, and has a (naive)
supremum $(G, r)$ given in (\ref{eq:G}).  $G$ is supported on
$V_\epsilon$ for $\epsilon > 0$ as small as we wish
(Lemma~\ref{lemma:G:supp:Veps}).  If we could show that $G$ is
supported on $X$, which happens to be the intersection of countably
many such sets $V_\epsilon$ (for $\epsilon=1/2^n$, $n \in \nat$;
recall Lemma~\ref{lemma:Veps}), then Proposition~\ref{prop:supp:X}
would imply that ${(F_i, r_i)}_{i \in I}$ would itself have a supremum
in $\mathbf B (Y, \dKRH)$, and that it would be computed as the naive
supremum.  Then $Y$ would be Yoneda-complete.

While it might seem intuitive that any prevision that is supported on
$V_\epsilon$ for arbitrarily small values of $\epsilon > 0$ should in
fact be supported on $X$, this is exactly what is missing in our
argument.  We shall fix that in several special cases.  In the
meantime, we state what we have as the following proposition.
\begin{prop}[Conditional Yoneda-completeness]
  \label{prop:supp:complete}
  Let $X, d$ be a standard quasi-metric space.  Let $S$ be any subset
  of the following properties on previsions: sublinearity,
  superlinearity, linearity, subnormalization, normalization,
  discreteness; let $Y$ denote the set of previsions on $X$
  satisfying $S$, and $Z$ be the set of previsions on $\mathbf B (X,
  d)$ that satisfy $S$.

  If every element of $Z$ that is supported on $V_{1/2^n}$ for every
  $n \in \nat$ is in fact supported on $X$, then $Y, \dKRH$ is
  Yoneda-complete, and directed suprema in $\mathbf B (Y, \dKRH)$ are
  computed as naive suprema.

  The same result holds for $\dKRH^a$ in lieu of $\dKRH$, for any $a >
  0$. \qed
\end{prop}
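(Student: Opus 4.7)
The plan is to follow the road map described in the paragraph preceding the statement: lift the problem from $X$ to the superspace $\mathbf B (X, d)$, which we already know to be Lipschitz regular (Theorem~\ref{thm:B:lipreg}) and standard (Proposition~\ref{prop:B:std}), apply the unconditional Yoneda-completeness result there, and then use the hypothesis on supports to bring the supremum back down to $X$.

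More precisely, take an arbitrary directed family ${(F_i, r_i)}_{i \in I}$ in $\mathbf B (Y, \dKRH)$. By Lemma~\ref{lemma:eta:lipcont}, $\eta_X$ is $1$-Lipschitz continuous, hence Lemma~\ref{lemma:Pf:lip} shows that $\Prev \eta_X$ is a $1$-Lipschitz map $\Prev X, \dKRH \to \Prev (\mathbf B (X, d)), \KRH{d^+}$ that preserves every property in $S$. Therefore ${(\Prev \eta_X (F_i), r_i)}_{i \in I}$ is a directed family of formal balls of previsions on $\mathbf B (X, d)$ all belonging to $Z$. Since $\mathbf B (X, d), d^+$ is standard and Lipschitz regular, Theorem~\ref{thm:LPrev:sup} applies and gives that this lifted family admits a supremum $(G, r)$ in $\mathbf B (\Prev (\mathbf B (X, d)), \KRH{d^+})$, computed as the naive supremum of Proposition~\ref{prop:LPrev:simplesup}; in particular $G$ is given by formula~(\ref{eq:G}), and Proposition~\ref{prop:LPrev:simplesup}~(4) ensures $G \in Z$.

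Now the hypothesis enters. By Lemma~\ref{lemma:G:supp:Veps}, this $G$ is supported on $V_\epsilon$ for every $\epsilon > 0$, hence on every $V_{1/2^n}$, $n \in \nat$. Since $G \in Z$, our assumption forces $G$ to be supported on $X$. Proposition~\ref{prop:supp:X}~(1--2) then yields a unique prevision $F$ on $X$ with $G = \Prev \eta_X (F)$, and $F$ again satisfies all properties in $S$, so $F \in Y$. Items~(3--4) of the same proposition give that $(F, r)$ is the naive supremum, and in fact the supremum, of ${(F_i, r_i)}_{i \in I}$ in $\mathbf B (\Prev X, \dKRH)$. Because $F \in Y$ and $\mathbf B (Y, \dKRH)$ is a sub-poset of $\mathbf B (\Prev X, \dKRH)$ under $\leq^{\dKRH^+}$, $(F, r)$ is also the supremum in $\mathbf B (Y, \dKRH)$; and by construction it is the naive supremum of ${(F_i, r_i)}_{i \in I}$. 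This establishes Yoneda-completeness of $Y, \dKRH$ together with the naive-supremum formula.

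There is no real obstacle: the hard work has already been packaged into Lemmas~\ref{lemma:Pf:lip} and~\ref{lemma:G:supp:Veps}, Proposition~\ref{prop:supp:X}, and the unconditional completeness theorem~\ref{thm:LPrev:sup}. The only points requiring care are to verify that $G$ stays in $Z$ at each stage---which is exactly what allows the hypothesis to be invoked---and to check that passing from $\mathbf B (\Prev X, \dKRH)$ down to the subspace $\mathbf B (Y, \dKRH)$ does not destroy the supremum, which is immediate from $F \in Y$. The $\dKRH^a$ variant is proved identically, replacing Theorem~\ref{thm:LPrev:sup} by Theorem~\ref{thm:LaPrev:sup} and Proposition~\ref{prop:LPrev:simplesup} by Proposition~\ref{prop:LaPrev:simplesup}, and noting that Lemma~\ref{lemma:Pf:lip}, Lemma~\ref{lemma:G:supp:Veps} and Proposition~\ref{prop:supp:X} are all stated (or immediately adaptable) for both quasi-metrics.
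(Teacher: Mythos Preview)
Your proposal is correct and follows exactly the approach the paper intends: the proposition is presented with only a \qed because its proof is precisely the recap paragraph immediately preceding it, which you have faithfully expanded. The only places where you add detail beyond the paper---verifying that $G \in Z$ so the hypothesis applies, and checking that the supremum in $\mathbf B(\Prev X,\dKRH)$ descends to $\mathbf B(Y,\dKRH)$---are genuine but routine checks that the paper leaves implicit.
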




\subsection{Preparing for Algebraicity}
\label{sec:continuity-results}

Having dealt with Yoneda-completeness, under some provisos, we now
prepare the grounds for later results, which will state that certain
spaces of previsions are algebraic.  The following will be
instrumental in characterizing the center points of those spaces.
\begin{prop}
  \label{prop:dKRH:cont}
  Let $X, d$ be a continuous
  Yoneda-complete
  quasi-metric space, $\alpha \in \Rp$, and $a \in \Rp$, $a > 0$.  For
  every $n \in \nat$, for all $a_1, \ldots, a_n \in \Rp$ and every
  $n$-tuple of center points $x_1$, \ldots, $x_n$ in $X, d$, for every
  prevision $F$ on $X$, the following maps are continuous from
  $\Lform_\alpha (X, d)^\patch$ (resp.,
  $\Lform_\alpha^a (X, d)^\patch$) to $(\creal)^\dG$:
  \begin{enumerate}
  \item $f \mapsto \dreal (\sum_{i=1}^n a_i f (x_i), F (f))$,
  \item $f \mapsto \dreal (\max_{i=1}^n f (x_i), F (f))$,
  \item $f \mapsto \dreal (\min_{i=1}^n f (x_i), F (f))$,
  \end{enumerate}
\end{prop}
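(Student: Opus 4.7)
My plan is to verify, for each of the three maps $\Phi(f) = \dreal(A(f), F(f))$ (with $A(f)$ being $\sum_{i=1}^n a_i f(x_i)$, $\max_{i=1}^n f(x_i)$, or $\min_{i=1}^n f(x_i)$), that the preimage of each subbasic open of $(\creal)^\dG$ is patch-open. The subbasic opens of $(\creal)^\dG$ are exactly the sets $[0, c[$ for $c \in (0, +\infty]$, as their complements $[c, +\infty]$ are the non-empty compact saturated subsets of $\creal$ in its Scott topology; so it suffices to show $\Phi^{-1}([0, c[)$ is patch-open.

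The decomposition I would use is
\[
\Phi^{-1}([0, c[) \;=\; \bigcup_{\substack{s, t \in \rat \\ s \leq t + c}} \{f : A(f) < s\} \cap \{f : F(f) > t\},
\]
which captures the condition $\dreal(A(f), F(f)) < c$, i.e.\ $A(f) < F(f) + c$ extended to accommodate $F(f) = +\infty$, by interpolating a rational $s$ between $A(f)$ and $F(f) + c$ and then a rational $t$ between $s - c$ and $F(f)$. The containment $(\supseteq)$ is immediate: if $A(f) < s$, $F(f) > t$, and $s \leq t + c$, then either $F(f) = +\infty$ and $\dreal(A(f), +\infty) = 0 < c$, or $A(f) < s \leq t + c < F(f) + c$, in each case matching the branches of the definition of $\dreal$ from Example~\ref{exa:creal}. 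The containment $(\subseteq)$ is a straightforward density argument.

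The reason each building block in the union is patch-open is that it is a finite intersection of opens coming from the two topologies generating the patch. By Corollary~\ref{corl:cont:Lalpha:simple}, the map $A$ is continuous from $\Lform_\alpha(X,d)^\dG$ (resp.\ $\Lform_\alpha^a(X,d)^\dG$) to $(\creal)^\dG$, so $\{f : A(f) < s\} = A^{-1}([0, s[)$ is open in the de~Groot dual topology, and therefore open in the finer patch topology. The functional $F$ is Scott-continuous from $\Lform X$ to $\creal$ by the very definition of a prevision, and since $\Lform_\alpha(X,d)$ carries the subspace topology from $\Lform X$, the set $\{f : F(f) > t\} = F^{-1}(]t, +\infty])$ is Scott-open in $\Lform_\alpha(X,d)$, hence patch-open. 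Arbitrary unions of finite intersections of patch-opens remain patch-open, completing the argument.

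The subtle point, which I expect to be the main technical obstacle, concerns the locus $A(f) = F(f) = +\infty$, where $\dreal = 0 < c$ by the convention of Example~\ref{exa:creal} but no pair of finite rationals witnesses this in the above union. In the bounded variant $\Lform_\alpha^a(X,d)$, $A(f)$ is uniformly bounded (by $\alpha a \sum_{i=1}^n a_i$ in the sum case, by $\alpha a$ in the max and min cases), so this locus is empty and the decomposition is exhaustive. In the unbounded case, one extends the decomposition to also include the index $s = +\infty$, exploiting that $\{f : A(f) < +\infty\} = A^{-1}([0, +\infty[)$ is itself a de~Groot open of $\Lform_\alpha(X,d)$ by the same corollary, and then pairs it with the Scott-opens $\{F > t\}$ for sufficiently large $t$ to absorb this remaining locus.
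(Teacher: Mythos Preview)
Your argument and the paper's are essentially the same: the paper phrases the key step as continuity of $(s,t)\mapsto\dreal(s,t)$ from $(\creal)^\dG\times\creal$ to $(\creal)^\dG$, writing $\dreal^{-1}([0,c[)=\bigcup_{b\in\Rp}[0,b+c[\times\, ]b,+\infty]$, which is your decomposition under the substitution $s=b+c$, $t=b$. In the bounded case $\Lform_\alpha^a(X,d)$ both arguments go through, since $A(f)$ is uniformly bounded and the locus $\{A=+\infty,\ F=+\infty\}$ is empty.

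You correctly flag the residual locus in the unbounded case, but your fix does not work: the sets you propose to add, namely $\{A(f)<+\infty\}\cap\{F(f)>t\}$, are \emph{disjoint} from $\{A(f)=+\infty,\ F(f)=+\infty\}$, so they absorb nothing. In fact the unbounded claim appears to be false as stated. Take $X=\{a,b\}$ with $d(a,b)=d(b,a)=+\infty$; both points are center points, $\Lform_\alpha(X,d)=\creal^2$ with the product Scott topology, and its patch is $[0,+\infty]^2$ with the order topology. With $n=1$, $a_1=1$, $x_1=a$ (so $A(f)=f(a)$) and the prevision $F(h)=h(b)$, the map $\Phi(u,v)=\dreal(u,v)$ satisfies $\Phi(+\infty,+\infty)=0$, yet every patch-neighbourhood of $(+\infty,+\infty)$ contains a point $(+\infty,N)$ with $N$ finite, where $\Phi=+\infty$. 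Hence $\Phi^{-1}([0,c[)$ is not open at that point. The paper's own proof shares this oversight: the point $(+\infty,+\infty)$ lies in $\dreal^{-1}([0,c[)$ (since $\dreal(+\infty,+\infty)=0$) but not in the displayed union $\bigcup_{b\in\Rp}[0,b+c[\times\, ]b,+\infty]$.
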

\proof We only deal with $\Lform_\alpha (X, d)$, as the case of
$\Lform_\alpha^a (X, d)$ is similar.

As a consequence of Corollary~\ref{corl:cont:Lalpha:simple} and the
fact that the patch topology is finer than the cocompact topology, the
map $f \mapsto \sum_{i=1}^n a_i f (x_i)$ is continuous from
$\Lform_\alpha (X, d)^\patch$ to $(\creal)^\dG$.  Similarly for
$f \mapsto \max_{i=1}^n f (x_i)$ and for
$f \mapsto \min_{i=1}^n f (x_i)$.  $F$ is continuous from $\Lform X$
to $\creal$, hence also from $\Lform_\alpha (X, d)$ to $\creal$
(because $\Lform_\alpha (X, d)$ has the subspace topology from
$\Lform X$), hence also from $\Lform_\alpha (X, d)^\patch$ to
$\creal$.  The claim will then follow from the fact that
$(s, t) \mapsto \dreal (s, t)$ is continuous from
$(\creal)^\dG \times \creal$ to $(\creal)^\dG$, which we now prove.
The non-trivial open subsets of $(\creal)^\dG$ are the open intervals
$[0, a[$.  The inverse image of the latter by $\dreal$ is
$\{(s, t) \in \creal \times \creal \mid s < t + a\} = \{(s, t) \in
\creal \times \creal \mid \exists b \in \Rp . s < b+a, b < t\} =
\bigcup_{b \in \Rp} [0, b+a[ \times ]b, +\infty]$.  \qed

The sum $\sum_{i=1}^n a_i f (x_i)$ is just the integral of $f$ with
respect to the so-called simple valuation
$\sum_{i=1}^n a_i \delta_{x_i}$, all notions we shall introduce later.
Since all continuous maps from a compact space to $(\creal)^\dG$ reach
their maximum, we obtain immediately:
\begin{lem}
  \label{lemma:dKRH:max}
  Let $X, d$ be a continuous
  Yoneda-complete
  quasi-metric space, let
  $\nu = \sum_{i=1}^n a_i \delta_{x_i}$ be a simple valuation on $X$
  such that every $x_i$ is a center point, and let $F$ be a prevision
  on $X$.  There is an $h \in \Lform_1 (X, d)$ such that
  $\dKRH (\nu, F) = \dreal (\sum_{i=1}^n a_i h (x_i), F (h))$; in
  other words, the supremum in (\ref{eq:dKRH}) is reached.

  Similarly, for every $a \in \Rp$, $a > 0$, the supremum in
  (\ref{eq:dKRHa}) is reached: there is an $h \in \Lform_1^a (X, d)$
  such that $\dKRH^a (\nu, F) = \dreal (\sum_{i=1}^n a_i h (x_i), F
  (h))$.  \qed
\end{lem}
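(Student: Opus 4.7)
The lemma follows directly from the machinery set up in the preceding proposition, together with a general compactness argument in the de Groot dual $(\creal)^\dG$. The plan is as follows.

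First, I would invoke item~(1) of Proposition~\ref{prop:dKRH:cont} with $\alpha = 1$: the map
\[
  g \colon f \mapsto \dreal\Bigl(\sum_{i=1}^n a_i f (x_i),\, F (f)\Bigr)
\]
is continuous from $\Lform_1 (X, d)^\patch$ to $(\creal)^\dG$. Next, by Lemma~\ref{lemma:cont:Lalpha:retr}~(3), $\Lform_1 (X, d)$ is stably compact in the topology of pointwise convergence, which by Proposition~\ref{prop:cont:Lalpha:pw} coincides with its subspace topology from $\Lform X$; hence $\Lform_1 (X, d)^\patch$ is compact (indeed compact Hausdorff). The continuous image $K = g[\Lform_1 (X, d)^\patch]$ is therefore a compact subset of $(\creal)^\dG$.

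Now I would observe that every non-empty compact subset $K$ of $(\creal)^\dG$ attains its supremum in the original order $\leq$ of $\creal$. Indeed, the non-trivial opens of $(\creal)^\dG$ are the sets $[0, a[$ with $a \in \Rp \cup \{+\infty\}$. Letting $s = \sup K$ and assuming $s \notin K$, the family $\{[0, t[ \mid t < s\}$ (together with $\emptyset$ and a suitable cofinal sequence if $s = +\infty$) is an open cover of $K$ in $(\creal)^\dG$; any finite subcover would be contained in some $[0, t[$ with $t < s$, contradicting $s = \sup K$. Applied to $K = g[\Lform_1 (X, d)]$, this shows that some $h \in \Lform_1 (X, d)$ satisfies $g(h) = \sup_{f \in \Lform_1 (X, d)} \dreal(\sum_{i=1}^n a_i f (x_i), F (f)) = \dKRH (\nu, F)$, using that $\nu(f) = \sum_{i=1}^n a_i f(x_i)$ by definition of a simple valuation.

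The second assertion, concerning $\dKRH^a$, is proved by the same argument, replacing $\Lform_1 (X, d)$ by $\Lform_1^a (X, d)$ throughout and invoking the corresponding clauses of Proposition~\ref{prop:dKRH:cont} and Lemma~\ref{lemma:cont:Lalpha:retr}. There is no serious obstacle here: all the real work has been done in establishing stable compactness of $\Lform_1(X,d)$ and $\Lform_1^a(X,d)$ and the patch-continuity of the functional $g$; the only small point to verify carefully is the elementary claim about compact subsets of $(\creal)^\dG$ attaining their suprema, which is what makes ``reaching the maximum'' rigorous in this non-Hausdorff setting.
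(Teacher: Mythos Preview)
Your proposal is correct and follows the same approach as the paper, which simply states ``since all continuous maps from a compact space to $(\creal)^\dG$ reach their maximum, we obtain immediately'' and leaves the rest implicit. You have merely spelled out the compactness argument the paper takes for granted; the one superfluous step is the appeal to Proposition~\ref{prop:cont:Lalpha:pw}, since Lemma~\ref{lemma:cont:Lalpha:retr}~(3) already gives stable compactness of $\Lform_1 (X, d)$ (hence compactness of its patch) under the bare hypothesis that $X, d$ is continuous.
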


One can make a function $h$ explicit where the supremum is reached.
Recall the functions $\sea x b$ from
Section~\ref{sec:functions-sea-x}.

\begin{prop}
  \label{prop:dKRH:max:d}
  Let $X, d$ be a continuous
  Yoneda-complete
  quasi-metric space, let
  $\nu = \sum_{i=1}^n a_i \delta_{x_i}$ be a simple valuation on $X$
  such that every $x_i$ is a center point, and let $F$ be a prevision
  on $X$.  There are numbers $b_i \in \creal$, $1\leq i\leq n$, such
  that $\dKRH (\nu, F) = \dreal (\sum_{i=1}^n a_i h (x_i), F (h))$,
  where $h = \bigvee_{i=1}^n \sea {x_i} {b_i}$.

  Similarly, for every $a \in \Rp$, $a > 0$, there are numbers
  $b_i \in [0, \alpha a]$, $1\leq i \leq n$, such that
  $\dKRH^a (\nu, F) = \dreal (\sum_{i=1}^n a_i h (x_i), F (h))$, where
  $h = \bigvee_{i=1}^n \sea {x_i} {b_i}$.
\end{prop}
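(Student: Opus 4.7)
The plan is to leverage Lemma \ref{lemma:dKRH:max} to reduce to the question of replacing an arbitrary optimizing $h \in \Lform_1(X, d)$ (resp. $\Lform_1^a(X, d)$) with a function of the specific form $\bigvee_{i=1}^n \sea{x_i}{b_i}$. The key tool will be the minimality property of such joins, established in Lemma \ref{lemma:sea:min}.

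Concretely, I would first apply Lemma \ref{lemma:dKRH:max} to obtain some $h \in \Lform_1(X, d)$ (resp., $h \in \Lform_1^a(X, d)$, so that $h$ is bounded by $a$) such that $\dKRH(\nu, F) = \dreal(\sum_{i=1}^n a_i h(x_i), F(h))$. Then I would set $b_i = h(x_i)$ for each $i$, which in the bounded case automatically gives $b_i \in [0, a]$. Define $h' = \bigvee_{i=1}^n \sea{x_i}{b_i}$; by Lemma \ref{lemma:sea:min}, $h'$ is the smallest $1$-Lipschitz continuous function satisfying $h'(x_i) \geq b_i$ for every $i$. Since $h$ is itself $1$-Lipschitz continuous with $h(x_i) = b_i \geq b_i$, minimality gives $h' \leq h$ pointwise; combined with $h'(x_i) \geq b_i = h(x_i)$, this forces $h'(x_i) = b_i$ for each $i$. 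In the bounded case, $h' \leq h \leq a \cdot \mathbf 1$ ensures $h' \in \Lform_1^a(X, d)$.

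From here the conclusion is immediate. The equalities $h'(x_i) = b_i = h(x_i)$ give $\sum_{i=1}^n a_i h'(x_i) = \sum_{i=1}^n a_i h(x_i)$, while $h' \leq h$ combined with the monotonicity of $F$ (which follows from its Scott-continuity) yields $F(h') \leq F(h)$. Since $\dreal(s, t) = \max(s - t, 0)$ is monotonically decreasing in $t$, we obtain
\[
\dreal\Bigl(\sum_{i=1}^n a_i h'(x_i), F(h')\Bigr) \;\geq\; \dreal\Bigl(\sum_{i=1}^n a_i h(x_i), F(h)\Bigr) \;=\; \dKRH(\nu, F).
\]
The reverse inequality holds because $h' \in \Lform_1(X, d)$ (resp. $\Lform_1^a(X, d)$), so this supremand is bounded above by $\dKRH(\nu, F)$ (resp. $\dKRH^a(\nu, F)$) by definition. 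Hence equality holds, as required.

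There is essentially no obstacle here: the structure of the argument is entirely dictated by the minimality property provided by Lemma \ref{lemma:sea:min} together with the reaching-of-supremum result in Lemma \ref{lemma:dKRH:max}. The only point demanding a moment's care is checking that $h'$ lies in the correct bounded subspace in the $\dKRH^a$ case, which follows directly from $h' \leq h \leq a \cdot \mathbf 1$.
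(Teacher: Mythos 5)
Your proof is correct and follows essentially the same route as the paper's: apply Lemma~\ref{lemma:dKRH:max} to get an optimizing $h$, set $b_i = h(x_i)$, use the minimality from Lemma~\ref{lemma:sea:min} to get $\bigvee_{i=1}^n \sea{x_i}{b_i} \leq h$ with equal values at the $x_i$, and conclude by monotonicity of $F$ and antitonicity of $\dreal$ in its second argument. The handling of the bounded case via $h' \leq h \leq a\cdot\mathbf{1}$ is also the intended one.
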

\proof We deal with the $\dKRH$ case, the $\dKRH^a$ case is similar.

Using Lemma~\ref{lemma:dKRH:max}, there is an
$h_0 \in \Lform_1 (X, d)$ such that
$\dKRH (\nu, F) = \dreal (\sum_{i=1}^n a_i h_0 (x_i), \allowbreak F
(h_0))$.  Let $b_i = h_0 (x_i)$, $1\leq i\leq n$, and define $h$ as
$\bigvee_{i=1}^n \sea {x_i} {b_i}$.  By Lemma~\ref{lemma:sea:min}, $h$
is in $\Lform_1 (X, d)$ and $h \leq h_0$, so $F (h) \leq F (h_0)$.
Since $\dreal$ is antitone in its second argument,
$\dKRH (\nu, F) \leq \dreal (\sum_{i=1}^n a_i h_0 (x_i), F (h))$, and
since $h_0 (x_i) = h (x_i)$ for each $i$,
$\dKRH (\nu, \nu') \leq \dreal (\sum_{i=1}^n a_i h (x_i), F (h))$.
The reverse inequality is by definition of $\dKRH$.  \qed

\subsection{Topologies on Spaces of Previsions}
\label{sec:topol-spac-prev}


There is a natural topology on $\Prev X$, called the \emph{weak}
topology, or the \emph{Scott weak$^*$} topology:
\begin{defi}[Weak topology]
  \label{defn:weak}
  For every topological space $X$, the \emph{weak topology} on
  $\Prev X$ has subbasic open sets
  $[h > b] = \{F \in \Prev X \mid F (h) > b\}$, $h \in \Lform X$,
  $b \in \Rp$.
\end{defi}
In other words, the weak topology is the coarsest topology such that
$F \mapsto F (h)$ is continuous from $\Prev X$ to $\creal$ (with its
Scott topology), for every $h \in \Lform X$.

When $X, d$ is a standard quasi-metric space, another subbase for the
weak topology is given by the open sets $[h > b]$, where now $h$ is
restricted to be 1-Lipschitz continuous.  This is because, for
$h \in \Lform X$, $h = \sup_{\alpha > 0} h^{(\alpha)}$, so
$[h > b] = \bigcup_{\alpha > 0} [1/\alpha h^{(\alpha)} > b/\alpha]$.

By the same argument, this time using the fact that
$h = \sup_{\beta > 0} \min (h, \beta)$, hence that
$[h > b] = \bigcup_{\beta > 0} [\min (h, \beta) > b]$, the open sets
$[h > b]$ also form a subbase of the weak topology, where $h$ is now
\emph{bounded} and in $\Lform X$.

Now fix $a \in \Rp$, $a > 0$.  For $h$ bounded in $\Lform X$, $h$ is
the directed supremum of bounded Lipschitz continuous maps
$h^{(\alpha)}$, $\alpha > 0$, so the same argument shows that an even
smaller subbase is given by the sets $[h > b]$ where
$h \in \Lform_\infty^\bnd (X, d)$.  By Lemma~\ref{lemma:Linfa}, $h$ is
in $\Lform_\alpha^a (X, d)$ for some $\alpha > 0$.  Then
$[h > b] = [1/\alpha h > b/\alpha]$, so that we can take the sets
$[h > b]$ as a subbase of the weak topology, where $h$ is now in
$\Lform_1^a (X, d)$.  This will be used in the next proposition.

\begin{prop}
  \label{prop:weak:dScott:a}
  Let $X, d$ be a standard quasi-metric space, $a, a' > 0$ with
  $a \leq a'$.  Let $S$ be a subset of the following properties on
  previsions: sublinearity, superlinearity, linearity,
  subnormalization, normalization, discreteness; and let $Y$ denote
  the set of previsions on $X$ satisfying $S$.

  Assume finally that the spaces $Y, \dKRH^a$, $Y, \dKRH^{a'}$ and $Y,
  \dKRH$ are Yoneda-complete and that directed suprema in their spaces
  of formal balls are computed as naive suprema.

  Then we have the following inclusions of topologies on $Y$:
  \begin{quote}
    weak $\subseteq$ $\dKRH^a$-Scott $\subseteq$ $\dKRH^{a'}$-Scott
    $\subseteq$ $\dKRH$-Scott.
  \end{quote}
%
\end{prop}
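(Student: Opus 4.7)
The plan is to establish each of the three inclusions separately, handling the first one by exhibiting a Scott-continuous auxiliary map on formal balls, and handling the two others by showing that the identity map is $1$-Lipschitz continuous (in the sense of Definition~\ref{defn:Lipcont}) between the relevant formal ball dcpos.

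For the inclusion weak $\subseteq$ $\dKRH^a$-Scott, I would exploit the fact, noted in the paragraphs preceding the statement, that the sets $[h > b]$ with $h \in \Lform_1^a (X, d)$ form a subbase of the weak topology on $Y$. Fix such an $h$ and define $\phi \colon \mathbf B (Y, \dKRH^a) \to \real \cup \{+\infty\}$ by $\phi (F, r) = F (h) - r$ (with the convention $+\infty - r = +\infty$). Monotonicity of $\phi$ follows from Lemma~\ref{lemma:LaPrev:ord} applied with $\alpha = 1$, and Scott-continuity follows from the naive supremum formula in Proposition~\ref{prop:LaPrev:simplesup}: for a directed family ${(F_i, r_i)}_{i \in I}$ with naive supremum $(F, r)$, we have $F (h) - r = \sup_{i \in I} (F_i (h) - r_i)$, which is a directed supremum. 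Hence $\phi^{-1} (]b, +\infty])$ is Scott-open in $\mathbf B (Y, \dKRH^a)$, and its intersection with $Y$ (under the embedding $F \mapsto (F, 0)$) is precisely $[h > b]$. So $[h > b]$ is $\dKRH^a$-Scott open.

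For the inclusion $\dKRH^a$-Scott $\subseteq$ $\dKRH^{a'}$-Scott, I would work through the identity map. Since $\Lform_1^a (X, d) \subseteq \Lform_1^{a'} (X, d)$, we have $\dKRH^a \leq \dKRH^{a'}$ as quasi-metrics, which immediately gives that $\identity Y \colon \mathbf B (Y, \dKRH^{a'}) \to \mathbf B (Y, \dKRH^a)$ is monotonic with respect to the two formal-ball orderings. For Scott-continuity, take a directed family ${(F_i, r_i)}_{i \in I}$ in $\mathbf B (Y, \dKRH^{a'})$ with supremum $(F, r)$. By hypothesis this supremum is the naive one, so $r = \inf_i r_i$ and $F (h) = \sup_i (F_i (h) + \alpha r - \alpha r_i)$ for every $h \in \Lform_\alpha^{a'} (X, d)$ and every $\alpha > 0$. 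The same family is directed in $\mathbf B (Y, \dKRH^a)$ and has a supremum there, again the naive one, which by the same formula but restricted to $\Lform_\alpha^a (X, d)$ produces a prevision $F'$ that agrees with $F$ on $\bigcup_{\alpha > 0} \Lform_\alpha^a (X, d) = \Lform_\infty^a (X, d)$, in particular on $\Lform_1^a (X, d)$. Corollary~\ref{corl:=:Lbnd1} then gives $F = F'$, and the two suprema coincide. Thus $\identity Y$ is $1$-Lipschitz continuous in the sense of Definition~\ref{defn:Lipcont}, and by the (1)~$\limp$~(2) direction of Proposition~\ref{prop:cont} it is continuous from the $\dKRH^{a'}$-Scott topology to the $\dKRH^a$-Scott topology, i.e.\ every $\dKRH^a$-Scott open is $\dKRH^{a'}$-Scott open.

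The inclusion $\dKRH^{a'}$-Scott $\subseteq$ $\dKRH$-Scott is obtained by exactly the same argument: $\Lform_1^{a'} (X, d) \subseteq \Lform_1 (X, d)$ yields $\dKRH^{a'} \leq \dKRH$ (this is also contained in Lemma~\ref{lemma:KRH:KRHa}), the identity map $\mathbf B (Y, \dKRH) \to \mathbf B (Y, \dKRH^{a'})$ is monotonic, and naive suprema for $\dKRH$ and $\dKRH^{a'}$ define the same prevision because they agree on $\Lform_\infty^{a'} (X, d) \supseteq \Lform_1^{a'} (X, d)$; Corollary~\ref{corl:=:Lbnd1} again forces equality, so Scott-continuity on formal balls holds, and Proposition~\ref{prop:cont} yields continuity of the identity between the two $d$-Scott topologies. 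The main conceptual point to be careful about is that the naive-supremum formula is parametrized by $\alpha > 0$ and by the bound (none, $a$, or $a'$), so one must check that the different formulas agree on a subspace of test functions that is rich enough to separate previsions; this is exactly what Corollary~\ref{corl:=:L1} and Corollary~\ref{corl:=:Lbnd1} guarantee, and that is the only subtle step in the argument.
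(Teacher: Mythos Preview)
Your proposal is correct and follows essentially the same route as the paper. For the first inclusion, your map $\phi(F,r)=F(h)-r$ is exactly the paper's auxiliary device (the paper writes the corresponding Scott-open set as $[h>b]^+=\{(F,r)\mid F(h)>r+b\}$); for the remaining two inclusions, the paper argues directly that every Scott-open subset of $\mathbf B(Y,\dKRH^a)$ is Scott-open in $\mathbf B(Y,\dKRH^{a'})$ (and similarly for $\dKRH$), which is precisely your Scott-continuity of $\mathbf B^1(\identity Y)$, and the identification of the two naive suprema via Corollary~\ref{corl:=:Lbnd1} is the same step the paper uses (though the paper leaves the reference implicit).
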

\proof \emph{First inclusion.}  We use yet another topology, this time
on $\mathbf B (Y, \dKRH^a)$.  Let $[h > b]^+$ be defined as the set of
those formal balls $(F, r)$ with $F \in Y$ such that $F (h) > r+b$.
The \emph{weak$^+$ topology} on $\mathbf B (Y, \dKRH^a)$ has a subbase
of open sets given by the sets $[h > b]^+$, for every
$h \in \Lform_1^a (X, d)$ and $b \in \Rp$.

$[h > b]^+$ is upwards-closed in $\mathbf B (Y, \dKRH^a)$.  Indeed,
assume that $(G, r) \in [h > b]^+$, where $h \in \Lform_1^a (X, d)$,
and that $(G, r) \leq^{\dKRH^{a+}} (G', r')$.  By
Lemma~\ref{lemma:LaPrev:ord}~(2), $G (h) - r \leq G' (h) - r'$.  Since
$G (h) > r+b$, $G' (h) > r'+b$, namely $G' \in [h > b]^+$.

To show that $[h > b]^+$ is Scott-open in $\mathbf B (Y, \dKRH^a)$,
let ${(G_i, r_i)}_{i \in I}$ be a directed family with (naive)
supremum $(G, r)$ in $\mathbf B (Y, \dKRH^a)$ and assume that
$(G, r) \in [h > b]^+$.  Since
$G (h) = \sup_{i \in I} (G_i (h) + r -r_i) > r+b$, $G_i (h) > r_i + b$
for some $i \in I$, i.e., $(G_i, r_i)$ is in $[h > b]^+$.

We can now proceed to show that $[h > b]$ is $\dKRH^a$-Scott open, for
every $h \in \Lform_1^a (X, d)$.  Equating $Y$ with the subset of all
formal balls $(G, 0)$, $G \in Y$, $[h > b]$ is equal to
$Y \cap [h > b]^+$.  Since $[h > b]^+$ is Scott-open, $[h > b]$ is
$\dKRH^a$-Scott open.

\emph{Second and third inclusions.}
The proofs of the second and third inclusions are similar.  We rely on
the easily checked inequalities $\dKRH^a \leq \dKRH^{a'} \leq \dKRH$,
for $a \leq a'$.  This implies that: $(*)$ $(F, r) \leq^{\dKRH^+} (F',
s)$ implies $(F, r) \leq^{\dKRH^{a'+}} (F', s)$, and that $(F, r)
\leq^{\dKRH^{a'+}} (F', s)$ implies $(F, r) \leq^{\dKRH^{a+}} (F',
s)$.

Let $\mathcal U$ be a Scott-open subset of $\mathbf B (Y,
\dKRH^a)$.  Since $\mathcal
U$ is upwards-closed in
$\leq^{\dKRH^{a+}}$, it is also upwards-closed in
$\leq^{\dKRH^{a'+}}$ by $(*)$.  Assume now that $(F, r) \in \mathcal
U$ is the supremum in $\mathbf B (Y, \dKRH^{a'})$ of a family ${(F_i,
  r_i)}_{i \in I}$ that is directed with respect to
$\leq\dKRH^{a'+}$.  By $(*)$ again, ${(F_i, r_i)}_{i \in
  I}$ is directed with respect to $\leq\dKRH^{a+}$.  Therefore ${(F_i,
  r_i)}_{i \in I}$ has a supremum $(F', r')$ in $\mathbf B (Y,
\dKRH^a)$ that is uniquely characterized by $r'=\inf_{i \in I}
r_i$, $F' (h) = \sup_{i \in I} (F_i (h) + r - r_i)$ for every $h \in
\Lform_1^a (X,
d)$, since suprema are assumed to be naive.  However, by naivety
again, $r = \inf_{i \in I} r_i$ and $F (h) = \sup_{i \in I} (F_i (h) +
r - r_i)$ for every $h \in \Lform_1^{a'} (X,
d)$.  This holds in particular for every $h \in \Lform_1^a (X,
d)$, so $F=F'$.  We have therefore obtained that $(F,
r)$ is also the supremum of the directed family ${(F_i, r_i)}_{i \in
  I}$ in $\mathbf B (Y, \dKRH^a)$.  Since $(F, r) \in \mathcal
U$, some $(F_i, r_i)$ is also in $\mathcal U$ since $\mathcal
U$ is Scott-open in $\mathbf B (Y,
\dKRH^a)$.  This shows that $\mathcal U$ is Scott-open in $\mathbf B
(Y,
\dKRH^{a'})$.  Similarly, we show that every Scott-open subset of
$\mathbf B (Y, \dKRH^{a'})$ is Scott-open in $\mathbf B (Y,
\dKRH)$.

By taking intersections $\mathcal U \cap Y$, it follows that every
$\dKRH^a$-Scott open is $\dKRH^{a'}$-Scott open, and that every
$\dKRH^{a'}$-Scott open is $\dKRH$-Scott open.  \qed

We shall see that all the mentioned topologies are in fact equal in a
number of interesting cases.  Proposition~\ref{prop:weak:dScott:a}
is the common core of those results.



\subsection{Extending Previsions to Lipschitz Maps}
\label{sec:extend-prev-lipsch}

A prevision $F$ can be thought as a generalized form of integral: for
every $h \in \Lform X$, $F (h)$ is the integral of $h$.  This will be
an actual integral in case $F$ is a linear prevision, and we shall
study that case in Section~\ref{sec:case-cont-valu}.

In particular, $F (h)$ makes sense when $h$ is $\alpha$-Lipschitz
continuous.  As we now observe, the results of
Section~\ref{sec:falpha-f-alpha} imply that $F (h)$ also makes sense
when $h$ is $\alpha$-Lipschitz, not necessarily continuous.
This will be an important gadget in Section~\ref{sec:minimal-transport}.

Recall that $L_\alpha (X, d)$ is the space of $\alpha$-Lipschitz maps
from $X, d$ to $\creal$, whereas $\Lform_\alpha (X, d)$ is the space
of $\alpha$-Lipschitz continuous maps.  We write $L_\infty (X, d)$ for
$\bigcup_{\alpha > 0} L_\alpha (X, d)$.
\begin{defi}[$\extF F$]
  \label{defn:extF}
  Let $X, d$ be a continuous quasi-metric space.  For every prevision
  $F$ on $X$, define $\extF F \colon L_\infty (X, d) \to \creal$ by
  $\extF F (h) = F (h^{(\alpha)})$, for every $h \in L_\alpha (X, d)$,
  $\alpha > 0$.
\end{defi}
This is well-defined since $F (h^{(\alpha)})$ does not depend on
$\alpha$, as long as $h$ is $\alpha$-Lipschitz.  In other words, if
$h$ is also $\beta$-Lipschitz, then computing $\extF F (h)$ as
$F (h^{(\beta)})$ necessarily yields the same value, by
Corollary~\ref{corl:ha=hb}.

\begin{lem}
  \label{lemma:extF:prev}
  Let $X, d$ be a continuous quasi-metric space.  For every prevision
  $F$ on $X$,
  \begin{enumerate}
  \item $\extF F$ is a monotonic, positively homogeneous map from
    $L_\infty (X, d)$ to $\creal$;
  \item $\extF F$ is sublinear, resp.\ superlinear, resp.\ linear,
    resp.\ subnormalized, resp.\ normalized if $F$ is;
  \item for every $h \in \Lform_\infty (X, d)$, $\extF F (h) = F (h)$;
  \item the restriction of $\extF F$ to $\Lform_\infty (X, d)$ is
    lower semicontinuous.
  \end{enumerate}
\end{lem}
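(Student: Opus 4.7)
The crux is the identity
\[
(h + h')^{(\alpha+\beta)} = h^{(\alpha)} + {h'}^{(\beta)} \quad \text{for } h \in L_\alpha(X,d),\ h' \in L_\beta(X,d),
\]
which reduces the algebraic content of the lemma to the corresponding properties of $F$ on Lipschitz continuous maps. The $\leq$ direction follows from the formula in Lemma~\ref{lemma:largestLipcont}: the left-hand side at $x$ is $\sup_{(y,s) \ll (x,0)} ((h(y) - \alpha s) + (h'(y) - \beta s))$, and each of the two summands is bounded separately by $h^{(\alpha)}(x)$ and ${h'}^{(\beta)}(x)$. The reverse inequality holds because $h^{(\alpha)} + {h'}^{(\beta)}$ is $(\alpha+\beta)$-Lipschitz continuous by Proposition~\ref{prop:alphaLip:props}~(2) and dominated pointwise by $h + h'$, hence below the largest such map $(h+h')^{(\alpha+\beta)}$.

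For item~(1), well-definedness has already been remarked after Definition~\ref{defn:extF} using Corollary~\ref{corl:ha=hb}. Positive homogeneity follows from Lemma~\ref{lemma:ah:(alpha)}: for $a > 0$, $\extF F(ah) = F((ah)^{(a\alpha)}) = F(a h^{(\alpha)}) = a \extF F(h)$, using positive homogeneity of $F$, and the case $a = 0$ is trivial. For monotonicity, if $h \leq h'$ with $h \in L_\alpha(X,d)$ and $h' \in L_\beta(X,d)$, set $\gamma = \max(\alpha,\beta)$; by Corollary~\ref{corl:ha=hb}, $h^{(\alpha)} = h^{(\gamma)}$ and ${h'}^{(\beta)} = {h'}^{(\gamma)}$, and since $h^{(\gamma)}$ is $\gamma$-Lipschitz continuous with $h^{(\gamma)} \leq h \leq h'$, we get $h^{(\gamma)} \leq {h'}^{(\gamma)}$ by maximality of the latter, whence $\extF F(h) \leq \extF F(h')$ by monotonicity of $F$.

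For item~(2), the identity gives $\extF F(h + h') = F(h^{(\alpha)} + {h'}^{(\beta)})$, and applying the sublinearity, superlinearity, or linearity of $F$ transports the property to $\extF F$. For (sub)normalization, I observe that the constant map $\alpha . \mathbf 1$ is $0$-Lipschitz continuous and equal to its own largest $0$-Lipschitz continuous minorant, so the identity specializes to $(\alpha . \mathbf 1 + h)^{(\beta)} = \alpha . \mathbf 1 + h^{(\beta)}$ for $h \in L_\beta(X,d)$; applying the corresponding property of $F$ yields the claim.

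Items~(3) and~(4) are essentially immediate. For~(3), if $h \in \Lform_\alpha(X,d)$ then $h$ is already $\alpha$-Lipschitz continuous, so $h^{(\alpha)} = h$ and $\extF F(h) = F(h)$. For~(4), the restriction of $\extF F$ to $\Lform_\infty(X,d)$ therefore coincides with the restriction of the Scott-continuous map $F \colon \Lform X \to \creal$; since $\Lform_\infty(X,d)$ carries the subspace topology from $\Lform X$, this restriction is continuous into $\creal$ with the Scott topology, hence lower semicontinuous. The only real obstacle is the displayed identity, but it is elementary once Lemma~\ref{lemma:largestLipcont} is available.
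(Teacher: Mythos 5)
Your proof is correct and follows the same overall decomposition as the paper's: reduce everything to identities for the operator $(\cdot)^{(\gamma)}$ (homogeneity via Lemma~\ref{lemma:ah:(alpha)}, additivity via the displayed identity), then transport the corresponding properties of $F$; items~(3) and~(4) are handled identically. The one place where you genuinely diverge is the proof of the additivity identity $(h+h')^{(\alpha+\beta)} = h^{(\alpha)} + {h'}^{(\beta)}$. The paper proves it as an equality of suprema: the families ${(h(y)-\alpha s)}_{(y,s)\ll(x,0)}$ and ${(h'(y)-\beta s)}_{(y,s)\ll(x,0)}$ are directed (since $h$, $h'$ are Lipschitz), so Scott-continuity of addition on $\creal$ lets one split the supremum of the sum into the sum of the suprema. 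You instead split the identity into two inequalities: the easy termwise bound for $\leq$, and for $\geq$ the maximality characterization of $(h+h')^{(\alpha+\beta)}$ together with Proposition~\ref{prop:alphaLip:props}~(2). Both arguments are sound; yours trades the directedness observation for the closure of Lipschitz continuous maps under addition, and has the minor advantage of working uniformly for distinct constants $\alpha \neq \beta$ (which also lets you absorb the (sub)normalization case as the specialization $\alpha = 0$, where the paper simply remarks that $\mathbf 1^{(\alpha)} = \mathbf 1$).
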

\proof (1) If $f \leq g$ in $L_\infty (X, d)$, then let $\alpha > 0$
be such that $f$ and $g$ are $\alpha$-Lipschitz.  Since $f^{(\alpha)}$
is $\alpha$-Lipschitz continuous and is below $f$, it is below $g$,
hence below the largest $\alpha$-Lipschitz continuous map below $g$,
that is $g^{(\alpha)}$.  Therefore
$\extF F (f) = F (f^{(\alpha)}) \leq F (g^{(\alpha)}) = \extF F (g)$.

For every $h \in L_\alpha (X, d)$, and for every $a \in \Rp$, $ah$ is
in $L_{a\alpha} (X, d)$, so $\extF F (ah) = F ((ah)^{(a\alpha)})$.
That is equal to $F (a h^{(\alpha)})$ by Lemma~\ref{lemma:ah:(alpha)},
hence to $a F (h^{(\alpha)}) = a \extF F (h)$.

(2) Let $f$, $g$ be in $L_\infty (X, d)$, say in $L_\alpha (X, d)$.
Then $f+g$ is in $L_{2\alpha} (X, d)$, and by
Lemma~\ref{lemma:largestLipcont},
$(f+g)^{(2\alpha)} (x) = \sup_{(y, s) \ll (x, 0)} (f (y) + g (y)
-2\alpha s)$.  The maps $(y, s) \mapsto f (y)-\alpha s$ and
$(y, s) \mapsto g (y)-\alpha s$ are monotonic since $f$ and $g$ are
$\alpha$-Lipschitz, so the families
${(f (y) - \alpha s)}_{(y, s) \ll (x, 0)}$ and
${(g (y) - \alpha s)}_{(y, s) \ll (x, 0)}$ are directed.  Using the
Scott-continuity of $+$ on $\creal$,
$(f+g)^{(2\alpha)} (x) = \sup_{(y, s) \ll (x, 0)} (f (y) - \alpha s) +
\sup_{(y, s) \ll (x, 0)} (g (y) - \alpha s) = f^{(\alpha)} (x) +
g^{(\alpha)} (x)$.  It follows immediately that $\extF F$ is
sublinear, resp.\ superlinear, resp.\ linear when $F$ is.  Similarly
for subnormalized and normalized previsions, since
$\mathbf 1^{(\alpha)} = \mathbf 1$.

(3) For every $h \in \Lform_\infty (X, d)$, say $h \in \Lform_\alpha
(X, d)$, $h^{(\alpha)} = h$, so $\extF F (h) = F (h^{(\alpha)}) = F
(h)$.

(4) By (3), the restriction of $\extF F$ to $\Lform_\infty (X, d)$
coincides with $F$, and we recall that the topology on $\Lform_\infty
(X, d)$ is the subspace topology, induced from the Scott topology on
$\Lform X$.  \qed


\section{The Case of Continuous Valuations}
\label{sec:case-cont-valu}

A \emph{valuation} $\nu$ on a topological space $X$ is a map $\nu
\colon \Open X \to \creal$, where $\Open X$ is the complete lattice of
open subsets of $X$, such that $\nu (\emptyset) = 0$ (strictness), for
all $U, V$ such that $U \subseteq V$, $\nu (U) \leq \nu (V)$
(monotonicity), for all $U$, $V$, $\nu (U \cup V) + \nu (U \cap V) =
\nu (U) + \nu (V)$ (modularity).  It is \emph{continuous} if and only
if it is Scott-continuous, \emph{subnormalized} if and only if $\nu
(X) \leq 1$, and \emph{normalized} if and only if $\nu (X)=1$.

We shall see that continuous valuations are the same thing as linear
previsions.  Before that, we related the notion with the better-known
notion of measure.

\subsection{Continuous Valuations and Measures}
\label{sec:cont-valu-meas}

A measure $\mu$ on the Borel subsets of a topological space is called
\emph{$\tau$-smooth} if and only if, for every directed family of open
sets ${(U_i)}_{i \in I}$,
$\mu (\bigcup_{i \in I} U_i) = \sup_{i \in I} \mu (U_i)$
\cite{Topsoe:topmes}.  A $\tau$-smooth measure $\mu$ gives rise to a
unique continuous valuation by restriction to the open sets.

In general, the restriction of a measure to the open sets is only
\emph{countably continuous}, that is, for every monotone
\emph{sequence}
$U_0 \subseteq U_1 \subseteq \cdots \subseteq U_n \subseteq \cdots$ of
open sets,
$\mu (\bigcup_{n \in \nat} U_n) = \sup_{n \in \nat} \mu (U_n)$.
Following Alvarez-Manilla \emph{et al.\@}  \cite{AMESD:ext:val}, we
call \emph{countably continuous} any valuation that satisfies this
property.

A poset is \emph{$\omega$-continuous} if and only if it is continuous
and has a countable basis $B$: that is, every element $x$ is the
supremum of a directed family of elements $y$ way-below $x$ and in the
countable set $B$.  A continuous poset is $\omega$-continuous if and
only if its Scott topology is countably based
\cite[Proposition~3.1]{Norberg:randomsets}.

Let us call a quasi-metric space $X, d$ \emph{$\omega$-continuous} if
and only if $\mathbf B (X, d)$ is an $\omega$-continuous poset.  As
shown by Edalat and Heckmann, when $X, d$ is a metric space,
$\mathbf B (X, d)$ is a continuous poset, and is $\omega$-continuous
if and only if $X, d$ is separable
\cite[Corollary~10]{EH:comp:metric}.  Hence, for metric spaces,
$\omega$-continuous is synonymous with separable.

\begin{lem}
  \label{lemma:mes=>val}
  Let $X, d$ be an $\omega$-continuous quasi-metric space.  Every
  measure on $X$, with the Borel $\sigma$-algebra of its $d$-Scott
  topology, is $\tau$-smooth, i.e., restricts to a continuous
  valuation on the open sets of $X$.
\end{lem}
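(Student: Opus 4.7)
The plan is to reduce $\tau$-smoothness to the countable continuity that every measure automatically enjoys (by countable additivity), using $\omega$-continuity to replace an arbitrary directed union of open sets by a countable one. So the key preliminary observation is that the $d$-Scott topology on $X$ has a countable base. Indeed, since $X, d$ is $\omega$-continuous, $\mathbf B (X, d)$ is an $\omega$-continuous poset, whose Scott topology is therefore countably based; and the $d$-Scott topology on $X$ is, by definition, the subspace topology induced from $\mathbf B (X, d)$, so it inherits a countable base $\{V_n\}_{n \in \nat}$.

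Now let $\mu$ be a Borel measure on $X$ for its $d$-Scott topology, and let ${(U_i)}_{i \in I}$ be a directed family of open subsets with union $U$. Monotonicity gives $\sup_{i \in I} \mu (U_i) \leq \mu (U)$, so the substance is the reverse inequality. Let $\mathcal N = \{n \in \nat \mid V_n \subseteq U_i \text{ for some } i \in I\}$, and for each $n \in \mathcal N$ pick $i(n) \in I$ with $V_n \subseteq U_{i(n)}$. Every point $x \in U$ lies in some $U_i$, and since $\{V_n\}$ is a base, there exists $n$ with $x \in V_n \subseteq U_i$, hence $n \in \mathcal N$; this shows that $U = \bigcup_{n \in \mathcal N} V_n \subseteq \bigcup_{n \in \mathcal N} U_{i(n)} \subseteq U$. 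Since $\mathcal N$ is countable, we can enumerate it as $n_1, n_2, \ldots$, and then use directedness of ${(U_i)}_{i \in I}$ to recursively choose indices $j_1 \sqsubseteq j_2 \sqsubseteq \cdots$ in $I$ with $U_{j_k} \supseteq U_{i(n_1)} \cup \cdots \cup U_{i(n_k)}$ for every $k$. Then ${(U_{j_k})}_{k \in \nat}$ is an increasing sequence of open sets whose union is $U$.

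Countable additivity of $\mu$ applied to the sequence of Borel sets $U_{j_1} \subseteq U_{j_2} \subseteq \cdots$ yields $\mu (U) = \mu \bigl(\bigcup_{k \in \nat} U_{j_k}\bigr) = \sup_{k \in \nat} \mu (U_{j_k}) \leq \sup_{i \in I} \mu (U_i)$, which combined with monotonicity gives $\mu (U) = \sup_{i \in I} \mu (U_i)$, i.e., $\mu$ is $\tau$-smooth. The continuous valuation on $X$ is then obtained by restricting $\mu$ to $\Open X$. No step looks genuinely hard; the only point that requires a little care is the construction of the countable cofinal sequence inside the directed family, and this is exactly where both $\omega$-continuity (to get countably many basic opens) and directedness of ${(U_i)}_{i \in I}$ (to turn a countable subfamily into a chain with the same union) come in.
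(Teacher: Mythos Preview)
Your proof is correct and, in fact, more direct than the paper's. You work directly on $X$: observe that the $d$-Scott topology is second-countable (as the subspace topology from an $\omega$-continuous poset), then extract a countable increasing subfamily from any directed family of opens and apply countable additivity. The paper instead pushes $\mu$ forward to $\mathbf B (X, d)$ via $\eta_X$, invokes a cited result (Lemma~2.5 of \cite{AMESD:ext:val}, or Heckmann's observation that countably continuous valuations are continuous on second-countable spaces) to conclude that $\eta_X[\mu]$ is a continuous valuation on $\mathbf B (X, d)$, and then pulls the result back to $X$ using the $\widehat{U_i}$ construction: $\mu (U) = \eta_X[\mu] (\bigcup_i \widehat{U_i}) = \sup_i \eta_X[\mu] (\widehat{U_i}) = \sup_i \mu (U_i)$. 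Your argument is essentially a direct proof of Heckmann's observation applied to $X$, bypassing both the pushforward and the $\widehat{U}$ machinery; the paper's route has the advantage of pointing to existing literature, but yours is self-contained. One minor notational point: you write $j_1 \sqsubseteq j_2 \sqsubseteq \cdots$ as if $I$ carries a preorder, whereas only the family $(U_i)$ is assumed directed; what you really want (and what your construction gives, if at each step you choose $j_{k+1}$ above both $U_{j_k}$ and $U_{i(n_{k+1})}$) is $U_{j_1} \subseteq U_{j_2} \subseteq \cdots$.
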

\proof Let $\mu$ be a measure on $X$.  Consider the image measure
$\eta_X [\mu]$ of $\mu$, namely
$\eta_X [\mu] (E) = \mu (\eta_X^{-1} (E))$ for every Borel subset $E$
of $\mathbf B (X, d)$.  Its restriction to the Scott-open subsets of
$\mathbf B (X, d)$ yields a countably continuous valuation.  Lemma~2.5
of \cite{AMESD:ext:val} states that, on an $\omega$-continuous dcpo, a
valuation is continuous if and only if it is countably continuous.
(R. Heckmann observed that this in fact holds on any second countable
topological space.)  Hence $\eta_X [\mu]$ restricts to a continuous
valuation on $\mathbf B (X, d)$.

Let ${(U_i)}_{i \in I}$ be a directed family of $d$-Scott open subsets
of $X$, and $U$ be their union.  We have:
\begin{eqnarray*}
  \mu (U) & = & \eta_X [\mu] (\bigcup_{i \in I} \widehat U_i) \\
  & = & \sup_{i \in I} \eta_X [\mu] (\widehat U_i) = \sup_{i \in I}
        \mu (U_i).
\end{eqnarray*}
\qed

In the reverse direction, we use the following theorem, due to Keimel
and Lawson \cite[Theorem~5.3]{KL:measureext}: every locally finite
continuous valuation on a locally compact sober space extends to a
Borel measure on the Borel $\sigma$-algebra.  A continuous valuation
$\nu$ is \emph{locally finite} if and only if every point has an open
neighborhood $U$ such that $\nu (U) < +\infty$.  We say that it is
\emph{finite} if and only if the measure of every open subset
(equivalently, of the whole space), is finite, and that it is
\emph{$\sigma$-finite} if and only if one can write the whole space as
a countable union of open subsets of finite measure.

\begin{lem}
  \label{lemma:val=>mes}
  Let $X, d$ be a continuous Yoneda-complete quasi-metric space.
  Every locally finite continuous valuation on $X$ extends to a
  measure on $X$ with the Borel $\sigma$-algebra of its $d$-Scott
  topology.
\end{lem}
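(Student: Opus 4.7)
The plan is to invoke the Keimel--Lawson theorem, stated immediately above the lemma: every locally finite continuous valuation on a locally compact sober space extends to a Borel measure on the Borel $\sigma$-algebra. Thus the work reduces to verifying that $X$ with its $d$-Scott topology is both locally compact and sober. Both properties are to be derived from continuous Yoneda-completeness by exploiting that $\mathbf{B}(X,d)$ is a continuous dcpo, hence itself locally compact and sober in its Scott topology, and that $X$ embeds in $\mathbf{B}(X,d)$ as the $G_\delta$-subset $\bigcap_{n\in\nat}V_{1/2^n}$ by Lemma~\ref{lemma:Veps}.

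For sobriety, given a non-empty irreducible closed $C\subseteq X$, I would take $\mathcal{C}$ to be the Scott-closure of $\eta_X(C)$ in $\mathbf{B}(X,d)$. Any $d$-Scott-open $V\subseteq X$ disjoint from $C$ yields $\widehat V$ Scott-open in $\mathbf{B}(X,d)$ disjoint from $\eta_X(C)$, hence from $\mathcal{C}$, whence $\mathcal{C}\cap X=C$. Irreducibility transfers: a Scott-closed decomposition $\mathcal{C}\subseteq\mathcal{C}_1\cup\mathcal{C}_2$ restricts to $C\subseteq(\mathcal{C}_1\cap X)\cup(\mathcal{C}_2\cap X)$, and irreducibility of $C$ together with minimality of $\mathcal{C}$ forces $\mathcal{C}\subseteq\mathcal{C}_i$ for some $i$. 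Sobriety of $\mathbf{B}(X,d)$ then gives $\mathcal{C}=\dc(x,r)$ for a unique $(x,r)$; for any $(y,0)\in\mathcal{C}$ the inequality $d(y,x)\leq -r$ forces $r=0$, so $C=cl_X(\{x\})$. Uniqueness of $x$ comes from $X$ being $T_0$ via the quasi-metric axiom.

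For local compactness, given $x\in U$ open in $X$, I would construct a compact saturated $K\subseteq U$ with $x\in\operatorname{int}_X(K)$ by an interior-refinement argument in $\mathbf{B}(X,d)$, parallel to the proof of Proposition~\ref{prop:Xd:compact}. Using continuity and the interpolation property of $\ll$, pick $(y_0,s_0)\ll(x,0)$ inside $\widehat U$ so that $\mathcal{V}:=\uuarrow(y_0,s_0)$ is a Scott-open neighbourhood of $(x,0)$ contained in $\widehat U$. Then recursively build finitary compact subsets $Q_n\subseteq\mathbf{B}(X,d)$ with $Q_{n+1}\subseteq\operatorname{int}(Q_n)$, $Q_0\subseteq\widehat U$, $r(Q_n)\to 0$, and $\mathcal{V}\subseteq Q_n$ for every $n$. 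Lemma~\ref{lemma:Xd:compact:1} makes $K:=\bigcap_nQ_n$ a compact saturated subset of $X$ contained in $U$, and $\mathcal{V}\cap X$ is an open neighbourhood of $x$ inside $K$.

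The hardest part is securing the condition $\mathcal{V}\subseteq Q_n$ while radii shrink to $0$: at stage $n$ the finite $\upc$-family defining $Q_n$ must cover $\uuarrow(y_0,s_0)$ by formal balls of radius $<1/n$, which amounts to finding a finite covering of $\upc(y_0,s_0)\cap X$ by such small closed balls sitting inside the interior of the previous $Q_{n-1}$. I would do this iteratively by applying Lemma~\ref{lemma:cont:Q} with $\epsilon=1/n$ and taking as neighbourhood the interior of $Q_{n-1}$, using that $\upc(y_0,s_0)\cap X$ is compact saturated in $X$ (as a filtered intersection with $Q_n$ having radii tending to $0$, cf.\ Proposition~\ref{prop:Xd:compact}). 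Once $X$ is established as locally compact sober, the Keimel--Lawson theorem delivers the required Borel measure extension of $\nu$.
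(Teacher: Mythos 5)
There is a fatal gap: your plan hinges on $X$ being locally compact in its $d$-Scott topology, and that is simply false for general continuous Yoneda-complete quasi-metric spaces. Baire space $\nat^\nat$ with its usual metric is complete metric, hence continuous (even algebraic) Yoneda-complete, and its $d$-Scott topology is the usual metric topology, which is famously not locally compact (the paper itself points this out in Remark~\ref{rem:ADN}). Your recursive construction breaks down exactly where you flag the difficulty: to get $\mathcal V = \uuarrow (y_0, s_0) \subseteq Q_n$ with $r(Q_n) \to 0$ you would need $B^d_{y_0, < s_0}$ to be covered by finitely many closed balls of radius $< 1/n$, i.e.\ you would need open balls to be totally bounded, which fails in Baire space (an open ball there contains infinitely many points pairwise at distance $1$). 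So Keimel--Lawson cannot be applied to $X$ directly. (Your sobriety argument, by contrast, is essentially fine, and sobriety of continuous Yoneda-complete spaces is a known fact.)

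The fix is to apply Keimel--Lawson not to $X$ but to $\mathbf B(X, d)$, which \emph{is} locally compact and sober because it is a continuous dcpo. This is what the paper does: push the (first, finite) valuation $\nu$ forward to $\eta_X[\nu]$ on $\mathbf B(X, d)$, extend that to a Borel measure $\mu$ on $\mathbf B(X, d)$ by Keimel--Lawson, and then restrict $\mu$ to $X$. The restriction is legitimate because $X = \bigcap_{n} V_{1/2^n}$ is a $G_\delta$, hence Borel, subset of $\mathbf B(X, d)$ (Lemma~\ref{lemma:Veps}), so every Borel subset of $X$ is Borel in $\mathbf B(X, d)$ and $\mu_{|X}$ is a measure on $X$ agreeing with $\nu$ on opens. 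Finally, the locally finite case reduces to the finite case via Corollary~3.7 of Keimel and Lawson's paper, a reduction your write-up also omits. The moral is that one should never need $X$ itself to be locally compact here; all the topological regularity is borrowed from the space of formal balls.
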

\proof We start with a finite continuous valuation $\nu$ on $X$.
Consider its image $\eta_X [\nu]$, defined by
$\eta_X [\nu] (\mathcal U) = \nu (\eta_X^{-1} (\mathcal U))$ for every
Scott-open subset $\mathcal U$ of $\mathbf B (X, d)$.  This is a
finite continuous valuation on a continuous dcpo.  Every continuous dcpo is
locally compact and sober, so we can apply Keimel and Lawson's
theorem.  Let $\mu$ be an extension of $\eta_X [\nu]$ to the Borel
subsets of $\mathbf B (X, d)$.  Since $X, d$ is Yoneda-complete, it is
standard, so $X$ is a $G_\delta$ subset of $\mathbf B (X, d)$
(Lemma~\ref{lemma:Veps}).  Every Borel subset of $X$ is then also
Borel in $\mathbf B (X, d)$, hence $\mu$ restricts to a measure
$\mu_{|X}$ on $X$.  For every open subset $U$ of $X$,
$\mu_{|X} (U) = \eta_X [\nu] (U) = \nu (U)$, so $\mu_{|X}$ is a
measure extending $\nu$.

We now invoke Corollary~3.7 of \cite{KL:measureext}: for every locally
finite continuous valuation $\nu$ on a topological space, if the
finite continuous valuations $\nu_V$ defined by
$\nu_V (U) = \nu (U \cap V)$, where $V$ ranges over the open subsets
such that $\nu (V) < +\infty$, all extend to measures on the Borel
$\sigma$-algebra, then $\nu$ also extends to a measure on the Borel
$\sigma$-algebra.  That is enough to conclude.  \qed

Together with the fact that, by the $\lambda\pi$-theorem, any two
finite measures that agree on the $\pi$-system of open sets agree on
the whole Borel $\sigma$-algebra, we obtain:
\begin{prop}[Finite measures=finite continuous valuations]
  \label{prop:mes=val}
  On an $\omega$-continuous Yoneda-complete quasi-metric space, there
  is a bijective correspondence between finite continuous valuations
  and finite measures, defined by restriction to open sets in one
  direction and by extension to the Borel $\sigma$-algebra in the
  other direction.
\end{prop}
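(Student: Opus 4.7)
The plan is to combine the two preceding lemmas with a uniqueness argument based on the Dynkin $\pi$-$\lambda$ theorem. In one direction, start with a finite Borel measure $\mu$ on $X$ (with its $d$-Scott topology). Lemma~\ref{lemma:mes=>val} shows that the restriction $\nu = \mu_{|\Open X}$ is a continuous valuation on $X$, and clearly $\nu (X) = \mu (X) < +\infty$, so $\nu$ is finite. In the other direction, start with a finite continuous valuation $\nu$; since any finite continuous valuation is locally finite (every point lies in the open set $X$ of finite measure), Lemma~\ref{lemma:val=>mes} produces an extension $\mu$ of $\nu$ to a measure on the Borel $\sigma$-algebra, and $\mu (X) = \nu (X) < +\infty$, so $\mu$ is finite.

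It remains to verify that these two operations are mutually inverse. The direction ``extend then restrict'' is tautological: if $\mu$ is an extension of $\nu$, then by definition of extension $\mu_{|\Open X} = \nu$. The direction ``restrict then extend'' is the substantive one. Starting from a finite Borel measure $\mu$, we form $\nu = \mu_{|\Open X}$ and then take any measure $\mu'$ extending $\nu$ as produced by Lemma~\ref{lemma:val=>mes}. Then $\mu$ and $\mu'$ are two finite Borel measures on $X$ that agree on $\Open X$. Now $\Open X$ is a $\pi$-system (closed under finite intersections) generating the Borel $\sigma$-algebra by definition, and both measures have the same finite total mass $\nu (X)$, so Dynkin's $\pi$-$\lambda$ theorem yields $\mu = \mu'$ on all Borel sets. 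This also shows that the extension produced by Lemma~\ref{lemma:val=>mes} is unique, so the passage $\nu \mapsto \mu$ is a well-defined map and not merely a choice.

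The main conceptual work has already been carried out in Lemmas~\ref{lemma:mes=>val} and~\ref{lemma:val=>mes}; there is no real obstacle at this last step, only the standard measure-theoretic uniqueness argument. The one point that deserves a moment's care is checking that the hypothesis ``finite'' is preserved under each passage, which is clear since both $\mu (X) = \nu (X)$ and total finiteness is what makes the $\pi$-$\lambda$ theorem applicable without additional $\sigma$-finiteness hypotheses.
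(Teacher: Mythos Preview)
Your proposal is correct and follows exactly the approach the paper takes: combine Lemma~\ref{lemma:mes=>val} (restriction) and Lemma~\ref{lemma:val=>mes} (extension) with the $\pi$-$\lambda$ theorem for uniqueness of finite measures agreeing on the $\pi$-system of open sets. You have simply spelled out the details more fully than the paper's one-line justification.
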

This holds in particular for (sub)normalized continuous valuations and
(sub)probability measures.  Note that Proposition~\ref{prop:mes=val}
includes the case of $\omega$-continuous complete metric spaces, that
is, of Polish spaces, since the open ball topology on metric spaces
coincides with the $d$-Scott topology.


\subsection{The Kantorovich-Rubinshte\u\i n-Hutchinson Quasi-Metric on
  Continuous Valuations}
\label{sec:kant-rubinsht-n}

There is an integral with respect to $\nu$, defined by the Choquet
formula
$\int_{x \in X} h (x) d\nu = \int_0^{+\infty} \nu (h^{-1} (]t,
+\infty])) dt$, for every $h \in \Lform X$, where the right-hand side
is an ordinary indefinite Riemann integral.  When $\nu$ is a
continuous valuation, the map $h \mapsto \int_{x \in X} h (x) d\nu$ is
a linear prevision, which is subnormalized, resp.\ normalized, as soon
as $\nu$ is.  This is stated in \cite{Gou-csl07}, and also follows
from \cite{Jones:proba,JP:proba}.  The use of the Choquet formula is
due to Tix \cite{Tix:bewertung}.

The mapping that sends $\nu$ to its associated linear prevision is
one-to-one, and the inverse map sends every linear prevision $F$ to
$\nu$, where $\nu (U) = F (\chi_U)$.  This allows us to treat
continuous valuations, interchangeably, as linear previsions, and
conversely.  This also holds in the subnormalized and normalized
cases.

This bijection is easily turned into an isometry, by defining a
quasi-metric on spaces of continuous valuations by:
\begin{equation}
  \label{eq:V:dKRH}
  \dKRH (\nu, \nu') = \sup_{h \in \Lform_1 X} \dreal \left(\int_{x \in X} h
(x) d\nu, \int_{x \in X} h (x) d\nu'\right),
\end{equation}
and a bounded version by:
\begin{equation}
  \label{eq:V:dKRHa}
  \dKRH^a (\nu, \nu') = \sup_{h \in \Lform_1^a X} \dreal \left(\int_{x \in X} h
(x) d\nu, \int_{x \in X} h (x) d\nu'\right).
\end{equation}

We write $\Val X$ for the space of continuous valuations on $X$, and
$\Val_{\leq 1} X$ and $\Val_1 X$ for the subspaces of subnormalized,
resp.\ normalized, continuous valuations.

\begin{rem}
  \label{rem:KRH:useless}
  For any two continuous valuations $\nu$ and $\nu'$ such that
  $\nu (X) > \nu' (X)$, $\dKRH (\nu, \nu') = +\infty$.  Indeed, let
  $h = a.\mathbf 1$, $a \in \Rp$: the right-hand side of
  (\ref{eq:V:dKRH}) is larger than or equal to
  $a (\nu (X) - \nu' (X))$, hence equal to $+\infty$, by taking
  suprema over $a$.  This incongruity disappears on the subspace
  $\Val_1 X$ of normalized continuous valuations (a.k.a.,
  \emph{probability} valuations).
\end{rem}

The specialization preordering of $\dKRH$, resp.\ $\dKRH^a$, on the
space of linear previsions is the pointwise ordering, as we have
already seen: $F \leq G$ if and only if $F (h) \leq G (h)$ for every
$h \in \Lform X$ (Lemma~\ref{lemma:KRH:qmet},
Lemma~\ref{lemma:KRHa:qmet}).  On $\Val X$, $\Val_{\leq 1} X$ and
$\Val_1 X$, the specialization preordering is then given by
$\nu \leq \nu'$ if and only if $\nu (U) \leq \nu' (U)$ for every open
subset $U$ of $X$.

Note that the map $h \mapsto \int_{x \in X} h (x) d\nu$ is also
Scott-continuous.  This follows from the Choquet formula, and the fact
that Riemann integrals of non-increasing functions commute with
arbitrary pointwise suprema, as noticed by Tix \cite{Tix:bewertung}.

\subsection{Yoneda-Completeness}
\label{sec:yoneda-completeness}

Theorem~\ref{thm:LPrev:sup} and Theorem~\ref{thm:LaPrev:sup} imply:
\begin{fact}
  \label{fact:V:complete}
  $\Val X$, $\Val_{\leq 1} X$ and $\Val_1 X$ are Yoneda-complete as
  soon as $X, d$ is standard and Lipschitz regular.
\end{fact}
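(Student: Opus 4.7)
The plan is to transfer Yoneda-completeness from spaces of linear previsions to spaces of continuous valuations via the bijective isometric correspondence $\nu \mapsto F_\nu$ where $F_\nu (h) = \int_{x \in X} h (x) \, d\nu$. By comparing the definitions~(\ref{eq:V:dKRH}) and~(\ref{eq:dKRH}), this map is an isometry between $(\Val X, \dKRH)$ and the subspace of linear previsions in $(\Prev X, \dKRH)$; similarly for $\dKRH^a$ using~(\ref{eq:V:dKRHa}) and~(\ref{eq:dKRHa}). Moreover, this isometry identifies $\Val_{\leq 1} X$ (resp.\ $\Val_1 X$) with the subspace of linear \emph{and} subnormalized (resp.\ linear \emph{and} normalized) previsions: indeed, on a linear prevision, $F (\alpha . \mathbf 1 + h) = \alpha F (\mathbf 1) + F (h)$, so the subnormalization (resp.\ normalization) inequality reduces to $F (\mathbf 1) \leq 1$ (resp.\ $F (\mathbf 1) = 1$), which under the bijection translates to $\nu (X) \leq 1$ (resp.\ $\nu (X) = 1$).

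Next, since $X, d$ is standard and Lipschitz regular, Proposition~\ref{prop:Linfty:determined} ensures that the topology on $\Lform_\infty (X, d)$ is determined by those of the subspaces $\Lform_\alpha (X, d)$, $\alpha > 0$. Hence Theorem~\ref{thm:LPrev:sup} applies to each of the three subspaces of previsions cut out by any of the combinations \{linear\}, \{linear, subnormalized\}, \{linear, normalized\}, and yields Yoneda-completeness under $\dKRH$, with directed suprema of formal balls computed as naive suprema. The analogous conclusion for $\dKRH^a$ follows identically from Proposition~\ref{prop:Linfty:determined:bnd} together with Theorem~\ref{thm:LaPrev:sup}.

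Finally, any bijective isometry $f$ between quasi-metric spaces induces an order-isomorphism on their spaces of formal balls via $(x, r) \mapsto (f (x), r)$, as noted just after Lemma~\ref{lemma:dKRH:iso}; this order-isomorphism preserves directed suprema, so Yoneda-completeness of the codomain transfers back to Yoneda-completeness of the domain. Pulling the conclusion of the previous paragraph back along $\nu \mapsto F_\nu$ (and its subnormalized/normalized restrictions) yields the stated Yoneda-completeness of $\Val X$, $\Val_{\leq 1} X$, and $\Val_1 X$, for both $\dKRH$ and $\dKRH^a$. There is no real obstacle: the argument is entirely formal, and the only point deserving a moment's attention is the identification of the isometric images with the subspaces of previsions to which Theorems~\ref{thm:LPrev:sup} and~\ref{thm:LaPrev:sup} directly apply.
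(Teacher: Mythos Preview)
Your proof is correct and takes essentially the same approach as the paper: the paper's justification is a single line invoking Theorem~\ref{thm:LPrev:sup} and Theorem~\ref{thm:LaPrev:sup}, which implicitly relies on exactly the isometric identification of $\Val X$, $\Val_{\leq 1} X$, $\Val_1 X$ with the spaces of linear, linear subnormalized, and linear normalized previsions that you have spelled out. You have simply made explicit the bookkeeping (the isometry, the matching of subnormalization/normalization conditions, and the transfer of Yoneda-completeness along a bijective isometry) that the paper leaves to the reader.
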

We shall see that they are also Yoneda-complete if $X, d$ is
continuous Yoneda-complete, and not necessarily Lipschitz regular.

For that, considering the results of Section~\ref{sec:supports}, we
shall naturally look at supports.
\begin{lem}
  \label{lemma:support}
  For a continuous valuation $\nu$ on $Y$, $A$ is a support of $\nu$
  if and only if, for all open subsets $U$, $V$ of $Y$ such that $U
  \cap A = V \cap A$, $\nu (U)=\nu (V)$.
\end{lem}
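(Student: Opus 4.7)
The plan is to unwind the definitions on both sides and then use the Choquet formula to move between $\Lform Y$-level information and open-set-level information.

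First I would handle the forward direction. Assume $A$ is a support of $\nu$, viewed as a linear prevision $F \colon h \mapsto \int_{x \in Y} h(x)\, d\nu$. Given open sets $U, V$ with $U \cap A = V \cap A$, the indicator maps $\chi_U, \chi_V \in \Lform Y$ have the same restriction to $A$, so by Definition~\ref{defn:support} we get $F(\chi_U) = F(\chi_V)$, which is exactly $\nu(U) = \nu(V)$.

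The interesting direction is the converse. Assume that $\nu(U) = \nu(V)$ whenever $U \cap A = V \cap A$, and pick $g, h \in \Lform Y$ with $g_{|A} = h_{|A}$. For each $t \in \Rp$, the open sets $U_t = g^{-1}(]t, +\infty])$ and $V_t = h^{-1}(]t, +\infty])$ satisfy $U_t \cap A = V_t \cap A$, because for $x \in A$ the values $g(x)$ and $h(x)$ coincide. The hypothesis then gives $\nu(U_t) = \nu(V_t)$ for every $t$. Applying the Choquet formula
\[
\int_{x \in Y} h(x)\, d\nu = \int_0^{+\infty} \nu(h^{-1}(]t, +\infty]))\, dt,
\]
recalled just before (\ref{eq:V:dKRH}), the two indefinite Riemann integrals are pointwise equal integrands, so $F(g) = F(h)$, proving that $A$ is a support of $\nu$.

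There is really no obstacle here: the statement is essentially a reformulation of the support condition in terms of open sets, and the only tool one needs beyond the definitions is the Choquet representation of the integral, which reduces equality of integrals to equality of measures of superlevel open sets. I would present the argument as two short paragraphs, one for each implication, with the Choquet formula doing the work in the non-trivial direction.
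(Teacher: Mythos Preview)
Your proposal is correct and follows essentially the same approach as the paper: both directions use indicator functions for the forward implication and the Choquet formula applied to superlevel sets for the converse.
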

\proof
Write $F (h)$ for $\int_{x \in X} h (x) d\nu$.
If $A$ is a support of $\nu$, and $U$ and $V$ are two open subsets
such that $U \cap A = V \cap A$, then $\chi_U$ and $\chi_V$ have the
same restriction to $A$, so $\nu (U) = F (\chi_U) = F (\chi_V) = \nu
(V)$.

Conversely, assume that for all open subsets $U$, $V$ of $Y$ such that
$U \cap A = V \cap A$, $\nu (U)=\nu (V)$.  Consider two maps
$g, h \in \Lform X$ with the same restriction to $A$.  For every
$t \in \Rp$,
$g^{-1} (]t, +\infty]) \cap A = g_{|A}^{-1} (]t, +\infty]) =
h_{|A}^{-1} (]t, +\infty]) = h^{-1} (]t, +\infty]) \cap A$, so
$F (g) = \int_0^{+\infty} \nu (g^{-1} (]t, +\infty])) dt =
\int_0^{+\infty} \nu (h^{-1} (]t, +\infty])) dt = F (h)$.  \qed

We recall that every locally finite continuous valuation on a locally
compact sober space extends to a Borel measure on the Borel
$\sigma$-algebra \cite[Theorem~5.3]{KL:measureext}.
\begin{lem}
  \label{lemma:V:supp}
  Let $Y$ be a locally compact sober space, and $\nu$ be a finite
  continuous valuation on $Y$.  Let
  $W_0 \supseteq W_1 \supseteq \cdots \supseteq W_n \supseteq \cdots$
  be a non-increasing sequence of open subsets of $Y$.  If $\nu$ is
  supported on each $W_n$, $n \in \nat$, then $\nu$ is supported on
  $\bigcap_{n \in \nat} W_n$.
\end{lem}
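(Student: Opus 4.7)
\medskip\noindent\textbf{Proof plan.} The plan is to reduce the statement to a classical continuity-from-above property of the Borel measure that extends $\nu$. Since $Y$ is locally compact sober and $\nu$ is finite, hence locally finite, the Keimel--Lawson theorem (Theorem~5.3 of \cite{KL:measureext}) cited just before the lemma produces a Borel measure $\mu$ on $Y$ with $\mu(U) = \nu(U)$ for every open $U$. This measure $\mu$ is finite, because $\mu(Y) = \nu(Y) < +\infty$.

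The first key observation is a convenient reformulation of ``supported on $W_n$'' for a continuous valuation. Using Lemma~\ref{lemma:support}, $\nu$ is supported on $W_n$ if and only if, for every open subset $U$ of $Y$, $\nu(U) = \nu(U \cap W_n)$ (the ``only if'' direction is obtained by specializing $V$ in Lemma~\ref{lemma:support} to $U \cap W_n$, which trivially satisfies $(U \cap W_n) \cap W_n = U \cap W_n$; the ``if'' direction is immediate). In measure-theoretic terms, the hypothesis therefore reads $\mu(U) = \mu(U \cap W_n)$ for every open $U$ and every $n \in \nat$.

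Now set $W = \bigcap_{n \in \nat} W_n$ and fix an open subset $U$ of $Y$. Each $W_n$ is open and $W$ is Borel, so the sets $U \cap W_n$ are Borel, form a non-increasing sequence, and have intersection $U \cap W$. Since $\mu$ is a finite measure, continuity from above gives
\[
\mu(U \cap W) \;=\; \lim_{n \to \infty} \mu(U \cap W_n) \;=\; \lim_{n \to \infty} \mu(U) \;=\; \mu(U).
\]
Hence for any two open subsets $U$, $V$ with $U \cap W = V \cap W$, we obtain
\[
\nu(U) \;=\; \mu(U) \;=\; \mu(U \cap W) \;=\; \mu(V \cap W) \;=\; \mu(V) \;=\; \nu(V),
\]
and Lemma~\ref{lemma:support} then yields that $\nu$ is supported on $W$.

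The only non-trivial ingredient is the invocation of the Keimel--Lawson extension theorem; once $\nu$ is realized as the restriction of a genuine Borel measure, everything else is pure monotone-convergence bookkeeping. No assumption beyond finiteness (plus local compactness and sobriety of $Y$) is needed, and the restriction to a decreasing \emph{sequence} (rather than an arbitrary filtered family) is exactly what makes the standard continuity-from-above of a finite measure applicable.
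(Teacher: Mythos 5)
Your proof is correct and follows essentially the same route as the paper: extend $\nu$ to a finite Borel measure via the Keimel--Lawson theorem, use Lemma~\ref{lemma:support} to restate the support hypothesis as $\nu(U)=\nu(U\cap W_n)$ for all open $U$, and then apply continuity from above of the finite measure along the decreasing sequence $U\cap W_n$ to get $\nu(U)=\nu(U\cap W)$. Nothing to add.
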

\proof Extend $\nu$ to a measure on the Borel $\sigma$-algebra of $Y$,
and continue to write $\nu$ for the extension.  Clearly, that
extension is a finite measure, namely $\nu (Y) < +\infty$.  Let
$A = \bigcap_{n \in \nat} W_n$.  $A$ is a $G_\delta$ set, hence is
Borel.  For every open subset $U$ of $Y$, $U$ and $U \cap W_n$ have
the same intersection with $W_n$, so, using the fact that $W_n$ is a
support of $\nu$, $\nu (U ) = \nu (U \cap W_n)$.  It follows that
$\nu (U) = \inf_{n \in \nat} \nu (U \cap W_n)$.  For a finite measure
$\nu$, the latter is equal to
$\nu (\bigcap_{n \in \nat} (U \cap W_n)) = \nu (U \cap A)$.

Now, if $U$ and $V$ are any two open subsets such that $U \cap A = V
\cap A$, then $\nu (U) = \nu (U \cap A) = \nu (V \cap A) = \nu (V)$.  \qed

\begin{thm}[Yoneda-completeness, valuations]
  \label{thm:V:complete}
  Let $X, d$ be a standard quasi-metric space, and assume that it is
  either Lipschitz regular, or continuous Yoneda-complete.  Then the
  spaces $\Val_{\leq 1} X$ and $\Val_1 X$, equipped with the $\dKRH$,
  resp.\ the $\dKRH^a$ quasi-metric ($a > 0$), are Yoneda-complete.

  Moreover, directed suprema $(\nu, r)$ of formal balls
  ${(\nu_i, r_i)}_{i \in I}$ are computed as naive suprema:
  $r = \inf_{i \in I} r_i$ and for every $h \in \Lform_\alpha (X, d)$
  (resp., in $\Lform_\alpha^a (X, d)$), 
  \begin{equation}
    \label{eq:V:sup}
    \int_{x \in X} h (x) d\nu =
    \sup_{i \in I} \left(\int_{x \in X} h (x) d\nu_i + \alpha r - \alpha r_i\right).
  \end{equation}
\end{thm}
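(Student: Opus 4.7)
The plan is to handle the two hypotheses separately, with the Lipschitz-regular case being immediate and the continuous Yoneda-complete case requiring a detour through measure theory.

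\textbf{The Lipschitz-regular case.} Continuous valuations on $X$ correspond bijectively and isometrically to linear previsions on $X$ via the Choquet integral, and under this correspondence (sub)normalization of valuations matches (sub)normalization of previsions. Thus $\Val_{\leq 1} X$, resp.\ $\Val_1 X$, is just the space of linear subnormalized (resp., normalized) previsions on $X$. Invoking Theorem~\ref{thm:LPrev:sup} (for $\dKRH$) and Theorem~\ref{thm:LaPrev:sup} (for $\dKRH^a$) with $S = \{\text{linearity, subnormalization}\}$ or $S = \{\text{linearity, normalization}\}$ yields Yoneda-completeness, and directed suprema are computed as naive suprema, giving~(\ref{eq:V:sup}).

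\textbf{The continuous Yoneda-complete case.} Here $X, d$ need not be Lipschitz regular, so we will instead apply Proposition~\ref{prop:supp:complete} with the same choices of $S$. The work is to verify its hypothesis: every linear (sub)normalized prevision $G$ on $\mathbf B (X, d)$ that is supported on $V_{1/2^n}$ for every $n \in \nat$ must be supported on $X$. By the prevision/valuation correspondence applied now on $\mathbf B (X, d)$, such a $G$ corresponds to a continuous valuation $\mu$ on $\mathbf B (X, d)$ with $\mu (\mathbf B (X, d)) \leq 1$, so $\mu$ is finite. Since $X, d$ is continuous Yoneda-complete, $\mathbf B (X, d)$ is a continuous dcpo, hence locally compact and sober (Proposition~8.2.12 and Corollary~5.1.36 of \cite{JGL-topology}). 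By Lemma~\ref{lemma:val=>mes} (or directly Keimel--Lawson), $\mu$ extends to a finite Borel measure on $\mathbf B (X, d)$. Now Lemma~\ref{lemma:V:supp}, applied to the non-increasing sequence $W_n = V_{1/2^n}$ of open subsets of the locally compact sober space $\mathbf B (X, d)$, implies that $\mu$ is supported on $\bigcap_{n \in \nat} V_{1/2^n} = X$ (Lemma~\ref{lemma:Veps}). Translating back through the correspondence, $G$ is supported on $X$, which is exactly the hypothesis of Proposition~\ref{prop:supp:complete}.

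Proposition~\ref{prop:supp:complete} then gives Yoneda-completeness of $\Val_{\leq 1} X, \dKRH$ and $\Val_1 X, \dKRH$ (and similarly for $\dKRH^a$), with directed suprema computed as naive suprema; unpacking the naive-supremum formula of Proposition~\ref{prop:LPrev:simplesup}, resp.\ Proposition~\ref{prop:LaPrev:simplesup}, under the Choquet identification $F (h) = \int_{x \in X} h (x)\,d\nu$, gives precisely~(\ref{eq:V:sup}). The main obstacle in this argument is precisely the passage from ``$G$ is supported on each $V_{1/2^n}$'' to ``$G$ is supported on their intersection $X$''; this is where the linearity plus finiteness of $G$ is essential, since it lets us invoke measure-theoretic continuity of $\mu$ along the sequence $W_n$, a property that need not hold for arbitrary (non-linear) previsions.
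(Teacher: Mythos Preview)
Your proposal is correct and follows essentially the same approach as the paper: the Lipschitz-regular case is dispatched via Theorem~\ref{thm:LPrev:sup} and Theorem~\ref{thm:LaPrev:sup}, and the continuous Yoneda-complete case goes through Proposition~\ref{prop:supp:complete} after verifying the support hypothesis via Lemma~\ref{lemma:V:supp} on the locally compact sober space $\mathbf B(X,d)$. Your explicit mention of the Keimel--Lawson measure extension is already absorbed inside Lemma~\ref{lemma:V:supp}, so that step is redundant but harmless.
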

\proof When $X, d$ is standard and Lipschitz regular, this follows
from Theorem~\ref{thm:LPrev:sup} and Theorem~\ref{thm:LaPrev:sup}.

When $X, d$ is continuous Yoneda-complete, $Y = \mathbf B (X, d)$ is a
continuous dcpo.  Every continuous dcpo is locally compact and sober
in its Scott topology: in fact, the continuous dcpos are exactly the
sober c-spaces \cite[Proposition~8.3.36]{JGL-topology}, and the
property of being a c-space is a strong form of local compactness.
Moreover, all subnormalized and all normalized continuous valuations
are finite.  We can therefore apply Lemma~\ref{lemma:V:supp}: every
element of $\Val_{\leq 1} (\mathbf B (X, d))$, resp.\
$\Val_1 (\mathbf B (X, d))$, that is supported on $V_{1/2^n}$ for
every $n \in \nat$ is in fact supported on
$\bigcap_{n \in \nat} V_{1/2^n}$, which happens to be $X$
(Lemma~\ref{lemma:Veps}).  The conclusion then follows from
Proposition~\ref{prop:supp:complete}.  \qed

\subsection{Algebraicity of $\Val_{\leq 1} X$}
\label{sec:algebr-cont-val_l}

A \emph{simple valuation} is a finite linear combination
$\sum_{i=1}^n a_i \delta_{x_i}$, where $a_1, \ldots, a_n \in \Rp$.  It
is subnormalized if $\sum_{i=1}^n a_i \leq 1$, normalized if
$\sum_{i=1}^n a_i = 1$.
\begin{lem}
  \label{lemma:V:simple:center}
  Let $X, d$ be a continuous Yoneda-complete quasi-metric space.

  For all center points $x_1$, \ldots, $x_n$ and all non-negative
  reals $a_1$, \ldots, $a_n$ with $\sum_{i=1}^n a_i \leq 1$, the
  simple valuation $\sum_{i=1}^n a_i \delta_{x_i}$ is a center point
  of $\Val_{\leq 1} X, \dKRH$ (resp., of $\Val_1, \dKRH$ if
  additionally $\sum_{i=1}^n a_i = 1$).

  The same result holds with $\dKRH^a$ instead of $\dKRH$, for any
  $a \in \Rp$, $a > 0$.
\end{lem}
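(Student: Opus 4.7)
The goal is to show that for every $\epsilon > 0$ the open ball $B = B^{\partial^+}_{(\mu, 0), < \epsilon} = \{(\nu, r) : \partial(\mu, \nu) + r < \epsilon\}$ is Scott-open in $\mathbf B(Y, \partial)$, where $(Y, \partial)$ stands for any of the four spaces listed in the statement. Upward-closure of $B$ under $\leq^{\partial^+}$ is immediate from the triangular inequality. For Scott-openness the strategy is to prove the stronger property that the map $\Phi \colon (\nu, r) \mapsto \partial(\mu, \nu) + r$ is Scott-continuous from $\mathbf B(Y, \partial)$ to $\creal$ equipped with the Scott topology of the reverse ordering $\geq$: equivalently, $\Phi$ is antitone and sends directed $\leq^{\partial^+}$-suprema to infima in $\creal$, whence $B = \Phi^{-1}([0, \epsilon))$ is Scott-open.

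The key identity is Proposition~\ref{prop:dKRH:max:d}, which expresses
\[
  \partial(\mu, F) \;=\; \max_{\vec b \in K} \bigl( \mu(h_{\vec b}) - F(h_{\vec b}) \bigr)_+,
\]
where $h_{\vec b} = \bigvee_{i=1}^n \sea{x_i}{b_i}$ and the parameter space $K$ is compact Hausdorff: $K = [0, a]^n$ for the $\dKRH^a$ case, and $K = \creal^n$ for the $\dKRH$ case (compact as a finite product of copies of $[0, \infty]$). Setting $F_{\vec b}(\nu, r) := (\mu(h_{\vec b}) - \nu(h_{\vec b}))_+ + r$, one has $\Phi(\nu, r) = \max_{\vec b \in K} F_{\vec b}(\nu, r)$. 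I would first check that each $F_{\vec b}$ is already Scott-continuous from $\mathbf B(Y, \partial)$ to $\creal^{op}$: antitonicity is immediate, and the naive-supremum formula of Theorem~\ref{thm:V:complete} gives $\nu_j(h_{\vec b}) - r_j \uparrow \nu^\infty(h_{\vec b}) - r^\infty$ along a directed family with supremum $(\nu^\infty, r^\infty)$, so both $A_j := \mu(h_{\vec b}) - \nu_j(h_{\vec b}) + r_j$ and $B_j := r_j$ are monotonically decreasing nets, and by directedness $\max(A_j, B_j) \downarrow \max(A_\infty, B_\infty) = F_{\vec b}(\nu^\infty, r^\infty)$.

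The decisive step is to exchange $\max_{\vec b \in K}$ with $\inf_j$, which I would do via Dini's theorem on the compact Hausdorff space $K$. For each fixed $\nu, r$ the function $\vec b \mapsto F_{\vec b}(\nu, r)$ is continuous on $K$: the assignment $\vec b \mapsto h_{\vec b}$ is $1$-Lipschitz in sup norm (from the explicit formula $h_{\vec b}(y) = \max_i (b_i - d(x_i, y))_+$), and the continuous valuation $\nu$---which extends to a finite Borel measure by Lemma~\ref{lemma:val=>mes}---is sup-norm continuous on bounded functions, so $\vec b \mapsto \nu(h_{\vec b})$ is continuous. The net $F_{\vec b}(\nu_j, r_j)$ of continuous functions on $K$ thus decreases pointwise to the continuous function $F_{\vec b}(\nu^\infty, r^\infty)$, and the standard net version of Dini's theorem yields uniform convergence, whence $\Phi(\nu_j, r_j) = \max_K F_{\vec b}(\nu_j, r_j) \to \max_K F_{\vec b}(\nu^\infty, r^\infty) = \Phi(\nu^\infty, r^\infty) < \epsilon$, producing some $(\nu_j, r_j) \in B$ as required. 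The main technical obstacle is the continuity of $F_{\vec b}$ in $\vec b$ at boundary points of $K$ in the unbounded $\dKRH$ case, where components $b_i$ may equal $\infty$: $h_{\vec b}$ becomes unbounded and the $\infty - \infty$ ambiguities in $\mu(h_{\vec b}) - \nu(h_{\vec b})$ must be handled by exploiting the mass equality $\sum a_i = \nu(X)$ of the normalized setting, which forces the leading $b_i$-linear contributions to cancel and restores continuity; the $\dKRH^a$ variant avoids this issue altogether, since $K = [0, a]^n$ is a Euclidean compactum.
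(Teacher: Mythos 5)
Your argument for the bounded quasi-metrics $\dKRH^a$ is correct, but it takes a genuinely different route from the paper's. The paper proves Scott-openness of the ball around $(\mu,0)$ by covering the \emph{function space} $\Lform_1^a(X,d)^\patch$ --- compact by the stable-compactness machinery of Lemma~\ref{lemma:cont:Lalpha:retr} --- with the open sets of functions witnessing the strict inequality at each index $j$, and extracting a finite subcover. You instead use Proposition~\ref{prop:dKRH:max:d} to collapse the supremum over $\Lform_1^a(X,d)$ to a supremum over the finite-dimensional cube $K=[0,a]^n$, and exchange $\max_K$ with the filtered infimum over $j$ by Dini's theorem. This buys a much more elementary compactness argument (a Euclidean cube instead of the patch topology on a space of Lipschitz maps); the price is the reliance on the attainment result of Proposition~\ref{prop:dKRH:max:d} and on the estimate $|\nu(h_{\vec b})-\nu(h_{\vec b'})|\leq\max_i|b_i-b_i'|$ --- which, by the way, needs no measure extension: it follows from monotonicity and linearity of $h\mapsto\nu(h)$ applied to $h_{\vec b}\leq h_{\vec b'}+\max_i|b_i-b_i'|\cdot\mathbf 1$, so the appeal to Lemma~\ref{lemma:val=>mes} is an unnecessary detour. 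Note also that Dini is more than you need: upper semicontinuity of each $\vec b\mapsto F_{\vec b}(\nu_j,r_j)$, compactness of $K$ and directedness already yield the finite-subcover argument, exactly as in the paper but on $K$.

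The unbounded case $\dKRH$ is where the gap lies, and it is not the routine boundary issue you suggest. The function $\vec b\mapsto\dreal(\mu(h_{\vec b}),F(h_{\vec b}))$ is genuinely discontinuous --- not even upper semicontinuous --- at points of $\creal^n$ with infinite coordinates, because $\dreal(+\infty,+\infty)=0$. Concretely, take two points $x,y$ with $d(x,y)=d(y,x)=1$, $\mu=\delta_x$, $\nu=\delta_y$ (both normalized): with $h_b=\sea x b$ one gets $\dreal(\mu(h_b),\nu(h_b))=b-(b-1)=1$ for all finite $b\geq1$, yet at $b=+\infty$ both integrals are $+\infty$ and the value drops to $0$. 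So the mass cancellation does make the finite-$\vec b$ values converge to a finite limit, but that limit disagrees with the value of $F_{\vec b}$ at the boundary as you defined it; ``restores continuity'' is false, and you would have to replace $F_{\vec b}$ at infinite $\vec b$ by its limit values and then re-establish both the pointwise convergence $\inf_jF_{\vec b}(\nu_j,r_j)=F_{\vec b}(\nu^\infty,r^\infty)$ and the continuity of the limit function for the modified object. Worse, your proposed repair relies on normalization, whereas the lemma also asserts the $\dKRH$ statement for the subnormalized space $\Val_{\leq 1}X$: there, e.g.\ $\mu=\delta_x$ and $\nu=\frac12\delta_x$ on a one-point space give $\dreal(\mu(h_b),\nu(h_b))=b/2\to+\infty$ while the value at $b=+\infty$ is again $0$, so the supremum $\dKRH(\mu,\nu)=+\infty$ is not attained on your compactified cube at all and no cancellation is available. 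As written, your proof establishes only the $\dKRH^a$ half of the lemma; the $(\infty,\infty)$ behaviour of $\dreal$ in the unbounded case needs a separate, fully worked-out treatment.
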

\proof We only deal with the case of $\Val_{\leq 1} X$, and of
$\dKRH$, the other cases are similar.  Let
$\nu_0 = \sum_{i=1}^n a_i \delta_{x_i}$,
$U = B^{\dKRH^+}_{(\nu_0, 0), <\epsilon}$.  $U$ is upwards-closed: if
$(\nu, r) \leq^{\dKRH^+} (\nu', r')$ and $(\nu, r) \in U$, then
$\dKRH (\nu, \nu') \leq r-r'$ and $\dKRH (\nu_0, \nu) < \epsilon -r$,
so $\dKRH (\nu_0, \nu') < \epsilon-r'$ by the triangular inequality,
and that means that $(\nu', r')$ is in $U$.

Recall that $\Val_{\leq 1}, \dKRH$ is Yoneda-complete, and that
directed suprema of formal balls are computed as naive suprema, by
Theorem~\ref{thm:V:complete}.  To show that $U$ is Scott-open,
consider a directed family ${(\nu_i, r_i)}_{i \in I}$ in
$\mathbf B (\Val_{\leq 1} X, \dKRH)$, with supremum $(\nu, r)$, and
assume that $(\nu, r)$ is in $U$.  That supremum is given as in
(\ref{eq:V:sup}).  Hence $r = \inf_{i \in I} r_i$ and for every
$h \in \Lform_1 (X, d)$,
$\int_{x \in X} h (x) d\nu = \sup_{i \in I} (\int_{x \in X} h (x)
d\nu_i + r - r_i)$.  (In the case of $\dKRH^a$, the same formula
holds, this time for every $h \in \Lform_1^a (X, d)$.)  Since
$(\nu, r) \in U$, $\dKRH (\nu_0, \nu) < \epsilon - r$, that is,
$\epsilon > r$ and
$\int_{x \in X} h (x) d\nu_0 - \epsilon + r < \int_{x \in X} h (x)
d\nu$.  Therefore, for every $h \in \Lform_1 (X, d)$ (resp.,
$\Lform_1^a (X, d)$), there is an index $i \in I$ such that
$\int_{x \in X} h (x) d\nu_0 - \epsilon + r < \int_{x \in X} h (x)
d\nu_i + r - r_i$, or equivalently,
\begin{equation}
  \label{eq:V:A}
  \sum_{i=1}^n a_i h (x_i) < \int_{x \in X} h (x) d\nu_i + \epsilon - r_i.
\end{equation}
Moreover, since $\epsilon > r = \inf_{i \in I} r_i$, we may take $i$
so large that $\epsilon - r_i > 0$.

Let $V_i$ be the set of all $h \in \Lform_1 (X, d)$ (resp.,
$\Lform_1^a (X, d)$) satisfying (\ref{eq:V:A}).  We have just shown
that $\Lform_1 (X, d) $(resp., $\Lform_1^a (X, d)$) is included in
$\bigcup_{i \in I} V_i$.  $V_i$ is also the inverse image of
$[0, \epsilon - r_i[$ by the map
$h \mapsto \dreal (\sum_{i=1}^n a_i h (x_i), \int_{x \in X} h (x)
d\nu_i)$, which is continuous from $\Lform_1 (X, d)^\patch$ (resp.,
$\Lform_1^a (X, d)^\patch$) to $(\creal)^\dG$ by
Proposition~\ref{prop:dKRH:cont}~(1).  Therefore $V_i$ is open in
$\Lform_1 (X, d)^\patch$ (resp.,
$\Lform_1^a (X, d)^\patch$), itself a compact space by
Lemma~\ref{lemma:cont:Lalpha:retr}~(4).

Hence there is a finite subset $J$ of $I$ such that
${(V_i)}_{i \in J}$ is also an open cover of $\Lform_1 (X, d)^\patch$
(resp., $\Lform_1^a (X, d)$).  That means that for every
$h \in \Lform_1 (X, d)$ (resp., $\Lform_1^a (X, d)$), there is an
index $i \in J$ (not just in $I$) such that (\ref{eq:V:A}) holds.  By
directedness, there is a single index
$i \in I$
such that (\ref{eq:V:A}) holds for every $h \in \Lform_1 (X, d)$
(resp., $\Lform_1^a (X, d)$).  That implies that $(\nu_i, r_i)$ is in
$U$, proving the claim.  \qed

\begin{rem}
  \label{rem:V:simple:center}
  Using the same proof, but relying on Fact~\ref{fact:V:complete}
  instead of Theorem~\ref{thm:V:complete}, we obtain that simple
  valuations supported on center points are center points of $\Val X,
  \dKRH$ (and similarly for subnormalized, resp.\ normalized
  valuations, and for $\dKRH^a$ instead of $\dKRH$), under the
  alternative assumption that $X, d$ is standard and Lipschitz regular.
\end{rem}

Our proof of algebraicity for spaces of valuations
(Theorem~\ref{thm:V:alg}) will make use of the following lemma, which
is folklore.
\begin{lem}
  \label{lemma:val:cont}
  Let $Y$ be a continuous dcpo, with a basis $\mathcal B$.  $\Val Y$
  (resp., $\Val_{\leq 1} Y$) is a continuous dcpo, and a basis is
  given by simple valuations supported on $\mathcal B$, viz., of the
  form $\sum_{i=1}^n a_i \delta_{y_i}$ with $y_i \in \mathcal B$
  (resp., and with $\sum_{i=1}^n a_i \leq 1$).
\end{lem}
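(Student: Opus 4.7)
This is a folklore result, classically due to Jones~\cite{Jones:proba} and Jones--Plotkin~\cite{JP:proba}; the plan is to recall their argument, which is combinatorial in nature, and note that it adapts at once to the unbounded case and to any prescribed basis $\mathcal B$ of $Y$.

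First I would verify that $\Val Y$ is a dcpo: for any directed family ${(\nu_i)}_{i \in I}$ in $\Val Y$, the pointwise supremum $\nu (U) = \sup_{i \in I} \nu_i (U)$ is strict, monotonic, modular (modularity passes to directed suprema in $\creal$), and Scott-continuous (by interchange of directed suprema), and it is manifestly the least upper bound. For $\Val_{\leq 1} Y$, the constraint $\nu (Y) \leq 1$ is preserved by directed suprema, so it is a sub-dcpo of $\Val Y$.

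The combinatorial heart of the argument is the \emph{splitting lemma}: if $\mu = \sum_{i=1}^m a_i \delta_{x_i}$ and $\nu = \sum_{j=1}^n b_j \delta_{y_j}$ are simple valuations with $\mu \leq \nu$ in the pointwise order on opens, then there are non-negative reals $t_{ij}$ such that $\sum_j t_{ij} = a_i$, $\sum_i t_{ij} \leq b_j$, and $t_{ij} > 0$ implies $x_i \leq y_j$. This is proved by max-flow/min-cut on the bipartite network whose source feeds each $x_i$ with capacity $a_i$, each $y_j$ discharges into the sink with capacity $b_j$, and $x_i \to y_j$ has infinite capacity iff $x_i \leq y_j$; the hypothesis $\mu \leq \nu$ applied to the up-sets of subsets of the $y_j$'s rules out separating cuts. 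Iterating with the interpolation property of $\ll$ in the continuous dcpo $Y$, one may moreover arrange $x_i \ll y_j$ whenever $t_{ij} > 0$, provided $\mu \ll \nu$.

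Next I would show that for each $\nu \in \Val Y$ the family of simple valuations $\mu$ supported on $\mathcal B$ with $\mu \ll \nu$ is directed (a joint refinement of two such $\mu, \mu'$ is built via the splitting lemma) and has supremum $\nu$. For density, I would use Jones's step-function construction: for each $K \in \nat$ and each tuple of basis elements $y_1, \ldots, y_n \in \mathcal B$ with $\uuarrow y_j$ of $\nu$-measure approximating $\nu$ well on $X$, assign weights $a_j$ by an inclusion--exclusion scheme on the lattice of unions of the $\uuarrow y_j$, obtaining a simple valuation $\mu_K = \sum_j a_j \delta_{y_j}$ such that $\mu_K \leq \nu$ and $\mu_K (U) \to \nu (U)$ on every open $U$. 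A further interpolation on each $y_j$ within $\mathcal B$ upgrades $\leq$ to $\ll$. The restriction to $\Val_{\leq 1} Y$ follows because each approximant has total mass $\leq \nu (Y) \leq 1$, and directed suprema are computed identically.

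The main obstacle is the density step: one must show that the approximants lie not just below $\nu$ but way-below it, which requires coordinating the interpolation property of $\ll$ in $Y$ with the transport produced by the splitting lemma, so as to obtain strict $\ll$-comparability across the supports of the approximants. Continuity of $Y$ (rather than mere directed completeness) is precisely what makes this coordination possible.
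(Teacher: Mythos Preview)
Your proposal is correct in outline, but it takes a substantially longer route than the paper's proof. The paper does not reconstruct Jones's argument: it simply \emph{cites} the known result that $\Val Y$ (resp., $\Val_{\leq 1} Y$) is a continuous dcpo with a basis of arbitrary simple valuations \cite[Theorem~IV-9.16]{GHKLMS:contlatt}, \cite[Corollary~5.4]{Jones:proba}, and then performs a short refinement step to replace that basis by one supported on $\mathcal B$. Concretely, the paper observes that any simple valuation $\sum_{i=1}^n b_i \delta_{z_i}$ (with $b_i > 0$) is the directed supremum of the family of valuations $\sum_{i=1}^n a_i \delta_{y_i}$ with $y_i \in \mathcal B$, $y_i \ll z_i$, $a_i < b_i$; each element of that family is way-below $\sum_{i=1}^n b_i \delta_{z_i}$ by the explicit characterization of $\ll$ in $\Val Y$ from \cite{GHKLMS:contlatt}; directedness is immediate by interpolation in $\mathcal B$. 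One then invokes the standard ``sup-of-directed-sups'' device (\cite[Exercise~5.1.13]{JGL-topology}) to conclude.

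Your approach---splitting lemma for directedness, step-function/inclusion--exclusion construction for density, interpolation to upgrade $\leq$ to $\ll$---is essentially Jones's original proof of the cited result, redone with the extra care of staying inside $\mathcal B$. It is self-contained, which is a virtue, but it is considerably more work, and the density step as you sketch it (``inclusion--exclusion scheme on the lattice of unions of the $\uuarrow y_j$'') would need a fair amount of fleshing out to be convincing. The paper's refinement argument avoids all of that by treating the hard part as a black box and only doing the easy passage from ``simple valuations'' to ``simple valuations supported on $\mathcal B$''.
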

\proof  The first part of the Lemma, that $\Val Y$ is a continuous dcpo, holds for general continuous dcpos
\cite[Theorem~IV-9.16]{GHKLMS:contlatt}, and is an extension of a
theorem by C. Jones that shows a similar result for $\Val_{\leq 1} Y$
\cite[Corollary~5.4]{Jones:proba}\footnote{The fact that Jones only
considers subnormalized continuous valuations is somewhat hidden.
Jones states that she considers continuous valuations as maps from
$\Open X$ to $\Rp$, not $\creal$ (Section~3.9, loc.cit.).  It is fair
to call those finite valuations.  Finite valuations do not form a
dcpo, as one can check easily.  To repair this, Jones says ``We shall
initially consider the set of continuous evaluations on any topological
space with the additional property that $\nu (X)\leq 1$'' at the
beginning of Section~4.1, loc.cit.  The new condition $\nu (X) \leq 1$
is necessary to prove that her space of continuous valuations is a
dcpo (Theorem~4.1, loc.cit.), and seems to have been assumed silently
for the rest of the thesis.}.

Every simple valuation $\sum_{i=1}^n b_i \delta_{z_i}$ on $Y$ (where,
without loss of generality, $b_i > 0$ for every $i$; and
$\sum_{i=1}^n b_i \leq 1$ in the case of $\Val_{\leq 1} Y$) is the
supremum of the family $D$ of simple valuations of the form
$\sum_{i=1}^n a_i \delta_{y_i}$, where $y_i$ ranges over the elements
of $\mathcal B$ way-below $z_i$ for each $i$, and $a_i < b_i$.
Indeed, for each open set $U$, for every
$r < (\sum_{i=1}^n b_i \delta_{z_i}) (U)$, let $A$ be the finite set
of indices $i$ such that $z_i \in U$.  Note that $A$ is non-empty.
Find $y_i \in \mathcal B$ way-below $z_i$ so that $y_i \in U$ for each
$i \in A$, and $a_i = b_i - \epsilon$ for each $i \in A$, with
$0 < \epsilon < 1/|A| \; ((\sum_{i=1}^n b_i \delta_{z_i}) (U) - r)$.
Then $r < (\sum_{i=1}^n a_i \delta_{y_i}) (U)$.

Every element of $D$ is way-below $\sum_{i=1}^n b_i \delta_{z_i}$:
Theorem~IV-9.16 of \cite{GHKLMS:contlatt}, already cited, states that
$\sum_{i=1}^n a_i \delta_{y_i} \ll \xi$, for any continuous valuation
$\xi$, if and only if for every non-empty subset $A$ of
$\{1, \cdots, n\}$,
$\sum_{i \in A} a_i < \xi (\bigcup_{i \in A} \uuarrow y_i)$, and for
$\xi = \sum_{i=1}^n b_i \delta_{z_i}$, this is obvious.  Note that
we only need the if part of Theorem~IV-9.16 of op.cit., which one can
check by elementary means.  A similar statement holds in the case
of subnormalized valuations, and we conclude similarly that
every element of $D$ is way-below $\sum_{i=1}^n b_i \delta_{z_i}$ in
the case of subnormalized valuations.

$D$ also forms a directed family: for two simple valuations
$\sum_{i=1}^n a_i \delta_{y_i}$ and $\sum_{i=1}^n a'_i \delta_{y'_i}$
in $D$, pick $y''_i \in \mathcal B$ way-below $z_i$ and above both
$y_i$ and $y'_i$ for each $i$, then
$\sum_{i=1}^n \max (a_i, a'_i) \delta_{y''_i}$ is above the two given
simple valuations, and in $D$.

In a poset, if $\xi$ is the supremum of a directed family
${(\xi_i)}_{i \in I}$, and each $\xi_i$ is the supremum of a directed
family of elements $\xi_{ij}$, $j \in J_i$, way-below $\xi_i$, then
the family ${(\xi_{ij})}_{i \in I, j \in J_i}$ is directed and admits
$\xi$ as supremum, see e.g.\ \cite[Exercise~5.1.13]{JGL-topology}.
Here we know that every continuous valuation $\xi$ is the supremum of
some directed family of simple valuations $\xi_i$ (way-)below $\xi$,
and we have just proved that each $\xi_i$ is the directed supremum of
simple valuations $\xi_{ij} \ll \xi_i$ supported on $\mathcal B$: the
result follows.  \qed

The proof of the following theorem will proceed through the study of
$\Val (\mathbf B (X, d))$.  We recall the canonical embedding
$\eta_X \colon x \mapsto (x, 0)$ of $X$, with its $d$-Scott topology,
into $\mathbf B (X, d)$, with its Scott topology.
  Every continuous valuation
$\nu$ on $X$ gives rise to an image valuation $\eta_X [\nu]$, which maps
every open subset $U$ to $\nu (\eta_X^{-1} (U))$.

For every $h \in \Lform_1 (X, d)$, and assuming again that $X, d$ is
standard, recall that $h' \colon (x, r) \mapsto h (x) - r$ is
Scott-continuous from $\mathbf B (X, d)$ to $\real \cup \{+\infty\}$.
It follows that $h'' \colon (x, r) \mapsto \max (h (x) - r, 0)$ is
Scott-continuous from $\mathbf B (X, d)$ to $\creal$.  Then we have:
\begin{lem}
  \label{lemma:h'':<}
  Let $X, d$ be a standard quasi-metric space, $\nu$ be a continuous
  valuation on $X$, and $h \in \Lform_1 (X,
  d)$.  Define $h'' (x, r) = \max (h (x) - r, 0)$.  Then:
  \begin{eqnarray*}
    \int_{(x, r) \in \mathbf B (X, d)} h'' (x, r) d \eta_X[\nu]
    & = &
    \int_{x \in X} h (x) d\nu.
  \end{eqnarray*}
\end{lem}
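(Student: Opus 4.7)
The natural route is to reduce both sides to a common Choquet integral over $[0,+\infty[$. Recall that for a continuous valuation $\xi$ and a lower semicontinuous map $k\geq 0$, $\int k\,d\xi = \int_0^{+\infty} \xi(k^{-1}(]t,+\infty]))\,dt$. Before we apply this to $h''$ we must know that $h''$ is lower semicontinuous on $\mathbf B(X,d)$, and that $\eta_X[\nu]$ is a continuous valuation on $\mathbf B(X,d)$. The first is stated in the paragraph preceding the lemma (it follows from Lemma~\ref{lemma:f'} applied to $h\in\Lform_1(X,d)$ and from the fact that $\max(\_,0)$ is Scott continuous from $\real\cup\{+\infty\}$ to $\creal$). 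The second is immediate from the fact that $\eta_X$ is Scott continuous, equivalently, continuous with respect to the $d$-Scott and Scott topologies (Lemma~\ref{lemma:eta:lipcont} and Lemma~\ref{lemma:dScott=Scott}~(2)), so that $\mathcal U\mapsto\nu(\eta_X^{-1}(\mathcal U))$ is a well-defined continuous valuation.

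Next, I would identify the superlevel sets. For every $t\geq 0$, a formal ball $(x,r)$ lies in ${h''}^{-1}(]t,+\infty])$ iff $\max(h(x)-r,0)>t$, iff $h(x)-r>t$ (the clause $0>t$ is excluded since $t\geq 0$). Applying $\eta_X^{-1}$ amounts to setting $r=0$, so
\[
  \eta_X^{-1}\bigl({h''}^{-1}(]t,+\infty])\bigr)=\{x\in X\mid h(x)>t\}=h^{-1}(]t,+\infty]).
\]

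From the definition of an image valuation, $\eta_X[\nu]({h''}^{-1}(]t,+\infty]))=\nu(h^{-1}(]t,+\infty]))$ for every $t\geq 0$. Plugging this into the Choquet formulae yields
\[
  \int_{(x,r)\in\mathbf B(X,d)} h''(x,r)\,d\eta_X[\nu]=\int_0^{+\infty}\nu(h^{-1}(]t,+\infty]))\,dt=\int_{x\in X}h(x)\,d\nu,
\]
which is the desired equality. There is no real obstacle here; the only care to exercise is that the Choquet integration range is $[0,+\infty[$, so the contribution of the region where $\max$ clips to $0$ is invisible and one does not need to worry about it; and that one uses lower semicontinuity of $h''$ only to the extent needed to justify the Choquet representation of the integral.
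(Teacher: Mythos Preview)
Your proposal is correct and follows essentially the same approach as the paper: both arguments unwind the Choquet formula for $\int h''\,d\eta_X[\nu]$, use that $\eta_X[\nu]({h''}^{-1}(]t,+\infty]))=\nu(\eta_X^{-1}({h''}^{-1}(]t,+\infty])))$, and identify this with $\nu(h^{-1}(]t,+\infty]))$. The paper phrases the last step as the change-of-variables observation $h''\circ\eta_X=h$, whereas you compute the superlevel sets directly, but the content is identical.
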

\proof 
$\int_{(x, r) \in \mathbf B (X, d)} h'' (x, r) d \eta_X[\nu] = \int_{x
  \in X} h'' (\eta_X (x)) d\nu$,
by a change of variables formula.  Explicitly,
$\int_{x \in X} h'' (x, r) d \eta_X [\nu] = \int_0^{+\infty} \eta_X [\nu]
({h''}^{-1} (]t, +\infty])) dt = \int_0^{+\infty} \nu (\eta_X^{-1}
({h''}^{-1} (]t, +\infty])) dt = \int_0^{+\infty} \nu ((h'' \circ
\eta_X)^{-1} (]t, +\infty])) dt = \int_{x \in X} h'' (\eta_X (x)) d\nu$.
Since $h'' \circ \eta_X = h$, we conclude.
\qed

Recall that a strong basis of a standard quasi-metric space $X, d$ is
any set $B$ of center points of $X$ such that, for every $x \in X$,
$(x, 0)$ is the supremum of a directed family of formal balls with
center points in $B$.  $X, d$ is algebraic if and only if it has a
strong basis.  The largest strong basis is simply the set of all
center points.
\begin{lem}
  \label{lemma:B:basis}
  Let $X, d$ be a standard algebraic quasi-metric space, with a strong basis
  $\mathcal B$.  Then $\mathbf B (X, d)$ is a continuous dcpo, with a
  basis consisting of the formal balls $(x, r)$ with
  $x \in \mathcal B$.  For $x \in \mathcal B$, $(x, r) \ll (y, s)$ if
  and only if $d (x, y) < r-s$.
\end{lem}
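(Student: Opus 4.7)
The way-below characterization of the lemma follows immediately from Proposition~5.18 of \cite{JGL:formalballs}: since every element of $\mathcal B$ is a center point by the definition of a strong basis, and since that proposition asserts the equivalence $(x, r) \ll (y, s) \iff d(x, y) < r - s$ for any center point $x$, the same equivalence holds for $x \in \mathcal B$. The fact that $\mathbf B(X, d)$ is a continuous dcpo for standard algebraic $X, d$ is part of the same citation. The substantive work is therefore to show that the formal balls with centers restricted to $\mathcal B$ already form a basis, not merely those with centers in the larger set of all center points.

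Fix $(y, s) \in \mathbf B(X, d)$. By the strong-basis hypothesis, $(y, 0)$ is the supremum of a directed family $(x_i, r_i)_{i \in I}$ of formal balls with $x_i \in \mathcal B$. By the very definition of standardness, $\inf_{i \in I} r_i = 0$ and $(y, s) = \sup_{i \in I} (x_i, r_i + s)$. For each $i \in I$ and each $\delta > 0$, the computation $d(x_i, y) \leq r_i < (r_i + s + \delta) - s$ combined with the way-below characterization above yields $(x_i, r_i + s + \delta) \ll (y, s)$. This produces a family of formal balls with centers in $\mathcal B$, each way-below $(y, s)$, and it remains to show that it is directed with supremum $(y, s)$.

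For directedness of $\{(x_i, r_i + s + \delta) : i \in I,\, \delta > 0\}$, given two members $(x_{i_1}, r_{i_1} + s + \delta_1)$ and $(x_{i_2}, r_{i_2} + s + \delta_2)$, use directedness of $(x_i, r_i)_{i \in I}$ to find $i_3 \in I$ with both $(x_{i_1}, r_{i_1})$ and $(x_{i_2}, r_{i_2})$ below $(x_{i_3}, r_{i_3})$ under $\leq^{d^+}$, and set $\delta_3 = \min(\delta_1, \delta_2)$; then $(x_{i_3}, r_{i_3} + s + \delta_3)$ dominates both members. For the supremum, $(y, s)$ is an upper bound since each member is way-below it, hence below it. Conversely, any upper bound $(y', s')$ satisfies $d(x_i, y') \leq r_i + s + \delta - s'$ for every $i \in I$ and every $\delta > 0$; taking the infimum over $\delta > 0$ yields $(x_i, r_i + s) \leq^{d^+} (y', s')$ for every $i \in I$, whence $(y, s) = \sup_{i \in I} (x_i, r_i + s) \leq^{d^+} (y', s')$.

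The main subtlety, modest though it is, lies in introducing the positive slack $\delta$: the raw inequality $d(x_i, y) \leq r_i$ only produces the ordinary inequality $(x_i, r_i + s) \leq^{d^+} (y, s)$, whereas the strict inequality $d(x_i, y) < (r_i + s + \delta) - s$ is what unlocks the way-below relation. Passing to the infimum over $\delta > 0$ at the end then recovers the original supremum.
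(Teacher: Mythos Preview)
Your proof is correct and follows the same core idea as the paper: use the strong-basis hypothesis together with standardness to express $(y,s)$ as a directed supremum of formal balls with centers in $\mathcal B$, then inflate the radii by a positive slack (your $\delta$, the paper's $1/2^n$) to turn the non-strict inequality $d(x_i,y)\leq r_i$ into the strict one needed for the way-below relation.

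The only organizational difference is that the paper adds an outer layer: it first uses continuity of $\mathbf B(X,d)$ to write $(y,s)$ as a directed supremum of formal balls $(y_i,s_i)\ll(y,s)$, applies the strong-basis-plus-slack argument to each $(y_i,s_i)$, and then invokes the general poset fact that a directed supremum of directed suprema of way-below elements is again a directed supremum (\cite[Exercise~5.1.13]{JGL-topology}). Your route applies the strong-basis-plus-slack argument directly to $(y,s)$ and verifies directedness and the supremum property by hand, which is slightly more economical and avoids the auxiliary citation.
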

\proof Since $X, d$ is standard algebraic hence continuous,
$\mathbf B (X, d)$ is a continuous poset
\cite[Proposition~5.18]{JGL:formalballs}.  The fact that
$(x, r) \ll (y, s)$ if and only if $d (x, y) < r-s$ is true, whenever
$x$ is a center point, is also mentioned in that proposition.  Now let
$(y, s) \in \mathbf B (X, d)$.  Write $(y, s)$ as the supremum of a
directed family ${(y_i, s_i)}_{i \in I}$ of formal balls way-below
$(y, s)$.  By assumption each $(y_i, s_i)$ is the supremum of a
directed family ${(x_{ij}, r_{ij})}_{j \in J_i}$, where
$x_{ij} \in \mathcal B$.  The family
${(x_{ij}, r_{ij}+1/2^n)}_{j \in J_i, n \in \nat}$ is also directed,
and its supremum is also equal to $(y_i, s_i)$, as one easily checks
by looking at the upper bounds of the family.  Additionally, since
$d (x_{ij}, y_i) \leq r_{ij} - s_i$,
$d (x_{ij}, y_i) < r_{ij}+1/2^n - s_i$, so each
$(x_{ij}, r_{ij}+1/2^n)$ is way-below $(y_i, s_i)$.  This is exactly
what we need to conclude that
${(x_{ij}, r_{ij}+1/2^n)}_{i \in I, j \in J_i, n \in \nat}$ is a
directed family whose supremum is $(y, s)$: if, in a poset, $a$ is the
supremum of a directed family $D$ and each element $b$ of $D$ is the
supremum of a directed family $D_b$ of elements way-below $b$, then
$\bigcup_{b \in D} D_b$ is a directed family whose supremum is $a$,
see Exercise~5.1.13 of \cite{JGL-topology} for example.  \qed

\begin{thm}[Algebraicity for spaces of subprobabilities]
  \label{thm:V:alg}
  Let $X, d$ be an algebraic Yoneda-complete quasi-metric space, with
  a strong basis $\mathcal B$.

  The spaces $\Val_{\leq 1} X, \dKRH$ and $\Val_{\leq 1} X, \dKRH^a$
  (for any $a \in \Rp$, $a > 0$) are algebraic Yoneda-complete, and a
  strong basis is given by the simple valuations
  $\sum_{i=1}^n a_i \delta_{x_i}$ with $x_i \in \mathcal B$, and
  $\sum_{i=1}^n a_i \leq 1$.
\end{thm}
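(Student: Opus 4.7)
The proof has three parts and I will take them in order. Yoneda-completeness of $\Val_{\leq 1} X$ under both $\dKRH$ and $\dKRH^a$ is immediate from Theorem~\ref{thm:V:complete}, since every algebraic Yoneda-complete quasi-metric space is in particular continuous Yoneda-complete (and standard). That every simple valuation $\sum_{i=1}^n a_i\delta_{x_i}$ with $x_i \in \mathcal B$ and $\sum_i a_i \leq 1$ is a center point of $\Val_{\leq 1} X$, both with respect to $\dKRH$ and to $\dKRH^a$, is the content of Lemma~\ref{lemma:V:simple:center}. Only the strong-basis property remains: for each $\nu \in \Val_{\leq 1} X$, I must build a directed family of formal balls $(\nu_j, s_j)$ in $\mathbf B(\Val_{\leq 1} X, \dKRH)$ whose supremum is $(\nu, 0)$ and whose centers $\nu_j$ are simple valuations of the form displayed in the theorem.

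The natural plan is to lift the approximation from the continuous dcpo $\mathbf B(X, d)$. Lemma~\ref{lemma:B:basis} asserts that $\mathbf B(X, d)$ is a continuous dcpo with basis $\{(y, r) : y \in \mathcal B,\ r \in \Rp\}$, so Lemma~\ref{lemma:val:cont} applied to the pushforward $\hat\nu = \eta_X[\nu] \in \Val_{\leq 1}(\mathbf B(X, d))$ yields a directed family $D$ of simple valuations $\xi = \sum_i a_{\xi, i}\,\delta_{(y_{\xi, i}, r_{\xi, i})}$ with $y_{\xi, i} \in \mathcal B$, each way-below $\hat\nu$ pointwise, whose supremum is $\hat\nu$. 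To each $\xi$, I associate the projected valuation $\nu_\xi = \sum_i a_{\xi, i}\,\delta_{y_{\xi, i}} \in \Val_{\leq 1} X$ and the weighted radius $s_\xi = \sum_i a_{\xi, i}\,r_{\xi, i}$. Using the Scott-continuous auxiliary $h''(x, r) = \max(h(x) - r, 0)$ together with Lemma~\ref{lemma:h'':<}, the chain
\[
  \int h\,d\nu_\xi - s_\xi \;\leq\; \sum_i a_{\xi, i}\,\max(h(y_{\xi, i}) - r_{\xi, i}, 0) \;=\; \int h''\,d\xi \;\leq\; \int h''\,d\hat\nu \;=\; \int h\,d\nu
\]
holds for every $h \in \Lform_1(X, d)$, and rearranging gives $\dKRH(\nu_\xi, \nu) \leq s_\xi$, i.e., $(\nu_\xi, s_\xi) \leq^{\dKRH^+} (\nu, 0)$. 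Since $\hat\nu$ is concentrated on $X = \bigcap_n V_{1/2^n}$ (Lemma~\ref{lemma:Veps}), each $\xi \in D$ can be refined by shrinking its radii $r_{\xi, i}$ without breaking $\xi \ll \hat\nu$ (smaller radii enlarge the sets $\uuarrow (y_{\xi,i}, r_{\xi,i})$), which produces a cofinal subfamily of $D$ along which $\max_i r_{\xi, i}$ and hence $s_\xi$ tends to $0$.

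The hard part is to convert $\{(\nu_\xi, s_\xi) : \xi \in D\}$ into a family that is genuinely directed under $\leq^{\dKRH^+}$ with supremum $(\nu, 0)$: the pointwise inequality $\xi_1 \leq \xi_2$ in $\Val_{\leq 1}(\mathbf B(X, d))$ does not by itself yield $(\nu_{\xi_1}, s_{\xi_1}) \leq^{\dKRH^+} (\nu_{\xi_2}, s_{\xi_2})$. Unfolding the splitting lemma for simple valuations on a continuous dcpo---transports $t_{ij} \geq 0$ with $\sum_j t_{ij} = a_{\xi_1, i}$, $\sum_i t_{ij} \leq a_{\xi_2, j}$, and $(y_{\xi_1, i}, r_{\xi_1, i}) \leq^{d^+} (y_{\xi_2, j}, r_{\xi_2, j})$ whenever $t_{ij} > 0$---one obtains the bound
\[
  \int h\,d\nu_{\xi_1} \;\leq\; \int h\,d\nu_{\xi_2} + s_{\xi_1} - \sum_j r_{\xi_2, j}\sum_i t_{ij},
\]
and the subtracted sum can fall \emph{strictly} short of $s_{\xi_2}$ precisely because, in the genuinely subprobability case, some mass of $\xi_2$ may be uncovered by the transport from $\xi_1$. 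To close this gap, I plan to reduce to the mass-conserving case by adjoining a formal point $\bot$ at infinite quasi-distance from $X$, so that each $\nu \in \Val_{\leq 1} X$ lifts to a normalized valuation by attaching $(1 - \nu(X))$ to $\bot$; the splitting lemma then forces $\sum_i t_{ij} = a_{\xi_2, j}$ for all $j$ corresponding to points of $X$, which closes the gap and yields monotonicity of the lifted net (after projecting $\bot$ away). Once monotonicity is in hand, the naive-supremum formula of Proposition~\ref{prop:LPrev:simplesup} together with $\inf_\xi s_\xi = 0$ proves $(\nu, 0) = \sup_\xi(\nu_\xi, s_\xi)$ in $\mathbf B(\Val_{\leq 1} X, \dKRH)$. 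The corresponding statement for $\dKRH^a$ is the same argument with Proposition~\ref{prop:LaPrev:simplesup} substituted for Proposition~\ref{prop:LPrev:simplesup} and test functions restricted to $\Lform_1^a(X, d) \subseteq \Lform_1(X, d)$.
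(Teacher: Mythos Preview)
Your global architecture---push $\nu$ forward to $\hat\nu=\eta_X[\nu]$ on the continuous dcpo $\mathbf B(X,d)$, approximate by simple valuations via Lemma~\ref{lemma:val:cont}, then project centers back to $X$---is exactly the paper's, and the reductions to Theorem~\ref{thm:V:complete} and Lemma~\ref{lemma:V:simple:center} are correct.  But the strong-basis argument has two genuine gaps.

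First, the $\bot$ trick does not rescue directedness.  If $\bot$ is at infinite quasi-distance from $X$ in both directions, then $\{\bot\}$ is order-disconnected from $\mathbf B(X,d)$, so the Scott-open sets of $\mathbf B(X\cup\{\bot\},d')$ see the $\bot$-component separately; on any such open set the lifted masses are $1-|\xi_1|\geq 1-|\xi_2|$, and the lifted family is \emph{not} monotone, so the normalized splitting lemma cannot be applied.  If instead you make $\bot$ a bottom element, then $(\bot,0)\leq^{(d')^+}(y_{\xi_2,j},r_{\xi_2,j})$ for all $j$, so mass at $\bot$ may be transported into $X$ and the equality $\sum_i t_{ij}=a_{\xi_2,j}$ is not forced.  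Either way the gap survives.  Second, the claim that shrinking $r_{\xi,i}$ ``enlarges $\uuarrow(y_{\xi,i},r_{\xi,i})$'' is backwards: smaller radius means $(y,r)$ is \emph{higher} in $\leq^{d^+}$, so $\uuarrow(y,r)$ shrinks, and $\xi\ll\hat\nu$ can be lost.

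The paper's fix is to absorb the mass deficit into the radius rather than into a phantom point: take
\[
  R(\xi)\;=\;\sum_j a_{\xi,j}\,r_{\xi,j}\;+\;\bigl(\nu(X)-\textstyle\sum_j a_{\xi,j}\bigr)
\]
instead of your $s_\xi$, after first restricting each $\xi$ to $V_1$ so that all $r_{\xi,j}<1$.  Then one tests not with $h''$ but with $(h+\mathbf 1)''$, which on formal balls of radius $<1$ equals $h(x)-r+1$ exactly (no $\max$), so that $\int(h+\mathbf 1)''\,d\xi=\int h\,d\nu_\xi-R(\xi)+\nu(X)$.  Monotonicity of $\xi\mapsto\int(h+\mathbf 1)''\,d\xi$ under $\xi_1\leq\xi_2$ then gives $\int h\,d\nu_{\xi_1}-R(\xi_1)\leq\int h\,d\nu_{\xi_2}-R(\xi_2)$ directly, with no recourse to the splitting lemma; the same device with $h=\mathbf 1$ yields $\inf_\xi R(\xi)=0$.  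The added term $\nu(X)-|\xi|$ is precisely the ``missing mass'' you were trying to park at $\bot$.
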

\proof Let $\nu \in \Val_{\leq 1} X$.  We wish to show that $(\nu, 0)$
is the supremum of a directed family of formal balls
$(\mathring\nu_i, R (\nu_i))$ below $(\nu, 0)$, and where each
$\mathring\nu_i$ is a simple valuation of the form
$\sum_{j=1}^n a_j \delta_{x_j}$, with $x_j \in \mathcal B$, and
$\sum_{j=1}^n a_j \leq 1$.

Finding $\mathring\nu_i$ and $R (\nu_i)$ is obvious if $\nu$ is the
zero valuation, so we assume for the rest of the proof that
$\nu (X) \neq 0$.

Since $X, d$ is algebraic Yoneda-complete, $\mathbf B (X, d)$ is a
continuous dcpo and the formal balls with centers in $\mathcal B$ are
a basis, by Lemma~\ref{lemma:B:basis}.

We profit from the fact that, since $\mathbf B (X, d)$ is a continuous
poset, $\Val_{\leq 1} (\mathbf B (X, d))$ is a continuous dcpo.
Lemma~\ref{lemma:val:cont} even allows us to say that a basis of the
latter consists in the simple valuations
$\mu = \sum_{j=1}^n a_j \delta_{(x_j, r_j)}$, where each $x_j$ is in
$\mathcal B$, and $\sum_{j=1}^{n_i} a_j \leq 1$.  In particular, for
every $\nu \in \Val_{\leq 1} X$, $\eta_X [\nu]$ is the supremum of a
directed family of simple valuations
$\nu_i = \sum_{j=1}^{n_i} a_{ij} \delta_{(x_{ij}, r_{ij})}$,
$i \in I$, where each $x_{ij}$ is a center point, and
$\sum_{j=1}^{n_i} a_{ij} \leq 1$.

We can require that $r_{ij} < 1$ for all $i, j$, by the following
argument.  Define $\nu'_i (U) = \nu_i (U \cap V_1)$, where
$V_1 = \{(x, r) \in \mathbf B (X, d) \mid r < 1\}$.  $V_1$ is open,
since $X, d$ is standard (Lemma~\ref{lemma:Veps}).  $\nu'_i$ is the
restriction of $\nu_i$ to $V_1$, and is again a subnormalized
valuation.  Explicitly,
$\nu'_i = \sum_{\substack{1\leq j \leq n_i\\r_{ij} < 1}} a_{ij}
\delta_{(x_{ij}, r_{ij})}$.  Note that all the radii involved in the
latter sum are strictly less than $1$.  If $\nu_i \leq \nu_{i'}$ then
$\nu'_i \leq \nu'_{i'}$, so the family ${(\nu'_i)}_{i \in I}$ is
directed as well.  Since $\nu'_i \leq \nu_i \ll \eta_X [\nu]$, this is
a family of (subnormalized) simple valuations way-below
$\eta_X [\nu]$.  Moreover, for every open subset $U$ of
$\mathbf B (X, d)$,
$\eta_X [\nu] (U) = \nu (U \cap X) = \nu (U \cap V_1 \cap X) = \eta_X
[\nu] (U \cap V_1)$ is equal to
$\sup_{i \in I} \nu_i (U \cap V_1) = \sup_{i \in I} \nu'_i (U)$, so
$\eta_X [\nu]$ is also the supremum of ${(\nu'_i)}_{i \in I}$.  All
this concurs to show that we may assume that $\nu_i$ satisfies
$r_{ij} < 1$ for all $i, j$, replacing $\nu_i$ by $\nu'_i$ if needed.

For every subnormalized simple valuation
$\mu = \sum_{j=1}^n a_j \delta_{(x_j, r_j)}$ on $\mathbf B (X, d)$
such that $\mu \leq \eta_X [\nu]$ and with $r_j < 1$ for every $j$,
let:
\begin{eqnarray}
  \label{eq:ringmu}
  \mathring\mu & = & \sum_{i=1}^n a_j \delta_{x_j} \\ 
  \label{eq:Rmu}
  R (\mu) & = & \sum_{j=1}^n a_j r_j + \nu (X) - \sum_{j=1}^n a_j.  
\end{eqnarray}
$R (\mu)$ is a non-negative number, owing to the fact that
$\mu \leq \eta_X [\nu]$: indeed
$\sum_{j=1}^n a_j = \mu (\mathbf B (X, d)) \leq \eta_X [\nu] (\mathbf
B (X, d)) = \nu (X)$.  Therefore
$\beta (\mu) = (\mathring \mu, R (\mu))$ is a well-defined formal ball
on $\Val_{\leq 1} X, \dKRH$ (resp., $\dKRH^a$).

For every $h \in \Lform_1 (X, d)$ (resp., $\Lform_1^a (X, d)$), recall
the construction $h''$ mentioned in Lemma~\ref{lemma:h'':<}.  However,
apply it to $h+\mathbf 1$, not $h$.  In other words,
$(h + \mathbf 1)''$ maps $(x, r)$ to $\max (h (x) - r +1, 0)$.  We
have
$\int_{(x, r) \in \mathbf B (X, d)} (h + \mathbf 1)'' (x, r) d\mu =
\sum_{j=1}^n a_j (h (x_j) - r_j + 1)$, because $r_j < 1$ for all
$i, j$.
Applying this to $\mu=\nu_i$ and $\mu=\nu_{i'}$, we obtain
that if $\nu_i \leq \nu_{i'}$, then
$\sum_{j=1}^{n_i} a_{ij} h (x_{ij}) - \sum_{j=1}^{n_i} a_{ij} r_j +
\sum_{j=1}^{n_i} a_{ij} \leq \sum_{j=1}^{n_{i'}} a_{i'j} h (x_{i'j}) -
\sum_{j=1}^{n_{i'}} a_{i'j} r_j + \sum_{j=1}^{n_{i'}} a_{i'j}$, namely
$\sum_{j=1}^{n_i} a_{ij} h (x_{ij}) \leq \sum_{j=1}^{n_{i'}} a_{i'j} h
(x_{i'j}) + R(\nu_i) - R(\nu_{i'})$.
This can be rewritten as
$\dreal (\int_{x \in X} h (x) d\mathring\nu_i, \int_{x \in X} h (x)
d\mathring\nu_{i'}) \leq R (\nu_i) - R (\nu_{i'})$.  Since $h$ is
arbitrary, $\beta (\nu_i) \leq^{\dKRH^+} \beta (\nu_{i'})$ (resp.,
$\leq^{\dKRH^{a+}}$%
, by multiplying $h''$ by $1/a$ first%
).  This shows that the family
${(\beta (\nu_i))}_{i \in I}$ is directed.

We now claim that $\beta (\nu_i) \leq^{\dKRH^+} (\nu, 0)$ for every
$i \in I$ (resp., $\leq^{\dKRH^{a+}}$).  Fix $h \in \Lform_1 (X, d)$
(resp., in $\Lform_1^a (X, d)$).  We wish to show that
$\sum_{j=1}^{n_i} a_{ij} h (x_{ij}) \leq \int_{x \in X} h (x) d\nu + R
(\nu_i)$.  To this end, recall that $\nu_i \ll \eta_X [\nu]$, in
particular $\nu_i \leq \eta_X [\nu]$.  Integrate $(h+\mathbf 1)''$ with
respect to each side of the inequality:
$\int_{(x, r) \in \mathbf B (X, d)} (h+\mathbf 1)'' (x, r) d\nu_i =
\sum_{j=1}^{n_i} a_{ij} h (x_{ij}) - \sum_{j=1}^{n_i} a_{ij} r_{ij} +
\sum_{j=1}^{n_i} a_{ij}$, and
$\int_{(x, r) \in \mathbf B (X, d)} (h+\mathbf 1)'' (x, r) d\eta_X[\nu]
= \int_{x \in X} h (x) d \nu + \nu (X)$, by
Lemma~\ref{lemma:h'':<}. 
Therefore
$\sum_{j=1}^{n_i} a_{ij} h (x_{ij}) \leq \int_{x \in X} h (x) d \nu +
\nu (X) + \sum_{j=1}^{n_i} a_{ij} r_{ij} - \sum_{j=1}^{n_i} a_{ij} =
\int_{x \in X} h (x) d \nu + R (\nu_i)$.

We finally claim that $\sup_{i \in I} \beta (\nu_i) = (\nu, 0)$.  Let
$R = \inf_{i \in I} R (\nu_i)$.  Let $h = \mathbf 1$, and compute
$\int_{(x, r) \in \mathbf B (X, d)} h'' (x, r) d\nu_i =
\sum_{j=1}^{n_i} a_{ij} (1 - r_{ij}) = \nu (X) - R (\nu_i)$.  Since
$\sup_{i \in I} \nu_i = \eta_X [\nu]$ and integration is
Scott-continuous in the valuation,
$\sup_{i \in I} (\nu (X) - R (\nu_i)) = \int_{(x, r) \in \mathbf B (X,
  d)} h'' (x, r) \allowbreak d\eta_X [\nu] = \int_{x \in X} h (x) d\nu =
\nu (X)$.  Hence $R=0$.

By Theorem~\ref{thm:V:complete}, directed suprema are computed as
naive suprema.  That is to say, $\sup_{i \in I} \beta (\nu_i)$ is a
formal ball $(G, R)$, with $R=0$ as we have just seen, and where
(equating $G$ with a linear prevision) $G$ maps every
$h \in \Lform_1 (X, d)$ (resp., $\Lform_1^a (X, d)$) to
$\sup_{i \in I} (\sum_{j=1}^{n_i} a_{ij} h (x_{ij}) - R (\nu_i))$.  We
have already noticed that
$\int_{(x, r) \in \mathbf B (X, d)} (h + \mathbf 1)'' (x, r) d\nu_i$
is equal to
$\sum_{j=1}^{n_i} a_{ij} h (x_{ij}) - \sum_{j=1}^{n_i} a_{ij} r_{ij} +
\sum_{j=1}^{n_i} a_{ij}$, that is, to
$\sum_{j=1}^{n_i} a_{ij} h (x_{ij}) + \nu (X) - R (\nu_i)$, so
$G (h) = \sup_{i \in I} \int_{(x, r) \in \mathbf B (X, d)} (h +
\mathbf 1)'' (x, r) d\nu_i) - \nu (X)$.  This is equal to
$\int_{(x, r) \in \mathbf B (X, d)} (h + \mathbf 1)'' (x, r) d \eta_X
[\nu] - \nu (X)$, since $\sup_{i \in I} \nu_i = \eta_X [\nu]$ and
integration is Scott-continuous in the valuation.  In turn, this is
equal to $\int_{x \in X} (h + \mathbf 1) (x) d\nu - \nu (X)$ by
Lemma~\ref{lemma:h'':<}, 
namely to $\int_{x \in X} h (x) d\nu$.  It follows that
$G (h) = \int_{x \in X} h (x) d\nu$ for every $h \in \Lform_1 (X, d)$
(resp., $\Lform_1^a (X, d)$), and this suffices to show that $G$
coincides with $h \mapsto \int_{x \in X} h (x) d\nu$, by
Corollary~\ref{corl:=:L1} (resp., Corollary~\ref{corl:=:Lbnd1}).
%
%
\qed

\subsection{Continuity of $\Val_{\leq 1} X$}
\label{sec:cont-val-leq-1}

We deduce a similar theorem for the larger class of continuous
Yoneda-complete quasi-metric spaces, by relying on the fact that the
continuous Yoneda-complete quasi-metric spaces are exactly the
$1$-Lipschitz continuous retracts of algebraic Yoneda-complete
quasi-metric spaces \cite[Theorem~7.9]{JGL:formalballs}.

Recall the map $\Prev f$ from Lemma~\ref{lemma:Pf:lip}.
\begin{lem}
  \label{lemma:Vleq1:functor}
  Let $X, d$ and $Y, \partial$ be two continuous Yoneda-complete
  quasi-metric spaces, and $f \colon X, d \mapsto Y, \partial$ be a
  $1$-Lipschitz continuous map.  The restriction of $\Prev f$ to
  $\Val_{\leq 1} X$ is a $1$-Lipschitz continuous map from
  $\Val_{\leq 1} X, \dKRH$ to $\Val_{\leq 1} Y, \KRH\partial$, and
  also from $\Val_{\leq 1} X, \dKRH^a$ to
  $\Val_{\leq 1} Y, \KRH\partial^a$ for every $a \in \Rp$, $a > 0$.

  Similarly with $\Val_1$ instead of $\Val_{\leq 1}$.
\end{lem}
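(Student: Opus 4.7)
The plan is to combine Lemma~\ref{lemma:Pf:lip}, Lemma~\ref{lemma:Pf:lipcont}, and Theorem~\ref{thm:V:complete}. First, instantiating Lemma~\ref{lemma:Pf:lip} with $\alpha = 1$ shows that $\Prev f$ is a $1$-Lipschitz map from $\Prev X, \dKRH$ to $\Prev Y, \KRH\partial$ (and similarly for $\dKRH^a$ and $\KRH\partial^a$), and preserves linearity, subnormalization, and normalization. Transporting through the bijection between continuous valuations and linear previsions (under which $\dKRH$ on $\Val_{\leq 1} X$ is by definition the quasi-metric of Definition~\ref{defn:KRH} on the associated linear previsions), this restricts to a well-defined $1$-Lipschitz map from $\Val_{\leq 1} X, \dKRH$ to $\Val_{\leq 1} Y, \KRH\partial$, and from $\Val_1 X, \dKRH$ to $\Val_1 Y, \KRH\partial$, and the same for the $a$-bounded versions.

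Next I would upgrade ``$1$-Lipschitz'' to ``$1$-Lipschitz continuous''. By Proposition~\ref{prop:cont}, since $\Val_{\leq 1} X$ and $\Val_{\leq 1} Y$ are standard (being Yoneda-complete by Theorem~\ref{thm:V:complete}, which applies because $X, d$ and $Y, \partial$ are continuous Yoneda-complete), it suffices to verify that $\mathbf B^1 (\Prev f)$ is Scott-continuous. Monotonicity is immediate from the $1$-Lipschitz property. For preservation of directed suprema, take any directed family ${(\nu_i, r_i)}_{i \in I}$ in $\mathbf B (\Val_{\leq 1} X, \dKRH)$, with supremum $(\nu, r)$. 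By Theorem~\ref{thm:V:complete} applied to $X, d$, this supremum coincides with the naive supremum of Proposition~\ref{prop:LPrev:simplesup}. Lemma~\ref{lemma:Pf:lipcont} then yields that $(\Prev f (\nu), r)$ is the naive supremum of the directed family ${(\Prev f (\nu_i), r_i)}_{i \in I}$ in $\mathbf B (\Prev Y, \KRH\partial)$. Since each $\Prev f (\nu_i)$ is a subnormalized (resp.\ normalized) linear prevision, this naive supremum lands in $\mathbf B (\Val_{\leq 1} Y, \KRH\partial)$ (resp.\ $\mathbf B (\Val_1 Y, \KRH\partial)$), using Proposition~\ref{prop:LPrev:simplesup}~(4) to see that subnormalization, normalization, and linearity are preserved by naive suprema. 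Applying Theorem~\ref{thm:V:complete} this time to $Y, \partial$, the naive supremum is indeed the directed supremum in $\mathbf B (\Val_{\leq 1} Y, \KRH\partial)$ (resp.\ $\mathbf B (\Val_1 Y, \KRH\partial)$). This establishes Scott-continuity of $\mathbf B^1 (\Prev f)$.

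The $\dKRH^a$ cases are handled by exactly the same argument, invoking the $\dKRH^a$ clauses of Theorem~\ref{thm:V:complete}, Lemma~\ref{lemma:Pf:lip}, Lemma~\ref{lemma:Pf:lipcont}, and Proposition~\ref{prop:LaPrev:simplesup} in place of their $\dKRH$ counterparts. I do not foresee any real obstacle: the argument is an assembly of already-established machinery, and the only point that requires any care is keeping track of the fact that the naive supremum, which a priori only lives in $\mathbf B (\Prev Y)$, actually lies in the subspace of (sub)probability valuations on $Y$, which is taken care of by the preservation properties in Proposition~\ref{prop:LPrev:simplesup}~(4) and Proposition~\ref{prop:LaPrev:simplesup}~(4).
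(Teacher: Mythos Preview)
Your proposal is correct and follows essentially the same approach as the paper: use Lemma~\ref{lemma:Pf:lip} for the $1$-Lipschitz property and preservation of (sub)normalization and linearity, then use Theorem~\ref{thm:V:complete} to identify directed suprema with naive suprema on both sides, and finally use Lemma~\ref{lemma:Pf:lipcont} to conclude that $\mathbf B^1(\Prev f)$ preserves directed suprema. One small redundancy: invoking Proposition~\ref{prop:cont} is unnecessary, since by Definition~\ref{defn:Lipcont} ``$1$-Lipschitz continuous'' \emph{means} that $\mathbf B^1(\Prev f)$ is Scott-continuous, which is exactly what you then verify; similarly, the appeal to Proposition~\ref{prop:LPrev:simplesup}~(4) is already subsumed by Theorem~\ref{thm:V:complete} applied to $Y$, which guarantees the naive supremum lands in $\Val_{\leq 1} Y$ (resp.\ $\Val_1 Y$).
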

\proof Lemma~\ref{lemma:Pf:lip} says that $\Prev f$ is $1$-Lipschitz,
so $\mathbf B^1 (\Prev f)$ is monotonic.  The same Lemma shows that
$\Prev f$ maps subnormalized linear previsions (i.e., elements of
$\Val_{\leq 1} X$) to subnormalized linear previsions, and normalized
linear previsions (i.e., elements of $\Val_1 X$) to normalized linear
previsions.  Again we confuse linear previsions and continuous
valuations.

By Theorem~\ref{thm:V:complete}, $\Val_{\leq 1} X, \dKRH$ and
$\Val_{\leq 1} Y, \KRH\partial$ are Yoneda-complete, and directed
suprema in their spaces of formal balls are naive suprema.  Similarly
with $\dKRH^a$ and $\KRH\partial^a$ in lieu of $\dKRH$ and
$\KRH\partial$, or with $\Val_1$ instead of $\Val_{\leq 1}$.  By
Lemma~\ref{lemma:Pf:lipcont}, $\mathbf B^1 (\Prev f)$ preserves naive
suprema, hence all directed suprema.  It must therefore be
Scott-continuous, which shows the claim.  \qed

Let $X, d$ be a continuous Yoneda-complete quasi-metric space.
There is an algebraic Yoneda-complete quasi-metric space $Y, \partial$
and
there are
two $1$-Lipschitz continuous maps $r \colon Y, \partial \to X, d$ and
$s \colon X, d \to Y, \partial$ such that $r \circ s = \identity X$.

By Lemma~\ref{lemma:Vleq1:functor}, $\Prev r$ and $\Prev s$ are also
$1$-Lipschitz continuous, and clearly
$\Prev r \circ \Prev s = \identity {\Val_{\leq 1} X}$, so
$\Val_{\leq 1} X, \dKRH$ is a $1$-Lipschitz continuous retract of
$\Val_{\leq 1} Y, \KRH\partial$.  (Similarly with $\dKRH^a$ and
$\KRH\partial^a$.)  Theorem~\ref{thm:V:alg} states that
$\Val_{\leq 1} Y, \KRH\partial$ (resp., $\KRH\partial^a$) is algebraic
Yoneda-complete, whence:
\begin{thm}[Continuity for spaces of subprobabilities]
  \label{thm:Vleq1:cont}
  Let $X, d$ be a continuous Yoneda-complete quasi-metric space.  The
  quasi-metric spaces $\Val_{\leq 1} X, \dKRH$ and
  $\Val_{\leq 1} X, \dKRH^a$ ($a \in \Rp$, $a > 0$) are continuous
  Yoneda-complete.  \qed
\end{thm}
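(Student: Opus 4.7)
The plan is to use the standard characterization (Theorem~7.9 of \cite{JGL:formalballs}) that the continuous Yoneda-complete quasi-metric spaces are exactly the $1$-Lipschitz continuous retracts of algebraic Yoneda-complete quasi-metric spaces, and transport this retraction through the $\Val_{\leq 1}$ construction using the functorial properties already established.

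First, given a continuous Yoneda-complete $X, d$, invoke the cited retraction theorem to obtain an algebraic Yoneda-complete quasi-metric space $Y, \partial$ together with $1$-Lipschitz continuous maps $s \colon X, d \to Y, \partial$ and $r \colon Y, \partial \to X, d$ such that $r \circ s = \identity X$. By Lemma~\ref{lemma:Vleq1:functor}, the restrictions of $\Prev s$ and $\Prev r$ to the subspaces of subnormalized valuations yield $1$-Lipschitz continuous maps $\Prev s \colon \Val_{\leq 1} X, \dKRH \to \Val_{\leq 1} Y, \KRH\partial$ and $\Prev r \colon \Val_{\leq 1} Y, \KRH\partial \to \Val_{\leq 1} X, \dKRH$ (and likewise with $\dKRH^a$ and $\KRH\partial^a$).

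Next, I would verify that $\Prev r \circ \Prev s = \identity {\Val_{\leq 1} X}$: for every $\nu \in \Val_{\leq 1} X$ and every $h \in \Lform X$, unwinding Definition~\eqref{eq:Prevf} gives $\Prev r (\Prev s (\nu)) (h) = \nu (h \circ r \circ s) = \nu (h)$, so indeed $\Val_{\leq 1} X, \dKRH$ is a $1$-Lipschitz continuous retract of $\Val_{\leq 1} Y, \KRH\partial$ (and similarly for the $\dKRH^a$ versions). By Theorem~\ref{thm:V:alg}, the latter is algebraic Yoneda-complete, hence in particular continuous Yoneda-complete.

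Finally, I would appeal once more to Theorem~7.9 of \cite{JGL:formalballs}: any $1$-Lipschitz continuous retract of a continuous Yoneda-complete (in fact algebraic Yoneda-complete) quasi-metric space is itself continuous Yoneda-complete. The only step that requires a little care is confirming that the retraction genuinely lands inside $\Val_{\leq 1} X$ and that the induced maps are $1$-Lipschitz continuous on this subspace rather than on the ambient space of previsions, but this is precisely what Lemma~\ref{lemma:Vleq1:functor} gives us (it uses Theorem~\ref{thm:V:complete} to ensure Yoneda-completeness of the subspaces involved and Lemma~\ref{lemma:Pf:lipcont} to transport naive suprema). No step involves a real obstacle beyond chaining together the machinery already set up.
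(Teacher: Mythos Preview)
Your proposal is correct and follows essentially the same approach as the paper: represent $X,d$ as a $1$-Lipschitz continuous retract of an algebraic Yoneda-complete space $Y,\partial$ via \cite[Theorem~7.9]{JGL:formalballs}, push the retraction through $\Val_{\leq 1}$ using Lemma~\ref{lemma:Vleq1:functor}, apply Theorem~\ref{thm:V:alg} to $\Val_{\leq 1} Y$, and conclude by the retract characterization of continuous Yoneda-completeness. The paper's argument is identical in structure, only more terse.
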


Together with Lemma~\ref{lemma:Vleq1:functor}, and
Theorem~\ref{thm:V:alg} for the algebraic case, we obtain.
\begin{cor}
  \label{cor:V:functor}
  $\Val_{\leq 1}, \dKRH$ defines an endofunctor on the category of
  continuous Yoneda-complete quasi-metric spaces and $1$-Lipschitz
  continuous map.  Similarly with $\dKRH^a$ instead of $\dKRH$
  ($a > 0$), or with algebraic instead of continuous.  \qed
\end{cor}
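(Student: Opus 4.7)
The plan is to assemble the corollary from pieces that are essentially already in place, and then verify the two genuinely functorial axioms (preservation of identities and of composition), which so far have not been spelled out.

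First I would dispatch the object part. For every continuous Yoneda-complete quasi-metric space $X, d$, Theorem~\ref{thm:Vleq1:cont} shows that $\Val_{\leq 1} X, \dKRH$ and $\Val_{\leq 1} X, \dKRH^a$ are continuous Yoneda-complete. In the algebraic case, Theorem~\ref{thm:V:alg} gives the analogous statement: if $X, d$ is algebraic Yoneda-complete with strong basis $\mathcal B$, then $\Val_{\leq 1} X$ with either quasi-metric is algebraic Yoneda-complete, a strong basis being given by the subprobability simple valuations supported on $\mathcal B$. So in both the continuous and algebraic settings, $\Val_{\leq 1}$ sends objects of the category to objects of the category.

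Next I would dispatch the morphism part. Given a $1$-Lipschitz continuous map $f \colon X, d \to Y, \partial$, Lemma~\ref{lemma:Vleq1:functor} shows that the restriction of $\Prev f$ to $\Val_{\leq 1} X$ is a $1$-Lipschitz continuous map from $\Val_{\leq 1} X, \dKRH$ to $\Val_{\leq 1} Y, \KRH\partial$, and similarly for $\dKRH^a$. I shall write $\Val_{\leq 1} f$ for this restriction. In the algebraic case there is nothing extra to check: the morphisms of the algebraic subcategory are simply the $1$-Lipschitz continuous maps, so the same lemma applies.

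It remains to verify functoriality proper. For the identity, $\Val_{\leq 1} (\identity X) (F) (h) = F (h \circ \identity X) = F (h)$ for every $F \in \Val_{\leq 1} X$ and $h \in \Lform X$, hence $\Val_{\leq 1} (\identity X) = \identity {\Val_{\leq 1} X}$. For composition, given $f \colon X, d \to Y, \partial$ and $g \colon Y, \partial \to Z, \mathfrak d$ both $1$-Lipschitz continuous (so that $g \circ f$ is $1$-Lipschitz continuous by Lemma~\ref{lemma:comp:Lip}), we compute
\[
  \Val_{\leq 1} (g \circ f) (F) (h) = F (h \circ (g \circ f)) = F ((h \circ g) \circ f) = \Val_{\leq 1} f (F) (h \circ g) = \Val_{\leq 1} g (\Val_{\leq 1} f (F)) (h),
\]
for every $F \in \Val_{\leq 1} X$ and $h \in \Lform Z$, so $\Val_{\leq 1} (g \circ f) = \Val_{\leq 1} g \circ \Val_{\leq 1} f$. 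There is no real obstacle here: all the technical content is already contained in Theorems~\ref{thm:V:alg} and~\ref{thm:Vleq1:cont} and Lemma~\ref{lemma:Vleq1:functor}, and what remains are the three routine identities above.
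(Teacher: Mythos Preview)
Your proposal is correct and follows essentially the same approach as the paper: the paper simply cites Lemma~\ref{lemma:Vleq1:functor} together with Theorem~\ref{thm:V:alg} (and implicitly Theorem~\ref{thm:Vleq1:cont}) and leaves the rest unstated. Your explicit verification of preservation of identities and composition from the formula $\Prev f (F)(h) = F(h \circ f)$ is routine and the paper did not bother to spell it out, but it is perfectly fine to include.
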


\subsection{Probabilities on Metric Spaces}
\label{sec:prob-metr-spac}

We would like to embark on the study of \emph{normalized} continuous valuations, and
the space $\Val_1 X$.  The situation is more complicated than in the
sub-normalized case, but the case of \emph{metric} spaces stands out.
We deal with them here, before we examine the more general algebraic
quasi-metric cases in subsection~\ref{sec:algebr-cont-prob}.


\begin{lem}
  \label{lemma:dKRH:metric}
  Let $X, d$ be a metric space.  Then $\dKRH$ is a metric, not just a
  quasi-metric, on $\Val_1 X$.  The same holds for $\dKRH^a$, for
  every $a \in \Rp$, $a > 0$.
\end{lem}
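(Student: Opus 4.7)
The plan is to prove symmetry of $\dKRH$ (and $\dKRH^a$) on $\Val_1 X$ when $X, d$ is a metric space; separation will then follow from Lemma~\ref{lemma:KRH:qmet} (since $\dKRH(\nu, \nu') = 0$ iff $\nu \leq \nu'$, and adding symmetry forces $\nu = \nu'$).

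The key observation specific to \emph{metric} spaces is that if $h \in \Lform_1^a(X, d)$ (a $1$-Lipschitz continuous map from $X$ to $[0, a]$), then the reflected map $k = a \cdot \mathbf{1} - h$ again lies in $\Lform_1^a(X, d)$. Indeed, $k$ takes values in $[0, a]$, it is $1$-Lipschitz because $\dreal(k(x), k(y)) = \max(h(y) - h(x), 0) \leq d(y, x) = d(x, y)$ (using symmetry of $d$), and it is continuous (in the metric, hence $d$-Scott, topology) because $h$ is. By Proposition~\ref{prop:cont}, $k$ is then $1$-Lipschitz continuous. This substitution $h \mapsto a \cdot \mathbf{1} - h$ is an involution of $\Lform_1^a(X, d)$ onto itself.

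First I would treat $\dKRH^a$. For $\nu, \nu' \in \Val_1 X$ (both normalized, so $\int c \, d\nu = c$ for constants $c$), a direct computation yields
\[
\int_{x \in X} (a - h(x)) \, d\nu - \int_{x \in X} (a - h(x)) \, d\nu' = \int_{x \in X} h(x) \, d\nu' - \int_{x \in X} h(x) \, d\nu,
\]
hence $\dreal\bigl(\int (a-h) d\nu, \int (a-h) d\nu'\bigr) = \dreal\bigl(\int h d\nu', \int h d\nu\bigr)$. Taking the supremum over $h \in \Lform_1^a(X, d)$ and using the fact that $h \mapsto a \cdot \mathbf{1} - h$ is a bijection of $\Lform_1^a(X, d)$ onto itself gives $\dKRH^a(\nu, \nu') = \dKRH^a(\nu', \nu)$, so $\dKRH^a$ is symmetric, and therefore a metric on $\Val_1 X$.

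For $\dKRH$, I would invoke Lemma~\ref{lemma:KRH:KRHa}, which gives $\dKRH(\nu, \nu') = \sup_{a > 0} \dKRH^a(\nu, \nu')$. Since a pointwise supremum of symmetric functions is symmetric, $\dKRH$ is symmetric on $\Val_1 X$, completing the proof. There is no serious obstacle here: the only subtlety to watch is that the reflection trick $h \mapsto a \cdot \mathbf{1} - h$ truly requires the symmetry $d(x,y) = d(y,x)$ of the underlying metric (this is exactly where quasi-metric spaces would fail), and that one must restrict to \emph{normalized} valuations so that the constants $a \cdot \mathbf{1}$ integrate to $a$ on both sides and cancel.
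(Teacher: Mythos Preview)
Your proof is correct and follows essentially the same approach as the paper: both first treat $\dKRH^a$ by observing that the reflection $h \mapsto a\cdot\mathbf{1} - h$ is an involution of $\Lform_1^a(X,d)$ (this is where symmetry of $d$ is used), then use normalization to get $\int(a-h)\,d\nu = a - \int h\,d\nu$ and hence symmetry of $\dKRH^a$, and finally invoke Lemma~\ref{lemma:KRH:KRHa} to pass to $\dKRH$.
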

\proof
We consider, equivalently, the space of normalized linear previsions
on $X$ instead of $\Val_1 X$.

As a matter of simplification, recall that on a metric space $X, d$,
the $d$-Scott topology coincides with the open ball topology.  As a
result, every $1$-Lipschitz map is automatically continuous, hence
$1$-Lipschitz continuous since every metric space is standard.

We start with the case of $\dKRH^a$.  For every $h \in \Lform_1^a (X,
d)$, $a-h$ is $1$-Lipschitz: for all $x, y \in X$, $(a-h (x)) - (a-h
(y)) = h (y) - h (x) \leq d (y, x) = d (x,y)$, using the fact that $h$
is $1$-Lipschitz and that $d$ is a metric.  Moreover, $a-h$ is bounded
from above by $a$, so $a-h$ is in $\Lform_1^a (X, d)$.

For every normalized linear prevision $F$ on $X$,
$F (a - h) = a - F (h)$.  Indeed, by linearity
$F (a - h) + F (h) = F (a.\mathbf 1)$, and this is equal to $a$ since
$F$ is normalized.

For all normalized linear previsions $F$, $F'$ on $X$,
$\dreal (F' (a - h), F (a - h))$ is therefore equal to
$\dreal (a - F' (h), a - F (h)) = \dreal (F (h), F' (h))$.  It follows
that for every $h \in \Lform_1^a (X, d)$, there is an
$h' \in \Lform_1^a (X, d)$, namely $h'=a-h$, such that
$\dreal (F' (h'), F (h')) = \dreal (F (h), F' (h))$.  Therefore
$\dKRH^a (F, F') \leq \dKRH^a (F', F)$.  By symmetry, we conclude that
$\dKRH^a (F, F') = \dKRH^a (F', F)$: $\dKRH^a$ is a metric.

To show that $\dKRH$ is a metric, it is enough to observe that
$\dKRH (F, F')$ is equal to $\sup_{a \in \Rp, a > 0} \dKRH^a (F, F')$
(Lemma~\ref{lemma:KRH:KRHa}), and to use the fact that $\dKRH^a$ is a
metric.  \qed

Since $\dKRH (F, F') = \dKRH (F', F)$ in that case, $\dKRH (F, F')$ is
also equal to
$\max (\dKRH (F, \allowbreak F'), \allowbreak \dKRH (F', F))$, which
is easily seen to be equal to
$\sup_{h \in \Lform_1 (X, d)} \max (\dreal (F (h), F' (h)),
\allowbreak \dreal (F' (h), F (h)))$.  The inner maximum is the
symmetrized
metric
$\dreal^{sym}$, defined by $\dreal^{sym} (a, \allowbreak b) = |a-b|$
for all $a, b \in \Rp$.  Therefore, we obtain the formula:
\begin{eqnarray}
  \label{eq:dKRH:metric}
  \dKRH (\nu, \nu') & = & \sup_{h \text{ 1-Lipschitz}} 
                          \left|\int_{x \in X} h (x) d\nu - \int_{x
                          \in X} h (x) d\nu'\right| \\
  \nonumber
  & = & \sup_{h \text{ 1-Lipschitz bounded}}
                          \left|\int_{x \in X} h (x) d\nu - \int_{x
                          \in X} h (x) d\nu'\right|,
\end{eqnarray}
for $\nu, \nu' \in \Val_1 X$, in the case where $d$ is a metric on
$X$.  (The second equality is by Lemma~\ref{lemma:KRH:bounded}.)

Similarly,
\begin{eqnarray}
  \label{eq:dKRHa:metric}
  \dKRH^a (\nu, \nu') & = & \sup_{h \text{ 1-Lipschitz bounded by $a$}}
                          \left|\int_{x \in X} h (x) d\nu - \int_{x
                          \in X} h (x) d\nu'\right|,
\end{eqnarray}
for $\nu, \nu' \in \Val_1 X$, assuming again that $d$ is a metric.   We
recognize the usual formula for the Kantorovich-Rubinshte\u\i n metric
when $a=1$.

Theorem~\ref{thm:V:complete} then states:
\begin{thm}
  \label{thm:V1:complete}
  For every complete metric space $X, d$, the space $\Val_1 X$ with
  the Kantorovich-Rubinshte\u\i n-Hutchinson metric
  (\ref{eq:dKRH:metric}), or with the $a$-bounded
  Kantorovich-Rubinshte\u\i n-Hutchinson metric (\ref{eq:dKRHa:metric}),
  $a \in \Rp$, $a > 0$, is a complete metric space.  \qed
\end{thm}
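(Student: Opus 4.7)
The plan is to reduce metric completeness of $\Val_1 X, \dKRH$ (and $\Val_1 X, \dKRH^a$) to the Yoneda-completeness results already proved. The key observation is that on a metric (as opposed to merely quasi-metric) space, Yoneda-completeness coincides with ordinary Cauchy-completeness, so that once we establish Yoneda-completeness of the space of normalized valuations, metric completeness follows for free.

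First, Lemma~\ref{lemma:dKRH:metric} has already shown that $\dKRH$ and $\dKRH^a$ are symmetric, i.e., actual metrics, on $\Val_1 X$ when $X,d$ is a metric space. So it only remains to verify metric completeness. To invoke Theorem~\ref{thm:V:complete}, I need that $X, d$ is continuous Yoneda-complete. Every metric space is standard, and by the Edalat--Heckmann result recalled in the excerpt (just before Proposition~\ref{prop:compactballs}), the space of formal balls of any metric space is a continuous poset. Moreover, for metric spaces, the equivalent notions of completeness in the literature (Cauchy-completeness, Yoneda-completeness) all coincide with the usual metric completeness; hence, $X, d$ being complete metric implies that $\mathbf B (X, d)$ is a continuous dcpo, i.e., $X, d$ is continuous Yoneda-complete.

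Applying Theorem~\ref{thm:V:complete} to the continuous Yoneda-complete space $X, d$, the spaces $\Val_1 X, \dKRH$ and $\Val_1 X, \dKRH^a$ are Yoneda-complete. It remains to upgrade Yoneda-completeness to metric completeness for the specific (symmetric) case at hand. Given a Cauchy sequence ${(\nu_n)}_{n \in \nat}$ in $\Val_1 X, \dKRH$ (resp., $\dKRH^a$), one extracts a subsequence ${(\nu_{n_k})}_{k \in \nat}$ such that $\dKRH (\nu_{n_k}, \nu_{n_{k+1}}) < 1/2^{k+1}$, and sets $r_k = \sum_{j \geq k} 1/2^{j+1}$, so that $(\nu_{n_k}, r_k)$ is a monotone net of formal balls with $\inf_k r_k = 0$. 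This net has a supremum $(\nu, 0)$ in $\mathbf B (\Val_1 X, \dKRH)$ by Yoneda-completeness, and $\nu$ is then the $\dKRH$-limit of ${(\nu_{n_k})}_{k}$. Because $\dKRH$ is symmetric (Lemma~\ref{lemma:dKRH:metric}), the Cauchy condition forces the whole sequence to converge to $\nu$ as well. The same argument applies verbatim to $\dKRH^a$.

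The main (really, only) obstacle is the last step, namely articulating cleanly why Yoneda-completeness collapses to ordinary metric completeness in the symmetric setting. The crucial input is symmetry: without it, convergence of a Cauchy sequence to the $d$-limit of a constructed Cauchy-weighted subnet cannot be propagated back to convergence in the original direction. Since Lemma~\ref{lemma:dKRH:metric} gives us precisely this symmetry on $\Val_1 X$, the reduction is straightforward, and the theorem follows.
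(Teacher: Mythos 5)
Your proposal is correct and follows essentially the same route as the paper: the paper derives Theorem~\ref{thm:V1:complete} directly from Theorem~\ref{thm:V:complete} together with Lemma~\ref{lemma:dKRH:metric}, relying on the standard fact (noted in the introduction) that Yoneda-completeness coincides with ordinary metric completeness on metric spaces. You merely spell out that last equivalence (rapidly Cauchy subsequence, Cauchy-weighted net, $d$-limit, symmetry), which the paper leaves implicit.
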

We haven't cared to mention that the simple normalized valuations
$\sum_{i=1}^n a_i \delta_{x_i}$ with $x_i$ center points in $X, d$,
$\sum_{i=1}^n a_i = 1$, are center points.  This is trivially true,
because every point of a metric space is a center point.  In fact, in
the case of Theorem~\ref{thm:V1:complete}, \emph{every} normalized
continuous valuation is a center point.

Theorem~\ref{thm:V1:complete} resembles the classical result that, if
$X, d$ is a complete separable metric space, then $\Val_1 X, \dKRH$ is
a complete (separable) metric space.  Note that we do not need $X$ to
be separable for Theorem~\ref{thm:V1:complete} to hold.

We will not show that the simple normalized valuations
$\sum_{i=1}^n a_i \delta_{x_i}$ with $x_i$ taken from a given strong
basis $\mathcal B$ also form a strong basis for the $\dKRH^a$ metric,
that is, a dense subset (Remark~\ref{rem:strong:basis}): we shall
prove that in the more general case of the $\dKRH^a$
\emph{quasi-}metric (Theorem~\ref{thm:V1:alg}).

\begin{rem}
  \label{rem:not:dense}
  As a special case of the upcoming Theorem~\ref{thm:V:weak=dScott},
  applied to complete metric spaces $X, d$, the $\dKRH^a$-Scott
  topology on $\Val_1 X$ will coincide with the weak topology.  Since
  the former is the usual open ball topology of the metric $\dKRH^a$,
  $\dKRH^a$ metrizes the weak topology on spaces of normalized
  continuous valuations on complete metric spaces.  This subsumes the
  well-known result that it metrizes the weak topology on spaces of
  probability measures on complete separable (Polish) spaces.

  However, the unbounded $\dKRH$ metric does \emph{not} metrize the
  weak topology.  For a counterexample, we reuse one due to Kravchenko
  \cite[Lemma~3.7]{Kravchenko:complete:K}.  Take any complete metric
  space $X, d$, with points ${(x_n)}_{n \in \nat \smallsetminus \{0\}}$ such
  that $n \leq d (x_0, x_n) < + \infty$.  Let
  $\nu_n = \frac 1 n \delta_{x_n} + (1-\frac 1 n) \delta_{x_0}$.  Then
  ${(\nu_n)}_{n \in \nat \smallsetminus \{0\}}$ converges to $\delta_{x_0}$ in
  the weak topology, since for every subbasic weak open set $[f > r]$
  containing $\delta_{x_0}$ (i.e., $f (x_0) > r$),
  $\int_{x \in X} f (x) d\nu_n = \frac 1 n f (x_n) + (1-\frac 1 n) f
  (x_0)$ is strictly larger than $r$ for $n$ large enough (this is
  easy if $r \geq 0$ since then $f (x_0) > 0$, and is trivial if
  $r < 0$ since $f \in \Lform X$ takes its values in $\creal$).
  However,
  $\dKRH (\delta_{x_0}, \nu_n) = \sup_{h \in \Lform_1 X} \dreal (h
  (x_0), \frac 1 n h (x_n) + (1-\frac 1 n) h (x_0)) \geq \dreal (d
  (x_0, x_n), (1-\frac 1 n) d (x_0, x_n))$ (using
  $d (\_, x_n) \in \Lform_1 (X, d)$, see Lemma~\ref{lemma:d(_,x)})
  $=\frac 1 n d (x_0, x_n) \geq 1$, which shows that
  ${(\nu_n)}_{n \in \nat \smallsetminus \{0\}}$ does not converge to
  $\delta_{x_0}$ in the open ball topology of $\dKRH$.

  We reassure ourselves in checking that
  ${(\nu_n)}_{n \in \nat \smallsetminus \{0\}}$ does converge to $\delta_{x_0}$
  in the open ball topology of $\dKRH^a$, though (this must be, since
  we announced that it would coincide with the weak topology):
  $\dKRH^a (\delta_{x_0}, \nu_n) = \sup_{h \in \Lform_1^a X} \max
  (\frac 1 n (h (x_0) - h (x_n))) \leq \frac a n$.
\end{rem}

\subsection{Algebraicity and Continuity of the Probabilistic
  Powerdomain $\Val_1 X$}
\label{sec:algebr-cont-prob}

We now turn to spaces of normalized continuous valuations in the
general case of algebraic, then continuous, 
quasi-metric spaces.  The situation is less neat than with
sub-normalized continuous valuations (Theorem~\ref{thm:V:alg}) or with
metric spaces, and we shall distinguish two settings where we can
conclude: for the bounded $\dKRH^a$ quasi-metrics
(Theorem~\ref{thm:V1:alg}), or for $\dKRH$ assuming the existence of a
so-called root in $X$ (Theorem~\ref{thm:V1:alg:root}).

\begin{thm}[Algebraicity for spaces of probabilities, $\dKRH^a$]
  \label{thm:V1:alg}
  Let $X, d$ be an algebraic Yoneda-complete quasi-metric space, with
  a strong basis $\mathcal B$.

  The space $\Val_1 X, \dKRH^a$ is algebraic Yoneda-complete, for
  every $a \in \Rp$, $a > 0$.

  All the normalized simple normalized valuations
  $\sum_{i=1}^n a_i \delta_{x_i}$ with $x_i$ center points in $X, d$,
  $\sum_{i=1}^n a_i = 1$, are center points, and form a strong basis,
  even when each $x_i$ is taken from $\mathcal B$.
\end{thm}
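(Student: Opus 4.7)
The plan is to adapt the proof of Theorem~\ref{thm:V:alg} by repairing the mass defect. Yoneda-completeness of $\Val_1 X, \dKRH^a$ is given by Theorem~\ref{thm:V:complete}, and Lemma~\ref{lemma:V:simple:center} already shows that simple normalized valuations supported on center points are themselves center points, so it suffices to exhibit, for each $\nu \in \Val_1 X$, a directed family of such formal balls with supremum $(\nu, 0)$.

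Following Theorem~\ref{thm:V:alg}, I would lift $\nu$ to $\eta_X[\nu] \in \Val_1(\mathbf B(X, d))$ and apply Lemmas~\ref{lemma:val:cont} and~\ref{lemma:B:basis} to write $\eta_X[\nu] = \sup_i \nu_i$ as a directed supremum of subnormalized simple valuations $\nu_i = \sum_j a_{ij}\,\delta_{(x_{ij}, r_{ij})}$ way-below $\eta_X[\nu]$, with each $x_{ij} \in \mathcal B$ and $r_{ij} < 1$. Fix a center point $x_0 \in \mathcal B$ (if $X$ is empty the statement is trivial), set $b_i = 1 - \sum_j a_{ij}$, and define
\[
\mathring{\nu}'_i \;=\; \sum_j a_{ij}\,\delta_{x_{ij}} + b_i\,\delta_{x_0}, \qquad R'_i \;=\; R(\nu_i) + a\,b_i,
\]
where $R(\nu_i) = \sum_j a_{ij} r_{ij} + b_i$ is the radius from the subnormalized proof. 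Each $\mathring{\nu}'_i$ is a normalized simple valuation supported on $\mathcal B$.

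Three verifications are then needed. First, $(\mathring{\nu}'_i, R'_i) \leq^{\dKRH^{a+}} (\nu, 0)$: for $h \in \Lform_1^a$, the bound $\sum_j a_{ij} h(x_{ij}) \leq \int h\,d\nu + R(\nu_i)$ taken from the subnormalized proof combines with $b_i h(x_0) \leq a b_i$ (since $h(x_0) \leq a$). Second, the family is directed: whenever $\nu_i \leq \nu_{i'}$, the subnormalized proof gives $\sum_j a_{ij} h(x_{ij}) \leq \sum_j a_{i'j} h(x_{i'j}) + R(\nu_i) - R(\nu_{i'})$, while the extra mass contributes $(b_i - b_{i'}) h(x_0) \leq a(b_i - b_{i'})$, precisely the difference $a b_i - a b_{i'}$ of the correction terms; this uses $b_i \geq b_{i'}$ (from $\sum_j a_{ij} \leq \sum_j a_{i'j}$) together again with $h(x_0) \leq a$. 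Third, the naive supremum $(G, R)$ has $R = \inf_i R'_i = 0$, since $R(\nu_i) \downarrow 0$ (by the $h = \mathbf 1$ argument of the subnormalized proof) forces $b_i \downarrow 0$ as well; and for every $h \in \Lform_1^a$,
\[
G(h) \;=\; \sup_i\left(\sum_j a_{ij} h(x_{ij}) - R(\nu_i) + b_i(h(x_0) - a)\right) \;=\; \int h\,d\nu,
\]
because the first two summands already reach $\int h\,d\nu$ at the supremum by the subnormalized case, while the last lies in $[-a b_i, 0]$ and so vanishes in the limit. Corollary~\ref{corl:=:Lbnd1} then identifies $G$ with $\nu$, so $(\nu, 0)$ is the supremum in $\mathbf B(\Val_1 X, \dKRH^a)$.

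The main obstacle is conceptual rather than computational: for the unbounded $\dKRH$ the very same construction breaks, because a test function in $\Lform_1$ can take arbitrarily large values and the displacement $b_i h(x_0)$ cannot be controlled by any linear multiple of $b_i$; the boundedness of test functions in $\Lform_1^a$ is precisely what makes a single fixed base point $x_0$ workable. The only technically delicate step is the directedness inequality, which ultimately reduces to the twin facts that $h(x_0) \leq a$ and that $b_i$ is monotone non-increasing along the net.
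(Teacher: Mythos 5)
Your proposal is correct and follows essentially the same route as the paper: lift $\nu$ to $\eta_X[\nu]$, approximate by subnormalized simple valuations $\nu_i$ with radii $<1$ as in Theorem~\ref{thm:V:alg}, renormalize each $\mathring\nu_i$ by placing the missing mass $b_i$ at a fixed center point $x_0$, and enlarge the radius by $a\,b_i$; your $b_i$ is the paper's $\lambda_i$ and your three verifications (below $(\nu,0)$, directedness via $h(x_0)\leq a$ and monotonicity of $b_i$, and $\inf_i b_i = 0$ forcing the naive supremum to be $(\nu,0)$) match the paper's $(*)$, $(**)$, $(*{*}*)$ and $(\dagger)$. Your observation that $R(\nu_i)\geq b_i$ gives $\inf_i b_i=0$ directly is a marginally shorter derivation of $(\dagger)$ than the paper's, but the argument is the same in substance.
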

\proof Let $\nu \in \Val_1 X$.  We wish to exhibit a directed family
of formal balls $(\nu'_i, r'_i)$, $i \in I$, below $(\nu, 0)$, whose
supremum is $(\nu, 0)$, where each $\nu'_i$ is a simple valuation of
the form $\sum_{j=1}^n a_j \delta_{x_j}$, with $x_j$ in $\mathcal B$,
and $\sum_{j=1}^n a_j = 1$.

As in the proof of Theorem~\ref{thm:V:alg}, $\eta_X [\nu]$ is the
supremum of a directed family of simple valuations
$\nu_i = \sum_{j=1}^{n_i} a_{ij} \delta_{(x_{ij}, r_{ij})}$,
$i \in I$, where each $x_{ij}$ is in $\mathcal B$, and
$\sum_{j=1}^{n_i} a_{ij} \leq 1$.

We can require that $r_{ij} < 1$ for all $i, j$, as in the proof of
Theorem~\ref{thm:V:alg}, and we similarly define $\mathring\nu_i$ as
$\sum_{j=1}^{n_i} a_{ij} \delta_{x_{ij}}$,
$R (\nu_i) = \sum_{j=1}^{n_i} a_{ij} r_{ij} + 1 - \sum_{j=1}^{n_i}
a_{ij}$, $\beta (\nu_i) = (\mathring\nu_i, R (\nu_i))$.  Define
$i \preceq i'$ if and only if $\nu_i \leq \nu_{i'}$, for all
$i, i' \in I$.  As before, we show that:
\begin{quote}
  $(*)$ for all $i \preceq i'$ in $I$,
  $\beta (\nu_i) \leq^{\dKRH^{a+}} \beta (\nu_{i'})$.
\end{quote}
This implies that the family ${(\beta (\nu_i))}_{i \in I}$ is
directed.  We also show that $\beta (\nu_i) \leq^{\dKRH^{a+}} (\nu, 0)$ for every
$i \in I$, and that $\sup_{i \in I} \beta (\nu_i) = (\nu, 0)$.
However $\mathring\nu_i$ is not normalized, only subnormalized.

Fix a center point $x_0$.  (There must exist one: the only algebraic
quasi-metric space without a center point is the empty space, and if
$X$ is empty then $\Val_1 X$ is empty as well, so we would not have
had a $\nu \in \Val_1 X$ to start with.)  For each $i \in I$, let
$\lambda_i = 1-\mathring\nu_i (X)$,
$\nu'_i = \mathring\nu_i + \lambda_i \delta_{x_0}$, and
$r'_i = R (\nu_i) + a \lambda_i$.  Now $\nu'_i$ is normalized.

We check that ${(\nu'_i, r'_i)}_{i \in I}$ is directed.  To this end,
it is enough to show that for all $i \preceq i'$ in $I$,
$(\nu'_i, r'_i) \leq{\dKRH^{a+}} (\nu'_{i'}, r'_{i'})$.  Before we do
so, we note that if $i \preceq i'$, then
$\nu_i (\mathbf B (X, d)) \leq \nu_{i'} (\mathbf B (X, d))$, that is,
$\sum_{j=1}^{n_i} a_{ij} \leq \sum_{j=1}^{n_{i'}} a_{i'j}$;
equivalently, $\mathring\nu_i (X) \leq \mathring\nu_{i'} (X)$.
Therefore:
\begin{quote}
  $(**)$ for all $i \preceq i'$ in $I$, $\lambda_i \geq \lambda_{i'}$.
\end{quote}
We also note that for all
$i \preceq i'$ in $I$, since by $(*)$
$\dKRH^a (\mathring\nu_i, \mathring\nu_{i'}) \leq R (\nu_i) - R
(\nu_{i'})$, the inequality
$\int_{x \in X} h (x) d\mathring\nu_i \leq \int_{x \in X} h (x)
d\mathring\nu_{i'} + R (\nu_i) - R (\nu_{i'})$ holds for $h$
equal to
the constant fonction equal to $a$; hence
$a \mathring\nu_i (X) \leq a \mathring\nu_{i'} (X) + R (\nu_i) - R
(\nu_{i'})$.  This implies:
\begin{quote}
  $(*{*}*)$ for all $i \preceq i'$ in $I$, $r'_i \geq r'_{i'}$.
\end{quote}
Let now $h$ be an arbitrary element from
$\Lform_1^a (X, d)$, and assume $i \preceq i'$.  We have the
following:
\begin{eqnarray*}
  \dreal (\int_{x \in X} \mskip-20mu h (x) d\nu'_i, \int_{x \in X} \mskip-20mu h (x)
  d\nu'_{i'})
  & = & \max (\int_{x \in X} \mskip-20mu h (x) d\mathring\nu_i + h (x_0) \lambda_i
        - \int_{x \in X} \mskip-20mu h (x) d\mathring\nu_{i'} - h (x_0)
        \lambda_{i'}, 0) \\
  & \leq & \max (R (\nu_i) - R (\nu_{i'}) + h (x_0) (\lambda_i -
           \lambda_{i'}), 0) \\
  && \quad\text{since }\dKRH^a (\mathring\nu_i, \mathring\nu_{i'}) \leq R (\nu_i) - R
(\nu_{i'})\text{, using }(*) \\
  & \leq & \max (R (\nu_i) - R (\nu_{i'}) + a (\lambda_i -
           \lambda_{i'}), 0) \\
  && \quad\text{since }h (x_0) \leq a\text{, and }\lambda_i \geq
     \lambda_{i'}\text{ by }(**) \\
  & = & \max (r'_i - r'_{i'}, 0) = r'_i - r'_{i'} \quad\text{ by }(*{*}*).
\end{eqnarray*}
This shows that for all $i \preceq i'$ in $I$,
$\dKRH^a (\nu'_i, \nu'_{i'}) \leq r'_i - r'_{i'}$, hence that
$(\nu'_i, r'_i) \leq^{\dKRH^{a+}} (\nu'_{i'}, r'_{i'})$.  In
particular, ${(\nu'_i, r'_i)}_{i \in I}$ is directed.

We know that $\beta (\nu_i) \leq^{\dKRH^{a+}} (\nu, 0)$ for every
$i \in I$, and we need to show that
$(\nu'_i, r'_i) \leq^{\dKRH^{a+}} (\nu, 0)$ as well.  For every
$h \in \Lform_1^a (X, d)$, this means showing that
$\int_{x \in X} h (x) d\mathring\nu_i + h (x_0) \lambda_i \leq \int_{x
  \in X} h (x) d \nu + R (\nu_i) + a \lambda_i$.  We know that
$\int_{x \in X} h (x) d\mathring\nu_i \leq \int_{x \in X} h (x) d \nu
+ R (\nu_i)$ since $\beta (\nu_i) \leq^{\dKRH^{a+}} (\nu, 0)$, and we
conclude since $h (x_0) \leq a$.

Finally, we know that $(\nu, 0)$ is the supremum of the directed
family ${(\beta (\nu_i))}_{i \in I}$ in $\mathbf B (\Val_{\leq 1} X,
\dKRH^a)$, and we must show that it is also the supremum of the
directed family ${(\nu'_i, r'_i)}_{i \in I}$ in $\mathbf B (\Val_1 X,
\dKRH^a)$.

We claim that:
\begin{quote}
  $(\dagger)$ $\inf_{i \in I} \lambda_i = 0$.
\end{quote}
Indeed, recall that $\sup_{i \in I} \beta (\nu_i) = (\nu, 0)$.  Since
directed suprema are computed as naive suprema
(Theorem~\ref{thm:V:complete}), this means that
$\inf_{i \in I} R (\nu_i) = 0$, and that $\int_{x \in X} h (x) d\nu$
is equal to
$\sup_{i \in I} (\int_{x \in X} h (x) d\mathring\nu_i - R (\nu_i))$
for every $h \in \Lform_1^a (X, d)$.  Taking $h = a.\mathbf 1$, the
latter yields $a = \sup_{i \in I} (a \mathring\nu_i (X) - R (\nu_i))$,
or equivalently $\sup_{i \in I} (-a \lambda_i - R (\nu_i)) = 0$.
Since ${(-R (\nu_i))}_{i \in I}$ is a directed family (because
$i \preceq i'$ implies $\beta (\nu_i) \leq \beta (\nu_{i'})$, hence
$R (\nu_i) \geq R (\nu_{i'})$), and ${(-\lambda_i)}_{i \in I}$ is
directed as well by $(**)$, we may use the Scott-continuity of
addition and of multiplication by $a$, and rewrite this as
$a \sup_{i \in I} (-\lambda_i) + \sup_{i \in I} (-R (\nu_i)) = 0$,
hence to $\inf_{i \in I} \lambda _i = 0$, considering that
$\inf_{i \in I} R (\nu_i) = 0$.

Knowing this, and using again that directed suprema are naive suprema,
our task consists in showing that $\inf_{i \in I} r'_i = 0$, and that
for every $h \in \Lform_1^a (X, d)$,
$\int_{x \in X} h (x) d\nu = \sup_{i \in I} (\int_{x \in X} h (x)
d\nu'_i - r'_i)$.  The first equality is proved by rewriting
$\inf_{i \in I} r'_i$ as
$\inf_{i \in I} R (\nu_i) + a \inf_{i \in I} \lambda_i$, invoking
Scott-continuity as above, and then using the equality
$\inf_{i \in I} R (\nu_i) = 0$ and $(\dagger)$.

For the second equality,
$\sup_{i \in I} (\int_{x \in X} h (x) d\nu'_i - r'_i)$ is equal to
$\sup_{i \in I} (\int_{x \in X} h (x) d\mathring\nu_i + h (x_0)
\lambda_i - R (\nu_i) - a \lambda_i)$, which is equal to the sum of
$\sup_{i \in I} (\int_{x \in X} h (x) d\mathring\nu_i - R (\nu_i)$ and
of $(a - h (x_0)) \sup_{i \in I} (-\lambda_i)$, by Scott-continuity of
addition and of multiplication by $a - h (x_0)$, again.  The first of
those summands is equal to $\int_{x \in X} h (x) d\nu$ since
$(\nu, 0)$ is the supremum of ${(\beta (\nu_i))}_{i \in I}$ in
$\mathbf B (\Val_{\leq 1} X, \dKRH^a)$, and the second one is equal to
$0$ by $(\dagger)$.  \qed

A quasi-metric $d$ on a space $X$ is \emph{$a$-bounded} if and only if
$d (x, y) \leq a$ for all $x, y \in X$.  It is \emph{bounded} if and
only if it is $a$-bounded for some $a \in \Rp$.
\begin{rem}
  \label{rem:dKRHa:bounded}
  The quasi-metric $\dKRH^a$ is $a$-bounded on any space of previsions.
\end{rem}

\begin{lem}
  \label{lemma:dKRH=dKRHa}
  If $d$ is an $a$-bounded quasi-metric on $X$, then $\dKRH$ and
  $\dKRH^a$ coincide on any space of previsions.
\end{lem}
\proof
Clearly, $\dKRH^a \leq \dKRH$.  In order to show that $\dKRH \leq
\dKRH^a$, let $F, F'$ be any two previsions, and let us consider any
$h \in \Lform_1 (X, d)$.  We claim that there is an $h' \in \Lform_1^a
(X, d)$ such that $\dreal (F (h), F' (h)) = \dreal (F (h'), F'
(h'))$.  This will imply that $\dreal (F (h), F' (h)) \leq \dKRH^a (F,
F')$, hence, as $h$ is arbitrary, that $\dKRH (F, F') \leq \dKRH^a (F,
F')$.

For any two points $x, y \in X$,
$\dreal (h (x), h (y)) \leq d (x, y) \leq a$.  This implies that the
range of $h$ is included in an interval of length at most $a$, namely
an interval $[b, b+a]$ for some $b \in \Rp$, or $\{+\infty\}$.  In the
first case, $h' = h - b.\mathbf 1$ fits.  In the second case,
$\dreal (F (h), F' (h)) = 0$ and the constant $0$ map fits.  \qed

\begin{cor}
  \label{corl:V1:alg}
  Let $X, d$ be an algebraic Yoneda-complete quasi-metric space with
  strong basis $\mathcal B$, where $d$ is bounded.  The space
  $\Val_1 X, \dKRH$ is algebraic Yoneda-complete.  All the simple
  normalized valuations $\sum_{i=1}^n a_i \delta_{x_i}$ with
  $x_i \in \mathcal B$ and $\sum_{i=1}^n a_i = 1$, are center points,
  and form a strong basis.
\end{cor}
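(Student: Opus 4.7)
The plan is to reduce the statement directly to Theorem~\ref{thm:V1:alg} by invoking the coincidence of $\dKRH$ with $\dKRH^a$ under a boundedness hypothesis. Since $d$ is bounded, there exists $a \in \Rp$ with $a > 0$ such that $d$ is $a$-bounded; fix such an $a$.

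First I would apply Lemma~\ref{lemma:dKRH=dKRHa}, which asserts that on any space of previsions over an $a$-bounded quasi-metric space $X, d$, the quasi-metrics $\dKRH$ and $\dKRH^a$ coincide. In particular, they coincide on $\Val_1 X$. This means that $\Val_1 X, \dKRH$ and $\Val_1 X, \dKRH^a$ are literally the same quasi-metric space; hence their spaces of formal balls are isomorphic as posets, their Scott topologies agree, and the notions of directed supremum, center point, strong basis, and algebraicity all transfer verbatim from one to the other.

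Then I would invoke Theorem~\ref{thm:V1:alg} applied to $X, d$ with the same strong basis $\mathcal B$ and with the chosen constant $a$. That theorem states exactly that $\Val_1 X, \dKRH^a$ is algebraic Yoneda-complete, that every simple normalized valuation $\sum_{i=1}^n a_i \delta_{x_i}$ with $x_i \in \mathcal B$ and $\sum_{i=1}^n a_i = 1$ is a center point of $\Val_1 X, \dKRH^a$, and that these simple normalized valuations form a strong basis of $\Val_1 X, \dKRH^a$. Translating back along the identity $\dKRH = \dKRH^a$ yields the same conclusions for $\Val_1 X, \dKRH$, completing the proof.

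There is no genuine obstacle here: the only thing to verify is that boundedness of $d$ is enough to trigger Lemma~\ref{lemma:dKRH=dKRHa}, and this is immediate from the definition. The corollary is essentially a repackaging of Theorem~\ref{thm:V1:alg} in the case where the unbounded $\dKRH$ and the bounded $\dKRH^a$ agree.
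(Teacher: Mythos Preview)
Your proposal is correct and matches the paper's own proof essentially verbatim: the paper's argument is simply to pick $a>0$ with $d$ $a$-bounded, invoke Lemma~\ref{lemma:dKRH=dKRHa} to identify $\dKRH$ with $\dKRH^a$, and then apply Theorem~\ref{thm:V1:alg}.
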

\proof By Lemma~\ref{lemma:dKRH=dKRHa}, $\dKRH=\dKRH^a$, where $a > 0$
is some non-negative real such that $d$ is $a$-bounded.  Now apply
Theorem~\ref{thm:V1:alg}.  \qed

Corollary~\ref{corl:V1:alg} is rather restrictive, and we give a
better result below.  The proof reuses some of the ideas used above.

In a quasi-metric space $X, d$, say that $x \in X$ is an
\emph{$a$-root} if and only if $d (x, y) \leq a$ for every $y \in X$.
As a special case, a $0$-root is an element below all others in the
$\leq^{d^+}$ ordering.  For example, $0$ is a $0$-root in $\creal$.

We call \emph{root} any $a$-root, for some $a \in \Rp$, $a > 0$.
$\real$ has no root.

\begin{lem}
  \label{lemma:root:center}
  Let $X, d$ be a standard algebraic quasi-metric space with strong
  basis $\mathcal B$, and $a \in \Rp$, $a > 0$.  If $X, d$ has an
  $a$-root, then, for every $\epsilon > 0$, it also has an
  $(a+\epsilon)$-root in $\mathcal B$.
\end{lem}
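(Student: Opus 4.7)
The plan is straightforward once we unwind the definition of algebraicity. Let $x$ be the given $a$-root in $X$, so $d(x, y) \leq a$ for all $y \in X$. Since $X, d$ is standard algebraic with strong basis $\mathcal B$, there is a directed family of formal balls ${(z_i, r_i)}_{i \in I}$, with each $z_i \in \mathcal B$, such that $\sup_{i \in I} (z_i, r_i) = (x, 0)$ in $\mathbf B(X, d)$.

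Next I would invoke Lemma~\ref{lemma:sup=>dlim}, which applies because $X, d$ is standard: from the supremum being $(x, 0)$ it follows that $\inf_{i \in I} r_i = 0$. Given the desired $\epsilon > 0$, pick an index $i \in I$ with $r_i < \epsilon$. Then $(z_i, r_i) \leq^{d^+} (x, 0)$ expands to $d(z_i, x) \leq r_i$. For an arbitrary $y \in X$, the triangular inequality and the $a$-root property of $x$ yield
\[
  d(z_i, y) \leq d(z_i, x) + d(x, y) \leq r_i + a < \epsilon + a.
\]
Since $z_i \in \mathcal B$ and the inequality holds for every $y \in X$, $z_i$ is an $(a+\epsilon)$-root in $\mathcal B$, as required.

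There is no real obstacle here; the only point worth emphasizing is that standardness is what lets us conclude $\inf_{i \in I} r_i = 0$ from the fact that $(x, 0)$ is the supremum (without standardness one would need Yoneda-completeness or a separate argument), and that the strong basis gives us $z_i \in \mathcal B$ rather than merely a center point.
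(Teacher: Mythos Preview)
Your proof is correct and follows essentially the same approach as the paper: approximate the $a$-root $x$ by a formal ball $(z_i, r_i)$ with $z_i \in \mathcal B$ and $r_i < \epsilon$, then use the triangular inequality. The only cosmetic difference is that the paper cites Lemma~\ref{lemma:B:basis} for the approximation and invokes standardness directly for $\inf_{i \in I} r_i = 0$, whereas you use the definition of strong basis and Lemma~\ref{lemma:sup=>dlim}; both routes are equivalent here.
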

\proof Assume an $a$-root $x$.  By Lemma~\ref{lemma:B:basis}, $(x, 0)$
is the supremum of a directed family ${(x_i, r_i)}_{i \in I}$ where
each $x_i$ is in $\mathcal B$.  Since $X, d$ is standard,
$\inf_{i \in I} r_i = 0$, so there is an $i \in I$ such that
$r_i < \epsilon$.  We use the fact that $(x_i, r_i) \leq^{d^+} (x, 0)$
to infer that $d (x_i, x) \leq r_i < \epsilon$.  It follows that, for
every $y \in X$, $d (x_i, y) \leq d (x_i, x) + d (x, y) < a+\epsilon$.
\qed

\begin{thm}[Algebraicity for spaces of probabilities, $\dKRH$]
  \label{thm:V1:alg:root}
  Let $X, d$ be an algebraic Yoneda-complete quasi-metric space, with
  strong basis $\mathcal B$, and with a root $x_0$.  The space
  $\Val_1 X, \dKRH$ is algebraic Yoneda-complete.

  All the simple normalized valuations $\sum_{i=1}^n a_i \delta_{x_i}$
  where $x_i$ are center points and $\sum_{i=1}^n a_i = 1$, are center
  points, and form a strong basis, even when each $x_i$ is taken from
  $\mathcal B$.
\end{thm}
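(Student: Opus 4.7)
The plan is to adapt the proof of Theorem~\ref{thm:V1:alg} to the unbounded quasi-metric $\dKRH$, using a \emph{centering trick} that reduces arguments about arbitrary $h \in \Lform_1 (X, d)$ to arguments about such $h$ satisfying $h (x_0) = a$. By Lemma~\ref{lemma:root:center} we may first assume $x_0 \in \mathcal B$, at the cost of slightly enlarging $a$. The claim that each simple normalized valuation $\sum_i a_i \delta_{x_i}$ with $x_i \in \mathcal B$ and $\sum_i a_i = 1$ is a center point of $\Val_1 X, \dKRH$ then follows directly from Lemma~\ref{lemma:V:simple:center}, whose hypotheses are the Yoneda-completeness of $\Val_1 X, \dKRH$ (Theorem~\ref{thm:V:complete}) and the stable compactness of $\Lform_1 (X, d)^\patch$ (Lemma~\ref{lemma:cont:Lalpha:retr}~(4)).

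For the strong basis claim, I mirror the construction in the proof of Theorem~\ref{thm:V1:alg} verbatim. Start with subnormalized simple approximants $\tilde\nu_i = \sum_j a_{ij}\delta_{(x_{ij}, r_{ij})} \ll \eta_X[\nu]$ in $\Val_{\leq 1} (\mathbf B (X, d))$ with $x_{ij} \in \mathcal B$ and $r_{ij} < 1$; set $\mathring\nu_i := \sum_j a_{ij}\delta_{x_{ij}}$, $\lambda_i := 1 - \sum_j a_{ij}$, $R (\nu_i) := \sum_j a_{ij} r_{ij} + \lambda_i$, and define $\nu'_i := \mathring\nu_i + \lambda_i \delta_{x_0}$, $r'_i := R (\nu_i) + a\lambda_i$. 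The three things to verify are: directedness $(\nu'_i, r'_i) \leq^{\dKRH^+} (\nu'_{i'}, r'_{i'})$ for $i \preceq i'$; the bound $(\nu'_i, r'_i) \leq^{\dKRH^+} (\nu, 0)$; and the supremum identity $\sup_i (\nu'_i, r'_i) = (\nu, 0)$, where by Theorem~\ref{thm:V:complete} the supremum is the naive one.

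The centering trick is the only new ingredient. Given any $h \in \Lform_1 (X, d)$ with $h (x_0) < +\infty$, define $h' (y) := h (y) - h (x_0) + a$. Since $h$ is 1-Lipschitz and $x_0$ is an $a$-root, $h (x_0) - h (y) \leq d (x_0, y) \leq a$, so $h' \geq 0$; also $h' (x_0) = a$, and $h'$ is 1-Lipschitz continuous since it differs from $h$ by a finite additive constant (continuity of $h'$ follows because ${h'}^{-1} (]t, +\infty]) = h^{-1} (]t + h (x_0) - a, +\infty])$ is open). The edge case $h (x_0) = +\infty$ forces $h \equiv +\infty$ by the root property, in which case both $\int h \, d\nu'_i$ and $\int h \, d\nu$ equal $+\infty$ and $\dreal$ between them is zero. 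For probability valuations $\mu, \mu'$, the constant shift cancels and $\int h \, d\mu - \int h \, d\mu' = \int h' \, d\mu - \int h' \, d\mu'$.

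With this reduction, each of the three verifications collapses onto the corresponding $\dKRH^a$-calculation from Theorem~\ref{thm:V1:alg}, with $h'$ in place of $h$: the atom at $x_0$ contributes $h' (x_0) \lambda_i = a \lambda_i$, matching the formula for $r'_i$, while the $\mathring\nu_i$-terms are handled by the inequality $\int h' \, d\mathring\nu_i \leq \int h' \, d\nu + R (\nu_i)$ from Theorem~\ref{thm:V:alg} (applied to $h' \in \Lform_1 (X, d)$), together with the elementary facts $\lambda_i \geq \lambda_{i'}$ for $i \preceq i'$ and $\inf_i \lambda_i = \inf_i R (\nu_i) = 0$. The main obstacle is the careful verification of the centering identity in the presence of $+\infty$ values and of the supremum identity via the naive-supremum formula; once those are in place, the rest is a direct transcription of the arguments in Theorems~\ref{thm:V:alg} and~\ref{thm:V1:alg}.
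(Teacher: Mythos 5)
Your proposal is correct and follows essentially the same route as the paper: reduce $\dKRH$ between normalized previsions to a supremum over $1$-Lipschitz continuous $h$ with $h(x_0)\leq a$ via the shift $h'=h-h(x_0)+a$ (handling $h(x_0)=+\infty$ separately), move $x_0$ into $\mathcal B$ by Lemma~\ref{lemma:root:center}, and then transcribe the $\nu'_i=\mathring\nu_i+\lambda_i\delta_{x_0}$, $r'_i=R(\nu_i)+a\lambda_i$ construction and the three verifications from Theorem~\ref{thm:V1:alg}. The points you flag as needing care (nonnegativity of $h'$ from the root property, the $+\infty$ edge case, and the naive-supremum identity) are exactly the ones the paper's proof addresses.
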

\proof By Lemma~\ref{lemma:root:center}, we may assume that $x_0$ is
also in $\mathcal B$.  Let now $a \in \Rp$, $a > 0$, be such that
$d (x_0, x) \leq a$ for every $x \in X$.

For all normalized previsions $F$ and $F'$ on $X$, for every
$h \in \Lform_1 (X, d)$, we claim that there is a map
$h' \in \Lform_1 (X, d)$ such that $h' (x_0) \leq a$, and such that
$\dreal (F (h), F' (h)) = \dreal (F (h'), F' (h'))$.  The argument is
similar to Lemma~\ref{lemma:dKRH=dKRHa}.  
For every $x \in X$, $h (x_0) \leq h (x) + d (x_0, x)$ since $h$ is
$1$-Lipschitz, so $h (x) \geq h (x_0) - a$ for every $x \in X$.  If
$h (x_0)=+\infty$, this implies that $h$ is the constant $+\infty$
map.  Then
$F (h) = F (\sup_{k \in \nat} k.\mathbf 1) = \sup_{k \in \nat} k =
+\infty$, and similarly $F' (h) = +\infty$, so
$\dreal (F (h), F' (h)) = 0$: we can take $h'=0$.  If
$h (x_0) < +\infty$, then let $h' (x) = h (x) - h (x_0) + a$ for every
$x \in X$.  Since $h (x) \geq h (x_0) - a$ for every $x \in X$,
$h' (x)$ is in $\creal$.  Clearly $h' \in \Lform_1 (X, d)$, and
$h' (x_0) = a$.

Therefore $\dKRH (F, F') = \sup_{h \in \Lform_1 (X, d), h (x_0) \leq
  a} \dreal (F (h), F' (h))$.

The proof is now very similar to Theorem~\ref{thm:V1:alg}.  Let
$\nu \in \Val_1 X$.  Then $\eta_X [\nu]$ is the supremum of a directed
family of simple valuations
$\nu_i = \sum_{j=1}^{n_i} a_{ij} \delta_{(x_{ij}, r_{ij})}$,
$i \in I$, where each $x_{ij}$ is in $\mathcal B$, and
$\sum_{j=1}^{n_i} a_{ij} \leq 1$.  We require $r_{ij} < 1$, define
$\mathring\nu_i$ as $\sum_{j=1}^{n_i} a_{ij} \delta_{x_{ij}}$,
$R (\nu_i) = \sum_{j=1}^{n_i} a_{ij} r_{ij} + 1 - \sum_{j=1}^{n_i}
a_{ij}$, and $\beta (\nu_i) = (\mathring\nu_i, R (\nu_i))$.  Define
$i \preceq i'$ if and only if $\nu_i \leq \nu_{i'}$, for all
$i, i' \in I$.  We have again: $(*)$ for all $i \preceq i'$ in $I$,
$\beta (\nu_i) \leq^{\dKRH^+} \beta (\nu_{i'})$.

Now define $\nu'_i$ as $\mathring\nu_i + \lambda_i \delta_{x_0}$, and
$r'_i = R (\nu_i) + a \lambda_i$, where
$\lambda_i = 1-\mathring\nu_i (X)$, exactly as in the proof of
Theorem~\ref{thm:V1:alg}.  We show, as before: $(**)$ for all
$i \preceq i'$ in $I$, $\lambda_i \geq \lambda_{i'}$; $(*{*}*)$ for
all $i \preceq i'$ in $I$, $r'_i \geq r'_{i'}$.

Let now $h$ be an arbitrary element from $\Lform_1 (X, d)$ such that
$h (x_0) \leq a$, and assume $i \preceq i'$.  We have the following
chain of inequalities---exactly the same as in Theorem~\ref{thm:V1:alg}:
\begin{eqnarray*}
  \dreal (\int_{x \in X} \mskip-20mu h (x) d\nu'_i, \int_{x \in X} \mskip-20mu h (x)
  d\nu'_{i'})
  & = & \max (\int_{x \in X} \mskip-20mu h (x) d\mathring\nu_i + h (x_0) \lambda_i
        - \int_{x \in X} \mskip-20mu h (x) d\mathring\nu_{i'} - h (x_0)
        \lambda_{i'}, 0) \\
  & \leq & \max (R (\nu_i) - R (\nu_{i'}) + h (x_0) (\lambda_i -
           \lambda_{i'}), 0) \\
  && \quad\text{since }\dKRH (\mathring\nu_i, \mathring\nu_{i'}) \leq R (\nu_i) - R
(\nu_{i'})\text{, using }(*) \\
  & \leq & \max (R (\nu_i) - R (\nu_{i'}) + a (\lambda_i -
           \lambda_{i'}), 0) \\
  && \quad\text{since }h (x_0) \leq a\text{, and }\lambda_i \geq
     \lambda_{i'}\text{ by }(**) \\
  & = & \max (r'_i - r'_{i'}, 0) = r'_i - r'_{i'} \quad\text{ by }(*{*}*).
\end{eqnarray*}
This shows that for all $i \preceq i'$ in $I$,
$\dKRH (\nu'_i, \nu'_{i'}) \leq r'_i - r'_{i'}$, since we have taken
the precaution to show that $\dKRH (\nu'_i, \nu'_{i'})$ is the
supremum of
$\dreal (\int_{x \in X} \mskip-20mu h (x) d\nu'_i, \int_{x \in X}
\mskip-20mu h (x) d\nu'_{i'})$ over all $h \in \Lform_1 (X, d)$
\emph{such that $h (x_0) \leq a$}.  Therefore
$(\nu'_i, r'_i) \leq^{\dKRH^+} (\nu'_{i'}, r'_{i'})$ for all
$i \preceq i'$ in $I$.  In particular, ${(\nu'_i, r'_i)}_{i \in I}$ is
directed.

The proof that $(\nu'_i, r'_i) \leq^{\dKRH^+} (\nu, 0)$ is also as in
the proof of Theorem~\ref{thm:V1:alg}.  It suffices to show that for
every $h \in \Lform_1 (X, d)$ with $h (x_0) \leq a$,
$\int_{x \in X} h (x) d\mathring\nu_i + h (x_0) \lambda_i \leq \int_{x
  \in X} h (x) d \nu + R (\nu_i) + a \lambda_i$.  We know that
$\int_{x \in X} h (x) d\mathring\nu_i \leq \int_{x \in X} h (x) d \nu
+ R (\nu_i)$ since $\beta (\nu_i) \leq^{\dKRH^{a+}} (\nu, 0)$, and we
conclude since $h (x_0) \leq a$.

Finally, we show that $(\nu, 0)$ is also the supremum of the directed
family ${(\nu'_i, r'_i)}_{i \in I}$ in $\mathbf B (\Val_1 X, \dKRH)$
by first showing: $(\dagger)$ $\inf_{i \in I} \lambda_i = 0$; then by
showing that it is the naive supremum of the family, that is, by
showing that $\inf_{i \in I} r'_i = 0$ and that for every
$h \in \Lform_1 (X, d)$,
$\int_{x \in X} h (x) d\nu = \sup_{i \in I} (\int_{x \in X} h (x)
d\nu'_i - r'_i)$.  The first equality is easy, considering
$(\dagger)$.  For the second equality,
$\sup_{i \in I} (\int_{x \in X} h (x) d\nu'_i - r'_i)$ is equal to
$\sup_{i \in I} (\int_{x \in X} h (x) d\mathring\nu_i + h (x_0)
\lambda_i - R (\nu_i) - a \lambda_i)$, which is equal to the sum of
$\sup_{i \in I} (\int_{x \in X} h (x) d\mathring\nu_i - R (\nu_i)$ and
of $(a - h (x_0)) \sup_{i \in I} (-\lambda_i)$, by Scott-continuity of
addition and of multiplication by $a - h (x_0)$, again.  (We use
$h (x_0) \leq a$ again here.)  The first of those summands is equal to
$\int_{x \in X} h (x) d\nu$ since $(\nu, 0)$ is the (naive) supremum
of ${(\beta (\nu_i))}_{i \in I}$ in
$\mathbf B (\Val_{\leq 1} X, \dKRH^a)$, and the second one is equal to
$0$ by $(\dagger)$.  \qed

The case of continuous Yoneda-complete quasi-metric spaces follows by
the same pattern as in Section~\ref{sec:cont-val-leq-1}, but we need
an additional argument in the case of spaces with a root.

Let $X, d$ be a continuous Yoneda-complete quasi-metric space.  There
is an algebraic Yoneda-complete quasi-metric space $Y, \partial$ and
two $1$-Lipschitz continuous maps $r \colon Y, \partial \to X, d$ and
$s \colon X, d \to Y, \partial$ such that $r \circ s = \identity X$.
This is again by \cite[Theorem~7.9]{JGL:formalballs}.  We need to know
that $Y, \partial$ is built as the formal ball completion of $X, d$,
defined as follows.  Let $(x, r) \prec (y, s)$ if and only if
$d (x, y) < r - s$.  A \emph{rounded ideal} of formal balls on $X, d$
is a set $D \subseteq \mathbf B (X, d)$ that is $\prec$-directed in
the sense that every finite subset of $D$ is $\prec$-below some
element of $D$, and $\prec$-downwards-closed, in the sense that any
element $\prec$-below some element of $D$ is in $D$.  The
\emph{aperture} of such a set $D$ is
$\alpha (D) = \inf \{r \mid (x, r) \in D\}$.  The elements of $Y$ are
exactly the rounded ideals in this sense that have aperture $0$.  The
quasi-metric $\partial$ is defined by
$\partial (D, D') = \sup_{b \in D} \inf_{b' \in D'} d^+ (b, b')$.
The set $\Dc (x, 0) = \{(y, r) \in \mathbf B (X, d) \mid (y, r) \prec (x,
0)\}$ is an element of $Y$ for every $x \in X$.  We need to know these
details to show:
\begin{lem}
  \label{lemma:root:completion}
  The formal ball completion of a quasi-metric space $X, d$ with an
  $a$-root $x$ ($a \in \Rp$, $a > 0$) has an $a$-root, namely
  $\Dc (x, 0)$.
\end{lem}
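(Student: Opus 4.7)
The plan is to unpack the definition of $\partial$ on the completion and then use the $a$-root property of $x$ together with the aperture-zero condition on arbitrary elements of $Y$ to produce enough ``small radius'' witnesses in any target rounded ideal.

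Concretely, I would fix an arbitrary $D \in Y$ and show that $\partial(\Dc(x,0), D) \leq a$. By definition this amounts to proving
\[
\sup_{b \in \Dc(x,0)} \inf_{b' \in D} d^+(b, b') \leq a.
\]
So I would fix $b = (y, r) \in \Dc(x,0)$, which by the definition of $\Dc(x,0)$ means $d(y,x) < r$; set $\delta = r - d(y,x) > 0$. Then given any $\epsilon > 0$ with $\epsilon < \delta$, use the fact that $D$ has aperture $0$ to pick some $(z, s) \in D$ with $s < \epsilon$.

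The key computation is the triangle inequality combined with the $a$-root hypothesis $d(x, z) \leq a$:
\[
d(y, z) \leq d(y, x) + d(x, z) \leq (r - \delta) + a,
\]
hence
\[
d^+((y, r), (z, s)) = \max(d(y,z) - r + s, 0) \leq \max(a - \delta + s, 0) \leq a,
\]
since $s < \epsilon < \delta$. Taking infimum over $b' \in D$ gives $\inf_{b' \in D} d^+(b, b') \leq a$, and taking supremum over $b \in \Dc(x,0)$ gives $\partial(\Dc(x,0), D) \leq a$, as desired. Since the paper already notes that $\Dc(x,0) \in Y$ for every $x \in X$, no verification of ``rounded ideal with aperture $0$'' is needed.

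I do not foresee any real obstacle: the proof is essentially a direct triangle-inequality argument, with the sole subtlety being the strict inequality $d(y,x) < r$ (rather than $\leq$), which however is exactly what gives us the positive slack $\delta$ needed to absorb the $s$-contribution from a small-radius element of $D$.
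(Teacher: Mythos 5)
Your proof is correct and follows essentially the same route as the paper's: fix $b \in \Dc(x,0)$, use the triangle inequality together with the $a$-root property, and invoke the aperture-zero condition to find an element of $D$ with small radius. The only cosmetic difference is that you absorb the small radius $s$ into the slack $\delta = r - d(y,x)$ to bound a single term by $a$ exactly, whereas the paper bounds the infimum by $a+s$ and lets $s$ tend to $0$; both are fine.
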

\proof Let $D$ be an element of the formal ball completion.  We must
show that $\partial (\Dc (x, 0), D) \leq a$, where $\partial$ is
defined as above.  For every $b = (z, t) \in \Dc (x, 0)$, by
definition $d (z, x) < t$.  For every $b' = (y, s) \in D$,
$d^+ (b, b') = \max (d (z, y) -t+s, 0) \leq \max (d (z, x) + d (x, y)
- t + s, 0) \leq \max (d (x, y) + s, 0)$.  Since $x$ is an $a$-root,
this is less than or equal to $a+s$.  We use the fact that
$\alpha (D)=0$ to make $s$ arbitrarily small.  Therefore
$\inf_{b' \in D} d^+ (b, b') \leq a$.  Taking suprema over
$b \in \Dc (x, 0)$, we obtain $\partial (\Dc (x, 0), D) \leq a$.  \qed

We return to our argument.  By Lemma~\ref{lemma:Vleq1:functor},
$\Prev r$ and $\Prev s$ are also $1$-Lipschitz continuous, and clearly
$\Prev r \circ \Prev s = \identity {\Val_1 X}$, so $\Val_1 X, \dKRH$
is a $1$-Lipschitz continuous retract of $\Val_1 Y, \KRH\partial$.
(Similarly with $\dKRH^a$ and $\KRH\partial^a$.)  By
Theorem~\ref{thm:V1:alg}, $\Val_1 Y, \KRH\partial^a$ is algebraic
Yoneda-complete.  If $X, d$ has a root, then $Y, \partial$ has a root,
too, by Lemma~\ref{lemma:root:completion}, and in that case
Theorem~\ref{thm:V1:alg:root} tells us that $\Val_1 Y, \KRH\partial$
is algebraic Yoneda-complete.  In any case, we obtain:
\begin{thm}[Continuity for spaces of probabilities]
  \label{thm:V1:cont}
  Let $X, d$ be a continuous Yoneda-complete quasi-metric space.  The
  quasi-metric space $\Val_1 X, \dKRH^a$ is continuous Yoneda-complete
  for every $a \in \Rp$, $a >0$.  If $X, d$ has a root, then
  $\Val_1 X, \dKRH$ is continuous Yoneda-complete as well.  \qed
\end{thm}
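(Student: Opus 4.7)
The plan is to reduce the continuous case to the algebraic case already treated in Theorems~\ref{thm:V1:alg} and~\ref{thm:V1:alg:root}, using the characterization of continuous Yoneda-complete quasi-metric spaces as $1$-Lipschitz continuous retracts of algebraic Yoneda-complete quasi-metric spaces (Theorem~7.9 of \cite{JGL:formalballs}). Concretely, I would first fix an algebraic Yoneda-complete space $Y, \partial$ together with $1$-Lipschitz continuous maps $r \colon Y \to X$ and $s \colon X \to Y$ satisfying $r \circ s = \identity X$; it matters for the second half that one can take $Y$ to be the standard formal ball completion of $X$, so that the internal structure of $Y$ is available for inspection.

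Next, I would pass this retract-section pair through the $\Val_1$ construction. Lemma~\ref{lemma:Vleq1:functor} ensures that $\Prev r$ and $\Prev s$ restrict to $1$-Lipschitz continuous maps between $\Val_1 Y$ and $\Val_1 X$, both under $\dKRH$ and under $\dKRH^a$, and functoriality yields $\Prev r \circ \Prev s = \identity {\Val_1 X}$. Thus $\Val_1 X$ is a $1$-Lipschitz continuous retract of $\Val_1 Y$. For the $\dKRH^a$ half of the theorem, Theorem~\ref{thm:V1:alg} provides that $\Val_1 Y, \KRH{\partial}^a$ is algebraic Yoneda-complete, and then the easy direction of Theorem~7.9 of \cite{JGL:formalballs} (retracts of algebraic Yoneda-complete spaces along $1$-Lipschitz continuous maps are continuous Yoneda-complete) lets me conclude that $\Val_1 X, \dKRH^a$ is continuous Yoneda-complete.

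For the $\dKRH$ half, I need to apply Theorem~\ref{thm:V1:alg:root} in place of Theorem~\ref{thm:V1:alg}, which demands that $Y$ itself carries a root. This is where Lemma~\ref{lemma:root:completion} is essential: if $x \in X$ is an $a$-root, then $\Dc (x, 0)$ is an $a$-root in the formal ball completion $Y$. With this in hand, $\Val_1 Y, \KRH\partial$ is algebraic Yoneda-complete, and the same retract argument delivers continuous Yoneda-completeness of $\Val_1 X, \dKRH$. The only genuinely non-formal ingredient in the argument is this root-preservation lemma; everything else is bookkeeping combining functoriality with the retract characterization. Consequently, I expect the potential obstacle to lie not in the present theorem but in having already set up the correct algebraic machinery upstream --- specifically, that $\Val_1$ respects $1$-Lipschitz continuous retracts and that the formal ball completion inherits roots --- both of which are already available.
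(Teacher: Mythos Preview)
Your proposal is correct and follows essentially the same route as the paper: reduce to the algebraic case via the retract characterization of continuous Yoneda-complete spaces (Theorem~7.9 of \cite{JGL:formalballs}), push the retraction through $\Val_1$ using Lemma~\ref{lemma:Vleq1:functor}, and in the rooted case invoke Lemma~\ref{lemma:root:completion} to ensure the formal ball completion $Y$ inherits a root so that Theorem~\ref{thm:V1:alg:root} applies. The paper's argument is laid out in the paragraphs immediately preceding the theorem statement and matches yours step for step.
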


Together with Lemma~\ref{lemma:Vleq1:functor}, and
Theorem~\ref{thm:V1:alg} for the algebraic case, we obtain:
\begin{cor}
  \label{cor:V1:functor:a}
  $\Val_1, \dKRH^a$ defines an endofunctor on the category of
  continuous (resp., algebraic) Yoneda-complete quasi-metric spaces
  and $1$-Lipschitz continuous map.  \qed
\end{cor}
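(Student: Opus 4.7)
The plan is to assemble this corollary directly from the pieces already in place. The corollary has three components to verify: that $\Val_1$ sends objects to objects in each of the two categories, that $\Val_1$ sends morphisms to morphisms, and that composition and identities are preserved.

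First I would verify the object part. For $X, d$ continuous Yoneda-complete, Theorem~\ref{thm:V1:cont} gives that $\Val_1 X, \dKRH^a$ is continuous Yoneda-complete. In the algebraic case, Theorem~\ref{thm:V1:alg} gives that $\Val_1 X, \dKRH^a$ is algebraic Yoneda-complete for any $a > 0$. So $\Val_1$ is well-defined on objects in both categories.

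Next, for the morphism part, given a $1$-Lipschitz continuous map $f \colon X, d \to Y, \partial$ between two continuous (resp., algebraic) Yoneda-complete quasi-metric spaces, define $\Val_1 f$ to be the restriction of $\Prev f$ to $\Val_1 X$. Lemma~\ref{lemma:Vleq1:functor} states exactly that this restriction lands in $\Val_1 Y$ and that it is $1$-Lipschitz continuous from $\Val_1 X, \dKRH^a$ to $\Val_1 Y, \KRH\partial^a$. So $\Val_1 f$ is indeed a morphism in the category.

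Finally, functoriality is immediate from the defining formula $\Prev f (F) (k) = F (k \circ f)$: for every $F \in \Val_1 X$ and every $k \in \Lform Y$, we have $\Prev \identity X (F) (k) = F (k \circ \identity X) = F (k)$, so $\Val_1 \identity X = \identity{\Val_1 X}$; and for any composable $1$-Lipschitz continuous maps $f \colon X, d \to Y, \partial$ and $g \colon Y, \partial \to Z, \mathfrak d$, we have $\Prev (g \circ f) (F) (k) = F (k \circ g \circ f) = \Prev f (F) (k \circ g) = \Prev g (\Prev f (F)) (k)$, so $\Val_1 (g \circ f) = \Val_1 g \circ \Val_1 f$. (Note that Lemma~\ref{lemma:comp:Lip} ensures that $g \circ f$ is again $1$-Lipschitz continuous, so that the composite is a morphism in our category.) There is no real obstacle here: all the substantive work has been done in Theorem~\ref{thm:V1:alg}, Theorem~\ref{thm:V1:cont}, and Lemma~\ref{lemma:Vleq1:functor}; the corollary simply packages them together. \qed
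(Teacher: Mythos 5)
Your proposal is correct and matches the paper's own (implicit) argument: the paper obtains this corollary by citing Theorem~\ref{thm:V1:cont}, Theorem~\ref{thm:V1:alg}, and Lemma~\ref{lemma:Vleq1:functor}, leaving the routine verification of $\Prev\,\identity X = \identity{}$ and $\Prev (g\circ f) = \Prev g \circ \Prev f$ unstated. You simply spell out that last trivial step explicitly, which is fine.
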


The case of $\dKRH$ requires the following additional lemma.
\begin{lem}
  \label{lemma:V1:root}
  For every quasi-metric space $X, d$ with an $a$-root $x$,
  ($a \in \Rp$, $a > 0$), $\Val_1 X, \dKRH$ has an $a$-root, namely
  $\delta_x$.
\end{lem}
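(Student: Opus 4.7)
The plan is to compute $\dKRH(\delta_x,\nu)$ directly for an arbitrary $\nu\in\Val_1 X$ and bound it uniformly by $a$. Unfolding the definition, $\dKRH(\delta_x,\nu)=\sup_{h\in\Lform_1(X,d)}\dreal\bigl(h(x),\int_{y\in X} h(y)\,d\nu\bigr)$, so it suffices to show $\dreal(h(x),\int h\,d\nu)\leq a$ for every $1$-Lipschitz continuous map $h\colon X\to\creal$.

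The key observation is that $1$-Lipschitzness plus the $a$-root property immediately give a pointwise lower bound on $h$. Indeed, for every $y\in X$ we have $h(x)\leq h(y)+d(x,y)\leq h(y)+a$, so that $h(y)\geq h(x)-a$ for all $y$ (interpreting the right-hand side as $0$ when $h(x)\leq a$, which is harmless since $h$ takes values in $\creal$). The next step is to integrate this pointwise inequality against $\nu$ and use normalization, $\nu(X)=1$, to get $\int h\,d\nu\geq \max(h(x)-a,0)$, which yields $h(x)\leq \int h\,d\nu+a$ and hence $\dreal(h(x),\int h\,d\nu)\leq a$.

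The only thing requiring a little care is the case $h(x)=+\infty$, which the machinery of $\dreal$ tolerates poorly. Here I would argue separately: from $+\infty=h(x)\leq h(y)+d(x,y)$ with $d(x,y)\leq a<+\infty$ one forces $h(y)=+\infty$ for every $y$, so $\int h\,d\nu=+\infty$ and $\dreal(+\infty,+\infty)=0\leq a$. This is really the only subtlety; the rest is bookkeeping.

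Taking the supremum over $h\in\Lform_1(X,d)$ gives $\dKRH(\delta_x,\nu)\leq a$, which is exactly the statement that $\delta_x$ is an $a$-root of $\Val_1 X,\dKRH$. (Scott-continuity of $\delta_x$ and the identity $\int h\,d\delta_x=h(x)$ are standard, and $\delta_x(X)=1$ so $\delta_x\in\Val_1 X$.) I expect no real obstacle here; the only thing to keep an eye on is the $+\infty$ case above.
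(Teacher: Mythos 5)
Your proof is correct and follows essentially the same route as the paper: use $1$-Lipschitzness and the $a$-root property to get $h(x)\leq h(y)+a$ pointwise, integrate against $\nu$ using $\nu(X)=1$, and conclude $\dreal(h(x),\int h\,d\nu)\leq a$. Your explicit treatment of the $h(x)=+\infty$ case, and your choice to integrate the pointwise bound $h(y)\geq h(x)-a$ rather than the (not obviously lower semicontinuous) function $y\mapsto h(y)+d(x,y)$ as the paper does, are minor refinements of the same argument.
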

\proof Let $\nu \in \Val_1 X$, and $h \in \Lform_1 X$.  Then:
\begin{eqnarray*}
  h (x)
  & = & \int_{y \in X} h (x) d\nu \qquad \text{since $\nu (X)=1$} \\
  & \leq & \int_{y \in X} h (y) + d (x,y) d\nu \qquad \text{since
           $h$ is $1$-Lipschitz} \\
  & \leq & \int_{y \in X} h(y) d\nu + a,
\end{eqnarray*}
since $d (x, y) \leq a$ for every $a \in X$, and $\nu (X)=1$.  It
follows that
$\dreal (\int_{y \in X} h (y) d\delta_x, \int_{y \in x} h (y) d\nu)
\leq a$.  Taking suprema over $h$, we obtain
$\dKRH (\delta_x, \nu) \leq a$.  \qed

\begin{cor}
  \label{cor:V1:functor}
  $\Val_1, \dKRH$ defines an endofunctor on the category of continuous
  (resp., algebraic) Yoneda-complete quasi-metric spaces with a root
  and $1$-Lipschitz continuous map.  \qed
\end{cor}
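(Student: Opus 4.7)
The plan is to assemble three previously established results, since the statement is essentially a bookkeeping corollary. First I would verify that the category is well-defined on objects: given an object $X,d$ (a continuous, resp.\ algebraic, Yoneda-complete quasi-metric space with a root $x_0$), I need $\Val_1 X,\dKRH$ to be in the same category. Yoneda-completeness with continuity follows from Theorem~\ref{thm:V1:cont} in the continuous case, and from Theorem~\ref{thm:V1:alg:root} in the algebraic case. The existence of a root for $\Val_1 X,\dKRH$ is exactly the content of Lemma~\ref{lemma:V1:root}: if $x_0$ is an $a$-root of $X,d$, then $\delta_{x_0}$ is an $a$-root of $\Val_1 X,\dKRH$.

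Next, I would address the action on morphisms. Given a $1$-Lipschitz continuous map $f\colon X,d\to Y,\partial$, define $\Val_1 f$ as the restriction of $\Prev f$ to $\Val_1 X$, i.e., $\Val_1 f(\nu)(k)=\nu(k\circ f)$ under the identification of normalized continuous valuations with normalized linear previsions. By Lemma~\ref{lemma:Vleq1:functor} (the ``similarly with $\Val_1$'' clause), this restriction is a $1$-Lipschitz continuous map from $\Val_1 X,\dKRH$ to $\Val_1 Y,\KRH\partial$, which is what is required for morphisms in the category.

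Finally, functoriality of $\Val_1,\dKRH$ is immediate. For identity preservation, $\Val_1(\identity X)(\nu)(k)=\nu(k\circ\identity X)=\nu(k)$, so $\Val_1(\identity X)=\identity{\Val_1 X}$. For composition preservation, given $g\colon Y,\partial\to Z,\mathfrak d$, we have
\[
\Val_1(g\circ f)(\nu)(k)=\nu(k\circ g\circ f)=\Val_1 f(\nu)(k\circ g)=\Val_1 g(\Val_1 f(\nu))(k),
\]
so $\Val_1(g\circ f)=\Val_1 g\circ\Val_1 f$. This finishes the verification.

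There is no real obstacle here: all the substantive work has already been done in Theorem~\ref{thm:V1:cont}, Theorem~\ref{thm:V1:alg:root}, Lemma~\ref{lemma:V1:root}, and Lemma~\ref{lemma:Vleq1:functor}. The only point worth flagging is that the category's ``with a root'' restriction is what makes the $\dKRH$ version (as opposed to the $\dKRH^a$ version treated in Corollary~\ref{cor:V1:functor:a}) meaningful: without a root, one cannot conclude that $\Val_1 X,\dKRH$ is Yoneda-complete in the first place, so $\Val_1,\dKRH$ would not even land in the appropriate category on objects.
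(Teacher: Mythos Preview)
Your proposal is correct and follows exactly the approach the paper intends: assemble Theorem~\ref{thm:V1:cont} (resp.\ Theorem~\ref{thm:V1:alg:root}) for the object-level continuity/algebraicity, Lemma~\ref{lemma:V1:root} for root preservation, and Lemma~\ref{lemma:Vleq1:functor} for the morphism action, with functoriality being routine. One minor quibble with your closing remark: without a root, $\Val_1 X,\dKRH$ is still Yoneda-complete by Theorem~\ref{thm:V:complete}; what fails is \emph{continuity} (resp.\ algebraicity), so $\Val_1,\dKRH$ would not land in the category of \emph{continuous} Yoneda-complete spaces.
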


\begin{rem}
  \label{lemma:Vleq1:root}
  For every quasi-metric space $X, d$, $\Val_{\leq 1} X, \dKRH$ and
  $\Val X, \dKRH$ have a root, namely the zero valuation.  That holds
  even when $X, d$ does not have a root.  Indeed, $0 \leq \nu$ for
  every continuous valuation $\nu$, namely $d (0, \nu) = 0$.
\end{rem}

\subsection{The Weak Topology}
\label{sec:weak-topology}

The weak topology on $\Val_{\leq 1} X$ and on $\Val_1 X$ is defined as
a special case of what it is on general spaces of previsions, through
subbasic open sets
$[h > a] = \{\nu \in \Val_{\leq 1} X \mid \int_{x \in X} h (x) d\nu >
a\}$, $h \in \Lform X$, $a \in \Rp$.

When $X, d$ is continuous Yoneda-complete, because of
Theorem~\ref{thm:V:complete}, the assumptions of
Proposition~\ref{prop:weak:dScott:a} are satisfied, so:
\begin{fact}
  \label{fact:V:weak}
  Let $X, d$ be a continuous Yoneda-complete quasi-metric space.  We
  have the following inclusions of topologies on $\Val_{\leq 1} X$,
  resp.\ $\Val_1 X$, where $0 < a \leq a'$:
  \begin{quote}
    weak $\subseteq$ $\dKRH^a$-Scott $\subseteq$ $\dKRH^{a'}$-Scott
    $\subseteq$ $\dKRH$-Scott.
  \end{quote}
\end{fact}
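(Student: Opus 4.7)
The plan is to observe that Fact~\ref{fact:V:weak} is an immediate corollary of Proposition~\ref{prop:weak:dScott:a}, applied with a judicious choice of the property set $S$. Specifically, taking $S = \{\text{linearity}, \text{subnormalization}\}$ yields $Y = \Val_{\leq 1} X$ (via the correspondence between linear previsions and continuous valuations), and taking $S = \{\text{linearity}, \text{normalization}\}$ yields $Y = \Val_1 X$. So the work reduces to checking that all the hypotheses of Proposition~\ref{prop:weak:dScott:a} are met in each case.

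First I would check the standardness hypothesis: Proposition~\ref{prop:weak:dScott:a} requires $X, d$ to be a standard quasi-metric space. But every Yoneda-complete quasi-metric space is automatically standard (recalled near the beginning of Section~\ref{sec:basics-quasi-metric}), hence every continuous Yoneda-complete space is standard, so this is free.

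Next I would verify the central hypothesis, namely that for each of the three quasi-metrics $\dKRH^a$, $\dKRH^{a'}$, and $\dKRH$, the space $Y$ is Yoneda-complete and all directed suprema in $\mathbf B(Y, \cdot)$ are computed as naive suprema. This is exactly the conclusion of Theorem~\ref{thm:V:complete}, which covers both $\Val_{\leq 1} X$ and $\Val_1 X$ equipped with any of $\dKRH$ or $\dKRH^a$ (for arbitrary $a > 0$). Since $a$ and $a'$ are arbitrary positive reals with $a \leq a'$, Theorem~\ref{thm:V:complete} supplies the three Yoneda-completeness and naive-supremum statements simultaneously.

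With all hypotheses verified, Proposition~\ref{prop:weak:dScott:a} applies directly and yields the announced chain of inclusions
$\text{weak} \subseteq \dKRH^a\text{-Scott} \subseteq \dKRH^{a'}\text{-Scott} \subseteq \dKRH\text{-Scott}$
on $\Val_{\leq 1} X$, respectively $\Val_1 X$. There is no real obstacle here; the only thing worth flagging is to make the identification between linear previsions and continuous valuations explicit, so that the (sub)normalization condition in $S$ genuinely matches the defining condition $\nu(X) \leq 1$ (resp.\ $\nu(X) = 1$) on valuations, which is immediate from the relation $\nu(X) = F(\mathbf 1)$ under the prevision-valuation correspondence.
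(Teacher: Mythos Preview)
Your proposal is correct and matches the paper's own argument exactly: the paper simply notes that Theorem~\ref{thm:V:complete} supplies the Yoneda-completeness and naive-supremum hypotheses needed to invoke Proposition~\ref{prop:weak:dScott:a}. Your version spells out the choice of $S$ and the standardness check more explicitly, but the route is identical.
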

We will see that the $\dKRH$-Scott topology is in general strictly
finer than the other topologies.
We will also see that the other topologies are all equal, assuming
$X, d$ algebraic.

\begin{lem}
  \label{lemma:alg:balls}
  Let $Y, \partial$ be a standard algebraic quasi-metric space, and
  $\mathcal B$ be a strong basis of $Y, \partial$.
  Fix also $\epsilon > 0$.
  Every $\partial$-Scott open subset of $Y$ is a union of open balls
  $B^\partial_{b, < r}$, where $b \in \mathcal B$ and
  $0 < r <\epsilon$.
\end{lem}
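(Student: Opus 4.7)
The plan is to unpack the definition of the $\partial$-Scott topology together with the continuity of $\mathbf B(Y,\partial)$ provided by Lemma~\ref{lemma:B:basis}, and then round radii down using standardness and the openness of the ambient Scott-open set.

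Fix a $\partial$-Scott open subset $U$ of $Y$. By definition, there is a Scott-open subset $\mathcal U$ of $\mathbf B(Y,\partial)$ with $U = \mathcal U \cap Y$ (identifying $y$ with $(y,0)$). Since $Y,\partial$ is standard algebraic, Lemma~\ref{lemma:B:basis} ensures that $\mathbf B(Y,\partial)$ is a continuous dcpo whose basis consists of the formal balls $(b,r)$ with $b \in \mathcal B$, and that for such $b$, $(b,r) \ll (y,0)$ if and only if $\partial(b,y) < r$.

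The key step is to show that every $y \in U$ lies in an open ball $B^\partial_{b,<r}$ with $b \in \mathcal B$, $0 < r < \epsilon$, and $B^\partial_{b,<r} \subseteq U$. Since $(y,0) \in \mathcal U$, continuity gives a directed family ${(b_i, r_i)}_{i \in I}$ with $b_i \in \mathcal B$, $(b_i,r_i) \ll (y,0)$, and supremum $(y,0)$; standardness yields $\inf_{i \in I} r_i = 0$. Because $\mathcal U$ is Scott-open, some $(b_{i_0}, r_{i_0})$ lies in $\mathcal U$. Picking $i_1 \in I$ with $r_{i_1} < \epsilon$ and then $i_2 \sqsupseteq i_0, i_1$ by directedness, we get $(b_{i_2}, r_{i_2}) \in \mathcal U$ with $r_{i_2} \le r_{i_1} < \epsilon$; write $(b,r) := (b_{i_2}, r_{i_2})$. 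Note $r > 0$ since $\partial(b,y) < r$.

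It remains to verify the two inclusions. First, $(b,r) \ll (y,0)$ gives $\partial(b,y) < r$, so $y \in B^\partial_{b,<r}$. Second, for any $z \in B^\partial_{b,<r}$, one has $\partial(b,z) < r$, hence $(b,r) \leq^{\partial^+} (z,0)$; since $\mathcal U$ is upwards-closed and contains $(b,r)$, we obtain $(z,0) \in \mathcal U$, i.e.\ $z \in U$. Therefore $B^\partial_{b,<r} \subseteq U$, and $U$ is the union of the open balls $B^\partial_{b,<r}$ with $b \in \mathcal B$, $0 < r < \epsilon$, and $B^\partial_{b,<r} \subseteq U$. The main thing to be careful about is simultaneously enforcing membership in $\mathcal U$ and the radius bound $r < \epsilon$, which is precisely what the combination of Scott-openness, standardness, and directedness delivers.
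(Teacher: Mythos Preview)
Your proof is correct and follows essentially the same approach as the paper's: pick a Scott-open $\mathcal U$ witnessing $U$, find a basis element $(b,r)$ with $b\in\mathcal B$ that is way-below $(y,0)$ and lies in $\mathcal U$ with $r<\epsilon$, and then use upward closure of $\mathcal U$ to get $B^\partial_{b,<r}\subseteq U$. The only cosmetic difference is that the paper enforces $r<\epsilon$ by intersecting $\mathcal U$ with the Scott-open set $V_\epsilon=\{(x,s)\mid s<\epsilon\}$ (Lemma~\ref{lemma:Veps}) before selecting the basis element, whereas you obtain both constraints separately and merge them via directedness; the two devices are equivalent.
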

\proof Let $V$ be a $\partial$-Scott open subset of $Y$, and
$y \in V$.  Our task is to show that
$y \in B^\partial_{b, <r} \subseteq V$ for some $b \in \mathcal B$ and
$r > 0$ such that $r < \epsilon$.  By definition of the
$\partial$-Scott topology, $V$ is the intersection of the Scott-open
subset $\widehat V$ of $\mathbf B (Y, \partial)$ with $Y$.  Since
$X, d$ is standard, $V_\epsilon$ is also Scott-open by
Lemma~\ref{lemma:Veps}, and $V$ is also the intersection of
$\widehat V \cap V_\epsilon$ with $Y$.  We use
Lemma~\ref{lemma:B:basis} and the fact that $(y, 0)$ is in
$\widehat V \cap V_\epsilon$ to conclude that there is a
$b \in \mathcal B$ and an $r \in \Rp$ such that $(b, r) \ll (y, 0)$
and $(b, r)$ is in $\widehat V \cap V_\epsilon$.  Since
$(b, r) \ll (y, 0)$, $\partial (b, y) < r$, which implies in
particular that $r > 0$, and also that $y$ is in $B^\partial_{b, <r}$.
Since $(b, r)$ is in $V_\epsilon$, $r < \epsilon$.  Finally,
$B^\partial_{b, <r}$ is included in $V$: for every
$z \in B^\partial_{b, <r}$, $d (b, z) < r$, so $(b, r) \ll (z, 0)$,
and that shows that $(z, 0)$ is in
$\uuarrow (b, r) \subseteq \widehat V$.  \qed


\begin{prop}
  \label{prop:V:weak=dScott:alg}
  Let $X, d$ be an algebraic Yoneda-complete quasi-metric space.  For
  every $a \in \Rp$, $a > 0$, the $\dKRH^a$-Scott topology coincides
  with the weak topology on $\Val_{\leq 1} X$, and on $\Val_1 X$.
\end{prop}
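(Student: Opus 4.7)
The plan is to establish the reverse inclusion of the one given in Fact~\ref{fact:V:weak}, namely that every $\dKRH^a$-Scott open subset of $\Val_{\leq 1} X$ (resp.\ $\Val_1 X$) is weakly open. By Theorem~\ref{thm:V:alg} (resp.\ Theorem~\ref{thm:V1:alg}), the space is algebraic Yoneda-complete with a strong basis consisting of simple valuations $\nu_0 = \sum_{i=1}^n a_i \delta_{x_i}$ whose support points $x_i$ are center points of $X, d$. Applying Lemma~\ref{lemma:alg:balls} to the ambient algebraic quasi-metric space, every $\dKRH^a$-Scott open subset is a union of open balls $B^{\dKRH^a}_{\nu_0, <r}$ with $\nu_0$ of this form; so it suffices to prove that each such ball is weakly open.

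To control $\dKRH^a$ around a fixed $\nu_0$, I would use Proposition~\ref{prop:dKRH:max:d}: for every $\nu$ there exist $b_1, \ldots, b_n \in [0, a]$ with the supremum in $\dKRH^a(\nu_0, \nu)$ attained at $h_{\mathbf b} = \bigvee_{i=1}^n \sea{x_i}{b_i}$. Because the value $\dreal(\sum_i a_i h_{\mathbf b}(x_i), \int h_{\mathbf b}\,d\nu)$ is bounded above by $\dKRH^a(\nu_0, \nu)$ for every $\mathbf b \in [0,a]^n$, this gives the key reformulation
\[
  \dKRH^a(\nu_0, \nu) \;=\; \max_{\mathbf b \in [0,a]^n} \Phi(\mathbf b, \nu),
  \qquad \Phi(\mathbf b, \nu) := \max\Bigl(\sum_{i=1}^n a_i h_{\mathbf b}(x_i) - \int_{x \in X} h_{\mathbf b}(x)\,d\nu,\; 0\Bigr).
\]
The crucial point is that the supremum, which a priori ranges over the infinite-dimensional $\Lform_1^a(X, d)$, is actually a maximum over the compact Hausdorff cube $[0,a]^n$.

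The plan is then to show that $\Phi$ is jointly upper semicontinuous on $[0,a]^n \times (\Val_{\leq 1} X, \text{weak})$. The first summand $\mathbf b \mapsto \sum_i a_i h_{\mathbf b}(x_i)$ is continuous into $\real$, because $h_{\mathbf b}(x_j) = \max_i \max(b_i - d(x_i, x_j), 0)$ is $1$-Lipschitz in $\mathbf b$. For the second summand, I would prove the joint lower semicontinuity of $(\mathbf b, \nu) \mapsto \int h_{\mathbf b}\,d\nu$ by the following approximation trick: if $\int h_{\mathbf b_0}\,d\nu_0 > t$, pick $\epsilon > 0$ with $\int h_{\mathbf b_0}\,d\nu_0 > t + 2\epsilon$; then $g := \max(h_{\mathbf b_0} - \epsilon, 0) \in \Lform X$ satisfies $\int g\,d\nu_0 > t + \epsilon$, so $[g > t + \epsilon]$ is a weak open neighborhood of $\nu_0$; and for $\mathbf b$ in the open set $\{b_i > b_{0,i} - \epsilon\} \cap [0,a]^n$ one checks by unwinding $\sea{x_i}{b_i}$ that $h_{\mathbf b} \geq g$ pointwise, so $\int h_{\mathbf b}\,d\nu > t$ throughout this open product neighborhood. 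The hardest step is this joint lower semicontinuity, since the weak topology does not give any form of dominated convergence; the shift-by-$\epsilon$ device exploits the monotonicity of $\mathbf b \mapsto h_{\mathbf b}$ to convert a pointwise approximation into a weakly open condition.

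Finally, by the standard fact that if $K$ is compact Hausdorff and $\Phi \colon K \times Y \to \real$ is upper semicontinuous then $y \mapsto \max_{k \in K} \Phi(k, y)$ is upper semicontinuous on $Y$ (proved by covering $K$ by finitely many open product neighborhoods on which $\Phi < r$), the map $\nu \mapsto \dKRH^a(\nu_0, \nu)$ is upper semicontinuous in the weak topology. Hence $B^{\dKRH^a}_{\nu_0, <r} = \{\nu : \dKRH^a(\nu_0, \nu) < r\}$ is weakly open, proving the missing inclusion. The argument is verbatim the same on $\Val_1 X$, using the strong basis of normalized simple valuations supported on center points from Theorem~\ref{thm:V1:alg}.
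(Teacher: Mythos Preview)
Your proof is correct and follows the same overall route as the paper: reduce to open balls $B^{\dKRH^a}_{\nu_0,<r}$ centered at simple valuations supported on center points (via Theorem~\ref{thm:V:alg}/\ref{thm:V1:alg} and Lemma~\ref{lemma:alg:balls}), then use Proposition~\ref{prop:dKRH:max:d} to express $\dKRH^a(\nu_0,\nu)$ as a supremum over test functions $h_{\mathbf b}$ parametrized by the finite-dimensional cube $[0,a]^n$, and finally exploit the compactness of that cube.

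The only difference is in how the compactness is cashed out. The paper discretizes $[0,a]^n$ explicitly: it fixes $\eta>0$ with $\dKRH^a(\nu_0,\nu)<\epsilon-\eta$, takes a grid of mesh $a/N<\eta$, obtains a \emph{finite} family $\mathcal H$ of functions $h_{\mathbf b}$, and checks directly that $\bigcap_{h\in\mathcal H}[h>\sum_i a_i h(x_i)-\epsilon+\eta]$ is a weak open set sandwiched between $\nu$ and the ball. You instead establish joint upper semicontinuity of $\Phi(\mathbf b,\nu)=\dreal(\sum_i a_i h_{\mathbf b}(x_i),\int h_{\mathbf b}\,d\nu)$ and invoke the general fact that a sup over a compact parameter space of a jointly u.s.c.\ function is u.s.c. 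These are two presentations of the same idea: your finite subcover of $[0,a]^n$ plays the role of the paper's grid, and your $\epsilon$-shift device (using $g=\max(h_{\mathbf b_0}-\epsilon,0)$ and the monotonicity $\mathbf b\mapsto h_{\mathbf b}$) is exactly what the paper's estimate $h'(x)\le h(x)+\eta$ encodes. Your version is a bit more conceptual; the paper's is a bit more explicit; neither buys anything the other does not.
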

\proof The $\dKRH^a$-Scott topology is finer by
Fact~\ref{fact:V:weak}.  Conversely, let $\nu \in \Val_{\leq 1} X$
(resp., $\Val_1 X$) and $\mathcal U$ be a $\dKRH^a$-Scott open
neighborhood of $\nu$.  We wish to show that $\nu$ is in some weak
open subset of $\mathcal U$.  By Theorem~\ref{thm:V:alg} (resp.,
Theorem~\ref{thm:V1:alg}), $\Val_{\leq 1} X$ (resp., $\Val_1 X$) is
algebraic, and we can then apply Lemma~\ref{lemma:alg:balls}, taking
$\mathcal B$ equal to the subset of simple (subnormalized, resp.\
normalized) valuations $\sum_{i=1}^n a_i \delta_{x_i}$ where each
$x_i$ is a center point.  Therefore we can assume that
$\mathcal U = B^{\dKRH^a}_{\sum_{i=1}^n a_i \delta_{x_i}, < \epsilon}$
for some such simple valuation $\sum_{i=1}^n a_i \delta_{x_i}$ and
some $\epsilon > 0$.

By assumption, $\dKRH^a (\sum_{i=1}^n a_i \delta_{x_i}, \nu) <
\epsilon$.  Let $\eta > 0$ be such that $\dKRH^a (\sum_{i=1}^n a_i
\delta_{x_i}, \nu) < \epsilon - \eta$.  Let $N$ be a natural number such
that $a/N < \eta$, and consider the collection $\mathcal H$ of maps of
the form $\bigvee_{i=1}^n \sea {x_i} {b_i}$ (see
Proposition~\ref{prop:dKRH:max:d}) where each $b_i$ is an integer
multiple of $a/N$ in $[0, a]$.  Note that $\mathcal H$ is a finite
family, and that for each $h \in \mathcal H$, $\sum_{i=1}^n a_i h
(x_i) < +\infty$.

Let $\mathcal V$ be the weak open
$\bigcap_{h \in \mathcal H} [h > \sum_{i=1}^n a_i h (x_i) - \epsilon +
\eta]$.  (We extend the notation $[h > b]$ to the case where $b < 0$
in the obvious way, as the set of continuous valuations $\nu'$ such
that $\int_{x \in X} h (x) d\nu' > b$; when $b < 0$, this is the whole
set, hence is again open.)  For every $h \in \mathcal H$, $h$ is in
$\Lform_1^a (X, d)$: it is in $\Lform_1 (X, d)$ by
Lemma~\ref{lemma:sea:cont}, and clearly bounded from above by $a$.
Since
$\dKRH^a (\sum_{i=1}^n a_i \delta_{x_i}, \nu) < \epsilon - \eta$,
$\sum_{i=1}^n a_i h (x_i) < \int_{x \in X} h (x) d \nu + \epsilon -
\eta$.  Hence $\nu$ is in $\mathcal V$.

Next, we show that $\mathcal V$ is included in
$\mathcal U = B^{\dKRH^a}_{\sum_{i=1}^n a_i \delta_{x_i}, <
  \epsilon}$.  Let $\nu'$ be an arbitrary element of $\mathcal V$.  By
Proposition~\ref{prop:dKRH:max:d}, there are numbers
$b'_i \in [0, a]$, $1\leq i \leq n$, such that
$\dKRH^a (\sum_{i=1}^n a_i \delta_{x_i}, \nu') = \dreal (\sum_{i=1}^n
a_i h' (x_i), \int_{x \in X} h' (x) d\nu')$, where
$h' = \bigvee_{i=1}^n \sea {x_i} {b'_i}$.  For each $i$, let $b_i$ be
the largest integer multiple of $a/N$ below $b'_i$, and let
$h = \bigvee_{i=1}^n \sea {x_i} {b_i}$.  Since $h$ is in $\mathcal H$,
and $\nu' \in \mathcal V$,
$\sum_{i=1}^n a_i h (x_i) < \int_{x \in X} h (x) d\nu' + \epsilon -
\eta$.  For each $i$, $b'_i \leq b_i + a/N \leq b_i + \eta$.  It
follows that, for every $x \in X$,
$(\sea {x_i} {b'_i}) (x) = \max (b'_i - d (x_i, x), 0) \leq \max (b_i
+ \eta - d (x_i, x), 0) \leq \max (b_i - d (x_i, x), 0) + \eta = (\sea
{x_i} {b_i}) (x) + \eta$.  In turn, we obtain that for every
$x \in X$, $h' (x) \leq h (x) + \eta$, so
$\sum_{i=1}^n a_i h' (x_i) \leq \sum_{i=1}^n a_i h (x_i) + \eta$,
using the fact that $\sum_{i=1}^n a_i \leq 1$.  Therefore
$\sum_{i=1}^n a_i h' (x_i) < \int_{x \in X} h (x) d\nu' + \epsilon$.
This implies that
$\dKRH^a (\sum_{i=1}^n a_i \delta_{x_i}, \nu') < \epsilon$.  Therefore
$\nu'$ is in $\mathcal U$.  \qed

\begin{rem}[$\dKRH$ does not quasi-metrize the weak topology]
  \label{rem:Kravchenko}
  The similar result where $\dKRH^a$ is replaced by $\dKRH$ is wrong.
  In other words, there are algebraic Yoneda-complete quasi-metric
  spaces $X, d$ such that the $\dKRH$-Scott topology is strictly finer
  than the weak topology, on $\Val_{\leq 1} X$, and on $\Val_1 X$.  We
  can even choose $X, d$ metric: see Remark~\ref{rem:not:dense} for a
  counterexample, due to Kravchenko, remembering that since $\dKRH$ is
  a metric in that case, the open ball topology coincides with the
  $\dKRH$-Scott topology.
\end{rem}

We now use the following fact:
\begin{fact}
  \label{fact:retract:two}
  If $A$ is a space with two topologies $\mathcal O_1$ and
  $\mathcal O_2$, and both embed into a topological space $B$ by the
  same topological embedding, then $\mathcal O_1 = \mathcal O_2$.
  \qed
\end{fact}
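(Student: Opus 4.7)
The plan is to invoke the standard characterization of topological embeddings: a map $e\colon A\to B$ is a topological embedding if and only if $e$ is injective and the topology on $A$ coincides with the initial topology induced by $e$, i.e., with $\{e^{-1}(V)\mid V\in\Open B\}$. This follows because "homeomorphism onto its image $e(A)$ with the subspace topology" is, for an injective $e$, exactly the statement that the open subsets of $A$ are the preimages of open subsets of $B$.

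Assuming this characterization, the conclusion is immediate. Write $e\colon A\to B$ for the common map that serves as a topological embedding of both $(A,\mathcal O_1)$ and $(A,\mathcal O_2)$ into $B$. Applying the characterization to $(A,\mathcal O_1)$ gives
\[
  \mathcal O_1 \;=\; \{e^{-1}(V)\mid V\in\Open B\},
\]
and applying it to $(A,\mathcal O_2)$ gives the identical description
\[
  \mathcal O_2 \;=\; \{e^{-1}(V)\mid V\in\Open B\}.
\]
Hence $\mathcal O_1=\mathcal O_2$.

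There is no serious obstacle here; the only thing to be careful about is to make explicit the standard fact that, for injective $e$, being a topological embedding is equivalent to the topology on the source being the preimage topology, so that both topologies on $A$ are forced to equal the same concrete collection of subsets. No further ingredient from the earlier development of the paper is needed.
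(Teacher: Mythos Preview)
Your proof is correct. The paper states this as a Fact with an immediate \qed and gives no argument at all, so there is nothing to compare; your appeal to the characterization of topological embeddings as injections carrying the initial topology is exactly the standard justification.
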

We shall apply this when $A$ is actually a retract, which is a
stronger condition than just a topological embedding.  We shall also
use the following.
\begin{fact}
  \label{fact:Pf:weak}
  Let $f \colon X \to Y$ be a continuous map between topological
  spaces.  Then $\Prev f$ is continuous from $\Prev X$ to $\Prev Y$,
  both spaces being equipped with the weak topology.
\end{fact}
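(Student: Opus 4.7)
The plan is to unfold the definition of $\Prev f$ together with the subbasic description of the weak topology, and reduce everything to a single observation about composition of continuous maps. Concretely, since the weak topology on $\Prev Y$ is generated by the subbasic sets $[k > b] = \{F' \in \Prev Y \mid F'(k) > b\}$, with $k \in \Lform Y$ and $b \in \Rp$, it suffices to check that the preimage $(\Prev f)^{-1}([k > b])$ is weakly open in $\Prev X$ for every such $k$ and $b$.

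Expanding the definition, $(\Prev f)^{-1}([k > b]) = \{F \in \Prev X \mid \Prev f(F)(k) > b\} = \{F \in \Prev X \mid F(k \circ f) > b\}$. The first step is then to observe that $k \circ f$ lies in $\Lform X$: indeed, $k \colon Y \to \creal$ is continuous (with $\creal$ carrying the Scott topology, which defines lower semicontinuity), $f \colon X \to Y$ is continuous by hypothesis, and composition of continuous maps is continuous. Hence $k \circ f \in \Lform X$ is a legitimate test function.

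The second and final step is to identify $(\Prev f)^{-1}([k > b])$ with the subbasic weak open set $[k \circ f > b] \subseteq \Prev X$, which is open in the weak topology on $\Prev X$ by definition. This gives continuity of $\Prev f$ immediately, since the preimage of every subbasic open of the codomain is open in the domain.

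There is no real obstacle here: the entire argument is one line once one has written out the definitions. The only point worth underlining, if anything, is that this proof needs no hypothesis beyond continuity of $f$ and requires neither a quasi-metric structure on $X$ or $Y$ nor any linearity, normalization, or boundedness condition on the previsions; it is a purely formal consequence of the definition of the weak topology and of $\Prev f$.
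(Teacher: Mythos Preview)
Your proposal is correct and follows exactly the paper's approach: the paper's entire proof is the single line $\Prev f^{-1} ([k > b]) = [k \circ f > b]$, and your argument simply unpacks this identity with the observation that $k \circ f \in \Lform X$.
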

Indeed, $\Prev f^{-1} ([k > b]) = [k \circ f > b]$.

\begin{thm}[$\dKRH^a$ quasi-metrizes the weak topology]
  \label{thm:V:weak=dScott}
  Let $X, d$ be a continuous Yoneda-complete quasi-metric space.  For
  every $a \in \Rp$, $a > 0$, the $\dKRH^a$-Scott topology coincides
  with the weak topology on $\Val_{\leq 1} X$, and on $\Val_1 X$.
\end{thm}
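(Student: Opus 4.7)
The plan is to reduce the continuous Yoneda-complete case to the algebraic case already handled by Proposition~\ref{prop:V:weak=dScott:alg}, using the fact that every continuous Yoneda-complete quasi-metric space is a $1$-Lipschitz continuous retract of an algebraic Yoneda-complete space (Theorem~7.9 of \cite{JGL:formalballs}, as used earlier in the proof of Theorem~\ref{thm:Vleq1:cont} and Theorem~\ref{thm:V1:cont}).

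Concretely, I would fix $X, d$ continuous Yoneda-complete and choose an algebraic Yoneda-complete space $Y, \partial$ with $1$-Lipschitz continuous maps $r \colon Y \to X$ and $s \colon X \to Y$ such that $r \circ s = \identity X$. By Corollary~\ref{cor:V:functor} (resp.\ Corollary~\ref{cor:V1:functor:a}), the induced maps $\Prev r$ and $\Prev s$ restrict to $1$-Lipschitz continuous maps between $\Val_{\leq 1} X$ and $\Val_{\leq 1} Y$ (resp.\ between $\Val_1 X$ and $\Val_1 Y$) under $\dKRH^a$ and $\KRH\partial^a$, and they still satisfy $\Prev r \circ \Prev s = \identity{}$. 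They are therefore continuous for the $\dKRH^a$-Scott and $\KRH\partial^a$-Scott topologies. Moreover, since $r$ and $s$ are in particular continuous between the underlying $d$-Scott and $\partial$-Scott topological spaces, Fact~\ref{fact:Pf:weak} tells us that $\Prev r$ and $\Prev s$ are continuous for the weak topologies as well.

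Now let $\tau_w$ and $\tau_s$ denote the weak and $\dKRH^a$-Scott topologies on $\Val_{\leq 1} X$ (the argument is identical on $\Val_1 X$). Fact~\ref{fact:V:weak} already gives $\tau_w \subseteq \tau_s$. For the converse, take a $\tau_s$-open subset $U$. Using $\Prev r \circ \Prev s = \identity{}$, we write
\[
U \;=\; (\Prev r \circ \Prev s)^{-1}(U) \;=\; \Prev s^{-1}\bigl(\Prev r^{-1}(U)\bigr).
\]
Since $\Prev r$ is continuous from $\Val_{\leq 1} Y$ with its $\KRH\partial^a$-Scott topology to $\Val_{\leq 1} X$ with its $\dKRH^a$-Scott topology, $\Prev r^{-1}(U)$ is $\KRH\partial^a$-Scott open in $\Val_{\leq 1} Y$. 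By Proposition~\ref{prop:V:weak=dScott:alg} applied to the algebraic Yoneda-complete space $Y, \partial$, this set is also open in the weak topology on $\Val_{\leq 1} Y$. Finally, since $\Prev s$ is continuous for the weak topologies, $\Prev s^{-1}(\Prev r^{-1}(U)) = U$ is $\tau_w$-open. Hence $\tau_s \subseteq \tau_w$, and the two topologies agree.

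The proof is essentially a bookkeeping argument once the right ingredients are collected, so there is no serious obstacle. The only point that needs care is checking that the retract maps $\Prev r$ and $\Prev s$ restrict to the right subspaces ($\Val_{\leq 1}$ or $\Val_1$) and are simultaneously continuous for both the weak and the $\dKRH^a$-Scott topology; this is exactly what Corollary~\ref{cor:V:functor}, Corollary~\ref{cor:V1:functor:a} and Fact~\ref{fact:Pf:weak} provide.
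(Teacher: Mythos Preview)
Your proof is correct and follows essentially the same approach as the paper: both reduce to the algebraic case (Proposition~\ref{prop:V:weak=dScott:alg}) via the $1$-Lipschitz continuous retraction $r,s$ onto an algebraic Yoneda-complete space, using that $\Prev r,\Prev s$ are continuous for both the $\dKRH^a$-Scott and the weak topologies. The paper packages the final step abstractly via Fact~\ref{fact:retract:two}, whereas you unfold that retract argument explicitly with $U = \Prev s^{-1}(\Prev r^{-1}(U))$; this is the same idea.
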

\proof We reduce to the case of algebraic Yoneda-complete quasi-metric
spaces.

We invoke \cite[Theorem~7.9]{JGL:formalballs} again: $X, d$ is the
$1$-Lipschitz continuous retract of an algebraic Yoneda-complete
quasi-metric space $Y, \partial$.  Call $s \colon X \to Y$ the section
and $r \colon Y \to X$ the retraction.  Then $\Prev s$ and $\Prev r$
form a $1$-Lipschitz continuous section-retraction pair by
Lemma~\ref{lemma:Pf:lip}, and in particular $\Prev s$ is an embedding
of $\Val_{\leq 1} X$ into $\Val_{\leq 1} Y$ with their $\dKRH^a$-Scott
topologies (similarly with $\Val_1$).  However, $s$ and $r$ are also
just continuous, by Proposition~\ref{prop:cont}, so $\Prev s$ and
$\Prev r$ also form a section-retraction pair between the same spaces,
this time with their weak topologies, by Fact~\ref{fact:Pf:weak}.  By
Proposition~\ref{prop:V:weak=dScott:alg}, the two topologies on
$\Val_{\leq 1} Y$ (resp., $\Val_1 Y$) are the same.
Fact~\ref{fact:retract:two} then implies that the two topologies on
$\Val_{\leq 1} X$ (resp., $\Val_1 X$) are the same as well.
\qed

\subsection{Splitting Lemmas}

This section will not be used in the following, and can be skipped if
you have no specific interest in it.

Jones showed that, on a dcpo $X$, given a finite subset $A$,
$\sum_{x \in A} a_x \delta_x \leq \sum_{y \in A} b_y \delta_y$ if and
only if there is a matrix ${(t_{xy})}_{x, y \in A}$ of non-negative
reals such that: $(i)$ if $x \not\leq y$ then $t_{xy}=0$, $(ii)$
$\sum_{y \in A} t_{xy} = a_x$, and $(iii)$
$\sum_{x \in A} t_{xy} \leq b_y$.  This is Jones' \emph{splitting
  lemma} \cite[Theorem~4.10]{Jones:proba}.  Additionally, if the two
valuations are normalized, the the inequality $(iii)$ can be taken to
be an equality.

The splitting lemma for simple normalized valuations can be cast in an
equivalent way as: $\mu = \sum_{x \in A} a_x \delta_x$ is less than or
equal to $\nu = \sum_{y \in A} b_y \delta_y$ if and only if there is a
simple normalized valuation
$\tau = \sum_{x, y \in A} t_{xy} \delta_{(x, y)}$ on $X^2$, supported
on the graph of $\leq$ $(i)$, whose first marginal
$\pi_1 [\tau] = \Val (\pi_1) (\tau) = \sum_{x \in A} (\sum_{y \in A}
t_{xy}) \delta_x$ is equal to $\mu$ $(ii)$ and whose second marginal
$\pi_2 [\tau] = \Val (\pi_2) (\tau) = \sum_{y \in A} (\sum_{x \in A}
t_{xy}) \delta_y$ is equal to $\nu$ $(iii)$.

This theorem can be seen as a special case of results
of Edwards \cite{Edwards:marginals}, which says that, in certain
classes of ordered topological spaces, $\mu \leq \nu$ for probability
measures $\mu$, $\nu$ if and only if there is a probability measure
$\tau$ on the product space, supported on the graph of $\leq$, with
first marginal equal to $\mu$ and with second marginal equal to $\nu$.

\begin{lem}
  \label{lemma:to:<=}
  Let $X, d$ be a quasi-metric space.  For two continuous valuations
  $\mu$ and $\nu$ on $X$, write $\xymatrix{\mu \ar[r]^w & \nu}$ if and
  only if there is a center point $x \in X$, a point $y \in X$, and a
  real number $t \in \Rp$ such that $\mu \geq t \delta_x$, $\nu = \mu
  + t \delta_y - t \delta_x$, and $w = t d (x, y)$.  (We agree that $0
  . (+\infty) = 0$ in case $t=0$ and $d (x, y)=+\infty$.)

  If $\xymatrix{\mu \ar[r]^w & \nu}$, then $\dKRH (\mu, \nu) = w$.
\end{lem}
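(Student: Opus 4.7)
The plan is to verify the two inequalities $\dKRH(\mu,\nu) \leq w$ and $\dKRH(\mu,\nu) \geq w$ separately, using the function $\sea{x}{d(x,y)}$ as the critical test function for the lower bound. A helpful reformulation is to set $\mu' := \mu - t\delta_x$, which is a well-defined continuous valuation thanks to $\mu \geq t\delta_x$, so that $\mu = \mu' + t\delta_x$ and $\nu = \mu' + t\delta_y$. Integrating an arbitrary $h \in \Lform_1 X$ against both identities yields the basic identity
\[ \int_X h\,d\mu + t\, h(y) = \int_X h\,d\nu + t\, h(x), \]
an equation in $\creal$ that will drive both directions.

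For the upper bound, I would fix $h \in \Lform_1 X$ and use the 1-Lipschitz hypothesis $\dreal(h(x), h(y)) \leq d(x,y)$, which reads $h(x) \leq h(y) + d(x,y)$ in the $\dreal$-sense. Combined with the displayed identity, this yields $\int h\,d\mu \leq \int h\,d\nu + t\,d(x,y)$, that is, $\dreal(\int h\,d\mu, \int h\,d\nu) \leq w$. Taking the supremum over $h \in \Lform_1 X$ gives $\dKRH(\mu,\nu) \leq w$, with the usual conventions taking care of the degenerate case $t=0$.

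For the lower bound, assume first $d(x,y) < +\infty$ and take $h = \sea{x}{d(x,y)}$. This map lies in $\Lform_1 X$ by Lemma~\ref{lemma:sea:cont}, which is exactly where the hypothesis that $x$ is a \emph{center} point enters the argument. A direct computation yields $h(x) = d(x,y)$ and $h(y) = 0$, so the identity collapses to $\int h\,d\mu = \int h\,d\nu + t\,d(x,y)$, giving $\dreal(\int h\,d\mu, \int h\,d\nu) \geq w$ and hence $\dKRH(\mu,\nu) \geq w$. When $d(x,y) = +\infty$ and $t > 0$ (so $w = +\infty$), I would instead use the family $h_b = \sea{x}{b}$ for arbitrarily large $b \in \Rp$: each $h_b$ is in $\Lform_1 X$, $h_b(x) = b$, and $h_b(y) = 0$ because $d(x,y) = +\infty$, so $\dKRH(\mu,\nu) \geq t b$ for every $b$, forcing $\dKRH(\mu,\nu) = +\infty = w$.

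The main obstacle is the careful bookkeeping around $+\infty$ values, both in the integrals and in the $\dreal$-interpretation of the identity $\int h\,d\mu + t h(y) = \int h\,d\nu + t h(x)$: subtractions have to be replaced throughout by applications of $\dreal$, and one must separately handle the possibility that $\int h\,d\mu'$ (and hence both integrals) is $+\infty$, in which case $\dreal$ contributes $0$ harmlessly on the upper-bound side, while on the lower-bound side the test function $\sea{x}{d(x,y)}$ is bounded by $d(x,y)$ and supported on $B^d_{x,<d(x,y)}$, which is where the finiteness assumptions implicit in our use of $\Val X$ enter.
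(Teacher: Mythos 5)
Your proof is correct and follows essentially the same route as the paper's: the same key identity $\int_X h\,d\mu + t\,h(y) = \int_X h\,d\nu + t\,h(x)$, the $1$-Lipschitz bound $h(x) \leq h(y) + d(x,y)$ for the upper estimate, and $\sea{x}{d(x,y)}$ (resp.\ $\sea{x}{b}$ for arbitrarily large $b$ when $d(x,y)=+\infty$) as the witness for the lower estimate, with the center-point hypothesis entering exactly through Lemma~\ref{lemma:sea:cont}. The only cosmetic difference is that the paper reads the identity off from $\nu + t\delta_x = \mu + t\delta_y$ directly, which spares it from having to check that $\mu - t\delta_x$ is a well-defined continuous valuation, and it explicitly restricts to bounded $h$ via Lemma~\ref{lemma:KRH:bounded} to keep the $\dreal$ bookkeeping finite --- the very points you flag as the remaining obstacles.
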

In more elementary terms: going from $\mu$ to $\nu$ by moving an
amount $t$ of mass from $x$ to $y$ makes you travel distance $t d (x,
y)$.

\proof For every bounded $h \in \Lform_1 (X, d)$,
$\int_{z \in X} h (z) d(\nu + t \delta_x) = \int_{z \in X} h (z) d
(\mu + t \delta_y)$, which implies that
$\int_{z \in X} h (z) d\nu + t h (x) = \int_{z \in X} d\mu + t h (y)$,
hence
$\dreal (\int_{z \in X} h (x) d\mu, \int_{z \in X} h (x) d\nu) = \max
(t (h (x) - h (y)), 0)$.  (The latter makes sense since $h$ is
bounded.)  Since $h$ is $1$-Lipschitz,
$\dreal (\int_{z \in X} h (x) d\mu, \int_{z \in X} h (x) d\nu) \leq t
d (x, y)$.  Taking suprema over all bounded maps
$h \in \Lform_1 (X, d)$, and using Lemma~\ref{lemma:KRH:bounded},
$\dKRH (\mu, \nu) \leq t d (x, y) = w$.

Conversely, if $d (x, y) < +\infty$, then let $b = d (x, y)$ and
consider $h = \sea x b$.  Then $h (x) = d (x, y)$ and $h (y) = 0$.
The function $h$ $h$ is in $\Lform_1 (X, d)$, by
Lemma~\ref{lemma:sea:cont}, and is bounded by $b$.  The equality
$\int_{z \in X} h (z) d\nu + t h (x) = \int_{z \in X} d\mu + t h (y)$
then rewrites as
$\dreal (\int_{z \in X} h (x) d\mu, \int_{z \in X} h (x) d\nu) = \max
(t (h (x) - h (y)), 0) = t d (x, y)$.  So
$\dKRH (\mu, \nu) \geq t d (x, y) = w$, whence $\dKRH (\mu, \nu) = w$.

If $d (x, y) = +\infty$, then let $b \in \Rp$ be arbitrary and
consider again $h = \sea x b$.  Again, $h$ is in $\Lform_1^b (X, d)$,
but now $h (x) = b$, $h (y) = 0$.  We obtain
$\dreal (\int_{z \in X} h (x) d\mu, \int_{z \in X} h (x) d\nu) = \max
(t (h (x) - h (y)), 0) = tb$, so $\dKRH (\mu, \nu) \geq tb$ for every
$b \in \Rp$.  If $t > 0$, it follows that
$\dKRH (\mu, \nu) = +\infty = w$.  Otherwise, we know already that
$\dKRH (\mu, \nu) \leq w = 0$, so $\dKRH (\mu, \nu) = 0 = w$.  \qed

\begin{rem}
  \label{rem:to:<=}
  Let $a \in \Rp$, $a > 0$.  A similar argument shows that under the
  same assumptions that $x$ is a center point, $\mu \geq t \delta_x$,
  and $\nu = \mu + t \delta_y - t \delta_x$, then
  $\dKRH^a (\mu, \nu) = t \min (a, d (x,y))$.

  Showing $\dKRH (\mu, \nu) \leq t \min (a, d (x, y))$ means showing
  that $\dKRH (\mu, \nu) \leq t d (x, y)$, which is done as above, and
  $\dKRH (\mu, \nu) \leq t a$, which follows from the fact that
  $h (x) - h (y) \leq a$ for every $h \in \Lform_1^a (X, d)$.  To show
  the reverse inequality, we take
  $h = \sea x a \in \Lform_1^a (X, d)$.  Then $h (x) = a$,
  $h (y) = \max (a - d (x, y), 0) = a - \min (a, d (x, y))$, so that
  $\dKRH (\mu, \nu) \geq \max (t (h (x) - h (y)), 0) = \min (a, d (x,
  y))$.
\end{rem}

For two simple valuations $\mu = \sum_{x \in A} a_x \delta_x$,
$\nu = \sum_{y \in B} b_y \delta_y$, say that
$T = {(t_{xy})}_{x \in A, y \in B}$ is a \emph{transition matrix} from
$\mu$ to $\nu$ if and only if:
\begin{enumerate}
\item for every $y \in B$, $\sum_{x \in A} t_{xy} = b_y$;
\item for every $x \in A$, $\sum_{y \in B} t_{xy} = a_x$.
\end{enumerate}
The \emph{weight} of $T$ is $\sum_{x \in A, y \in B} t_{xy} d (x, y)$.

\begin{lem}
  \label{lem:txy:<=}
  Let $X, d$ be a quasi-metric space, and
  $\mu = \sum_{x \in A} a_x \delta_x$,
  $\nu = \sum_{y \in B} b_y \delta_y$ be two simple valuations on $X$.
  The weight of any transition matrix from $\mu$ to $\nu$ is larger
  than or equal to $\dKRH (\mu, \nu)$.
\end{lem}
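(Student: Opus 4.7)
The plan is to exploit the $1$-Lipschitz inequality $h(x) \le h(y) + d(x,y)$, satisfied by every $h \in \Lform_1(X,d)$, together with the two marginal identities defining a transition matrix $T = (t_{xy})$, in order to bound $\int h\, d\mu - \int h\, d\nu$ by the weight of $T$, and then take the supremum over $h$.

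Concretely, writing $w(T) = \sum_{x \in A, y \in B} t_{xy}\, d(x,y)$ for the weight, for any $h \in \Lform_1(X,d)$ I would chain
\begin{align*}
  \int_{z \in X} h(z)\, d\mu
  = \sum_{x \in A} a_x\, h(x)
  &= \sum_{x \in A}\sum_{y \in B} t_{xy}\, h(x) \\
  &\le \sum_{x,y} t_{xy}\,(h(y) + d(x,y)) \\
  &= \sum_{y \in B} b_y\, h(y) + w(T)
   = \int_{z \in X} h(z)\, d\nu + w(T),
\end{align*}
using condition~(2) to replace $a_x$ by $\sum_y t_{xy}$, the $1$-Lipschitz bound applied to each $(x,y)$, and condition~(1) to collect $\sum_x t_{xy} = b_y$. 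Since each $\mu$, $\nu$ is a simple valuation, the integrals are just finite sums, so no measure-theoretic subtlety arises.

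From this it follows directly from the definition of $\dreal$ (Example~\ref{exa:creal}) that $\dreal(\int h\, d\mu, \int h\, d\nu) \le w(T)$: either $\int h\, d\mu \le \int h\, d\nu$ and the distance vanishes, or $\int h\, d\mu$ is finite and strictly larger than $\int h\, d\nu$, in which case the distance is the difference, bounded above by $w(T)$. Taking the supremum over $h \in \Lform_1(X,d)$ then yields $\dKRH(\mu,\nu) \le w(T)$ by Definition~\ref{defn:KRH}.

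The only real obstacle will be the bookkeeping around the value $+\infty$. If $w(T) = +\infty$ the inequality is trivial. If $w(T) < +\infty$, then $t_{xy} > 0$ forces $d(x,y) < +\infty$; if moreover $\int h\, d\mu = +\infty$ whilst $\int h\, d\nu < +\infty$, some $x$ with $a_x > 0$ satisfies $h(x) = +\infty$, but then for every $y$ with $t_{xy} > 0$ the $1$-Lipschitz inequality together with finiteness of $d(x,y)$ forces $h(y) = +\infty$, and since $b_y \ge t_{xy} > 0$ this contradicts $\int h\, d\nu < +\infty$. Hence the displayed computation is legitimate wherever it is non-trivial, and the result follows.
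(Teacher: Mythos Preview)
Your proof is correct and follows essentially the same approach as the paper: both replace $a_x$ by $\sum_y t_{xy}$, apply the $1$-Lipschitz inequality $h(x)\le h(y)+d(x,y)$ termwise, regroup using $\sum_x t_{xy}=b_y$, and take the supremum over $h\in\Lform_1(X,d)$. Your additional bookkeeping around the value $+\infty$ is more careful than the paper's proof, which leaves those cases implicit.
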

\proof Let $h \in \Lform_1 (X, d)$.  Then:
\begin{eqnarray*}
  \int_{x \in X} h (x) d\mu
  & = & \sum_{x \in A} a_x h (x) \\
  & = & \sum_{x \in A, y \in B} t_{xy} h (x) \\
  & \leq & \sum_{x \in A, y \in B} t_{xy} (d (x, y) + h (y))
           \quad\text{since $h$ is $1$-Lipschitz} \\
  & = & \sum_{x \in A, y \in B} t_{xy} d (x, y) + \sum_{x \in A, y \in
        B} t_{xy} h (y) \\
  & = & \sum_{x \in A, y \in B} t_{xy} d (x, y) + \sum_{y \in
        B} b_y h (y) \\
  & = & \sum_{x \in A, y \in B} t_{xy} d (x, y) + \int_{y \in X} h (y)
  d\nu,
\end{eqnarray*}
from which the result follows.  \qed


Refine the notation $\xymatrix{\mu \ar[r]^w & \nu}$, and define
$\xymatrix{\mu \ar[r]^w_{A, B} & \nu}$ if and only if we can pick $x$
from $A$ and $y$ from $B$; explicitly, if and only if there is a
center point $x \in A$, a point $y \in B$, and a real number
$t \in \Rp$ such that $\mu \geq t \delta_x$,
$\nu = \mu + t \delta_y - t \delta_x$, and $w = t d (x, y)$.
\begin{lem}
  \label{lemma:V:<=:simple}
  Let $X, d$ be a standard quasi-metric space, $A$ be a non-empty
  finite set of center points of $X, d$, and $B$ be a non-empty finite
  set of points of $X$.

  For all simple normalized valuations
  $\mu = \sum_{x \in A} a_x \delta_x$ and
  $\nu = \sum_{y \in B} b_y \delta_y$, and $r \in \Rp$, the following
  are equivalent:
  \begin{enumerate}
  \item $\dKRH (\mu, \nu) \leq r$;
  \item for every map $f \colon A \cup B \to \Rp$ such that
    $f (x) \leq f (y) + d (x, y)$ for all $x, y \in A \cup B$,
    $\sum_{x \in A} a_x f (x) \leq \sum_{y \in B} b_y f (y) + r$;
  \item for all maps $f_1 \colon A \to \Rp$ and $f_2 \colon B \to \Rp$
    such that $f_1 (x) \leq f_2 (y) + d (x, y)$ for all $x \in A$ and
    $y \in B$,
    $\sum_{x \in A} a_x f_1 (x) \leq \sum_{y \in B} b_y f_2 (y) + r$;
  \item there is a transition matrix ${(t_{xy})}_{x \in A, y \in B}$
    from $\mu$ to $\nu$ of weight
    $\sum_{x \in A, y \in B} t_{xy} d (x, y) \leq r$;
  \item there is a finite path
    $\mu = \xymatrix{\mu_0 \ar[r]^{w_1}_{A, B} & \mu_1
      \ar[r]^{w_2}_{A, B} & \cdots \ar[r]^{w_n}_{A, B} & \mu_n} =
    \nu$, with $\sum_{i=1}^n w_i \leq r$.
  \end{enumerate}
\end{lem}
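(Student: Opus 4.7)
The strategy is to prove the cycle $(1) \Rightarrow (2) \Rightarrow (3) \Rightarrow (4) \Rightarrow (5) \Rightarrow (1)$. Most steps are direct consequences of the machinery developed in Sections~\ref{sec:quasi-metr-prev} and~\ref{sec:cont-lipsch-maps}; the substantive content, as in the classical Kantorovich-Rubinshte\u\i n theorem, sits at $(3) \Rightarrow (4)$ and amounts to finite-dimensional linear-programming duality for the transportation problem.

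For $(1) \Rightarrow (2)$ I extend $f \colon A \cup B \to \Rp$ to the function $g = \bigvee_{x \in A} (\sea{x}{f(x)})$ on all of $X$. Since $A$ consists of center points, $g$ lies in $\Lform_1(X, d)$ by Lemma~\ref{lemma:sea:min}; a short computation using that $f$ is $1$-Lipschitz on $A \cup B$ yields $g(x) = f(x)$ for $x \in A$ and $g(y) \leq f(y)$ for $y \in B$. Applying $\dKRH(\mu, \nu) \leq r$ to the bounded map $g$ then gives $\sum_{x \in A} a_x f(x) = \int g\, d\mu \leq \int g\, d\nu + r \leq \sum_{y \in B} b_y f(y) + r$. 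For $(2) \Rightarrow (3)$, given $f_1$ and $f_2$, I use the envelope $\phi(u) = \inf_{y \in B} (f_2(y) + d(u, y))$ for $u \in A \cup B$; the triangular inequality makes $\phi$ a $1$-Lipschitz map on $A \cup B$ with $\phi(y) \leq f_2(y)$ on $B$ and $\phi(x) \geq f_1(x)$ on $A$ (by the hypothesis of~(3)). Truncating $\phi$ by any constant $M$ larger than every $f_1(x)$ produces an $\Rp$-valued function to which~(2) applies, and the resulting inequality collapses to~(3).

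The step $(3) \Rightarrow (4)$ is finite-dimensional LP duality for the transportation problem with primal
\[
  \min \sum_{(x, y) \in A \times B} t_{xy}\, d(x, y), \qquad t_{xy} \geq 0, \quad \sum_y t_{xy} = a_x, \quad \sum_x t_{xy} = b_y.
\]
The primal is feasible (take $t_{xy} = a_x b_y$) and its feasible set is a compact polytope, so the minimum is attained at some $T$; the LP dual is $\max (\sum_x a_x u_x + \sum_y b_y v_y)$ subject to $u_x + v_y \leq d(x, y)$, which after the substitution $u_x = f_1(x)$, $v_y = -f_2(y)$ and a global shift to restore non-negativity (harmless because $\sum_x a_x = \sum_y b_y = 1$) is exactly the supremum bounded by~(3). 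Strong duality then produces a transition matrix $T$ of weight at most $r$.

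Finally, $(4) \Rightarrow (5)$ is obtained by listing the pairs $(x, y) \in A \times B$ with $t_{xy} > 0$ in any order, say grouped by $x \in A$, and performing the transfers one pair at a time: at every stage the available mass at the current $x_i$ is still at least $t_{x_i y_i}$, each intermediate step has the form required by Lemma~\ref{lemma:to:<=} with weight $w_i = t_{x_i y_i} d(x_i, y_i)$, the total weight telescopes to $\sum_{x, y} t_{xy} d(x, y) \leq r$, and the endpoint is $\nu$ because the marginals of $T$ are $(a_x)$ and $(b_y)$. The closing step $(5) \Rightarrow (1)$ follows from the triangular inequality for $\dKRH$ applied along the path, using Lemma~\ref{lemma:to:<=} to control each one-step distance by $w_i$. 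The only genuine obstacle is $(3) \Rightarrow (4)$, which reduces to invoking standard finite-dimensional LP duality once one observes that the asymmetric Kantorovich-Monge transportation LP admits its usual symmetric dual; the quasi-metric setting introduces no additional difficulty beyond the mild truncation already needed for $(2) \Rightarrow (3)$.
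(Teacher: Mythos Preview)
Your proof follows the same cycle $(1)\Rightarrow(2)\Rightarrow(3)\Rightarrow(4)\Rightarrow(5)\Rightarrow(1)$ as the paper, with the same ingredients at each step (the extension $\bigvee_{x\in A}\sea{x}{f(x)}$ for $(1)\Rightarrow(2)$, the lower envelope $\inf_{y\in B}(f_2(y)+d(\cdot,y))$ for $(2)\Rightarrow(3)$, LP duality for $(3)\Rightarrow(4)$, enumeration of the $t_{xy}$-transfers for $(4)\Rightarrow(5)$, and Lemma~\ref{lemma:to:<=} plus the triangular inequality for $(5)\Rightarrow(1)$). The one point where you should be as explicit as the paper is at $(3)\Rightarrow(4)$: since $d(x,y)$ may equal $+\infty$, the paper restricts the LP to pairs with $d(x,y)<+\infty$ (so that all coefficients are finite and standard duality applies), sets $t_{xy}=0$ on the remaining pairs, and then uses the normalization $\sum_x a_x=\sum_y b_y=1$ to upgrade the inequality marginals $\sum_y t_{xy}\geq a_x$, $\sum_x t_{xy}\leq b_y$ to the required equalities.
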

\proof $1 \limp 2$.  Let $f \colon A \cup B \to \Rp$ be such that
$f (x) \leq f (y) + d (x, y)$ for all $x, y \in A \cup B$ (i.e., $f$
is $1$-Lipschitz).  Build
$h = \bigvee_{x \in A} \sea x {f (x)} \colon X \to \creal$.  By
Lemma~\ref{lemma:sea:min}, $h$ is the smallest $1$-Lipschitz map such
that $h (x) \geq f (x)$ for every $x \in A$, and is in
$\Lform_1 (X, d)$.  By $1$,
$\int_{x \in X} h (x) d\mu \leq \int_{y \in X} h (y) d\nu + r$, namely
$\sum_{x \in A} a_x h (x) \leq \sum_{y \in A} b_y h (y) + r$.

We claim that $h (z) \leq f (z)$ for every $z \in A \cup B$.
That reduces to showing that for every $x \in A$, $(\sea x {f (x)})
(z) \leq f (z)$, namely that $\max (f (x) - d (x, z), 0) \leq f (z)$.
Since $f$ is $1$-Lipschitz, $f (x) - d (x, z) \leq f (z)$, which
concludes the proof of the claim.

For every $x \in A$, we have just seen that $h (x) \leq f (x)$.
Recalling that we also have $h (x) \geq f (x)$, it follows that
$h (x) = f (x)$.  Therefore
$\sum_{x \in A} a_x f (x) \leq \sum_{y \in A} b_y h (y) + r$.  Since
$h (y) \leq f (y)$ for every $y \in B$, we conclude that
$\sum_{x \in A} a_x f (x) \leq \sum_{y \in A} b_y f (y) + r$.

$2 \limp 3$.  Assume that $f_1 (x) \leq f_2 (y) + d (x, y)$ for all
$x \in A$ and $y \in B$, and let
$f (z) = \inf_{y \in B} f_2 (y) + d (z, y)$ for every
$z \in A \cup B$.  This formula defines an element of $\Rp$ since $B$
is non-empty.  By assumption, $f_1 (x) \leq f (x)$ for every
$x \in A$.  Since we can take $y=z$ in the formula defining $f$,
$f (y) \leq f_2 (y)$ for every $y \in B$.  Finally, since for all
$z, z' \in A$, $d (z, y) \leq d (z, z') + d (z', y)$,
$f (z) \leq d (z, z') + f (z')$, so $f$ is $1$-Lipschitz on
$A \cup B$.  By $2$,
$\sum_{x \in A} a_x f (x) \leq \sum_{y \in B} b_y f (y) + r$.  Since
$f_1 (x) \leq f (x)$ for every $x \in A$ and $f (y) \leq f_2 (y)$ for
every $y \in B$,
$\sum_{x \in A} a_x f_1 (x) \leq \sum_{y \in B} b_y f_2 (y) + r$.

$3 \limp 4$.  We use linear programming duality.  (Using the max-flow
min-cut theorem would suffice, too.)  Recall that a (so-called primal)
linear programming problem consists in maximizing
$Z = \sum_{j=1}^n c_j X_j$, where every $X_j$ varies, subject to
constraints of the form $\sum_{j=1}^n \alpha_{ij} X_j \leq \beta_i$
($1\leq i \leq m$) and $X_j \geq 0$ ($1\leq j\leq n$).  The dual
problem consists in minimizing $W = \sum_{i=1}^m \beta_i Y_i$ subject
to $\sum_{i=1}^m \alpha_{ij} Y_i \geq c_j$ ($1\leq j\leq n$) and
$Y_i \geq 0$ ($1\leq i\leq m$).  The two problems are dual, meaning
that $Z$ is always less than or equal to $W$, and that if the maximum
of the $Z$ values is different from $+\infty$, then it is equal to the
minimum of the $W$ values.  In particular, if $Z \leq r$ for all
values of the variables $X_j$ satisfying the constraints, then
$W \leq r$ for some tuple of values of the variables $Y_i$ satisfying
the constraints.

In our case, $j$ ranges over the disjoint sum $A+B$ (up to a
bijection), and $i$ enumerates the pairs $(x, y) \in A \times B$ such
that $d (x, y) < +\infty$.  For $j=x\in A$, $X_j=f_1(x)$ and
$c_j=a_x$; for $j=y\in B$, $X_j=f_2(y)$ and $c_j=-b_y$.  For
$i=(x,y) \in A \times B$ such that $d (x,y) < +\infty$, the associated
constraint $\sum_{j=1}^n \alpha_{ij} X_j \leq \beta_i$ reads
$f_1 (x) \leq f_2 (y) + d (x, y)$; that is, $\beta_i = d (x, y)$, and
$\alpha_{ij}=1$ if $j=x\in A$, $\alpha_{ij} = -1$ if $j=y\in B$, and
$\alpha_{ij}=0$ otherwise.

The dual variables $Y_i$ can then be given values which we write
$t_{xy}$, where $i = (x, y)$ with $d (x, y) < +\infty$, in such a way
that $W$, that is,
$\sum_{(x, y) \in A \times B, d (x, y) < +\infty} t_{xy} d (x, y)$, is
less than or equal to $r$, and the dual constraints are satisfied.
Those are: for every $x \in A$,
$\sum_{y \in B, d (x, y) < +\infty} t_{xy} \geq a_x$; for every
$y \in B$, $\sum_{x \in A, d (x, y) < +\infty} -t_{xy} \geq -b_y$; and
$t_{xy} \geq 0$ for all $(x, y)$ such that $d (x, y) < +\infty$.  To
simplify notations, define $t_{xy}$ as $0$ when $d (x, y) = +\infty$.
Then $\sum_{x \in A, y \in B} t_{xy} d (x, y) \leq r$, which establishes
$(iii)$, and: $(i')$ $\sum_{y \in B} t_{xy} \geq a_x$ for every
$x \in A$, $(ii')$ $\sum_{x \in A} t_{xy} \leq b_y$ for every
$y \in B$.

For every $x \in A$, let
$\epsilon_x = \sum_{y \in B} t_{xy} - a_x \geq 0$.  We have
$\sum_{x \in A} \epsilon_x = \sum_{x \in A, y \in B} t_{xy} - \sum_{x
  \in A} a_x = \sum_{x \in A, y \in B} t_{xy} - 1$ (since $\mu$ is
normalized)
$= \sum_{y \in B} \sum_{x \in A} t_{xy} - 1 \leq \sum_{y \in B} b_y -
1 \leq 0$, using $(ii')$.  Since the numbers $\epsilon_x$ are
non-negative and sum to at most $0$, they are all equal to $0$,
showing that $\sum_{y \in B} t_{xy} = a_x$ for every $x \in A$.  To
show that $\sum_{x \in A} t_{xy} = b_y$ for every $y \in B$, we reason
symmetrically, by letting
$\eta_y = b_y - \sum_{x \in A} t_{xy} \geq 0$, and realizing that
$\sum_{y \in B} \eta_y = 1 - \sum_{x \in A, y \in B} t_{xy} \leq 1 -
\sum_{x \in A} a_x = 0$, using $(i')$, we show that every $\eta_y$ is
equal to $0$.

$4 \limp 5$.  Enumerate the pairs $(x, y)$ such that $x \in A$ and
$y \in B$ in any order.  It is easy to see that one can go from $\mu$
to $\nu$ by a series of transitions $\xymatrix{\ar[r]^w_{A, B} &}$,
one for each such pair $(x, y)$, where we transfer some mass $t_{xy}$
from $x$ to $y$.  In that case, $w = t_{xy} d (x, y)$.  Each of these
numbers $w$ is different from $+\infty$, since
$\sum_{x \in A, y \in B} t_{xy} d (x, y) \leq r$.  Moreover the sum of
these numbers $w$ is less than or equal to $r$.

$5 \limp 1$.  This follows from Lemma~\ref{lemma:to:<=} and the
triangular inequality.  \qed

\begin{cor}
  \label{corl:V:<=:simple}
  Let $X, d$ be a standard quasi-metric space, $A$ be a non-empty
  finite set of center points of $X, d$, and $B$ be a non-empty finite
  set of points of $X$.

  For all simple normalized valuations
  $\mu = \sum_{x \in A} a_x \delta_x$ and
  $\nu = \sum_{y \in B} b_y \delta_y$, if
  $\dKRH (\mu, \nu) < +\infty$, then:
  \begin{enumerate}
  \item there is a transition matrix ${(t_{xy})}_{x \in A, y \in B}$
    from $\mu$ to $\nu$ of weight
    $\sum_{x \in A, y \in B} t_{xy} d (x, y)$ equal to
    $\dKRH (\mu, \nu)$;
  \item there is a finite path
    $\mu = \xymatrix{\mu_0 \ar[r]^{w_1}_{A, B} & \mu_1
      \ar[r]^{w_2}_{A, B} & \cdots \ar[r]^{w_n}_{A, B} & \mu_n} =
    \nu$, with $\dKRH (\mu, \nu) = \sum_{i=1}^n w_i$.
  \end{enumerate}
\end{cor}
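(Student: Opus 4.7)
The plan is to derive both items as immediate consequences of Lemma~\ref{lemma:V:<=:simple}, combined with matching lower bound statements that are already at hand. The hypothesis $\dKRH (\mu, \nu) < +\infty$ is used precisely to be allowed to plug $r = \dKRH (\mu, \nu)$ into the equivalences of Lemma~\ref{lemma:V:<=:simple}, which require a real parameter $r \in \Rp$.

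For item~(1), I would apply Lemma~\ref{lemma:V:<=:simple} in the direction $(1) \limp (4)$ with $r = \dKRH (\mu, \nu)$; the inequality $\dKRH (\mu, \nu) \leq r$ being trivially satisfied, this hands us a transition matrix ${(t_{xy})}_{x \in A, y \in B}$ from $\mu$ to $\nu$ whose weight satisfies $\sum_{x \in A, y \in B} t_{xy} d (x, y) \leq \dKRH (\mu, \nu)$. The reverse inequality $\sum_{x \in A, y \in B} t_{xy} d (x, y) \geq \dKRH (\mu, \nu)$ is exactly the content of Lemma~\ref{lem:txy:<=}, applied to that same transition matrix. Combining the two inequalities yields the desired equality of the weight with $\dKRH (\mu, \nu)$.

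For item~(2), I would similarly invoke Lemma~\ref{lemma:V:<=:simple} in the direction $(1) \limp (5)$ with $r = \dKRH (\mu, \nu)$, producing a finite path $\mu = \mu_0 \xrightarrow{w_1}_{A,B} \mu_1 \xrightarrow{w_2}_{A,B} \cdots \xrightarrow{w_n}_{A,B} \mu_n = \nu$ with $\sum_{i=1}^n w_i \leq \dKRH (\mu, \nu)$. For the converse inequality, each individual transition $\mu_{i-1} \xrightarrow{w_i}_{A,B} \mu_i$ falls under Lemma~\ref{lemma:to:<=}, which gives $\dKRH (\mu_{i-1}, \mu_i) = w_i$. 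Applying the triangular inequality for $\dKRH$ along the path, we obtain $\dKRH (\mu, \nu) \leq \sum_{i=1}^n \dKRH (\mu_{i-1}, \mu_i) = \sum_{i=1}^n w_i$, and equality follows.

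There is essentially no obstacle, since the real work (the duality argument through linear programming, and the construction of transitions from transition matrices) has already been carried out inside Lemma~\ref{lemma:V:<=:simple}; the only point worth emphasizing in the write-up is that the finiteness hypothesis $\dKRH (\mu, \nu) < +\infty$ is what makes it legitimate to instantiate the parameter $r$ of Lemma~\ref{lemma:V:<=:simple} at the value $\dKRH (\mu, \nu)$, and that the two inequalities obtained on each side (from Lemma~\ref{lemma:V:<=:simple} upward, and from Lemma~\ref{lem:txy:<=}, resp.\ Lemma~\ref{lemma:to:<=} plus the triangular inequality, downward) automatically close the gap.
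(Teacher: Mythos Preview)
Your proposal is correct and follows the paper's proof essentially verbatim: take $r = \dKRH(\mu,\nu)$ in Lemma~\ref{lemma:V:<=:simple} (directions $(1)\Rightarrow(4)$ and $(1)\Rightarrow(5)$), then close the gap using Lemma~\ref{lem:txy:<=} for item~(1) and Lemma~\ref{lemma:to:<=} plus the triangular inequality for item~(2). Your explicit remark that the finiteness hypothesis is what allows $r = \dKRH(\mu,\nu) \in \Rp$ is the only point the paper leaves implicit.
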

\proof Take $r = \dKRH (\mu, \nu)$ in
Lemma~\ref{lemma:V:<=:simple}~(4): then
$\sum_{x \in A, y \in B} t_{xy} d (x, y) \leq \dKRH (\mu, \nu)$.  The
reverse inequality is by Lemma~\ref{lem:txy:<=}.  Now use
Lemma~\ref{lemma:V:<=:simple}~(5) instead: there is a finite path
$\mu = \xymatrix{\mu_0 \ar[r]^{w_1}_{A, B} & \mu_1 \ar[r]^{w_2}_{A, B}
  & \cdots \ar[r]^{w_n}_{A, B} & \mu_n} = \nu$, with
$\sum_{i=1}^n w_i \leq \dKRH (\mu, \nu)$.  By Lemma~\ref{lemma:to:<=}
and the triangular inequality,
$\dKRH (\mu, \nu) \leq \sum_{i=1}^n w_i$, from which we obtain the
equality.  \qed

We have a similar result for the bounded KRH metric $\dKRH^a$.
The resulting formula is slightly less elegant.
\begin{lem}
  \label{lemma:V:<=:simple:a}
  Let $X, d$ be a standard quasi-metric space, $a > 0$, $A$ be a
  non-empty finite set of center points of $X, d$, and $B$ be a
  non-empty finite set of points of $X$.

  For all simple normalized valuations
  $\mu = \sum_{x \in A} a_x \delta_x$ and
  $\nu = \sum_{y \in B} b_y \delta_y$, and $r \in \Rp$, the following
  are equivalent:
  \begin{enumerate}
  \item $\dKRH^a (\mu, \nu) \leq r$;
  \item for every map $f \colon A \cup B \to [0, a]$ such that
    $f (x) \leq f (y) + d (x, y)$ for all $x, y \in A \cup B$,
    $\sum_{x \in A} a_x f (x) \leq \sum_{y \in B} b_y f (y) + r$;
  \item for all maps $f_1 \colon A \to [0, a]$ and
    $f_2 \colon B \to [0, a]$ such that
    $f_1 (x) \leq f_2 (y) + d (x, y)$ for all $x \in A$ and $y \in B$,
    $\sum_{x \in A} a_x f_1 (x) \leq \sum_{y \in B} b_y f_2 (y) + r$;
  \item there is a matrix ${(t_{xy})}_{x \in A, y \in B}$, and two
    vectors ${(u_x)}_{x \in A}$ and ${(v_y)}_{y \in B}$ with
    non-negative entries such that:
    \begin{enumerate}
    \item $\sum_{x \in A, y \in B} t_{xy} d (x, y) + a \sum_{x \in A}
      u_x + a \sum_{y \in B} v_y \leq r$,
    \item for every $x \in A$, $\sum_{y \in B} t_{xy} + u_x \geq a_x$,
    \item for every $y \in B$, $\sum_{x \in A} t_{xy} - v_y \leq b_y$.
    \end{enumerate}
  \end{enumerate}
\end{lem}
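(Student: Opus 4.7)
The plan is to cycle through the four conditions, mimicking the proof of Lemma~\ref{lemma:V:<=:simple} but keeping careful track of the upper bound $a$ on function values, which manifests itself as two new families of slack variables $u_x, v_y$ in the LP dual. The key structural difference from the unbounded case is that conditions (b) and (c) of item~(4) are inequalities, not equalities as in $(iv)$ of Lemma~\ref{lemma:V:<=:simple}, because the caps $f_1(x) \le a$ and $f_2(y) \le a$ introduce slack.

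For $(1) \limp (2)$, given a $1$-Lipschitz map $f \colon A \cup B \to [0,a]$, I set $h = \bigvee_{x \in A} \sea{x}{f(x)}$. Because $f(x) \le a$, each $\sea{x}{f(x)}$ is bounded by $a$, so $h \in \Lform_1^a(X,d)$ by Lemma~\ref{lemma:sea:cont} and Proposition~\ref{prop:alphaLip:props}~(3). Applying (1) to $h$ and noting, exactly as in the proof of Lemma~\ref{lemma:V:<=:simple}, that $h(x) = f(x)$ for $x \in A$ and $h(z) \le f(z)$ for $z \in A \cup B$ (since $f$ is $1$-Lipschitz), I conclude (2). For $(2) \limp (3)$, given $f_1, f_2$, I interpolate via $f(z) = \min(a, \inf_{y \in B}(f_2(y) + d(z,y)))$; the cap at $a$ ensures $f$ takes values in $[0,a]$, and $f$ is $1$-Lipschitz as a minimum of $1$-Lipschitz functions. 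As before, $f_1(x) \le f(x)$ (using both $f_1(x) \le a$ and $f_1(x) \le f_2(y) + d(x,y)$) and $f(y) \le f_2(y)$ (taking $y$ itself in the infimum), so (2) applied to $f$ yields (3).

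The step $(3) \limp (4)$ is the main technical step, and proceeds by linear programming duality exactly as in the earlier lemma, but with two additional families of primal constraints. The primal maximizes $\sum_{x \in A} a_x f_1(x) - \sum_{y \in B} b_y f_2(y)$ over $f_1(x), f_2(y) \ge 0$ subject to $f_1(x) - f_2(y) \le d(x,y)$ (with $d(x,y)<+\infty$), $f_1(x) \le a$, and $f_2(y) \le a$, with respective dual variables $t_{xy}, u_x, v_y$. The dual then minimizes $\sum_{x,y} t_{xy} d(x,y) + a \sum_x u_x + a \sum_y v_y$ subject to $\sum_y t_{xy} + u_x \ge a_x$ and $-\sum_x t_{xy} + v_y \ge -b_y$, i.e. the constraints (b) and (c) of the statement. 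As in the unbounded case, define $t_{xy} = 0$ when $d(x,y)=+\infty$; the primal is feasible (take $f_1 = f_2 = 0$) and bounded above by $r$ by assumption, hence by LP strong duality the dual admits a solution with objective value $\le r$, giving (a).

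Finally $(4) \limp (1)$ is a direct computation: for an arbitrary $h \in \Lform_1^a(X,d)$, chain $\sum_x a_x h(x) \le \sum_x (\sum_y t_{xy} + u_x) h(x)$ by (b), bound $t_{xy} h(x) \le t_{xy}(h(y) + d(x,y))$ by $1$-Lipschitzness and $u_x h(x) \le a u_x$ by $h \le a$, then rearrange and use (c) together with $h(y) \le a$ and finally (a) to obtain $\sum_x a_x h(x) \le \sum_y b_y h(y) + r$. I expect the LP duality step to be the main obstacle, not because the duality itself is subtle but because one must correctly identify which constraints produce which dual variables and check that the infinite distances can be handled by suppressing the corresponding $t_{xy}$, just as in the proof of Lemma~\ref{lemma:V:<=:simple}. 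Note that no analogue of item~(5) of that lemma is claimed here, and for good reason: the slack variables $u_x, v_y$ do not correspond to transfers of mass between actual points of $X$, so the ``path of mass movements'' interpretation is no longer available.
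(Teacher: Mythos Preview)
Your proposal is correct and follows essentially the same route as the paper: the same $h = \bigvee_{x \in A} \sea{x}{f(x)}$ construction for $(1)\limp(2)$, the same capped interpolant $f(z) = \min(a, \inf_{y \in B}(f_2(y)+d(z,y)))$ for $(2)\limp(3)$, the same LP duality with the bound constraints $f_1(x)\le a$, $f_2(y)\le a$ generating dual variables $u_x, v_y$ for $(3)\limp(4)$, and the same chain of inequalities for $(4)\limp(1)$. Your closing remark on why no analogue of item~(5) appears is accurate but not part of the paper's proof.
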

$1 \limp 2$.  Given $f$ as in (1), we build
$h = \bigvee_{x \in A} \sea x {f (x)} \colon X \to \creal$.  We then
reason as in Lemma~\ref{lemma:V:<=:simple}, additionally noting that
$h$ takes its values in $[0, a]$.  Indeed, for every $x \in A$, for
every $z$,
$(\sea x {f (x)}) (z) = \max (f (x) - d (x, z), 0) \leq f (x) \leq a$.

$2 \limp 3$.  We reason again as in Lemma~\ref{lemma:V:<=:simple}, but
we define $f (z)$ instead as
$\min (a, \inf_{y \in B} f_2 (y) + d (z, y))$, so that $f$ takes its
values in $[0, a]$.  For every $x \in A$, we obtain that
$f_1 (x) \leq \inf_{y \in B} f_2 (y) + d (x, y)$ as before, and
$f_1 (x) \leq a$ by assumption, so $f_1 (x) \leq f (x)$.  We have
$f (y) \leq f_2 (y)$ for every $y \in B$ by taking $z=y$ in the
formula defining $f$.  The rest is as before.

$3 \limp 4$.  We again use linear programming duality.  We now have
the extra constraints $f_1 (x) \leq a$ for each $x \in A$ and
$f_2 (y) \leq a$ for each $y \in B$.  For that, we must create new
variables $u_x$, $x \in A$, and $v_y$, $y \in A$, and the dual problem
now reads: minimize
$W = \sum_{x \in A, y \in B} t_{xy} d (x, y) + \sum_{x \in A} a u_x +
\sum_{y \in B} a v_y$ under the constraints
$\sum_{y \in B} t_{xy} + u_x \geq a_x$, $x \in A$, and
$\sum_{x \in A} -t_{xy} + v_y \geq -b_y$.

$4 \limp 1$.  Let $h \in \Lform^a_1 (X, d)$.  Then:
\begin{eqnarray*}
  \int_{x \in X} h (x) d\mu
  & = & \sum_{x \in A} a_x h (x) \\
  & \leq & \sum_{x \in A} u_x h (x) + \sum_{x \in A, y \in B} t_{xy} h
           (x) \qquad \text{by (b)} \\
  & \leq & \sum_{x \in A} u_x h (x) + \sum_{x \in A, y \in B} t_{xy} h
           (y) + \sum_{x \in A, y \in B} t_{xy} d (x, y)
           \qquad \text{since $h$ is $1$-Lipschitz} \\
  & \leq & \sum_{x \in A} u_x h (x) + \sum_{y \in B} v_y h (y) +
           \sum_{y \in B} b_y h (y) + \sum_{x \in A, y \in B} t_{xy} d (x, y)
           \qquad \text{by (c)} \\
  & \leq & a \sum_{x \in A} u_x + a \sum_{y \in V} v_y  +
           \sum_{y \in B} b_y h (y) + \sum_{x \in A, y \in B} t_{xy} d
           (x, y) \\
  &&   \qquad \text{since $h$ is bounded by $a$ from above} \\
  & \leq & r + \sum_{y \in B} b_y h (y) \qquad \text{by (a)} \\
  & = & r + \int_{x \in X} h (x) d\nu.
\end{eqnarray*}
\qed

Those splitting lemmas can be seen as variants of the
Kantorovich-Rubinshte\u\i n Theorem \cite{Kantorovich:1942}.  The
latter says that, on a complete separable metric space $X, d$ (i.e., a
Polish metric space), for any two probability measures $\mu$, $\nu$,
$\dKRH (\mu, \nu) = \min_\tau \int_{(x, y) \in X^2} d (x, y) d\tau$,
where $\tau$ ranges over the probability measures on $X^2$ whose first
marginal is equal to $\mu$ and whose second marginal is equal to
$\nu$.  See \cite{Fernique:KR} for an elementary proof.

This is what Lemma~\ref{corl:V:<=:simple} says, in the more general
asymmetric setting where we have a quasi-metric, not a metric, but
applied to the less general situation where $\mu$ and $\nu$ are
\emph{simple} normalized valuations and $\mu$ is supported on a set of
center points.  Translating that lemma, we obtain that
$\dKRH (\mu, \nu)$ is equal to the infimum of all the integrals
$\int_{(x, y) \in X^2} d (x, y) d \tau$, where $\tau$ ranges over
certain simple normalized valuations with first marginal $\mu$ and
second marginal $\nu$.  Moreover, the infimum is attained in case
$\dKRH (\mu, \nu) < +\infty$.

\subsection{Linear Duality}
\label{sec:minimal-transport}

We wish to show a general linear duality theorem, in the style of
Kantorovich and Rubinshte\u\i n.  That would say that, for any
$\mu, \nu \in \Val_1 X$,
$\dKRH (\mu, \nu) = \min_\tau \int_{(x, y) \in X^2} d (x, y) d\tau$,
where $\tau \in \Val_1 (X^2)$ satisfies $\pi_1 [\tau] = \mu$ and
$\pi_2 [\tau] = \nu$.

That runs into a silly problem: the map $(x, y) \mapsto d (x, y)$ is
not lower semicontinuous, unless $d$ is a metric.  Indeed, the map $y
\mapsto d (x, y)$ is even antitonic (with respect to $\leq^d$),
whereas lower semicontinuous maps are monotonic.

One may think of taking $\tau$ to be a probability measure, and that
would probably solve part of the issue, but one would need to show
that $(x, y) \mapsto d (x, y)$ is a measurable map, where $X^2$ comes
with the Borel $\sigma$-algebra over the $d^2$-Scott topology, where
$d^2$ is introduced below.

Instead, we observe that each probability distribution
$\mu \in \Val_1 (X)$ gives rise to a linear prevision
$F_\mu \colon h \mapsto \int_{x \in X} h (x) d\mu$, then to a linear,
normalized monotonic map
$\extF {F_\mu} \colon L_\infty (X, d) \to \creal$ whose restriction
to $\Lform_\infty (X, d)$ is lower semicontinuous, by
Lemma~\ref{lemma:extF:prev}, assuming $X, d$ continuous
Yoneda-complete.  Recall that $L_\infty (X, d)$ is the space of
Lipschitz, not necessarily continuous, maps from $X, d$ to $\creal$.
We write $L_\infty^\bnd (X, d)$ for the subspace of bounded Lipschitz
maps.

Since $F_\mu$ is normalized, it restricts to a map from
$L_\infty^\bnd (X, d)$ to $\Rp$, that is, not taking the value
$+\infty$.
\begin{defi}[$L^\pm (X, d)$]
  \label{defn:L+-}
  Let $X, d$ be a quasi-metric space.  The maps of the form $f-g$,
  where $f, g \in L_\infty^\bnd (X, d)$ form a real vector space
  $L^\pm (X, d)$.
\end{defi}
\begin{rem}
  \label{rem:L+-:op}
  $L^\pm (X, d)$ not only contains $L_\infty^\bnd (X, d)$, but also
  $L_\infty^\bnd (X, d^{op})$.  Indeed, for every bounded
  $\alpha$-Lipschitz map $g \colon X, d^{op} \to \real$, $-g$ is a
  bounded $\alpha$-Lipschitz map from $X, d$ to $\real$, and therefore
  $g = 0 - (-g)$ is in $L^\pm (X, d)$.
\end{rem}

We can now extend $\extF {F_\mu}$ to integrate functions taking real
values, not just non-negative values.  This is ${\extF F}^\pm_\mu$,
defined below.
\begin{lem}
  \label{lemma:L+-:extF}
  Let $X, d$ be a continuous quasi-metric space.  For every monotonic
  linear map $F \colon L_\infty^\bnd (X, d) \to \real$, there is a
  unique monotonic linear map $F^\pm \colon L^\pm (X, d) \to \real$
  such that $F^\pm (h) = F (h)$ for every
  $h \in L_\infty^\bnd (X, d)$.
\end{lem}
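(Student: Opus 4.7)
The plan is to define $F^\pm$ by the only formula compatible with linearity, namely $F^\pm(f-g) = F(f) - F(g)$ for every $f, g \in L_\infty^\bnd(X,d)$, and then to verify the required properties. This will establish both existence and uniqueness: uniqueness is immediate, since any linear extension must send $f-g$ to $F^\pm(f) - F^\pm(g) = F(f) - F(g)$.

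The first step is to observe that $L_\infty^\bnd(X,d)$ is closed under finite sums (if $f$ is $\alpha$-Lipschitz bounded by $a$ and $g$ is $\beta$-Lipschitz bounded by $b$, then $f+g$ is $(\alpha+\beta)$-Lipschitz bounded by $a+b$) and contains the constant $0$. With this, well-definedness is straightforward: if $f - g = f' - g'$ in $L^\pm(X,d)$, then $f + g'$ and $f' + g$ lie in $L_\infty^\bnd(X,d)$ and are equal pointwise, so by linearity of $F$, $F(f) + F(g') = F(f') + F(g)$, and since all four values are real, $F(f) - F(g) = F(f') - F(g')$. Linearity of $F^\pm$ reduces by routine rewriting to linearity of $F$; note that for a scalar $\lambda \in \real$, one writes $\lambda(f-g) = (\lambda f) - (\lambda g)$ when $\lambda \geq 0$ and $\lambda(f-g) = (|\lambda| g) - (|\lambda|f)$ when $\lambda < 0$, and both decompositions lie in $L^\pm(X,d)$.

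For monotonicity, suppose $f-g \leq f'-g'$ pointwise in $L^\pm(X,d)$. Rearranging gives $f + g' \leq f' + g$ pointwise in $L_\infty^\bnd(X,d)$, so monotonicity of $F$ yields $F(f) + F(g') \leq F(f') + F(g)$, hence $F^\pm(f-g) \leq F^\pm(f'-g')$. Finally, to see that $F^\pm$ extends $F$, take $g = 0$: then $F^\pm(f) = F(f) - F(0) = F(f)$, since $F(0) = 0$ by linearity.

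I expect no serious obstacle here: the argument is the standard Jordan-decomposition style construction, and the only mild point requiring attention is that the values $F(f), F(g)$ lie in $\real$ (not $\creal$), so that subtraction is unambiguous, which is given by hypothesis. The closure of $L_\infty^\bnd(X,d)$ under sums and scalar multiplication plays the role of making $L^\pm(X,d)$ genuinely the vector space generated by $L_\infty^\bnd(X,d)$, and all other verifications are purely algebraic.
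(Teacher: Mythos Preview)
Your proposal is correct and follows essentially the same route as the paper: define $F^\pm(f-g)=F(f)-F(g)$, check well-definedness and monotonicity via the rearrangement $f+g'\leq f'+g$, and handle negative scalars by swapping the two summands. The only cosmetic difference is that the paper proves monotonicity first and derives well-definedness as a special case, whereas you do the two verifications separately.
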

That $F^\pm$ is linear should be taken in the usual
vector-space-theoretic sense: it preserves sums and products by
arbitrary real numbers.  That $F$ is linear should be taken in the
usual sense of this paper: it preserves sums and products by
non-negative real numbers.

\proof
Necessarily $F^\pm (f-g) =  F (f) -  F
(g)$ for all $f, g \in L_\infty^\bnd (X, d)$, which shows uniqueness.

To show existence, we first note that for all
$f, g \in L_\infty^\bnd (X, d)$, say $f$ and $g$ take their values in
$[0, a]$, then $ F (f)$ and $ F (g)$ are in
$[0, a]$, and are certainly different from $+\infty$.  Therefore the
difference $ F (f) -  F (g)$ makes sense, and
is a real number.

If $f-g \leq f'-g'$, where $f, f', g, g' \in L_\infty^\bnd (X, d)$, we
claim that $F (f) - F (g) \leq F (f') - F (g')$.  Indeed, this is
equivalent to $ F (f) + F (g') \leq F (g) + F (f')$, which follows
from $f+g' \leq g+f'$ and the fact that $F$ is monotonic and linear.

From that claim we obtain that $f-g=f'-g'$ implies
$F (f) - F (g) = F (f') - F (g')$, whence defining $F^\pm (f-g)$ as
$ F (f) - F (g)$ for all $f, g \in L_\infty^\bnd (X, d)$ is
non-ambiguous.  It also follows that $F^\pm$ is monotonic.

$F^\pm$ is then clearly linear.  In particular, for $a \in \Rp$,
$F^\pm (a (f-g)) = F (af) - F (ag) = a F (f) - a F (g) = a F^\pm
y(f-g)$, and for $a \in \real$ negative,
$F^\pm (a (f-g)) = F^\pm ((-a) (g-f)) = F ((-a) g) - F ((-a) f) = (-a)
F (g) - (-a) F (f) = a F (g) - a F (g) = a F^\pm (f-g)$.  \qed

Let $d^{op}$ be the opposite quasi-metric of $d$, defined by
$d^{op} (x, y) = d (y, x)$.  We equip $X \times X$ with the
quasi-metric $d^2$ defined by
$d^2 ((x, y), (x', y')) = d (x, x') + d (y', y) = d (x, x') + d^{op}
(y, y')$---note the reversal of the $y$ components.
\begin{lem}
  \label{lemma:d2:lip}
  Let $X, d$ be a quasi-metric space.  The distance map
  $d \colon X \times X, d^2 \to \creal$ is $1$-Lipschitz.
\end{lem}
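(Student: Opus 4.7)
The plan is to unpack the definition of $1$-Lipschitz and reduce everything to two applications of the triangular inequality of $d$. We need to show
\[
\dreal(d(x,y), d(x',y')) \leq d^2((x,y),(x',y')) = d(x,x') + d(y',y)
\]
for all $x,y,x',y' \in X$.

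The core computation is the estimate
\[
d(x,y) \leq d(x,x') + d(x',y) \leq d(x,x') + d(x',y') + d(y',y),
\]
obtained by two applications of the triangular inequality. The key step is then to convert this inequality about $d$ into one about $\dreal$, being careful about the $+\infty$ cases built into the definition of $\dreal$.

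I would split into cases according to the clauses defining $\dreal$. If $d(x,y) \leq d(x',y')$, then $\dreal(d(x,y), d(x',y')) = 0$ and there is nothing to prove. Otherwise $d(x',y') < d(x,y)$, and in particular $d(x',y') \neq +\infty$. If further $d(x,y) < +\infty$, then $\dreal(d(x,y),d(x',y')) = d(x,y) - d(x',y')$, and the displayed inequality above gives directly $d(x,y) - d(x',y') \leq d(x,x') + d(y',y)$, as required. If $d(x,y) = +\infty$, then since $d(x',y') < +\infty$, the same displayed inequality forces at least one of $d(x,x')$ and $d(y',y)$ to equal $+\infty$, hence $d^2((x,y),(x',y')) = +\infty$, matching $\dreal(d(x,y),d(x',y')) = +\infty$.

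There is no real obstacle here; the only mild nuisance is bookkeeping around $+\infty$, which is handled cleanly by the case split above. No properties of the $d$-Scott topology or standardness are needed, since $1$-Lipschitz is a purely metric statement (as opposed to $1$-Lipschitz continuity, which we are not claiming at this point).
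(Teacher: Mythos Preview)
Your proof is correct and follows the same approach as the paper: both reduce to the inequality $d(x,y) \leq d(x',y') + d(x,x') + d(y',y)$, obtained from two applications of the triangular inequality. The paper compresses your case analysis into the single observation that $\dreal(a,b) \leq c$ is equivalent to $a \leq b + c$ in $\creal$, while you unfold this equivalence explicitly; the content is identical.
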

\proof It suffices to show that
$d (x, y) \leq d (x', y') + d^2 ((x, y), (x', y'))$, which easily
follows from the triangular inequality.  \qed

\begin{defi}[Coupling]
  \label{defn:coupling}
  Let $X, d$ be a continuous quasi-metric space, and $F$, $G$ be two
  linear, monotonic maps from $L_\infty^\bnd (X, d)$ to $\real$.  A
  \emph{coupling} between $F$ and $G$ is a linear, monotonic map
  $\Gamma \colon L_\infty^\bnd (X \times X, d^2) \to \real$ such that
  $\Gamma (\mathbf 1) = 1$, $\Gamma (f \circ \pi_1) = F (f)$ for every
  $f \in L_\infty^\bnd (X, d)$, and $\Gamma (g \circ \pi_2) = G^\pm (g)$
  for every $g \in L_\infty^\bnd (X, d^{op})$.

  By extension, we say that $\Gamma$ is a coupling between normalized
  continuous valuations $\mu$ and $\nu$ if and only if $\Gamma$ is a
  coupling between $\extF {F_\mu}$ and $\extF {F_\nu}$.
\end{defi}
Note that $G^\pm (g)$ makes sense by Remark~\ref{rem:L+-:op}.  It is
easy to see that $\pi_1 \colon X \times X, d^2 \to X, d$ and
$\pi_2 \colon X \times X, d^2 \to X, d^{op}$ are $1$-Lipschitz, so
that $f \circ \pi_1$ and $g \circ \pi_2$ are Lipschitz from
$X \times X, d^2$ to $\real$, hence $\Gamma (f \circ \pi_1)$ and
$\Gamma (g \circ \pi_2)$ make sense.

Here is a trivial example of coupling, which is an analogue of the
product measure between $F$ and $G$.
\begin{lem}
  \label{lemma:coupling:x}
  Let $X, d$ be a continuous quasi-metric space, and $F$, $G$ be two
  linear, monotonic maps from $L_\infty^\bnd (X, d)$ to $\creal$.  The map
  $F \ltimes G$ defined by
  $(F \ltimes G) (h) = F (x \mapsto G^\pm (y \mapsto h (x, y)))$ for
  every $h \in L_\infty^\bnd (X \times X, d^2)$ is a coupling between $F$
  and $G$.
\end{lem}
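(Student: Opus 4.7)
The plan is to check that the formula defining $F \ltimes G$ is well-typed, and then verify the four defining properties of a coupling. The key preliminary step is to show that the iterated application makes sense: that is, for $h \in L_\infty^\bnd(X \times X, d^2)$, the inner section $y \mapsto h(x, y)$ lies in the domain of $G^\pm$, and the resulting map $\phi \colon x \mapsto G^\pm(y \mapsto h(x, y))$ lies in the domain of $F$.

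For the inner section, I would fix $h$ of Lipschitz constant $\alpha$ and bounded by some $M$. Since $d^2((x, y), (x, y')) = d(x, x) + d^{op}(y, y') = d^{op}(y, y')$, the section $y \mapsto h(x, y)$ is an $\alpha$-Lipschitz map from $(X, d^{op})$ into $[0, M]$; hence it belongs to $L_\infty^\bnd(X, d^{op}) \subseteq L^\pm(X, d)$ by Remark~\ref{rem:L+-:op}, and $G^\pm$ can be applied. For $\phi$, boundedness $0 \le \phi(x) \le M$ follows from monotonicity of $G^\pm$ together with $G(\mathbf 1) = 1$ (implicit in the coupling definition through $\Gamma(\mathbf 1) = 1$ and $\Gamma(f \circ \pi_1) = F(f)$ taken at $f = \mathbf 1$). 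For the Lipschitz bound, linearity of $G^\pm$ gives $\phi(x) - \phi(x') = G^\pm(y \mapsto h(x, y) - h(x', y))$; since $h(x, y) - h(x', y) \le \alpha\, d^2((x, y), (x', y)) = \alpha\, d(x, x')$, the argument is pointwise below the constant $\alpha\, d(x, x') \cdot \mathbf 1$, and monotonicity of $G^\pm$ yields $\phi(x) - \phi(x') \le \alpha\, d(x, x')$. Thus $\phi \in L_\infty^\bnd(X, d)$ and $F(\phi)$ is defined.

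Then I would check the four coupling properties. Linearity and monotonicity of $F \ltimes G$ are immediate from the linearity and monotonicity of $F$, of $G^\pm$, and of the assignment $h \mapsto \phi$. For $(F \ltimes G)(\mathbf 1)$, the inner section is the constant $\mathbf 1$, so $\phi = G(\mathbf 1) \cdot \mathbf 1 = \mathbf 1$ and $F(\mathbf 1) = 1$. For $f \in L_\infty^\bnd(X, d)$ and $h = f \circ \pi_1$, the section $y \mapsto h(x, y) = f(x) \cdot \mathbf 1$ is constant in $y$, so $\phi(x) = f(x) G(\mathbf 1) = f(x)$, giving $(F \ltimes G)(h) = F(f)$. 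Symmetrically, for $g \in L_\infty^\bnd(X, d^{op})$ and $h = g \circ \pi_2$, the section $y \mapsto h(x, y) = g$ is independent of $x$, so $\phi(x) = G^\pm(g)$ is a constant function, and $(F \ltimes G)(h) = G^\pm(g) F(\mathbf 1) = G^\pm(g)$.

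The only delicate point is the Lipschitz estimate for $\phi$, which depends on the particular ``twisted'' form of $d^2$ that turns the second coordinate into $d^{op}$; without this twist, $y \mapsto h(x, y)$ would not be Lipschitz on the correct space for $G^\pm$ to apply. Everything else is routine bookkeeping with linearity and monotonicity.
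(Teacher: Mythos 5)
Your proof is correct and follows essentially the same route as the paper's: check that the section $y \mapsto h(x,y)$ is Lipschitz for $d^{op}$ (hence in $L^\pm(X,d)$ by Remark~\ref{rem:L+-:op}), establish the $\alpha$-Lipschitz bound on $\phi$ via linearity and monotonicity of $G^\pm$, and then verify the coupling identities by direct computation. Your explicit remarks on the boundedness of $\phi$ and on the silently assumed normalization $F(\mathbf 1)=G^\pm(\mathbf 1)=1$ (which the paper's proof also uses without stating it in the hypotheses) are accurate and, if anything, slightly more careful than the original.
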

\proof The map $y \mapsto h (x, y)$ is Lipschitz from $X, d^{op}$ to
$\real$, hence is in $L^\pm (X, d)$ by Remark~\ref{rem:L+-:op}.
Therefore $G^\pm (y \mapsto h (x, y))$ makes sense.  We claim that the
map $x \mapsto G^\pm (y \mapsto h (x, y))$ is $\alpha$-Lipschitz,
assuming that $h$ is $\alpha$-Lipschitz.  Indeed, for every $y$,
$h (x, y) \leq h (x', y) + \alpha d (x, x')$, so
$G^\pm (y \mapsto h (x, y)) \leq G^\pm (y \mapsto h (x', y) + \alpha d
(x, x')) = G^\pm (y \mapsto h (x', y)) + \alpha d (x, x')$, using the
fact that $G^\pm$ maps $\mathbf 1$ to $1$.  It follows that our
definition of $F \ltimes G$ makes sense.

$F \ltimes G$ is linear and monotonic because $F$ and $G^\pm$ are.
$(F \ltimes G) (\mathbf 1) = F (x \mapsto G^\pm (y \mapsto 1)) = F (x
\mapsto 1) = 1$.  For every $f \in L_\infty^\bnd (X, d)$,
$(F \ltimes G) (f \circ \pi_1) = F (x \mapsto G^\pm (y \mapsto f (x)))
= F (x \mapsto f (x)) = F (f)$, and for every
$g \in L_\infty^\bnd (X, d^{op})$,
$(F \ltimes G) (g \circ \pi_2) = F (x \mapsto G^\pm (y \mapsto g (y)))
= F (x \mapsto G^\pm (g)) = G^\pm (g)$.  \qed

\begin{rem}
  \label{rem:coupling:alt}
  The map $F \rtimes G \colon h \mapsto G^\pm (y \mapsto F (x \mapsto
  h (x, y)))$ is also a coupling between $F$ and $G$.
\end{rem}

Lemma~\ref{lemma:d2:lip} allows us to make sense of $\Gamma (d)$,
for any coupling $\Gamma$, where $d$ is the underlying metric.

\begin{lem}
  \label{lemma:coupling:<=}
  Let $X, d$ be a continuous quasi-metric space, and $\mu, \nu \in
  \Val_1 X$.  For every coupling $\Gamma$ between $\mu$ and $\nu$,
  $\dKRH (\mu, \nu) \leq \Gamma (d)$.
\end{lem}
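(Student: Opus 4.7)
The plan is to reduce, via Lemma~\ref{lemma:KRH:bounded}, to showing that for every bounded $h \in \Lform_1(X,d)$ with bound $a > 0$,
$$\dreal\!\left(\int_{x\in X} h(x)\,d\mu,\ \int_{x\in X} h(x)\,d\nu\right) \leq \Gamma(d),$$
since then taking the supremum over $h$ and invoking that lemma yields the result. I will feed $h$ into the first marginal of $\Gamma$ and a suitable companion $g$ into the second marginal, then compare the resulting quantity against $d$ pointwise on $X\times X$.

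Concretely, I set $g := a\cdot\mathbf 1 - h$. Since $h \in \Lform_1(X,d)$ is 1-Lipschitz, the inequality $h(y') - h(y) \leq d(y',y) = d^{op}(y,y')$ shows that $g$ is 1-Lipschitz from $X, d^{op}$ to $\creal$; it also takes values in $[0,a]$, so $g \in L_\infty^\bnd(X, d^{op})$. A direct check (analogous to Lemma~\ref{lemma:d2:lip}) gives $\pi_1 \colon X\times X,d^2 \to X,d$ and $\pi_2 \colon X\times X,d^2 \to X,d^{op}$ both 1-Lipschitz, so both $h\circ\pi_1$ and $g\circ\pi_2$ belong to $L_\infty^\bnd(X\times X, d^2)$, and hence so does their sum. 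Using linearity of $\Gamma$, the coupling identities $\Gamma(h\circ\pi_1) = F(h)$ and $\Gamma(g\circ\pi_2) = G^\pm(g)$, together with Lemma~\ref{lemma:L+-:extF} to expand $G^\pm(a\cdot\mathbf 1 - h) = a - G(h)$, and Lemma~\ref{lemma:extF:prev}(3) identifying $F(h) = \int h\,d\mu$ and $G(h) = \int h\,d\nu$ on bounded Lipschitz continuous maps, I obtain
$$\Gamma(h\circ\pi_1 + g\circ\pi_2) = \int_{x\in X} h(x)\,d\mu \ +\ a \ -\ \int_{x\in X} h(x)\,d\nu.$$

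For the upper bound, I observe the pointwise estimate
$$h(x) + g(y) \,=\, a + h(x) - h(y) \,\leq\, a + \min(d(x,y),\,a),$$
where the 1-Lipschitz bound $h(x)-h(y) \leq d(x,y)$ and the range bound $h(x)-h(y) \leq a$ are combined. Setting $d_a := \min(d,\,a)$, the map $d_a$ is 1-Lipschitz from $X\times X, d^2$ (as the min of the 1-Lipschitz $d$ and a constant) and bounded by $a$, so $d_a \in L_\infty^\bnd(X\times X, d^2)$. Monotonicity of $\Gamma$ then gives $\Gamma(h\circ\pi_1 + g\circ\pi_2) \leq a + \Gamma(d_a)$. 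Subtracting $a$ and appealing to the convention (afforded by Lemma~\ref{lemma:d2:lip}) that $\Gamma(d)$ is the monotone extension $\sup_{c>0}\Gamma(\min(d,c))$, I conclude $\int h\,d\mu - \int h\,d\nu \leq \Gamma(d_a) \leq \Gamma(d)$. Since $h$ is bounded and $\mu$ is normalized, $\int h\,d\mu < +\infty$, so $\dreal(\int h\,d\mu, \int h\,d\nu) \leq \Gamma(d)$ follows, and supping over $h$ completes the proof.

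The main obstacle is purely bookkeeping: navigating the quasi-metric asymmetry so that each object lives in the correct Lipschitz class (notice in particular that $\pi_2$ must be treated as a 1-Lipschitz map into $X, d^{op}$, not $X, d$, and correspondingly that the companion function $g = a\cdot\mathbf 1 - h$ belongs to $L_\infty^\bnd(X, d^{op})$ even though $h$ lies in the opposite space). Once this asymmetry is handled, and once it is agreed that $\Gamma(d)$ is interpreted as the natural truncation-limit $\sup_{c>0}\Gamma(\min(d,c))$, the argument is the straightforward Kantorovich duality lower bound.
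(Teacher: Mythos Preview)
Your proof is correct and follows the same Kantorovich-style dual-potential argument as the paper: reduce to bounded $h$ via Lemma~\ref{lemma:KRH:bounded}, feed $h$ and its companion $g=a\cdot\mathbf 1-h$ through the two marginals of $\Gamma$, and bound using the pointwise inequality coming from $1$-Lipschitzness of $h$.

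The one organizational difference worth noting: the paper works in $L^\pm(X\times X,d^2)$ and applies the extended functional $\Gamma^\pm$ directly to the inequality $h\circ\pi_1 \le h\circ\pi_2 + d$, whereas you stay entirely inside $L_\infty^\bnd(X\times X,d^2)$ by truncating to $d_a=\min(d,a)$. Your route is arguably more careful, since $d$ need not be bounded (or even finite), so the paper's assertion that $(h\circ\pi_2)+d \in L^\pm(X\times X,d^2)$ is literally true only when $d$ is bounded; the paper glosses over this and simply says ``Lemma~\ref{lemma:d2:lip} allows us to make sense of $\Gamma(d)$''. Your explicit interpretation $\Gamma(d)=\sup_{c>0}\Gamma(\min(d,c))$ makes that step precise.
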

\proof Using Lemma~\ref{lemma:KRH:bounded}, it suffices to show that
for every bounded map $h$ in $\Lform_1 (X, d)$,
$F_\mu (h) \leq F_\nu (h) + \Gamma (d)$.  Since $\pi_1$ is
$1$-Lipschitz, $h \circ \pi_1$ is $1$-Lipschitz, and since $\pi_2$ is
$1$-Lipschitz from $X \times X, d^2$ to $X, d^{op}$, $-h \circ \pi_2$
is $1$-Lipschitz from $X \times X, d^2$ to $\real$.  By
Lemma~\ref{lemma:d2:lip}, the map $(h \circ \pi_2) + d$ is then in
$L^\pm (X \times X, d^2)$.

We observe that $(h \circ \pi_1) \leq (h \circ \pi_2) + d$, in other
words, for all $x, y \in X$, $h (x) \leq h (y) + d (x, y)$: this is
because $h$ is $1$-Lipschitz.

Since $\Gamma$ is monotonic and linear, $\Gamma^\pm$ is monotonic and
linear (Lemma~\ref{lemma:L+-:extF}), so
$\Gamma^\pm (h \circ \pi_1) \leq \Gamma^\pm (h \circ \pi_2) +
\Gamma^\pm (d)$.

Since $h \circ \pi_1$ is in $L_1 (X, d)$, $\Gamma^\pm (h \circ \pi_1) =
\Gamma (h \circ \pi_1)$.  This is equal to $\extF {F_\mu} (h)$ by the
definition of a coupling, and then to $F_\mu (h)$ by
Lemma~\ref{lemma:extF:prev}~(3).

Let $a$ be some upper bound on the values of $h$.  Then
$\Gamma^\pm (h \circ \pi_2) = a - \Gamma^\pm (a.\mathbf 1 - h \circ
\pi_2)$.  Since $-h \circ \pi_2$ is $1$-Lipschitz,
$a.\mathbf 1 - h \circ \pi_2$ is also $1$-Lipschitz, so
$\Gamma^\pm (a.\mathbf 1 - h \circ \pi_2) = \Gamma (a.\mathbf 1 - h
\circ \pi_2) = \Gamma (g\circ \pi_2)$ where $g = a.\mathbf 1 - h$.  By
the definition of a coupling, and since $g$ is $1$-Lipschitz from
$X, d^{op}$ to $\Rp$,
$\Gamma (g \circ \pi_2) = \extF {F_\nu}^\pm (g)$.  Therefore
$\Gamma^\pm (h \circ \pi_2) = a - \extF {F_\nu}^\pm (g) = \extF
{F_\nu}^\pm (a.\mathbf 1 - g) = \extF {F_\nu}^\pm (h)$.  This is equal
to $\extF {F_\nu} (h)$ since $h \in L_1 (X, d)$, hence to $F_\nu (h)$
by Lemma~\ref{lemma:extF:prev}~(3).  \qed

\begin{prop}
  \label{prop:coupling:=}
  Let $X, d$ be a continuous quasi-metric space, and
  $\mu, \nu \in \Val_1 X$.  There is a coupling $\Gamma$ between $\mu$
  and $\nu$ such that $\dKRH (\mu, \nu) = \Gamma (d)$.
\end{prop}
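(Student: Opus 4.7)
The plan is to combine a compactness argument for the space of couplings with an approximation by simple valuations, for which the splitting lemma already produces explicit couplings with the right weight.

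First, dispense with the easy case: if $\dKRH(\mu,\nu) = +\infty$, then Lemma~\ref{lemma:coupling:<=} forces $\Gamma(d)=+\infty$ for every coupling, so any coupling (for instance $F_\mu \ltimes F_\nu$ from Lemma~\ref{lemma:coupling:x}) works. So I assume $\dKRH(\mu,\nu) < +\infty$. The key step then is to show that the set $\mathcal K$ of couplings between $\mu$ and $\nu$ is a compact Hausdorff space in a suitable topology. View each coupling $\Gamma$ as a point of the product $\prod_{h \in L_\infty^\bnd(X\times X, d^2)} [-\|h\|_\infty,\|h\|_\infty]$, endowed with the product topology. The defining constraints of Definition~\ref{defn:coupling} (monotonicity, additivity, positive homogeneity, $\Gamma(\mathbf 1)=1$, and the two marginal equalities, using Lemma~\ref{lemma:extF:prev}~(3) to identify the marginals on bounded Lipschitz test functions) are all closed conditions on finitely many coordinates at a time, so $\mathcal K$ is closed in a compact Hausdorff product, hence compact Hausdorff.

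Next, the simple case. For simple normalized valuations $\mu_0 = \sum_i a_i\delta_{x_i}$ and $\nu_0 = \sum_j b_j\delta_{y_j}$ with the $x_i$ center points and $\dKRH(\mu_0,\nu_0)<+\infty$, Corollary~\ref{corl:V:<=:simple}~(1) produces a transition matrix ${(t_{ij})}$ whose weight $\sum_{ij} t_{ij} d(x_i,y_j)$ equals $\dKRH(\mu_0,\nu_0)$. The simple valuation $\tau_0 = \sum_{ij} t_{ij}\delta_{(x_i,y_j)}$ on $X\times X$, integrated against bounded Lipschitz maps, then defines a coupling $\Gamma_0 \in \mathcal K$ (of $\mu_0,\nu_0$) with $\Gamma_0(d) = \dKRH(\mu_0,\nu_0)$. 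Then I would approximate $\mu$ and $\nu$ by directed families of simple valuations $\mu_i, \nu_i$ supported on center points with $(\mu_i,r_i) \uparrow (\mu,0)$ and $(\nu_i,r_i)\uparrow(\nu,0)$ in $\mathbf B(\Val_1 X,\dKRH^a)$ for each $a>0$ (using Theorem~\ref{thm:V1:alg} applied to the algebraic approximation of $X,d$ from Theorem~\ref{thm:V1:cont}), and notice via the triangular inequality that $\dKRH(\mu_i,\nu_i) \to \dKRH(\mu,\nu)$ suitably.

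For each pair $(\mu_i,\nu_i)$, the simple case yields a coupling $\Gamma_i \in \mathcal K_i$ (relative to $(\mu_i,\nu_i)$) with $\Gamma_i(d) = \dKRH(\mu_i,\nu_i)$. Enlarge to a common compact set $\widetilde{\mathcal K}$ of \emph{almost-couplings}, defined by allowing marginals to differ from $\mu$ and $\nu$ by up to $r_i$ in the $\dKRH^a$-sense; each $\Gamma_i$ lies in $\widetilde{\mathcal K}$ eventually, and $\widetilde{\mathcal K}$ is compact by the same Tychonoff argument. Extract a convergent subnet $\Gamma_i \to \Gamma$ in $\widetilde{\mathcal K}$. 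Passing to the limit in the marginal conditions (which are continuous in the product topology on finitely many coordinates) and using $r_i \to 0$ shows $\Gamma \in \mathcal K$. For each $N \in \nat$, $\min(d,N) \in L_\infty^\bnd(X\times X, d^2)$, so $\Gamma(\min(d,N)) = \lim_i \Gamma_i(\min(d,N)) \leq \liminf_i \Gamma_i(d) = \dKRH(\mu,\nu)$; letting $N \to \infty$ and using Lemma~\ref{lemma:extF:prev}~(4) together with monotone convergence then gives $\Gamma(d) \leq \dKRH(\mu,\nu)$. Combined with Lemma~\ref{lemma:coupling:<=}, this proves the claim.

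The main obstacle is the last paragraph: the distance $d$ is unbounded, hence not itself a coordinate of the compact product, so one cannot simply appeal to continuity of $\Gamma \mapsto \Gamma(d)$. The truncation $\min(d,N)$ and a careful passage to the limit $N \to \infty$, using the fact that $\Gamma$ is monotonic and $\Gamma^\pm$ behaves well under suprema of bounded Lipschitz maps, is what makes the argument work. Once this is handled, the rest is a routine instance of the duality/approximation scheme that underlies the classical Kantorovich-Rubinshte\u\i n theorem.
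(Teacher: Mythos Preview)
Your approach is genuinely different from the paper's. You proceed by approximating $\mu,\nu$ with simple valuations, invoking the splitting lemma to produce optimal couplings in the simple case, and extracting a limit coupling via a Tychonoff-type compactness argument on a space of functionals. The paper instead gives a one-shot functional-analytic construction: it equips $B = L^\pm(X\times X, d^2)$ with the sup norm, defines a linear functional $\Lambda$ on the subspace $A$ of separable maps $f\ominus g\colon (x,y)\mapsto f(x)-g(y)$ by $\Lambda(f\ominus g) = \extF{F_\mu}^\pm(f) - \extF{F_\nu}^\pm(g)$, checks $|\Lambda(f\ominus g)| \leq \|f\ominus g\|$ by comparison with the product coupling $\extF{F_\mu}\ltimes\extF{F_\nu}$ of Lemma~\ref{lemma:coupling:x}, extends by Hahn--Banach to all of $B$, and verifies that the extension is monotonic and has the right marginals. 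This uses continuity of $X,d$ only through Definition~\ref{defn:extF} and Lemma~\ref{lemma:extF:prev}, and requires neither Yoneda-completeness nor algebraicity.

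Your argument, by contrast, has a genuine gap at the approximation step. Theorem~\ref{thm:V1:alg} and Theorem~\ref{thm:V1:cont} require $X,d$ to be algebraic, resp.\ continuous, \emph{Yoneda-complete}; the proposition only assumes $X,d$ continuous, so those theorems are not available. Even granting Yoneda-completeness, those results only yield $(\mu_i,r_i)\uparrow(\mu,0)$ in $\mathbf B(\Val_1 X,\dKRH^a)$ for each fixed $a>0$, i.e., $\dKRH^a(\mu_i,\mu)\leq r_i$; they give no control on the unbounded distance $\dKRH(\mu_i,\mu)$, and --- since the quasi-metric is asymmetric --- none at all on the reverse distances $\dKRH(\mu,\mu_i)$, $\dKRH(\nu_i,\nu)$ that your triangular-inequality argument would also need. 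Your claimed convergence $\dKRH(\mu_i,\nu_i)\to\dKRH(\mu,\nu)$ therefore does not follow. A $\dKRH$-level approximation of normalized valuations is only available under the additional hypothesis of a root (Theorem~\ref{thm:V1:alg:root}), which is not assumed here. The Hahn--Banach route sidesteps all of this and works directly at the stated generality.
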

\proof Let $B$ be the vector space $L^\pm (X \times X, d^2)$.  This is
a normed vector space with $||h|| = \sup_{(x, y) \in X \times X} |h (x, y)|$.
 
For all $f, g \in L_\infty^\bnd (X, d)$, the map $f \ominus g$ defined
as mapping $(x, y)$ to $f (x) - g (y)$, is Lipschitz from
$X \times X, d^2$ to $\real$.  Indeed, if $f$ and $g$ are
$\alpha$-Lipschitz, then $f (x) \leq f (x') + \alpha d (x, x')$ and
$g (y') \leq g (y) + \alpha d (y', y)$, so then
$f (x) - g (y) \leq f (x') + \alpha d (x, x') - g (y') + \alpha d (y',
y) = (f (x') - g (y')) + \alpha d^2 ((x, y), (x', y'))$.

In particular, the maps $f \ominus g$, with
$f, g \in L_\infty^\bnd (X, d)$, are all in $B$.  It is easy to see
that the subspace $A$ generated by those maps are the maps of the form
$f \ominus g$, where $f$ and $g$ are now taken from $L^\pm (X, d)$.

Define a map $\Lambda \colon A \to \real$ by
$\Lambda (f \ominus g) = \extF {F_\mu}^\pm (f) - \extF {F_\nu}^\pm
(g)$.  If $f \ominus g = f' \ominus g'$, then for all $x, y \in X$,
$f (x) - g (y) = f' (x) - g' (y)$, so $f=f'$ and $g=g'$.  It follows
that $\Lambda$ is defined unambiguously.  It is easy to see that
$\Lambda$ is linear.

We claim that $|\Lambda (f \ominus g)| \leq ||f \ominus g||$.  To show
that, we consider a function similar to the coupling
$\extF {F_\mu} \ltimes \extF {F_\nu}$ of Lemma~\ref{lemma:coupling:x}.
Let $\Phi (h)$ be defined as
$\extF {F_\mu}^\pm (x \mapsto \extF {F_\nu}^\pm (y \mapsto h (x, y)))$
for every $h \in B$.  Then:
\begin{eqnarray*}
  \Phi (f \ominus g)
  & = & \extF {F_\mu}^\pm (x \mapsto \extF {F_\nu}^\pm (y \mapsto f (x) -
        g (y))) \\
  & = & \extF {F_\mu}^\pm (x \mapsto f (x) - \extF {F_\nu}^\pm (g)) \\
  & = & \extF {F_\mu}^\pm (f) - \extF {F_\nu}^\pm (g) = \Lambda (f \ominus g).
\end{eqnarray*}
Also, $|\Phi (h)| \leq ||h||$ for every $h \in B$.  Indeed,
$\Phi (h) \leq \extF {F_\mu}^\pm (x \mapsto \extF {F_\nu}^\pm (y
\mapsto ||h||))) = \extF {F_\mu}^\pm (x \mapsto ||h||) = ||h||$, and
similarly for $-\Phi (h) = \Phi (-h)$.  As a consequence,
$|\Lambda (f \ominus g)| = |\Phi (f \ominus g)| \leq ||f \ominus g||$.

By the Hahn-Banach Theorem, $\Lambda$ has a linear extension $\Gamma$
to the whole of $B$, such that $\Gamma (h) \leq ||h||$ for every
$h \in B$.

We note that $\Gamma (\mathbf 1)=1$.  Indeed, $\mathbf 1$ can be
written as $f \ominus g$ with $f = \mathbf 1$ and $g = 0$, whence
$\Gamma (\mathbf 1) = \Lambda (f \ominus g) = 1$.

We claim that, for every $h \in B$ that takes its values in $\Rp$,
then $\Gamma (h) \geq 0$.  Otherwise, there is an $h \in B$ that takes
its values in $\Rp$ such that $\Gamma (h) < 0$.  By multiplying by an
appropriate positive scalar, we may assume that $||h|| \leq 1$.  Then
$\Gamma (\mathbf 1-h) \leq ||\mathbf 1-h|| \leq 1$, although
$\Gamma (\mathbf 1-h) = 1 - \Gamma (h) > 1$.

It follows that $\Gamma$ is monotonic: if $h \leq h'$, then $h' - h$
takes its values in $\Rp$, so $\Gamma (h' - h) \geq 0$, and that is
equivalent to $\Gamma (h) \leq \Gamma (h')$.

Finally, for every $f \in L_\infty^\bnd (X, d)$,
$\Gamma (f \circ \pi_1) = \Gamma (f \ominus 0) = \Lambda (f \ominus 0)
= \extF {F_\mu}^\pm (f) = \extF {F_\mu} (f)$, and for every
$g \in L_\infty^\bnd (X, d^{op})$,
$\Gamma (g \circ \pi_2) = \Gamma (- (0 \ominus g)) = - \Lambda (0
\ominus g) = \extF {F_\nu}^\pm (g)$.  \qed

As a corollary of Lemma~\ref{lemma:coupling:<=} and
Proposition~\ref{prop:coupling:=}, we obtain the following.
\begin{thm}[Coupling]
  \label{thm:coupling}
  Let $X, d$ be a continuous quasi-metric space, and
  $\mu, \nu \in \Val_1 X$.  Then:
  \begin{equation}
    \label{eq:coupling}
    \dKRH (\mu, \nu) = \min_\Gamma \Gamma (d),
  \end{equation}
  where $\Gamma$ ranges over the couplings between $\mu$ and $\nu$.  \qed
\end{thm}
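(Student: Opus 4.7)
The plan is to derive the theorem as an immediate corollary of the two preceding results, which together constitute the two halves of the statement. The first half, that $\Gamma(d)$ is always an upper bound for $\dKRH(\mu,\nu)$, is exactly the content of Lemma~\ref{lemma:coupling:<=}. The second half, that equality is attained for at least one coupling, is exactly the content of Proposition~\ref{prop:coupling:=}. So the real work has already been done, and the present proof amounts to noting that the infimum $\inf_\Gamma \Gamma(d)$ is bounded below by $\dKRH(\mu,\nu)$ and realized by a specific coupling, hence is a minimum equal to $\dKRH(\mu,\nu)$.

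Concretely, I would proceed as follows. First, let $\Gamma$ be any coupling between $\mu$ and $\nu$. By Lemma~\ref{lemma:coupling:<=} (whose proof uses the $1$-Lipschitz inequality $h\circ\pi_1 \leq h\circ\pi_2 + d$ together with the coupling marginal identities and the linear extension $\Gamma^\pm$ from Lemma~\ref{lemma:L+-:extF}), we have $\dKRH(\mu,\nu) \leq \Gamma(d)$. Taking the infimum over $\Gamma$ gives $\dKRH(\mu,\nu) \leq \inf_\Gamma \Gamma(d)$.

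Next, by Proposition~\ref{prop:coupling:=}, there exists a particular coupling $\Gamma_0$ between $\mu$ and $\nu$ such that $\Gamma_0(d) = \dKRH(\mu,\nu)$; recall this was produced by a Hahn--Banach extension argument applied to the linear functional $\Lambda(f\ominus g) = \extF{F_\mu}^\pm(f) - \extF{F_\nu}^\pm(g)$ on the subspace $A$ of $L^\pm(X\times X, d^2)$ of functions of the form $f\ominus g$, bounded in sup-norm by a product-coupling majorant. Combining the two ingredients yields both that the infimum on the right-hand side of (\ref{eq:coupling}) is at most $\dKRH(\mu,\nu)$ (since $\Gamma_0$ witnesses the bound) and that it is at least $\dKRH(\mu,\nu)$ (by the first step). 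Hence equality holds, and the infimum is a minimum, attained at $\Gamma_0$.

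There is no genuine obstacle at this stage: the two substantive difficulties, namely controlling $\Gamma(d)$ by $\dKRH(\mu,\nu)$ in spite of the fact that $(x,y)\mapsto d(x,y)$ is not lower semicontinuous (handled by extending to $L^\pm$ and working with $1$-Lipschitz rather than Lipschitz continuous test functions), and constructing an optimal coupling without invoking a tightness argument (handled by Hahn--Banach), have both been discharged in the preceding lemmas. The only care needed in writing the proof is to state the two-line combination cleanly and note that the infimum is in fact a minimum.
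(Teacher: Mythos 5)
Your proposal is correct and is exactly the paper's argument: the theorem is stated as an immediate corollary of Lemma~\ref{lemma:coupling:<=} (giving $\dKRH(\mu,\nu)\leq\Gamma(d)$ for every coupling) and Proposition~\ref{prop:coupling:=} (producing a coupling attaining equality), so the infimum is a minimum equal to $\dKRH(\mu,\nu)$. Nothing further is needed.
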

This holds, in particular, for every metric space, since every metric
space is continuous.


\section{The Hoare Powerdomain}
\label{sec:hoare-powerdomain}

\subsection{The $\dH$ Quasi-Metric}
\label{sec:hoare-quasi-metric}

For every open subset $U$ of $X$ (with its $d$-Scott topology), and
every point $x \in X$, we can define the distance from $x$ to the
complement $\overline U$ of $U$ by
$d (x, \overline U) = \sup \{r \in \Rp \mid (x, r) \in \widehat U\}$
\cite[Definition~6.10]{JGL:formalballs}, where $\widehat U$ is the
largest open subset of $\mathbf B (X, d)$ such that
$\widehat U \cap X \subseteq U$ (equivalently,
$\widehat U \cap X = U$).  Given that $X, d$ is standard, the
following laws hold (Lemma~6.11, loc.cit.): $d (x, \overline U)=0$ if
and only if $x \not\in U$;
$d (x, \overline U) \leq d (x, y) + d (y, \overline U)$; the map
$d (\_, \overline U)$ is $1$-Lipschitz continuous from $X, d$ to
$\creal, \dreal$.  Additionally,
$d (x, \overline U) \leq \inf_{y \in \overline U} d (x, y)$, with
equality if $\overline U = \dc E$ for some finite set $E$, or if $x$
is a center point (Proposition~6.12, loc.cit.).

An equivalent definition, working directly with closed sets, is as
follows.  For a closed subset $C$ of $X$, let $\cl (C)$ be its closure
in $\mathbf B (X, d)$---the smallest closed subset of $\mathbf B (X, d)$
that contains $C$.  If $U$ is the complement of $C$, then $\cl (C)$ is
the complement of $\widehat U$ in $\mathbf B (X, d)$.  Then
$d (x, C) = \sup \{r \in \Rp \mid (x, r) \not\in \cl (C)\}$, and
$d (x, C)=0$ if and only if $x \in C$.
\begin{lem}
  \label{lemma:dxC}
  Let $X, d$ be a quasi-metric space.  For every $x \in X$ and every
  closed subset $C$ of $X$,
  $d (x, C) = \inf \{r \in \Rp \mid (x, r) \in \cl (C)\}$, where the
  infimum of an empty set of elements of $\Rp$ is taken to be
  $+\infty$.

  Moreover,
  \begin{enumerate}
  \item the infimum is attained if $d (x, C) < +\infty$;
  \item for every $r \in \Rp$, $d (x, C) \leq r$ if and only if $(x,
    r) \in \cl (C)$.
  \end{enumerate}
\end{lem}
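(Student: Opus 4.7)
The plan is to reduce everything to a simple observation about the shape of the set $S = \{r \in \Rp \mid (x,r) \in \cl(C)\}$. The key point is that two formal balls with the same center satisfy $(x,r') \leq^{d^+} (x,r)$ iff $r' \geq r$, so $S$ is upward-closed in $\Rp$ (closed sets are downward-closed with respect to specialization). Its complement $T = \{r \in \Rp \mid (x,r) \notin \cl(C)\}$ is therefore a downward-closed subset of $\Rp$, hence an interval $[0, \beta)$ or $[0, \beta]$ with $\beta = \sup T \in [0,+\infty]$, which immediately gives $\sup T = \inf S$ (with the convention that the infimum of the empty set is $+\infty$, matching the case $S = \emptyset$, i.e., $\beta = +\infty$). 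This yields the main equality $d(x,C) = \inf\{r \in \Rp \mid (x,r) \in \cl(C)\}$.

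For (1), I would show that whenever $\beta < +\infty$ the infimum is actually attained, i.e., $S = [\beta,+\infty)$ rather than $(\beta,+\infty)$. Pick a decreasing sequence $r_n \in S$ with $r_n \downarrow \beta$. The family $\{(x,r_n)\}_n$ is a chain in $\mathbf{B}(X,d)$ (for $\leq^{d^+}$), and I claim that $(x,\beta)$ is its least upper bound. It is an upper bound since $(x,r_n) \leq^{d^+} (x,\beta)$ (as $r_n \geq \beta$). Conversely, any upper bound $(y,t)$ satisfies $d(x,y) \leq r_n - t$ for every $n$; taking $n \to \infty$ gives $d(x,y) \leq \beta - t$, i.e., $(x,\beta) \leq^{d^+} (y,t)$. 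Since $\cl(C)$ is Scott-closed, it contains the supremum of any directed family it contains, so $(x,\beta) \in \cl(C)$, that is $\beta \in S$. Notably this argument needs no standardness assumption, because for chains with constant center the supremum $(x,\beta)$ can be identified directly.

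Finally, (2) is a direct consequence: if $(x,r) \in \cl(C)$ then $r \in S$, so $\beta \leq r$, i.e., $d(x,C) \leq r$; conversely, if $d(x,C) = \beta \leq r$, part~(1) (together with the case $\beta = +\infty$, which is vacuous since $r \in \Rp$) gives $\beta \in S$, and by upward-closure of $S$ we get $r \in S$, i.e., $(x,r) \in \cl(C)$.

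The only mildly delicate point is the bookkeeping at the boundary cases (empty $S$ corresponding to $\beta = +\infty$, and empty $T$ corresponding to $x \in C$ with $\beta = 0$), and making the ``upper bound = $(x,\beta)$'' argument without invoking standardness; everything else is a straightforward reading off from the downward-closure and Scott-closedness of $\cl(C)$.
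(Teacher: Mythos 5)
Your proof is correct and follows essentially the same route as the paper's: both split $\Rp$ into the downward-closed set of radii with $(x,r)\notin\cl(C)$ and the upward-closed set of radii with $(x,r)\in\cl(C)$ to get the main equality, and both establish~(1) by identifying $(x,\beta)$ as the supremum of a shrinking chain of formal balls with constant center (which, as you note, needs no standardness) and invoking Scott-closedness of $\cl(C)$; part~(2) then falls out identically.
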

\proof Let $A = \{r \in \Rp \mid (x, r) \not\in \cl (C)\}$,
$B = \{r \in \Rp \mid (x, r) \in \cl (C)\}$.  $A$ and $B$ partition
$\Rp$, and since $\cl (C)$ is downwards-closed, $A$ is
downwards-closed, $B$ is upwards-closed, and for all $a \in A$,
$b \in B$, we must have $a < b$.  It follows that $\sup A = \inf B$,
proving the first claim.

$B$ is also closed under (filtered) infima.  Indeed, for any filtered
family ${(r_i)}_{i \in I}$ of elements of $B$ with
$\inf_{i \in I} r_i = r$, ${(x, r_i)}_{i \in I}$ is a directed family
in $\mathbf B (X, d)$, whose supremum is $(x, r)$.  (The upper bounds
of ${(x, r_i)}_{i \in I}$ are those formal balls $(y, s)$ such that
$d (x, y) \leq r_i - s$ for every $i \in I$, namely the elements above
$(x, r)$.)  That implies that $r$ is in $B$.

It follows that, if $B$ is non-empty, then it is an interval of the
form $[b, +\infty[$, where $b = \inf B = d (x, C)$.  This shows (1).

For (2), if $d (x, C) \leq r \in \Rp$, then $B$ is non-empty and $r
\geq b$, so $r$ is in $B$, that is, $(x, r) \in \cl (C)$.  Conversely,
if $(x, r) \in \cl (C)$, then $r$ is in $B$, so $B$ is non-empty and $r
\geq b$, meaning that $d (x, C) \leq r$.  \qed

In the following, we shall need to know a few elementary facts about
the closure operation $\cl$.  For a subset $\mathcal A$ of $\mathbf B
(X, d)$, and every $a \in \Rp$, we agree that $\mathcal A+a$ denotes
$\{(x, r+a) \mid (x, r) \in \mathcal A\}$.
\begin{lem}
  \label{lemma:cl:1}
  Let $X, d$ be a standard quasi-metric space.
  \begin{enumerate}
  \item For every Scott-closed subset $\mathcal C$ of
    $\mathbf B (X, d)$, $\mathcal C+a$ is Scott-closed.
  \item For every subset $\mathcal A$ of $\mathbf B (X, d)$, $\cl
    (\mathcal A)+a = \cl (\mathcal A+a)$.
  \end{enumerate}
\end{lem}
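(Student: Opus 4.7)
The plan is to handle both items by exploiting the fact that, for a standard quasi-metric space, the shift map $\_+a\colon (x,r)\mapsto(x,r+a)$ is Scott-continuous from $\mathbf B(X,d)$ to itself (Proposition~2.4 of \cite{JGL:formalballs}, cited just after Example~\ref{exa:poset}).

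For (1), I would verify separately that $\mathcal C+a$ is downwards-closed and closed under directed suprema. For the former, suppose $(y,s)\leq^{d^+}(x,r+a)$ with $(x,r)\in\mathcal C$. Then $s\geq r+a$, so $s-a\geq r\geq0$, and the inequality $d(y,x)\leq s-(r+a)=(s-a)-r$ says exactly that $(y,s-a)\leq^{d^+}(x,r)$; by Scott-closedness of $\mathcal C$ we get $(y,s-a)\in\mathcal C$ and hence $(y,s)\in\mathcal C+a$. For the latter, take a directed family ${((x_i,r_i+a))}_{i\in I}$ in $\mathcal C+a$ with supremum $(x,r)$. Observing that ${((x_i,r_i))}_{i\in I}$ is also directed (the inequality $d(x_i,x_j)\leq(r_i+a)-(r_j+a)$ is just $d(x_i,x_j)\leq r_i-r_j$), standardness of $X,d$ (see Proposition~\ref{prop:std:dlim} and the discussion preceding it) gives that ${((x_i,r_i))}_{i\in I}$ has a supremum $(x,r-a)$; this lies in $\mathcal C$ since $\mathcal C$ is Scott-closed, so $(x,r)\in\mathcal C+a$.

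For (2), I would prove the two inclusions separately. The inclusion $\cl(\mathcal A+a)\subseteq\cl(\mathcal A)+a$ follows by minimality: $\cl(\mathcal A)+a$ is Scott-closed by~(1), and contains $\mathcal A+a$ because $\mathcal A\subseteq\cl(\mathcal A)$. For the reverse inclusion, I would invoke Scott-continuity of the shift map $\_+a$: the preimage $(\_+a)^{-1}(\cl(\mathcal A+a))$ is Scott-closed in $\mathbf B(X,d)$ and contains $\mathcal A$ (since $\mathcal A+a\subseteq\cl(\mathcal A+a)$), so by minimality of $\cl(\mathcal A)$ it contains $\cl(\mathcal A)$. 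Unpacking, every $(x,r)\in\cl(\mathcal A)$ satisfies $(x,r+a)\in\cl(\mathcal A+a)$, which gives $\cl(\mathcal A)+a\subseteq\cl(\mathcal A+a)$.

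I do not expect any real obstacle here: both parts reduce to routine manipulations once one has the shift map $\_+a$ as a Scott-continuous self-map. The only subtlety is keeping track of the non-negativity of radii, which is why I verify $s-a\geq0$ explicitly in the downwards-closed case and why standardness is needed to translate a supremum of the shifted family back to a supremum of the original family.
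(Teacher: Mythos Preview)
Your proof is correct and follows essentially the same approach as the paper's: item~(1) is checked by hand via downward-closedness and closure under directed suprema (using standardness to un-shift the supremum), and item~(2) is obtained from~(1) for one inclusion and from the Scott-continuity of $\_+a$ for the other. The only cosmetic difference is that for the inclusion $\cl(\mathcal A)+a\subseteq\cl(\mathcal A+a)$ the paper invokes directly the general fact that continuous images of closures are contained in closures of images, whereas you unpack this via the preimage $(\_+a)^{-1}(\cl(\mathcal A+a))$; these are the same argument.
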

\proof (1) For every formal ball $(x, r)$ below some element
$(y, s+a)$ of $\mathcal C+a$ (viz., $(y, s)$ is in $\mathcal C$),
$d (x, y) \leq r - s - a$, so $r \geq s+a \geq a$, and
$(x, r-a) \leq^{d^+} (y, s)$.  Since $\mathcal C$ is downwards-closed,
$(x, r-a)$ is in $\mathcal C$, hence $(x, r)$ is in $\mathcal C+a$.
This proves that $\mathcal C+a$ is downwards-closed.  For every
directed family ${(x_i, r_i+a)}_{i \in I}$ in $\mathcal C+a$,
${(x_i, r_i)}_{i \in I}$ is also directed, and in $\mathcal C$.  If
${(x_i, r_i+a)}_{i \in I}$ has a supremum, then
${(x_i, r_i)}_{i \in I}$ also has a supremum $(x, r)$, by the
definition of standardness, and the supremum of
${(x_i, r_i+a)}_{i \in I}$ is $(x, r+a)$.
Since $\mathcal C$ is Scott-closed, $(x, r)$ is in $\mathcal C$, so
$(x, r+a)$ is in $\mathcal C+a$.

(2) Since $\cl (\mathcal A)+a$ is Scott-closed, and since it obviously
contains $\mathcal A+a$, it contains $\cl (\mathcal A+a)$.  For the reverse
inclusion, we use the fact that the map $\_ + a$ is Scott-continuous,
since $X, d$ is standard,
and that for a continuous map, the closure of the image of a set
contains the image of the closure.  \qed

In accordance with our convention that $X$ is equated with a subspace
of $\mathbf B (X, d)$ through the embedding $x \mapsto (x, 0)$, we
also agree that, for any subset $C$ of $X$, $C+a$ denotes
$\{(x, a) \mid x \in C\}$.
\begin{lem}
  \label{lemma:cl}
  Let $X, d$ be a standard quasi-metric space.  For every closed
  subset $C$ of $X$, for every $a \in \Rp$:
  \begin{enumerate}
  \item $\cl (C) \cap X = C$;
  \item $\cl (C)+a = \cl (C+a)$.
  \end{enumerate}
\end{lem}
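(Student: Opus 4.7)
The plan is to derive both items almost directly from results already in the paper, especially Lemma~\ref{lemma:cl:1}.

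For item~(1), I would invoke the fact, noted just after the definition of the $d$-Scott topology, that $X$ is a topological subspace of $\mathbf{B}(X,d)$ under $x \mapsto (x,0)$. Since the closure of a subset $C$ of a subspace equals the intersection of the subspace with the closure computed in the ambient space, the closure of $C$ in $X$ (with its $d$-Scott topology) equals $\cl(C) \cap X$. Because $C$ is closed in $X$, this closure is $C$ itself, giving $\cl(C) \cap X = C$. This is a single-line argument, not an obstacle.

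For item~(2), I would simply apply Lemma~\ref{lemma:cl:1}~(2) to the subset $\mathcal{A} = C \subseteq \mathbf{B}(X,d)$ (identifying $C$ with $\{(x,0) : x \in C\}$ via the embedding $\eta_X$). Under this identification, $C + a = \{(x,a) : x \in C\}$ agrees with the paper's convention, so Lemma~\ref{lemma:cl:1}~(2) yields $\cl(C) + a = \cl(C+a)$ immediately. The standardness assumption on $X, d$ is exactly what Lemma~\ref{lemma:cl:1} requires, so no extra hypothesis is needed.

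There is no real obstacle here: (1) is subspace topology, (2) is an instance of the more general lemma already proved. The only thing worth flagging is the mild abuse of notation in identifying $C$ (a subset of $X$) with its image under $\eta_X$ (a subset of $\mathbf{B}(X,d)$); I would make this identification explicit at the start of the proof so that the invocation of Lemma~\ref{lemma:cl:1}~(2) is unambiguous.
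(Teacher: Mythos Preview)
Your proposal is correct and essentially matches the paper. For item~(2) you do exactly what the paper does, namely invoke Lemma~\ref{lemma:cl:1}~(2); for item~(1) the paper phrases the same subspace-closure fact dually via complements (writing $\cl(C)$ as the complement of $\widehat{U}$ and using $\widehat{U}\cap X=U$), but this is the same one-line idea as your direct appeal to the identity $\mathrm{cl}_X(C)=\cl(C)\cap X$ for subspaces.
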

\proof (1) Let $U$ be the complement of $C$ in $X$, so that $\cl (C)$
is the complement of $\widehat U$ in $\mathbf B (X, d)$.
Since $\widehat U \cap X = U$, by taking complements $\cl (C) \cap X = C$.

(2) By Lemma~\ref{lemma:cl:1}~(2).  \qed

We now define a quasi-metric on $\Hoarez X$, the set of all closed
subsets of $X$, in a manner that looks like one half of the Hausdorff
metric.  We reserve the notation $\Hoare X$ for the set of
\emph{non-empty} closed subsets of $X$.
\begin{defi}[$\dH$]
  \label{defn:dH}
  Let $X, d$ be a standard quasi-metric space.  For any two closed
  subsets $C$, $C'$ of $X$, let $\dH (C, C') = \sup_{x \in C} d (x, C')$.
\end{defi}

\begin{lem}
  \label{lemma:dH}
  Let $X, d$ be a standard quasi-metric space.
  \begin{enumerate}
  \item For every $C \in \Hoarez X$, for every closed set $C'$, for
    every $a \in \Rp$, $\dH (C, C') \leq a$ if and only if $C+a
    \subseteq \cl (C')$;
  \item $\dH$ is a quasi-metric on $\Hoarez X$, and its
    specialization ordering is inclusion;
  \item the order $\leq^{\dH^+}$ on $\mathbf B (\Hoarez X,
    \dH)$ is given by $(C, r) \leq^{\dH^+} (C', r')$ if
    and only if $C+r \subseteq \cl (C' + r')$.
  \end{enumerate}
\end{lem}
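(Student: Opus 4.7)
\emph{For (1),} I would simply unfold the definition of $\dH$ and apply Lemma~\ref{lemma:dxC}~(2). By definition $\dH(C,C') = \sup_{x \in C} d(x, C')$, so $\dH(C,C') \leq a$ is equivalent to $d(x, C') \leq a$ for every $x \in C$; and by Lemma~\ref{lemma:dxC}~(2), this in turn is equivalent to $(x, a) \in \cl(C')$ for every $x \in C$, namely $C + a \subseteq \cl(C')$.

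\emph{For (2),} I would verify the three quasi-metric axioms by reading them through (1), using Lemma~\ref{lemma:cl}. Reflexivity $\dH(C,C) = 0$ amounts via (1) to $C + 0 \subseteq \cl(C)$, which is immediate since $C \subseteq \cl(C)$. For the triangle inequality, write $a = \dH(C, C')$ and $b = \dH(C', C'')$ (the case $a = +\infty$ or $b = +\infty$ being trivial); then by (1), $C + a \subseteq \cl(C')$ and $C' + b \subseteq \cl(C'')$, so shifting the first inclusion by $b$ and using Lemma~\ref{lemma:cl}~(2),
\[
  C + (a+b) \;\subseteq\; \cl(C') + b \;=\; \cl(C'+b) \;\subseteq\; \cl(\cl(C'')) \;=\; \cl(C''),
\]
whence $\dH(C, C'') \leq a+b$ by (1). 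For the separation axiom, $\dH(C,C') = \dH(C',C) = 0$ gives $C \subseteq \cl(C')$ and $C' \subseteq \cl(C)$; intersecting with $X$ and using Lemma~\ref{lemma:cl}~(1), which says $\cl(C') \cap X = C'$, yields $C \subseteq C'$ and $C' \subseteq C$. The specialization preordering on $\Hoarez X$ is then $C \leq C'$ iff $\dH(C, C') = 0$ iff $C \subseteq \cl(C') \cap X = C'$, i.e., set inclusion.

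\emph{For (3),} I would simply unfold the definition of $\dH^+$ and combine (1) with Lemma~\ref{lemma:cl}~(2). By definition $(C, r) \leq^{\dH^+} (C', r')$ iff $\dH(C, C') \leq r - r'$, which (since $\dH \geq 0$) forces $r \geq r'$; and under this hypothesis, (1) applied with $a = r - r'$ says the inequality is equivalent to $C + (r-r') \subseteq \cl(C')$. Shifting both sides by $r'$ and invoking Lemma~\ref{lemma:cl}~(2) converts this to $C + r \subseteq \cl(C') + r' = \cl(C' + r')$, as desired. Conversely, if $C$ is non-empty and $C+r \subseteq \cl(C'+r') = \cl(C')+r'$, then any element of the right-hand side has radius at least $r'$, forcing $r \geq r'$; we are then back in the previous case and may read the argument backwards. (The degenerate case $C = \emptyset$ is trivial, since then both sides reduce to automatically true statements modulo $r \geq r'$.)

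The main obstacle, if any, is purely book-keeping: one has to be careful that the shift operation $\_ + a$ behaves well on closures, and this is exactly the content of Lemma~\ref{lemma:cl}~(2), whose proof used standardness; beyond that, the three parts of the lemma are essentially one manipulation each.
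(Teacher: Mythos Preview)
Your proof is correct and follows essentially the same approach as the paper: part~(1) by unfolding the definition and applying Lemma~\ref{lemma:dxC}(2), and parts~(2)--(3) by reading the conditions through~(1) together with Lemma~\ref{lemma:cl}. One small remark on your parenthetical in~(3): when $C = \emptyset$, the left-hand side $(C,r) \leq^{\dH^+} (C',r')$ reduces to $r \geq r'$ (since $\dH(\emptyset, C') = 0$), while the right-hand side $\emptyset + r \subseteq \cl(C'+r')$ is vacuously true, so the equivalence as stated actually fails for $r < r'$; the paper's proof carries the condition $r \geq r'$ explicitly, and as you correctly observe, this condition is automatic when $C \neq \emptyset$.
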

\proof (1) By definition, $\dH (C, C') \leq a$ if and only if, for every
$x \in C$, $d (x, C') \leq a$, if and only if, for every $x \in C$,
$(x, a)$ is in $\cl (C')$, by Lemma~\ref{lemma:dxC}~(2).  That is
equivalent to $C+a \subseteq \cl (C')$.

(2) By (1), $\dH (C, C')=0$ if and only if
$C \subseteq \cl (C')$.  The latter is equivalent to
$C \subseteq \cl (C') \cap X$, hence to $C \subseteq C'$, using
Lemma~\ref{lemma:cl}~(1).  This shows immediately that
$\dH (C, C')=\dH (C', C)=0$ implies $C=C'$, and that
inclusion will be the specialization ordering of $\dH$.

It still remains to show the triangular inequality, namely
$\dH (C, C'') \leq \dH (C, C') + \dH (C', C'')$.  If the right-hand
side is equal to $+\infty$, then the inequality is obvious.
Otherwise, let $a = \dH (C, C')$, $a' = \dH (C', C'')$, both elements
of $\Rp$.  By (1), $C+a \subseteq \cl (C')$ and
$C'+a' \subseteq \cl (C'')$.  Therefore
$C+a+a' \subseteq \cl (C') + a' = \cl (C' + a')$ (by
Lemma~\ref{lemma:cl}~(2)) $\subseteq \cl (C'')$.  By (1) again, this
implies that $\dH (C, C'') \leq a+a'$, which proves our claim.

(3) $(C, r) \leq^{\dH^+} (C', r')$ if and only if $r \geq r'$ and
$C+r-r' \subseteq \cl (C')$, by (1).  Adding $r'$ to both sides,
$C+r-r' \subseteq \cl (C')$ if and only if
$C+r \subseteq \cl (C')+r'$, and $\cl (C')+r' = \cl (C'+r')$ by
Lemma~\ref{lemma:cl}~(2).  \qed

\begin{rem}
  \label{rem:H:root}
  For every standard quasi-metric space $X, d$ with an $a$-root $x$,
  ($a \in \Rp$, $a > 0$), $\Hoare X, \dH$ has an $a$-root, namely
  $\dc x$.  Indeed, fix $C \in \Hoarez X$.  For every $y \in X$,
  $d (x, y) \leq a$, and that certainly holds for any fixed $y \in C$.
  We recall that $d (x, C) \leq \inf_{y \in C} d (x,y)$, so
  $d (x, C) \leq a$.  By Lemma~\ref{lemma:dxC}~(2), $(x, a)$ is then
  in $\cl (C)$, so $\dc x + a = \dc (x, a)$ is included in $\cl (C)$.
  by Lemma~\ref{lemma:dH}~(1), $\dH (\dc x, C) \leq a$.

  $\Hoarez X, \dH$ has a root even when $X, d$ has no root, namely the
  empty closed subset: since $\emptyset \subseteq C$ for every $C \in
  \Hoarez X, \dH$, $\dH (\emptyset, C) = 0$.
\end{rem}

We relate that construction with previsions now.  This is similar to
Lemma~4.7, item~1, of \cite{jgl-jlap14}.
\begin{lem}
  \label{lemma:eH}
  Let $X$ be a topological space.  For each subset $C$ of $X$, let
  $F^C (h) = \sup_{x \in C} h (x)$.  (We agree that this is equal to
  $0$ if $C = \emptyset$.)
  \begin{enumerate}
  \item If $C$ is closed, then $F^C$ is a discrete sublinear
    prevision.
  \item Conversely, every discrete sublinear prevision is of the form
    $F^C$ for some unique closed set $C$.
  \end{enumerate}
  Moreover, $F$ is normalized if and only if $C$ is non-empty.
\end{lem}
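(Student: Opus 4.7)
My plan breaks into three pieces: verifying (1), constructing $C$ from $F$ for (2), and propagating the agreement from characteristic functions to all of $\Lform X$.

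For (1), I would verify each axiom for $F^C$ directly. Scott-continuity follows since directed suprema in $\Lform X$ are pointwise and $\sup$ commutes with $\sup$; positive homogeneity is immediate; sublinearity reduces to the obvious $\sup_{x\in C}(h+h')(x) \le \sup_{x\in C} h(x) + \sup_{x\in C} h'(x)$. For discreteness, given $h \in \Lform X$ and a strict $f \in \Lform \creal$, I would write $\sup_{x\in C} h(x)$ as the directed supremum of $\max_{x\in A} h(x)$ over finite subsets $A \subseteq C$, then use monotonicity of $f$ on each finite maximum together with Scott-continuity of $f$ on the resulting directed family to conclude $f(F^C(h)) = \sup_A \max_{x\in A} f(h(x)) = \sup_{x\in C} f(h(x)) = F^C(f\circ h)$. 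Finally, $F^C$ is normalized iff $F^C(\alpha\mathbf 1) = \alpha$ for all $\alpha$, i.e.\ iff $C \ne \emptyset$; conversely when $C \ne \emptyset$, $F^C(\alpha\mathbf 1 + h) = \alpha + \sup_{x \in C} h(x) = \alpha + F^C(h)$.

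For (2), the crux is to observe that $F(\chi_U) \in \{0,1\}$ for every open $U$. I would obtain this by applying discreteness to $h = \chi_U$ and $f = \chi_{]0,+\infty]}$: this $f$ is strict and lies in $\Lform \creal$, and $f \circ \chi_U = \chi_U$, giving $F(\chi_U) = f(F(\chi_U))$, which forces $F(\chi_U) \in \{0,1\}$. I then set
\[
  U_F \;=\; \bigcup\{U \in \Open X \mid F(\chi_U) = 0\}, \qquad C \;=\; X \setminus U_F,
\]
so that $C$ is closed. To see that $F(\chi_{U_F}) = 0$, I would present $U_F$ as the directed union of its finite sub-unions $\bigcup_{j \in J} U_j$ (with $J$ finite, $F(\chi_{U_j}) = 0$), use $\chi_{\bigcup_j U_j} \le \sum_j \chi_{U_j}$ together with sublinearity to get $F(\chi_{\bigcup_j U_j}) \le \sum_j F(\chi_{U_j}) = 0$, and then invoke Scott-continuity of $F$.

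It remains to show $F = F^C$ on all of $\Lform X$. For an open $U$, $F(\chi_U) = 1$ iff $U \not\subseteq U_F$ iff $U \cap C \neq \emptyset$ iff $F^C(\chi_U) = 1$, so $F$ and $F^C$ agree on characteristic functions of opens. To extend to arbitrary $h \in \Lform X$, I would test the level sets: for each $r \in \Rp$, the strict, Scott-continuous function $f_r = \chi_{]r,+\infty]}$ satisfies $f_r \circ h = \chi_{h^{-1}(]r,+\infty])}$, so discreteness yields $F(\chi_{h^{-1}(]r,+\infty])}) = f_r(F(h))$, which equals $1$ iff $F(h) > r$. Combining with the previous step, $F(h) > r \iff h^{-1}(]r,+\infty]) \cap C \neq \emptyset \iff \sup_{x \in C} h(x) > r$, so $F(h)$ and $F^C(h)$ have the same strict lower bounds in $\Rp$ and hence coincide. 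Uniqueness is automatic: if $F^{C} = F^{C'}$ for closed $C, C'$, then $\{U \mid U \cap C = \emptyset\} = \{U \mid U \cap C' = \emptyset\}$, so the complements of $C$ and $C'$ agree. The only mildly subtle point is spotting that $\chi_{]r,+\infty]}$ is the right family of strict Scott-continuous test functions to feed into discreteness; once that is in hand, the remainder is essentially bookkeeping.
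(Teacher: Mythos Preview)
Your proof is correct and follows essentially the same route as the paper: define $C$ as the complement of the largest open with $F$-value zero (obtained via sublinearity on finite unions and Scott-continuity on the directed union), then use discreteness with the strict test functions $\chi_{]r,+\infty]}$ to pass from agreement on characteristic functions to agreement on all of $\Lform X$. The only cosmetic difference is that for discreteness in part~(1) the paper notes that any non-empty family in the chain $\creal$ is already directed, so one can apply Scott-continuity of $f$ to $\{h(x)\mid x\in C\}$ immediately without the detour through finite subsets.
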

\proof We start with the last claim, and show that $F^C$ is normalized
if and only if $C$ is non-empty.  If $C = \emptyset$, $F^C = 0$ is not
normalized.  Otherwise, $F^C (\alpha.\mathbf 1 + h) = \sup_{x \in C}
(\alpha + h (x)) = \alpha + \sup_{x \in C} h (x) = \alpha + F^C (h)$.

1. $F^C$ is clearly a prevision.  We check that $F^C$ is discrete.  If
$C = \emptyset$, then $F^C = 0$ and for every strict
$f \in \Lform \creal$, $F^C (f \circ h) = 0 = f (F^C (h))$---because
$f$ is strict.  Otherwise, $f (F^C (h)) = \sup_{x \in C} f (h (x))$
because every non-empty family in $\creal$ is directed, and because
$f$ is Scott-continuous.  Since
$\sup_{x \in C} f (h (x)) = F^C (f \circ h)$, we conclude that $F^C$
is discrete.

Clearly,
$F^C (h+h') = \sup_{x \in C} (h (x) + h' (x)) \leq \sup_{x \in C} h
(x) + \sup_{x \in C} h' (x)$, so $F^C$ is sublinear.

2. Let now $F$ be an arbitrary discrete sublinear prevision on $X$.
Let $\mathcal F$ be the family of open subsets $U$ of $X$ such that
$F (\chi_U)=0$.  $\mathcal F$ contains the empty set, hence is
non-empty.  If $U \subseteq V$ and $V \in \mathcal F$, then $U$ is in
$\mathcal F$ because $F$ is monotonic.  For every directed family
${(U_i)}_{i \in I}$ of elements of $\mathcal F$, letting
$U = \bigcup_{i \in I} U_i$, ${(\chi_{U_i})}_{i \in I}$ is also a
directed family of lower semicontinuous maps whose supremum is
$\chi_U$.  Since $F$ is Scott-continuous,
$F (\chi_U) = \sup_{i \in I} F (\chi_{U_i}) = 0$, so $U$ is in
$\mathcal F$.  We have shown that $\mathcal F$ is Scott-closed.

We observe that $\mathcal F$ is directed.  Take any two elements
$U_1$, $U_2$ of $\mathcal F$.  Since $F$ is sublinear,
$F (\chi_{U_1}) + F (\chi_{U_2}) \geq F (\chi_{U_1} + \chi_{U_2}) = F
(\chi_{U_1 \cup U_2} + \chi_{U_1 \cap U_2}) \geq F (\chi_{U_1 \cup
  U_2})$, so the latter is equal to $0$, meaning that $U_1 \cup U_2$
is in $\mathcal F$.

$\mathcal F$ being Scott-closed and directed, it has a largest element
$U_0$, and therefore $\mathcal F$ is the set of open subsets of $U_0$.

We can now prove (2).  If $F = F^C$, then $\mathcal F$ is the family
of open subsets that do not intersect $C$, so $U_0$ is the complement
of $C$.  This shows the uniqueness of $C$.

As far as existence is concerned, we are led to define $C$ as the
complement of $U_0$.

We check that $F = F^C$.  Let $h$ be a lower semicontinuous map from
$X$ to $\creal$.  For every $t \in \Rp$, let $f = \chi_{]t, +\infty]}$
and notice that this is a strict lower semicontinuous map.  Since $F$
is discrete, $F (f \circ h) = f (F (h))$.  Noting that
$f \circ h = \chi_U$ where $U$ is the open set
$h^{-1} (]t, +\infty])$, $U \in \mathcal F$ is equivalent to
$f (F (h)) = 0$.  In other words,
$h^{-1} (]t, +\infty]) \subseteq U_0$ if and only if $F (h) \leq t$.
We notice that $F^C (h) \leq t$ if and only if $h (x) \leq t$ for
every $x \in C$, if and only if every $x \in X$ such that $h (x) > t$
is in $U_0$ (by contraposition), if and only if
$h^{-1} (]t, +\infty]) \subseteq U_0$.  Therefore $F^C (h) \leq t$ if
and only if $F (h) \leq t$, for every $t \in \Rp$.  It follows that
$F^C (h) = F (h)$.  \qed

\begin{prop}
  \label{prop:H:prev}
  Let $X, d$ be a standard quasi-metric space.  The bijection
  $C \mapsto F^C$ is an isometry between $\Hoarez X$ and the set of
  discrete sublinear previsions on $X$: for all closed subsets $C$,
  $C'$, $\dH (C, C') = \dKRH (F^C, F^{C'})$.
\end{prop}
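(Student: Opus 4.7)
The plan is to prove both inequalities $\dKRH(F^C,F^{C'}) \le \dH(C,C')$ and $\dH(C,C') \le \dKRH(F^C,F^{C'})$ directly, exploiting the characterization of $d(x,C')$ via the Scott-closure $\cl(C')$ in $\mathbf B(X,d)$ provided by Lemma~\ref{lemma:dxC}.

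For the upper bound, I would let $r = \dH(C,C')$, assume $r<+\infty$ (else the inequality is vacuous), fix $h \in \Lform_1(X,d)$, and show that $h(x) \le F^{C'}(h)+r$ for every $x \in C$. By Lemma~\ref{lemma:dH}~(1), $(x,r) \in \cl(C')$ for each such $x$. Since $h$ is $1$-Lipschitz continuous, the map $h'\colon(y,s)\mapsto h(y)-s$ is Scott-continuous on $\mathbf B(X,d)$ by Lemma~\ref{lemma:f'}~(2). For any $\epsilon>0$, the set $\mathcal U_\epsilon = \{(y,s)\mid h(y)-s > h(x)-r-\epsilon\}$ is then Scott-open and contains $(x,r)$; since $(x,r)\in\cl(C')$, $\mathcal U_\epsilon$ must meet $C'$ (identified with $\{(x',0)\mid x'\in C'\}$), yielding some $x'\in C'$ with $h(x')>h(x)-r-\epsilon$. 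Hence $F^{C'}(h) \ge h(x)-r-\epsilon$, and letting $\epsilon\to 0$ gives $h(x)\le F^{C'}(h)+r$. (The case $h(x)=+\infty$ is handled identically using $\mathcal U_N=\{(y,s)\mid h(y)-s>N\}$, forcing $F^{C'}(h)=+\infty$.) Taking the supremum over $x\in C$ yields $\dreal(F^C(h),F^{C'}(h))\le r$, and taking the supremum over $h$ gives $\dKRH(F^C,F^{C'})\le\dH(C,C')$.

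For the lower bound, I would use the canonical test function $h(y) = d(y,C')$. By the basic properties of $d(\_,\overline U)$ recalled at the start of Section~\ref{sec:hoare-powerdomain} (triangular inequality and $1$-Lipschitz continuity), $h\in\Lform_1(X,d)$. Because $d(x',C')=0$ for every $x'\in C'$, we have $F^{C'}(h)=0$; on the other hand $F^C(h) = \sup_{x\in C} d(x,C') = \dH(C,C')$ by definition. Using $\dreal(a,0)=a$ for all $a\in\creal$, it follows that
\[
\dKRH(F^C,F^{C'}) \;\ge\; \dreal(F^C(h),F^{C'}(h)) \;=\; \dreal(\dH(C,C'),0) \;=\; \dH(C,C'),
\]
which is the desired inequality.

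Degenerate cases are routine: if $C=\emptyset$ then $F^C=0$, $\dH(C,C')=0$, and $\dKRH(0,F^{C'})=0$ since $\dreal(0,\_)\equiv 0$; if $C\neq\emptyset$ and $C'=\emptyset$ then $\cl(C')=\emptyset$ forces $d(x,C')=+\infty$, so $\dH(C,C')=+\infty$, and taking $h=a.\mathbf 1$ with $a\to+\infty$ in the definition of $\dKRH$ gives $\dKRH(F^C,0)=+\infty$ as well. The only subtle point is the proper handling of $+\infty$ values of $h$ in the first direction, which is the reason for the alternative $\mathcal U_N$ argument; otherwise the proof is essentially a dualization of closure vs.\ lower semicontinuous test functions.
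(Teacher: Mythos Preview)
Your proof is correct and follows essentially the same approach as the paper's own proof: the lower bound via the test function $h = d(\_, C')$ and the upper bound by pushing the Scott-continuous map $h' \colon (y,s) \mapsto h(y)-s$ through the fact that $(x, r) \in \cl(C')$ whenever $r = \dH(C,C') < +\infty$. The paper phrases the upper bound slightly more uniformly by taking ``for every $b < h'(x,a)$'' rather than splitting off the $h(x)=+\infty$ case, but the content is identical; your explicit treatment of the degenerate cases $C=\emptyset$ and $C'=\emptyset$ is a minor expository addition.
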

The indicated bijection is, of course, the one exhibited in
Lemma~\ref{lemma:eH}.

\proof Take $h = d (\_, C')$ in the definition of $\dKRH$
(Equation~(\ref{eq:dKRH})), and recall that this is $1$-Lipschitz
continuous.  We obtain that
$\dKRH (F^C, F^{C'}) \geq \dreal (\sup_{x \in C} d (x, C'), \sup_{y
  \in C'} d (y, C')) = \dreal (\dH (C, C'), \dH (C', C')) = \dreal
(\dH (C, C'), 0)$ (using Lemma~\ref{lemma:dH}~(2)) $= \dH (C, C')$.

For the reverse inclusion, fix $h \in \Lform_1 (X, d)$, and let us
check that
$\sup_{x \in C} h (x) \leq \sup_{y \in C'} h (y) + \dH (C, C')$.  If
$\dH (C, C') = +\infty$, this is clear, so assume
$a = \dH (C, C') < +\infty$.  Fix $x \in C$: our aim is to show that
$h (x) - a \leq \sup_{y \in C'} h (y)$.  Note that the left-hand side
is $h' (x, a)$, where $h' \colon (z, t) \mapsto h (z) - t$ is
Scott-continuous.  For every $b < h' (x, a)$, the formal ball $(x, a)$
is in the Scott-open set ${h'}^{-1} (]b, +\infty])$.  Using
Lemma~\ref{lemma:dH}~(1) and the fact that $a = \dH (C, C')$, $C+a$ is
included in $\cl (C')$, so $(x, a)$ is in $\cl (C')$.  Therefore
${h'}^{-1} (]b, +\infty])$ intersects $\cl (C')$, hence also $C'$.
Let $y$ be in the intersection.  Then $h' (y, 0) = h (y) > b$, which
shows that $\sup_{y \in C'} h (y) > b$.  As $b < h' (x, a)$ is
arbitrary, we conclude.  \qed

\begin{rem}
  \label{rem:HX:dKRHa}
  By using similar arguments, one can show that
  $\dKRH^a (F^C, F^{C'}) = \dH^a (C, C')$, where $\dH^a (C, C')$ is
  defined as $\min (a, \allowbreak \dH (C, C'))$, for every
  $a \in \Rp$, $a > 0$.  In one direction,
  $\dKRH^a (F^C, F^{C'}) \leq \dKRH (F^C, F^{C'}) = \dH (C, C')$ by
  Proposition~\ref{prop:H:prev}, and $\dKRH^a (F^C, F^{C'}) \leq a$ by
  definition.  In the other direction, we take
  $h = \min (a, d (\_, C'))$ in the definition of $\dKRH^a$
  (\ref{eq:dKRHa}), check that $h \in \Lform_1^a (X, d)$, and
  therefore obtain that
  $\dKRH^a (F^C, F^{C'}) \geq \dreal (\sup_{x \in C} \min (a, d (x,
  C')), \allowbreak \sup_{y \in C'} \min (a, d (y, C'))) = \min (a, d
  (C, C'))$.
\end{rem}

\subsection{Completeness}
\label{sec:H:completeness}

We shall need the following easy, folklore lemma.
\begin{lem}
  \label{lemma:H:cl:sup}
  For every topological space $Y$, for every subset $A$ of $Y$, for
  every lower semicontinuous map
  $h \colon Y \to \real \cup \{-\infty, +\infty\}$,
  $\sup_{y \in A} h (y) = \sup_{y \in cl (A)} h (y)$.
\end{lem}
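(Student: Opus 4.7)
The plan is to prove the two inequalities $\sup_{y \in A} h(y) \leq \sup_{y \in cl(A)} h(y)$ and $\sup_{y \in cl(A)} h(y) \leq \sup_{y \in A} h(y)$ separately. The first is immediate from the inclusion $A \subseteq cl(A)$ and the monotonicity of supremum.

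For the reverse inequality, I would argue by contradiction. Suppose $\sup_{y \in cl(A)} h(y) > \sup_{y \in A} h(y)$; call $s$ the right-hand side. Then there exists $y_0 \in cl(A)$ with $h(y_0) > s$, and I can pick $t \in \real$ with $s < t < h(y_0)$ (using the density of $\real$ in $\real \cup \{-\infty,+\infty\}$; if $h(y_0) = +\infty$, any $t > s$ works, and if $s = -\infty$, any $t < h(y_0)$ works). Then $y_0 \in h^{-1}(]t, +\infty])$, which is an open neighborhood of $y_0$ because $h$ is lower semicontinuous. Since $y_0 \in cl(A)$, every open neighborhood of $y_0$ meets $A$, so there exists $y_1 \in A$ with $h(y_1) > t > s$, contradicting $s = \sup_{y \in A} h(y)$.

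The only slightly delicate point is handling the empty-set and infinite-value boundary cases, namely $A = \emptyset$ (then $cl(A) = \emptyset$ and both suprema are $-\infty$, so equality is trivial) and the possibility that $s = -\infty$ or $h(y_0) = +\infty$, but as noted above these are accommodated by choosing $t$ appropriately in $\real$. There is no real obstacle here; the argument is entirely a routine application of the definitions of closure and lower semicontinuity.
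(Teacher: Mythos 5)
Your proof is correct and follows essentially the same route as the paper: both establish the easy inequality from $A \subseteq cl(A)$ and then derive a contradiction by choosing a real threshold between the two suprema and observing that the open set $h^{-1}(]t,+\infty])$ meets $cl(A)$, hence meets $A$. Your extra care with the empty-set and infinite-value cases is fine but not needed beyond what you already note.
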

\proof Since $A \subseteq cl (A)$,
$\sup_{y \in A} h (y) \leq \sup_{y \in cl (A)} h (y)$.  If that
inequality were strict, then there would be a real number $a$ such
that $\sup_{y \in A} h (y) \leq a < \sup_{y \in cl (A)} h (y)$.  In
particular, $h^{-1} (]a, +\infty])$ would intersect $cl (A)$, hence
also $A$.  Therefore, there would be an $y \in A$ such that
$h (y) > a$, which contradicts $\sup_{y \in A} h (y) \leq a$.  \qed

\begin{lem}
  \label{lemma:H:clC}
  Let $X, d$ be a continuous Yoneda-complete quasi-metric space.  For
  every closed subset $\mathfrak C$ of $\mathbf B (X, d)$ such that
  $F^{\mathfrak C}$ is supported on $V_{1/2^n}$ for every $n \in \nat$,
  \begin{enumerate}
  \item $\mathfrak C$ is equal to $\cl (C)$, where $C$ is the closed
    set $\mathfrak C \cap X$;
  \item For every $k \in \Lform (\mathbf B (X, d))$, $F^{\mathfrak C}
    (k) = F^C (k \circ \eta_X)$;
  \item $F^{\mathfrak C}$ is supported on $X$.
  \end{enumerate}
\end{lem}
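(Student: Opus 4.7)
The plan is to prove (1) first, since (2) and (3) follow quickly from it. The inclusion $\cl(C) \subseteq \mathcal C$ is immediate: $C = \mathcal C \cap X \subseteq \mathcal C$, and $\mathcal C$ is closed. For the reverse $\mathcal C \subseteq \cl(C)$, I fix $(x_0, r_0) \in \mathcal C$ and an arbitrary Scott-open neighborhood $U$ of $(x_0, r_0)$, and aim to exhibit a point of $C$ in $U$.

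The idea is to inductively construct a $\ll$-ascending sequence $(a_n, b_n)_{n \in \nat}$ of formal balls, all in $\mathcal C$, contained in a fixed $\uuarrow (a_0, b_0) \subseteq U$, with $b_n \to 0$. For the base case, since $\mathbf B(X, d)$ is continuous, pick $(a_0, b_0) \ll (x_0, r_0)$ with $\uuarrow(a_0, b_0) \subseteq U$; down-closedness of $\mathcal C$ gives $(a_0, b_0) \in \mathcal C$, and we set $(u_0, s_0) := (x_0, r_0)$. Inductively, suppose $(a_n, b_n) \ll (u_n, s_n)$ with $(u_n, s_n) \in \mathcal C \cap \uuarrow(a_n, b_n)$. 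Since $\uuarrow(a_n, b_n)$ is Scott-open and meets $\mathcal C$, $F^{\mathcal C}(\chi_{\uuarrow(a_n, b_n)}) = 1$, so the support hypothesis on $V_{1/2^{n+1}}$ applied to this indicator forces $\uuarrow(a_n, b_n) \cap V_{1/2^{n+1}} \cap \mathcal C \neq \emptyset$. Pick a witness $(u_{n+1}, s_{n+1})$ with $s_{n+1} < 1/2^{n+1}$; by interpolation in the continuous dcpo $\mathbf B(X,d)$, choose $(a_{n+1}, b_{n+1})$ with $(a_n, b_n) \ll (a_{n+1}, b_{n+1}) \ll (u_{n+1}, s_{n+1})$, and, using that the set of formal balls way-below $(u_{n+1}, s_{n+1})$ has radii converging down to $s_{n+1}$, we can additionally arrange $b_{n+1} < 1/2^n$. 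Again $(a_{n+1}, b_{n+1}) \leq (u_{n+1}, s_{n+1}) \in \mathcal C$ gives $(a_{n+1}, b_{n+1}) \in \mathcal C$, and $\uuarrow(a_{n+1}, b_{n+1}) \subseteq \uuarrow(a_n, b_n) \subseteq U$.

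The sequence $(a_n, b_n)$ is directed and entirely in $\mathcal C$, with $\inf_n b_n = 0$. Yoneda-completeness of $X, d$ gives it a supremum $(a^*, b^*)$, and standardness forces $b^* = 0$, so $(a^*, 0) \in X$. Scott-closedness of $\mathcal C$ yields $(a^*, 0) \in \mathcal C \cap X = C$, and from $(a_0, b_0) \ll (a_1, b_1) \leq (a^*, 0)$ we get $(a^*, 0) \in \uuarrow(a_0, b_0) \subseteq U$. Thus $C \cap U \neq \emptyset$, and as $U$ was arbitrary, $(x_0, r_0) \in \cl(C)$.

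Once (1) is in hand, (2) is immediate: for any $k \in \Lform(\mathbf B(X, d))$, $F^{\mathcal C}(k) = \sup_{(x, r) \in \cl(C)} k(x, r) = \sup_{(x, r) \in C} k(x, r)$ by Lemma~\ref{lemma:H:cl:sup} applied to the lower semicontinuous $k$; since points of $C$ are of the form $(y, 0)$ with $y \in C$, this equals $\sup_{y \in C} k(y, 0) = F^C(k \circ \eta_X)$. Finally, (3) follows from (2): if $g, h \in \Lform(\mathbf B(X,d))$ agree on $X$ then $g \circ \eta_X = h \circ \eta_X$, whence $F^{\mathcal C}(g) = F^C(g \circ \eta_X) = F^C(h \circ \eta_X) = F^{\mathcal C}(h)$. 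The main obstacle is the inductive construction in the second paragraph: combining the support condition (which produces points of $\mathcal C$ of small radius within any Scott-open set meeting $\mathcal C$) with interpolation in the continuous dcpo (to maintain a $\ll$-chain whose radii actually converge to $0$), so that Yoneda-completeness can supply the limit point $(a^*, 0)$ sitting in both $C$ and the prescribed open neighborhood.
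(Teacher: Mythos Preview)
Your proof is correct and follows essentially the same approach as the paper's. The paper phrases the argument for (1) by contradiction (assuming some $(x,r)\in\mathcal C\smallsetminus\cl(C)$ and deriving a point of $C$ in the complement of $\cl(C)$), whereas you argue directly that every Scott-open neighborhood of a point of $\mathcal C$ meets $C$; but the core construction---using the support hypothesis to find points of $\mathcal C$ of arbitrarily small radius in any Scott-open set meeting $\mathcal C$, then interpolating in the continuous dcpo $\mathbf B(X,d)$ to build a $\ll$-chain whose supremum has radius $0$---is identical, and your derivations of (2) and (3) from (1) match the paper verbatim.
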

\proof We first note that for every (Scott-)open subset $\mathcal U$
of $\mathbf B (X, d)$ that intersects $\mathfrak C$, for every
$\eta > 0$, $\mathfrak C$ also intersects $\mathcal U \cap V_{\eta}$.
Indeed, since $\mathcal U \cap \mathfrak C \neq \emptyset$,
$F^{\mathfrak C} (\chi_{\mathcal U}) = 1$.  We use our assumption that
$F^{\mathfrak C}$ is supported on $V_{1/2^n}$, where we choose $n$ so
large that $1/2^n < \eta$: since $\chi_{\mathcal U}$ and
$\chi_{\mathcal U \cap V_{\eta}}$ coincide on $V_{1/2^n}$,
$F^{\mathfrak C} (\chi_{\mathcal U \cap V_{\eta}}) = 1$.  Therefore
$\mathfrak C$ also intersects $\mathcal U \cap V_{\eta}$.

(1) Clearly $\cl (C) \subseteq \mathfrak C$.  In the reverse
direction, assume for the sake of contradiction that $\mathfrak C$ is
not included in $\cl (C)$.  There is a formal ball $(x, r)$ in
$\mathfrak C$ that is in some Scott-open subset $\mathcal U$ (namely,
the complement of $\cl (C)$) such that $\mathcal U \cap X$ does not
intersect $C$.

By our preliminary remark, $\mathfrak C$ must also intersect
$\mathcal U \cap V_1$, say at $(y_0, s_0)$.  Since $\mathbf B (X, d)$
is a continuous dcpo, $(y_0, s_0)$ is the supremum of a directed
family of formal balls way-below $(y_0, s_0)$, so one of them, call it
$(x_0, r_0)$, is in $\mathcal U \cap V_1$.  It is also in
$\mathfrak C$, since every Scott-closed set is downwards-closed.

Since $(x_0, r_0) \ll (y_0, s_0)$, $\mathfrak C \cap \uuarrow (x_0, r_0)$
is non-empty.  By our preliminary remark again, $\mathfrak C$ must
intersect $\uuarrow (x_0, r_0) \cap V_{1/2}$, say at $(y_1, s_1)$.  As
above, there is formal ball $(x_1, r_1) \ll (y_1, s_1)$ in $\mathfrak C
\cap \uuarrow (x_0, r_0) \cap V_{1/2}$.

We proceed in the same way: $\mathfrak C \cap \uuarrow (x_1, r_1)$ is
non-empty, hence $\mathfrak C \cap \uuarrow (x_1, r_1) \cap V_{1/4}$
contains a formal ball $(y_2, s_2)$, and that allows us to find
$(x_2, r_2) \ll (y_2, s_2)$ in
$\mathfrak C \cap \uuarrow (x_1, r_1) \cap V_{1/4}$.

Inductively, we obtain formal balls $(x_n, r_n)$ in
$\mathfrak C \cap V_{1/2^n}$ with the property that
$(x_0, r_0) \ll (x_1, r_1) \ll \cdots \ll (x_n, r_n) \ll \cdots$, and
$(x_0, r_0) \in \mathcal U$.  Let
$(y, s) = \sup_{n \in \nat} (x_n, r_n)$.  Since $s \leq r_n$ and
$(x_n, r_n) \in V_{1/2^n}$, $s$ is less than or equal to $1/2^n$ for
every $n \in \nat$, hence zero.  Since $\mathfrak C$ is Scott-closed,
$(y, s) = (y, 0)$ is in $\mathfrak C$, so $y$ is in $C$.  Because
$\mathcal U$ is upwards-closed and $(x_0, r_0)$ is in $\mathcal U$,
$(y, 0)$ must also be in $\mathcal U$.  But that contradicts the fact
that $\mathcal U \cap X$ does not intersect $C$.

(2) Let $k \in \Lform (\mathbf B (X, d))$.  We have
$F^{\mathfrak C} (k) = \sup_{(x, r) \in \mathfrak C} k (x, r) =
\sup_{(x, r) \in \cl (C)} k (x, r)$ by (1), so
$F^{\mathfrak C} (k) = \sup_{x \in C} k (x, 0)$ by
Lemma~\ref{lemma:H:cl:sup}, and that is exactly
$F^C (k \circ \eta_X)$.

(3) Let $g, f \in \Lform (\mathbf B (X, d))$ be such that $g_{|X} =
f_{|X}$, that is, $g \circ \eta_X = f \circ \eta_X$.  By (2),
$F^{\mathfrak C} (g) = F^C (g \circ \eta_X) = F^C (f \circ \eta_X) =
F^{\mathfrak C} (f)$.  \qed

\begin{prop}[Yoneda-completeness, Hoare powerdomain]
  \label{prop:HX:Ycomplete}
  Let $X, d$ be a standard Lipschitz regular quasi-metric space, or a
  continuous Yoneda-complete quasi-metric space.  Then
  $\Hoarez X, \dH$ and $\Hoare X, \dH$ are Yoneda-complete.

  Suprema of directed families of formal balls of previsions are naive
  suprema.
\end{prop}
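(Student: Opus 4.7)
The plan is to reduce the statement about $\Hoarez X$ and $\Hoare X$ to results we already have about spaces of previsions, by exploiting the isometric correspondence $C \mapsto F^C$ between closed subsets of $X$ and discrete sublinear previsions (Lemma~\ref{lemma:eH} and Proposition~\ref{prop:H:prev}). Under that isometry, $\Hoarez X, \dH$ corresponds to the space of discrete sublinear previsions on $X$ equipped with $\dKRH$, and $\Hoare X, \dH$ corresponds to its subspace of normalized ones. Hence Yoneda-completeness and the description of directed suprema as naive suprema will transport verbatim once I can establish the analogous statement for these spaces of previsions.

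In the Lipschitz regular case, Theorem~\ref{thm:LPrev:sup} applies directly: the family $S$ may be taken to be $\{\text{sublinear, discrete}\}$, and optionally also $\{\text{normalized}\}$ for $\Hoare X$. This yields Yoneda-completeness and computes directed suprema as naive suprema, which transports along the isometry to give the result.

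For the continuous Yoneda-complete case, Lipschitz regularity of $X$ is not available, so I would apply the conditional Yoneda-completeness result Proposition~\ref{prop:supp:complete}. The critical input is the verification that every discrete sublinear (resp.\ normalized discrete sublinear) prevision $F$ on $\mathbf B(X,d)$ that is supported on $V_{1/2^n}$ for every $n \in \nat$ is in fact supported on $X$. By Lemma~\ref{lemma:eH}(2) applied to the topological space $\mathbf B(X,d)$, any such $F$ is of the form $F^{\mathcal C}$ for a unique Scott-closed subset $\mathcal C$ of $\mathbf B(X,d)$, with $\mathcal C$ non-empty in the normalized case. This is exactly the setting of Lemma~\ref{lemma:H:clC}, whose item (3) asserts that $F^{\mathcal C}$ is supported on $X$. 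Thus the hypothesis of Proposition~\ref{prop:supp:complete} is fulfilled for $S = \{\text{sublinear, discrete}\}$ and for $S = \{\text{sublinear, discrete, normalized}\}$, giving Yoneda-completeness of both $\Hoarez X, \dH$ and $\Hoare X, \dH$ and the naive-supremum formula.

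The main obstacle is the second case: one must know that the supporting Scott-closed set $\mathcal C \subseteq \mathbf B(X,d)$ is recoverable from its trace $C = \mathcal C \cap X$, and that this trace is non-empty when $\mathcal C$ is. Both are handled by Lemma~\ref{lemma:H:clC}(1), whose proof exploits the continuous-dcpo structure of $\mathbf B(X,d)$ (way-below approximation together with the support assumption on every $V_{1/2^n}$) to build, for any $(x,r) \in \mathcal C$ lying in an open set $\mathcal U$ disjoint from $\cl(C)$, a sequence of formal balls $(x_n, r_n) \in \mathcal C \cap \mathcal U \cap V_{1/2^n}$ converging to a point of $C \cap \mathcal U$, a contradiction. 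Once this is in hand, applying Proposition~\ref{prop:supp:complete} is immediate and the Yoneda-completeness and naive-supremum statements follow.
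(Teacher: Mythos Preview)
Your proposal is correct and follows exactly the paper's approach: reduce via the isometry of Proposition~\ref{prop:H:prev} to discrete sublinear (resp.\ normalized) previsions, then invoke Theorem~\ref{thm:LPrev:sup} in the Lipschitz regular case and Proposition~\ref{prop:supp:complete} together with Lemma~\ref{lemma:H:clC} in the continuous Yoneda-complete case. Your additional elaboration on how Lemma~\ref{lemma:eH}(2) identifies the relevant previsions as $F^{\mathcal C}$ and on the proof idea behind Lemma~\ref{lemma:H:clC}(1) is accurate but goes beyond what the paper's terse proof spells out.
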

\proof Through the isometry of Proposition~\ref{prop:H:prev}, we only
have to show that the spaces of discrete sublinear previsions (resp.,
the normalized discrete sublinear previsions) are Yoneda-complete
under $\dKRH$.  If $X, d$ is standard and Lipschitz regular, this
follows from Theorem~\ref{thm:LPrev:sup}.  Otherwise, this follows from
Proposition~\ref{prop:supp:complete}, since the assumption on supports
is verified by Lemma~\ref{lemma:H:clC}.  \qed

\begin{rem}
  \label{rem:HX:dKRHa:Ycomplete}
  Under the same assumptions, for every $a > 0$, $\Hoarez X, \dH^a$
  and $\Hoare X, \dH^a$ are Yoneda-complete, and directed suprema of
  formal balls are naive suprema.  The quasi-metric $\dH^a$ was
  introduced in Remark~\ref{rem:HX:dKRHa}.  The argument is similar to
  Proposition~\ref{prop:HX:Ycomplete}, replacing
  Proposition~\ref{prop:H:prev} by Remark~\ref{rem:HX:dKRHa}.
\end{rem}

\begin{rem}
  \label{rem:Ha}
  For $a > 0$, it is clear that the specialization ordering
  $\leq^{\dH^a}$ is again inclusion: $\dH^a (C, C') = 0$ if and only
  if $\min (a, \dH (C, C'))=0$ if and only if $\dH (C, C')=0$, if and
  only if $C \subseteq C'$ by Lemma~\ref{lemma:dH}~(2).
\end{rem}

\subsection{$\dH$-Limits}
\label{sec:dh-limits}

Suprema of directed families of formal balls in $\Hoare_0 X$ and in
$\Hoare X$ are naive suprema, but the detour through previsions makes
that less than explicit.  Let us give a more concrete description of
those suprema, i.e., of $\dH$-limits.
\begin{lem}
  \label{lemma:H:sup}
  Let $X, d$ be a continuous Yoneda-complete quasi-metric space.  For
  every directed family ${(C_i, r_i)}_{i \in I}$ in
  $\mathbf B (\Hoare_0 (X, d), \dH)$, its supremum $(C, r)$ is given
  by $r = \inf_{i \in I} r_i$ and
  $C = \cl (\bigcup_{i \in I} C_i + r_i - r) \cap X$.
\end{lem}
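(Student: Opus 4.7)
The plan is to invoke Proposition~\ref{prop:HX:Ycomplete}, which guarantees existence of the supremum, and then identify it explicitly by directly verifying the two characterizing properties. I will set $r = \inf_{i \in I} r_i$, form the subset $\mathcal A = \bigcup_{i \in I} (C_i + (r_i - r))$ of $\mathbf B(X, d)$, let $\mathcal C = \cl(\mathcal A)$ be its Scott-closure in $\mathbf B(X, d)$, and put $C = \mathcal C \cap X$. The goal will be to show that $(C, r)$ is an upper bound of ${(C_i, r_i)}_{i \in I}$ in $\mathbf B(\Hoarez X, \dH)$ and that every upper bound dominates it, using Lemma~\ref{lemma:dH}(3) throughout to convert formal-ball inequalities on $\Hoarez X$ into inclusions of subsets of $\mathbf B(X, d)$.

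For the upper-bound property, proving $(C_i, r_i) \leq^{\dH^+} (C, r)$ reduces by Lemma~\ref{lemma:dH}(3) and Lemma~\ref{lemma:cl}(2) to checking $C_i + (r_i - r) \subseteq \cl(C)$. Since $C_i + (r_i - r) \subseteq \mathcal A \subseteq \mathcal C$ and the reverse inclusion $\cl(C) \subseteq \mathcal C$ is automatic (because $C \subseteq \mathcal C$ and $\mathcal C$ is Scott-closed), the required containment follows at once from the equality $\mathcal C = \cl(C)$. This equality is the content of Lemma~\ref{lemma:H:clC}(1) applied to the Scott-closed set $\mathcal C$ of $\mathbf B(X, d)$, provided the discrete sublinear prevision $F^{\mathcal C}$ on $\mathbf B(X, d)$ is supported on $V_{1/2^n}$ for every $n \in \nat$.

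Verifying this support property is where the main work lies, and I expect it to be the principal obstacle. By Lemma~\ref{lemma:H:cl:sup}, for every $k \in \Lform(\mathbf B(X, d))$ one has $F^{\mathcal C}(k) = \sup_{(y,s) \in \mathcal A} k(y,s) = \sup_{i \in I} \sup_{x \in C_i} k(x, r_i - r)$, so it is enough to show this supremum is unchanged when restricted to $\mathcal A \cap V_{1/2^n}$. Given any $i$, $x \in C_i$, and $a < k(x, r_i - r)$, I will exploit the directedness of the family: for any $i' \sqsupseteq i$, the relation $(C_i, r_i) \leq^{\dH^+} (C_{i'}, r_{i'})$ yields, via Lemma~\ref{lemma:dH}(3) together with Lemma~\ref{lemma:cl}(2), the inclusion $C_i + (r_i - r) \subseteq \cl(C_{i'} + (r_{i'} - r))$, so the Scott-open set $k^{-1}(]a, +\infty])$ (which contains $(x, r_i - r)$) must meet $C_{i'} + (r_{i'} - r)$ at some $(y, r_{i'} - r)$. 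Choosing $i'$ with $r_{i'} - r < 1/2^n$ places this witness in $\mathcal A \cap V_{1/2^n}$ with $k(y, r_{i'} - r) > a$, as required. The delicate point is that one must combine directedness with the Scott-topological behavior of $\cl$ in $\mathbf B(X, d)$ to push arbitrary elements of $\mathcal A$ into $V_{1/2^n}$ without losing any mass of the supremum.

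For minimality, let $(C', r')$ be any upper bound: then $r' \leq r$, and for each $i$ Lemma~\ref{lemma:dH}(3) yields $C_i + (r_i - r') \subseteq \cl(C')$. Writing $r_i - r' = (r_i - r) + (r - r')$ and taking the union over $i$ gives $\mathcal A + (r - r') \subseteq \cl(C')$; closing in $\mathbf B(X, d)$ and applying Lemma~\ref{lemma:cl:1}(2) produces $\mathcal C + (r - r') \subseteq \cl(C')$, and restricting to formal balls with center in $X$ delivers $C + (r - r') \subseteq \cl(C')$, i.e.\ $(C, r) \leq^{\dH^+} (C', r')$ by Lemma~\ref{lemma:dH}(3) once more. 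Together with the upper-bound property this identifies $(C, r)$ as the supremum and completes the proof.
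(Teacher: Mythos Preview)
Your proposal is correct and follows essentially the same route as the paper: define $\mathcal A$, $\mathcal C = \cl(\mathcal A)$, and $C = \mathcal C \cap X$; establish that $F^{\mathcal C}$ is supported on each $V_\epsilon$ by using directedness to push elements of $\mathcal A$ to arbitrarily small radius, invoke Lemma~\ref{lemma:H:clC} to obtain $\mathcal C = \cl(C)$, and then verify the upper-bound and least-upper-bound properties directly via Lemma~\ref{lemma:dH}(3). The only cosmetic differences are that the paper phrases the support argument through cofinality of the subfamily $\{i \mid i_0 \sqsubseteq i\}$ rather than your explicit Scott-open-set argument, and handles minimality via the auxiliary set $\mathcal D = \{(y,s) \mid (y,s+r) \in \cl(C'+r')\}$ instead of your use of Lemma~\ref{lemma:cl:1}(2); both pairs of arguments are interchangeable.
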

\proof By Proposition~\ref{prop:HX:Ycomplete}, that supremum is the
naive supremum, so $r = \inf_{i \in I} r_i$ in particular.  One might
think of writing down the naive supremum explicitly and simplifying
the expression, but a direct verification seems easier.

Let $\mathcal A = \bigcup_{i \in I} C_i + r_i - r$ and
$\mathfrak C = \cl (\mathcal A)$.  Let also $i \sqsubseteq j$ if and
only if $(C_i, r_i) \leq^{\dH^+} (C_j, r_j)$, making
${(C_i, r_i)}_{i \in I, \sqsubseteq}$ a monotone net.  The core of the
proof consists in showing that $\mathfrak C = \cl (C)$ for
$C = \mathfrak C \cap X$, and we shall obtain that by showing that
$F^{\mathfrak C}$ is supported on every $V_\epsilon$, $\epsilon > 0$.

For every $k \in \Lform (\mathbf B (X, d))$,
$F^{\mathfrak C} (k) = \sup_{(x, s) \in \mathcal A} k (x, s)$ by
Lemma~\ref{lemma:H:cl:sup}.  By definition of $\mathcal A$, and
another recourse to Lemma~\ref{lemma:H:cl:sup},
$F^{\mathfrak C} (k) = \sup_{i \in I} \sup_{x \in C_i} k (x, r_i-r) =
\sup_{i \in I} \sup_{(x, s) \in \cl (C_i+r_i-r)} k (x, s)$.  If
$i \sqsubseteq j$ then
$\dH (C_i, C_j) \leq r_i-r_j = (r_i-r)-(r_j-r)$, whence
$(C_i, r_i-r) \leq^{\dH^+} (C_j, r_j-r)$ and therefore
$\cl (C_i+r_i-r) \subseteq \cl (C_j+r_j-r)$ by
Lemma~\ref{lemma:dH}~(3).  In particular,
$\sup_{x \in C_i} k (x, r_i-r) = \sup_{(x, s) \in \cl (C_i+r_i-r)} k
(x, s)$ is smaller than or equal to
$\sup_{x \in C_j} k (x, r_j-r) = \sup_{(x, s) \in \cl (C_j+r_j-r)} k
(x, s)$.  This shows that the outer supremum in
$F^{\mathfrak C} (k) = \sup_{i \in I} \sup_{x \in C_i} k (x, r_i-r)$
is a directed supremum.

For any $i_0 \in I$, the subfamily of all $i \in I$ such that
$i_0 \sqsubseteq i$ is cofinal in $I$, hence $F^{\mathfrak C} (k)$ is
also equal to
$\sup_{i \in I, i_0 \sqsubseteq i} \sup_{x \in C_i} k (x, r_i-r)$.

For every $\epsilon > 0$, if $g$ and $f$ are two elements of
$\Lform (\mathbf B (X, d))$ that coincide on $V_\epsilon$, and since
$r = \inf_{i \in r_i}$, there is an $i_0 \in I$ such that
$r_i - r < \epsilon$ for every $i \in I$, $i_0 \sqsubseteq i$.  Then
$g (x, r_i-r) = f (x, r_i-r)$, and the formula we have just obtained
for $F^{\mathfrak C} (k)$ shows that
$F^{\mathfrak C} (g) = F^{\mathfrak C} (f)$.  In other words,
$F^{\mathfrak C}$ is supported on $V_\epsilon$.  By
Lemma~\ref{lemma:H:clC}, $\mathfrak C$ is equal to $\cl (C)$, where
$C = \mathfrak C \cap X$.

For every $i \in I$, $C_i + r_i \subseteq \cl (C + r)$, since
$\cl (C+r) = \cl (C) + r$ (by Lemma~\ref{lemma:cl}~(2))
$ = \mathfrak C + r = \cl (\bigcup_{i \in I} C_i + r_i)$ (by
Lemma~\ref{lemma:cl:1}~(2)).  Hence $(C, r)$ is an upper bound of
${(C_i, r_i)}_{i \in I}$ by Lemma~\ref{lemma:dH}, item~3.

Assume $(C', r')$ is another upper bound of ${(C_i, r_i)}_{i \in I}$.
Then $r' \leq \inf_{i \in I} r_i = r$, and
$C_i+r_i \subseteq \cl (C'+r')$ for every $i \in I$, by
Lemma~\ref{lemma:dH}~(3).  Let $\mathfrak D$ be the set of formal balls
$(y, s)$ such that $(y, s+r) \in \cl (C'+r')$.  Since $X, d$ is
standard, the map $(y, s) \mapsto (y, s+r)$ is Scott-continuous, so
$\mathfrak D$ is closed in $\mathbf B (X, d)$.  Since $r_i \geq r$, the
inclusion $C_i+r_i \subseteq \cl (C'+r')$ rewrites as
$C_i+r_i-r \subseteq \mathfrak D$.  Taking unions over $i \in I$, then
closures, we obtain $\mathfrak C \subseteq \mathfrak D$.

Since $\mathfrak C = \cl (C)$, $C$ is included in $\mathfrak D$, too,
and that means that $C+r \subseteq \cl (C'+r')$.  By
Lemma~\ref{lemma:dH}~(3), $(C, r) \leq^{\dH^+} (C', r')$, so $(C, r)$
is the least upper bound of ${(C_i, r_i)}_{i \in I}$.  \qed

\subsection{Algebraicity}
\label{sec:H:algebraicity}

\begin{lem}
  \label{lemma:H:simple:center}
  Let $X, d$ be a continuous Yoneda-complete quasi-metric space.  For
  all center points $x_1$, \ldots, $x_n$, $\dc \{x_1, \cdots, x_n\}$
  is a center point of $\Hoarez X, \dH$, and also of $\Hoare X, \dH$
  if $n \geq 1$.
\end{lem}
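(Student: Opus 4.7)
The plan is to adapt the argument of Lemma~\ref{lemma:V:simple:center} to discrete sublinear previsions. Set $C_0 = \dc\{x_1, \ldots, x_n\}$ and fix $\epsilon > 0$. Under the isometry $C \mapsto F^C$ of Proposition~\ref{prop:H:prev}, the open ball $U = B^{\dH^+}_{(C_0, 0), <\epsilon}$ corresponds to $\{(C, r) : \dKRH(F^{C_0}, F^C) < \epsilon - r\}$, and because every lower semicontinuous map is monotonic for the specialization preorder, $F^{C_0}(h) = \max_{i=1}^n h(x_i)$. My task is to show $U$ is Scott-open in $\mathbf B(\Hoarez X, \dH)$; upward-closedness of $U$ follows immediately from the triangular inequality for $\dH$ (Lemma~\ref{lemma:dH}~(2)).

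For Scott-openness, I would take a directed family $(C_j, r_j)_{j \in I}$ with supremum $(C, r) \in U$. By Proposition~\ref{prop:HX:Ycomplete} this supremum is naive, so $r = \inf_j r_j$ and $F^C(h) = \sup_j (F^{C_j}(h) + r - r_j)$ for every $h \in \Lform_1(X, d)$. Picking $\eta > 0$ with $\dKRH(F^{C_0}, F^C) < \epsilon - r - 2\eta$, I would focus on the cofinal subset $I' = \{j \in I : r_j < \epsilon - \eta\}$ and, for each $j \in I'$, introduce
\[
V_j = \{h \in \Lform_1(X, d) : \dreal(\max\nolimits_{i=1}^n h(x_i), F^{C_j}(h)) < \epsilon - r_j - \eta\}.
\]
By Proposition~\ref{prop:dKRH:cont}~(2), $V_j$ is the preimage of $[0, \epsilon - r_j - \eta[$, an open subset of $(\creal)^\dG$, under a continuous map from $\Lform_1(X, d)^\patch$ to $(\creal)^\dG$, and is therefore open in $\Lform_1(X, d)^\patch$. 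A short calculation with the naive-sup formula shows that $(V_j)_{j \in I'}$ covers $\Lform_1(X, d)^\patch$. Since that patch space is compact by Lemma~\ref{lemma:cont:Lalpha:retr}~(4), I extract a finite subcover $V_{j_1}, \ldots, V_{j_m}$ and use directedness of $I$ to pick $j^* \in I$ with $j_k \sqsubseteq j^*$ for every $k$. Using the inequalities $F^{C_{j_k}}(h) - r_{j_k} \leq F^{C_{j^*}}(h) - r_{j^*}$ implied by $(C_{j_k}, r_{j_k}) \leq^{\dH^+} (C_{j^*}, r_{j^*})$, one verifies $\dreal(\max_i h(x_i), F^{C_{j^*}}(h)) < \epsilon - r_{j^*} - \eta$ for all $h \in \Lform_1(X,d)$, so $\dKRH(F^{C_0}, F^{C_{j^*}}) \leq \epsilon - r_{j^*} - \eta < \epsilon - r_{j^*}$ and $(C_{j^*}, r_{j^*}) \in U$.

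For $n \geq 1$, $C_0$ is non-empty hence lies in $\Hoare X$; since $\Hoare X$ sits as an upwards-closed subspace of $\Hoarez X$ under inclusion (the specialization order for $\dH$, by Lemma~\ref{lemma:dH}~(2)), the restriction of $U$ to $\mathbf B(\Hoare X, \dH)$ inherits Scott-openness. The main obstacle will be the covering step: one must argue that even $h$'s with $F^{C_0}(h) = +\infty$ fall into some $V_j$, which I would handle—as in Lemma~\ref{lemma:V:simple:center}—by reducing via Lemma~\ref{lemma:KRH:bounded} to bounded maps in $\Lform_1(X,d)$, where infinite values do not arise and the $\dreal$ inequalities translate back without loss.
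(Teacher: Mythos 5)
Your proof follows the paper's own argument for this lemma essentially step for step: the same translation through $C \mapsto F^C$ via Proposition~\ref{prop:H:prev}, the same naive-supremum characterization from Proposition~\ref{prop:HX:Ycomplete}, the same open sets $V_j$ obtained as preimages under the maps of Proposition~\ref{prop:dKRH:cont}~(2), and the same finite-subcover-plus-directedness conclusion using the patch-compactness of $\Lform_1(X,d)^\patch$ from Lemma~\ref{lemma:cont:Lalpha:retr}~(4). The extra $\eta$ of slack and the closing remark about maps taking the value $+\infty$ are refinements the paper's proof does not spell out, but they do not change the route.
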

\proof The proof is very similar to Lemma~\ref{lemma:V:simple:center}.
Let $C_0 = \dc \{x_1, \cdots, x_n\}$.  For every $h \in \Lform
X$, $F^{C_0} (h) = \max_{j=1}^n h (x_j)$, where the maximum is taken to be
$0$ if $n=0$.  Let $U = B^{\dKRH^+}_{(F^{C_0}, 0), <\epsilon}$.  $U$ is
upwards-closed: if $(F^C, r) \leq^{\dKRH^+} (F^{C'}, r')$ and $(F^C, r) \in
U$, then $\dKRH (F^C, F^{C'}) \leq r-r'$ and $\dKRH (F^{C_0}, F^C) <
\epsilon - r$, so $\dKRH (F^{C_0}, F^{C'}) < \epsilon - r'$ by the
triangular inequality, and that means that $(F^{C'}, r')$ is in $U$.

To show that $U$ is Scott-open, consider a directed family
${(C_i, r_i)}_{i \in I}$ in $\mathbf B (\Hoarez X, \dH)$ (resp.,
$\mathbf B (\Hoare X, \dH)$) with supremum $(C, r)$, and assume that
$(F^C, r)$ is in $U$.  By Proposition~\ref{prop:HX:Ycomplete},
directed suprema of formal balls are naive suprema, so
$r = \inf_{i \in I} r_i$ and $C$ is characterized by the fact that,
for every $h \in \Lform_1 (X, d)$,
$F^C (h) = \sup_{i \in I} (F^{C_i} (h) + r - r_i)$.  Since $(F^C, r)$
is in $U$, $\dKRH (F^{C_0}, F^C) < \epsilon - r$, so $\epsilon > r$
and $F^{C_0} (h) - \epsilon + r < F^C (h)$.  Therefore, for every
$h \in \Lform_1 (X, d)$, there is an index $i \in I$ such that
$F^{C_0} (h) - \epsilon + r < F^{C_i} (h) + r - r_i$, or equivalently:
\begin{equation}
  \label{eq:H:A}
  \max_{j=1}^n h (x_j) < F^{C_i} (h) + \epsilon - r_i.
\end{equation}
Moreover, since $\epsilon > r = \inf_{i \in I} r_i$, we may assume
that $i$ is so large that $\epsilon > r_i$.

Let $V_i$ be the set of all $h \in \Lform_1 (X, d)$ satisfying
(\ref{eq:H:A}).  Each $V_i$ is the inverse image of $[0, \epsilon -
r_i[$ by the map $h \mapsto \dreal (\max_{i=1}^n h (x_i), F^{C_i}
(h))$, which is continuous from $\Lform_1 (X,
d)^\patch$ to $(\creal)^\dG$ by Proposition~\ref{prop:dKRH:cont}~(2).
Hence ${(V_i)}_{i \in I}$ is an open cover of $\Lform_1 (X,
d)^\patch$.  The latter is a compact
space by Lemma~\ref{lemma:cont:Lalpha:retr}~(4).  Hence we can
extract a finite subcover ${(V_i)}_{i \in J}$: for every $h \in
\Lform_1 (X, d)$, there is an index $i$ in the finite set $J$ such
that (\ref{eq:H:A}) holds.  By directedness, one can require that $i$
be the same for all $h$.  This shows that $(F^{C_i}, r_i)$ is in $U$,
proving the claim.  \qed

\begin{rem}
  \label{rem:Ha:simple:center}
  A similar result holds for $\dH^a$ instead of $\dH$, $a \in \Rp$,
  $a > 0$: on a continuous Yoneda-complete quasi-metric space $X, d$,
  and for all center points $x_1$, \ldots, $x_n$,
  $\dc \{x_1, \cdots, x_n\}$ is a center point of $\Hoarez X, \dH^a$,
  and also of $\Hoare X, \dH^a$ if $n \geq 1$.  The proof is as for
  Lemma~\ref{lemma:H:simple:center}.
\end{rem}

\begin{thm}[Algebraicity of Hoare powerdomains]
  \label{thm:H:alg}
  Let $X, d$ be an algebraic Yoneda-complete quasi-metric space, with
  strong basis $\mathcal B$.  The
  spaces $\Hoarez X, \dH$ and $\Hoare X, \dH$ are algebraic
  Yoneda-complete.

  Every closed set of the form $\dc \{x_1, \cdots, x_n\}$ where every
  $x_i$ is a center point (and with $n \geq 1$ in the case of
  $\Hoare X$) is a center point of $\Hoarez X, \dH$ (resp.,
  $\Hoare X, \dH$), and those form a strong basis, even when each
  $x_i$ is taken from $\mathcal B$.
\end{thm}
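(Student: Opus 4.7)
Yoneda-completeness of $\Hoarez X, \dH$ and $\Hoare X, \dH$ is already delivered by Proposition~\ref{prop:HX:Ycomplete}, since every algebraic Yoneda-complete quasi-metric space is in particular continuous Yoneda-complete. The assertion that each $\dc\{x_1,\ldots,x_n\}$ with center points $x_i$ is itself a center point of the relevant hyperspace is precisely Lemma~\ref{lemma:H:simple:center}. So the only remaining task is to exhibit, for every $C \in \Hoarez X$ (resp.\ non-empty $C \in \Hoare X$), a directed family of formal balls $(\dc F_i, r_i)$ with each $F_i$ a finite subset of $\mathcal B$, whose supremum in $\mathbf B(\Hoarez X,\dH)$ (resp.\ $\mathbf B(\Hoare X,\dH)$) is $(C,0)$.

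I would mirror the proof of Theorem~\ref{thm:V:alg}, passing through the space $\mathbf B(X,d)$ via the closure $\cl(C)\in\Hoarez(\mathbf B(X,d))$. Since $\mathbf B(X,d)$ is a continuous dcpo whose basis is $\mathcal B\times\Rp$ (Lemma~\ref{lemma:B:basis}), its Hoare hyperspace is a continuous lattice, and $\cl(C)$ is the directed union, over finite $\widetilde F \subseteq \mathcal B \times \Rp$ with $\widetilde F \subseteq \cl(C)$, of the simple closed sets $\dc\widetilde F$. The condition $\widetilde F \subseteq \cl(C)$ is equivalent, by Lemma~\ref{lemma:dxC}(2), to $d(y_j,C)\le s_j$ for every $(y_j,s_j)\in\widetilde F$. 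For each such $\widetilde F = \{(y_1,s_1),\ldots,(y_n,s_n)\}$ I would form the candidate formal ball $\beta(\widetilde F) = (\dc\{y_1,\ldots,y_n\}, R(\widetilde F))$ with $R(\widetilde F)=\max_j s_j$; then $\dH(\dc\{y_j\},C)=\max_j d(y_j,C)\le \max_j s_j = R(\widetilde F)$, so $\beta(\widetilde F)\le^{\dH^+}(C,0)$. For $\Hoare X$ one only has to insist that $\widetilde F$ be non-empty, which is automatic once $C$ is non-empty: pick any $x\in C$ and any $(y,s)\in\mathcal B\times\Rp$ with $(y,s)\ll(x,0)$.

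For the supremum computation I would invoke Lemma~\ref{lemma:H:sup}: the naive sup of the family is $(C'',0)$, where $C''=\cl\bigl(\bigcup_{\widetilde F}\dc\{y_j\}+R(\widetilde F)\bigr)\cap X$. The inclusion $C''\subseteq C$ is automatic from $\beta(\widetilde F)\le^{\dH^+}(C,0)$; for $C\subseteq C''$, given $x\in C$ and a basic Scott-open neighbourhood $\uuarrow(y,s)$ of $(x,0)$ with $y\in\mathcal B$ and $d(y,x)<s$, pick any $s''$ with $d(y,x)\le s''<s$. Then $(y,s'')\in\cl(C)$, so $\widetilde F=\{(y,s'')\}$ contributes $(y,s'')\in\dc\{y\}+R(\widetilde F)$ to the union, witnessing $\uuarrow(y,s)$ meeting it and hence $(x,0)\in\cl$ of the union.

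The main obstacle is verifying that the family $\{\beta(\widetilde F)\}$ is directed in $\mathbf B(\Hoarez X,\dH)$. The naive move $\widetilde F_3=\widetilde F_1\cup\widetilde F_2$ yields $R(\widetilde F_3)=\max(R_1,R_2)\ge R_j$, which is the \emph{wrong} direction in the formal ball order. The fix is to \emph{refine} rather than merely union: for each $(y_j,s_j)\in\widetilde F_1\cup\widetilde F_2$ with $s_j$ exceeding a target radius $r_3<\min(R_1,R_2)$, one has to substitute a basis pair $(y_j',s_j')$ with $s_j'\le r_3$, with $(y_j',s_j')\in\cl(C)$, and with $d(y_j,y_j')\le s_j-s_j'$. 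The existence of such a substitution, coming from the interpolation property of $\ll$ on $\mathbf B(X,d)$ applied inside $\cl(C)$, is the technical heart of the argument and is made delicate by the asymmetry of $d$; one cannot hope to refine each $(y_j,s_j)$ to arbitrarily small radius while keeping $d(y_j,y_j')\le s_j-s_j'$ in isolation, and the choice of the target $r_3$ must be made small enough to make all required refinements simultaneously possible, using that each $(y_j,s_j)$ lies below some $(x_j,0)\in\cl(C)$ (since $(y_j,s_j)\in\cl(C)$) and that the way-below family at $(x_j,0)$ is cofinally concentrated at small radii. Once directedness is in hand, Yoneda-completeness and the naive-sup formula finish the proof.
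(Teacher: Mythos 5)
Your reduction is right: Yoneda-completeness comes from Proposition~\ref{prop:HX:Ycomplete}, the center-point claim is Lemma~\ref{lemma:H:simple:center}, and all that remains is to produce, for each closed $C$, a directed family of formal balls $(\dc F, r)$ with $F \subseteq \mathcal B$ finite whose supremum is $(C,0)$. Your family is essentially the paper's (the paper works directly in $X$, taking all $(\dc\{x_1,\dots,x_n\},r)$ with $x_j \in \mathcal B$ and $d(x_j,C)<r$, rather than routing through $\Hoarez(\mathbf B(X,d))$), and your supremum computation via Lemma~\ref{lemma:H:sup} is sound. But the proof is not complete: you explicitly leave the directedness of the family as "the technical heart" without carrying it out, and directedness is precisely the non-trivial content of this theorem. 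A description of what would need to be refined, together with an acknowledgement that the refinement is delicate, is not a proof of that step.

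Worse, the hint you give for how to do the refinement rests on a false premise. You assert that each $(y_j,s_j)\in\cl(C)$ "lies below some $(x_j,0)\in\cl(C)$", i.e.\ that there is a point $x_j\in C$ with $d(y_j,x_j)\le s_j$. What Lemma~\ref{lemma:dxC}(2) gives you is only $d(y_j,C)\le s_j$, and even for a center point $y_j$, where $d(y_j,C)=\inf_{x\in C}d(y_j,x)$, that infimum need not be attained (take $C=\{b_n\}_n$ with $d(y_j,b_n)=s_j+1/n$). This is exactly why the paper defines its family with the \emph{strict} inequality $d(x_j,C)<r$: strictness produces an actual witness $z_j\in C$ with $d(x_j,z_j)<r-\epsilon$ for some $\epsilon>0$, and then the fact that $x_j$ is a center point (so that $B^{d^+}_{(x_j,0),<r-\epsilon}$ is Scott-open and contains $(z_j,0)$) together with Lemma~\ref{lemma:B:basis} and the openness of $V_\epsilon$ yields a refined basis point $x'_j\in\mathcal B$ with $d(x_j,x'_j)\le r-\epsilon$ and $d(x'_j,C)<\epsilon$; collecting these refined points gives a common upper bound of radius $\epsilon$. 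Your non-strict condition $d(y_j,C)\le s_j$ does not support this interpolation, so as stated your family's directedness cannot be established by the route you sketch. The fix is to switch to strict inequalities and then actually run the refinement argument; as it stands, the proposal has a genuine gap at its central step.
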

\proof $\Hoarez X, \dH$ and $\Hoare X, \dH$ are Yoneda-complete by
Proposition~\ref{prop:HX:Ycomplete}, and $\dc \{x_1, \cdots, x_n\}$ is
a center point as soon as every $x_i$ is a center point, by
Lemma~\ref{lemma:H:simple:center}.

Let $C \in \Hoarez X$.  We wish to show that $(C, 0)$ is the
supremum of a directed family of formal balls $(C_i, r_i)$ below
$(C, 0)$, where every $C_i$ is the downward closure of finitely many
center points.  In case $C = \emptyset$, this is easy: take the family
$\{(\emptyset, 0)\}$.  Hence assume that $C$ is non-empty.

Consider the family $D$ of all formal balls
$(\dc \{x_1, x_2, \cdots, x_n\}, r)$ where $n \geq 1$, every $x_j$ is
in $\mathcal B$, and $d (x_j, C) < r$.

$D$ is a non-empty family.  Indeed, since $C$ is non-empty, pick $x$
from $C$.  By Lemma~\ref{lemma:B:basis}, there is a formal ball
$(x_1, r) \ll (x, 0)$ (i.e., $d (x_1, x) < r$) where $x_1$ is in
$\mathcal B$.  Since $x_1$ is a center point,
$d (x_1, C) = \inf_{z \in C} d (x_1, z)$
\cite[Proposition~6.12]{JGL:formalballs}, so $(\dc x_1, r)$ is in $D$.

To show that $D$ is directed, let $(\dc \{x_1, \cdots, x_m\}, r)$ and
$(\dc \{y_1, \cdots, y_n\}, s)$ be two elements of $D$.  By
definition, $d (x_j, C) < r$ and $d (y_k, C) < s$ for all indices $j$
and $k$.  Since there are finitely many points $x_j$ and $y_k$, we can
find $\epsilon > 0$ such that $d (x_j, C) < r-\epsilon$ and
$d (y_k, C) < s-\epsilon$ for all indices $j$ and $k$.  For each $j$,
since $d (x_j, C) < r-\epsilon$, there is a point $z_j \in C$ such
that $d (x_j, z_j) < r-\epsilon$.  (We use Proposition~6.12 of
loc.cit.\ once again.)  Since $x_j$ is a center point,
$B^{d^+}_{(x_j, 0), <r-\epsilon}$ is Scott-open.  It also contains
$(z_j, 0)$.  By Lemma~\ref{lemma:B:basis}, there is a formal ball
$(x'_j, r'_j) \ll (z_j, 0)$, where $x'_j$ is in $\mathcal B$ and
$r'_j < \epsilon$.  Indeed, one of the formal balls way-below
$(z_j, 0)$ must be in the open set $V_\epsilon$
(Lemma~\ref{lemma:Veps}).  By construction,
$d (x'_j, C) \leq d (x'_j, z_j) < r'_j < \epsilon$: the first
inequality comes from $d (x'_j, C) = \inf_{z \in C} d (x'_j, z)$,
which holds since $x'_j$ is a center point, and the second one is
justified by the fact that $(x'_j, r'_j) \ll (z_j, 0)$.  Also
$d (x_j, x'_j) \leq r-\epsilon-r'_j \leq r-\epsilon$, because
$(x'_j, r'_j) \in B^{d^+}_{(x_j, 0), <r-\epsilon}$.  Similarly, we
find center points $y'_k$ such that $d (y'_k, C) < \epsilon$ and
$d (y_k, y'_k) \leq s-\epsilon$ for every $k$.  Let
$E = \{x'_1, \cdots, x'_m, y'_1, \cdots, y'_n\}$, then
$(\dc E, \epsilon)$ is in $D$, and we claim that
$(\dc \{x_1, \cdots, x_m\}, r)$ and $(\dc \{y_1, \cdots, y_n\}, s)$
are both below $(\dc E, \epsilon)$.  We check the first of those
claims:
$\dH (\dc \{x_1, \cdots, x_m\}, \dc E) = \sup_{j=1}^m d (x_j, \dc E)
\leq \sup_{j=1}^m d (x_j, x'_j) \leq r-\epsilon$, and this proves the
claim.

We may use Lemma~\ref{lemma:H:sup} to compute the supremum of $D$.
Instead we verify directly that this is $(C, 0)$.

Every element $(\dc \{x_1, \cdots, x_m\}, r)$ of $D$ is below
$(C, 0)$.  To verify this, we compute:
\begin{eqnarray*}
  \dH (\dc \{x_1, \cdots, x_m\}, C)
  & = & \sup_{x' \in \dc \{x_1, \cdots, x_m\}} d (x', C) \\
  & = & \sup\nolimits_{j=1}^m \sup_{x' \leq x_j} d (x', C) \\
  & \leq & \sup\nolimits_{j=1}^m \sup_{x' \leq x_j} \underbrace {d (x', x_j)}_0 + d (x_j,
C) \\
  & = & \sup\nolimits_{j=1}^m d (x_j, C),
\end{eqnarray*}
and we use the fact that $d (x_j, C) < r$ for every $j$.

This means that $(C, 0)$ is an upper bound of $D$, and it remains to
show that it is the least.

We first note that there are elements of $D$ with arbitrary small
radius.  For that, we return to the argument we have given that $D$ is
non-empty: pick $x \in C$, and use Lemma~\ref{lemma:B:basis} to find a
formal ball $(x_1, r) \ll (x, 0)$ (i.e., $d (x_1, x) < r$) where $x_1$
is in $\mathcal B$.  That lemma also guarantees that $r$ can be chosen
as small as we wish, since we can choose $(x_1, r)$ in the open set
$V_\epsilon$ for $\epsilon > 0$ as we wish, and we have seen that
$(\dc x_1, r)$ is in $D$.

Let $(C', r')$ be another upper bound of $D$.  Since $D$ contains
formal balls of arbitrary small radius, and all those radii are larger
than or equal to $r'$ (because all the elements of $D$ are below
$(C', r')$), $r'=0$.  It remains to show that $C$ is included in $C'$,
and this will prove that $(C, 0) \leq^{\dH^+} (C', 0)$, using
Lemma~\ref{lemma:dH}~(2).  To this end, we consider an arbitrary open
subset $U$ that intersects $C$, say at $z$, and we assume for the sake
of contradiction that it does not intersect $C'$.  Since $z$ is not in
$C'$, $(z, 0)$ is not in $\cl (C')$, by Lemma~\ref{lemma:cl}~(1), and
Lemma~\ref{lemma:B:basis} gives us a formal ball
$(x, \epsilon) \ll (z, 0)$ (i.e., $d (x, z) < \epsilon$) where $x$ is
in $\mathcal B$.  Then
$B^d_{x, <\epsilon} = \uuarrow (x, \epsilon) \cap X$ contains $z$ and
does not intersect $C'$, since $\uuarrow (x, \epsilon)$ does not
intersect $\cl (C')$.  Since $d (x, z) < \epsilon$,
$d (x, C) < \epsilon$.  Find $\epsilon' > 0$ such that
$d (x, C) < \epsilon' < \epsilon$.  The formal ball
$(\dc x, \epsilon')$ is then in $D$, and since $(C', 0)$ is an upper
bound of $D$, $(\dc x, \epsilon') \leq^{\dH^+} (C', 0)$.  This
rewrites as $d (x, C') \leq \epsilon' < \epsilon$.  Using the fact
that for a center point $x$, $d (x, C') = \inf_{z' \in C'} d (x, z')$,
there is a point $z' \in C'$ such that $d (x, z') < \epsilon$.  This
contradicts the fact that $B^d_{x, <\epsilon}$ does not intersect
$C'$.  \qed

\begin{rem}
  \label{rem:Ha:alg}
  The same result holds for $\dH^a$, for every $a \in \Rp$, $a > 0$:
  when $X, d$ is algebraic Yoneda-complete, $\Hoarez X, \dH^a$ and
  $\Hoare X, \dH^a$ are algebraic Yoneda-complete, with the same
  strong basis.
  The proof is the same, except in the final step, where we consider
  another upper bound $(C', r')$ of $D$.  We need to make sure that we
  can take $\epsilon \leq a$.  This is guaranteed by
  Lemma~\ref{lemma:B:basis}, which states that we can choose
  $\epsilon$ as small as we wish.
  In the last lines of the proof, we obtain that
  $(\dc x, \epsilon') \leq^{\dH^{a+}} (C', 0)$, hence
  $\min (a, d (x, C')) \leq \epsilon' < \epsilon$.  Since
  $\epsilon \leq a$, this implies that $d (x, C') < \epsilon$, leading
  to a contradiction as above.
%
%
\end{rem}

\subsection{Continuity}
\label{sec:continuity-hoare}

We proceed exactly as in Section~\ref{sec:cont-val-leq-1} for
subnormalized continuous valuations.
\begin{lem}
  \label{lemma:H:functor}
  Let $X, d$ and $Y, \partial$ be two continuous Yoneda-complete
  quasi-metric spaces, and $f \colon X, d \mapsto Y, \partial$ be a
  $1$-Lipschitz continuous map.  The map
  $\Hoare f \colon \Hoarez X, \dH \to \Hoarez Y, \dH$ defined by
  $\Hoare f (C) = cl (f [C])$ is $1$-Lipschitz continuous.  Moreover,
  $F^{\Hoare f (C)} = \Prev f (F^C)$ for every $C \in \Hoarez X$.

  Similarly with $\Hoare$ instead of $\Hoarez$, with $\dH^a$ instead
  of $\dH$.
\end{lem}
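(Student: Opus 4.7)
The plan is to first establish the identity $F^{\Hoare f(C)} = \Prev f(F^C)$, and then derive both Lipschitz-ness and Scott-continuity of $\mathbf B^1(\Hoare f)$ by transport along the isometry of Proposition~\ref{prop:H:prev}. For the identity, given $k \in \Lform Y$, unravel $\Prev f(F^C)(k) = F^C(k \circ f) = \sup_{x \in C}\,k(f(x)) = \sup_{y \in f[C]}\,k(y)$, and then apply Lemma~\ref{lemma:H:cl:sup} to the lower semicontinuous map $k$ and the subset $f[C]$ of $Y$ to conclude that this supremum equals $\sup_{y \in cl(f[C])} k(y) = F^{\Hoare f(C)}(k)$. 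That $\Hoare f(C) \in \Hoarez Y$ is immediate, and $f[C]$ is non-empty when $C$ is, so the map restricts to $\Hoare X \to \Hoare Y$ as claimed.

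Next I would use Lemma~\ref{lemma:Pf:lip} (with $\alpha=1$), which provides that $\Prev f$ is $1$-Lipschitz from $\Prev X, \dKRH$ to $\Prev Y, \KRH\partial$, and preserves discreteness and sublinearity (and normalization for the $\Hoare$ case). By Proposition~\ref{prop:H:prev}, the bijection $C \mapsto F^C$ is an isometry from $\Hoarez X, \dH$ onto the subspace of discrete sublinear previsions equipped with $\dKRH$, and similarly for $\partial$. The identity just established shows that $\Hoare f$ is the conjugate of $\Prev f$ by these isometries, so $\Hoare f$ inherits $1$-Lipschitzness. The $\dH^a$ variant is handled in exactly the same way, using Remark~\ref{rem:HX:dKRHa} for the isometry and the $\dKRH^a$ version of Lemma~\ref{lemma:Pf:lip}.

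For continuity, by Proposition~\ref{prop:cont} it suffices to check that $\mathbf B^1(\Hoare f)$ is Scott-continuous, i.e.\ preserves directed suprema in $\mathbf B(\Hoarez X, \dH)$. By Proposition~\ref{prop:HX:Ycomplete} (and Remark~\ref{rem:HX:dKRHa:Ycomplete} for $\dH^a$), these suprema are naive suprema. Lemma~\ref{lemma:Pf:lipcont} says precisely that $\mathbf B^1(\Prev f)$ preserves naive suprema of directed families of formal balls of previsions. Conjugating again by the isometries from Proposition~\ref{prop:H:prev}, and using the identity $F^{\Hoare f(C)} = \Prev f(F^C)$ to see that formal balls $(C_i,r_i)$ on $\Hoarez X$ are transported to formal balls $(F^{C_i},r_i)$ and similarly on the target, we deduce that $\mathbf B^1(\Hoare f)$ preserves directed suprema; the restriction to $\Hoare X$ is automatic because $\Hoare f$ preserves non-emptiness.

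The only subtle point is to make sure the isometry genuinely transports directed suprema in the subspace $\Hoarez Y \hookrightarrow \Prev Y$: I would verify this using the fact (part of Proposition~\ref{prop:HX:Ycomplete} via Proposition~\ref{prop:supp:complete}) that the naive supremum of a directed family of discrete sublinear previsions of the form $F^{C_i}$ is again of this form, so that the supremum computed in $\mathbf B(\Hoarez X, \dH)$ really does correspond to the supremum in $\mathbf B(\Prev X,\dKRH)$ restricted to this subspace. Once this point is granted, the whole argument is essentially a mechanical transport along isometries; the main conceptual obstacle, which is the identity $F^{\Hoare f(C)} = \Prev f(F^C)$, has been handled up front via Lemma~\ref{lemma:H:cl:sup}.
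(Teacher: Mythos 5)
Your proposal is correct and follows essentially the same route as the paper: establish $F^{\Hoare f(C)} = \Prev f(F^C)$ via Lemma~\ref{lemma:H:cl:sup}, transport $1$-Lipschitzness through the isometry of Proposition~\ref{prop:H:prev} using Lemma~\ref{lemma:Pf:lip}, and obtain Scott-continuity of $\mathbf B^1(\Hoare f)$ from the fact that directed suprema of formal balls are naive suprema (Proposition~\ref{prop:HX:Ycomplete}) together with Lemma~\ref{lemma:Pf:lipcont}. The only cosmetic difference is your invocation of Proposition~\ref{prop:cont}, which is unnecessary since $1$-Lipschitz continuity is by definition the Scott-continuity of $\mathbf B^1(\Hoare f)$.
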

Recall that $f [C]$ denotes the image of $C$ by $f$.

\proof We first check that $F^{\Hoare f (C)} = \Prev f (F^C)$.  For
every $h \in \Lform X$,
$F^{\Hoare f (C)} (h) = \sup_{y \in cl (f [C])} h (y) = \sup_{y \in f
  [C]} h (y)$ by Lemma~\ref{lemma:H:cl:sup}.  That is equal to
$\sup_{x \in X} h (f (x))$.  We also have
$\Prev f (F^C) (h) = F^C (h \circ f) = \sup_{x \in X} h (f (x))$.
Therefore $F^{\Hoare f (C)} = \Prev f (F^C)$, which shows that the
isometry of Proposition~\ref{prop:H:prev} is natural.

By Lemma~\ref{lemma:Pf:lip}, $\Prev f$ is $1$-Lipschitz, so
$\mathbf B^1 (\Prev f)$ is monotonic.  Also, $\Prev f$ maps discrete
sublinear previsions to discrete sublinear previsions, and normalized
previsions to normalized previsions.  By
Proposition~\ref{prop:HX:Ycomplete}, $\Hoarez X, \dH$ and
$\Hoare X, \dH$ are Yoneda-complete, hence through the isometry of
Proposition~\ref{prop:H:prev}, the corresponding spaces of discrete
sublinear previsions are Yoneda-complete as well.  Moreover, directed
suprema of formal balls are computed as naive suprema.  By
Lemma~\ref{lemma:Pf:lipcont}, $\mathbf B^1 (\Prev f)$ preserves naive
suprema, hence all directed suprema.  It must therefore be
Scott-continuous, and using the (natural) isometry of
Proposition~\ref{prop:H:prev}, $\mathbf B^1 (\Hoare f)$ must also be
Scott-continuous.  Hence $\Hoare f$ is $1$-Lipschitz continuous.

In the case of $\dH^a$, the argument is the same, except that we use
Remark~\ref{rem:HX:dKRHa:Ycomplete} instead of
Proposition~\ref{prop:HX:Ycomplete}.  \qed

Let $X, d$ be a continuous Yoneda-complete quasi-metric space.
There is an algebraic Yoneda-complete quasi-metric space $Y, \partial$
and two $1$-Lipschitz continuous maps $r \colon Y, \partial \to X, d$
and $s \colon X, d \to Y, \partial$ such that $r \circ s = \identity
X$.

By Lemma~\ref{lemma:H:functor}, $\Hoarez r$ and $\Hoarez s$ are also
$1$-Lipschitz continuous.  Also,
$\Hoarez r \circ \Hoarez s = \identity {\Hoarez X}$: through the
isometry $C \mapsto F^C$, that boils down to
$\Prev r \circ \Prev s = \identity \relax$, which is easily checked
since
$\Prev r (\Prev s (F^C)) (h) = \Prev s (F^C) (h \circ r) = F^C (h
\circ r \circ s) = F^C (h)$, for all $C$ and $h$.  Therefore
$\Hoarez X, \dH$ is a $1$-Lipschitz continuous retract of
$\Hoarez Y, \mH\partial$.  (Similarly with $\dH^a$ and
$\mH\partial^a$.)  Theorem~\ref{thm:H:alg} states that
$\Hoarez Y, \mH\partial$ and $\Hoare Y, \mH\partial$ (resp.,
$\mH\partial^a$, using Remark~\ref{rem:Ha:alg} instead) is algebraic
Yoneda-complete, whence:
\begin{thm}[Continuity for Hoare powerdomains]
  \label{thm:H:cont}
  Let $X, d$ be a continuous Yoneda-complete quasi-metric space.  The
  quasi-metric spaces $\Hoarez X, \dH$ and $\Hoarez X, \dH^a$
  ($a \in \Rp$, $a > 0$) are continuous Yoneda-complete.  Similarly
  with $\Hoare X$. \qed
\end{thm}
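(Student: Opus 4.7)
The plan is to reduce the continuous case to the already-established algebraic case (Theorem~\ref{thm:H:alg} and Remark~\ref{rem:Ha:alg}) by means of a retraction argument, exactly parallel to how Theorem~\ref{thm:Vleq1:cont} was deduced from Theorem~\ref{thm:V:alg} in Section~\ref{sec:cont-val-leq-1}. The key external input is the characterization from \cite[Theorem~7.9]{JGL:formalballs}: a quasi-metric space is continuous Yoneda-complete if and only if it is a $1$-Lipschitz continuous retract of an algebraic Yoneda-complete quasi-metric space. This characterization is used twice, first to replace $X, d$ by an algebraic space upstairs, and second to infer continuous Yoneda-completeness from being a $1$-Lipschitz continuous retract of an algebraic Yoneda-complete space.

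First I would fix an algebraic Yoneda-complete quasi-metric space $Y, \partial$ together with a section $s \colon X, d \to Y, \partial$ and retraction $r \colon Y, \partial \to X, d$, both $1$-Lipschitz continuous, with $r \circ s = \identity X$. Applying Lemma~\ref{lemma:H:functor} functorially, the maps $\Hoarez s$, $\Hoarez r$ and $\Hoare s$, $\Hoare r$ are all $1$-Lipschitz continuous (for either quasi-metric $\dH$ or $\dH^a$). The next step is to verify the section-retraction identity $\Hoarez r \circ \Hoarez s = \identity{\Hoarez X}$ (and the analogous one for $\Hoare$). For $C \in \Hoarez X$, continuity of $r$ yields $r[cl(s[C])] \subseteq cl(r[s[C]]) = cl(C) = C$, so $cl(r[cl(s[C])]) \subseteq C$; conversely every $x \in C$ can be written $x = r(s(x))$ with $s(x) \in cl(s[C])$, so $C \subseteq r[cl(s[C])] \subseteq cl(r[cl(s[C])])$.

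Hence $\Hoarez X, \dH$ (resp., $\Hoarez X, \dH^a$) is a $1$-Lipschitz continuous retract of $\Hoarez Y, \mH\partial$ (resp., $\Hoarez Y, \mH\partial^a$), and likewise for $\Hoare$. Theorem~\ref{thm:H:alg} guarantees that the target spaces $\Hoarez Y, \mH\partial$ and $\Hoare Y, \mH\partial$ are algebraic Yoneda-complete, and Remark~\ref{rem:Ha:alg} gives the analogue for $\mH\partial^a$. Invoking \cite[Theorem~7.9]{JGL:formalballs} once more in the opposite direction, any $1$-Lipschitz continuous retract of an algebraic Yoneda-complete quasi-metric space is continuous Yoneda-complete, which concludes the proof.

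The only substantive step, and thus the main obstacle, is the verification of the set-theoretic identity $\Hoarez r \circ \Hoarez s = \identity{\Hoarez X}$ in the presence of the closure operations introduced by the definition of $\Hoarez f$; everything else is a direct invocation of results that have already been proved. Even this step is mild, reducing to continuity of $r$ and the fact that $r \circ s$ is the identity on $X$. One may alternatively transport the retraction through the isometry of Proposition~\ref{prop:H:prev}, using that $\Prev r$ and $\Prev s$ form a section-retraction pair of $1$-Lipschitz continuous maps on the corresponding spaces of discrete sublinear previsions (respectively normalized ones for $\Hoare X$), avoiding closure manipulations altogether.
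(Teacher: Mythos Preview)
Your proposal is correct and follows essentially the same approach as the paper: reduce to the algebraic case via \cite[Theorem~7.9]{JGL:formalballs}, apply Lemma~\ref{lemma:H:functor} to obtain the $1$-Lipschitz continuous retraction at the level of Hoare powerdomains, and conclude from Theorem~\ref{thm:H:alg} and Remark~\ref{rem:Ha:alg}. The paper simply asserts $\Hoarez r \circ \Hoarez s = \identity{\Hoarez X}$ as ``clear'' whereas you spell out the closure argument, but the strategy is identical.
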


Together with Lemma~\ref{lemma:H:functor}, and
Theorem~\ref{thm:H:alg} for the algebraic case, we obtain the following.
\begin{cor}
  \label{cor:H:functor}
  $\Hoarez, \dH$ defines an endofunctor on the category of continuous
  Yoneda-complete quasi-metric spaces and $1$-Lipschitz continuous
  map.  Similarly with $\Hoare$ instead of $\Hoarez$, with $\dH^a$
  instead of $\dH$ ($a > 0$), or with algebraic instead of continuous.
  \qed
\end{cor}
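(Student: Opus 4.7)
The plan is to check that $\Hoarez$ (respectively $\Hoare$) together with the action on morphisms $\Hoare f (C) = cl(f[C])$ from Lemma~\ref{lemma:H:functor} satisfies the two functor axioms, since the object part and the morphism part have already been taken care of by the main theorems.

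First, I would recall that the object part works: for every continuous (resp.\ algebraic) Yoneda-complete quasi-metric space $X, d$, the space $\Hoarez X, \dH$ is continuous (resp.\ algebraic) Yoneda-complete by Theorem~\ref{thm:H:cont} (resp.\ Theorem~\ref{thm:H:alg}), and similarly for $\Hoare X$, for $\dH^a$, and with Remark~\ref{rem:Ha:alg} handling the algebraic case with $\dH^a$. The morphism part is Lemma~\ref{lemma:H:functor}: for every $1$-Lipschitz continuous $f \colon X, d \to Y, \partial$ between continuous Yoneda-complete quasi-metric spaces, the map $\Hoare f \colon C \mapsto cl(f[C])$ is $1$-Lipschitz continuous from $\Hoarez X, \dH$ to $\Hoarez Y, \mH\partial$, and preserves non-emptiness (since $f[C]$ is non-empty whenever $C$ is), hence restricts to a morphism between the corresponding $\Hoare$-spaces; and the same holds for $\dH^a$.

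Next I would verify the two functoriality equations. For identities, $\Hoare (\identity X)(C) = cl(\identity X[C]) = cl(C) = C$, since $C$ is already closed; hence $\Hoare (\identity X) = \identity{\Hoarez X}$. For composition, given $1$-Lipschitz continuous maps $f \colon X, d \to Y, \partial$ and $g \colon Y, \partial \to Z, \mathfrak d$, I want to prove $\Hoare (g \circ f) = \Hoare g \circ \Hoare f$. Unfolding, the left-hand side maps $C$ to $cl((g \circ f)[C]) = cl(g[f[C]])$, and the right-hand side maps $C$ to $cl(g[cl(f[C])])$. The inclusion $cl(g[f[C]]) \subseteq cl(g[cl(f[C])])$ is immediate from $f[C] \subseteq cl(f[C])$. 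For the reverse inclusion, note that $g$ is continuous (by Proposition~\ref{prop:cont} applied to the $1$-Lipschitz continuous map $g$), so $g[cl(f[C])] \subseteq cl(g[f[C]])$, and taking closures preserves this inclusion.

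These three observations together establish that $\Hoarez, \dH$ is an endofunctor on the category of continuous Yoneda-complete quasi-metric spaces and $1$-Lipschitz continuous maps, and the same argument works verbatim for $\Hoare$, for $\dH^a$ (using Remark~\ref{rem:HX:dKRHa:Ycomplete} and the corresponding $\dH^a$-version of Lemma~\ref{lemma:H:functor}), and for the algebraic subcategory (using Theorem~\ref{thm:H:alg} and Remark~\ref{rem:Ha:alg}). There is no real obstacle here; the only mild point of attention is confirming that the continuity of $g$ used in the composition law really follows from $1$-Lipschitz continuity, which is the content of the implication (1)\,$\Rightarrow$\,(2) in Proposition~\ref{prop:cont}, valid without any standardness hypothesis beyond what is already assumed.
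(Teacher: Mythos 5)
Your proposal is correct and follows the same route the paper intends: the paper states the corollary with no written proof, treating the object part (Theorems~\ref{thm:H:alg} and~\ref{thm:H:cont}, Remark~\ref{rem:Ha:alg}) and the morphism part (Lemma~\ref{lemma:H:functor}) as already established and the functor laws as immediate. Your explicit verification of the identity and composition laws (using $g[cl(A)] \subseteq cl(g[A])$ for the continuous map $g$, via Proposition~\ref{prop:cont}) is exactly the routine check the paper leaves implicit, and it is carried out correctly.
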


\subsection{The Lower Vietoris Topology}
\label{sec:lower-viet-topol}

The lower Vietoris topology on $\Hoare X$, resp.\ $\Hoarez X$, is
generated by subbasic open sets $\Diamond U = \{C \mid C \cap U \neq
\emptyset\}$, where $U$ ranges over the open subsets of $X$.

\begin{lem}
  \label{lemma:H:V=weak}
  Let $X, d$ be a standard quasi-metric space.  The map $C \mapsto
  F^C$ is a homeomorphism of $\Hoarez X$ (resp., $\Hoare X$) with the
  lower Vietoris topology onto the space of discrete sublinear
  previsions on $X$ (resp., that are normalized) with the weak topology.
\end{lem}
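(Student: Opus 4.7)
The plan is to exploit the bijection already established in Lemma~\ref{lemma:eH}: $C \mapsto F^C$ is a bijection from $\Hoarez X$ onto the set of discrete sublinear previsions on $X$, which restricts to a bijection from $\Hoare X$ onto the subset of normalized discrete sublinear previsions. So all that remains is to verify that this bijection is continuous in both directions, and for that it suffices to check the behavior on subbasic opens on each side.

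First I would show that $C \mapsto F^C$ is continuous for the two topologies. A subbase of the weak topology consists of the sets $[h > b] = \{F \mid F(h) > b\}$ for $h \in \Lform X$, $b \in \Rp$. Computing the preimage:
\[
\{C \in \Hoarez X \mid F^C(h) > b\} = \{C \mid \sup\nolimits_{x \in C} h(x) > b\} = \{C \mid C \cap h^{-1}(]b, +\infty]) \neq \emptyset\} = \Diamond h^{-1}(]b, +\infty]).
\]
Since $h$ is lower semicontinuous, $h^{-1}(]b, +\infty])$ is open in the $d$-Scott topology, so this preimage is a subbasic open of the lower Vietoris topology. Hence $C \mapsto F^C$ is continuous.

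Next I would show continuity of the inverse. A subbase of the lower Vietoris topology is given by the sets $\Diamond U$, where $U$ is open in $X$. Under the bijection, $\Diamond U$ corresponds to $\{F^C \mid C \cap U \neq \emptyset\}$. Since $\chi_U \in \Lform X$ and $F^C(\chi_U) = \sup_{x \in C} \chi_U(x)$ equals $1$ if $C \cap U \neq \emptyset$ and $0$ otherwise, this set is precisely the intersection of $[\chi_U > 0]$ with the image of the bijection. This is an open subset of the space of discrete sublinear previsions in the subspace weak topology. So the inverse of $C \mapsto F^C$ pulls each lower Vietoris subbasic open back to a weak open, proving continuity.

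Finally, the restriction to $\Hoare X$ follows without extra work, since the last clause of Lemma~\ref{lemma:eH} states that $F^C$ is normalized iff $C$ is non-empty, so the bijection between $\Hoare X$ and normalized discrete sublinear previsions is the restriction of the one above; both topologies restrict to the corresponding subspace topology, and continuity is inherited. There is no real obstacle here: the argument is a direct manipulation of subbases, the only thing to check being that the characteristic function $\chi_U$ of an open set lies in $\Lform X$ so that $[\chi_U > 0]$ makes sense as a weak open, which is immediate.
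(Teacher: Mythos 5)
Your proposal is correct and follows essentially the same route as the paper: use the bijection from Lemma~\ref{lemma:eH}, then match the subbasic weak open $[h > b]$ with $\Diamond h^{-1}(]b, +\infty])$ in one direction and express $\Diamond U$ as the trace of $[\chi_U > c]$ in the other (the paper uses $c = 1/2$ where you use $c = 0$, which makes no difference since $F^C(\chi_U) \in \{0,1\}$).
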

\proof
This is a bijection by Lemma~\ref{lemma:eH}.
Fix $h \in \Lform X$, $a \in \Rp$.
For every $C \in \Hoarez X$, $F^C$ is in $[h > a]$ if and only if
$\sup_{x \in C} h (x) > a$, if and only if $h (x) > a$ for some $x \in
C$, if and only if $C$ intersects $h^{-1} (]a, +\infty])$, namely, $C
\in \Diamond h^{-1} (]a, +\infty])$.  Therefore the bijection is
continuous.

In the other direction, for every open subset $U$, $\Diamond U$ is the
set of all $C \in \Hoarez X$ such that $C$ intersects
$\chi_U^{-1} (]1/2, +\infty])$, i.e., such that $F^C$ is in
$[\chi_U > 1/2]$.  The case of $\Hoare X$ is similar.  \qed

\begin{lem}
  \label{lemma:H:Vietoris}
  Let $X, d$ be a standard and Lipschitz regular quasi-metric space,
  or a continuous Yoneda-complete quasi-metric space, and let
  $a, a' > 0$ with $a \leq a'$.  We have the following inclusions of
  topologies on $\Hoarez X$, resp.\ $\Hoare X$:
  \begin{quote}
    lower Vietoris $\subseteq$ $\dH^a$-Scott $\subseteq$
    $\dH^{a'}$-Scott $\subseteq$ $\dH$-Scott.
  \end{quote}
\end{lem}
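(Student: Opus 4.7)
The plan is to transport the chain of inclusions established for spaces of previsions (Proposition~\ref{prop:weak:dScott:a}) through the natural isometry-cum-homeomorphism between Hoare powerdomains and spaces of discrete sublinear previsions. More precisely, Lemma~\ref{lemma:eH} gives a bijection $C \mapsto F^C$ between $\Hoarez X$ (resp.\ $\Hoare X$) and the space $Z$ of discrete sublinear previsions (resp.\ normalized ones). Proposition~\ref{prop:H:prev} shows this bijection is an isometry when $Z$ is equipped with $\dKRH$, and Remark~\ref{rem:HX:dKRHa} shows it is also an isometry when $Z$ is equipped with $\dKRH^a$ and $\Hoarez X$ with $\dH^a$. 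In particular, any isometry is a homeomorphism for the induced Scott topologies on formal balls, so the $\dH$-Scott (resp.\ $\dH^a$-Scott) topology on $\Hoarez X$ corresponds exactly to the $\dKRH$-Scott (resp.\ $\dKRH^a$-Scott) topology on $Z$.

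First I would check that Proposition~\ref{prop:weak:dScott:a} applies. Its hypotheses require that the space of previsions under consideration be Yoneda-complete under each of $\dKRH$, $\dKRH^a$, $\dKRH^{a'}$, with directed suprema of formal balls computed as naive suprema. This is exactly the content of Proposition~\ref{prop:HX:Ycomplete} (for $\dH$) and Remark~\ref{rem:HX:dKRHa:Ycomplete} (for $\dH^a$, $\dH^{a'}$), transported through the isometry. Applying Proposition~\ref{prop:weak:dScott:a} to the set $Z$ of discrete sublinear previsions (resp.\ normalized ones, which is stable under the relevant properties), we get the chain of inclusions
\[
\text{weak} \subseteq \dKRH^a\text{-Scott} \subseteq \dKRH^{a'}\text{-Scott} \subseteq \dKRH\text{-Scott}.
\]

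Next I would invoke Lemma~\ref{lemma:H:V=weak}, which identifies the lower Vietoris topology on $\Hoarez X$ (resp.\ $\Hoare X$) with the weak topology on $Z$ via the same bijection $C \mapsto F^C$. Combining this with the isometric identifications of the $\dH^a$-, $\dH^{a'}$-, and $\dH$-Scott topologies with their prevision-side counterparts, transporting the whole chain back yields precisely
\[
\text{lower Vietoris} \subseteq \dH^a\text{-Scott} \subseteq \dH^{a'}\text{-Scott} \subseteq \dH\text{-Scott},
\]
as desired.

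I do not anticipate any serious obstacle: the work has already been done on the prevision side, and all the required tools (isometric identification for $\dH$, $\dH^a$; homeomorphism identification for lower Vietoris; Yoneda-completeness with naive suprema) are available. The only mild subtlety is to verify, in the normalized (i.e., $\Hoare X$) case, that Proposition~\ref{prop:weak:dScott:a} can indeed be specialized to the subset $Z$ of normalized discrete sublinear previsions. But this is immediate since Proposition~\ref{prop:weak:dScott:a} is stated for any subset $Y$ of previsions defined by a choice of properties among sublinearity, normalization, discreteness, etc., and both Proposition~\ref{prop:HX:Ycomplete} and Remark~\ref{rem:HX:dKRHa:Ycomplete} confirm Yoneda-completeness with naive suprema on $\Hoare X$ as well.
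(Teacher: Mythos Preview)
Your proposal is correct and follows essentially the same approach as the paper's proof, which is a one-line invocation of Proposition~\ref{prop:weak:dScott:a} via Lemma~\ref{lemma:H:V=weak}, Remark~\ref{rem:HX:dKRHa}, and Proposition~\ref{prop:HX:Ycomplete}. Your write-up is simply a more detailed unpacking of how those pieces fit together, including the explicit check for the normalized case $\Hoare X$.
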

\proof Considering Lemma~\ref{lemma:H:V=weak},
Remark~\ref{rem:HX:dKRHa}, and Proposition~\ref{prop:HX:Ycomplete},
this is a consequence of Proposition~\ref{prop:weak:dScott:a}.  \qed


\begin{prop}
  \label{prop:H:Vietoris}
  Let $X, d$ be an algebraic Yoneda-complete quasi-metric space.  The
  $\dH$-Scott topology, the $\dH^a$-Scott topology, for every
  $a \in \Rp$, $a > 0$, and the lower Vietoris topology all coincide
  on $\Hoarez X$, resp.\ $\Hoare X$.
\end{prop}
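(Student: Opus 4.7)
\medskip

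The plan is to combine Lemma~\ref{lemma:H:Vietoris} with a direct identification of the $\dH$-open balls around finitary center points as finite intersections of Vietoris subbasic opens. Lemma~\ref{lemma:H:Vietoris} already gives us the chain of inclusions
\[
  \text{lower Vietoris} \subseteq \dH^a\text{-Scott} \subseteq \dH^{a'}\text{-Scott} \subseteq \dH\text{-Scott},
\]
so the only thing left to prove is the reverse inclusion $\dH\text{-Scott} \subseteq \text{lower Vietoris}$; from this all topologies will collapse.

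To prove this, I would first invoke Theorem~\ref{thm:H:alg}: since $X,d$ is algebraic Yoneda-complete, so is $\Hoarez X, \dH$ (resp.\ $\Hoare X, \dH$), and a strong basis is given by the finitary closed sets $C_0 = \dc\{x_1, \ldots, x_n\}$ with each $x_j$ a center point of $X,d$. Applying Lemma~\ref{lemma:alg:balls} to $\Hoarez X, \dH$ with this strong basis, every $\dH$-Scott open is a union of open balls $B^{\dH}_{C_0, <r}$ of this form. So it is enough to show that each such ball is lower-Vietoris open.

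The key computation is that
\[
  B^{\dH}_{C_0, <r} \;=\; \bigcap_{j=1}^n \Diamond B^d_{x_j, <r}.
\]
For each center point $x_j$, Proposition~6.12 of~\cite{JGL:formalballs} gives $d(x_j, C) = \inf_{y\in C} d(x_j, y)$, so $d(x_j, C) < r$ is equivalent to $C \cap B^d_{x_j,<r} \neq \emptyset$. Using the triangular inequality $d(x,C) \leq d(x,x_j) + d(x_j,C)$ (Lemma~6.11 of op.\ cit.), and the fact that $x \leq x_j$ forces $d(x,x_j)=0$, I get $\sup_{x \in C_0} d(x,C) = \max_{j=1}^n d(x_j, C)$, so $\dH(C_0, C) < r$ is equivalent to $d(x_j, C) < r$ for every $j$, which is the displayed intersection. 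Since each $x_j$ is a center point, the ball $B^d_{x_j, <r} = \uuarrow(x_j, r) \cap X$ is $d$-Scott open, and $B^{\dH}_{C_0, <r}$ is therefore a finite intersection of Vietoris subbasic opens, hence lower-Vietoris open. The edge case $n=0$ in $\Hoarez X$ is trivial, since then $B^{\dH}_{\emptyset,<r} = \Hoarez X$.

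There is no serious obstacle: the proof is essentially a direct computation using the description of center points in Theorem~\ref{thm:H:alg} and the formula $d(x_j, C) = \inf_{y \in C} d(x_j, y)$ valid at center points. The only point that needs mild care is verifying the reduction $\sup_{x \in C_0} d(x,C) = \max_j d(x_j, C)$, which follows from monotonicity of $d(\_,C)$ in the first argument along $\leq$ via the triangle inequality with a zero-distance step.
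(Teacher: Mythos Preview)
Your proposal is correct and follows essentially the same route as the paper: invoke Lemma~\ref{lemma:H:Vietoris} for one direction, then use the strong basis from Theorem~\ref{thm:H:alg} to reduce to showing that each open ball $B^{\dH}_{\dc\{x_1,\ldots,x_n\},<r}$ equals $\bigcap_{j=1}^n \Diamond B^d_{x_j,<r}$. The paper phrases the reduction slightly differently, working with $\uuarrow(\dc\{x_1,\ldots,x_n\},r)$ in $\mathbf B(\Hoarez X,\dH)$ and intersecting with $\Hoarez X$, but the key identity and the verification are the same; your use of Lemma~\ref{lemma:alg:balls} and the explicit justification of $\dH(C_0,C)=\max_j d(x_j,C)$ are if anything a bit more careful.
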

\proof Considering Lemma~\ref{lemma:H:Vietoris}, it remains to show
that every $\dH$-Scott open subset is open in the lower Vietoris
topology.  Again we deal with $\Hoarez X$, and note that the case of
$\Hoare X$ is completely analogous.

The subsets $\mathcal V = \uuarrow (\dc \{x_1, \cdots, x_n\}, r)$,
with $x_1$, \ldots, $x_n$ center points form a base of the Scott
topology on $\mathbf B (\Hoarez X, \dH)$, using
Theorem~\ref{thm:H:alg}.  It suffices to show that
$\mathcal V \cap \Hoarez X$ is open in the lower Vietoris topology.
For that, we use the implication (1)$\limp$(3) of Lemma~5.8 of
\cite{JGL:formalballs}, which states that, in a continuous
quasi-metric space $Y, \partial$, for every $\epsilon > 0$, for every
center point $y \in Y$,
$B^{\partial^+}_{(y, 0), <\epsilon} = \uuarrow (y, \epsilon)$.  Here
$H = \Hoarez X$, $\partial = \dH$, is algebraic complete by
Theorem~\ref{thm:H:alg}, $\mathcal V = \dc \{x_1, \cdots, x_n\}$ is a
center point by Lemma~\ref{lemma:H:simple:center}, so
$\uuarrow (\dc \{x_1, \cdots, x_n\}, r)$ is the open ball
$B^{\dH^+}_{\dc \{x_1, \cdots, x_n\}, < r}$.  Then, $C \in \Hoarez X$
is in $\mathcal V$ if and only if it is in
$B^{\dH^+}_{\dc \{x_1, \cdots, x_n\}, < r}$, if and only if
$\dH (\dc \{x_1, \cdots, x_n\}, C) < r$, if and only if
$d (x_i, C) < r$ for every $i$, $1\leq i\leq n$, if and only if $C$
intersects all the open balls $B^d_{x_i, <r}$.  Since each $x_i$ is a
center point, those open balls are open in the $d$-Scott topology, so
$\mathcal V \cap \Hoarez X = \bigcap_{i=1}^n \Diamond B^d_{x_i, <r}$
is open in the lower Vietoris topology.  \qed

\begin{thm}[$\dH$ quasi-metrizes the lower Vietoris topology]
  \label{thm:H:Vietoris}
  Let $X, d$ be a continuous Yoneda-complete quasi-metric space.  The
  $\dH$-Scott topology, the $\dH^a$-Scott topology, for every
  $a \in \Rp$, $a > 0$, and the lower Vietoris topology all coincide
  on $\Hoarez X$, resp.\ $\Hoare X$.
\end{thm}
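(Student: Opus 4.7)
The plan is to reduce the continuous case to the algebraic case (Proposition~\ref{prop:H:Vietoris}) via the standard retraction technique already used in the proofs of Theorem~\ref{thm:H:cont} and Theorem~\ref{thm:V:weak=dScott}. By \cite[Theorem~7.9]{JGL:formalballs}, $X, d$ is a $1$-Lipschitz continuous retract of some algebraic Yoneda-complete quasi-metric space $Y, \partial$, with section $s \colon X \to Y$ and retraction $r \colon Y \to X$ satisfying $r \circ s = \identity X$. I will verify that this retraction lifts to a topological retraction on $\Hoarez X$ (and $\Hoare X$) simultaneously for three topologies at once: $\dH$-Scott, $\dH^a$-Scott, and lower Vietoris.

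For the two Scott topologies, Lemma~\ref{lemma:H:functor} provides the key input: $\Hoarez s$ and $\Hoarez r$ are $1$-Lipschitz continuous both for $\dH$ and for $\dH^a$, and functoriality (Corollary~\ref{cor:H:functor}) gives $\Hoarez r \circ \Hoarez s = \identity {\Hoarez X}$. For the lower Vietoris topology, I will establish the elementary identity $(\Hoarez f)^{-1}(\Diamond U) = \Diamond f^{-1}(U)$ for any continuous map $f \colon X \to Y$ and any open $U \subseteq Y$; this holds because $\cl (f [C])$ meets $U$ iff $f [C]$ does (openness of $U$), iff $C$ meets $f^{-1}(U)$. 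Consequently $\Hoarez f$ is continuous for the lower Vietoris topologies, and since $s, r$ are continuous in the underlying $d$-Scott/$\partial$-Scott topologies by Proposition~\ref{prop:cont}, $\Hoarez s$ and $\Hoarez r$ again form a section-retraction pair for the lower Vietoris topology.

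With this in hand, I will invoke Fact~\ref{fact:retract:two}: $\Hoarez s$ is a topological embedding of $\Hoarez X$ into $\Hoarez Y$ under each of the three topologies (as every section of a continuous retraction is an embedding). On $\Hoarez Y$, the three topologies coincide by Proposition~\ref{prop:H:Vietoris}, since $Y, \partial$ is algebraic Yoneda-complete. Hence the three subspace topologies that $\Hoarez X$ inherits through $\Hoarez s$ coincide, which forces the three original topologies on $\Hoarez X$ to coincide. The argument transfers verbatim to $\Hoare X$, using the same Lemma~\ref{lemma:H:functor} and Proposition~\ref{prop:H:Vietoris}.

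The main obstacle, in principle, is the potential mismatch between the variants ($\Hoarez$ versus $\Hoare$, and the different values of $a$), but these are all handled uniformly once the functoriality of $\Hoarez$ and $\Hoare$ for lower Vietoris is observed; no finer analysis of supports or of naive suprema is needed beyond what is already packaged in Lemma~\ref{lemma:H:functor} and Proposition~\ref{prop:H:Vietoris}. The chain of inclusions in Lemma~\ref{lemma:H:Vietoris} then collapses to equalities, completing the proof.
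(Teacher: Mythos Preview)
Your proof is correct and follows essentially the same retraction-to-algebraic strategy as the paper, invoking \cite[Theorem~7.9]{JGL:formalballs}, Proposition~\ref{prop:H:Vietoris}, and Fact~\ref{fact:retract:two}. The only minor difference is that you establish lower Vietoris continuity of $\Hoarez f$ directly via the identity $(\Hoarez f)^{-1}(\Diamond U) = \Diamond f^{-1}(U)$, whereas the paper routes through the prevision isomorphism (Proposition~\ref{prop:H:prev}) together with Fact~\ref{fact:Pf:weak} and Lemma~\ref{lemma:H:V=weak}; your direct argument is a small but legitimate simplification.
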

\proof The proof is as for Theorem~\ref{thm:V:weak=dScott}.  By
\cite[Theorem~7.9]{JGL:formalballs}, $X, d$ is the $1$-Lipschitz
continuous retract of an algebraic Yoneda-complete quasi-metric space
$Y, \partial$.  Call $s \colon X \to Y$ the section and
$r \colon Y \to X$ the retraction.  Using
Proposition~\ref{prop:H:prev}, we confuse $\Hoarez X$ with the
corresponding space of discrete sublinear previsions, and similarly
for $\Hoare X$, $\Hoarez Y$, $\Hoare Y$.  Then $\Prev s$ and $\Prev r$
form a $1$-Lipschitz continuous section-retraction pair by
Lemma~\ref{lemma:Pf:lip}, and in particular $\Prev s$ is an embedding
of $\Hoarez X$ into $\Hoarez Y$ with their $\dH$-Scott topologies
(similarly with $\Hoare$, or with $\dH^a$ in place of $\dH$).
However, $s$ and $r$ are also just continuous, by
Proposition~\ref{prop:cont}, so $\Prev s$ and $\Prev r$ also form a
section-retraction pair between the same spaces, this time with their
weak topologies (as spaces of previsions), by Fact~\ref{fact:Pf:weak},
that is, with their lower Vietoris topologies, by
Lemma~\ref{lemma:H:V=weak}.  By Proposition~\ref{prop:H:Vietoris}, the
two topologies on $\Hoarez Y$ (resp., $\Hoare Y$) are the same.
Fact~\ref{fact:retract:two} then implies that the two topologies on
$\Hoarez X$ (resp., $\Hoare X$) are the same as well.  \qed

\section{The Smyth Powerdomain}
\label{sec:smyth-powerdomain}






\subsection{The $\dQ$ Quasi-Metric}
\label{sec:dq-quasi-metric}

The Smyth powerdomain of $X$ is the space of all non-empty compact
saturated subsets $Q$ of $X$.  Instead of defining a specific
quasi-metric on such subsets, as we did with $\dH$ on the Hoare
powerdomain, we shall reuse $\dKRH$, on the isomorphic space of
discrete superlinear previsions on $X$.  The following is inspired
from \cite[Lemma~4.7, item~2]{jgl-jlap14}, as was Lemma~\ref{lemma:eH}.
\begin{lem}
  \label{lemma:uQ}
  Let $X$ be a topological space.  For each non-empty subset $Q$ of
  $X$, let $F_Q (h) = \inf_{x \in Q} h (x)$.
  \begin{enumerate}
  \item If $Q$ is compact saturated and non-empty, then $F_Q$ is a
    normalized discrete superlinear prevision; moreover,
    $F_Q (h) = \min_{x \in Q} h (x)$ for every $h \in \Lform X$.
  \item Conversely, if $X$ is sober, then every normalized discrete
    superlinear prevision is of the form $F_Q$ for some unique
    non-empty compact saturated set $Q$.
  \end{enumerate}
\end{lem}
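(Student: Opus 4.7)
My plan is to treat (1) and (2) in turn, and to follow closely the structure of the proof of Lemma~\ref{lemma:eH}, dualising sublinearity/$\sup$ to superlinearity/$\min$.

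For (1) the key observation is that if $Q$ is compact saturated and $h \in \Lform X$, then $h[Q]$ is a compact subset of $\creal$ in its Scott topology; its saturation is therefore of the form $[b,+\infty]$ by Lemma~\ref{lemma:Q:radius} applied to $\creal$ (or directly, since non-empty compact saturated subsets of $\creal$ are closed upper intervals), so the infimum $b = \inf_{x \in Q} h(x)$ is attained, giving the stated $\min$. Scott-continuity of $F_Q$ would then follow from the usual argument: for a directed family $(h_i)_i$ with pointwise supremum $h$ and every $t < \min_{x \in Q} h(x)$, $Q$ is covered by the directed family ${(h_i^{-1}(]t,+\infty]))}_i$, so compactness gives one $i$ with $Q \subseteq h_i^{-1}(]t,+\infty])$, whence $\min_{x \in Q} h_i(x) \geq t$. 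Positive homogeneity and normalization are immediate, and superlinearity is the trivial inequality $\inf_Q (h+h') \geq \inf_Q h + \inf_Q h'$. Discreteness reduces to the claim $\min_{x \in Q} f(h(x)) = f(\min_{x \in Q} h(x))$ for lsc $f$: one direction uses that $f$ is monotonic (as an lsc map between Scott-topologised spaces), and the other uses that the $\min$ on the right is attained, at an $x_0 \in Q$ at which the $\min$ on the left is bounded above by $f(h(x_0))$.

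For (2) I would follow the same template as in Lemma~\ref{lemma:eH}, but using Hofmann--Mislove in place of the Scott-closed / open pairing. Set $\mathcal F = \{U \in \Open X : F(\chi_U) = 1\}$. Applying discreteness to the strict lsc map $f = \chi_{]0,+\infty]}$ and $h = \chi_U$, one gets $F(\chi_U) = f(F(\chi_U))$, forcing $F(\chi_U) \in \{0,1\}$ (normalization plus monotonicity keeps the value in $[0,1]$). Then $\mathcal F$ is upwards-closed in $\Open X$ by monotonicity, Scott-open in $\Open X$ by Scott-continuity of $F$, contains $X$ since $F(\mathbf 1)=1$ by normalization, and avoids $\emptyset$ since $F(0)=0$ by positive homogeneity. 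The main obstacle is closure under binary intersection: if $F(\chi_{U_1}) = F(\chi_{U_2}) = 1$, superlinearity gives $F(\chi_{U_1} + \chi_{U_2}) \geq 2$; applying discreteness to the strict lsc $f = \chi_{]3/2,+\infty]}$, and noting that $f \circ (\chi_{U_1} + \chi_{U_2}) = \chi_{U_1 \cap U_2}$, yields $F(\chi_{U_1 \cap U_2}) = f(F(\chi_{U_1} + \chi_{U_2})) = 1$, so $U_1 \cap U_2 \in \mathcal F$.

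Hence $\mathcal F$ is a Scott-open filter of opens not containing $\emptyset$, and sobriety of $X$ allows us to invoke the Hofmann--Mislove theorem (\cite[Theorem~8.3.2]{JGL-topology}) to obtain a unique non-empty compact saturated $Q$ with $\mathcal F = \{U \in \Open X : Q \subseteq U\}$. It remains to identify $F$ with $F_Q$. For any $h \in \Lform X$ and $t \in \Rp$, discreteness applied to the strict lsc $f = \chi_{]t,+\infty]}$ (with the trivial case $t=+\infty$ excluded) gives $F(\chi_{h^{-1}(]t,+\infty])}) = f(F(h))$, so $F(h) > t$ iff $h^{-1}(]t,+\infty]) \in \mathcal F$ iff $Q \subseteq h^{-1}(]t,+\infty])$ iff $\min_{x \in Q} h(x) > t$; taking the supremum over such $t$ gives $F(h) = F_Q(h)$. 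Uniqueness of $Q$ follows because a saturated set is the intersection of its open neighbourhoods, and the opens containing $Q$ are exactly those $U$ with $F_Q(\chi_U) = 1$, a quantity intrinsic to $F$.
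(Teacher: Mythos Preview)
Your proof is correct and follows the same overall architecture as the paper's: in part~(1) you verify each axiom directly, and in part~(2) you construct the Scott-open filter $\mathcal F = \{U : F(\chi_U)=1\}$, apply Hofmann--Mislove, and then identify $F$ with $F_Q$ via discreteness applied to threshold functions $\chi_{]t,+\infty]}$.

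The one place where your route differs is the proof that $\mathcal F$ is closed under binary intersection, and your argument there is actually cleaner than the paper's. The paper first establishes, by a somewhat involved contradiction using two separate applications of discreteness, that $F(\chi_U) + F(\chi_V) = F(\chi_U + \chi_V)$ whenever $U \subseteq V$; it then combines this with the identity $\chi_{U_1 \cup U_2} + \chi_{U_1 \cap U_2} = \chi_{U_1} + \chi_{U_2}$ and superlinearity. Your approach bypasses all this: from $F(\chi_{U_1} + \chi_{U_2}) \geq 2$ (superlinearity) and a single application of discreteness to $f = \chi_{]3/2,+\infty]}$, using that $f \circ (\chi_{U_1} + \chi_{U_2}) = \chi_{U_1 \cap U_2}$, you get $F(\chi_{U_1 \cap U_2}) = 1$ immediately. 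This is a genuine simplification of that step, though the global strategy is the same.
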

\proof 1. For every $h \in \Lform X$, $\{h (x) \mid x \in Q\}$ is
compact in $\creal$, and non-empty, hence has a least element.  It
follows that there is an $x \in Q$ such that $h (x) = F_Q (h)$.  This
justifies the second subclaim, that $F_Q (h) = \min_{x \in Q} h (x)$.

$F_Q$ is clearly positively homogeneous and monotonic.  To show
Scott-continuity, let $h$ be written as the supremum of a directed
family ${(h_i)}_{i \in I}$ in $\Lform X$.  $F_Q (h) \geq \sup_{i \in
  I} F_Q (h_i)$ is a consequence of monotonicity.  If the inequality
were strict, then there would be a number $r \in \Rp$ such that $F_Q
(h) > r \geq \sup_{i \in I} F_Q (h_i)$.  The inequality $F_Q (h) > r$
means that the image of $Q$ by $h$ is contained in the Scott-open $]r,
+\infty]$.  For every $x \in Q$, $h (x) = \sup_{i \in I} h_i (x) > r$,
so $h_i (x) > r$ for some $i \in I$.  It follows that $Q$ is included
in the union of the open subsets $h_i^{-1} (]r, +\infty])$.  Since $Q$
is compact and the family of the latter open subsets is directed, $Q$
is included in one of them, say $h_i$.  That means that the image of
$Q$ by $h_i$ is included in $]r, +\infty]$, and that implies that $F_Q
(h_i) = \min_{x \in Q} h_i (x) > r$, contradicting $r \geq \sup_{i \in I} F_Q (h_i)$.

We check that $F_Q$ is discrete.  Let $x \in Q$ be such that
$h (x) = F_Q (h)$.  For every $f \in \Lform \creal$ (strict or not),
$F_Q (f \circ h)$ is trivially less than or equal to $f (h (x))$.  For
every $x' \in Q$, $h (x') \geq h (x)$, so
$(f \circ h) (x') \geq f (h (x))$, from which it follows that
$F_Q (f \circ h) = \min_{x' \in Q} (f \circ h) (x') \geq f (h (x)) = f
(F_Q (h))$.

Since this holds even when $f$ is not strict, it holds for the
map $f (x) = \alpha + x$, for any $\alpha \in \Rp$.  In other words,
$F_Q (\alpha.\mathbf 1 + h) = \alpha + F_Q (h)$, showing that $F_Q$ is
normalized.

Finally, we show that $F_Q$ is superlinear.  Fix $h, h' \in \Lform X$.
Then $F_Q (h+h') = \min_{x \in Q} (h (x) + h (x')) \geq \min_{x \in Q}
h (x) + \min_{x \in Q'} h' (x) = F_Q (h) + F_Q (h')$.

2. Now assume $X$ sober, and let $F$ be a normalized discrete
superlinear prevision on $X$.  Let $\mathcal F$ be the family of open
subsets $U$ of $X$ such that $F (\chi_U)=1$.  $\mathcal F$ is upwards-closed.
Since $F$ is normalized,
$\mathcal F$ contains $X$ itself, hence is non-empty.

Note that, for every open subset $U$ of $X$, $F (\chi_U)$ is either
equal to $1$ or to $0$.  This is because $F$ is discrete: for every
$t \in ]0, 1]$,
$F (\chi_U) = F (\chi_{]t, +\infty]} \circ \chi_U) = \chi_{]t,
  +\infty]} (F (\chi_U)) \in \{0, 1\}$.

Using discreteness again and superlinearity, we show that, if
$U \subseteq V$, then $F (\chi_U) + F (\chi_V) = F (\chi_U+\chi_V)$.
The right-hand side is larger than or equal to the left-hand side by
superlinearity.  Imagine it is strictly larger.  Let
$f \in \Lform \creal$ map any $t \leq 1/2$ to $0$, any
$t \in ]1/2, 3/2]$ to $1$, and any $t > 3/2$ to $2$.  By discreteness,
$F (\chi_U + \chi_V) = F (f \circ (\chi_U + \chi_V)) = f (F (\chi_U +
\chi_V))$, which shows that $F (\chi_U + \chi_V) \in \{0, 1, 2\}$.
Since $F (\chi_U) + F (\chi_V) < F (\chi_U+\chi_V)$, $F (\chi_U)$ and
$F (\chi_V)$ cannot both be equal to $1$, and since
$F (\chi_U) \leq F (\chi_V)$, at least $F (\chi_U)$ is equal to $0$.
Hence $F (\chi_V) < F (\chi_U+\chi_V)$.  Since
$F (\chi_U + \chi_V) \leq F (2 \chi_V)$, we obtain that
$F (\chi_U + \chi_V)$ is a natural number between $F (\chi_V)$
(exclusive) and $2 F (\chi_V)$ (inclusive).  This implies that
$F (\chi_V)=1$ and $F (\chi_U + \chi_V) = 2$.  Now let
$f = \chi_{]3/2, +\infty]}$, and observe that
$f \circ (\chi_U + \chi_V) = \chi_U$.  Therefore
$F (f \circ (\chi_U + \chi_V)) = F (\chi_U) = 0$.  By discreteness,
$F (f \circ (\chi_U + \chi_V)) = f (F (\chi_U + \chi_V)) = f (2) = 1$,
contradiction.

Using that, we show that $\mathcal F$ is a filter.  It remains to show
that for any two elements $U_1$, $U_2$ of $\mathcal F$, $U_1 \cap U_2$
is also in $\mathcal F$.  Since $U_1 \cap U_2 \subseteq U_1 \cup U_2$,
by the equality we have just shown,
$F (\chi_{U_1 \cup U_2}) + F (\chi_{U_1 \cap U_2}) = F (\chi_{U_1 \cup
  U_2} + \chi_{U_1 \cap U_2}) = F (\chi_{U_1} + \chi_{U_2})$, which is
larger than or equal to $F (\chi_{U_1}) + F (\chi_{U_2})$ by
superlinearity, that is, to $2$.  Hence
$F (\chi_{U_1 \cap U_2}) \geq 2 - F (\chi_{U_1 \cup U_2}) \geq 1$,
where the latter inequality comes from the fact that
$F (\chi_{U_1 \cup U_2}) $ can only be equal to $0$ or to $1$.  Since
$F (\chi_{U_1 \cap U_2}) $ can only be equal to $0$ or to $1$, its
value is $1$, so $U_1 \cap U_2$ is in $\mathcal F$.

Finally, $\mathcal F$ is Scott-open, meaning that for every directed
family ${(U_i)}_{i \in I}$ of open subsets whose union is in $\mathcal
F$, some $U_i$ is in $\mathcal F$.  This follows from the
Scott-continuity of $F$.

We now appeal to the Hofmann-Mislove Theorem: every Scott-open filter
of open subsets of a sober space is the filter of open neighborhoods
of a unique compact saturated subset $Q$.  Then, for every open subset
$U$, $Q \subseteq U$ if and only if $U \in \mathcal F$ if and only if
$F (\chi_U) = 1$.  Since $F (0)=0$, the empty set is not in $\mathcal
F$, so $Q$ is non-empty.

We check that $F = F_Q$.  Let $h \in \Lform X$.  For every $t \in
\Rp$, let $f = \chi_{]t, +\infty]}$.  Since $F$ is discrete, $F (f
\circ h) = f (F (h))$.   Let $U = h^{-1} (]t, +\infty])$, so that $f
\circ h = \chi_U$.  Then $F (h) > t$ if and only if $f (F (h)) = 1$, if
and only if $F (\chi_U)=1$ if and only if $Q \subseteq U$.
If $Q \subseteq U$, then $F_Q (h) = \min_{x \in Q} h (x) > t$, and conversely.
Therefore $F (h) > t$ if and only if $F_Q (h) > t$, for all $h$ and
$t$, whence $F = F_Q$.

Finally, uniqueness is obvious: if $F = F_Q$ for some non-empty
compact saturated subset $Q$, then $Q$ must be such that for every
open subset $U$, $Q \subseteq U$ if and only if $F (\chi_U)=1$.  \qed

\begin{defi}[$\dQ$]
  \label{defn:dQ}
  Let $X, d$ be a quasi-metric space.  For any two non-empty compact
  saturated subsets $Q$, $Q'$ of $X$, let
  $\dQ (Q, Q') = \dKRH (F_Q, F_{Q'})$.
\end{defi}
\begin{rem}
  \label{defn:dQa}
  It of course makes sense to define $\dQ^a (Q, Q')$ as $\dKRH^a (F_Q,
  F_{Q'})$ as well.
\end{rem}
We give a more concrete description of $\dQ$ in Lemma~\ref{lemma:dQ}.
We start with an easy observation.
\begin{lem}
  \label{lemma:d:lip}
  Let $X, d$ be a standard quasi-metric space.  For every point $x'
  \in X$, the map $d (\_, x')$ is in $\Lform_1 (X, d)$.
\end{lem}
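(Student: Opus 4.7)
The plan is to observe that this is essentially a restatement of Lemma~\ref{lemma:d(_,x)}, whose content is precisely that $d(\_, x')$ is $1$-Lipschitz continuous from $X, d$ to $\creal, \dreal$. Since $\Lform_1(X, d)$ was defined in Definition~\ref{defn:Lalpha} as the set of $1$-Lipschitz continuous maps from $X, d$ to $\creal, \dreal$, membership is immediate once continuity and the Lipschitz bound are in hand. So the proof should simply cite Lemma~\ref{lemma:d(_,x)}, perhaps after briefly recalling the two ingredients.

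If I were to re-derive it from scratch rather than cite, the two steps would be as follows. First, the $1$-Lipschitz condition is a one-line consequence of the triangular inequality: for all $x, y \in X$, the inequality $d(x, x') \leq d(x, y) + d(y, x')$ yields $\dreal(d(x, x'), d(y, x')) = \max(d(x, x') - d(y, x'), 0) \leq d(x, y)$. Second, for continuity, appeal to Proposition~\ref{prop:cont}: since $\creal, \dreal$ is standard and $X, d$ is standard by assumption, it suffices to show that $d(\_, x')$ is continuous from $X$ with its $d$-Scott topology to $\creal$ with its $\dreal$-Scott topology. By Example~\ref{exa:creal}, the $\dreal$-Scott topology on $\creal$ is just the Scott topology, whose subbasic opens are the intervals $]t, +\infty]$. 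The inverse image of $]t, +\infty]$ under $d(\_, x')$ is the open hole $T^d_{x', > t} = \{x \in X \mid d(x, x') > t\}$, which is $d$-Scott open by Lemma~\ref{lemma:hole}.

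There is no real obstacle here; the lemma is essentially a bookkeeping convenience restating Lemma~\ref{lemma:d(_,x)} in a form suitable for later use (notably, in the characterisation of $\dQ$ that follows).
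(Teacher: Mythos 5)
Your proposal is correct, and in fact cleaner than the paper's own proof: the statement is literally a restatement of Lemma~\ref{lemma:d(_,x)} (which asserts exactly that $d(\_,x')$ is $1$-Lipschitz continuous from $X,d$ to $\creal,\dreal$ on a standard space), combined with the definition of $\Lform_1(X,d)$ in Definition~\ref{defn:Lalpha}, whereas the paper instead appeals to external results on Yoneda-continuity and on $d(\_,C)$ for $C=\dc x'$. Your fallback derivation (triangular inequality for the Lipschitz bound, then Proposition~\ref{prop:cont} together with Example~\ref{exa:creal} and the open holes of Lemma~\ref{lemma:hole} for continuity) is precisely the paper's proof of Lemma~\ref{lemma:d(_,x)}, so there is no gap.
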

\proof
Let $h = d (\_, x')$.  This is a $1$-Lipschitz
Yoneda-continuous map \cite[Exercise~7.4.36]{JGL-topology}, hence it
is $1$-Lipschitz continuous, since $X, d$ is standard.  Alternatively,
$h$ is also equal to the map $d (\_, C)$ where $C$ is the closed set
$\dc x'$, and that is $1$-Lipschitz continuous by \cite[Lemma~6.11]{JGL:formalballs}.
\qed

\begin{lem}
  \label{lemma:dQ}
  Let $X, d$ be a standard quasi-metric space.  For all non-empty
  compact saturated subsets $Q$, $Q'$,
  $\dQ (Q, Q') = \sup_{x' \in Q'} d (Q, x')$, where
  $d (Q, x') = \inf_{x \in Q} d (x, x')$.
\end{lem}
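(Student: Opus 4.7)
The plan is to unfold both sides through Definition~\ref{defn:dQ} and Lemma~\ref{lemma:uQ}. First, I would rewrite
\[
\dQ(Q,Q') \;=\; \dKRH(F_Q,F_{Q'}) \;=\; \sup_{h\in\Lform_1(X,d)} \dreal\!\left(\min_{x\in Q} h(x),\;\min_{y\in Q'} h(y)\right),
\]
where the use of $\min$ (rather than $\inf$) is justified by Lemma~\ref{lemma:uQ}(1), since $Q$ and $Q'$ are non-empty compact saturated and $h$ is lower semicontinuous.

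For the upper bound $\dQ(Q,Q') \leq \sup_{x'\in Q'} d(Q,x')$, fix $h\in\Lform_1(X,d)$ and any $x'\in Q'$. Since $h$ is $1$-Lipschitz, for every $x\in Q$ we have $h(x)\leq h(x') + d(x,x')$; minimising over $x\in Q$ yields $F_Q(h) \leq h(x') + d(Q,x') \leq h(x') + \sup_{x'\in Q'} d(Q,x')$. Taking the minimum over $x'\in Q'$ gives $F_Q(h) \leq F_{Q'}(h) + \sup_{x'\in Q'} d(Q,x')$. A brief case analysis on infinities (using the convention defining $\dreal$) turns this into $\dreal(F_Q(h),F_{Q'}(h)) \leq \sup_{x'\in Q'} d(Q,x')$; in particular, when $F_{Q'}(h)=+\infty$ the left side is $0$, and when $F_Q(h)=+\infty$ but $F_{Q'}(h)<+\infty$ the displayed inequality forces the supremum on the right to be $+\infty$ as well. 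Taking the supremum over $h$ gives the desired inequality.

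For the lower bound $\dQ(Q,Q') \geq \sup_{x'\in Q'} d(Q,x')$, I plan to exhibit a witness: for each fixed $x'\in Q'$, take $h = d(\_,x')$, which lies in $\Lform_1(X,d)$ by Lemma~\ref{lemma:d:lip}. Then $h(x')=0$, whence $F_{Q'}(h)=\min_{y\in Q'} d(y,x') = 0$, while $F_Q(h)=\min_{x\in Q} d(x,x')=d(Q,x')$. Consequently $\dreal(F_Q(h),F_{Q'}(h)) = d(Q,x')$, and taking the supremum over $x'\in Q'$ bounds $\dQ(Q,Q')$ from below by $\sup_{x'\in Q'} d(Q,x')$.

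No serious obstacle is anticipated; the only point that requires a touch of care is the handling of the cases $F_Q(h)=+\infty$ or $d(Q,x')=+\infty$, which are settled by inspecting the definition of $\dreal$ and the conventions on extended arithmetic already in force.
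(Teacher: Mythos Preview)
Your proof is correct and follows essentially the same approach as the paper's: the lower bound uses the witness $h=d(\_,x')$ exactly as the paper does, and your upper bound is a minor rearrangement of the paper's argument (the paper fixes $h$, picks the specific $x'\in Q'$ where $F_{Q'}(h)=h(x')$ is attained, and then uses the $1$-Lipschitz inequality, whereas you carry the supremum over $x'$ through the computation before minimising). Both are straightforward unfoldings of Definition~\ref{defn:dQ} via Lemma~\ref{lemma:uQ} and Lemma~\ref{lemma:d:lip}.
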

\proof We first show that
$\dQ (Q, Q') \leq \sup_{x' \in Q'} d (Q, x')$.  It suffices to show
that for every $r \in \Rp$ such that $r < \dQ (Q, Q')$,
$r \leq d (Q, x')$ for some $x' \in Q'$.  Since
$r < \dQ (Q, Q') = \dKRH (F_Q, F_{Q'})$, there is an
$h \in \Lform_1 (X, d)$ such that $F_Q (h) > F_{Q'} (h) + r$.  Let
$x' \in Q'$ be such that $F_{Q'} (h) = h (x')$, using
Lemma~\ref{lemma:uQ}, item~1.  For every $x \in Q$,
$h (x) > h (x') + r $, so, since $h$ is $1$-Lipschitz,
$d (x, x') > r$.  It follows that $d (Q, x') \geq r$.

In the reverse direction, we fix $x' \in Q'$, $r < d (Q, x')$, and we
show that there is an $h \in \Lform_1 (X, d)$ such that
$F_Q (h) \geq r + F_{Q'} (h)$.  Since $r < d (Q, x')$, $d (x, x') > r$
for every $x \in Q$.  Let $h = d (\_, x')$.  This is in
$\Lform_1 (X, d) $ by Lemma~\ref{lemma:d:lip}, and $h (x) > r$ for
every $x \in Q$.  It follows that
$F_Q (h) = \min_{x \in Q} h (x) > r$, whereas
$F_{Q'} (h) \leq h (x')=0$.  \qed

Lemma~\ref{lemma:dQ} means that $\dQ (Q, Q')$ is given by one half of
the familiar Hausdorff formula $\dQ (Q, Q') = \sup_{x' \in Q'} \inf_{x
  \in Q} d (x, x')$.  The quasi-metric $\dH$ is given by a formula
that is almost the other half, $\sup_{x \in Q} \inf_{x' \in Q'} d (x, x')$.

\begin{rem}
  \label{rem:dQa}
  We also have $\dQ^a (Q, Q') = \min (a, \dQ (Q, Q'))$, for every
  $a \in \Rp$, $a > 0$.  In one direction,
  $\dQ^a (Q, Q') = \dKRH^a (F_Q, F_{Q'}) \leq a$ and
  $\dQ^a (Q, Q') = \dKRH^a (F_Q, F_{Q'}) \leq \dKRH (F_Q, F_{Q'}) =
  \dQ (Q, Q')$, so $\dQ^a (Q, Q') \leq \min (a, \dQ (Q, Q'))$.  In the
  other direction, we fix $r < \min (a, \dQ (Q, Q'))$.  In particular,
  $r < a$ and there is an $x' \in Q'$ such that $r < d (Q, x')$,
  whence $d (x, x') > r$ for every $x \in Q$.  Let
  $h = \min (a, d (\_, x')) \in \Lform_1^a (X, d)$:
  $F_Q (h) = \min (a, \min_{x \in Q} d (x, x')) > r$, and
  $F_{Q'} (h) = 0$, so $\dQ^a (Q, Q') = \dKRH^a (F_Q, F_{Q'}) \geq r$.
  Since $r$ is arbitrary, $\min (a, \dQ (Q, Q')) \leq \dQ^a (Q, Q')$.
\end{rem}

\begin{rem}
  \label{rem:Q:root}
  For every standard quasi-metric space $X, d$ with an $a$-root $x$,
  ($a \in \Rp$, $a > 0$), $\Smyth X, \dQ$ has an $a$-root, namely
  $\upc x$.  Indeed, fix $Q \in \Smyth X$.  For every $y \in X$,
  $d (x, y) \leq a$.  For every $x' \in \upc x$,
  $d (x, y) \leq d (x, x') + d (x', y) = d (x', y)$, so
  $d (\upc x, y) = \inf_{x' \in \upc x} d (x', y) = d (x, y)$.  By
  Lemma~\ref{lemma:dQ},
  $d (\upc x, Q) = \sup_{y \in Q} d (\upc x, y) = \sup_{y \in Q} d (x,
  y) \leq a$.
\end{rem}

\begin{lem}
  \label{lemma:d(Q,x):props}
  Let $X, d$ be a standard quasi-metric space.  For every non-empty
  compact saturated subset $Q$ of $X$, for every $x' \in X$, the
  following hold:
  \begin{enumerate}
  \item there is an $x \in Q$ such that $d (Q, x') = d (x, x')$;
    $d (Q, x') = \min_{x \in Q} d (x, x')$;
  \item $d (Q, x') = 0$ if and only if $x' \in Q$;
  \item $d (Q, x') \leq d (Q, y') + d (y', x')$ for every $y' \in X$.
  \end{enumerate}
\end{lem}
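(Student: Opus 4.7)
The plan is to establish~(1) first, from which (2) and (3) follow with little effort. The whole argument rests on the fact, recorded in Lemma~\ref{lemma:d(_,x)}, that $d(\_, x')$ is $1$-Lipschitz continuous from $X, d$ to $\creal, \dreal$, so in particular lower semicontinuous.

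For~(1), let $b = d(Q, x') = \inf_{x \in Q} d(x, x')$. Since $d(\_, x')$ is continuous and $Q$ is compact, its image $K = d[Q]$ (where we abbreviate $d(\_, x')[Q]$) is a non-empty compact subset of $\creal$ with its Scott topology, and $b = \inf K$. The key point is that every non-empty compact subset of $\creal$ contains its infimum: if not, then $K \subseteq \bigcup_{n \geq 1} \,]b + 1/n, +\infty]$, which is a directed open cover, so by compactness $K \subseteq \,]b + 1/n, +\infty]$ for some $n$, contradicting $b = \inf K$. Alternatively, and more directly in $X$, apply Lemma~\ref{lemma:hole}: the open holes $T^d_{x', > b + 1/n}$ are $d$-Scott open, they form a directed family, and their union $T^d_{x', > b}$ would cover $Q$ if no $x \in Q$ attained $d(x, x') = b$; compactness of $Q$ would then give $Q \subseteq T^d_{x', > b + 1/n}$ for some $n$, contradicting the definition of $b$. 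Either way, there exists $x \in Q$ with $d(x, x') = b$, giving $d(Q, x') = \min_{x \in Q} d(x, x')$.

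For~(2), if $x' \in Q$ then $d(Q, x') \leq d(x', x') = 0$, hence equals $0$. Conversely, if $d(Q, x') = 0$, then by~(1) there is $x \in Q$ with $d(x, x') = 0$, i.e.\ $x \leq x'$ in the specialization preorder of $X, d$; since $Q$ is saturated (upwards-closed), $x' \in Q$. For~(3), the triangular inequality gives $d(x, x') \leq d(x, y') + d(y', x')$ for every $x \in Q$; taking the infimum of the left-hand side over $x \in Q$ yields $d(Q, x') \leq d(Q, y') + d(y', x')$ (the second summand is independent of $x$, and $\inf_x d(x, y') = d(Q, y')$).

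The only step requiring any real work is~(1), and even this is quick once we invoke the $1$-Lipschitz continuity of $d(\_, x')$ (hence standardness of $X, d$ used implicitly): the rest of the lemma is a direct consequence of attainment together with the triangular inequality and saturatedness of $Q$.
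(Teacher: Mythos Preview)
Your proof is correct and follows essentially the same approach as the paper: use the lower semicontinuity of $d(\_, x')$ to get attainment of the minimum on the compact set $Q$, then derive~(2) and~(3) routinely from attainment, saturatedness, and the triangular inequality. Your remark that standardness is used implicitly is apt; the paper's own proof invokes Lemma~\ref{lemma:d:lip}, which likewise requires $X, d$ to be standard, so the hypothesis in the lemma statement is tacitly strengthened in both cases.
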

\proof 1. Because $d (\_, x')$ is in
$\Lform_1 (X, d) \subseteq \Lform X$ (Lemma~\ref{lemma:d:lip}), it
reaches its minimum on the compact set $Q$.

2. Let $x \in Q$ be such that $d (Q, x') = d (x, x')$, by (1).  If
$d (Q, x')=0$, then $d (x, x')=0$, so $x \leq x'$, and since $Q$ is
saturated, $x'$ is in $Q$.  Conversely, if $x' \in Q$, then
$d (x', x') = 0$ implies that $d (Q, x')=0$.

3. Let $y'$ be an arbitrary point of $X$.  For every $x \in Q$,
$d (x, x') \leq d (x, y') + d (y', y)$, and we obtain (3) by taking
minima over $x \in Q$ on both sides.  \qed

\begin{lem}
  \label{lemma:dQ:spec}
  Let $X, d$ be a standard quasi-metric space.  For all non-empty
  compact saturated subsets $Q$, $Q'$ of $X$, $\dQ (Q, Q')=0$ if and
  only if $Q \supseteq Q'$.
\end{lem}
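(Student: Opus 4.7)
The plan is to read off the statement directly from the concrete description of $\dQ$ established in Lemma~\ref{lemma:dQ}, namely $\dQ(Q,Q') = \sup_{x' \in Q'} d(Q, x')$, combined with the characterization of when $d(Q, x')=0$ from Lemma~\ref{lemma:d(Q,x):props}~(2), which states that $d(Q, x')=0$ iff $x' \in Q$ (using here that $Q$ is saturated).

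For the ``if'' direction, I would assume $Q \supseteq Q'$. Then every $x' \in Q'$ lies in $Q$, so $d(Q, x')=0$ by Lemma~\ref{lemma:d(Q,x):props}~(2). Taking the supremum over $x' \in Q'$ in the formula from Lemma~\ref{lemma:dQ} yields $\dQ(Q,Q')=0$.

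For the ``only if'' direction, I would assume $\dQ(Q,Q')=0$. Since $\dQ(Q,Q')$ is a supremum of non-negative quantities $d(Q, x')$ for $x' \in Q'$, each of these vanishes. By Lemma~\ref{lemma:d(Q,x):props}~(2) again, every $x' \in Q'$ belongs to $Q$, i.e., $Q' \subseteq Q$.

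There is no real obstacle here: the proof is a direct application of two previous lemmas, and once the formula of Lemma~\ref{lemma:dQ} is available, both implications are immediate. The only point worth noting is that Lemma~\ref{lemma:dQ} is stated under the standardness assumption, so I would implicitly assume (as appears to be the running convention in this section) that $X, d$ is standard; otherwise one has to argue directly from the definition $\dQ(Q,Q')=\dKRH(F_Q,F_{Q'})$, using Lemma~\ref{lemma:KRH:qmet} to reduce $\dKRH(F_Q,F_{Q'})=0$ to $F_Q \leq F_{Q'}$ and then separating points with an indicator $\chi_U$ of an open neighborhood $U \supseteq Q$ not containing a putative $x' \in Q' \setminus Q$ (such a $U$ exists by saturation of $Q$).
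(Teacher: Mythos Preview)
Your proposal is correct and follows exactly the same route as the paper: apply Lemma~\ref{lemma:dQ} to rewrite $\dQ(Q,Q')$ as $\sup_{x'\in Q'} d(Q,x')$, then use Lemma~\ref{lemma:d(Q,x):props}~(2). Your observation that Lemma~\ref{lemma:dQ} carries a standardness hypothesis which Lemma~\ref{lemma:dQ:spec} does not is a genuine point the paper glosses over, and your sketched workaround via $\dKRH(F_Q,F_{Q'})=0 \Leftrightarrow F_Q\leq F_{Q'}$ and separation by $\chi_U$ is a reasonable way to close that gap.
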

\proof $\dQ (Q, Q') = 0$ if and only if $\sup_{x' \in Q'} d (Q, x')=0$
by Lemma~\ref{lemma:dQ}, if and only if $d (Q, x')=0$ for every $x'
\in Q'$.  By Lemma~\ref{lemma:d(Q,x):props}, this is equivalent to
requiring that $x' \in Q$ for every $x' \in Q'$.  \qed

\begin{rem}
  \label{rem:dQa:spec}
  Because of Remark~\ref{rem:dQa}, it also follows that for every $a
  \in \Rp$, $a > 0$, $\dQ^a (Q, Q') = 0$ if and only if $Q \supseteq Q'$.
\end{rem}

\subsection{Completeness}
\label{sec:Q:completeness}

\begin{lem}
  \label{lemma:Q:supp}
  Let $X, d$ be a standard quasi-metric space.  For every compact
  saturated subset $\mathcal Q$ of $\mathbf B (X, d)$ such that
  $F_{\mathcal Q}$ is supported on $V_{1/2^n}$ for every $n \in \nat$,
  \begin{enumerate}
  \item $\mathcal Q$ is included in $X$;
  \item $\mathcal Q$ is a compact saturated subset of $X$;
  \item $F_{\mathcal Q}$ is supported on $X$.
  \end{enumerate}
\end{lem}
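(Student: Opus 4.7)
The three statements are tightly linked: once I prove $(1)$, statements $(2)$ and $(3)$ follow almost immediately. I may assume $\mathcal Q$ is non-empty (otherwise all three claims are vacuous, since $\emptyset \subseteq X$ is a compact saturated subset of $X$ and $F_\emptyset$ is not a prevision to begin with).

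For $(1)$, the key observation is that $F_{\mathcal Q}(\chi_{\mathcal U}) = \min_{(x,r)\in\mathcal Q} \chi_{\mathcal U}(x,r)$ takes only the values $0$ and $1$, and equals $1$ precisely when $\mathcal Q \subseteq \mathcal U$. Now fix $n \in \nat$. The constant map $\mathbf 1$ and the indicator $\chi_{V_{1/2^n}}$ both belong to $\Lform(\mathbf B(X,d))$ and have identical restriction to $V_{1/2^n}$ (both equal the constant $1$ there). Since $F_{\mathcal Q}$ is supported on $V_{1/2^n}$, we obtain $F_{\mathcal Q}(\mathbf 1) = F_{\mathcal Q}(\chi_{V_{1/2^n}})$. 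The left-hand side equals $1$ because $\mathcal Q$ is non-empty; hence the right-hand side equals $1$, which forces $\mathcal Q \subseteq V_{1/2^n}$. Intersecting over all $n$ and invoking Lemma~\ref{lemma:Veps} yields $\mathcal Q \subseteq \bigcap_{n\in\nat} V_{1/2^n} = X$.

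For $(2)$, I only need to check that $X$ is upwards-closed in $\mathbf B(X,d)$, so that Lemma~\ref{lemma:compact:subspace} applies. This is immediate from the definition of $\leq^{d^+}$: if $(x,0) \leq^{d^+} (y,s)$, then $d(x,y) \leq -s$, which forces $s = 0$, i.e., $(y,s) \in X$. Combined with $(1)$, Lemma~\ref{lemma:compact:subspace} delivers that $\mathcal Q$ is compact saturated in $X$ with its $d$-Scott topology.

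For $(3)$, given any $g,h \in \Lform(\mathbf B(X,d))$ with $g_{|X} = h_{|X}$, I use $(1)$ to write every element of $\mathcal Q$ as $(x,0)$ with $x \in X$; then $F_{\mathcal Q}(g) = \min_{(x,0)\in\mathcal Q} g(x,0) = \min_{(x,0)\in\mathcal Q} h(x,0) = F_{\mathcal Q}(h)$, so $X$ is a support of $F_{\mathcal Q}$. There is no real obstacle here; the only subtle point is step $(1)$, and that subtlety is absorbed by the fact that previsions arising from compact saturated sets evaluate indicators of open sets to $\{0,1\}$, which is why the Smyth case is appreciably easier than its Hoare analogue (Lemma~\ref{lemma:H:clC}), where one had to build a descending sequence of formal balls.
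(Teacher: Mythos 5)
Your proof is correct and follows essentially the same route as the paper's: exploit the support hypothesis on the indicators $\chi_{V_{1/2^n}}$ together with non-emptiness of $\mathcal Q$ to force $\mathcal Q \subseteq V_{1/2^n}$ for every $n$, then intersect and conclude via Lemma~\ref{lemma:Veps} and Lemma~\ref{lemma:compact:subspace}. Your step $(1)$ is in fact slightly more direct than the paper's, which first shows that $F_{\mathcal Q}(\chi_{V_{1/2^n}})$ is independent of $n$ and then invokes Scott-continuity against $\sup_{n} \chi_{V_{1/2^n}} = \mathbf 1$, whereas you compare $\chi_{V_{1/2^n}}$ with $\mathbf 1$ in one stroke using the support assumption.
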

\proof
If $\mathcal Q$ is empty, this is obvious, so let us assume that
$\mathcal Q$ is non-empty.

For any two real numbers $r, s > 0$, $\chi_{V_r}$ and $\chi_{V_s}$
coincide on $V_{1/2^n}$, where $n$ is any natural number large enough
that $1/2^n \leq r, s$.  Therefore
$F_{\mathcal Q} (\chi_{V_r}) = F_{\mathcal Q} (\chi_{V_s})$.  It
follows that $F_{\mathcal Q} (\chi_{V_r})$ is a value $a$ that does
not depend on $r > 0$.  The union
$\bigcup_{r \in \Rp \smallsetminus \{0\}} V_r$ is the whole space of
formal balls, so
$\sup_{r \in \Rp \smallsetminus \{0\}} \chi_{V_r} = \mathbf 1$.  Since
$F_{\mathcal Q}$ is Scott-continuous,
$\sup_{r \in \Rp \smallsetminus \{0\}} F_{\mathcal Q} (\chi_{V_r}) =
F_{\mathcal Q} (\mathbf 1) = 1$ (since $\mathcal Q$ is non-empty), and
that is also equal to $\sup_{r \in \Rp \smallsetminus \{0\}} a = a$.
We have obtained that $F_{\mathcal Q} (\chi_{V_r}) = 1$ for every
$r > 0$, namely that $\mathcal Q \subseteq V_r$ for every $r > 0$.  As
a consequence $\mathcal Q$ is included in $\bigcap_{r > 0} V_r = X$,
showing (1).  (2) follows from Lemma~\ref{lemma:compact:subspace}.
(3) is now obvious.  \qed

Let $\Smyth X$ be the set of all non-empty compact saturated subsets
of $X$, with the $\dQ$ quasi-metric.  $\Smyth X$ is the \emph{Smyth
  powerdomain} of $X$.  By Lemma~\ref{lemma:uQ}, if $X$ is sober in
its $d$-Scott topology, then the bijection $Q \mapsto F_Q$ is an
isometry of $\Smyth X, \dQ$ onto the set of normalized discrete
superlinear previsions on $X$, with the usual $\dKRH$ quasi-metric.
\begin{prop}[Yoneda-completeness, Smyth powerdomain]
  \label{prop:QX:Ycomplete}
  Let $X, d$ be a sober standard quasi-metric space.  Then
  $\Smyth X, \dQ$ is Yoneda-complete.

  Suprema of directed families of formal balls of previsions are naive
  suprema.
\end{prop}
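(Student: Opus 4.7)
My strategy mirrors that of Proposition~\ref{prop:HX:Ycomplete} for the Hoare case. Since $X$ is sober, Lemma~\ref{lemma:uQ} provides a bijective isometry $Q \mapsto F_Q$ from $\Smyth X, \dQ$ onto the subspace $\mathcal P$ of normalized discrete superlinear previsions in $(\Prev X, \dKRH)$, by the very definition of $\dQ$ (Definition~\ref{defn:dQ}). Yoneda-completeness and the characterization of directed suprema as naive suprema transport through such isometries, so it suffices to establish these two properties for $\mathcal P, \dKRH$.

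For that, I would apply the conditional Yoneda-completeness theorem Proposition~\ref{prop:supp:complete} with $S$ consisting of superlinearity, normalization, and discreteness, so that the corresponding $Y$ is exactly $\mathcal P$. This reduces the task to a single support claim: every normalized discrete superlinear prevision $G$ on $\mathbf B(X,d)$ that is supported on $V_{1/2^n}$ for every $n \in \nat$ is in fact supported on $X$. The plan to verify this claim is to represent such a $G$ as $F_{\mathcal Q}$ for a unique non-empty compact saturated subset $\mathcal Q \subseteq \mathbf B(X,d)$ via Lemma~\ref{lemma:uQ}~(2) applied to the superspace $\mathbf B(X,d)$, and then to invoke Lemma~\ref{lemma:Q:supp}~(3) to conclude that $G = F_{\mathcal Q}$ is supported on $X$. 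Once this support condition is verified, Proposition~\ref{prop:supp:complete} yields Yoneda-completeness of $\mathcal P$ together with the naive-supremum description, and the conclusion for $\Smyth X, \dQ$ follows by transport along the isometry.

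The main obstacle is that Lemma~\ref{lemma:uQ}~(2) requires its ambient space to be sober, so the missing ingredient is the auxiliary fact that $\mathbf B(X,d)$ is sober in its Scott topology---which coincides with the $d^+$-Scott topology by Lemma~\ref{lemma:dScott=Scott}~(2)---whenever $X,d$ is sober and standard. The plan to establish this would be to take an irreducible Scott-closed subset $\mathcal C$ of $\mathbf B(X,d)$, define $r = \inf\{s \in \Rp \mid \exists x,\ (x,s) \in \mathcal C\}$, use standardness of $d$ to translate radii so as to work at aperture $r$, and then show via irreducibility that the trace on $X$ (after radius translation) is an irreducible closed subset of $X$. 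Sobriety of $X$ then produces a generic point, which combined with the radius $r$ yields a formal ball $(x_0, r)$ whose downward closure in $\mathbf B(X,d)$ is $\mathcal C$. Once sobriety of $\mathbf B(X,d)$ is in hand, the chain Lemma~\ref{lemma:uQ}~(2), then Lemma~\ref{lemma:Q:supp}~(3), closes the argument through Proposition~\ref{prop:supp:complete}.
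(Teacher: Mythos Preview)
Your overall strategy---transport to normalized discrete superlinear previsions via the isometry built into Definition~\ref{defn:dQ} and Lemma~\ref{lemma:uQ}, then invoke Proposition~\ref{prop:supp:complete} with the support hypothesis supplied by Lemma~\ref{lemma:Q:supp}---is exactly the paper's. You have in fact been more careful than the paper's one-line proof: you noticed that Lemma~\ref{lemma:Q:supp} only treats previsions of the form $F_{\mathcal Q}$ with $\mathcal Q$ compact saturated in $\mathbf B(X,d)$, so to verify the hypothesis of Proposition~\ref{prop:supp:complete} one must first represent an arbitrary normalized discrete superlinear prevision on $\mathbf B(X,d)$ as some $F_{\mathcal Q}$ via Lemma~\ref{lemma:uQ}~(2), and that requires $\mathbf B(X,d)$ to be sober.

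The gap is that your proposed auxiliary fact is false: $\mathbf B(X,d)$ need not be sober when $X,d$ is merely sober and standard. Take $X=\rat$ with its usual metric (Hausdorff, hence sober; metric, hence standard). For any irrational $\alpha$, the set $\mathcal C_\alpha=\{(y,s)\in\mathbf B(\rat,d): |y-\alpha|\le s\}$ is Scott-closed and irreducible but is not the downward closure of any formal ball. Your sketch breaks exactly here: the infimum of radii in $\mathcal C_\alpha$ is $0$, and the ``trace on $X$ after radius translation'' is $\{(y,0):|y-\alpha|\le 0\}=\emptyset$, so there is no irreducible closed subset of $X$ to hand to sobriety of $X$. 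This is not repairable under the stated hypotheses---indeed the conclusion itself fails for $X=\rat$: a Cauchy-weighted chain of singletons $(\{x_n\},r_n)$ in $\mathbf B(\Smyth\rat,\dQ)$ with $x_n\to\alpha$ irrational has no upper bound at all, so $\Smyth\rat,\dQ$ is not Yoneda-complete. Your argument (and the paper's) does go through once $X,d$ is continuous Yoneda-complete, since then $\mathbf B(X,d)$ is a continuous dcpo and hence sober; that is also the only generality in which the proposition is invoked later.
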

\proof This follows from Proposition~\ref{prop:supp:complete}, since
the assumption on supports is verified by Lemma~\ref{lemma:Q:supp}.
\qed

\begin{rem}
  \label{rem:QX:Ycomplete:a}
  Similarly, when $X, d$ is standard and sober, $\Smyth X, \dQ^a$ is
  Yoneda-complete for every $a \in \Rp$, $a > 0$.
\end{rem}
As a first example of sober standard quasi-metric spaces, one can find
all metric spaces.  They are all standard, and they are sober since
$T_2$.  They are also continuous.

A second family of sober standard quasi-metric spaces consists in the
continuous Yoneda-complete quasi-metric spaces, in particular all
algebraic Yoneda-complete quasi-metric spaces.  We have already
mentioned that all Yoneda-complete quasi-metric spaces are standard
\cite[Proposition~2.2]{JGL:formalballs}; that continuous
Yoneda-complete quasi-metric spaces are sober is Proposition~4.1 of
loc.cit.  Sobriety could of course be dispensed with it we worked
directly on normalized discrete superlinear previsions, instead of the
more familiar elements of $\Smyth X$.

\subsection{$\dQ$-Limits}
\label{sec:dq-limits}

As for Lemma~\ref{lemma:H:sup}, there is a more direct expression of
directed suprema of formal balls over $\Smyth X, \dQ$, i.e., of
$\dQ$-limits, than by relying on naive suprema.  Recall that $Q+r$ is
the set $\{(x, r) \mid x \in Q\}$.
\begin{lem}
  \label{lemma:Q:sup}
  Let $X, d$ be a sober standard quasi-metric space.  Then:
  \begin{enumerate}
  \item In $\mathbf B (\Smyth X, \dQ)$, $(Q, r) \leq^{\dQ^+} (Q', r')$
    if and only if 
    $Q'+r' \subseteq \upB (Q+r)$, where $\upB$ is upward closure in
    $\mathbf B (X, d)$.
  \item If $X, d$ is continuous Yoneda-complete, then for every
    directed family ${(Q_i, r_i)}_{i \in I}$, the supremum $(Q, r)$ is
    given by $r = \inf_{i \in I} r_i$ and
    $Q = \bigcap_{i \in I} \upB (Q_i + r_i - r)$.
  \end{enumerate}
\end{lem}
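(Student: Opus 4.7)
My plan is as follows. For part~(1), I will unfold the definitions. By Lemma~\ref{lemma:dQ}, $(Q,r) \leq^{\dQ^+} (Q',r')$ means $\dQ(Q,Q') \leq r-r'$, i.e., $\sup_{x'\in Q'} d(Q,x') \leq r-r'$. Lemma~\ref{lemma:d(Q,x):props}(1) guarantees that $d(Q,x')=\min_{x\in Q}d(x,x')$ is attained, so this amounts to saying that for every $x'\in Q'$ there exists $x\in Q$ with $d(x,x') \leq r-r'$, equivalently $(x,r) \leq^{d^+} (x',r')$. This is exactly the statement that $Q'+r' \subseteq \upB(Q+r)$. Non-emptiness of $Q'$ together with $d\geq 0$ makes the auxiliary condition $r \geq r'$ automatic.

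For part~(2), I will set $Q := \bigcap_{i\in I} \upB(Q_i + r_i - r)$ (intersection taken inside $\mathbf B(X,d)$) and show that $(Q,r)$ is the supremum. The first task is to prove $Q \in \Smyth X$. Each $Q_i + r_i - r$ is the image of the compact set $Q_i$ under the continuous map $x \mapsto (x, r_i-r)$, so $\upB(Q_i + r_i - r)$ is a non-empty compact saturated subset of $\mathbf B(X,d)$. I will check that the family $\{\upB(Q_i + r_i - r)\}_{i \in I}$ is filtered by invoking part~(1): $(Q_i,r_i)\leq^{\dQ^+}(Q_j,r_j)$ forces $Q_j + r_j \subseteq \upB(Q_i+r_i)$, and subtracting $r$ then taking upward closures gives $\upB(Q_j + r_j - r) \subseteq \upB(Q_i + r_i - r)$. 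Since the maximal radius of $\upB(Q_i + r_i - r)$ (in the sense of Lemma~\ref{lemma:Q:radius}) is $r_i - r$, and $\inf_i (r_i - r) = 0$, Lemma~\ref{lemma:Xd:compact:1} will deliver $Q$ as a non-empty compact saturated subset of $X$.

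The remainder is clean algebra. The shift $\_+r$ is an order-embedding on $\mathbf B(X,d)$, so it commutes with upward closure ($\upB(A)+r = \upB(A+r)$) and with intersections; consequently
\[
  Q+r \;=\; \bigcap_{i\in I}\bigl(\upB(Q_i+r_i-r)+r\bigr) \;=\; \bigcap_{i\in I}\upB(Q_i+r_i),
\]
and the right-hand side, being an intersection of upwards-closed sets, already equals $\upB(Q+r)$. Applying part~(1): the inclusion $Q+r \subseteq \upB(Q_i+r_i)$ yields $(Q_i,r_i) \leq^{\dQ^+} (Q,r)$ for each $i$, and for any upper bound $(Q',r')$, the inclusions $Q'+r' \subseteq \upB(Q_i+r_i)$ intersect to $Q'+r' \subseteq \bigcap_i \upB(Q_i+r_i) = \upB(Q+r)$, giving $(Q,r) \leq^{\dQ^+} (Q',r')$; moreover $r' \leq r_i$ for all $i$ forces $r' \leq r$. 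The hard part will be the compact-saturated claim: without the continuity of $X,d$, nothing guarantees that the filtered intersection is non-empty or that its radii collapse down to $0$, and these are precisely what Lemma~\ref{lemma:Xd:compact:1} supplies.
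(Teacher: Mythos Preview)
Your part~(1) is correct and matches the paper's argument.

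For part~(2), the overall strategy matches the paper, and your handling of compactness, filteredness, and the final application of part~(1) is sound. However, there is a genuine gap in the step you call ``clean algebra.'' The claimed identity $\upB(A)+r = \upB(A+r)$ is \emph{false}: the shift $\_+r$ is an order-embedding but not an order-isomorphism (its image is $\{(y,s): s\geq r\}$), and order-embeddings do not preserve upward closure. Concretely, $\upB(A)+r$ contains only formal balls of radius $\geq r$, whereas $\upB(A+r)$ can contain balls of smaller radius; for $A=\{(x,0)\}$ and $r>0$ one has $\upB(A)+r = \{(x,r)\}$ but $\upB(A+r) \supseteq \{(x,s): 0\leq s\leq r\}$. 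Only the inclusion $\upB(A)+r \subseteq \upB(A+r)$ holds. Your displayed chain therefore establishes merely $Q+r \subseteq \bigcap_i \upB(Q_i+r_i)$, hence $\upB(Q+r) \subseteq \bigcap_i \upB(Q_i+r_i)$ (the easy direction), but not the reverse inclusion---which is exactly what is needed for the least-upper-bound step. Your own next sentence exposes the problem: it would force $Q+r$ to be upwards-closed in $\mathbf B(X,d)$, which fails whenever $r>0$, since $(x,r)\leq^{d^+}(x,r/2)$ for any $x\in Q$.

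The paper closes this gap using well-filteredness. Since $X,d$ is continuous Yoneda-complete, $\mathbf B(X,d)$ is a continuous dcpo, hence sober, hence well-filtered. For the hard inclusion $\bigcap_i \upB(Q_i+r_i) \subseteq \upB(Q+r)$, one shows that every Scott-open neighbourhood $\mathcal U$ of $\upB(Q+r)$ contains $\bigcap_i \upB(Q_i+r_i)$: pull $\mathcal U$ back along the Scott-continuous map $\_+r$ to an open neighbourhood of $Q = \bigcap_i \upB(Q_i+r_i-r)$; well-filteredness then furnishes an index $i$ with $\upB(Q_i+r_i-r) \subseteq (\_+r)^{-1}(\mathcal U)$, whence $Q_i+r_i \subseteq \mathcal U$ and so $\upB(Q_i+r_i) \subseteq \mathcal U$. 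This topological argument cannot be replaced by shift algebra alone.
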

\proof (1) If $(Q, r) \leq^{\dQ^+} (Q', r')$, then
$\dQ (Q, Q') \leq r-r'$, so $r \geq r'$ and for every $x' \in Q'$,
$d (Q, x') \leq r-r'$, by Lemma~\ref{lemma:dQ}.  Using
Lemma~\ref{lemma:d(Q,x):props}~(1), there is an $x \in Q$ such that
$d (x, x') \leq r-r'$.  In particular, $(x, r) \leq^{d^+} (x', r')$.
This shows that every element of $Q'+r'$ is in $\upB (Q, r)$.

Conversely, if every element of $Q'+r'$ is in $\upB (Q, r)$, then in
particular $r \geq r'$.  Indeed, since $Q'$ is non-empty, we can find
$x' \in Q'$, and $(x', r') \in \upB (Q, r)$.  There is an $x \in Q$ such that
$(x, r) \leq^{d^+} (x', r')$, and that implies $r \geq r'$.

We rewrite the inclusion $Q'+r' \subseteq \upB (Q+r)$ as: for every
$x' \in Q'$, there is an $x \in Q$ such that
$(x, r) \leq^{d^+} (x', r')$, i.e., $d (x, x') \leq r-r'$.  It follows
that $\dQ (Q, Q') \leq r-r'$ by Lemma~\ref{lemma:dQ}, whence
$(Q, r) \leq^{\dQ^+} (Q', r')$.

(2) Let $r = \inf_{i \in I} r_i$ and
$Q = \bigcap_{i \in I} \upB (Q_i + r_i - r)$.  Since $X, d$ is
standard, the map $\_ + r_i - r$ is Scott-continuous, and since
$\eta_X$ is also continuous, the image of $Q_i + r_i - r$ of $Q_i$ by
their composition is compact in $\mathbf B (X, d)$.  Hence
$\upB (Q_i + r_i - r)$ is compact saturated in $\mathbf B (X, d)$.

Let $i \sqsubseteq j$ if and only if
$(Q_i, r_i) \leq^{\dQ^+} (Q_j, r_j)$.  If $i \sqsubseteq j$ then
$(Q_i, r_i-r) \leq^{\dQ^+} (Q_j, r_j-r)$, so
$\upB (Q_i + r_i - r) \supseteq \upB (Q_j + r_j - r)$ by (1).  It
follows that the family ${(\upB (Q_i + r_i - r))}_{i \in I}$ is
filtered.

The radius of $\upB (Q_i + r_i - r)$, as defined in
Lemma~\ref{lemma:Q:radius}, is equal to $r_i - r$, and
$\inf_{i \in I} (r_i-r)=0$.  Therefore, by
Lemma~\ref{lemma:Xd:compact:1}, $Q$ is non-empty, compact, and
saturated in $\mathbf B (X, d)$, hence an element of $\Smyth X$.

We claim that $\upB (Q+r) = \bigcap_{i \in I} \upB (Q_i + r_i)$.  For
every element $(x, r)$ of $Q+r$ (i.e., $x \in Q$), for every
$i \in I$, $(x, 0)$ lies above some element $(x_i, r_i-r)$ where
$x_i \in Q_i$; so $d (x_i, x) \leq r_i-r$, which implies
$(x_i, r_i) \leq^{d^+} (x, r)$, hence $(x, r) \in \upB (Q_i + r_i)$.
That shows $Q+r \subseteq \bigcap_{i \in I} \upB (Q_i + r_i)$.  Since
the right-hand side is upwards-closed, $\upB (Q+r)$ is also included
in $\bigcap_{i \in I} \upB (Q_i + r_i)$.  In the reverse direction,
it is enough to show that every Scott-open neighborhood $\mathcal U$
of $\upB (Q+r)$ contains $\bigcap_{i \in I} \upB (Q_i + r_i)$.  Since
$X, d$ is standard, the map $\_ + r$ is Scott-continuous, so
$(\_ + r)^{-1} (\mathcal U)$ is Scott-open.  Since $Q+r$ is included
in $\mathcal U$, $Q$ is included in $(\_ + r)^{-1} (\mathcal U)$.
Since $X, d$ is continuous Yoneda-complete, $\mathbf B (X, d)$ is a
continuous dcpo, hence is sober, hence well-filtered, so, using the
definition of $Q$, $Q_i+r_i-r$ is included in
$(\_ + r)^{-1} (\mathcal U)$ for some $i \in I$.  That means that
$Q_i + r_i$ is included in $\mathcal U$, hence also $\upB (Q_i + r_i)$
since every Scott-open set is upwards-closed.  Therefore, $\mathcal U$
indeed contains $\bigcap_{i \in I} \upB (Q_i + r_i)$.

We can now finish the proof.  Since $Q+r$ is included in
$\upB (Q_i + r_i)$, $(Q_i, r_i) \leq^{\dQ^+} (Q, r)$ by (1).  For
every upper bound $(Q', r')$ of ${(Q_i, r_i)}_{i \in I}$,
$r' \leq \inf_{i \in I} r_i = r$, and $Q'+r'$ is included in
$\upB (Q_i + r_i)$ for every $i \in I$, by (1).  Hence $Q'+r'$ is
included in $\bigcap_{i \in I} \upB (Q_i + r_i) = \upB (Q+r)$, showing
that $(Q, r) \leq^{\dQ^+} (Q', r')$, by (1) again.  \qed

\begin{rem}
  \label{rem:Q:sup}
  One can rephrase Lemma~\ref{lemma:Q:sup}~(2) as follows.  Let $X, d$
  be a continuous Yoneda-complete quasi-metric space.  Let
  ${(Q_i)}_{i \in I, \sqsubseteq}$ be a Cauchy-weightable net, and
  ${(Q_i, r_i)}_{i \in I, \sqsubseteq}$ be some corresponding
  Cauchy-weighted net.  Then the $\dQ$-limit of
  ${(Q_i)}_{i \in I, \sqsubseteq}$ is $Q$ is the filtered intersection
  $\bigcap_{i \in I} \upB (Q_i+r_i)$.  Since $Q$ is included in $X$,
  this is also equal to $\bigcap_{i \in I} (X \cap \upB (Q_i+r_i))$.
  Note that $X \cap \upB (Q_i+r_i)$ is the set $Q_i^{+r_i}$ of points
  $\{x \in X \mid \exists y \in Q_i. d (y, x) \leq r_i\}$ that are at
  distance at most $r_i$ from $Q_i$.  (Beware that there is no reason
  to believe that $Q_i^{+r_i}$ would be compact.)  Hence $Q$ is the
  filtered intersection $\bigcap_{i \in I} Q_i^{+r_i}$.
\end{rem}

\subsection{Algebraicity}
\label{sec:Q:algebraicity}

\begin{lem}
  \label{lemma:Q:simple:center}
  Let $X, d$ be a continuous Yoneda-complete quasi-metric space.  For
  all $n \geq 1$, and center points $x_1$, \ldots, $x_n$,
  $\upc \{x_1, \cdots, x_n\}$ is a center point of $\Smyth X, \dQ$.
\end{lem}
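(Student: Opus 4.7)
The proof will mirror Lemma~\ref{lemma:V:simple:center} and Lemma~\ref{lemma:H:simple:center}, replacing the role of simple valuations (resp.\ finite unions of downward closures) by the prevision $F_{Q_0}$ where $Q_0 = \upc \{x_1,\cdots,x_n\}$. Observe first that $F_{Q_0}(h) = \min_{j=1}^n h(x_j)$ for every $h \in \Lform X$: since $Q_0$ is the upward closure of $\{x_1,\cdots,x_n\}$ and every $h \in \Lform X$ is monotonic, the minimum of $h$ on $Q_0$ is attained among the $x_j$. Also, since $Q_0$ is non-empty, $F_{Q_0}$ is a normalized discrete superlinear prevision, and equals $F^{Q_0}$ from the isometry of Lemma~\ref{lemma:uQ}.

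Let $U = B^{\dQ^+}_{(Q_0,0),<\epsilon}$. That $U$ is upwards-closed in $\mathbf B(\Smyth X,\dQ)$ is an immediate consequence of the triangular inequality for $\dQ$, exactly as in Lemma~\ref{lemma:V:simple:center}. For Scott-openness, consider a directed family ${(Q_i,r_i)}_{i \in I}$ in $\mathbf B(\Smyth X,\dQ)$ with supremum $(Q,r) \in U$. Since $X,d$ is continuous Yoneda-complete, hence sober standard, Proposition~\ref{prop:QX:Ycomplete} guarantees that the supremum is the naive one: $r = \inf_{i \in I} r_i$ and, transporting through the isometry, $F_Q(h) = \sup_{i \in I}(F_{Q_i}(h)+r-r_i)$ for every $h \in \Lform_1(X,d)$.

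From $(Q,r) \in U$ we get $\dreal(F_{Q_0}(h),F_Q(h)) < \epsilon-r$ for all $h \in \Lform_1(X,d)$, and in particular $\epsilon > r$. I claim that, for every such $h$, there exists $i \in I$ with $r_i < \epsilon$ and $\dreal(F_{Q_0}(h),F_{Q_i}(h)) < \epsilon - r_i$. Indeed, if $F_{Q_0}(h) < +\infty$, one unfolds the naive supremum to find $i$ with $F_{Q_0}(h) < F_{Q_i}(h) + \epsilon - r_i$; the edge case $F_{Q_0}(h) = +\infty$ forces $F_Q(h)=+\infty$, and is handled by choosing any $i$ with $r_i < \epsilon$ and large enough so that $F_{Q_i}(h)+r-r_i$ is large or $+\infty$ (using the definition $\dreal(a,+\infty)=0$ and directedness as in Lemma~\ref{lemma:V:simple:center}).

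Set $V_i = \{h \in \Lform_1(X,d) \mid \dreal(F_{Q_0}(h),F_{Q_i}(h)) < \epsilon - r_i\}$ for each $i \in I$ with $r_i < \epsilon$. Since $F_{Q_0}(h) = \min_{j=1}^n h(x_j)$ and each $x_j$ is a center point, Proposition~\ref{prop:dKRH:cont}~(3) tells us that $h \mapsto \dreal(\min_{j=1}^n h(x_j),F_{Q_i}(h))$ is continuous from $\Lform_1(X,d)^\patch$ to $(\creal)^\dG$, so $V_i$ is the inverse image of $[0,\epsilon-r_i[$ under a continuous map, hence patch-open. By the previous paragraph, the $V_i$ cover $\Lform_1(X,d)$, hence also $\Lform_1(X,d)^\patch$, which is compact by Lemma~\ref{lemma:cont:Lalpha:retr}~(4). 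Extract a finite subcover indexed by $J \subseteq I$; by directedness of ${(Q_i,r_i)}_{i \in I}$ and monotonicity of $V_i$ in $i$ (through the naive-supremum structure), there is a single $i_0 \in I$ with $V_{i_0} = \Lform_1(X,d)$. Taking the supremum over $h \in \Lform_1(X,d)$ in the definition of $V_{i_0}$ gives $\dKRH(F_{Q_0},F_{Q_{i_0}}) \leq \epsilon - r_{i_0}$; since $r_{i_0} < \epsilon$ this is strict, so $\dQ(Q_0,Q_{i_0}) < \epsilon - r_{i_0}$, witnessing $(Q_{i_0},r_{i_0}) \in U$.

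The main obstacle is the patch-compactness argument, but this is exactly the shape of argument already deployed twice (Lemmas~\ref{lemma:V:simple:center} and~\ref{lemma:H:simple:center}); the new ingredient is merely the invocation of item~(3) of Proposition~\ref{prop:dKRH:cont} instead of items~(1) or~(2). Verifying the covering claim in the infinite-value edge case is the only delicate point, but it follows the same pattern as in the earlier lemmas.
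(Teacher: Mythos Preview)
Your proof is essentially the same as the paper's: identify $F_{Q_0}(h)=\min_{j=1}^n h(x_j)$, invoke Proposition~\ref{prop:QX:Ycomplete} for naive suprema, cover $\Lform_1(X,d)^\patch$ by the sets $V_i$ using Proposition~\ref{prop:dKRH:cont}~(3), extract a finite subcover by Lemma~\ref{lemma:cont:Lalpha:retr}~(4), and use directedness to pass to a single index.

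One point deserves correction. Your justification ``since $r_{i_0}<\epsilon$ this is strict'' does not turn $\dKRH(F_{Q_0},F_{Q_{i_0}})\leq \epsilon-r_{i_0}$ into a strict inequality: a supremum of strict inequalities need not be strict. The correct reason (left implicit in the paper as well) is that the map $h\mapsto \dreal(\min_j h(x_j),F_{Q_{i_0}}(h))$ is continuous from the compact space $\Lform_1(X,d)^\patch$ to $(\creal)^\dG$ by Proposition~\ref{prop:dKRH:cont}~(3), hence attains its maximum at some $h^*$; since $h^*\in V_{i_0}$, the supremum $\dKRH(F_{Q_0},F_{Q_{i_0}})=\dreal(F_{Q_0}(h^*),F_{Q_{i_0}}(h^*))<\epsilon-r_{i_0}$ is genuinely strict. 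Your handling of the $F_{Q_0}(h)=+\infty$ edge case is also somewhat loose (the paper simply elides it), but the same maximum-attained argument covers it, since the map to $(\creal)^\dG$ handles $+\infty$ values uniformly.
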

\proof Recall that every continuous Yoneda-complete quasi-metric space
is sober.

Let $Q_0 = \upc \{x_1, \cdots, x_n\}$.  For every $h \in \Lform X$,
$F_{Q_0} (h) = \min_{j=1}^n h (x_j)$.  Let
$U = B^{\dKRH^+}_{(F_{Q_0}, 0), <\epsilon}$.  $U$ is upwards-closed:
if $(F_Q, r) \leq^{\dKRH^+} (F_{Q'}, r')$ and $(F_Q, r)$ is in $U$,
then $\dKRH (F_Q, F_{Q'}) \leq r-r'$, and
$\dKRH (F_{Q_0}, \allowbreak F_Q) < \epsilon - r$, so
$\dKRH (F_{Q_0}, F_{Q'}) < r-r'+\epsilon-r = \epsilon-r'$, showing
that $(F_{Q'}, r')$ is in $U$.

To show that $U$ is Scott-open, consider a directed family
${(Q_i, r_i)}_{i \in I}$ in $\mathbf B (\Smyth X, \dQ)$, with supremum
$(Q, r)$, and assume that $(F_Q, r)$ is in $U$.  By
Proposition~\ref{prop:QX:Ycomplete}, this is a naive supremum, so
$r = \inf_{i \in I} r_i$ and $Q$ is characterized by the fact that,
for every $h \in \Lform_1 (X, d)$,
$F_Q (h) = \sup_{i \in I} (F_{Q_i} (h) + r - r_i)$.  Since $(F_Q, r)$
is in $U$, $\dKRH (F_{Q_0}, F_Q) < \epsilon - r$, so $\epsilon > r$
and $F_{Q_0} (h) - \epsilon + r < F_Q (h)$ for every
$h \in \Lform_1 (X, d)$.  Therefore, for every $h \in \Lform_1 (X, d)$,
there is an index $i \in I$ such that
$F_{Q_0} (h) - \epsilon + r < F_{Q_i} (h) + r - r_i$, or equivalently:
\begin{equation}
  \label{eq:Q:A}
  \min_{j=1}^n h (x_j) < F_{Q_i} (h) + \epsilon - r_i.
\end{equation}
Moreover, since $\epsilon > r = \inf_{i \in I} r_i$, we may assume
that $i$ is so large that $\epsilon > r_i$.

Let $V_i$ be the set of all $h \in \Lform_1 (X, d)$ such that
(\ref{eq:Q:A}) holds.  Each $V_i$ is the inverse image of
$[0, \epsilon - r_i[$ by the map
$h \mapsto \dreal (\min_{j=1}^n h (x_j), F^{C_i} (h))$, which is
continuous from $\Lform_1 (X, d)^\patch$ to $(\creal)^\dG$ by
Proposition~\ref{prop:dKRH:cont}~(3).  Hence ${(V_i)}_{i \in I}$ is an
open cover of $\Lform_1 (X, d)^\patch$.  The latter is a compact space
by Lemma~\ref{lemma:cont:Lalpha:retr}~(4).  Hence we can extract
a finite subcover ${(V_i)}_{i \in J}$: for every
$h \in \Lform_1 (X, d)$, there is an index $i$ in the finite set $J$
such that (\ref{eq:Q:A}) holds.  By directedness, one can require that
$i$ be the same for all $h$.  This shows that $(F_{Q_i}, r_i)$ is in
$U$, proving the claim.  \qed

\begin{rem}
  \label{rem:Qa:simple:center}
  A similar result holds for $\Smyth X, \dQ^a$ for any $a \in \Rp$, $a
  > 0$, and the argument is the same as for Lemma~\ref{lemma:Q:simple:center}.
\end{rem}

\begin{lem}
  \label{lemma:Q:approx}
  Let $X, d$ be a standard algebraic quasi-metric space, with strong
  basis $\mathcal B$.  For every compact subset $Q$ of $X$, for every
  open neighborhood $U$ of $Q$, and for every $\epsilon > 0$, there
  are finitely many points $x_1$, \ldots, $x_n$ in $\mathcal B$ and
  radii $r_1, \ldots, r_n < \epsilon$ such that
  $Q \subseteq \bigcup_{j=1}^n B^d_{x_j, <r_j} \subseteq U$.
\end{lem}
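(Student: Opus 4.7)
The plan is to mimic the argument of Lemma~\ref{lemma:cont:Q}, but exploiting algebraicity to ensure the covering balls are centered at strong basis points rather than at arbitrary centers.

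First I would transport the problem into $\mathbf B (X, d)$. Viewing $X$ as the subspace of formal balls of radius $0$, the set $Q$ is also compact in $\mathbf B (X, d)$. The set $\widehat U$ is Scott-open in $\mathbf B (X, d)$ with $\widehat U \cap X = U$, and by Lemma~\ref{lemma:Veps} the set $V_\epsilon = \{(x, r) \mid r < \epsilon\}$ is Scott-open. Hence $\widehat U \cap V_\epsilon$ is a Scott-open neighborhood of $Q$ in $\mathbf B (X, d)$.

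Next I would invoke algebraicity. By Lemma~\ref{lemma:B:basis}, $\mathbf B (X, d)$ is a continuous dcpo with basis $\{(x, r) \mid x \in \mathcal B\}$. Therefore the open set $\widehat U \cap V_\epsilon$ is the union of all $\uuarrow (x, r)$ where $(x, r) \in \widehat U \cap V_\epsilon$ and $x \in \mathcal B$. By compactness of $Q$, finitely many such $\uuarrow (x_1, r_1), \ldots, \uuarrow (x_n, r_n)$ suffice to cover $Q$, with $x_j \in \mathcal B$, $(x_j, r_j) \in \widehat U$, and $r_j < \epsilon$.

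Finally I would intersect with $X$. Since each $x_j \in \mathcal B$ is a center point, Lemma~\ref{lemma:B:basis} gives $\uuarrow (x_j, r_j) \cap X = B^d_{x_j, <r_j}$, so $Q \subseteq \bigcup_{j=1}^n B^d_{x_j, <r_j}$. Moreover $\widehat U$ is upwards-closed, so $\uuarrow (x_j, r_j) \subseteq \widehat U$, and intersecting with $X$ yields $B^d_{x_j, <r_j} \subseteq U$ for each $j$. This gives the chain $Q \subseteq \bigcup_{j=1}^n B^d_{x_j, <r_j} \subseteq U$ with every $r_j < \epsilon$, as required. There is no real obstacle here; the only point requiring some care is the use of algebraicity (via Lemma~\ref{lemma:B:basis}) to ensure that the elements of the basis of $\mathbf B (X, d)$ one uses have centers in the prescribed strong basis $\mathcal B$, which is precisely what lets us replace the arbitrary center points appearing in Lemma~\ref{lemma:cont:Q} by centers drawn from $\mathcal B$.
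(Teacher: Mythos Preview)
Your proof is correct and follows essentially the same approach as the paper's: both arguments cover $Q$ inside the Scott-open set $\widehat U \cap V_\epsilon$ by sets $\uuarrow (x, r)$ with $x \in \mathcal B$ (via Lemma~\ref{lemma:B:basis}), extract a finite subcover by compactness, and then intersect with $X$. The paper phrases this pointwise (for each $y \in Q$ pick a basis formal ball $(x, r) \ll (y, 0)$ in $\widehat U \cap V_\epsilon$), while you phrase it globally as writing the open set as a union of $\uuarrow (x, r)$'s, but these are the same argument.
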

\proof For every $y$ in $Q$, we can find a formal ball
$(x, r) \ll (y, 0)$ in $\widehat U$ such that $x \in \mathcal B$ and
such that $r < \epsilon$, by Lemma~\ref{lemma:B:basis}.  The
corresponding open balls $B^d_{x, <r}$ are open since $x$ is a center
point, hence form an open cover.  It remains to extract a finite
subcover, thanks to the compactness of $Q$.  \qed



\begin{lem}
  \label{lemma:Q:B:eps}
  Let $X, d$ be a standard algebraic quasi-metric space.  Let $Q$ be a
  compact saturated subset of $X$.  For all center points $x_1$,
  \ldots, $x_m$ and all $r_1, \cdots, r_m > 0$ such that
  $Q \subseteq \bigcup_{j=1}^m B^d_{x_j, <r_j}$, there is an
  $\epsilon > 0$, $\epsilon < r_1, \cdots, r_m$, such that
  $Q \subseteq \bigcup_{j=1}^m B^d_{x_j, <r_j-\epsilon}$.
\end{lem}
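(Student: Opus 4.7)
The plan is to use the definition of \emph{center point} together with the compactness of $Q$, by exhibiting the desired cover as one member of a directed family of open covers.

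First I would recall that, since each $x_j$ is a center point and $X,d$ is standard, the open ball $B^d_{x_j, <s}$ is the intersection with $X$ of the Scott-open set $B^{d^+}_{(x_j,0), <s}$ of $\mathbf B(X,d)$, and is therefore $d$-Scott open, for every $s > 0$. Consequently, for each $\epsilon \in (0,r)$, the finite union
\[
  U_\epsilon \;=\; \bigcup_{j=1}^m B^d_{x_j, <\, r-\epsilon}
\]
is a $d$-Scott open subset of $X$. For $0 < \epsilon' \leq \epsilon < r$ we clearly have $U_{\epsilon'} \supseteq U_\epsilon$, so ${(U_\epsilon)}_{\epsilon \in (0,r)}$ is a directed family of $d$-Scott opens.

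Next I would check that
\[
  \bigcup_{\epsilon \in (0,r)} U_\epsilon \;=\; \bigcup_{j=1}^m B^d_{x_j, <\, r},
\]
which is immediate from the equality $B^d_{x_j, <r} = \bigcup_{\epsilon \in (0,r)} B^d_{x_j, <r-\epsilon}$ for each $j$. By hypothesis $Q$ is contained in the right-hand side, hence in the directed union $\bigcup_{\epsilon \in (0,r)} U_\epsilon$. Since $Q$ is compact in the $d$-Scott topology, this directed open cover admits a finite subcover, and by directedness a single $U_\epsilon$ already contains $Q$. This gives the desired $\epsilon \in (0,r)$ with $Q \subseteq \bigcup_{j=1}^m B^d_{x_j, <r-\epsilon}$.

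There is really no substantial obstacle here; the only point that needs attention is that the open balls $B^d_{x_j, <r-\epsilon}$ genuinely belong to the $d$-Scott topology (so that $Q$'s compactness applies), which is exactly what the center point hypothesis provides via the definition recalled before Lemma~\ref{lemma:B:basis}.
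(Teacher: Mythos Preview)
Your proof is correct and follows essentially the same approach as the paper's: both define the chain $U_\epsilon = \bigcup_{j=1}^m B^d_{x_j,<r-\epsilon}$, observe it covers $Q$, and extract a single $U_\epsilon$ by compactness. Your write-up is in fact slightly cleaner in explicitly justifying why the balls $B^d_{x_j,<r-\epsilon}$ are $d$-Scott open via the center-point hypothesis.
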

\proof For each $\epsilon > 0$ such that
$\epsilon < r_1, \cdots, r_m$, we consider the open subset
$U_\epsilon = \bigcup_{j=1}^m B^d_{x_j, < r_j-\epsilon}$.  For every
$x \in Q$, $x$ is in some open ball $B^d_{x_j, <r_j}$ by assumption,
so $d (x, x_j) < r_j$.  This implies that there is an $\epsilon > 0$
such that $d (x, x_j) < r_j-\epsilon$ (in particular
$\epsilon < r_j$).  That inequality is preserved by replacing
$\epsilon$ by a smaller positive number strictly less than $r_1$,
\ldots, $r_m$.  Then $x$ is in $U_\epsilon$.  The family
${(U_\epsilon)}_{0 < \epsilon < r_1, \cdots, r_m}$ is then a chain
that forms an open cover of $Q$.  Since $Q$ is compact, $Q$ is
included in $U_\epsilon$ for some $\epsilon > 0$ with
$\epsilon < r_1, \cdots, r_m$.  \qed

\begin{thm}[Algebraicity of Smyth powerdomains]
  \label{thm:Q:alg}
  Let $X, d$ be an algebraic Yoneda-complete quasi-metric space, with
  a strong basis $\mathcal B$.

  The space $\Smyth X, \dQ$ is algebraic Yoneda-complete.

  Every non-empty compact saturated set of the form
  $\upc \{x_1, \cdots, x_n\}$ where every $x_i$ is a center point is a
  center point of $\Smyth X, \dQ$, and those sets form a strong basis,
  even when each $x_i$ is taken from $\mathcal B$.
\end{thm}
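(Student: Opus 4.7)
The plan is to follow the pattern of Theorem~\ref{thm:H:alg}, carefully tracking the asymmetry of $\dQ$. Yoneda-completeness of $\Smyth X, \dQ$ is immediate from Proposition~\ref{prop:QX:Ycomplete}, since algebraic Yoneda-completeness entails continuous Yoneda-completeness, hence sobriety and standardness; directed suprema of formal balls are therefore naive suprema. That every $\upc\{x_1,\dots,x_n\}$ with $x_j$ center points is a center point of $\Smyth X, \dQ$ is Lemma~\ref{lemma:Q:simple:center}, so the remaining task is the strong-basis property.

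Fix $Q \in \Smyth X$ and let $D$ be the set of formal balls $(P,r)$ with $P = \upc\{x_1,\dots,x_n\}$, $x_j \in \mathcal B$, and $\dQ(P,Q) < r$, equivalently $Q \subseteq \bigcup_j B^d_{x_j,<r}$ by Lemma~\ref{lemma:dQ}. Non-emptiness and the existence of elements of arbitrarily small radius in $D$ follow from Lemma~\ref{lemma:Q:approx}. For directedness, given $(P_1,r_1),(P_2,r_2) \in D$, I would extract $\epsilon > 0$ via Lemma~\ref{lemma:Q:B:eps} so that $Q \subseteq \bigcup_j B^d_{x_{1j},<r_1-2\epsilon}$ and $Q \subseteq \bigcup_k B^d_{x_{2k},<r_2-2\epsilon}$; then for each $y \in Q$ I pick matching indices $j(y),k(y)$ and apply Lemma~\ref{lemma:B:basis} to choose $(z_y,s_y) \ll (y,0)$ with $z_y \in \mathcal B$ and $(z_y,s_y) \in V_{\epsilon/2} \cap B^{d^+}_{(x_{1,j(y)},0),<r_1-2\epsilon} \cap B^{d^+}_{(x_{2,k(y)},0),<r_2-2\epsilon}$ (all three Scott-open and containing $(y,0)$, the latter two because $x_{1,j(y)}$ and $x_{2,k(y)}$ are center points). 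A finite subcover of $Q$ by the open balls $B^d_{z_y,<\epsilon/2}$ yields $(\upc\{z_y : y \in F\},\epsilon) \in D$ dominating both original balls via the triangle inequality.

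Every member of $D$ lies $\leq^{\dQ^+} (Q,0)$ by construction, and since $D$ contains elements of arbitrarily small radius the supremum $(Q^*,0) = \sup D$ satisfies $Q \subseteq Q^*$ by Lemma~\ref{lemma:dQ:spec} (recall that $\leq^{\dQ}$ is reverse inclusion). The main obstacle is the reverse inclusion $Q^* \subseteq Q$. By Lemma~\ref{lemma:Q:sup}(2), $Q^* = \bigcap_{(P,r) \in D} \bigcup_j B^d_{x_j,\leq r}$, so because $Q$ is saturated it suffices, for every open $U \supseteq Q$, to exhibit a single $(P,r) \in D$ with $\bigcup_j B^d_{x_j,\leq r} \subseteq U$; then $Q^* \subseteq U$, and $Q = \bigcap_{U \supseteq Q} U$ yields $Q^* \subseteq Q$.

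The delicate point is finding a uniform radius $r$ that does double duty: small enough that every closed ball $B^d_{x_j,\leq r}$ lies inside $U$, yet large enough that the open balls $B^d_{x_j,<r}$ already cover $Q$. I would exploit that $d(\cdot,\overline U)$ is $1$-Lipschitz continuous (Lemma~6.11 of \cite{JGL:formalballs}) hence lower semicontinuous on the compact set $Q$, so $\delta_0 := \inf_{y \in Q} d(y,\overline U) > 0$ is strictly positive and attained. For any $\delta \in (0,\delta_0)$, the set $W_\delta := (\_ + \delta)^{-1}(\widehat U)$ is Scott-open in $\mathbf B(X,d)$ (by standardness of $X,d$) and contains $(y,0)$ for every $y \in Q$ since $(y,\delta) \in \widehat U$. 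Lemma~\ref{lemma:B:basis} then produces, for each $y \in Q$, a formal ball $(x_y,s_y) \ll (y,0)$ with $x_y \in \mathcal B$ and $(x_y,s_y) \in W_\delta \cap V_{\delta/2}$, whence $s_y < \delta/2$ and $d(x_y,\overline U) \geq s_y + \delta > \delta/2$, forcing $B^d_{x_y,\leq \delta/2} \subseteq U$. Extracting a finite subcover $F$ of $Q$ by $B^d_{x_y,<\delta/2}$, the pair $(P,r) := (\upc\{x_y : y \in F\},\delta/2)$ lies in $D$ and its associated union of closed balls is contained in $U$, completing the argument.
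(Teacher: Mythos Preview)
Your proof is correct and builds the same family $D$ as the paper, using the same auxiliary Lemmas~\ref{lemma:Q:approx}, \ref{lemma:Q:B:eps}, and \ref{lemma:B:basis}. The organization differs in two places. For directedness, the paper shrinks both radii by a single $\epsilon$, intersects the two unions of open balls to form an open neighbourhood $U$ of $Q$, and invokes Lemma~\ref{lemma:Q:approx} once on $U$ to extract the dominating finitary compact; you instead pick, for each $y\in Q$, a formal ball $(z_y,s_y)$ lying in the intersection of three Scott-open sets and then extract a finite subcover. Both routes are equally short. For the least-upper-bound step the difference is more marked: the paper takes an arbitrary upper bound $(Q',0)$, assumes $\dQ(Q,Q')>0$, and derives a contradiction using an open hole $T^d_{x',>\epsilon}$ as the ambient $U$ in Lemma~\ref{lemma:Q:approx}; you instead compute $\sup D$ explicitly via Lemma~\ref{lemma:Q:sup}(2) and sandwich $Q^*$ between $Q$ and every open neighbourhood of $Q$, using the positivity of $\inf_{y\in Q} d(y,\overline U)$ and the $\widehat U$ machinery to manufacture a member of $D$ whose associated closed balls lie inside $U$. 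Your argument is somewhat more constructive and makes the supremum formula do the work; the paper's is a line shorter and avoids the detour through $\widehat U$. Either is a perfectly good proof.
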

\proof First recall that every algebraic Yoneda-complete quasi-metric
space is continuous Yoneda-complete, and that every continuous
quasi-metric space is sober.  Then $\Smyth, \dQ$ is Yoneda-complete by
Proposition~\ref{prop:QX:Ycomplete}, and $\upc \{x_1, \cdots, x_n\}$
is a center point as soon as every $x_i$ is a center point, by
Lemma~\ref{lemma:Q:simple:center}.

Let $Q \in \Smyth X$.  We wish to show that $(Q, 0)$ is the
supremum of a directed family $D$ of formal balls $(Q_i, r_i)$ below
$(Q, 0)$, where every $Q_i$ is the upward closure of finitely many
center points.

Let $D$ be the family of all formal balls
$(\upc \{x_1, \cdots, x_n\}, r)$, where $n \geq 1$, every $x_j$ is in
$\mathcal B$, and $Q \subseteq \bigcup_{j=1}^n B^d_{x_j, <r}$.

We start by showing that $D$ is non-empty, and that we can in fact
find elements of $D$ with arbitrary small radius.  By
Lemma~\ref{lemma:Q:approx} with $U=X$, for every $\epsilon > 0$, there
are finitely many points $x_1$, \ldots, $x_n$ in $\mathcal B$ and
radii $r_1, \ldots, r_n < \epsilon$ such that
$Q \subseteq \bigcup_{j=1}^n B^d_{x_j, <r_j}$, in particular
$Q \subseteq \bigcup_{j=1}^n B^d_{x_j, <\epsilon}$.  It follows that
$(\upc \{x_1, \cdots, x_n\}, \epsilon)$ is in $D$.

Before we proceed, we make the following observation: $(*)$ for every
$(\upc \{x_1, \cdots, x_m\}, r)$ in $D$, there is an $\epsilon > 0$
such that $\epsilon < r$ and
$(\upc \{x_1, \cdots, x_m\}, r - \epsilon)$ in $D$.  This is exactly
Lemma~\ref{lemma:Q:B:eps} with $r_1, \cdots, r_m = r$.  Note also that
if $(\upc \{x_1, \cdots, x_m\}, r - \epsilon)$ is in $D$, then
$(\upc \{x_1, \cdots, x_m\}, r - \epsilon')$ is also in $D$ for every
$\epsilon' > 0$ with $\epsilon' \leq \epsilon$.

We now claim that $D$ is directed.  Let
$(\upc \{x_1, \cdots, x_m\}, r)$ and $(\upc \{y_1, \cdots, y_n\}, s)$
be two elements of $D$.  Using $(*)$, there is an $\epsilon > 0$ such
that $\epsilon < r, s$ and such that
$(\upc \{x_1, \cdots, x_m\}, r-\epsilon)$ and
$(\upc \{y_1, \cdots, y_n\}, s-\epsilon)$ are again in $D$.  The open
subset
$U = \bigcup_{j=1}^m B^d_{x_j, <r-\epsilon} \cap \bigcup_{k=1}^n
B^d_{y_k, <s-\epsilon}$ contains $Q$.  By Lemma~\ref{lemma:Q:approx},
there is a finite family of points $z_1$, \ldots, $z_p$ in
$\mathcal B$ and associated radii $t_1, \cdots, t_p < \epsilon$ such
that
$Q \subseteq \bigcup_{\ell=1}^p B^d_{z_\ell, < t_\ell} \subseteq U$.
Let $E = \{z_1, \cdots, z_p\}$.  By construction,
$Q \subseteq \bigcup_{\ell=1}^p B^d_{z_\ell, < \epsilon}$, so
$(\upc E, \epsilon)$ is in $D$.  We claim that
$(\upc \{x_1, \cdots, x_m\}, r) \leq^{\dQ^+} (\upc E, \epsilon)$.  To
show that, we use Lemma~\ref{lemma:dQ} and reduce our problem to
showing that
$\sup_{\ell=1}^p \inf_{j=1}^m d (x_j, z_\ell) \leq r-\epsilon$.  For
every $\ell$, $z_\ell$ is in the open ball
$B^d_{z_\ell, <t_\ell} \subseteq U \subseteq \bigcup_{j=1}^m B^d_{x_j,
  <r-\epsilon}$, so $d (x_j, z_\ell) < r-\epsilon$ for some $j$, and
this proves the claim.  We show that
$(\upc \{y_1, \cdots, y_n\}, s) \leq^{\dQ^+} (\upc E, \epsilon)$
similarly.

By definition of $D$, $(Q, 0)$ is an upper bound of $D$: every element
$(\upc \{x_1, \cdots, x_m\}, r)$ of $D$ is such that $Q \subseteq
\bigcup_{j=1}^m B^d_{x_j, <r}$, so $\dQ (\upc \{x_1, \cdots, x_m\}, Q)
= \sup_{x \in Q} \inf_{j=1}^m d (x_j, x) \leq r$.

We claim that $(Q, 0)$ is the least upper bound of $D$.  Let
$(Q', r')$ be another upper bound.  Since $D$ contains formal balls of
arbitrarily small radius, $r'=0$.  Let us assume that $(Q, 0)$ is not
below $(Q', 0)$.  By definition, this means that $\dQ (Q, Q') > 0$.
Let $\epsilon > 0$ be such that $\epsilon < \dQ (Q, Q')$.  By
Lemma~\ref{lemma:dQ}, there is a point $x' \in Q'$ such that
$d (Q, x') > \epsilon$, so $d (x, x') > \epsilon$ for every $x \in Q$.
Let $U$ be the open hole $T^d_{x', >\epsilon}$, a $d$-Scott open set
by Lemma~\ref{lemma:hole}.  By construction, $U$ contains $Q$.  By
Lemma~\ref{lemma:Q:approx}, $U$ contains a finite union
$\bigcup_{j=1}^n B^d_{x_j, <r_j}$ of formal balls centered at points
of $\mathcal B$, that union contains $Q$, and $r_j < \epsilon$ for
every $j$.  Then $(\upc \{x_1, \cdots, x_n\}, \epsilon)$ is in $D$,
hence is below $(Q', 0)$.  This means that
$\dQ (\upc \{x_1, \cdots, x_n\}, Q') \leq \epsilon$.  In particular,
$d (\upc \{x_1, \cdots, x_n\}, x') \leq \epsilon$, so
$d (x_j, x') \leq \epsilon$ for some $j$.  (Indeed,
$d (x, x') \leq \epsilon$ for some $x \in \upc \{x_1, \cdots, x_n\}$,
say $x_j \leq x$.  Then $d (x_j, x') \leq d (x_j, x) + d (x, x')$, and
$d (x_j, x)=0$ since $x_j \leq x$.)  Since
$\bigcup_{j=1}^n B^d_{x_j, <r_j}$ is included in $U$, $x_j$ is in $U$,
so $d (x_j, x') > \epsilon$, contradiction.  \qed

\begin{rem}
  \label{rem:Qa:alg}
  The same result holds for $\dQ^a$, for every $a \in \Rp$, $a > 0$:
  when $X, d$ is standard algebraic, $\Smyth X, \dQ^a$ is algebraic
  Yoneda-complete, with the same strong basis.
  The proof is the same, except in the final step, where we consider
  another upper bound $(Q', r')$.  We must additionally require that
  $\epsilon \leq a$, and this is possible because
  Lemma~\ref{lemma:Q:approx} allows us to take $\epsilon$ as small as
  we wish.  The inequality
  $\dQ^a (\upc \{x_1, \cdots, x_n\}, Q') \leq \epsilon$ then implies
  $\dQ (\upc \{x_1, \cdots, x_n\}, Q') \leq \epsilon$, allowing us to
  conclude as above.
\end{rem}

\subsection{Continuity}
\label{sec:continuity-smyth}

We proceed exactly as in Section~\ref{sec:cont-val-leq-1} for
subnormalized continuous valuations, or as in
Section~\ref{sec:continuity-hoare} for the Hoare powerdomains.

We start with an easy fact.
\begin{fact}
  \label{fact:Q:upc:inf}
  For every topological space $Y$, for every subset $A$ of $Y$, for
  every monotonic map
  $h \colon Y \to \real \cup \{-\infty, +\infty\}$,
  $\inf_{y \in A} h (y) = \inf_{y \in \upc A} h (y)$.  \qed
\end{fact}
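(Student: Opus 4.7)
The plan is to establish the two inequalities separately, using only the definition of upward closure with respect to the specialization preordering $\leq$ and the monotonicity of $h$.

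First, since $A \subseteq \upc A$, taking infima over a larger set can only decrease the value, so $\inf_{y \in \upc A} h(y) \leq \inf_{y \in A} h(y)$. This direction requires no monotonicity hypothesis.

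For the reverse inequality, I would fix an arbitrary $y \in \upc A$. By definition of $\upc A$, there exists some $x \in A$ with $x \leq y$. Monotonicity of $h$ then gives $h(x) \leq h(y)$, and since $x \in A$, we have $\inf_{x' \in A} h(x') \leq h(x) \leq h(y)$. Taking the infimum over all $y \in \upc A$ yields $\inf_{x' \in A} h(x') \leq \inf_{y \in \upc A} h(y)$.

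Combining the two inequalities produces the stated equality. There is no real obstacle here: the argument is a one-line application of monotonicity together with the defining property of upward closure, and the value $+\infty$ or $-\infty$ for $h$ causes no difficulty since the inequalities $h(x) \leq h(y)$ are taken in the totally ordered set $\real \cup \{-\infty, +\infty\}$ where infima are always defined.
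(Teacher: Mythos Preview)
Your proof is correct. The paper treats this as an obvious fact and gives no proof at all (the statement is immediately followed by \qed), so your elementary argument via the two inequalities is exactly the intended verification.
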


\begin{lem}
  \label{lemma:Q:functor}
  Let $X, d$ and $Y, \partial$ be two continuous Yoneda-complete
  quasi-metric spaces, and $f \colon X, d \mapsto Y, \partial$ be a
  $1$-Lipschitz continuous map.  The map
  $\Smyth f \colon \Smyth X, \dQ \to \Smyth Y, \dQ$ defined by
  $\Smyth f (Q) = \upc f [Q]$ is $1$-Lipschitz continuous.
  Moreover, $F_{\Smyth f (Q)} = \Prev f (F_Q)$ for every
  $Q \in \Smyth X$.

  Similarly with $\dQ^a$ instead of $\dQ$.
\end{lem}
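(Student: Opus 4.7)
\proof
The plan is to mirror the proof of Lemma~\ref{lemma:H:functor} for the Hoare powerdomain, transporting everything through the isometry $Q \mapsto F_Q$ of Lemma~\ref{lemma:uQ} (which requires sobriety, but every continuous Yoneda-complete quasi-metric space is sober).

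First I would establish the naturality equation $F_{\Smyth f(Q)} = \Prev f (F_Q)$. For every $h \in \Lform Y$, we compute
\[
F_{\Smyth f(Q)}(h) = \inf_{y \in \upc(f[Q])} h(y) = \inf_{y \in f[Q]} h(y),
\]
using Fact~\ref{fact:Q:upc:inf} and the fact that $h$, being lower semicontinuous hence monotonic, attains the same infimum on a set and on its upward closure. This is in turn equal to $\inf_{x \in X} h(f(x)) = F_Q(h \circ f) = \Prev f (F_Q)(h)$. In particular, $\Smyth f(Q)$ is indeed in $\Smyth Y$, since by Lemma~\ref{lemma:Pf:lip}, $\Prev f$ maps normalized discrete superlinear previsions to previsions of the same type, and those correspond bijectively to non-empty compact saturated sets via Lemma~\ref{lemma:uQ}.

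Next I would deduce the $1$-Lipschitz continuity. By Lemma~\ref{lemma:Pf:lip}, $\Prev f$ is a $1$-Lipschitz map from $\Prev X, \dKRH$ to $\Prev Y, \dKRH$, so $\mathbf{B}^1(\Prev f)$ is monotonic. Via the isometry of Definition~\ref{defn:dQ}, this transports to: $\mathbf{B}^1(\Smyth f)$ is monotonic, hence $\Smyth f$ is $1$-Lipschitz. To upgrade this to $1$-Lipschitz continuity, observe that by Proposition~\ref{prop:QX:Ycomplete}, both $\Smyth X, \dQ$ and $\Smyth Y, \dQ$ are Yoneda-complete and directed suprema of formal balls are computed as naive suprema. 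By Lemma~\ref{lemma:Pf:lipcont}, $\mathbf{B}^1(\Prev f)$ preserves naive suprema of directed families of formal balls of previsions; through the isometry, $\mathbf{B}^1(\Smyth f)$ therefore preserves all directed suprema in $\mathbf{B}(\Smyth X, \dQ)$. Hence $\mathbf{B}^1(\Smyth f)$ is Scott-continuous, which is precisely the definition of $\Smyth f$ being $1$-Lipschitz continuous.

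The case of $\dQ^a$ is entirely parallel, replacing Proposition~\ref{prop:QX:Ycomplete} by Remark~\ref{rem:QX:Ycomplete:a}, and noting from Remark~\ref{defn:dQa} and the corresponding version of Lemma~\ref{lemma:Pf:lip} that $\Prev f$ remains $1$-Lipschitz from $\Prev X, \dKRH^a$ to $\Prev Y, \dKRH^a$. No real obstacle arises; the only point that deserves care is checking that $\Smyth f (Q)$ lies in $\Smyth Y$ (i.e., is non-empty compact saturated), but this follows from the fact that the isometry $Q \mapsto F_Q$ is a bijection onto the normalized discrete superlinear previsions, and $\Prev f$ preserves normalization, discreteness, and superlinearity. \qed
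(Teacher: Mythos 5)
Your proof is correct and follows essentially the same route as the paper's: establish the naturality equation $F_{\Smyth f(Q)} = \Prev f(F_Q)$ via Fact~\ref{fact:Q:upc:inf}, get $1$-Lipschitzness from Lemma~\ref{lemma:Pf:lip}, and then upgrade to $1$-Lipschitz continuity by combining Proposition~\ref{prop:QX:Ycomplete} (directed suprema of formal balls are naive suprema) with Lemma~\ref{lemma:Pf:lipcont} (naive suprema are preserved). Your explicit check that $\Smyth f(Q)$ lands in $\Smyth Y$, and your remark that sobriety is available because continuous Yoneda-complete spaces are sober, are welcome additions that the paper leaves implicit.
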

Recall that $f [Q]$ denotes the image of $Q$ by $f$.

\proof We first check that $F_{\Smyth f (Q)} = \Prev f (F_Q)$.  For
every $h \in \Lform X$,
$F_{\Smyth f (Q)} (h) = \inf_{y \in \upc (f [Q])} h (y) = \inf_{y \in
  f [Q]} h (y)$ by Fact~\ref{fact:Q:upc:inf}.  That is equal to
$\inf_{x \in X} h (f (x))$.  We also have
$\Prev f (F_Q) (h) = F_Q (h \circ f) = \inf_{x \in X} h (f (x))$.
Therefore $F_{\Smyth f (Q)} = \Prev f (F_Q)$.

By Lemma~\ref{lemma:Pf:lip}, $\Prev f$ is $1$-Lipschitz, so
$\mathbf B^1 (\Prev f)$ is monotonic.  Also, $\Prev f$ maps normalized
discrete superlinear previsions to normalized discrete superlinear
previsions.  By Proposition~\ref{prop:QX:Ycomplete}, $\Smyth X, \dQ$
is Yoneda-complete, hence through the isometry of
Proposition~\ref{prop:H:prev}, the corresponding spaces of normalized
discrete superlinear previsions are Yoneda-complete as well.
Moreover, directed suprema of formal balls are computed as naive
suprema.  By Lemma~\ref{lemma:Pf:lipcont}, $\mathbf B^1 (\Prev f)$
preserves naive suprema, hence all directed suprema.  It must
therefore be Scott-continuous.  Hence $\Smyth f$ is $1$-Lipschitz
continuous.

In the case of $\dQ^a$, the argument is the same, except that we use
Remark~\ref{rem:QX:Ycomplete:a} instead of
Proposition~\ref{prop:QX:Ycomplete}.  \qed

Let $X, d$ be a continuous Yoneda-complete quasi-metric space.
There is an algebraic Yoneda-complete quasi-metric space $Y, \partial$
and two $1$-Lipschitz continuous maps $r \colon Y, \partial \to X, d$
and $s \colon X, d \to Y, \partial$ such that $r \circ s = \identity
X$.

By Lemma~\ref{lemma:Q:functor}, $\Smyth r$ and $\Smyth s$ are also
$1$-Lipschitz continuous, and clearly
$\Smyth r \circ \Smyth s = \identity {\Smyth X}$, so $\Smyth X, \dQ$ is
a $1$-Lipschitz continuous retract of $\Smyth Y, \mQ\partial$.
(Similarly with $\dQ^a$ and $\mQ\partial^a$.)  Theorem~\ref{thm:Q:alg}
states that $\Smyth Y, \mQ\partial$ (resp., $\mQ\partial^a$, using
Remark~\ref{rem:Qa:alg} instead) is algebraic Yoneda-complete, whence:
\begin{thm}[Continuity for the Smyth powerdomain]
  \label{thm:Q:cont}
  Let $X, d$ be a continuous Yoneda-complete quasi-metric space.  The
  quasi-metric spaces $\Smyth X, \dQ$ and $\Smyth X, \dQ^a$
  ($a \in \Rp$, $a > 0$) are continuous Yoneda-complete.  \qed
\end{thm}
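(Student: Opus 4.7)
The proof is essentially laid out in the paragraph preceding the statement; I will expand on that sketch. The plan is to transfer the known algebraic case to the continuous case via a retraction argument, using \cite[Theorem~7.9]{JGL:formalballs}, which states that every continuous Yoneda-complete quasi-metric space $X, d$ is the $1$-Lipschitz continuous retract of some algebraic Yoneda-complete quasi-metric space $Y, \partial$. Let $s \colon X \to Y$ be the section and $r \colon Y \to X$ the retraction, with $r \circ s = \identity X$.

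First I would invoke Lemma~\ref{lemma:Q:functor} to obtain that $\Smyth s \colon \Smyth X, \dQ \to \Smyth Y, \mQ\partial$ and $\Smyth r \colon \Smyth Y, \mQ\partial \to \Smyth X, \dQ$ are both $1$-Lipschitz continuous, and similarly for the $\dQ^a$ versions. Next I would check the equation $\Smyth r \circ \Smyth s = \identity {\Smyth X}$. The inclusion $Q \subseteq \upc r[\upc s[Q]]$ is immediate since for every $x \in Q$, $r(s(x)) = x$. For the reverse inclusion, any $y \in \upc r[\upc s[Q]]$ is of the form $y \geq r(z)$ for some $z \geq s(x)$ with $x \in Q$; monotonicity of $r$ (automatic from $1$-Lipschitzness) yields $r(s(x)) \leq r(z) \leq y$, i.e.\ $x \leq y$, and saturation of $Q$ gives $y \in Q$. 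Hence $(\Smyth s, \Smyth r)$ is a section-retraction pair of $1$-Lipschitz continuous maps between $\Smyth X, \dQ$ and $\Smyth Y, \mQ\partial$, and similarly for the $\dQ^a$ variants.

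By Theorem~\ref{thm:Q:alg}, $\Smyth Y, \mQ\partial$ is algebraic Yoneda-complete, and by Remark~\ref{rem:Qa:alg} so is $\Smyth Y, \mQ\partial^a$. In particular $\mathbf B(\Smyth Y, \mQ\partial)$ is a continuous (indeed algebraic) dcpo. Applying the functor $\mathbf B^1$ to the pair $(\Smyth s, \Smyth r)$ gives a Scott-continuous section-retraction pair between $\mathbf B(\Smyth X, \dQ)$ and $\mathbf B(\Smyth Y, \mQ\partial)$, exhibiting the former as a Scott-continuous retract of a continuous dcpo. It is standard that Scott-continuous retracts of continuous dcpos are continuous dcpos, so $\mathbf B(\Smyth X, \dQ)$ is a continuous dcpo. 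Since $\Smyth X, \dQ$ is Yoneda-complete by Proposition~\ref{prop:QX:Ycomplete} (continuous Yoneda-complete implies standard and sober), and standardness is automatic from Yoneda-completeness, this exactly says that $\Smyth X, \dQ$ is continuous Yoneda-complete. The same argument applies verbatim to $\dQ^a$.

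No step here is particularly difficult. The only mild subtlety I would flag is the verification of $\Smyth r \circ \Smyth s = \identity{\Smyth X}$, since the Smyth powerdomain uses saturation and one must not forget that monotonicity of $r$ (coming from $1$-Lipschitzness together with the specialization ordering of formal balls) is what makes the argument close; apart from that the proof is a direct combination of the representation theorem, Lemma~\ref{lemma:Q:functor}, Theorem~\ref{thm:Q:alg}, and the preservation of continuity by Scott-continuous retracts.
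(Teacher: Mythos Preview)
Your proof is correct and follows the same approach as the paper's: exhibit $\Smyth X, \dQ$ (resp.\ $\dQ^a$) as a $1$-Lipschitz continuous retract of $\Smyth Y, \mQ\partial$ (resp.\ $\mQ\partial^a$) via Lemma~\ref{lemma:Q:functor} and \cite[Theorem~7.9]{JGL:formalballs}, then invoke Theorem~\ref{thm:Q:alg} (resp.\ Remark~\ref{rem:Qa:alg}) and the fact that such retracts of algebraic Yoneda-complete spaces are continuous Yoneda-complete. You spell out explicitly both the verification of $\Smyth r \circ \Smyth s = \identity{\Smyth X}$ and the passage through $\mathbf B^1$ and retracts of continuous dcpos, which the paper leaves implicit (the latter is essentially the content of Lemma~\ref{lemma:retract:alg}).
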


Together with Lemma~\ref{lemma:Q:functor}, and
Theorem~\ref{thm:Q:alg} for the algebraic case, we obtain.
\begin{cor}
  \label{cor:Q:functor}
  $\Smyth, \dQ$ defines an endofunctor on the category of continuous
  Yoneda-complete quasi-metric spaces and $1$-Lipschitz continuous
  map.  Similarly with $\dQ^a$ instead of $\dQ$ ($a > 0$), or with
  algebraic instead of continuous.  \qed
\end{cor}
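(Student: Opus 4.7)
The plan is to assemble the pieces already established in the excerpt: Theorem~\ref{thm:Q:cont} (respectively Theorem~\ref{thm:Q:alg} for the algebraic case) tells us that $\Smyth$ sends objects of the category to objects, and Lemma~\ref{lemma:Q:functor} tells us that $\Smyth f$ is a morphism whenever $f$ is. It thus remains only to verify the two functoriality equations: $\Smyth(\identity X) = \identity{\Smyth X}$ and $\Smyth(g\circ f) = \Smyth g\circ \Smyth f$.

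For the identity law, observe that every $Q\in\Smyth X$ is saturated, so $\Smyth(\identity X)(Q)=\upc(\identity X[Q])=\upc Q=Q$. For the composition law, fix $1$-Lipschitz continuous maps $f\colon X,d\to Y,\partial$ and $g\colon Y,\partial\to Z,\mathfrak d$ between continuous (or algebraic) Yoneda-complete quasi-metric spaces. On one hand $\Smyth(g\circ f)(Q)=\upc(g[f[Q]])$, and on the other $(\Smyth g\circ\Smyth f)(Q)=\upc(g[\upc(f[Q])])$. The inclusion $\upc(g[f[Q]])\subseteq\upc(g[\upc(f[Q])])$ is immediate since $f[Q]\subseteq\upc(f[Q])$. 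For the reverse inclusion, pick $z\in\upc(g[\upc(f[Q])])$: then $g(y)\leq z$ for some $y\in\upc(f[Q])$, and $f(x)\leq y$ for some $x\in Q$; since $g$ is continuous (Proposition~\ref{prop:cont}), it is monotonic for the specialization preorderings, so $g(f(x))\leq g(y)\leq z$, whence $z\in\upc(g[f[Q]])$.

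The variant with $\dQ^a$ is literally the same, using Remark~\ref{rem:QX:Ycomplete:a} and Remark~\ref{rem:Qa:alg} in place of their unbounded counterparts, since the set-theoretic content of Lemma~\ref{lemma:Q:functor} does not depend on which of the two quasi-metrics is chosen. There is no real obstacle here: the functoriality equations are purely order-theoretic manipulations on upward closures, and all analytical content (Lipschitz continuity of $\Smyth f$, preservation of the subcategory of continuous, resp.\ algebraic, Yoneda-complete spaces) has already been absorbed into the cited lemma and theorems. \qed
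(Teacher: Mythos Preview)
Your proof is correct and follows the same route as the paper, which simply cites Lemma~\ref{lemma:Q:functor} and Theorem~\ref{thm:Q:alg} (resp.\ Theorem~\ref{thm:Q:cont}) and leaves the rest implicit. You have merely made explicit the verification of the identity and composition laws that the paper takes for granted.
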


\subsection{The Upper Vietoris Topology}
\label{sec:upper-viet-topol}

The upper Vietoris topology on $\Smyth X$ is generated by basic open
subsets $\Box U = \{Q \in \Smyth X \mid Q \subseteq U\}$, where $U$
ranges over the open subsets of $X$.

\begin{lem}
  \label{lemma:Q:V=weak}
  Let $X, d$ be a standard quasi-metric space that is sober in its
  $d$-Scott topology.  The map $Q \mapsto F_Q$ is a homeomorphism of
  $\Smyth X$ with the upper Vietoris topology onto the space of
  normalized discrete superlinear previsions on $X$ with the weak
  topology.
\end{lem}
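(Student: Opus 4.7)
The plan is to mimic the proof of Lemma~\ref{lemma:H:V=weak}, replacing the closed-set/$\sup$ machinery with the compact-set/$\min$ machinery of Lemma~\ref{lemma:uQ}. First I would invoke Lemma~\ref{lemma:uQ}: item~(1) says $Q \mapsto F_Q$ sends $\Smyth X$ into the normalized discrete superlinear previsions, and item~(2), which requires sobriety, gives the inverse assignment, so sobriety of $X$ in its $d$-Scott topology is exactly what is needed to make the map $Q \mapsto F_Q$ a bijection onto the stated codomain.

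For continuity from the upper Vietoris side to the weak side, I would check that the preimage of each subbasic weak open $[h > a] = \{F \mid F(h) > a\}$, with $h \in \Lform X$ and $a \in \Rp$, is upper Vietoris open. Using Lemma~\ref{lemma:uQ}~(1), $F_Q(h) = \min_{x \in Q} h(x)$, so $F_Q(h) > a$ is equivalent to $h(x) > a$ for every $x \in Q$, i.e.\ $Q \subseteq h^{-1}(]a,+\infty])$. Since $h$ is lower semicontinuous, $h^{-1}(]a,+\infty])$ is a $d$-Scott open $U$, and the preimage of $[h > a]$ is the basic upper Vietoris open $\Box U$.

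For continuity in the opposite direction, it suffices to show that for every $d$-Scott open $U$, the set $\Box U = \{Q \in \Smyth X \mid Q \subseteq U\}$ is weakly open in the image. Take $h = \chi_U$, which is lower semicontinuous and takes only the values $0$ and $1$. Then $F_Q(\chi_U) = \min_{x \in Q} \chi_U(x) \in \{0,1\}$, and $F_Q(\chi_U) = 1$ iff $Q \subseteq U$. Consequently $\Box U$ is the preimage of the subbasic weak open $[\chi_U > 1/2]$ under $Q \mapsto F_Q$, which finishes the argument.

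No step here is really an obstacle: all the real work has already been done in Lemma~\ref{lemma:uQ} (where sobriety is used via Hofmann--Mislove to recover $Q$ from $F$) and in the observation that $F_Q(\chi_U) \in \{0,1\}$. The proof is essentially a transcription of Lemma~\ref{lemma:H:V=weak}, with $\Hoarez X$ replaced by $\Smyth X$, $F^C = \sup_{x \in C} h(x)$ replaced by $F_Q = \min_{x \in Q} h(x)$, $\Diamond U$ replaced by $\Box U$, and the closed-set indicator replaced by the open-set indicator.
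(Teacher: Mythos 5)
Your proposal is correct and follows exactly the paper's argument: the bijection comes from Lemma~\ref{lemma:uQ} (with sobriety feeding into item~(2)), the preimage of $[h > a]$ is $\Box\, h^{-1}(]a,+\infty])$ via $F_Q(h)=\min_{x\in Q}h(x)$, and $\Box U$ is recovered as the preimage of $[\chi_U > 1/2]$. Nothing to add.
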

\proof This is a bijection by Lemma~\ref{lemma:uQ}.  Fix
$h \in \Lform X$, $a \in \Rp$.  For every $Q \in \Smyth X$, $F_Q$ is
in $[h > a]$ if and only if $h (x) > a$ for every $x \in Q$, if and
only if $Q \in \Box h^{-1} (]a, +\infty])$.  Therefore the bijection
is continuous.  In the other direction, for every open subset $U$,
$\Box U$ is the set of all $Q \in \Smyth X$ such that
$F_Q \in [\chi_U > 1/2]$.  \qed

\begin{lem}
  \label{lemma:Q:Vietoris}
  Let $X, d$ be a standard quasi-metric space that is sober in its
  $d$-Scott topology, and $a, a' > 0$ with $a \leq a'$.  We have the
  following inclusion of topologies:
  \begin{quote}
    upper Vietoris $\subseteq$ $\dQ^a$-Scott $\subseteq$ $\dQ^{a'}$-Scott
    $\subseteq$ $\dQ$-Scott.
  \end{quote}
\end{lem}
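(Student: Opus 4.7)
The plan is to mimic the proof of Lemma~\ref{lemma:H:Vietoris} for the Hoare powerdomain, transporting everything through the isometry of Lemma~\ref{lemma:Q:V=weak} and then invoking the general result Proposition~\ref{prop:weak:dScott:a} for spaces of previsions.

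First, I would observe that since $X, d$ is standard and sober, Lemma~\ref{lemma:Q:V=weak} gives a homeomorphism between $\Smyth X$ with the upper Vietoris topology and the set $Y$ of normalized discrete superlinear previsions on $X$ with the weak topology. By Definition~\ref{defn:dQ} and Remark~\ref{defn:dQa}, the map $Q \mapsto F_Q$ is moreover an isometry from $\Smyth X, \dQ$ to $Y, \dKRH$ and from $\Smyth X, \dQ^a$ to $Y, \dKRH^a$ for every $a > 0$. Since an isometric bijection is automatically an isomorphism of quasi-metric spaces, it carries the $\dQ$-Scott and $\dQ^a$-Scott topologies to the $\dKRH$-Scott and $\dKRH^a$-Scott topologies, respectively.

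Next, I would check that the hypotheses of Proposition~\ref{prop:weak:dScott:a} are satisfied for the subspace $Y$ defined by normalization, superlinearity, and discreteness. By Proposition~\ref{prop:QX:Ycomplete}, $\Smyth X, \dQ$ is Yoneda-complete with directed suprema in the space of formal balls computed as naive suprema; transported through the isometry, this shows $Y, \dKRH$ is Yoneda-complete with naive suprema. Remark~\ref{rem:QX:Ycomplete:a} gives the same for $\dQ^a$ and $\dQ^{a'}$, hence for $Y, \dKRH^a$ and $Y, \dKRH^{a'}$.

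With these preliminaries in place, Proposition~\ref{prop:weak:dScott:a} applied to $Y$ yields the chain
\[
  \text{weak} \subseteq \dKRH^a\text{-Scott} \subseteq \dKRH^{a'}\text{-Scott} \subseteq \dKRH\text{-Scott}
\]
on $Y$. Pulling this chain back along the isometry/homeomorphism $Q \mapsto F_Q$ gives the desired inclusion
\[
  \text{upper Vietoris} \subseteq \dQ^a\text{-Scott} \subseteq \dQ^{a'}\text{-Scott} \subseteq \dQ\text{-Scott}
\]
on $\Smyth X$. There is no real obstacle here beyond verifying that the isometry simultaneously realizes all four topological identifications (upper Vietoris $\leftrightarrow$ weak, and the three Scott topologies for $\dQ$, $\dQ^a$, $\dQ^{a'}$ $\leftrightarrow$ those for $\dKRH$, $\dKRH^a$, $\dKRH^{a'}$), which is routine once sobriety has been used to invoke Lemma~\ref{lemma:Q:V=weak}.
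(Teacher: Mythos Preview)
Your proposal is correct and follows essentially the same approach as the paper: transport the question to the space of normalized discrete superlinear previsions via Lemma~\ref{lemma:Q:V=weak} and the isometry of Definition~\ref{defn:dQ}/Remark~\ref{defn:dQa}, then invoke Proposition~\ref{prop:weak:dScott:a}, checking its hypotheses via Proposition~\ref{prop:QX:Ycomplete} (and Remark~\ref{rem:QX:Ycomplete:a}). The paper's proof is just a terser statement of the same argument.
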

\proof Considering Lemma~\ref{lemma:Q:V=weak}, this is a consequence
of Proposition~\ref{prop:weak:dScott:a}.  Note that the latter applies
because directed suprema of formal balls are indeed naive suprema, due
to Proposition~\ref{prop:QX:Ycomplete}.  \qed

\begin{prop}
  \label{prop:Q:Vietoris}
  Let $X, d$ be an algebraic Yoneda-complete quasi-metric space.  The
  $\dQ$-Scott topology and the upper Vietoris topologies coincide on
  $\Smyth X$.
\end{prop}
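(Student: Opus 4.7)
\proof[Proof plan]
The inclusion ``upper Vietoris $\subseteq$ $\dQ$-Scott'' is already supplied by Lemma~\ref{lemma:Q:Vietoris}, so the plan is to prove the reverse inclusion: every $\dQ$-Scott open subset of $\Smyth X$ is open in the upper Vietoris topology. Recall that every algebraic Yoneda-complete quasi-metric space is sober and standard, so that $\Smyth X$ is well defined and $\Smyth X, \dQ$ is itself algebraic Yoneda-complete by Theorem~\ref{thm:Q:alg}, with a strong basis given by the finitary compact saturated sets $\upc\{x_1,\cdots,x_n\}$ whose generators $x_j$ are center points of $X, d$.

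By the general fact that in a standard algebraic quasi-metric space, each center point $b$ satisfies $\uuarrow (b, r) = B^{\dQ^+}_{b, < r}$ (\cite[Lemma~5.8~(3)]{JGL:formalballs}), the sets $\uuarrow (\upc\{x_1,\cdots,x_n\}, r)$, with $x_j$ center points of $X, d$ and $r > 0$, form a base of the Scott topology on $\mathbf B (\Smyth X, \dQ)$. It therefore suffices to show that each intersection
\[
\mathcal W = B^{\dQ^+}_{\upc\{x_1,\cdots,x_n\}, <r} \cap \Smyth X
\]
is upper-Vietoris open in $\Smyth X$.

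Unfolding the definition and using Lemma~\ref{lemma:dQ}, $Q \in \mathcal W$ iff $\dQ(\upc\{x_1,\cdots,x_n\}, Q) < r$, iff $\sup_{y \in Q} d(\upc\{x_1,\cdots,x_n\}, y) < r$. Because each $x_j$ is a center point, $d(\upc\{x_1,\cdots,x_n\}, y) = \min_{j=1}^n d(x_j, y)$: indeed, for any $x \geq x_j$ one has $d(x_j, y) \leq d(x_j, x) + d(x, y) = d(x, y)$, so the infimum defining $d(\upc\{x_1,\cdots,x_n\}, y)$ is attained at $y = x_j$ for some $j$. Consequently $Q \in \mathcal W$ is equivalent to requiring that for every $y \in Q$ there be some $j$ with $d(x_j, y) < r$, i.e.\ $Q \subseteq \bigcup_{j=1}^n B^d_{x_j, < r}$.

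Since each $x_j$ is a center point of the standard space $X, d$, each open ball $B^d_{x_j, <r}$ is $d$-Scott open (this is essentially the definition of center point, cf.\ Section~\ref{sec:lipsch-regul-spac}), so $U = \bigcup_{j=1}^n B^d_{x_j, <r}$ is an open subset of $X$ and $\mathcal W = \Box U$, which is a basic open set in the upper Vietoris topology. The main obstacle is really a bookkeeping one, namely verifying the identity $d(\upc\{x_1,\cdots,x_n\}, y) = \min_j d(x_j, y)$ and matching the center-point hypothesis of Theorem~\ref{thm:Q:alg} with the fact that $B^d_{x_j,<r}$ is a $d$-Scott open of $X$; once these are in place the rest is a direct unfolding. \qed
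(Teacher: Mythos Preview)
Your approach is essentially the paper's own, but there is one genuine gap. You claim that
\[
\sup_{y\in Q}\min_{j=1}^n d(x_j,y) < r
\quad\Longleftrightarrow\quad
\forall y\in Q\;\exists j,\; d(x_j,y) < r,
\]
and then read off $\mathcal W = \Box\bigl(\bigcup_{j=1}^n B^d_{x_j,<r}\bigr)$. The forward implication is immediate, but the converse is not: from ``each $y\in Q$ satisfies $\min_j d(x_j,y)<r$'' you only get that the supremum is $\leq r$, not strictly less. This is exactly the step where compactness of $Q$ must enter, and it is the content of Lemma~\ref{lemma:Q:B:eps} in the paper: if $Q\subseteq\bigcup_{j=1}^n B^d_{x_j,<r}$, then the chain $U_\epsilon=\bigcup_{j=1}^n B^d_{x_j,<r-\epsilon}$ covers $Q$ (each $y\in Q$ lies in $U_\epsilon$ for small enough $\epsilon$), so by compactness $Q\subseteq U_\epsilon$ for some $\epsilon>0$, whence $\dQ(\upc\{x_1,\cdots,x_n\},Q)\leq r-\epsilon<r$. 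Here you do use that each $x_j$ is a center point, so that the balls $B^d_{x_j,<r-\epsilon}$ are $d$-Scott open and the cover argument is legitimate.

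A minor remark: your justification ``because each $x_j$ is a center point'' for the identity $d(\upc\{x_1,\cdots,x_n\},y)=\min_j d(x_j,y)$ is misplaced; that identity holds for arbitrary $x_j$'s by monotonicity of $d$ in its first argument (your own computation shows this). The center-point hypothesis is needed elsewhere: to know that $\upc\{x_1,\cdots,x_n\}$ is a center point of $\Smyth X$, and to know that the balls $B^d_{x_j,<s}$ are $d$-Scott open. Once you insert the compactness step, your proof coincides with the paper's.
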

\proof Recall that algebraic Yoneda-complete quasi-metric spaces are
sober, as a consequence of \cite[Proposition~4.1]{JGL:formalballs},
which says that all continuous Yoneda-complete quasi-metric spaces are
sober.  Hence Lemma~\ref{lemma:Q:Vietoris} cares for one direction.
It remains to show that every $\dQ$-Scott open subset is open in the
upper Vietoris topology.

Theorem~\ref{thm:Q:alg} tells us that the subsets
$\mathcal V = \uuarrow (\upc \{x_1, \cdots, x_n\}, r)$ with
$n \geq 1$, and $x_1$, \ldots, $x_n$ center points, form a base of the
Scott topology on $\mathbf B (\Smyth X, \dQ)$.  We show that
$\mathcal V \cap \Smyth X$ is open in the upper Vietoris topology.
For that, we use the implication (1)$\limp$(3) of Lemma~5.8 of
\cite{JGL:formalballs}, which states that, in a continuous
quasi-metric space $Y, \partial$, for every $\epsilon > 0$, for every
center point $y \in Y$,
$B^{\partial^+}_{(y, 0), <\epsilon} = \uuarrow (y, \epsilon)$.  Here
$H = \Smyth X$, $\partial = \dQ$, is algebraic complete by
Theorem~\ref{thm:Q:alg}, $\upc \{x_1, \cdots, x_n\}$ is a center point
by Lemma~\ref{lemma:Q:simple:center}, so
$\mathcal V = \uuarrow (\upc \{x_1, \cdots, x_n\}, r)$ is the open
ball $B^{\dQ^+}_{\upc \{x_1, \cdots, x_n\}, < r}$.  For every $Q$ in
$\mathcal V$, $\dQ (\upc \{x_1, \cdots, x_n\}, Q) < r$, so for every
$x \in Q$, there is an index $j$ such that $d (x_j, x) < r$.  This
implies that $Q \subseteq \bigcup_{j=1}^n B^d_{x_j, <r}$.  Conversely,
if $Q \subseteq \bigcup_{j=1}^n B^d_{x_j, <r}$, then by
Lemma~\ref{lemma:Q:B:eps}, there is an $\epsilon > 0$, $\epsilon < r$,
such that $Q \subseteq \bigcup_{j=1}^n B^d_{x_j, <r-\epsilon}$.  In
other words, for every $x \in Q$, there is a $j$ such that
$d (x_j, x) < r-\epsilon$.  Therefore
$\dQ (\upc \{x_1, \cdots, x_n\}, Q) \leq r-\epsilon < r$, so $Q$ is in
$\mathcal V$.  Summing up, $\mathcal V$ is equal to
$\Box \bigcup_{j=1}^n B^d_{x_j, <r}$, hence is upper Vietoris open.
\qed

\begin{thm}[$\dQ$ quasi-metrizes the upper Vietoris topology]
  \label{thm:Q:Vietoris}
  Let $X, d$ be a continuous Yoneda-complete quasi-metric space.  The
  $\dQ$-Scott topology, the $\dQ^a$-Scott topology, for every
  $a \in \Rp$, $a > 0$, and the upper Vietoris topology all coincide
  on $\Smyth X$.
\end{thm}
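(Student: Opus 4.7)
The plan is to reduce the continuous Yoneda-complete case to the algebraic one via a $1$-Lipschitz continuous section-retraction pair, exactly as in the proof of Theorem~\ref{thm:H:Vietoris} for the Hoare powerdomain. By \cite[Theorem~7.9]{JGL:formalballs}, there exist an algebraic Yoneda-complete quasi-metric space $Y, \partial$ and $1$-Lipschitz continuous maps $s \colon X, d \to Y, \partial$ and $r \colon Y, \partial \to X, d$ with $r \circ s = \identity X$. Both $X, d$ and $Y, \partial$ are sober (being continuous Yoneda-complete), so Lemma~\ref{lemma:Q:V=weak} applies on both sides.

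First, I would apply Lemma~\ref{lemma:Q:functor} (together with its $\dQ^a$ version) to obtain that $\Smyth s$ and $\Smyth r$ are $1$-Lipschitz continuous, with $\Smyth r \circ \Smyth s = \identity{\Smyth X}$. In particular, $\Smyth s$ is a topological embedding of $\Smyth X$ into $\Smyth Y$ with their $\dQ$-Scott topologies, and similarly with $\mQ\partial^a$ in place of $\mQ\partial$ for any $a > 0$. So $\Smyth X$ is a retract of $\Smyth Y$ under the $\dQ$-Scott (resp.\ $\dQ^a$-Scott) topology.

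Next, I would observe that $s$ and $r$ are also continuous maps between $X$ and $Y$ with the underlying $d$-Scott and $\partial$-Scott topologies, by Proposition~\ref{prop:cont}. Hence, via the natural isometry of Lemma~\ref{lemma:Q:V=weak} that identifies $\Smyth$ with the space of normalized discrete superlinear previsions under the weak topology, Fact~\ref{fact:Pf:weak} shows that $\Smyth s$ and $\Smyth r$ also form a continuous section-retraction pair between $\Smyth X$ and $\Smyth Y$, both equipped with the upper Vietoris topology. So $\Smyth X$ is likewise a retract of $\Smyth Y$ under the upper Vietoris topology.

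Finally, Proposition~\ref{prop:Q:Vietoris} gives the theorem on the algebraic space $Y, \partial$: the $\mQ\partial$-Scott, $\mQ\partial^a$-Scott, and upper Vietoris topologies on $\Smyth Y$ all coincide. Combined with Lemma~\ref{lemma:Q:Vietoris} (which yields the inclusions upper Vietoris $\subseteq$ $\dQ^a$-Scott $\subseteq$ $\dQ^{a'}$-Scott $\subseteq$ $\dQ$-Scott on $\Smyth X$ directly), it suffices to prove the reverse inclusion, i.e., that every $\dQ$-Scott open subset of $\Smyth X$ is upper Vietoris open. This follows from Fact~\ref{fact:retract:two}: since $\Smyth X$ embeds by the same map $\Smyth s$ into $\Smyth Y$ under both topologies, and the two topologies on $\Smyth Y$ coincide, the two topologies on $\Smyth X$ must coincide as well. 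The main thing to check is that $\Smyth s$ really is an embedding under both topologies, which is immediate from having a continuous retraction in each case; there is no substantive obstacle beyond faithfully reproducing the argument of Theorem~\ref{thm:H:Vietoris}.
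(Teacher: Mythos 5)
Your proposal is correct and follows essentially the same route as the paper's own proof: reduce to the algebraic case via the $1$-Lipschitz continuous retraction of \cite[Theorem~7.9]{JGL:formalballs}, observe that the same section-retraction pair works for both the $\dQ$-Scott and upper Vietoris topologies, invoke Proposition~\ref{prop:Q:Vietoris} on the algebraic space, and conclude with Fact~\ref{fact:retract:two}. The only cosmetic difference is that you phrase the functorial action as $\Smyth s$, $\Smyth r$ via Lemma~\ref{lemma:Q:functor} where the paper uses $\Prev s$, $\Prev r$ via Lemma~\ref{lemma:Pf:lip}; these agree by the identity $F_{\Smyth f(Q)} = \Prev f(F_Q)$.
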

\proof The proof is as for Theorem~\ref{thm:V:weak=dScott} or
Theorem~\ref{thm:H:Vietoris}.  By \cite[Theorem~7.9]{JGL:formalballs},
$X, d$ is the $1$-Lipschitz continuous retract of an algebraic
Yoneda-complete quasi-metric space $Y, \partial$.  Call
$s \colon X \to Y$ the section and $r \colon Y \to X$ the retraction.
We confuse $\Smyth X$ with the corresponding space of discrete
sublinear previsions.  Then $\Prev s$ and $\Prev r$ form a
$1$-Lipschitz continuous section-retraction pair by
Lemma~\ref{lemma:Pf:lip}, and in particular $\Prev s$ is an embedding
of $\Smyth X$ into $\Smyth Y$ with their $\dQ$-Scott topologies
(similarly with $\dQ^a$ in place of $\dQ$).  However, $s$ and $r$ are
also just continuous, by Proposition~\ref{prop:cont}, so $\Prev s$ and
$\Prev r$ also form a section-retraction pair between the same spaces,
this time with their weak topologies (as spaces of previsions), by
Fact~\ref{fact:Pf:weak}, that is, with their lower Vietoris
topologies, by Lemma~\ref{lemma:Q:V=weak}.  (Recall that $X$ and $Y$
are sober since continuous Yoneda-complete.)  By
Proposition~\ref{prop:Q:Vietoris}, the two topologies on $\Smyth Y$
are the same.  Fact~\ref{fact:retract:two} then implies that the two
topologies on $\Smyth X$ are the same as well.  \qed

\section{Sublinear, Superlinear Previsions}
\label{sec:other-previsions}

\subsection{A Minimax Theorem}
\label{sec:minimax-theorem-1}

We establish a minimax theorem on non-Hausdorff spaces.  We are not
aware of any preexisting such theorem.  We shall need to apply it on
non-Hausdorff spaces in the case of superlinear previsions
(Section~\ref{sec:superl-prev}).  For sublinear previsions, several
existing minimax theorems, on Hausdorff spaces, could be used instead.

Our proof is a simple modification of Frenk and Kassay's Theorem~1.1
\cite{FK:minimax}, a variant of Ky Fan's celebrated minimax theorem
\cite{KyFan:minimax}.  One might say that this is overkill here, since
the point in those theorems is that no vector space structure has to
be assumed to introduce the required convexity assumptions, and the
functions we shall apply this theorem to will not only be convex, but
in fact linear.  Our notion of linearity will be the same as in the
notion of linear previsions, and that is a notion that applies on
cones that are not in general embeddable in vector spaces, so that one
might say that we need the added generality, at least partly.  Our
reason for presenting the theorem in its full generality, however, is
because it is no more complicated to do so.

The proof we shall give is really Frenk and Kassay's, up to minor
modifications.  It is not completely obvious at first sight where they
require the Hausdorff assumption, and this is hidden in a use
of the finite intersection property for compact sets,
which typically only holds in Hausdorff spaces.
We show that this can be completely be dispensed with.

Given two non-empty sets $A$, $B$, and a map
$f \colon A \times B \to \real$, we clearly have:
\[
\sup_{b \in B} \inf_{a \in A} f (a,b) \leq \inf_{a \in A} \sup_{b \in
  B} f (a,b).
\]
Our aim is to strengthen that to an equality, under some extra
assumptions.  We shall also consider the case of functions with values
in $\real \cup \{+\infty\}$, not just $\real$, although we shall not
need that.  (Including the value $-\infty$ as well seems to present
some difficulties, but $+\infty$ does not.)

The minimax theorem must rely on a separation theorem, in the style of
Hahn and Banach.  For our purposes, we shall use a result due to
R. Tix, K. Keimel, and G. Plotkin, which we reproduce in the
proposition below.

We write ${\creal}_\sigma$ for $\creal$ with its Scott topology.
Hence a lower semicontinuous map from $A$ to $\creal$ is a continuous
map from $A$ to ${\creal}_\sigma$.  A subset $A$ of $\creal^n$ is
\emph{convex} if and only if for all $a_1, a_2 \in A$ and for every
$\alpha \in ]0, 1[$, $\alpha a_1 + (1-\alpha) a_2$ is in $A$.  A map
is \emph{linear} if and only if it preserves sums and products by
scalars from $\Rp$.  Let $\vec 1$ be the all one vector in $\creal^n$.
\begin{prop}[\cite{TKP:nondet:prob}, Lemma~3.7]
  \label{prop:strictsep}
  Let $Q$ be a convex compact subset of ${\creal}_\sigma^n$ disjoint
  from $\dc \vec 1 = [0, 1]^n$. Then there is a linear continuous
  function $\Lambda$ from ${\creal}_\sigma^n$ to ${\creal}_\sigma$ and
  a $b > 1$ such that $\Lambda (\vec 1) \leq 1$ and $\Lambda (x) > b$
  for every $x \in Q$.  \qed
\end{prop}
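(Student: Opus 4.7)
The plan is a Hahn--Banach style separation of the convex compact set $Q$ from the convex Scott-closed set $\dc \vec 1$ in the ordered cone $\creal^n$ equipped with its Scott topology. I would proceed in three steps: sharpen the disjointness using compactness, produce a separating Scott-continuous linear functional, and then rescale.

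First, I would sharpen the hypothesis. The set $\dc \vec 1 = [0,1]^n$ is Scott-closed in $\creal^n$, and its complement equals the directed union $\bigcup_{\epsilon > 0} U_\epsilon$ where $U_\epsilon = \{x \in \creal^n : \max_i x_i > 1 + \epsilon\}$ is Scott-open. Since $Q$ is compact and contained in this union, finitely many (hence, by direction, one) $U_\epsilon$ already covers $Q$. Thus there is $\epsilon > 0$ with $Q \cap [0, 1+\epsilon]^n = \emptyset$; the set $[0, 1+\epsilon]^n = \dc ((1+\epsilon) \vec 1)$ is again convex and Scott-closed.

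Second, I would invoke a Hahn--Banach style separation theorem for d-cones: there exists a Scott-continuous linear $\Lambda \colon \creal^n_\sigma \to \creal_\sigma$ and a real constant $c$ with $\Lambda(x) > c$ for all $x \in Q$ and $\Lambda(x) \leq c$ for all $x \in [0, 1+\epsilon]^n$. The Scott-continuous linear maps from $\creal^n_\sigma$ to $\creal_\sigma$ are exactly those of the form $\Lambda(x) = \sum_{i=1}^n \lambda_i x_i$ with $\lambda \in \creal^n$ (using the convention $0 \cdot \infty = 0$). Evaluating the second inequality at each rescaled coordinate vector $(1+\epsilon) e_i \in [0, 1+\epsilon]^n$ gives $(1+\epsilon) \lambda_i \leq c$, so each $\lambda_i$ is finite; evaluating at $(1+\epsilon) \vec 1$ gives $(1+\epsilon) \sum_i \lambda_i \leq c$. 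We cannot have $\lambda = 0$, since that would force $0 \leq c$ from the second inequality while $0 > c$ from the first (using that $Q$ is non-empty; the case $Q = \emptyset$ is trivial). Hence $\sum_i \lambda_i > 0$.

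Third, I would normalize. Divide $\Lambda$ by $\Lambda(\vec 1) = \sum_i \lambda_i > 0$ to get a new Scott-continuous linear functional, call it $\Lambda$ again, with $\Lambda(\vec 1) = 1$ and $\Lambda(x) > c / \sum_i \lambda_i \geq 1+\epsilon$ for every $x \in Q$. Choose $b$ with $1 < b < 1+\epsilon$ (or $b = 1+\epsilon$); then $\Lambda(\vec 1) \leq 1$ and $\Lambda(x) > b$ for $x \in Q$, as required. The main obstacle is step two: obtaining the separating functional. Since $\creal^n_\sigma$ is not a Hausdorff topological vector space and not locally convex in the usual sense, the classical Hahn--Banach theorem does not apply directly; one must rely on the appropriate Hahn--Banach separation theorem in the d-cone setting (as developed by Roth and exploited in \cite{TKP:nondet:prob}), or derive separation from scratch via a reduction to a classical finite-dimensional separation in $\real^n$ using truncation maps $\tau_M(x)_i = \min(x_i, M)$ for $M$ large.
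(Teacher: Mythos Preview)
The paper does not prove this proposition: it is quoted verbatim from \cite{TKP:nondet:prob} (Lemma~3.7 there) and closed with a \qed, so there is no proof in the paper to compare against.

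Your sketch is a reasonable reconstruction of how such a result is obtained in the d-cone setting, and indeed mirrors the line of argument in \cite{TKP:nondet:prob}: one invokes the strict separation theorem for continuous d-cones (separating the compact convex $Q$ from the Scott-closed convex $\dc\vec 1$), observes that the resulting Scott-continuous linear functional on $\creal_\sigma^n$ is necessarily of the form $x \mapsto \sum_i \lambda_i x_i$ with finite $\lambda_i$, and rescales. Two remarks. First, your step~1 is not strictly needed: once you have $\Lambda(x) > c$ on the compact $Q$ and $\Lambda(\vec 1) \leq c$, lower semicontinuity of $\Lambda$ gives that $\inf_{x\in Q}\Lambda(x)$ is attained and therefore strictly exceeds $c$; after normalizing so that $\Lambda(\vec 1)=1$ you can take any $b$ strictly between $1$ and that infimum. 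Second, and more importantly, your step~2 is the entire content of the result: Proposition~\ref{prop:strictsep} \emph{is} the specialization to $\creal_\sigma^n$ of the d-cone separation theorem, so invoking ``a Hahn--Banach style separation theorem for d-cones'' here is not a reduction but a restatement. If you want a self-contained proof, the truncation idea you mention at the end is the right way forward: push $Q$ into $[0,M]^n \subseteq \real^n$ via $\tau_M$ for $M$ large, apply classical finite-dimensional separation there, and check that the resulting linear form has non-negative coefficients (which is where convexity and the order structure of $\dc\vec 1$ enter).
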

Writing $e_i$ for the tuple with a $1$ at position $i$ and zeros
elsewhere, such a linear continuous map must map every
$(x_1, x_2, \cdots, x_n)$ to $\sum_{i=1}^n c_i x_i$, where
$c_i = \Lambda (e_i)$.  This is indeed certainly true when every $x_i$
is different from $+\infty$, by linearity, and (Scott-)continuity
gives us the result for the remaining cases.

\begin{cor}
  \label{corl:strictsep}
  Let $Q$ be a non-empty convex compact subset of ${\creal}_\sigma^n$
  disjoint from $\dc \vec 1 = [0, 1]^n$.  There are non-negative real
  numbers $c_1$, $c_2$, \ldots, $c_n$ such that $\sum_{i=1}^n c_i=1$
  and $\sum_{i=1}^n c_i x_i > 1$ for every $(x_1, x_2, \cdots, x_n)$
  in $Q$.
\end{cor}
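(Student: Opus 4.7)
The plan is to apply Proposition~\ref{prop:strictsep} directly and then normalize the resulting coefficients. I would proceed as follows.

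First, invoke Proposition~\ref{prop:strictsep} to obtain a linear continuous map $\Lambda \colon \creal_\sigma^n \to \creal_\sigma$ and a real number $b > 1$ such that $\Lambda(\vec 1) \leq 1$ and $\Lambda(x) > b$ for every $x \in Q$. Next, use the remark following Proposition~\ref{prop:strictsep}: setting $c_i = \Lambda(e_i)$, linearity and Scott-continuity of $\Lambda$ force the representation $\Lambda(x_1, \dots, x_n) = \sum_{i=1}^n c_i x_i$. Each $c_i$ lies in $\creal$, hence is nonnegative; moreover, since $e_i \leq \vec 1$ and $\Lambda$ is monotone (being continuous from $\creal_\sigma^n$ to $\creal_\sigma$, hence order-preserving), $c_i \leq \Lambda(\vec 1) \leq 1$, so each $c_i$ is in fact a nonnegative real number.

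Let $s = \sum_{i=1}^n c_i = \Lambda(\vec 1) \in [0,1]$. The key observation, which uses the hypothesis that $Q$ is nonempty, is that $s > 0$: otherwise every $c_i$ would vanish, making $\Lambda$ identically zero on finite tuples (and therefore on all of $\creal_\sigma^n$ by Scott-continuity), which contradicts $\Lambda(x) > b > 1$ for any $x \in Q$. Now rescale by setting $c'_i = c_i/s$. Then $\sum_{i=1}^n c'_i = 1$, and for every $(x_1,\dots,x_n) \in Q$,
\[
  \sum_{i=1}^n c'_i x_i \;=\; \frac{1}{s}\,\Lambda(x) \;>\; \frac{b}{s} \;\geq\; b \;>\; 1,
\]
where the penultimate inequality uses $s \leq 1$. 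This is exactly the desired conclusion, with the numbers $c'_i$ playing the role of the $c_i$ in the statement.

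There is no genuine obstacle here: the only delicate points are confirming that $\Lambda$ has the linear form with nonnegative real (not extended-real) coefficients, and ruling out the degenerate case $s=0$ using the nonemptiness of $Q$. Both are handled above in a few lines, so the corollary reduces to a routine normalization of Proposition~\ref{prop:strictsep}.
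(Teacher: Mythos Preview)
Your proof is correct and follows essentially the same approach as the paper: apply Proposition~\ref{prop:strictsep}, write $\Lambda$ as $\sum_i c_i x_i$, argue that the $c_i$ are finite with positive sum (using nonemptiness of $Q$), and normalize. The only cosmetic differences are that you bound $c_i$ via $c_i = \Lambda(e_i) \leq \Lambda(\vec 1) \leq 1$ using monotonicity, whereas the paper deduces finiteness from $\sum_i c_i = \Lambda(\vec 1) \leq 1$ directly; both work equally well.
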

\proof Find $\Lambda$ and $b > 1$ as above.  $\Lambda$ maps every
$(x_1, x_2, \cdots, x_n)$ to $\sum_{i=1}^n c_i x_i$, where each $c_i$
is in $\creal$.  Since $\Lambda (\vec 1) \leq 1$,
$\sum_{i=1}^n c_i \leq 1$.  In particular, no $c_i$ is equal to
$+\infty$.  Since $Q$ is non-empty, $\sum_{i=1}^n c_i x_i > b$ for
some point $(x_1, x_2, \cdots, x_n)$ in $Q$.  This implies that not
every $c_i$ is equal to $0$, hence $\sum_{i=1}^n c_i \neq 0$.  Let
$c'_i = c_i / \sum_{i=1}^n c_i$.  Then $\sum_{i=1}^n c'_i=1$, and for
every $(x_1, x_2, \cdots, x_n)$ in $Q$,
$\sum_{i=1}^n c'_i x_i > b / \sum_{i=1}^n c_i \geq b > 1$.  \qed

Following Frenk and Kassay, we say that a map
$f \colon A \times B \to \real \cup \{+\infty\}$ is
\emph{closely convex in its first argument} if and and only if for all
$a_1, a_2 \in A$, for every $\alpha \in ]0, 1[$, for every
$\epsilon > 0$, there is an $a \in A$ such that, for every $b \in B$:
\[
  f (a, b) \leq \alpha f (a_1, b) + (1-\alpha) f (a_2, b) + \epsilon.
\]
We say that $f$ is \emph{closely concave in its second argument} if
and only if for all $b_1, b_2 \in B$, for every $\alpha \in ]0, 1[$,
for all $\epsilon, M > 0$, there is an $b \in B$ such that, for every
$a \in A$:
\[
  f (a, b) \geq \min (M, \alpha f (a, b_1) + (1-\alpha) f (a, b_2) - \epsilon).
\]
The extra lower bound $M$ serves to handle the cases where $\alpha f
(a, b_1) + (1-\alpha) f (a, b_2)$ is infinite.

\begin{lem}
  \label{lemma:minimax:convex}
  Let $A$ be a non-empty compact topological space and $B$ be a
  non-empty set.  Let $f \colon A \times B \to \real \cup \{+\infty\}$
  be a map that is closely convex in its first argument, and such that
  $f (\_, b)$ is lower semicontinuous for every $b \in B$.

  Then, for every $t \in \real$, $\inf_{a \in A} \sup_{b \in B} f (a,
  b) \leq t$ if and only if, for every normalized simple valuation
  $\sum_{i=1}^n c_i \delta_{b_i}$ on $B$, $\inf_{a \in A} \sum_{i=1}^n
  c_i f (a, b_i) \leq t$.
\end{lem}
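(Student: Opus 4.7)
The forward implication is immediate: for any normalized simple valuation $\sum_{i=1}^n c_i \delta_{b_i}$ on $B$, each $\sum_i c_i f(a, b_i)$ is bounded above by $\sup_b f(a,b)$, so $\inf_a \sum_i c_i f(a,b_i) \leq \inf_a \sup_b f(a,b) \leq t$.

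For the converse I would argue by contradiction. Assume $\inf_a \sup_b f(a,b) > t$; the goal is to exhibit a normalized simple valuation whose expectation strictly exceeds $t$ at every $a$. Pick $\tilde t$ with $t < \tilde t < \inf_a \sup_b f(a,b)$. The open sets $U_b = f(\_,b)^{-1}((\tilde t, +\infty])$ cover $A$ by lower semicontinuity, and compactness extracts a finite subcover $U_{b_1}, \ldots, U_{b_n}$. The function $g(a) = \max_i f(a,b_i)$ is lower semicontinuous on the compact space $A$, hence attains its infimum $m$, and $m \geq f(a^*, b_j) > \tilde t > t$ for any minimizer $a^*$ and any $j$ with $a^* \in U_{b_j}$.

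Next, since Corollary~\ref{corl:strictsep} lives in $\creal_\sigma^n$, I would shift by a constant $L \geq 0$ chosen so that $f(a,b_i)+L \geq 0$ on $A$ for $i = 1,\dots,n$---possible because the minimum of finitely many lower semicontinuous maps on a compact space is finite---and form $K = \{(f(a,b_1)+L, \ldots, f(a,b_n)+L) : a \in A\} \subseteq \creal_\sigma^n$, compact as the continuous image of $A$. Fix $t' \in (t, m)$. A routine iteration of close convexity in the first argument from binary to arbitrary finite combinations shows that for every $P \in \mathrm{co}(K)$ and every $\epsilon > 0$ there is $a \in A$ with $f(a,b_i) \leq P_i - L + \epsilon$ for all $i$; taking $\epsilon < m - t'$ forbids any $P \in \mathrm{co}(K)$ from lying in $[0, t'+L]^n$, since this would force $\max_i f(a,b_i) < m$. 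Rescaling by $1/(t'+L)$ and applying Corollary~\ref{corl:strictsep} to the convex compact set $\mathrm{co}(K)/(t'+L)$, disjoint from $[0,1]^n$, delivers $c_1,\ldots,c_n \geq 0$ with $\sum c_i = 1$ and $\sum c_i y_i > 1$ on the scaled hull, so $\sum c_i f(a,b_i) > t'$ for every $a \in A$, and hence $\inf_a \sum c_i f(a,b_i) \geq t' > t$, contradicting the hypothesis.

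The main obstacle is justifying that $\mathrm{co}(K)$ is itself convex \emph{and} compact in $\creal_\sigma^n$, as demanded by Corollary~\ref{corl:strictsep}. The strategy is to exhibit $\mathrm{co}(K)$ as the continuous image of the compact space $K^{n+1} \times \Delta_n$ (with $\Delta_n$ the standard $n$-simplex) under the convex-combination map, continuous because addition and scalar multiplication are Scott-continuous on $\creal_\sigma$. This requires a Carathéodory-type result in $\creal^n$: compared to the classical proof in $\real^n$, one must additionally handle $+\infty$-coordinates of a target point $P$, which is achieved by partitioning coordinates into those where $P$ is finite (apply the classical Carathéodory to the projected data) and those where $P = +\infty$ (reserve at most one term per such coordinate carrying the $+\infty$, then perturb the remaining weights slightly to restore exact equality on the finite-coordinate part). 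This is a finitary linear-algebra manipulation. Everything else---the iterated close convexity, the shift to $\creal_\sigma^n$, and the application of the separation corollary---is routine.
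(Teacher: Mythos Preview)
Your overall strategy matches the paper's: extract a finite subcover $U_{b_1}, \ldots, U_{b_n}$, push $A$ into $\creal_\sigma^n$ by an affine shift of $(f(\_,b_1), \ldots, f(\_,b_n))$, and apply Corollary~\ref{corl:strictsep}. The substantive difference is how you manufacture a \emph{convex compact} witness for that corollary.

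The paper takes $Q = \upc K$ rather than $\mathrm{co}(K)$. Compactness of $\upc K$ is immediate (saturation of a compact set). Convexity is the interesting step: the paper first records the compactness fact $(*)$ that $z \in Q$ whenever $z + \epsilon\vec 1 \in Q$ for all $\epsilon > 0$; then, given $z_1, z_2 \in \upc K$ with witnesses $h(a_1) \leq z_1$, $h(a_2) \leq z_2$, a single application of close convexity produces $a$ with $h(a) \leq \alpha z_1 + (1-\alpha) z_2 + \epsilon\vec 1$, so $\alpha z_1 + (1-\alpha) z_2 + \epsilon\vec 1 \in \upc K$ for every $\epsilon > 0$, and $(*)$ finishes. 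Disjointness from $[0,1]^n$ follows directly from the covering, with no recourse to your quantity $m$.

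Your route via $\mathrm{co}(K)$ is trivially convex but needs Carath\'eodory in $\creal^n$ for compactness, and the sketch you give has a gap. You first Carath\'eodory-reduce on the finite coordinates $I_f$ to a subset $J$, then add one term per infinite coordinate, then ``perturb the remaining weights slightly to restore exact equality on the finite-coordinate part.'' That perturbation can fail: the extra terms contribute to the $I_f$ coordinates too, and after subtracting their contribution the target point need not remain in $\mathrm{co}\{s_j|_{I_f} : j \in J\}$---that depends on where the reduced point sits in the hull and in which direction the added terms push. The salvage is to reverse the order: first select, from the \emph{original} support, a cover of size at most $|I_\infty|$ for the infinite coordinates and freeze those weights; then Carath\'eodory-reduce the residual combination on $I_f$ to at most $|I_f|+1$ terms. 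This gives at most $n+1$ terms total, and then your compactness argument for $\mathrm{co}(K)$ goes through. So your approach is repairable, but the paper's use of $\upc K$ sidesteps the entire Carath\'eodory digression in a couple of lines.
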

\proof Let $L \colon A \to \real \cup \{+\infty\}$ be defined by
$L (a) = \sup_{b \in B} f (a, b)$.  Being a pointwise supremum of
lower semicontinuous functions, $L$ is itself lower semicontinuous.

For every normalized simple valuation $\sum_{i=1}^n c_i \delta_{b_i}$
on $B$,
$\sum_{i=1}^n c_i f (a, b_i) \leq \sum_{i=1}^n c_i L (a) = L (a)$, so
$\inf_{a \in A} \sum_{i=1}^n c_i f (a, b_i) \leq \inf_{a \in A} L
(a)$.  In particular, if $\inf_{a \in A} L (a) \leq t$, then
$\inf_{a \in A} \sum_{i=1}^n c_i f (a, b_i) \leq t$.

In the reverse direction, assume that $\inf_{a \in A} L (a) > t$.  We
wish to show that
$\inf_{a \in A} \allowbreak \sum_{i=1}^n c_i f (a, b_i) > t$ for some
normalized simple valuation $\sum_{i=1}^n c_i \delta_{b_i}$.  To that
end, we first pick a real number $t'$ such that
$\inf_{a \in A} L (a) > t' > t$, and we shall find a normalized simple
valuation $\sum_{i=1}^n c_i \delta_{b_i}$ such that
$\sum_{i=1}^n c_i f (a, b_i) \geq t'$ for every $a \in A$.

For every $b \in B$, $U_b = \{a \in A \mid f (a, b) > t'\}$ is open
since $f (\_, b)$ is lower semicontinuous, and the sets $U_b$,
$b \in B$, cover $A$: for every $a \in A$, $L (a) > t'$, so
$f (a, b) > t'$ for some $b \in B$.  Since $A$ is compact, there are
finitely many points $b_1$, $b_2$, \ldots, $b_n$ such that
$A = \bigcup_{i=1}^n U_{b_i}$.  Note that, since $A$ is non-empty,
$n \geq 1$.

Since $f (\_, b_i)$ is lower semicontinuous on the compact space $A$,
it reaches its minimum $r_i$ in $\real \cup \{+\infty\}$.  Let
$r = \min (r_1, r_2, \cdots, r_n)$, and find $c > 0$ such that
$1 + c (r - t') \geq 0$.  If $r \geq t'$, we can take any $c > 0$,
otherwise any $c$ such that $0 < c < 1/(t'-r)$ will fit.
The map $h_i \colon a \mapsto 1 + c (f (a, b_i) - t')$ is then lower
semicontinuous from $A$ to $\creal$, and therefore
$h (a) = (h_1 (a), h_2 (a), \cdots, h_n (a))$ defines a continuous map
from $A$ to ${\creal}_\sigma^n$.

Let $K$ be the image of $A$ by $h$.  This is a compact subset of
${\creal}_\sigma^n$.  It is also non-empty, since $A$ is non-empty.
Hence $Q = \upc K$ is also non-empty, compact, and saturated.

We observe the following fact: $(*)$ for every $z \in \creal^n$, if
$z + \epsilon. \vec 1$ is in $Q$ for every $\epsilon > 0$, then $z$ is
also in $Q$.  Indeed, consider the Scott-closed set
$C_\epsilon = \dc (z + \epsilon.\vec 1)$ in $\Rp^n$, for every
$\epsilon \geq 0$.  Then
$C_0 = \dc z = \bigcap_{\epsilon > 0} C_\epsilon$.  If $z$ were not in
$Q$, then $C_0$ would not intersect $Q$, since $Q$ is upwards-closed.
Then there would be finitely many values
$\epsilon_1, \epsilon_2, \ldots, \epsilon_n > 0$ such that
$\bigcap_{i=1}^n C_{\epsilon_i}$ does not intersect $Q$ by
compactness, and by letting $\epsilon$ be the smallest $\epsilon_i$,
$C_\epsilon$ would not intersect $Q$, contradiction.

We claim that $Q$ is convex.  Otherwise, there are two points $z_1$
and $z_2$ of $Q$ and a real number $\alpha \in {]0, 1[}$ such that
$\alpha z_1 + (1-\alpha) z_2$ is not in $Q$.  By $(*)$,
$\alpha z_1 + (1-\alpha) z_2 +\epsilon.\vec 1$ is not in $Q$ for some
$\epsilon > 0$.  Since $z_1$ and $z_2$ are in $Q$, by definition there
are points $a_1$ and $a_2$ in $A$ such that $h (a_1) \leq z_1$ and
$h (a_2) \leq z_2$.  We finally use the fact that $f$ is closely
convex in its first argument: there is a point $a \in A$ such that,
for every $i$, $1\leq i\leq n$,
$f (a, b_i) \leq \alpha f (a_1, b_i) + (1-\alpha) f (a_2, b_i) +
\epsilon/c$.  That implies
$h (a) \leq \alpha h (a_1) + (1-\alpha) h (a_2) + \epsilon.\vec 1 \leq
\alpha z_1 + (1-\alpha) z_2 + \epsilon.\vec 1$.  Since $h (a)$ is in
$Q$ and $Q$ is upwards-closed, this contradicts the fact that
$\alpha z_1 + (1-\alpha) z_2 + \epsilon.\vec 1$ is not in $Q$.

We claim that $Q$ does not intersect $[0, 1]^n$.  Assume on the
contrary that there is an $a \in A$, and a $z \in [0, 1]^n$ such that
$h (a) \leq z$.  Since $A = \bigcup_{i=1}^n U_{b_i}$, there is an
index $i$ such that $f (a, b_i) > t'$, so $h_i (a) > 1$.  However
$h (a) \leq z \in [0, 1]^n$ entails that $h_i (a) \leq 1$,
contradiction.

Now we use Corollary~\ref{corl:strictsep} and obtain non-negative real
numbers $c_1$, $c_2$, \ldots, $c_n$ such that $\sum_{i=1}^n c_i=1$,
and $\sum_{i=1}^n c_i z_i \geq 1$ for every
$(z_1, z_2, \cdots, z_n) \in Q$.  This holds for the elements $h (a)$
of $K \subseteq Q$, hence
$\sum_{i=1}^n c_i (1 + c (f (a, b_i) - t')) \geq 1$ for every
$a \in A$.  It follows that $\sum_{i=1}^n c_i f (a, b_i) \geq t'$, as
required.  \qed

\begin{lem}
  \label{lemma:minimax:concave}
  Let $A$ and $B$ be two non-empty sets, and let
  $f \colon A \times B \to \real \cup \{+\infty\}$ be closely concave
  in its second argument.  For every normalized simple valuation
  $\sum_{i=1}^n c_i \delta_{b_i}$ on $B$,
  $\inf_{a \in A} \sum_{i=1}^n c_i f (a, b_i) \leq \sup_{b \in B}
  \inf_{a \in A} f (a, b)$.
\end{lem}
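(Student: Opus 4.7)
I would prove this by induction on $n$, the number of support points of the normalized simple valuation.

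The base case $n=1$ is immediate: if $\mu = \delta_{b_1}$ then $c_1 = 1$, and $\inf_{a \in A} f(a,b_1) \leq \sup_{b \in B} \inf_{a \in A} f(a,b)$ holds by the definition of supremum.

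For the inductive step ($n \geq 2$, assuming the result for valuations with fewer than $n$ support points), the strategy is to use close concavity to merge two atoms of $\mu$ into a single point, reducing to a valuation with $n-1$ support points. Concretely, assume all $c_i > 0$ (otherwise drop the zero-weight atom and invoke the IH directly). Set $\alpha = c_{n-1}/(c_{n-1}+c_n)$, and for $\epsilon > 0$ small and $M > 0$ large (to be chosen), close concavity applied to $b_{n-1}$ and $b_n$ yields $b^* \in B$ such that for every $a \in A$,
\[
  f(a,b^*) \geq \min\bigl(M,\ \alpha f(a,b_{n-1}) + (1-\alpha) f(a,b_n) - \epsilon\bigr).
\]
Multiplying by $c_{n-1}+c_n$ and adding $\sum_{i=1}^{n-2} c_i f(a,b_i)$ to both sides, and using the distributivity of $+$ over $\min$, gives
\[
  \sum_{i=1}^{n-2} c_i f(a,b_i) + (c_{n-1}+c_n) f(a,b^*) \geq \min\Bigl(\textstyle\sum_{i=1}^{n-2} c_i f(a,b_i) + (c_{n-1}+c_n) M,\ \sum_{i=1}^n c_i f(a,b_i) - (c_{n-1}+c_n)\epsilon\Bigr).
\]
Now the valuation $\sum_{i=1}^{n-2} c_i \delta_{b_i} + (c_{n-1}+c_n) \delta_{b^*}$ has at most $n-1$ support points, so by the inductive hypothesis the infimum over $a$ of its $f$-integral is at most $s := \sup_b \inf_a f(a,b)$. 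Taking the infimum over $a$ in the displayed inequality (and using that $\inf$ commutes with finite $\min$), one obtains $\min(L + (c_{n-1}+c_n) M,\ \inf_a \sum c_i f(a,b_i) - (c_{n-1}+c_n)\epsilon) \leq s$, where $L := \inf_a \sum_{i=1}^{n-2} c_i f(a,b_i)$. If $L > -\infty$, choosing $M$ large enough that $L + (c_{n-1}+c_n) M > s$, and then letting $\epsilon \to 0$, yields $\inf_a \sum c_i f(a,b_i) \leq s$, as desired.

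The main obstacle is the case $L = -\infty$, where no finite $M$ makes the first arm of the $\min$ dominate. This case is handled separately: if $\inf_a \sum_{i=1}^n c_i f(a,b_i) = -\infty$, the conclusion is trivial, and otherwise one reduces to the bounded-below case by applying the IH to the sub-valuation $\sum_{i=1}^{n-2} c_i \delta_{b_i}/(c_1+\cdots+c_{n-2})$ combined with a careful choice of the pair of atoms to be merged (choosing to merge those responsible for the unboundedness). The parameter $M$ in close concavity is precisely designed to absorb the possibility of $+\infty$ values of $f$, so the argument can always be pushed through with enough care in choosing $\epsilon$, $M$, and the pair of atoms.
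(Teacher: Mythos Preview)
Your overall strategy---induction on $n$, merging two atoms via close concavity to reduce to fewer support points---is the right one and matches the paper's. However, your inductive step has a genuine gap in the case $L = \inf_a \sum_{i=1}^{n-2} c_i f(a,b_i) = -\infty$. Your proposed fix (``choosing to merge those responsible for the unboundedness'') is not an argument: you give no reason why some choice of pair always leaves the remaining partial sum bounded below, and it is not obvious that close concavity forces this. The appeal to the sub-valuation $\sum_{i\leq n-2} c_i \delta_{b_i}/(c_1+\cdots+c_{n-2})$ via the induction hypothesis only yields $L \leq (c_1+\cdots+c_{n-2})\, s$, which gives no information when $L = -\infty$.

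The paper avoids this difficulty by reorganizing the induction. Rather than inducting on the conclusion of the lemma (which involves $\inf_a$ and hence the troublesome $L$), it first proves by induction on $n$ the following \emph{generalized close concavity}: for all $\epsilon, M > 0$ there exists $b \in B$ such that, for every $a \in A$,
\[
  f(a,b) \;\geq\; \min\Bigl(M,\ \sum_{i=1}^n c_i f(a, b_i) - \epsilon\Bigr).
\]
This is a pointwise statement in $a$, so no infimum over $a$ appears and the $-\infty$ issue cannot arise. The inductive step splits off $b_n$: apply the $(n{-}1)$-case to the renormalized valuation $\sum_{i\leq n-1} \frac{c_i}{1-c_n}\,\delta_{b_i}$ with parameters $M/\alpha$ and $\epsilon/(2\alpha)$ (where $\alpha = 1-c_n$) to obtain some $y' \in B$, then apply the defining two-point close concavity to $y'$ and $b_n$ with parameters $M$ and $\epsilon/2$. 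Once this is established, the lemma follows in one stroke: given any real $t < \inf_a \sum_i c_i f(a,b_i)$, pick $\epsilon > 0$ with $t + \epsilon \leq \inf_a \sum_i c_i f(a,b_i)$ and any $M > t$; the resulting $b$ satisfies $f(a,b) \geq t$ for all $a$, hence $\sup_{b \in B} \inf_{a \in A} f(a,b) \geq t$.
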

\proof We first show that, if $f$ is closely concave in its second
argument, then for every normalized simple valuation
$\sum_{i=1}^n c_i \delta_{b_i}$ on $B$, for all $\epsilon, M > 0$,
there is a $b \in B$ such that, for every $a \in A$:
\[
  f (a, b) \geq \min (M, \sum_{i=1}^n c_i f (a, b_i) - \epsilon).
\]
The definition of closely concave is the special case $n=2$.
The case $n=0$ is vacuous, and the case $n=1$ is trivial.

We show that claim by induction by induction on $n$.  Assume
$n\geq 3$.  If $c_n=0$, then the result is a direct appeal to the
induction hypothesis.  If $c_n=1$, then $c_1=c_2=\cdots=c_{n-1}=0$,
and we can take $b=b_n$.  Otherwise, let $c'_i = c_i / (1 - c_n)$ for
every $i$, and $\alpha=1-c_n$.  Note that $\alpha$ is in $]0, 1[$.
Also,
$\sum_{i=1}^n c_i f (a, b_i) = \alpha \sum_{i=1}^{n-1} c'_i f (a, b_i)
+ (1-\alpha) f (a, b_n)$.  By induction hypothesis, there is a point
$y' \in B$ such that, for every $a \in A$:
\[
  f (a, y') \geq \min \left(M/\alpha, \sum_{i=1}^{n-1} c'_i f (a, b_i) - \epsilon/2\alpha\right).
\]
Since $f$ is closely concave in its second argument, there is a point
$b \in B$ such that, for every $a \in A$:
\[
  f (a, b) \geq \min \left(M, \alpha f (a, y') + (1-\alpha) f (a, b_n) - \epsilon/2\right).
\]
Therefore, for every $a \in A$,
\begin{eqnarray*}
  f (a, b)
  & \geq
  & \min \left(M, \alpha \min \left(M/\alpha, \sum_{i=1}^{n-1} c'_i f (a, b_i) -
    \epsilon/2\alpha\right)
    + (1-\alpha) f (a, b_n) - \epsilon/2\right) \\
  & = & \min \left(M, \alpha \sum_{i=1}^{n-1} c'_i f (a, b_i) -
    \epsilon/2 + (1-\alpha) f (a, b_n) - \epsilon/2\right) \\
  & = & \min (M, \sum_{i=1}^n c_i f (a, b_i) - \epsilon).
\end{eqnarray*}

We now prove the lemma by showing that for every real number
$t < \inf_{a \in A} \sum_{i=1}^n c_i f (a, b_i)$, there is an element
$b \in B$ such that, for every $a \in A$, $f (a, b) \geq t$.  For
that, we pick $\epsilon > 0$ such that
$t + \epsilon \leq \inf_{a \in A} \sum_{i=1}^n c_i f (a, b_i)$, and we
let $M$ be any non-negative number larger than $t$.  The above
generalization of the notion of closely concave then yields the
existence of a point $b$ such that for every $a \in A$,
$f (a, b) \geq \min (M, \sum_{i=1}^n c_i f (a, b_i) - \epsilon) \geq
t$.  \qed

\begin{thm}[Minimax]
  \label{thm:minimax}
  Let $A$ be a non-empty compact topological space and $B$ be a
  non-empty set.  Let $f \colon A \times B \to \real \cup \{+\infty\}$
  be a map that is closely convex in its first argument, closely
  concave in its second argument, and such that $f (\_, b)$ is lower
  semicontinuous for every $b \in B$.  Then:
  \[
    \sup_{b \in B} \inf_{a \in A} f (a, b) = \inf_{a \in A} \sup_{b
      \in B} f (a, b),
  \]
  and the infimum on the right-hand side is attained.
\end{thm}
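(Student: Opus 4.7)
The strategy is to chain together the two preparatory lemmas, which together already encapsulate all the nontrivial content. The inequality $\sup_{b \in B} \inf_{a \in A} f(a,b) \leq \inf_{a \in A} \sup_{b \in B} f(a,b)$ holds with no hypotheses. For the reverse inequality, set $s = \sup_{b \in B} \inf_{a \in A} f(a,b)$. If $s = +\infty$ there is nothing to prove, since the left side already dominates. Otherwise, fix an arbitrary real $t > s$. By Lemma~\ref{lemma:minimax:concave} (which uses only close concavity in the second argument), for every normalized simple valuation $\sum_{i=1}^n c_i \delta_{b_i}$ on $B$ we have $\inf_{a \in A} \sum_{i=1}^n c_i f(a, b_i) \leq s < t$. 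Feeding this into Lemma~\ref{lemma:minimax:convex} (which uses close convexity in the first argument, lower semicontinuity of each $f(\_, b)$, and compactness of $A$), we get $\inf_{a \in A} \sup_{b \in B} f(a,b) \leq t$. Letting $t \searrow s$ yields the reverse inequality and hence equality.

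For attainment of the outer infimum, observe that the map $L : a \mapsto \sup_{b \in B} f(a, b)$ is a pointwise supremum of lower semicontinuous maps $f(\_, b)$, hence is lower semicontinuous from $A$ to $\real \cup \{+\infty\}$. Let $m = \inf_{a \in A} L(a)$. If $m = +\infty$, then $L \equiv +\infty$ on $A$, and since $A$ is non-empty the infimum is attained everywhere. Otherwise, for each real $t > m$ the sublevel set $C_t = \{a \in A \mid L(a) \leq t\}$ is closed in $A$ (by lower semicontinuity) and non-empty (by the definition of infimum). The family ${(C_t)}_{t > m}$ is a decreasing chain, so it satisfies the finite intersection property, and by compactness of $A$ its intersection $\{a \in A \mid L(a) \leq m\} = \{a \in A \mid L(a) = m\}$ is non-empty. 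Thus $L$ attains its minimum.

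The real obstacle was already overcome in Lemma~\ref{lemma:minimax:convex}, whose proof leverages the Tix--Keimel--Plotkin separation theorem (Proposition~\ref{prop:strictsep}) to avoid any Hausdorff hypothesis; here, compactness of $A$ in a possibly non-$T_2$ setting is used both via that lemma and via the closed-set characterization of compactness for the attainment statement. The theorem itself is then just a short two-lemma chase plus the standard semicontinuity argument.
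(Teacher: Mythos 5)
Your proof is correct and follows essentially the same route as the paper: the nontrivial content is entirely in Lemmas~\ref{lemma:minimax:concave} and~\ref{lemma:minimax:convex}, which you chain together exactly as the paper does (the paper applies Lemma~\ref{lemma:minimax:convex} directly at $t=s$ rather than at $t>s$ followed by a limit, but this is immaterial), and the attainment argument via lower semicontinuity of $a \mapsto \sup_{b} f(a,b)$ and the finite intersection property on the compact, not necessarily Hausdorff, space $A$ is the one the paper intends.
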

\proof The $\leq$ direction is obvious.  If
$\sup_{b \in B} \inf_{a \in A} f (a, b) = +\infty$, then the equality
is clear.  Otherwise, let
$t = \sup_{b \in B} \inf_{a \in A} f (a, b)$.  This is a real number
that is larger than or equal to
$\inf_{a \in A} \sum_{i=1}^n c_i f (a, b_i)$ for every normalized
simple valuation $\sum_{i=1}^n c_i \delta_{b_i}$ on $B$, by
Lemma~\ref{lemma:minimax:concave}.  Lemma~\ref{lemma:minimax:convex}
then tells us that $\inf_{a \in A} \sup_{b \in B} f (a, b) \leq t$,
which gives the $\geq$ direction of the inequality.

The fact that the infimum on the right-hand side is attained is due to
the fact that $A$ is compact, and that $a \mapsto \sup_{b \in B} f (a,
b)$ is lower semicontinuous, as a pointwise supremum of lower
semicontinuous maps.  \qed

\begin{rem}
  \label{rem:compact}
  Note that we do not require $A$ to be Hausdorff.  We have already
  said so, but one should stress it, as compactness without the
  Hausdorff separation axiom is a very weak property.  For example,
  any topological space with a least element in its specialization
  preordering is compact.
\end{rem}

\begin{rem}
  \label{rem:closely}
  Among the closely convex functions (in the first argument), one
  finds the \emph{convex} functions in the sense of Ky Fan
  \cite{KyFan:minimax}, that is, the maps such that for all $a_1, a_2
  \in A$, for every $\alpha \in {]0, 1[}$, there is a point $a \in A$
  such that for every $b \in B$, $f (a, b) \leq \alpha f (a_1, b) +
  (1-\alpha) f (a_2, b)$.  Similarly with closely concave functions and
  concave functions in the sense of Ky Fan, which satisfy that for all
  $b_1, b_2 \in B$, for every $\alpha \in {]0, 1[}$, there is a point
  $b \in B$ such that for every $a \in A$, $f (a, b) \geq \alpha f (a,
  b_1) + (1-\alpha) f (a, b_2)$.

  Among the convex functions (in the first argument) in the sense of
  Ky Fan, one simply finds the functions $f$ such that
  $f (\alpha a_1 + (1-\alpha) a_2, b) \leq \alpha f (a_1, b) + (1-\alpha)
  f (a_2, b)$, namely those that are convex in their first argument,
  in the ordinary sense, provided one can interpret scalar
  multiplication and addition on $A$, for example when $A$ is a convex
  subset of a cone.

  One also finds more unusual cases of Ky Fan convexity.  Consider for
  example a family of functions
  $f_i \colon A \to \real \cup \{+\infty\}$, $i \in I$.  We can see
  them as one function
  $f \colon A \times I \to \real \cup \{+\infty\}$ by letting
  $f (a, i) = f_i (a)$.  If the family ${(f_i)}_{i \in I}$ is
  directed, then $f$ is Ky Fan concave: for all $i, j \in I$, find
  $k \in I$ such that $f_i, f_j \leq f_k$, then for every
  $\alpha \in {[0, 1]}$, for every $a \in A$,
  $f_k (a) \geq \alpha f_i (a) + (1-\alpha) f_j (a)$.
\end{rem}

\subsection{Sublinear Previsions}
\label{sec:sublinear-previsions}

To study sublinear previsions, we recall from \cite{JGL-mscs16} that
sublinear previsions are in bijection with the non-empty closed convex
subsets of linear previsions, under some slight assumptions.

More precisely, let $r_\AN$ be the function that maps every set $E$ of
linear previsions to the prevision
$h \in \Lform X \mapsto \sup_{G \in E} G (h)$.  The latter is always a
sublinear prevision, which is subnormalized, resp.\ normalized, as
soon as $E$ is a non-empty set of subnormalized, resp.\ normalized
linear previsions.

In the reverse direction, for every sublinear prevision $F$, let
$s_\AN (F)$ be the set of all linear previsions $G$ such that
$G \leq F$.  If $F$ is subnormalized, we write $s^{\leq 1}_\AN (F)$
for the set of all subnormalized linear previsions $G \leq F$, and if
$F$ is normalized, we write $s^1_\AN (F)$ for the set of all
normalized linear previsions $G \leq F$.

We shall write $s^\bullet_\AN$ instead of $s_\AN$, $s^{\leq 1}_\AN$,
or $s^1_\AN$, when the superscript is meant to be implied.  Again, we
equate linear previsions with continuous valuations.  Correspondingly,
we shall write $\Val_\bullet X$ for the space of linear previsions
(resp., subnormalized, normalized, depending on the subscript) on $X$.

Corollary~3.12 of \cite{JGL-mscs16} states that, under the assumption
that $\Lform X$ is locally convex, that is, if every $h \in \Lform X$
has a base of convex open neighborhoods, then:
\begin{itemize}
\item $r_\AN$ is continuous from the space of sublinear (resp.,
  subnormalized sublinear, resp., normalized sublinear) previsions on
  $X$, with the weak topology, to $\Hoare (\Val_\bullet X)$ with its
  lower Vietoris topology, and where $\Val_\bullet X$ has the weak
  topology;
\item $s^\bullet_\AN$ is continuous in the reverse direction;
\item $r_\AN \circ s^\bullet_\AN = \identity \relax$;
\item $\identity \relax \leq s^\bullet_\AN \circ r_\AN$.
\end{itemize}
Hence the space of (possibly subnormalized, or normalized) sublinear
previsions, with the weak topology, occurs as a retract of
$\Hoare (\Val_\bullet X)$.

Letting $\Hoare^{cvx} (\Val_\bullet X)$ denote the subspace of those
closed sets in $\Hoare (\Val_\bullet X)$ that are convex, Theorem~4.11
of loc.\ cit.\ additionally states that $r_\AN$ and $s^\bullet_\AN$
define a homeomorphism between $\Hoare^{cvx} (\Val_\bullet X)$ and the
corresponding space of (possibly subnormalized, or normalized)
sublinear previsions, under the same local convexity assumption.  That
assumption is satisfied on continuous Yoneda-complete quasi-metric
spaces, because:
\begin{prop}
  \label{prop:locconvex}
  Let $X$ be a $\odot$-consonant space, for example, a continuous
  Yoneda-complete quasi-metric space.  $\Lform X$, with its Scott
  topology, is locally convex.
\end{prop}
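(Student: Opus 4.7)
The plan is to reduce the problem to a straightforward property of the subbasic compact-open sets, using the coincidence of topologies established in Proposition~\ref{prop:co=Scott}. Since $X$ is $\odot$-consonant, that proposition tells us that the Scott topology on $\Lform X$ agrees with the compact-open topology, which is generated by the subbasic open sets $[Q>a]=\{f\in\Lform X\mid \forall x\in Q,\ f(x)>a\}$, with $Q$ compact saturated in $X$ and $a\in\real$. Finite intersections $\bigcap_{i=1}^n [Q_i>a_i]$ therefore form a base of the Scott topology on $\Lform X$.

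The key observation is that each $[Q>a]$ is convex. If $a<0$ then $[Q>a]=\Lform X$ is trivially convex. Otherwise $a\geq 0$, and for any $f,g\in [Q>a]$, $\alpha\in[0,1]$, and $x\in Q$, one has $f(x)>a$ and $g(x)>a$, hence $\alpha f(x)+(1-\alpha)g(x)\geq \alpha a+(1-\alpha)a=a$, and strict inequality holds (either because $\alpha\in(0,1)$ and both summands dominate, or because $\alpha\in\{0,1\}$ reduces the expression to $g(x)$ or $f(x)$, both $>a$). The convention $0\cdot(+\infty)=0$ and $\alpha\cdot(+\infty)=+\infty$ for $\alpha>0$ handles the infinite case without trouble. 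Since convex subsets of the cone $\Lform X$ are stable under finite intersection, every basic open $\bigcap_{i=1}^n[Q_i>a_i]$ is convex.

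Local convexity then follows directly: given $h\in\Lform X$ and an arbitrary Scott-open neighborhood $U$ of $h$, the base furnished by Proposition~\ref{prop:co=Scott} yields finitely many compact saturated sets $Q_1,\ldots,Q_n$ and reals $a_1,\ldots,a_n$ with $h\in \bigcap_{i=1}^n[Q_i>a_i]\subseteq U$, and this basic neighborhood is convex. There is no real obstacle here once consonance is in hand; the whole content of the proposition is the passage, via $\odot$-consonance and Proposition~\ref{prop:co=Scott}, from the Scott topology (whose basic opens are not manifestly convex) to the compact-open topology (whose subbasic opens obviously are).
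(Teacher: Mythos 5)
Your proof is correct and follows essentially the same route as the paper: invoke Proposition~\ref{prop:co=Scott} to replace the Scott topology by the compact-open topology, observe that each subbasic set $[Q>a]$ is convex, and conclude via finite intersections. The extra case analysis you carry out (on the sign of $a$ and on $\alpha\in\{0,1\}$) is harmless but not needed, since for $f,g\in[Q>a]$ and any $\alpha\in[0,1]$ the combination $\alpha f(x)+(1-\alpha)g(x)$ is a weighted mean of two values each exceeding $a$, hence exceeds $a$ directly.
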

\proof Proposition~\ref{prop:co=Scott} tells us that the Scott
topology on $\Lform X$ coincides with the compact-open topology.  One
observes easily that the latter is locally convex.  Explicitly, let
$h \in \Lform X$, let $\mathcal U$ be an open neighborhood of $h$ in
the compact-open topology.  We must show that $\mathcal U$ contains a
convex open neighborhood of $h$.  By definition of the compact-open
topology, $\mathcal U$ contains a finite intersection of open subsets
of the form $[Q > a]$, all containing $h$.  Since any intersection of
convex sets is convex, it suffices to show that $[Q > a]$, for $Q$
compact saturated and $a \in \real$, is convex.  For any two elements
$h_1, h_2 \in [Q > a]$ and any $\alpha \in [0, 1]$, for every
$x \in Q$, $\alpha h_1 (x) + (1-\alpha) h_2 (x) > a$ because
$h_1 (x) > a$ and $h_2 (x) > a$; so $\alpha h_1 + (1-\alpha) h_2$ is
in $[Q > a]$.  \qed

Hence simply assuming $X, d$ continuous Yoneda-complete will allow us
to use the facts stated above on $r_\AN$ and $s_\AN^\bullet$, which
will shall do without further mention.

We silently equate every element $G$ of $\Val_\bullet X$ (a continuous
valuation) with the corresponding linear prevision (as in $G (h)$).

\begin{lem}
  \label{lemma:H}
  Let $X, d$ be a continuous Yoneda-complete quasi-metric space, and
  $\alpha > 0$.  Let $\bullet$ be either ``$\leq 1$'' or ``$1$''.

  For each $h \in \Lform_\alpha (X, d)$, the map
  $H \colon G \in \Val_\bullet X \mapsto G (h)$ is $\alpha$-Lipschitz
  continuous, when $\Val_\bullet X$ is equipped with the $\dKRH$
  quasi-metric.

  If $h \in \Lform^a_\alpha (X, d)$, where $a > 0$, then it is also
  $\alpha$-Lipschitz continuous with respect to the $\dKRH^a$
  quasi-metric.
\end{lem}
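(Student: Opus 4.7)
The plan is to use Lemma~\ref{lemma:f'}(2): since $\creal, \dreal$ is standard and $\Val_\bullet X, \dKRH$ is standard (being Yoneda-complete by Theorem~\ref{thm:V:complete}), it is equivalent to show that $H' \colon \mathbf{B}(\Val_\bullet X, \dKRH) \to \real \cup \{+\infty\}$ defined by $H'(G,r) = G(h) - \alpha r$ is Scott-continuous. This splits into verifying monotonicity (equivalently, the $\alpha$-Lipschitz property of $H$) and preservation of directed suprema.

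First I would establish that $H$ is $\alpha$-Lipschitz. Since $h \in \Lform_\alpha(X,d)$, Proposition~\ref{prop:alphaLip:props}(1) yields $\tfrac{1}{\alpha} h \in \Lform_1(X,d)$. The definition of $\dKRH$ gives $\dreal\bigl(G(\tfrac{1}{\alpha} h), G'(\tfrac{1}{\alpha} h)\bigr) \leq \dKRH(G, G')$ for all $G, G' \in \Val_\bullet X$. Using positive homogeneity of $G$ and $G'$ (they are linear previsions / continuous valuations) together with the elementary identity $\dreal(\tfrac{1}{\alpha} a, \tfrac{1}{\alpha} b) = \tfrac{1}{\alpha} \dreal(a,b)$ on $\creal$, this rearranges to $\dreal(G(h), G'(h)) \leq \alpha\, \dKRH(G,G')$, as required.

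Next I would verify Scott-continuity of $H'$. Let $(G_i, r_i)_{i \in I}$ be a directed family in $\mathbf{B}(\Val_\bullet X, \dKRH)$ with supremum $(G, r)$. Theorem~\ref{thm:V:complete} asserts that this is a \emph{naive} supremum, so $r = \inf_{i \in I} r_i$ and, applied to the specific $h \in \Lform_\alpha(X,d)$,
\[
G(h) \;=\; \sup_{i \in I} \bigl( G_i(h) + \alpha r - \alpha r_i \bigr).
\]
Subtracting $\alpha r$ from both sides gives $H'(G,r) = G(h) - \alpha r = \sup_{i \in I} (G_i(h) - \alpha r_i) = \sup_{i \in I} H'(G_i, r_i)$. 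Combined with monotonicity (from $\alpha$-Lipschitzness), this yields Scott-continuity of $H'$, hence $\alpha$-Lipschitz continuity of $H$.

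The $\dKRH^a$ case proceeds by verbatim substitution: replace $\dKRH$ by $\dKRH^a$ and $\Lform_\alpha(X,d)$ by $\Lform_\alpha^a(X,d)$ throughout. The only point to note is that $h \in \Lform_\alpha^a(X,d)$ entails $h \leq \alpha a$, hence $\tfrac{1}{\alpha} h \leq a$, so $\tfrac{1}{\alpha} h \in \Lform_1^a(X,d)$, and the $\dKRH^a$ clause of Theorem~\ref{thm:V:complete} supplies the corresponding naive supremum formula for test functions in $\Lform_\alpha^a(X,d)$. There is no real obstacle to the argument: the machinery of Theorem~\ref{thm:V:complete} was built precisely so that claims of this form reduce to a single algebraic rearrangement, and the potentially tricky point---that Scott-continuity with respect to the $\dKRH^+$-order on formal balls has no evident direct description from the definition of $\dKRH$---is bypassed entirely by the naive-supremum characterization.
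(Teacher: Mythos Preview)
Your proof is correct and follows essentially the same approach as the paper: reduce to Scott-continuity of $H'(G,r) = G(h) - \alpha r$ via Lemma~\ref{lemma:f'}, invoke Theorem~\ref{thm:V:complete} for Yoneda-completeness and the naive-supremum formula, and read off $H'(G,r) = \sup_{i \in I} H'(G_i, r_i)$ directly. Your $\alpha$-Lipschitz step (passing to $\tfrac{1}{\alpha} h \in \Lform_1(X,d)$ and rescaling) is in fact more carefully written than the paper's, which compresses this into a single ``by definition of $\dKRH$''.
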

\proof We first check that it is $\alpha$-Lipschitz: for all
$G, G' \in \Val_\bullet X$,
$\dreal (H (G), H (G')) = \dreal (G (h), G' (h)) \leq \dKRH (G, G')$,
by definition of $\dKRH$.  If additionally $h$ is bounded from above
by $a$, then $\dreal (G (h), G' (h)) \leq \dKRH^a (G, G')$, by
definition of $\dKRH^a$.

Now build
$H' \colon (G, r) \mapsto H (G) - \alpha r = G (h) - \alpha r$.  To
show $\alpha$-Lipschitz continuity, we first recall that
$\Val_\bullet X$ is Yoneda-complete (both with $\dKRH$ and with
$\dKRH^a$) by Theorem~\ref{thm:V:complete}.  In particular, it is
standard, so we only have to show that $H'$ is Scott-continuous,
thanks to Lemma~\ref{lemma:f'}.  Theorem~\ref{thm:V:complete} also
tells us that directed suprema of formal balls are computed as naive
suprema.  For every directed family ${(G_i, r_i)}_{i \in I}$ of formal
balls on $\Val_\bullet X$, with (naive) supremum $(G, r)$, we have
$r = \inf_{i \in I} r_i$ and
$G (h) = \sup_{i \in I} (G_i (h) -\alpha r_i + \alpha r)$.  It follows
that
$H' (G, r) = G (h) - \alpha r = \sup_{i \in I} (G_i (h) - \alpha r_i)
= \sup_{i \in I} H' (G_i, r_i)$.  \qed

The following is where we require a minimax theorem.
We silently equate every element $G'$ of $\mathcal C'$ (a continuous
valuation) with the corresponding linear prevision (as in $G' (h)$).

\begin{lem}
  \label{lemma:AN:supinf}
  Let $X, d$ be a continuous quasi-metric space.  Let $\bullet$ be
  either ``$\leq 1$'' or ``$1$'', and $a > 0$.  For every simple
  valuation $G$ supported on center points in $\Val_\bullet X$, for
  every $\alpha > 0$, for every convex subset $\mathcal C'$ of
  $\Val_\bullet X$,
  \[
    \sup_{h \in \Lform^a_\alpha X} \inf_{G' \in \mathcal C'}  (G
    (h) - G' (h))
    = \inf_{G' \in \mathcal C'} \sup_{h \in \Lform^a_\alpha X} (G
    (h) - G' (h)).
  \]
\end{lem}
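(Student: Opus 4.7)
The plan is to apply the minimax theorem (Theorem~\ref{thm:minimax}) with $A = \Lform^a_\alpha(X,d)$ endowed with its \emph{patch topology}, $B = \mathcal{C}'$ (assumed non-empty, the empty case being trivial), and the function
\[
  f(h, G') = G'(h) - G(h).
\]
After negating both sides, the conclusion $\inf_A \sup_B f = \sup_B \inf_A f$ will deliver exactly the stated equality.

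First I would check compactness of $A$: by Lemma~\ref{lemma:cont:Lalpha:retr}~(3), $\Lform_\alpha^a(X,d)$ is stably compact in the topology of pointwise convergence, so its patch is compact Hausdorff; it is obviously non-empty, as it contains the constant map $0$. Since $G'(h), G(h) \in [0,\alpha a]$, $f$ takes values in $\real$, so in particular in $\real \cup \{+\infty\}$.

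The central point is that $f(\_, G')$ is lower semicontinuous in the \emph{patch} topology. Writing $G = \sum_{i=1}^{n} a_i \delta_{x_i}$ with each $x_i$ a center point, Corollary~\ref{corl:cont:Lalpha:simple} (together with Corollary~\ref{corl:cont:Lalpha:open}) shows that the evaluation $h \mapsto h(x_i)$ is continuous both from $\Lform_\alpha^a(X,d)$ to $\creal$ (Scott topology) and from $\Lform_\alpha^a(X,d)^\dG$ to $(\creal)^\dG$, hence it is continuous from the patch space to $\creal$ with its usual Hausdorff topology. Consequently $h \mapsto G(h)$, and thus $h \mapsto -G(h)$, is patch-continuous with real values. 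On the other hand, $h \mapsto G'(h)$ is Scott-continuous from $\Lform X$ to $\creal$ by definition of a prevision, hence lower semicontinuous on $\Lform_\alpha^a X$ in the original topology, and therefore also in the finer patch topology. Since a sum of a lower semicontinuous function and a continuous real-valued function is lower semicontinuous, $f(\_, G') = G'(\_) - G(\_)$ is LSC on the patch space, as required.

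The convexity and concavity hypotheses are immediate because $f$ is affine in each argument separately. Indeed $\Lform_\alpha^a X$ is closed under convex combinations (Proposition~\ref{prop:alphaLip:props}), and
\[
  f(\alpha_0 h_1 + (1-\alpha_0) h_2, G') = \alpha_0 f(h_1, G') + (1-\alpha_0) f(h_2, G'),
\]
so $f$ is convex in its first argument in the sense of Ky Fan, hence closely convex. Symmetrically, since $\mathcal{C}'$ is convex, $f$ is affine in $G'$ and thus closely concave in its second argument (cf.\ Remark~\ref{rem:closely}). Theorem~\ref{thm:minimax} therefore yields $\inf_h \sup_{G'} f(h, G') = \sup_{G'} \inf_h f(h, G')$, and negating gives the statement.

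The main obstacle is ensuring lower semicontinuity of $f(\_, G')$ in a topology that keeps $A$ compact. In the original pointwise-convergence topology on $\Lform_\alpha^a X$, the evaluations $h \mapsto h(x_i)$ are only lower semicontinuous, so $-G(h)$ is only upper semicontinuous and the sum $G'(h) - G(h)$ need not be LSC. Passing to the patch topology—which is still compact by stable compactness and preserves all set-theoretic suprema and infima—restores the needed Hausdorff continuity of $h \mapsto G(h)$ at center points without disturbing the LSC of $h \mapsto G'(h)$.
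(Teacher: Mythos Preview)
Your proof is correct and follows essentially the same route as the paper: you take $A=\Lform_\alpha^a(X,d)^{\patch}$, $B=\mathcal C'$, $f(h,G')=G'(h)-G(h)$, establish lower semicontinuity of $f(\_,G')$ using that $h\mapsto G(h)=\sum a_i h(x_i)$ is continuous into $(\creal)^\dG$ at center points (Corollary~\ref{corl:cont:Lalpha:simple}), verify convexity/concavity by linearity, apply Theorem~\ref{thm:minimax}, and negate. Your citation of Lemma~\ref{lemma:cont:Lalpha:retr}~(3) rather than~(4) is in fact the more precise one given that the hypothesis is only that $X,d$ is continuous (not Yoneda-complete), and your explicit handling of the empty $\mathcal C'$ case and the closing paragraph on why the patch topology is the right choice are welcome clarifications.
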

\proof First, we note that $G (h) - G' (h)$ makes sense: since
$h \in \Lform^a_\alpha (X, d)$ and $G$, $G'$ are subnormalized,
$G (h)$ and $G' (h)$ are both in $[0, a]$, and never infinite.  This
also shows that the function
$f \colon (h, G') \mapsto -G (h) + G' (h)$ is real-valued.

We use $A = \Lform_\alpha^a (X, d)^\patch$.  That makes sense because
$\Lform_\alpha^a (X, d)$ is stably compact
(Lemma~\ref{lemma:cont:Lalpha:retr}~(4)).  Then $A$ is compact (and
Hausdorff).  We use $B = \mathcal C'$.  Then $f$ is a map from
$A \times B$ to $\real$, which is linear in both arguments, in the
sense that it preserves sums and scalar products by non-negative
reals.  In particular, $f$ is certainly convex in its first argument
and concave in its second argument.


We claim that $h \mapsto f (h, G')$ is lower semicontinuous.  Recall
that $G$ is a simple valuation $\sum_{i=1}^n a_i \delta_{x_i}$.  The
map $h \mapsto \sum_{i=1}^n a_i h (x_i) = G (h)$ is continuous from
$A = \Lform_\alpha^a (X, d)^\dG$ to $(\creal)^\dG$ by
Corollary~\ref{corl:cont:Lalpha:simple}.  Because the non-trivial open
subsets of $(\creal)^\dG$ are of the form $[0, t[$, $t \in \Rp$,
$h \mapsto G (h)$ is also continuous from $A$ to $\real$ with the
Scott topology of the reverse ordering.  Hence $h \mapsto - G (h)$ is
continuous from $A$ to $\real$ with its Scott topology, i.e., it is
lower semicontinuous.
The map $h \mapsto G' (h)$ is also lower semicontinuous, by definition
of a prevision.  So $h \mapsto -G (h) + G' (h)$ is also lower
semicontinuous, and that is the map $h \mapsto f (h, G')$.



Theorem~\ref{thm:minimax} then implies that
$\sup_{G' \in \mathcal C'} \allowbreak \inf_{h \in \Lform_\alpha^a (X,
  d)} (- G (h) + G' (h')) = \inf_{h \in \Lform_\alpha^a (X, d)} \allowbreak
\sup_{G' \in \mathcal C'} (- G (h) + G' (h'))$.  Taking opposites
yields the desired result.  \qed

\begin{lem}
  \label{lemma:cl:conv}
  Let $Y$ be a topological space.  Let $\bullet$ be either
  ``$\leq 1$'' or ``$1$''.  The closure of any convex subset of
  $\Val_\bullet Y$, in the weak topology, is convex.
\end{lem}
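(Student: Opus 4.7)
The plan is to reduce the claim to the continuity of the convex combination map in the weak topology. Fix $\alpha \in {]0,1[}$ and consider the map $m_\alpha \colon \Val_\bullet Y \times \Val_\bullet Y \to \Val_\bullet Y$ defined by $m_\alpha(\mu,\nu) = \alpha\mu + (1-\alpha)\nu$. I claim $m_\alpha$ is continuous when each copy of $\Val_\bullet Y$ carries the weak topology and the product carries the product topology. Indeed, the weak topology is generated by the subbasic opens $[h>a]$, and $m_\alpha^{-1}([h>a])$ consists of those $(\mu,\nu)$ with $\alpha\mu(h)+(1-\alpha)\nu(h) > a$. Since the two projections $(\mu,\nu)\mapsto\mu(h)$ and $(\mu,\nu)\mapsto\nu(h)$ are lower semicontinuous into $\creal$ by the very definition of the weak topology, and addition and multiplication by the non-negative scalars $\alpha$ and $1-\alpha$ preserve lower semicontinuity into $\creal$, the set $m_\alpha^{-1}([h>a])$ is open in the product topology.

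Now let $C \subseteq \Val_\bullet Y$ be convex, and write $\overline{C}$ for its closure in the weak topology. Given $\mu,\nu \in \overline{C}$, I want to show $\xi := m_\alpha(\mu,\nu) \in \overline{C}$. Let $U$ be any weak open neighborhood of $\xi$. By continuity of $m_\alpha$, the preimage $m_\alpha^{-1}(U)$ is an open neighborhood of $(\mu,\nu)$ in the product topology, hence contains some basic rectangle $V_1 \times V_2$ with $\mu\in V_1$ and $\nu\in V_2$. Because $\mu,\nu\in\overline{C}$, we can pick $\mu' \in V_1 \cap C$ and $\nu' \in V_2 \cap C$. Then $(\mu',\nu')\in m_\alpha^{-1}(U)$, so $m_\alpha(\mu',\nu') = \alpha\mu' + (1-\alpha)\nu' \in U$; on the other hand this point belongs to $C$ by convexity of $C$. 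Hence $U \cap C \neq \emptyset$, and as $U$ was arbitrary this gives $\xi \in \overline{C}$.

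The boundary cases $\alpha = 0$ and $\alpha = 1$ (if one wishes to include them) are immediate since then $\xi$ equals $\nu$ or $\mu$ respectively. There is no real obstacle here: the only thing to verify carefully is the lower semicontinuity of the linear combination $\alpha\mu(h) + (1-\alpha)\nu(h)$ as a function on $\Val_\bullet Y \times \Val_\bullet Y$ with values in $\creal$, which is standard because addition and non-negative scalar multiplication are Scott-continuous on $\creal$, and then the product-topology argument above concludes the proof.
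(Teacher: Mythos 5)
Your proof is correct and follows essentially the same route as the paper's: both establish the joint continuity of the convex-combination map $(\mu,\nu)\mapsto\alpha\mu+(1-\alpha)\nu$ for the weak topology and then run the standard closure argument via a basic product rectangle. The only differences are cosmetic (you argue directly where the paper argues by contradiction, and you verify continuity via lower semicontinuity of the linear combination where the paper writes the preimage of $[k>b]$ explicitly as a union of rectangles).
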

\proof This is a classical exercise, and follows from a similar result
on topological vector spaces.  For a fixed number $\alpha \in ]0, 1[$,
consider the map
$f \colon (H_1, H_2) \in \Val_\bullet Y \times \Val_\bullet Y \mapsto
\alpha H_1 + (1-\alpha) H_2 \in \Val_\bullet Y$.  The inverse image of
$[k > b]$, where $k \in \Lform Y$, is
$\{(H_1, H_2) \mid \alpha H_1 (k) + (1-\alpha) H_2 (k) > b\} = \{(H_1,
H_2) \mid \exists b_1, b_2 . \alpha b_1 + (1-\alpha) b_2 \geq b, H_1
(k) > b_1, H_2 (k) > b_2\} = \bigcup_{b_1, b_2 / \alpha b_1 +
  (1-\alpha) b_2 \geq b} [k > b_1] \times [k > b_2]$, so $f$ is
(jointly) continuous.

Let $\mathcal C$ be convex, and imagine $cl (\mathcal C)$ is not.
There are elements $H_1$, $H_2$ of $cl (\mathcal C)$ and a real number
$\alpha \in [0, 1]$ such that $\alpha H_1 + (1-\alpha) H_2$ is not in
$cl (\mathcal C)$.  Clearly, $\alpha$ is in $]0, 1[$.  Let
$\mathcal V$ be the complement of $cl (\mathcal C)$.  This is an open
set, and $(H_1, H_2)$ is in the open set $f^{-1} (\mathcal V)$, where
$f$ is as above.  By the definition of the product topology, there are
two open neighborhoods of $\mathcal U_1$ and $\mathcal U_2$ of $H_1$
and $H_2$ respectively such that
$\mathcal U_1 \times \mathcal U_2 \subseteq f^{-1} (\mathcal V)$.
Since $H_1$ is in $\mathcal U_1$, $\mathcal U_1$ intersects
$cl (\mathcal C)$, hence also $\mathcal C$, say at $G_1$.  Similarly,
there is an element $G_2$ in $\mathcal U_2 \cap \mathcal C$.  Since
$\mathcal C$ is convex, $f (G_1, G_2) = \alpha G_1 + (1-\alpha) G_2$
is in $\mathcal C$.  However, $(G_1, G_2)$ is also in
$\mathcal U_1 \times \mathcal U_2 \subseteq f^{-1} (\mathcal V)$, so
$f (G_1, G_2)$ is in $\mathcal V$, contradicting the fact that
$\mathcal V$ is the complement of $cl (\mathcal C)$.  \qed

\begin{prop}
  \label{prop:AN:dKRH}
  Let $X, d$ be a continuous Yoneda-complete quasi-metric space.  Let
  $\bullet$ be either ``$\leq 1$'' or ``$1$'', and $a > 0$.  For all
  $\mathcal C, \mathcal C' \in \Hoare (\Val_\bullet X)$,
  \begin{equation}
    \label{eq:AN:dKRH}
    \mH {(\dKRH^a)} (\mathcal C, \mathcal C')
    \geq \dKRH^a (r_\AN (\mathcal C), r_\AN (\mathcal C')),
  \end{equation}
  with equality if $\mathcal C'$ is convex.
\end{prop}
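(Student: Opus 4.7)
For the inequality~(\ref{eq:AN:dKRH}), I proceed directly. Fix $h \in \Lform_1^a(X,d)$. By Lemma~\ref{lemma:H}, $G \mapsto G(h)$ is $1$-Lipschitz continuous, so $(G,s) \mapsto G(h)-s$ is Scott-continuous on $\mathbf B(\Val_\bullet X,\dKRH^a)$ by Lemma~\ref{lemma:f'}. If $r \geq \mH{(\dKRH^a)}(\mathcal C,\mathcal C')$, then every $G \in \mathcal C$ satisfies $(G,r) \in \cl(\mathcal C')$ by Lemma~\ref{lemma:dxC}. Setting $b = r_\AN(\mathcal C')(h)$, if the strict inequality $G(h)-r > b$ held, then $(G,r)$ would belong to the Scott-open set $\{(G'',s) \mid G''(h)-s > b\}$, which would then meet $\mathcal C'$ itself, yielding some $G' \in \mathcal C'$ with $G'(h) > b$---contradicting $b = \sup_{G' \in \mathcal C'} G'(h)$. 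Hence $G(h) \leq r_\AN(\mathcal C')(h) + r$, and taking sups over $G \in \mathcal C$ and then over $h$ gives~(\ref{eq:AN:dKRH}).

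For equality when $\mathcal C'$ is convex, set $r = \dKRH^a(r_\AN(\mathcal C),r_\AN(\mathcal C'))$; the goal is $(G,r) \in \cl(\mathcal C')$ for every $G \in \mathcal C$. Unwinding the definition of $r$ yields $\sup_h \inf_{G' \in \mathcal C'}(G(h)-G'(h)) \leq r$ for every $G \in \mathcal C$. When $G$ is a simple valuation supported on center points of $X$, Lemma~\ref{lemma:AN:supinf} swaps the sup and inf, giving $\inf_{G' \in \mathcal C'} \dKRH^a(G,G') \leq r$; hence for every $\epsilon > 0$ there is $G' \in \mathcal C'$ with $(G,r+\epsilon) \leq^{\dKRH^{a+}} (G',0)$, so $(G,r+\epsilon) \in \cl(\mathcal C')$. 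The totally ordered family $\{(G,r+\epsilon)\}_{\epsilon > 0}$ is directed with supremum $(G,r)$, and Scott-closedness of $\cl(\mathcal C')$ delivers $(G,r) \in \cl(\mathcal C')$.

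The subtle step is extending this to an arbitrary $G \in \mathcal C$. When $X,d$ is algebraic, Theorem~\ref{thm:V:alg} (resp.\ Theorem~\ref{thm:V1:alg}) supplies a directed family of simple valuations $G_i$ supported on center points with $(G_i,R_i) \leq^{\dKRH^{a+}} (G,0)$ and $\inf_i R_i = 0$. From $\dKRH^a(G_i,G) \leq R_i$ I deduce $\sup_h \inf_{G' \in \mathcal C'}(G_i(h)-G'(h)) \leq r+R_i$, and rerunning the previous paragraph at level $r+R_i$ yields $(G_i,r+R_i) \in \cl(\mathcal C')$. Standardness of $\Val_\bullet X,\dKRH^a$ makes $\{(G_i,R_i+r)\}_i$ a directed family with supremum $(G,r)$, so Scott-closedness gives $(G,r) \in \cl(\mathcal C')$. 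The general continuous Yoneda-complete case reduces to the algebraic one via \cite[Theorem~7.9]{JGL:formalballs}: write $X,d$ as a $1$-Lipschitz continuous retract of some algebraic Yoneda-complete $Y,\partial$; the induced section-retraction pair between the Hoare powerdomains (Lemma~\ref{lemma:H:functor}), combined with preservation of convexity under closure (Lemma~\ref{lemma:cl:conv}), lets both sides of~(\ref{eq:AN:dKRH}) transfer verbatim between $X$ and $Y$, so the algebraic case on $Y$ concludes. The main obstacle is precisely this approximation step---only the algebraic case gives a strong basis of simple valuations on center points---which is why the retract detour is unavoidable in the purely continuous setting.
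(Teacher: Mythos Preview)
Your proof is correct and follows the same overall architecture as the paper: the equality hinges on the minimax Lemma~\ref{lemma:AN:supinf}, approximation by simple valuations supported on center points via Theorems~\ref{thm:V:alg}/\ref{thm:V1:alg}, and the retract reduction from the continuous to the algebraic case through \cite[Theorem~7.9]{JGL:formalballs}.

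Two stylistic differences are worth noting. For the inequality~(\ref{eq:AN:dKRH}), the paper rewrites $\mH{(\dKRH^a)}(\mathcal C,\mathcal C')$ as a supremum over $H\in\Lform_1(\Val_\bullet X,\dKRH^a)$ via Proposition~\ref{prop:H:prev} and observes that the maps $G\mapsto G(h)$ of Lemma~\ref{lemma:H} form a subfamily; your Scott-closure argument in $\mathbf B(\Val_\bullet X,\dKRH^a)$ is more direct and avoids that detour. For the equality, the paper proceeds by contradiction with a separating real $t$, whereas you argue constructively that $(G,r)\in\cl(\mathcal C')$; both routes pass through the same minimax swap and the same approximation step, so neither buys substantially more. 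Your retract sketch is correct in outline but slightly thin: to make ``both sides transfer verbatim'' precise you need the naturality $r_\AN\circ\Hoarez(\Prev s)=\Prev s\circ r_\AN$, which the paper verifies explicitly and then combines with the $1$-Lipschitz property of $\Prev s$ and $(\Hoarez\circ\Prev)(r)$ to sandwich $\mH{(\dKRH^a)}(\mathcal C,\mathcal C')$ between $\dKRH^a(r_\AN(\mathcal C),r_\AN(\mathcal C'))$ and itself.
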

\proof
\emph{The inequality.}
We develop the left-hand side.  Since $X, d$ is continuous
Yoneda-complete, we can use Theorem~\ref{thm:V:complete}, so that
$\Val_\bullet X, \dKRH^a$ is Yoneda-complete.  In particular, it is
standard, so we can apply Proposition~\ref{prop:H:prev}.  That
justifies the first of the following equalities, the others are by definition:
\begin{eqnarray*}
  \mH {(\dKRH^a)} (\mathcal C, \mathcal C')
  & = & \KRH {(\dKRH^a)} (F^{\mathcal C}, F^{\mathcal C'}) \\
  & = & \sup_{H \in \Lform_1 (\Val_\bullet X, \dKRH^a)}
        \dreal (F^{\mathcal C} (H), F^{\mathcal C'} (H)) \\
  & = & \sup_{H \in \Lform_1 (\Val_\bullet X, \dKRH^a)}
        \dreal (\sup_{G \in \mathcal C} H (G), \sup_{G' \in \mathcal
        C'} H (G')).
\end{eqnarray*}
The right-hand side of (\ref{eq:AN:dKRH}) is equal to:
\begin{eqnarray*}
  \dKRH^a (r_\AN (\mathcal C), r_\AN (\mathcal C'))
  & = & \sup_{h \in \Lform_1^a X} \dreal (r_\AN (\mathcal C) (h), r_\AN
        (\mathcal C') (h)) \\
  & = & \sup_{h \in \Lform_1^a X} \dreal (\sup_{G \in \mathcal C} G (h),
        \sup_{G' \in \mathcal C'} G' (h)).
\end{eqnarray*}
Among the elements $H$ of $\Lform_1 (\Val_\bullet X, \dKRH^a)$, we
find those obtained from $h \in \Lform^a_1 (X, d)$ by letting
$H (G) = G (h)$, according to Lemma~\ref{lemma:H}.  Therefore the
left-hand side of (\ref{eq:AN:dKRH}) is larger than or equal to its
right-hand side.

\vskip0.5em
\emph{The equality, assuming $X, d$ algebraic.}
Now assume $\mathcal C'$ convex.  We will show that
$\mH {(\dKRH^a)} (\mathcal C, \allowbreak \mathcal C') = \dKRH^a
(r_\AN (\mathcal C), r_\AN (\mathcal C'))$, and we do that first in
the special case where $X, d$ algebraic Yoneda-complete.  If the
inequality were strict, then there would be a real number $t$ such
that
$\mH {(\dKRH^a)} (\mathcal C, \allowbreak \mathcal C') > t > \dKRH^a
(r_\AN (\mathcal C), r_\AN (\mathcal C'))$.  Note that the second
inequality implies that $t$ is (strictly) positive.

Using Definition~\ref{defn:dH},
$\mH {(\dKRH^a)} (\mathcal C, \mathcal C') = \sup_{G \in \mathcal C}
\dKRH^a (G, \mathcal C')$, so $\dKRH^a (G, \mathcal C') > t$ for some
$G \in \mathcal C$.

Recall that on a standard quasi-metric space $X, d$, the map
$d (\_, C)$ is $1$-Lipschitz continuous for every $d$-Scott closed set
$C$ (see the beginning of Section~\ref{sec:hoare-quasi-metric}, or
Lemma~6.11 of \cite{JGL:formalballs} directly).  Moreover,
$d (x, C) \leq \inf_{y \in C} d (x, y)$, with equality if $x$ is a
center point (Proposition~6.12, loc.\ cit.)

Since $\Val_\bullet X, \dKRH^a$ is Yoneda-complete hence standard, the
map $\dKRH^a (\_, \mathcal C')$ is $1$-Lipschitz continuous.  By
Theorem~\ref{thm:V:alg} (when $\bullet$ is ``$\leq 1$'') or
Theorem~\ref{thm:V1:alg} (when $\bullet$ is ``$1$''), $(G, 0)$ is the
(naive) supremum of a directed family ${(G_i, r_i)}_{i \in I}$ where
each $G_i$ is a simple (subnormalized, resp.\ normalized) valuation
supported on center points.  Since $\dKRH^a (\_, \mathcal C')$ is
$1$-Lipschitz continuous, the map
$(F, r) \mapsto \dKRH^a (F, \mathcal C') - r$ is Scott-continuous, so
$\dKRH^a (G, \mathcal C') = \sup_{i \in I} \dKRH^a (G_i, \mathcal C')
- r_i$.  It follows that $\dKRH^a (G_i, \mathcal C') - r_i > t$ for
some $i \in I$.  We recall that
$\dKRH^a (G_i, \mathcal C') \leq \inf_{G' \in \mathcal C'} \dKRH^a
(G_i, G')$; in fact, equality holds since $G_i$ is a center point.  In
any case, $\inf_{G' \in \mathcal C'} \dKRH^a (G_i, G') > r_i + t$.

Expanding the definition of $\dKRH^a$, it follows that
$\inf_{G' \in \mathcal C'} \sup_{h \in \Lform^a_1 X} \dreal (G_i (h),
G' (h)) > r_i + t$.  Then there is a real number $t' > t+r_i$ such
that, for every $G' \in \mathcal C'$, there is an
$h \in \Lform^a_1 (X, d)$ such that $\dreal (G_i (h), G' (h)) > t'$;
in particular, since $t' > t > 0$ and therefore $G' (h)$ cannot be
equal to $+\infty$, $G_i (h) - G' (h)$ makes sense and is strictly
larger than $t'$.  Working in reverse, we obtain that
$\inf_{G' \in \mathcal C'} \sup_{h \in \Lform^a_1 X} (G_i (h) - G'
(h)) - r_i > t$.

We now use Lemma~\ref{lemma:AN:supinf} and obtain that
$\sup_{h \in \Lform^a_1 X} \inf_{G' \in \mathcal C'} (G_i (h) - G'
(h)) - r_i > t$.  Hence there is an $h \in \Lform^a_1 (X, d)$ such
that $\inf_{G' \in \mathcal C'} (G_i (h) - G' (h)) - r_i > t$.  That
can be written equivalently as
$G_i (h) - \sup_{G' \in \mathcal C'} G' (h) - r_i > t$.  Since
$(G_i, r_i) \leq^{\dKRH^{a+}} (G, 0)$, in particular
$G_i (h) \leq G (h) + r_i$, so
$G (h) - \sup_{G' \in \mathcal C'} G' (h) > t$.  This certainly
implies that
$\sup_{G \in \mathcal C} G (h) - \sup_{G' \in \mathcal C'} G' (h) >
t$, and therefore that
$\dreal (\sup_{G \in \mathcal C} G (h), \sup_{G' \in \mathcal C'} G'
(h)) > t$.  However, we had assumed that
$t > \dKRH^a (r_\AN (\mathcal C), r_\AN (\mathcal C')) = \sup_{h \in
  \Lform_1^a X} \dreal (\sup_{G \in \mathcal C} G (h), \sup_{G' \in
  \mathcal C'} G' (h))$, leading to a contradiction.

This shows
$\mH {(\dKRH^a)} (\mathcal C, \allowbreak \mathcal C') = \dKRH^a
(r_\AN (\mathcal C), r_\AN (\mathcal C'))$ when $\mathcal C'$ is
convex, assuming that $X, d$ is algebraic Yoneda-complete.

\vskip0.5em
\emph{The equality, in the general case.}
We now deal with the general case, where $X, d$ is continuous
Yoneda-complete.  We use \cite[Theorem~7.9]{JGL:formalballs}: $X, d$
is a retract of some algebraic Yoneda-complete space $Y, \partial$
through $1$-Lipschitz continuous maps $r \colon Y \to X$ and
$s \colon X \to Y$.  We check that
$r_\AN \circ \Hoarez (\Prev s) = \Prev s \circ r_\AN$ (namely that
$r_\AN$ is a natural transformation): for every
$\mathcal C \in \Hoare (\Val_\bullet X)$, for every $k \in \Lform Y$,
\begin{eqnarray*}
  r_\AN (\Hoarez (\Prev s) (\mathcal C)) (k)
  & = & \sup_{H \in \Hoarez (\Prev s) (\mathcal C)} H (k) \\
  & = & \sup_{H \in cl (\Prev s [\mathcal C])} H (k) \qquad\text{(see
        Lemma~\ref{lemma:H:functor})} \\
  & = & \sup_{H \in \Prev s [\mathcal C]} H (k) \qquad \text{by
        Lemma~\ref{lemma:H:cl:sup}} \\
  & = & \sup_{G \in \mathcal C} \Prev s (G) (k) = \sup_{G \in \mathcal
        C} G (k \circ s).
\end{eqnarray*}
Note that Lemma~\ref{lemma:H:cl:sup} applies because $H \mapsto H (k)$
is lower semicontinuous: the inverse image of $]b, +\infty]$ is $[k >
b]$, which is open in $\Val_\bullet Y$ with the weak topology, and we
recall that this coincides with the $\KRH\partial^a$-Scott topology, since
$Y, \partial$ is continuous Yoneda-complete.

The other term we need to compute is $\Prev s (r_\AN (\mathcal C)) (k)
= r_\AN (\mathcal C) (k \circ s) = \sup_{G \in \mathcal C} G (k \circ
s)$.  Therefore $r_\AN \circ \Hoarez (\Prev s) = \Prev s \circ r_\AN$.

Another fact we need to observe is that, when $\mathcal C'$ is convex,
$\Hoarez (\Prev s) (\mathcal C') = cl (\Prev s [\mathcal C'])$ is
convex.  It is easy to see that $\Prev s [\mathcal C']$ is convex: for
any two elements $\Prev s (G_1)$, $\Prev s (G_2)$ where
$G_1, G_2 \in \mathcal C'$, for every $\alpha \in [0, 1]$,
$\alpha \Prev s (G_1) + (1-\alpha) \Prev s (G_2)$ maps every
$k \in \Lform Y$ to
$\alpha G_1 (k \circ s) + (1-\alpha) G_2 (k \circ s)$, and is
therefore equal to $\Prev s (\alpha G_1 + (1-\alpha) G_2)$, which is
in $\Prev s [\mathcal C']$.  Then we apply Lemma~\ref{lemma:cl:conv}.

We can now finish our proof.  In the first line, we use that
$r \circ s$ is the identity map, and that $\Hoarez$ and $\Val_\bullet$
are functorial (Corollary~\ref{cor:H:functor}, and
Corollary~\ref{cor:V:functor} or Corollary~\ref{cor:V1:functor:a}
depending on the value of $\bullet$).
\begin{eqnarray*}
  \mH {(\dKRH^a)} (\mathcal C, \mathcal C')
  & = &
        \mH {(\dKRH^a)} ((\Hoarez \circ\Prev) (r) ((\Hoarez \circ \Prev)
        (s) (\mathcal C)),
        (\Hoarez \circ \Prev) (r)
        ((\Hoarez \circ \Prev) (s) (\mathcal C'))) \\
  & \leq &
           \mH {(\dKRH^a)} (\Hoarez (\Prev s) (\mathcal C), \Hoarez (\Prev s) (\mathcal
           C'))
           \qquad\text{since $(\Hoarez \circ \Prev) (r)$ is $1$-Lipschitz}
  \\
  & = &
        \dKRH^a (r_\AN (\Hoarez (\Prev s) (\mathcal C)),
        r_\AN (\Hoarez (\Prev s) (\mathcal C'))),
\end{eqnarray*}
since we know that the equality version of (\ref{eq:AN:dKRH}) holds on
the algebraic Yoneda-complete space $Y, \partial$, and since
$\Hoarez (\Prev s) (\mathcal C') = cl (s [\mathcal C'])$ is convex, as
we have just seen.  We now use our previous remark that
$r_\AN \circ \Hoarez (\Prev s) = \Prev s \circ r_\AN$ to obtain
$\mH {(\dKRH^a)} (\mathcal C, \mathcal C') \leq \dKRH^a (\Prev s
(r_\AN (\mathcal C)), \allowbreak \Prev s (r_\AN (\mathcal C')))$, and
the latter is less than or equal to
$\dKRH^a (r_\AN (\mathcal C), r_\AN (\mathcal C'))$ since $\Prev s$ is
$1$-Lipschitz.  Since by (\ref{eq:AN:dKRH}), the other inequality
$\mH {(\dKRH^a)} (\mathcal C, \mathcal C') \geq \dKRH^a (r_\AN
(\mathcal C), r_\AN (\mathcal C'))$ holds, equality follows.  \qed

\begin{lem}
  \label{lemma:AN:s:lip}
  Let $X, d$ be a continuous Yoneda-complete quasi-metric space.  Let
  $\bullet$ be either ``$\leq 1$'' or ``$1$'', and $a > 0$.  Then:
  \begin{enumerate}
  \item $r_\AN$ is $1$-Lipschitz from
    $\Hoare (\Val_\bullet X), \mH {(\dKRH^a)}$ to the space of
    (sub)normalized sublinear previsions on $X$ with the $\dKRH^a$
    quasi-metric;
  \item $s^\bullet_\AN$ preserves distances on the nose; in
    particular, $s^\bullet_\AN$ is $1$-Lipschitz.
  \end{enumerate}
\end{lem}
\proof (1) The first part of Proposition~\ref{prop:AN:dKRH} implies that
$r_\AN$ is $1$-Lipschitz. (2) For any two (sub)normalized
sublinear previsions $F$ and $F'$,
$s^\bullet_\AN (F') = \{G \in \Val_\bullet X \mid G \leq F'\}$ is
always convex, so the second part of Proposition~\ref{prop:AN:dKRH}
implies that:
\[
  \dKRH^a (F, F') = \dKRH^a (r_\AN (s^\bullet_\AN (F)), r_\AN
  (s^\bullet_\AN (F'))) = \mH {(\dKRH^a)} (s^\bullet_\AN (F),
  s^\bullet_\AN (F')).
\]

We would like to note a possible, subtle issue.  In writing
``$\Hoare (\Val_\bullet X), \mH {(\dKRH^a)}$'', we silently assume
that $\Val_\bullet X$ has the $\dKRH^a$-Scott topology, and the
elements of $\Hoare (\Val_\bullet X)$ are closed subsets for that
topology.  However Proposition~\ref{prop:AN:dKRH} considers closed
subsets for the weak topology.  Fortunately, the two topologies are
the same, by Theorem~\ref{thm:V:weak=dScott}.  \qed


\begin{lem}
  \label{lemma:AN:r:lipcont}
  Let $X, d$ be a continuous Yoneda-complete quasi-metric space.  Let
  $\bullet$ be either ``$\leq 1$'' or ``$1$'', and $a > 0$.  Then
  $r_\AN$ is $1$-Lipschitz continuous from $\Hoare (\Val_\bullet X),
  \mH {(\dKRH^a)}$ to the space of (sub)normalized sublinear
  previsions on $X$ with the $\dKRH^a$ quasi-metric.
\end{lem}
\proof By Lemma~\ref{lemma:AN:s:lip}~(1), it is $1$-Lipschitz.  In
particular, $\mathbf B^1 (r_\AN)$ is monotonic.  Similarly
$\mathbf B^1 (s^\bullet_\AN)$ is monotonic.  Since
$r_\AN \circ s^\bullet_\AN = \identity\relax$,
$\mathbf B^1 (r_\AN) \circ \mathbf B^1 (s^\bullet_\AN) =
\identity\relax$.  Since
$\identity\relax \leq s^\bullet_\AN \circ r_\AN$, i.e., since
$\mathcal C \subseteq s^\bullet_\AN (r_\AN (\mathcal C))$ for every
$\mathcal C$, we also have that
$(\mathcal C, r) \leq^{\mH {(\dKRH^a)}^+} \mathbf B^1 (s^\bullet_\AN)
(\mathbf B^1 (r_\AN) (\mathcal C, r))$: indeed
$\mH {(\dKRH^a)} (\mathcal C, s^\bullet_\AN (r_\AN (\mathcal C))) = 0
\leq r - r$.

This shows that $\mathbf B^1 (r_\AN)$ is left adjoint to $\mathbf B^1
(s^\bullet_\AN)$.  We conclude because left adjoints preserve all
existing suprema, hence in particular directed suprema.  \qed


We require the following order-theoretic lemma.
\begin{lem}
  \label{lemma:orderretract}
  Every order-retract of a dcpo is a dcpo.  Precisely, for any two
  monotonic maps $p \colon B \to A$ and $e \colon A \to B$ such that
  $p \circ e = \identity A$, if $B$ is a dcpo, then $A$ is a dcpo.
  Moreover, the supremum of any directed family ${(a_i)}_{i \in I}$ in
  $A$ is $p (\sup_{i \in I} e (a_i))$.
\end{lem}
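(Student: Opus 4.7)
The plan is a straightforward five-line verification: apply $e$ to transport the directed family into $B$, take its supremum there, then push the result back through $p$ to land in $A$, and check that what we obtain is in fact the least upper bound.

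In detail, let ${(a_i)}_{i \in I}$ be a directed family in $A$. Since $e$ is monotonic, ${(e (a_i))}_{i \in I}$ is a directed family in $B$, which is a dcpo, so it has a supremum $b = \sup_{i \in I} e (a_i)$ in $B$. Define $a = p (b) = p (\sup_{i \in I} e (a_i))$; this is the candidate supremum claimed in the statement.

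To see that $a$ is an upper bound of ${(a_i)}_{i \in I}$, note that for each $i \in I$, $e (a_i) \leq b$, so by monotonicity of $p$, $p (e (a_i)) \leq p (b) = a$. Using $p \circ e = \identity A$, the left-hand side is exactly $a_i$, giving $a_i \leq a$. To see that $a$ is the least upper bound, let $a'$ be any other upper bound. By monotonicity of $e$, $e (a_i) \leq e (a')$ for every $i \in I$, so $e (a')$ is an upper bound of ${(e (a_i))}_{i \in I}$ in $B$, whence $b \leq e (a')$. Applying $p$, and again using $p \circ e = \identity A$, we get $a = p (b) \leq p (e (a')) = a'$. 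Hence $a$ is the supremum.

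There is no real obstacle: everything falls out of the two monotonicity assumptions and the retraction identity $p \circ e = \identity A$, and no use of the order-enrichment $e \circ p \leq \identity B$ (which would make this a proper embedding-projection pair in the Kock--Zöberlein sense) is needed here. The only thing worth flagging is that the formula $p (\sup_{i \in I} e (a_i))$ that we exhibit is precisely the one stated in the lemma, so the ``Moreover'' clause is a free by-product of the argument rather than something that needs separate work.
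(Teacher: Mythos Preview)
Your proof is correct and follows essentially the same approach as the paper's: transport via $e$, take the supremum in $B$, push back via $p$, and verify the upper-bound and least-upper-bound conditions using only monotonicity of $p$ and $e$ together with $p \circ e = \identity A$. The paper likewise remarks that Scott-continuity of $p$ and $e$ is not needed, matching your observation that only the retraction identity and monotonicity are used.
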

\proof This is an easy exercise, but since one may be surprised by the
fact that we do not need $p$ or $e$ to be Scott-continuous for that,
we give the complete proof.  First $p (\sup_{i \in I} e (a_i))$ is
larger than $p (e (a_i)) = a_i$ for every $i \in I$, since $p$ is
monotonic.  Then, if $b$ is any other upper bound of
${(a_i)}_{i \in I}$ in $A$, then $e (a_i) \leq e (b)$ for every
$i \in I$, because $e$ is monotonic, so
$\sup_{i \in I} e (a_i) \leq e (b)$.  It follows that
$p (\sup_{i \in I} e (a_i)) \leq p (e (b)) = b$.  \qed

\begin{prop}[Yoneda-completeness, sublinear previsions]
  \label{prop:AN:Ycomplete}
  Let $X, d$ be an algebraic Yoneda-complete quasi-metric space.  Let
  $\bullet$ be either ``$\leq 1$'' or ``$1$'', and $a > 0$.  The space
  of (sub)normalized sublinear previsions on $X$ with the $\dKRH^a$
  quasi-metric is Yoneda-complete, and directed suprema of formal
  balls are computed as naive suprema.
\end{prop}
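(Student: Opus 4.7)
The plan is to derive Yoneda-completeness of the space $\mathcal{F}$ of (sub)normalized sublinear previsions on $X$ (with $\dKRH^a$) from the Yoneda-completeness of $\Hoare(\Val_\bullet X)$ (with $\mH{(\dKRH^a)}$), via the section-retraction pair $(s^\bullet_\AN, r_\AN)$. First I would note that, under the hypothesis that $X, d$ is algebraic Yoneda-complete, $\Val_\bullet X, \dKRH^a$ is algebraic Yoneda-complete by Theorem~\ref{thm:V:alg} (when $\bullet$ is ${\leq} 1$) or Theorem~\ref{thm:V1:alg} (when $\bullet$ is $1$); then Theorem~\ref{thm:H:alg} together with Remark~\ref{rem:HX:dKRHa:Ycomplete} ensures that $\Hoare(\Val_\bullet X), \mH{(\dKRH^a)}$ is itself Yoneda-complete, with all directed suprema of formal balls computed as naive suprema.

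I would then invoke the two structural facts established just before the proposition: Lemma~\ref{lemma:AN:s:lip}~(2) says $s^\bullet_\AN$ is an isometry, so $\mathbf{B}^1(s^\bullet_\AN)$ is an order-embedding and in particular monotonic, and Lemma~\ref{lemma:AN:r:lipcont} says that in the algebraic case $\mathbf{B}^1(r_\AN)$ is Scott-continuous, hence monotonic. Since $r_\AN \circ s^\bullet_\AN = \identity\relax$, Lemma~\ref{lemma:orderretract} transfers the dcpo structure on $\mathbf{B}(\Hoare(\Val_\bullet X), \mH{(\dKRH^a)})$ to a dcpo structure on $\mathbf{B}(\mathcal{F}, \dKRH^a)$, with the supremum of a directed family $(F_i, r_i)_{i \in I}$ given by $(F, r) := \mathbf{B}^1(r_\AN)\bigl(\sup_i \mathbf{B}^1(s^\bullet_\AN)(F_i, r_i)\bigr)$. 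The Kostanek--Waszkiewicz characterisation then yields Yoneda-completeness of $\mathcal{F}, \dKRH^a$.

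What remains --- and is the only nontrivial step --- is to check that this $(F, r)$ is the naive supremum of $(F_i, r_i)_{i \in I}$. Let $(\mathcal{C}, r) = \sup_i (s^\bullet_\AN(F_i), r_i)$, so $F = r_\AN(\mathcal{C})$; naivety on the Hoare side already gives $r = \inf_i r_i$. Fix $h \in \Lform_\alpha^a(X, d)$ with $\alpha > 0$: by Lemma~\ref{lemma:H} the evaluation map $H \colon G \mapsto G(h)$ is in $\Lform_\alpha^a(\Val_\bullet X, \dKRH^a)$. Unwinding definitions yields $F(h) = \sup_{G \in \mathcal{C}} G(h) = F^{\mathcal{C}}(H)$ and, using $r_\AN \circ s^\bullet_\AN = \identity\relax$, $F_i(h) = F^{s^\bullet_\AN(F_i)}(H)$. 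The Hoare-side naive-supremum identity, applied to the single test function $H$, then reads
\[
F^{\mathcal{C}}(H) = \sup_i \bigl(F^{s^\bullet_\AN(F_i)}(H) + \alpha r - \alpha r_i\bigr),
\]
which translates back into $F(h) = \sup_i (F_i(h) + \alpha r - \alpha r_i)$, exactly the naive-supremum formula for $\mathcal{F}$.

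The main obstacle is this last translation step: the naive-supremum identity on the Hoare powerdomain is stated in terms of Lipschitz continuous test functions on the valuation space, whereas for $\mathcal{F}$ it must be formulated in terms of test functions on $X$. Lemma~\ref{lemma:H} is what bridges the two formulations, by ensuring that for every $h \in \Lform_\alpha^a(X, d)$ the evaluation functional $G \mapsto G(h)$ sits in the correct Lipschitz class $\Lform_\alpha^a$ on $\Val_\bullet X, \dKRH^a$.
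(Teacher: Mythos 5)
Your proof is correct and follows essentially the same route as the paper's: Yoneda-completeness of $\Hoare(\Val_\bullet X), \mH{(\dKRH^a)}$ with naive suprema, the order-retraction $(\mathbf B^1(s^\bullet_\AN), \mathbf B^1(r_\AN))$ fed into Lemma~\ref{lemma:orderretract}, and the translation of the Hoare-side naive-supremum formula via the evaluation functionals $G \mapsto G(h)$ of Lemma~\ref{lemma:H}. The only quibble is a citation slip: the Yoneda-completeness of $\Hoare(\Val_\bullet X)$ under $\mH{(\dKRH^a)}$ (the Hoare quasi-metric over the $\dKRH^a$-quasi-metrized valuation space) comes from Proposition~\ref{prop:HX:Ycomplete} applied to $\Val_\bullet X, \dKRH^a$, not from Remark~\ref{rem:HX:dKRHa:Ycomplete}, which concerns the truncated quasi-metric $\dH^a = \min(a, \dH)$.
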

\proof By Theorem~\ref{thm:V:alg}, resp.\ Theorem~\ref{thm:V1:alg},
$\Val_\bullet X, \dKRH^a$ is algebraic Yoneda-complete, so we can use
Proposition~\ref{prop:HX:Ycomplete} and conclude that
$\Hoare (\Val_\bullet X), \mH {(\dKRH^a)}$ is Yoneda-complete, and
that directed suprema of formal balls are computed as naive suprema.
In particular, $\mathbf B (\Hoare (\Val_\bullet X), \mH {(\dKRH^a)})$
is a dcpo.  Moreover, $p = \mathbf B^1 (r_\AN)$ and
$e = \mathbf B^1 (s^\bullet_\AN)$ are monotonic by
Lemma~\ref{lemma:AN:s:lip}, and form an order-retraction.  By
Lemma~\ref{lemma:orderretract}, the poset of formal balls of the space
of (sub)normalized sublinear previsions is a dcpo, and that shows that
the latter space is Yoneda-complete.

We compute the supremum of a directed family ${(F_i, r_i)}_{i \in I}$
of formal balls, where each $F_i$ is a (sub)normalized sublinear
prevision.  This is $p (\sup_{i \in I} e (F_i, r_i))$, where $p$ and
$e$ are as above.  For each $i \in I$,
$e (F_i, r_i) = (s^\bullet_\AN (F_i), r_i)$.  The closed set
$\mathcal C_i = s^\bullet_\AN (F_i)$ has an associated discrete
sublinear prevision $F^{\mathcal C_i}$, defined as mapping
$H \in \Lform (\Val_\bullet X)$ to
$\sup_{G \in \mathcal C_i} H (G) = \sup_{G \in \Val_\bullet X, G\leq
  F_i} H (G)$.  Since directed suprema are naive suprema in
$\mathbf B (\Hoare (\Val_\bullet X), \mH {(\dKRH^a)})$,
$\sup_{i \in I} e (F_i, r_i)$ is $(\mathcal C, r)$ where
$r = \inf_{i \in I} r_i$ and
$F^{\mathcal C} (H) = \sup_{i \in I} (F^{\mathcal C_i} (H) - \alpha
r_i + \alpha r) = \sup_{i \in I} (\sup_{G \in \Val_\bullet X, G \leq
  F_i} H (G) - \alpha r_i + \alpha r)$, for every
$H \in \Lform^a_\alpha (\Val_\bullet X, \dKRH^a)$.  Then
$p (\sup_{i \in I} e (F_i, r_i))$ is the formal ball $(F, r)$, where
$F = r_\AN (\mathcal C)$.  In particular, for every
$h \in \Lform^a_\alpha (X, d)$,
$F (h) = \sup_{G \in \mathcal C} G (h)$.  The map
$H \colon G \mapsto G (h)$ is in
$\Lform^a_\alpha (\Val_\bullet X, \dKRH^a)$ by Lemma~\ref{lemma:H},
and:
\begin{eqnarray*}
  F (h)
  & =
  & \sup_{G \in \mathcal C} H (G) \\
  & = & F^{\mathcal C} (H) \\
  & = & \sup_{i \in I} (\sup_{G \in \Val_\bullet X, G \leq F_i} H (G)
        - \alpha r_i + \alpha r) \\
  & = & \sup_{i \in I} (\sup_{G \in \Val_\bullet X, G \leq F_i} G (h)
        - \alpha r_i + \alpha r) \\
  & = & \sup_{i \in I} (F_i (h) - \alpha r_i + \alpha r).
\end{eqnarray*}
\qed

\begin{lem}
  \label{lemma:AN:s:lipcont}
  Let $X, d$ be an algebraic Yoneda-complete quasi-metric space.  Let
  $\bullet$ be either ``$\leq 1$'' or ``$1$'', and $a > 0$.  Then
  $s^\bullet_\AN$ is $1$-Lipschitz continuous from the space of
  (sub)normalized sublinear previsions on $X$ with the $\dKRH^a$
  quasi-metric to $\Hoare (\Val_\bullet X), \mH {(\dKRH^a)}$.
\end{lem}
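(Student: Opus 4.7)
By Lemma~\ref{lemma:AN:s:lip}(2), $s^\bullet_\AN$ preserves distances, so $\mathbf B^1(s^\bullet_\AN)$ is monotonic. What remains is Scott-continuity of $\mathbf B^1(s^\bullet_\AN)$. Fix a directed family $(F_i, r_i)_{i \in I}$ of formal balls of (sub)normalized sublinear previsions; by Proposition~\ref{prop:AN:Ycomplete} it has a naive supremum $(F, r)$ with $r = \inf_i r_i$. Since $\Val_\bullet X, \dKRH^a$ is algebraic Yoneda-complete (Theorem~\ref{thm:V:alg} or Theorem~\ref{thm:V1:alg}), hence continuous Yoneda-complete, Proposition~\ref{prop:HX:Ycomplete} guarantees that the image family $(s^\bullet_\AN(F_i), r_i)_{i \in I}$ has a naive supremum $(\mathcal C, r)$ in $\mathbf B(\Hoare(\Val_\bullet X), \mH{(\dKRH^a)})$, with the same $r$. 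The task reduces to proving $\mathcal C = s^\bullet_\AN(F)$.

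Monotonicity gives $\mathcal C \subseteq s^\bullet_\AN(F)$ for free. Applying the Scott-continuous map $\mathbf B^1(r_\AN)$ (Lemma~\ref{lemma:AN:r:lipcont}) and using the retraction $r_\AN \circ s^\bullet_\AN = \identity \relax$, we obtain $r_\AN(\mathcal C) = \sup_i r_\AN(s^\bullet_\AN(F_i))$ in $\mathbf B$, which is $F$. By Theorem~4.11 of \cite{JGL-mscs16}, $r_\AN$ and $s^\bullet_\AN$ are mutually inverse between $\Hoare^{cvx}(\Val_\bullet X)$ and the sublinear prevision space (this uses local convexity of $\Lform X$, Proposition~\ref{prop:locconvex}). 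Thus if $\mathcal C$ is convex, $\mathcal C = s^\bullet_\AN(r_\AN(\mathcal C)) = s^\bullet_\AN(F)$. The main obstacle is therefore to show that the naive supremum $\mathcal C$ inherits convexity from the sets $s^\bullet_\AN(F_i)$.

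For this I would use the formal-ball description of Lemma~\ref{lemma:H:sup}: $\mathcal C = \mathcal D \cap \Val_\bullet X$, where $\mathcal D = \cl\bigl(\bigcup_i s^\bullet_\AN(F_i) + (r_i - r)\bigr)$ is closed in $\mathbf B(\Val_\bullet X, \dKRH^a)$. For each $\alpha \in [0,1]$, introduce the convex-combination map
\[
  \tilde\psi_\alpha\bigl((G_1,s_1),(G_2,s_2)\bigr) = (\alpha G_1 + (1-\alpha)G_2,\; \alpha s_1 + (1-\alpha)s_2)
\]
on $\mathbf B(\Val_\bullet X, \dKRH^a)^2$. Convexity of $\dKRH^a$ in each argument yields monotonicity, and a direct calculation from the naive-supremum formula of Theorem~\ref{thm:V:complete} gives Scott-continuity in each argument; on a product of dcpos this entails joint Scott-continuity. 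Since each $s^\bullet_\AN(F_k)$ is convex, $\tilde\psi_\alpha$ sends $(s^\bullet_\AN(F_k)+(r_k-r))^2$ into $s^\bullet_\AN(F_k)+(r_k-r)$, and hence its Scott-closure into itself.

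Given any $(G_1, r_i-r)$ and $(G_2, r_j-r)$ in $\bigcup_i s^\bullet_\AN(F_i)+(r_i-r)$, pick $k$ above $i,j$ by directedness; combining $\mH{(\dKRH^a)}(s^\bullet_\AN(F_i), s^\bullet_\AN(F_k)) \leq r_i - r_k$ (Lemma~\ref{lemma:dH}(1)) with translation by $r_k - r$ (Lemma~\ref{lemma:cl:1}(2)) places both formal balls into $\cl(s^\bullet_\AN(F_k)+(r_k-r)) \subseteq \mathcal D$. Hence $\tilde\psi_\alpha$ sends the whole union squared into $\mathcal D$; joint Scott-continuity plus closedness of $\mathcal D$ promotes this to $\tilde\psi_\alpha(\mathcal D \times \mathcal D) \subseteq \mathcal D$. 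Restricting to radius zero shows that $\mathcal C$ is convex, completing the identification $\mathcal C = s^\bullet_\AN(F)$ and hence the Scott-continuity of $\mathbf B^1(s^\bullet_\AN)$.
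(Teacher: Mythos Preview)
Your argument is correct, but the paper takes a much shorter, topological route. The paper first shows that on $\Val_\bullet X$ the $\dKRH^a$-Scott topology coincides with the weak topology (Theorem~\ref{thm:V:weak=dScott}) and that on $\Hoare(\Val_\bullet X)$ the $\mH{(\dKRH^a)}$-Scott topology coincides with the lower Vietoris topology (Proposition~\ref{prop:H:Vietoris}, using algebraicity of $\Val_\bullet X$). Since $s^\bullet_\AN$ is already known to be continuous from sublinear previsions with the weak topology to $\Hoare(\Val_\bullet X)$ with the lower Vietoris topology (from \cite{JGL-mscs16}), and since the $\dKRH^a$-Scott topology on sublinear previsions is finer than the weak one (Proposition~\ref{prop:weak:dScott:a}, enabled by Proposition~\ref{prop:AN:Ycomplete}), $s^\bullet_\AN$ is continuous for the $d$-Scott topologies. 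Proposition~\ref{prop:cont} then converts ``$1$-Lipschitz and continuous'' into ``$1$-Lipschitz continuous''.

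Your approach instead works directly on formal balls: you compute the naive supremum $(\mathcal C,r)$ of $(s^\bullet_\AN(F_i),r_i)_i$ via Lemma~\ref{lemma:H:sup}, and show $\mathcal C$ is convex by lifting convex combinations to $\mathbf B(\Val_\bullet X,\dKRH^a)$, so that $\mathcal C = s^\bullet_\AN(r_\AN(\mathcal C)) = s^\bullet_\AN(F)$. This is more hands-on and self-contained --- it never invokes the coincidence of the weak and $\dKRH^a$-Scott topologies --- but it is also longer and requires a somewhat delicate two-step closure argument to pass $\tilde\psi_\alpha$ through Scott closures. The paper's route buys brevity by cashing in on heavy machinery already built; yours buys a concrete picture of why convexity survives the formal-ball supremum, and would likely adapt more directly if one wished to bypass the topology-identification theorems. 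One small wording quibble: what you call ``convexity of $\dKRH^a$ in each argument'' is really joint convexity, and the promotion to $\tilde\psi_\alpha(\mathcal D\times\mathcal D)\subseteq\mathcal D$ is cleanest via separate (rather than joint) Scott-continuity, closing first in one variable and then the other.
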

\proof
By Theorem~\ref{thm:V:weak=dScott}, the $\dKRH^a$-Scott
topology coincides with the weak topology on $\Val_\bullet X$.  By
Theorem~\ref{thm:V:alg}, resp.\ Theorem~\ref{thm:V1:alg},
$\Val_\bullet X, \dKRH^a$ is algebraic Yoneda-complete, so we can use
Proposition~\ref{prop:H:Vietoris} and conclude that the
$\mH {(\dKRH^a)}$-Scott topology coincides with the lower Vietoris
topology on $\Hoare (\Val_\bullet X)$.

Proposition~\ref{prop:AN:Ycomplete} gives us the necessary properties
that allow us to use Proposition~\ref{prop:weak:dScott:a}: the
$\dKRH^a$-Scott topology on our space of sublinear previsions is finer
than the weak topology.  Since $s^\bullet_\AN$ is continuous from that
space of sublinear previsions with the weak topology to $\Hoare
(\Val_\bullet X)$ with its lower Vietoris topology, it is therefore
also continuous with respect to the $\dKRH^a$-topology on the former,
and the $\mH {(\dKRH^a)}$-Scott topology on the latter.

We now know that $s^\bullet_\AN$ is both $1$-Lipschitz and continuous
with respect to the $\dKRH^a$-Scott and $\mH {(\dKRH^a)}$-Scott
topologies.  By Proposition~\ref{prop:cont}, it must be $1$-Lipschitz
continuous.  That proposition applies because all the spaces involved
are Yoneda-complete hence standard.  In particular, our space of
sublinear previsions is Yoneda-complete by
Proposition~\ref{prop:AN:Ycomplete}, and $\Hoare (\Val_\bullet X)$ is
Yoneda-complete by Proposition~\ref{prop:HX:Ycomplete}.  \qed

\begin{lem}
  \label{lemma:retract:alg}
  Let $X, d$ and $Y, \partial$ be two quasi-metric spaces, and two
  $1$-Lipschitz continuous maps:
  \[
    \xymatrix{Y, \partial \ar@<1ex>[r]^{r} & X, d
      \ar@<1ex>[l]^{s}}
  \]
  with $r \circ s = \identity X$.  (I.e., $X, d$ is a
  \emph{$1$-Lipschitz continuous retract} of $Y, \partial$ through
  $r$, $s$.)

  If $Y, \partial$ is algebraic Yoneda-complete, then $X, d$ is
  continuous Yoneda-complete.  Every point of $X$ is a $d$-limit of a
  Cauchy-weightable net of points of the form $r (y)$ with $y$ taken
  from a strong basis of $Y, \partial$.

  If additionally $d (r (y), x) = \partial (y, s (x))$ for all
  $x \in X$ and $y \in Y$, then $X, d$ is algebraic Yoneda-complete,
  and a strong basis of $X, d$ is given by the image of a strong basis
  of $Y, \partial$ by $r$.
\end{lem}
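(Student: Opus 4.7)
The plan is to exploit the functoriality of $\mathbf{B}^1$ to transport algebraic Yoneda-completeness from $Y,\partial$ to $X,d$, using $(r,s)$ as a section-retraction pair at the level of formal balls.

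First I would set up the lifted retraction. By Lemma~\ref{lemma:Balpha:f}, $\mathbf{B}^1(r) \colon \mathbf{B}(Y,\partial) \to \mathbf{B}(X,d)$ and $\mathbf{B}^1(s) \colon \mathbf{B}(X,d) \to \mathbf{B}(Y,\partial)$ are both Scott-continuous, and from $r \circ s = \identity{X}$ we obtain $\mathbf{B}^1(r) \circ \mathbf{B}^1(s) = \identity{\mathbf{B}(X,d)}$. Since $Y,\partial$ is algebraic Yoneda-complete, $\mathbf{B}(Y,\partial)$ is a continuous (in fact algebraic) dcpo. By Lemma~\ref{lemma:orderretract}, $\mathbf{B}(X,d)$ is a dcpo, and a standard argument (way-below is preserved downward by the Scott-continuous retraction) shows that $\mathbf{B}(X,d)$ is in fact continuous: for $x \in X$, if $e(q) = \mathbf{B}^1(s)(x,0)$ is the directed supremum of elements $(y_i,r_i) \ll \mathbf{B}^1(s)(x,0)$, then $(x,0) = \mathbf{B}^1(r)(\mathbf{B}^1(s)(x,0)) = \sup_i (r(y_i), r_i)$ by Scott-continuity, and each $(r(y_i),r_i)$ is way-below $(x,0)$. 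Thus $X,d$ is continuous Yoneda-complete, and in particular standard.

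For the Cauchy-weightable-net part of (1), I would fix $x \in X$ and a strong basis $\mathcal{B}_Y$ of $Y,\partial$. Write $(s(x),0)$ as the supremum of a directed family ${(y_i, r_i)}_{i \in I}$ with $y_i \in \mathcal{B}_Y$, a family guaranteed by the definition of strong basis. Applying $\mathbf{B}^1(r)$ gives $(x,0) = \sup_i (r(y_i), r_i)$; since $\mathbf{B}(X,d)$ is now known to be standard, $\inf_i r_i = 0$ by Lemma~\ref{lemma:sup=>dlim}, and the underlying net ${(r(y_i))}_{i \in I}$ is Cauchy-weightable with $d$-limit $x$.

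For part~(2), the additional hypothesis $d(r(y),x) = \partial(y,s(x))$ is used to show that $r$ sends center points of $Y,\partial$ to center points of $X,d$. The key computation, for $y$ a center point of $Y,\partial$ and $\epsilon > 0$, is the identity
\[
  B^{d^+}_{(r(y),0),<\epsilon} \;=\; (\mathbf{B}^1(s))^{-1}\bigl(B^{\partial^+}_{(y,0),<\epsilon}\bigr),
\]
which follows directly by unwinding the definition of $d^+$ and $\partial^+$ and substituting $\partial(y,s(x)) = d(r(y),x)$. Since $y$ is a center point, $B^{\partial^+}_{(y,0),<\epsilon}$ is Scott-open in $\mathbf{B}(Y,\partial)$; as $\mathbf{B}^1(s)$ is Scott-continuous, the preimage $B^{d^+}_{(r(y),0),<\epsilon}$ is Scott-open in $\mathbf{B}(X,d)$, so $r(y)$ is a center point of $X,d$. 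Combining this with the construction of the directed family ${(r(y_i),r_i)}_{i \in I}$ from part~(1), we see that $r[\mathcal{B}_Y]$ consists of center points and is a strong basis of $X,d$, so $X,d$ is algebraic Yoneda-complete.

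The main obstacle is the verification of the preimage identity in part~(2): the hypothesis $d(r(y),x) = \partial(y,s(x))$ is precisely what is needed to make the equality go through in both directions, and without it one only gets $d(r(y),x) \leq \partial(y,s(x))$ from $1$-Lipschitzness of $r$, which would be insufficient to identify the two open balls. Once this identity is in hand, the rest is routine transfer via the Scott-continuous retraction.
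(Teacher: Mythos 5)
Your proposal is correct and follows essentially the same route as the paper: lift $(r,s)$ to a Scott-continuous retraction $\mathbf B^1(r),\mathbf B^1(s)$ of posets of formal balls, conclude that $\mathbf B(X,d)$ is a continuous dcpo as a retract of one, transport a (strong) basis along $\mathbf B^1(r)$, and use the identity $B^{d^+}_{(r(y),0),<\epsilon}=\mathbf B^1(s)^{-1}\bigl(B^{\partial^+}_{(y,0),<\epsilon}\bigr)$, which is exactly where the extra hypothesis $d(r(y),x)=\partial(y,s(x))$ is needed, to see that $r$ sends center points to center points. The only cosmetic difference is that you construct the Cauchy-weighted net converging to $x$ directly by pushing forward a family converging to $(s(x),0)$, where the paper invokes the general fact that the image of a basis under a retraction is a basis.
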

\proof $\mathbf B^1 (r)$ and $\mathbf B^1 (s)$ form a retraction, in
the sense that those are (Scott-)continuous maps and
$\mathbf B^1 (r) \circ \mathbf B^1 (s) = \identity {\mathbf B (X,
  d)}$.  The continuous retracts of continuous dcpos are continuous
dcpos (see Corollary~8.3.37 of \cite{JGL-topology} for example), so
$\mathbf B (X, d)$ is a continuous dcpo.

Now assume that $B$ is a strong basis of $Y, \partial$.  Let $A$ be
the image of $B$ by $r$.  By Lemma~\ref{lemma:B:basis}, the set
$\mathring B$ of formal balls whose centers are in $B$ is a basis of
the continuous dcpo $\mathbf B (Y, \partial)$.  Then the image of
$\mathring B$ by $\mathbf B^1 (r)$ is a basis of $\mathbf B (X, d)$.
This can be found as Exercise~5.1.44 of loc.\ cit., for example: the
image of a basis by a retraction is a basis.  The image of
$\mathring B$ is the set $\mathring A$ of formal balls whose centers
are in $A$.  That $\mathring A$ is a basis means exactly that every
element of $X$ is the $d$-limit of a Cauchy-weightable net of points
in $A$, considering that $X, d$ is Yoneda-complete.

In order to show that $A$ is a strong basis, it remains to show that
the elements $a = r (b)$ of $A$ are center points, under our
additional assumption that $d (r (y), x) = \partial (y, s (x))$ for
all $x \in X$ and $y \in Y$.  Consider the open ball
$B^{d^+}_{(a, 0), <\epsilon}$.  This is the set of formal balls
$(x, t)$ on $X$ such that $d (a, x) < \epsilon -t$, or equivalently
$\partial (b, s (x)) < \epsilon - t$, using the additional assumption.
Hence
$B^{d^+}_{(a, 0), <\epsilon} = \mathbf B^1 (s)^{-1}
(B^{\partial^+}_{(b, 0), <\epsilon})$.  This is open since $b$ is a
center point and $\mathbf B^1 (s)$ is continuous.  \qed

\begin{thm}[Continuity for sublinear previsions]
  \label{thm:AN:cont}
  Let $X, d$ be a continuous Yoneda-complete quasi-metric space, and
  $a > 0$.

  The space of all subnormalized (resp., normalized) sublinear
  previsions on $X$ with the $\dKRH^a$ quasi-metric is continuous
  Yoneda-complete.

  It occurs as a $1$-Lipschitz continuous retract of
  $\Hoare (\Val_{\leq 1} X), \mH {(\dKRH^a)}$ (resp.,
  $\Hoare (\Val_1 X), \allowbreak \mH {(\dKRH^a)}$), and as an
  isometric copy of $\Hoare^{cvx} (\Val_{\leq 1} X), \mH {(\dKRH^a)}$
  (resp., $\Hoare^{cvx} (\Val_1 X), \mH {(\dKRH^a)}$) through $r_\AN$
  and $s^{\leq 1}_\AN$ (resp., $s^1_\AN$).

  The $\dKRH^a$-Scott topology on the space of subnormalized (resp.,
  normalized) sublinear previsions coincides with the weak topology.
\end{thm}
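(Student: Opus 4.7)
My plan is to follow the two-tier strategy used in Theorems~\ref{thm:Vleq1:cont}, \ref{thm:V1:cont}, \ref{thm:H:cont} and~\ref{thm:Q:cont}: first assemble the four assertions in the algebraic case from the preceding lemmas, then reduce the continuous case to it through Theorem~7.9 of \cite{JGL:formalballs}, which supplies an algebraic Yoneda-complete $Y,\partial$ with $1$-Lipschitz continuous maps $r\colon Y\to X$, $s\colon X\to Y$ satisfying $r\circ s=\identity X$. Writing $\Prev^\bullet_\AN X$ for the space of (sub)normalized sublinear previsions on $X$, the algebraic case goes as follows: Yoneda-completeness and the naive-supremum formula come from Proposition~\ref{prop:AN:Ycomplete}; Lemmas~\ref{lemma:AN:r:lipcont} and~\ref{lemma:AN:s:lipcont} together with $r_\AN\circ s^\bullet_\AN=\identity\relax$ give the $1$-Lipschitz continuous retract of $\Hoare(\Val_\bullet X),\mH{(\dKRH^a)}$; Lemma~\ref{lemma:AN:s:lip}(2) upgrades this to an isometric copy of $\Hoare^{cvx}(\Val_\bullet X)$, using Theorem~4.11 of \cite{JGL-mscs16} to identify the image of $s^\bullet_\AN$; continuous Yoneda-completeness follows because $\Val_\bullet X$ is algebraic Yoneda-complete (Theorem~\ref{thm:V:alg} or~\ref{thm:V1:alg}), hence so is $\Hoare(\Val_\bullet X)$ (Theorem~\ref{thm:H:alg}), and a Scott-continuous retract of a continuous dcpo is continuous; and the topology equality is obtained from the fact that $\mH{(\dKRH^a)}$-Scott equals the lower Vietoris topology on $\Hoare(\Val_\bullet X)$ (Theorem~\ref{thm:H:Vietoris}), which by Lemma~\ref{lemma:H:V=weak} and Theorem~\ref{thm:V:weak=dScott} equals the weak topology there, transported through the isometric/homeomorphic bijection $s^\bullet_\AN$ via Fact~\ref{fact:retract:two}.

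For the continuous case, Lemma~\ref{lemma:Pf:lip} yields $1$-Lipschitz maps $\Prev r$ and $\Prev s$ preserving sublinearity and (sub)normalization, with $\Prev r\circ\Prev s=\identity\relax$, so they restrict to maps between $\Prev^\bullet_\AN X$ and $\Prev^\bullet_\AN Y$. To invoke the same argument pattern as in Theorems~\ref{thm:Vleq1:cont}--\ref{thm:Q:cont}, I need these restrictions to be $1$-Lipschitz \emph{continuous}, which requires Yoneda-completeness of $\Prev^\bullet_\AN X$. I would break this mild circularity as follows: given a directed family of formal balls $(F_i,r_i)_{i\in I}$ in $\mathbf B(\Prev^\bullet_\AN X,\dKRH^a)$, push it through $\mathbf B^1(\Prev s)$ to a directed family in $\mathbf B(\Prev^\bullet_\AN Y,\dKRH^a)$, take its supremum $(G,r)$ there (which exists and is naive by the algebraic case), and pull back through $\mathbf B^1(\Prev r)$. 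Using $r\circ s=\identity X$, the pulled-back prevision $\Prev r(G)$ evaluates on each $h\in\Lform_1^a(X,d)$ to $\sup_i(F_i(h)+r-r_i)$, i.e., it is the naive prevision for the original family, whose sublinearity and (sub)normalization are furnished by Proposition~\ref{prop:LaPrev:simplesup}(4); monotonicity of $\mathbf B^1(\Prev r)$ and $\mathbf B^1(\Prev s)$ then shows $(\Prev r(G),r)$ is the least upper bound. Thus $\Prev^\bullet_\AN X$ is Yoneda-complete with naive suprema, and Lemma~\ref{lemma:Pf:lipcont} upgrades $\Prev r,\Prev s$ to $1$-Lipschitz continuous maps. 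Continuous Yoneda-completeness of $\Prev^\bullet_\AN X$ follows as a Scott-continuous retract of the continuous dcpo $\mathbf B(\Prev^\bullet_\AN Y)$.

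The remaining assertions then transfer routinely. The retract and isometric-copy statements rest on Lemma~\ref{lemma:AN:s:lip}(2) plus the continuity of $r_\AN$ and $s^\bullet_\AN$ in the continuous setting, which now follows from Proposition~\ref{prop:cont}, since all the spaces involved are known to be standard. The weak = $\dKRH^a$-Scott equality transports from $Y$ to $X$ by the same Fact~\ref{fact:retract:two} argument used in Theorems~\ref{thm:V:weak=dScott} and~\ref{thm:H:Vietoris}: $(\Prev s,\Prev r)$ is a retraction both in the weak topology (Fact~\ref{fact:Pf:weak}) and in the $\dKRH^a$-Scott topology (just established), and the algebraic case supplies the equality on $Y$. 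The main obstacle is precisely the circularity-breaking step of the previous paragraph: establishing Yoneda-completeness of $\Prev^\bullet_\AN X$ in the non-algebraic case without presupposing $1$-Lipschitz continuity of $\Prev r$ and $\Prev s$. Everything else is a transport along the Theorem~7.9 retraction, in the spirit already followed several times in the paper.
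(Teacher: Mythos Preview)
Your proposal is correct in substance, but it takes a more roundabout route than the paper, and the ``circularity-breaking'' step you flag as the main obstacle is in fact unnecessary.

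The paper works \emph{directly} on continuous Yoneda-complete $X$, without ever passing through an algebraic $Y$ via Theorem~7.9 of \cite{JGL:formalballs}. The key observation is that the adjointness argument of Lemma~\ref{lemma:AN:r:lipcont} does not actually use the algebraic hypothesis: from Lemma~\ref{lemma:AN:s:lip} (which only needs continuous Yoneda-complete $X$) we get that $\mathbf B^1(r_\AN)$ is left adjoint to $\mathbf B^1(s^\bullet_\AN)$, hence preserves all existing directed suprema, so $r_\AN$ is $1$-Lipschitz continuous outright. Once that is known, Lemma~\ref{lemma:orderretract} gives Yoneda-completeness of the sublinear-prevision space with no circularity, the naive-supremum formula follows as in Proposition~\ref{prop:AN:Ycomplete} (whose proof again only uses Proposition~\ref{prop:HX:Ycomplete} applied to $\Val_\bullet X$, which is continuous Yoneda-complete by Theorems~\ref{thm:Vleq1:cont}/\ref{thm:V1:cont}), and then the argument of Lemma~\ref{lemma:AN:s:lipcont} goes through verbatim. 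Continuity is then immediate from Theorem~\ref{thm:H:cont} and the fact that $1$-Lipschitz continuous retracts of continuous Yoneda-complete spaces are continuous Yoneda-complete; the topology claim follows by Fact~\ref{fact:retract:two} exactly as you sketch.

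Your two-tier strategy via $\Prev r,\Prev s$ is not wrong, but it duplicates work: you reconstruct Yoneda-completeness by push--pull through $Y$, whereas the adjointness of $\mathbf B^1(r_\AN)\dashv\mathbf B^1(s^\bullet_\AN)$ hands it to you directly on $X$. Note also a small imprecision in your continuous-case paragraph: saying that $1$-Lipschitz continuity of $r_\AN$ ``follows from Proposition~\ref{prop:cont}'' does not quite work, since $r_\AN$ is known continuous from lower Vietoris to weak, and while lower Vietoris${}={}\mH{(\dKRH^a)}$-Scott on the domain, you do not yet know weak${}={}\dKRH^a$-Scott on the codomain (that is what you are trying to prove). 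The adjointness argument is the correct tool for $r_\AN$; Proposition~\ref{prop:cont} is the right tool only for $s^\bullet_\AN$.
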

\proof We have already seen that $r_\AN$ and $s^\bullet_\AN$ are
continuous, with respect to the weak topologies.  By
Lemma~\ref{lemma:AN:r:lipcont} and Lemma~\ref{lemma:AN:s:lipcont},
they are also $1$-Lipschitz continuous with respect to
$\mH {(\dKRH^a)}$ on $\Hoare (\Val_\bullet X)$ and $\dKRH^a$ on the
space of (sub)normalized sublinear previsions.

Recall also that $r_\AN \circ s^\bullet_\AN = \identity \relax$.  This
shows the $1$-Lipschitz continuous retract part, and also the
isometric copy part, since $s^\bullet_\AN$ and $r_\AN$ restrict to
bijections with the subspace $\Hoare^{cvx} (\Val_\bullet X)$.

We turn to continuity.  By Theorem~\ref{thm:Vleq1:cont} or
Theorem~\ref{thm:V1:cont}, $\Val_\bullet X, \dKRH^a$ is continuous
Yoneda-complete.  We can then use Theorem~\ref{thm:H:cont} to conclude
that $\Hoare (\Val_\bullet X), \mH{(\dKRH^a)}$ is continuous
Yoneda-complete.  Since every $1$-Lipschitz continuous retract of a
continuous Yoneda-complete quasi-metric space is continuous
Yoneda-complete (Corollary~8.3.37 of \cite{JGL-topology}, already used
in the proof of Lemma~\ref{lemma:retract:alg}), the space of all
subnormalized (resp., normalized) sublinear previsions on $X$ with the
$\dKRH^a$ quasi-metric is continuous Yoneda-complete.

We show that the $\dKRH^a$-Scott topology on the space of
(sub)normalized sublinear previsions is the weak topology.  We use
Fact~\ref{fact:retract:two} with the space of (sub)normalized
sublinear previsions, either with the weak or with the $\dKRH^a$-Scott
topology, for $A$, and $\Hoare (\Val_\bullet X)$ with its
$\mH {(\dKRH^a)}$-Scott topology, or equivalently with its weak
topology, for $B$.  The latter two topologies coincide because of
Theorem~\ref{thm:V:weak=dScott} and Proposition~\ref{prop:H:Vietoris}.
Since $s^\bullet_\AN$ is the section part of a $1$-Lipschitz
continuous retraction, it is a topological embedding of $A$ with the
$\dKRH^a$-Scott topology into $B$.  We recall that $s^\bullet_\AN$ is
also a section, hence a topological embedding of $A$ with the weak
topology into $\Hoare (\Val_\bullet X)$ with its lower Vietoris
topology, that is, into $B$, and this allows us to conclude.  \qed

We did not need to show that directed suprema of formal balls were
computed as naive suprema to prove the latter.  Still, that is true.
\begin{lem}
  \label{lemma:AN:cont}
  Let $X, d$ be a continuous Yoneda-complete quasi-metric space, and
  $a > 0$.  The suprema of directed families ${(F_i, r_i)}_{i \in I}$
  of formal balls, where each $F_i$ is a (sub)normalized sublinear
  prevision on $X$, are naive suprema.
\end{lem}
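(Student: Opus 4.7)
The plan is to mirror the proof of Proposition~\ref{prop:AN:Ycomplete}, using the retraction between the space of (sub)normalized sublinear previsions and $\Hoare(\Val_\bullet X)$, but starting only from continuous Yoneda-completeness of $X,d$. First I would assemble the ingredients. By Theorem~\ref{thm:Vleq1:cont} or Theorem~\ref{thm:V1:cont}, $\Val_\bullet X,\dKRH^a$ is continuous Yoneda-complete; hence by Proposition~\ref{prop:HX:Ycomplete}, $\Hoare(\Val_\bullet X),\mH{(\dKRH^a)}$ is Yoneda-complete with directed suprema of formal balls computed as naive suprema. By Theorem~\ref{thm:AN:cont}, $r_\AN$ and $s^\bullet_\AN$ are $1$-Lipschitz continuous and satisfy $r_\AN\circ s^\bullet_\AN = \identity\relax$, so $\mathbf B^1(r_\AN)$ and $\mathbf B^1(s^\bullet_\AN)$ are monotonic Scott-continuous and form an order-retraction.

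Given a directed family ${(F_i,r_i)}_{i\in I}$ of formal balls of (sub)normalized sublinear previsions, I would apply Lemma~\ref{lemma:orderretract}: its supremum $(F,r)$, which exists by Theorem~\ref{thm:AN:cont}, equals $\mathbf B^1(r_\AN)\bigl(\sup_{i\in I}(s^\bullet_\AN(F_i),r_i)\bigr)$. The inner supremum $(\mathcal C,r)$ is the naive supremum in $\mathbf B(\Hoare(\Val_\bullet X),\mH{(\dKRH^a)})$, so $r=\inf_{i\in I}r_i$ and, for every $H\in\Lform^a_\beta(\Val_\bullet X,\dKRH^a)$, $\beta>0$,
\[
F^{\mathcal C}(H)=\sup_{i\in I}\bigl(F^{s^\bullet_\AN(F_i)}(H)+\beta r-\beta r_i\bigr).
\]
In particular, $F=r_\AN(\mathcal C)$ and $r=\inf_{i\in I}r_i$, which is the first half of the naive-supremum description.

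To establish the functional formula, fix $h\in\Lform^a_\alpha(X,d)$ with $\alpha>0$ and consider the evaluation map $H\colon G\mapsto G(h)$ on $\Val_\bullet X$. By Lemma~\ref{lemma:H}, $H$ is $\alpha$-Lipschitz continuous on $\Val_\bullet X,\dKRH^a$, and since elements of $\Val_\bullet X$ are subnormalized with $h\leq a.\mathbf 1$, $H(G)\leq a$. Choosing $\beta=\max(\alpha,1)$ ensures $H\in\Lform^a_\beta(\Val_\bullet X,\dKRH^a)$ via Proposition~\ref{prop:alphaLip:props}~(5). Applying the naive supremum formula to $H$ and using $r_\AN\circ s^\bullet_\AN=\identity\relax$, which gives $F^{s^\bullet_\AN(F_i)}(H)=\sup_{G\leq F_i,\,G\in\Val_\bullet X}G(h)=F_i(h)$, I obtain
\[
F(h)=r_\AN(\mathcal C)(h)=\sup_{G\in\mathcal C}G(h)=F^{\mathcal C}(H)=\sup_{i\in I}\bigl(F_i(h)+\beta r-\beta r_i\bigr).
\]
Finally, a short bookkeeping step removes the discrepancy between $\beta$ and $\alpha$: by the ordering on formal balls, the net $F_i(h)-\alpha r_i$ is monotone nondecreasing, so $\sup_{i\in I}(F_i(h)-\beta r_i)=\sup_{i\in I}(F_i(h)-\alpha r_i)-(\beta-\alpha)r$, yielding $\sup_{i\in I}(F_i(h)+\beta r-\beta r_i)=\sup_{i\in I}(F_i(h)+\alpha r-\alpha r_i)$, as required.

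The only real subtlety is the $\beta$-versus-$\alpha$ scaling in the last step, since the bound $H\leq a$ forces us to work in $\Lform^a_\beta$ for $\beta\geq 1$ rather than directly in $\Lform^a_\alpha$; the monotonicity of $F_i(h)-\alpha r_i$ along the directed net (a consequence of the definition of $\leq^{\dKRH^{a+}}$) resolves this cleanly. Everything else is an almost verbatim copy of the retract argument in Proposition~\ref{prop:AN:Ycomplete}, with Theorem~\ref{thm:Vleq1:cont}/\ref{thm:V1:cont} substituted for their algebraic counterparts.
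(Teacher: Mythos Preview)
Your proof is correct and follows essentially the same retract-through-$\Hoare(\Val_\bullet X)$ argument as the paper. The one difference is your $\beta$-versus-$\alpha$ detour, which is unnecessary: the quasi-metric on $\Hoare(\Val_\bullet X)$ is $\mH{(\dKRH^a)}$, i.e.\ the \emph{unbounded} Hausdorff quasi-metric built from $\dKRH^a$, so the naive-supremum formula there (via Proposition~\ref{prop:H:prev} and Proposition~\ref{prop:LPrev:simplesup}) applies to every $H\in\Lform_\alpha(\Val_\bullet X,\dKRH^a)$, with no boundedness constraint on $H$. Since Lemma~\ref{lemma:H} already gives $H\colon G\mapsto G(h)$ as $\alpha$-Lipschitz continuous for $\dKRH^a$, you can plug in $\beta=\alpha$ directly, as the paper does. (Incidentally, your bound $H(G)\le a$ is also off: for $h\in\Lform_\alpha^a(X,d)$ one has $h\le\alpha a.\mathbf 1$, hence $H(G)\le\alpha a$, which would have put $H$ in $\Lform_\alpha^a$ anyway.) Your bookkeeping repair is correct, but the whole issue evaporates once you notice the Hoare layer carries the unbounded $\dH$ metric.
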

\proof Let $\alpha > 0$, and $h$ be in $\Lform_\alpha^a (X, d)$.  By
Theorem~\ref{thm:AN:cont}, $\mathbf B^1 (r_\AN)$ is Scott-continuous
and $\mathbf B^1 (r_\AN) \circ \mathbf B^1 (s_\AN^\bullet)$ is the
identity.  The supremum of ${(F_i, r_i)}_{i \in I}$ is then
$\mathbf B^1 (r_\AN) (\mathcal C, r) = (r_\AN (\mathcal C), r)$, where
$(\mathcal C, r) = \sup_{i \in I} (s_\AN^\bullet (F_i), r_i)$.  By
Proposition~\ref{prop:HX:Ycomplete}, this is a naive supremum, meaning
that $r = \inf_{i \in I} r_i$ and
$F^{\mathcal C} (H) = \sup_{i \in I} (F^{s_\AN^\bullet (F_i)} (H) +
\alpha r - \alpha r_i)$ for every
$H \in \Lform_\alpha (\Val_\bullet X, \dKRH^a)$.  In other words,
$F^{\mathcal C} (H) = \sup_{i \in I} (\sup_{G \in s_\AN^\bullet (F_i)}
H (G) + \alpha r - \alpha r_i)$.  Taking $H (G) = G (h)$ (see
Lemma~\ref{lemma:H}), we obtain:
\begin{eqnarray*}
  F^{\mathcal C} (H) & = &
                           \sup_{i \in I} (\sup_{G \in s_\AN^\bullet (F_i)}
                           G (h) + \alpha r - \alpha r_i) \\
  & = & \sup_{i \in I} (r_\AN (s_\AN^\bullet (F_i)) (h) + \alpha r -
        \alpha r_i) \\
  & = & \sup_{i \in I} (F_i (h) + \alpha r - \alpha r_i),
\end{eqnarray*}
where the last equalities are by definition of $r_\AN$, and because
$r_\AN \circ s_\AN^\bullet = \identity\relax$.  By using the
definition of $F^{\mathcal C}$, then of $H$, then of $r_\AN$, we
obtain
$F^{\mathcal C} (H) = \sup_{G \in \mathcal C} H (G) = \sup_{G \in
  \mathcal C} G (h) = r_\AN (\mathcal C) (h)$, showing that
$r_\AN (\mathcal C) (h) = \sup_{i \in I} (F_i (h) + \alpha r - \alpha
r_i)$, as desired.  \qed

We turn to algebraic Yoneda-complete spaces.
\begin{thm}[Algebraicity for sublinear previsions]
  \label{thm:AN:alg}
  Let $X, d$ be an algebraic Yoneda-complete quasi-metric space, with
  a strong basis $\mathcal B$.  Let $a > 0$.

  The space of all subnormalized (resp., normalized) sublinear
  previsions on $X$ with the $\dKRH^a$ quasi-metric is algebraic
  Yoneda-complete.

  All the sublinear previsions of the form
  $h \mapsto \max_{i=1}^m \sum_{j=1}^{n_i} a_{ij} h (x_{ij})$, with
  $m \geq 1$, $\sum_{j=1}^{n_i} a_{ij} \leq 1$ (resp., $=1$) for every
  $i$, and where each $x_{ij}$ is a center point, are center points,
  and they form a strong basis, even when each $x_{ij}$ is constrained
  to lie in $\mathcal B$.
\end{thm}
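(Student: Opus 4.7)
\medskip
\noindent\emph{Proof proposal.}
The plan is to follow the two-part template already used for Theorem~\ref{thm:V:alg}, Theorem~\ref{thm:V1:alg}, and Theorem~\ref{thm:H:alg}: first show each functional of the stated form is a center point, then exhibit every (sub)normalized sublinear prevision $F$ as the supremum of a directed family of formal balls whose centers have that form. Yoneda-completeness is already established (Proposition~\ref{prop:AN:Ycomplete}), so once both pieces are in hand, algebraicity is immediate.

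For the first part, fix $F_0 \colon h \mapsto \max_{i=1}^m \sum_{j=1}^{n_i} a_{ij} h(x_{ij})$ with the $x_{ij}$ center points and appropriate constraints on $\sum_j a_{ij}$. One checks mechanically that $F_0$ is a (sub)normalized sublinear prevision. To prove it is a center point I would mimic the proof of Lemma~\ref{lemma:V:simple:center}: set $U = B^{\dKRH^{a+}}_{(F_0,0),<\epsilon}$, observe that it is upwards-closed by the triangular inequality, and show Scott-openness as follows. Given a directed family $(F_k,r_k)_{k \in I}$ with supremum $(F,r) \in U$, invoke Lemma~\ref{lemma:AN:cont} to write the supremum as a naive supremum: $r = \inf_k r_k$ and $F(h) = \sup_k (F_k(h) + r - r_k)$ for $h \in \Lform_1^a(X,d)$. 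Choose $\eta > 0$ with $\dKRH^a(F_0,F) < \epsilon - r - \eta$, set
\[
V_k = \left\{ h \in \Lform_1^a(X,d) \;\middle|\; \max_{i=1}^m \sum_{j=1}^{n_i} a_{ij} h(x_{ij}) < F_k(h) + \epsilon - r_k - \eta \right\},
\]
and use Corollary~\ref{corl:cont:Lalpha:simple:2} (continuity of $h \mapsto \max_i \sum_j a_{ij} h(x_{ij})$ from $\Lform_1^a(X,d)^\patch$ to $(\creal)^\dG$) together with Proposition~\ref{prop:dKRH:cont} to check each $V_k$ is open in $\Lform_1^a(X,d)^\patch$. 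The $V_k$ (for $k$ large enough that $r_k < \epsilon - \eta$) cover this compact space (Lemma~\ref{lemma:cont:Lalpha:retr}~(4)); extracting a finite subcover and using directedness then produces a single $k$ for which $\dKRH^a(F_0,F_k) \leq \epsilon - r_k - \eta < \epsilon - r_k$, so $(F_k,r_k) \in U$.

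For the second part, let $F$ be an arbitrary (sub)normalized sublinear prevision and set $\mathcal{C} = s^\bullet_\AN(F) \in \Hoare^{cvx}(\Val_\bullet X)$. By Theorem~\ref{thm:V:alg} (resp.\ Theorem~\ref{thm:V1:alg}), the space $\Val_\bullet X, \dKRH^a$ is algebraic Yoneda-complete with a strong basis of simple (sub)normalized valuations supported on $\mathcal{B}$. Applying Theorem~\ref{thm:H:alg} to $\Val_\bullet X$, the closed sets $\dc\{\nu_1,\ldots,\nu_m\}$ with $m \geq 1$ and each $\nu_k$ a simple valuation supported on $\mathcal{B}$ form a strong basis of $\Hoare(\Val_\bullet X), \mH(\dKRH^a)$. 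Hence $(\mathcal{C},0)$ is the supremum in $\mathbf{B}(\Hoare(\Val_\bullet X), \mH(\dKRH^a))$ of a directed family $(C_i,r_i)_{i \in I}$ with each $C_i = \dc\{\nu_{i,1},\ldots,\nu_{i,m_i}\}$ of this basic shape. By Lemma~\ref{lemma:AN:r:lipcont} the map $\mathbf{B}^1(r_\AN)$ is Scott-continuous, so applying it gives $(F,0) = \sup_i (r_\AN(C_i),r_i)$, and one computes directly that $r_\AN(C_i)(h) = \sup_{G \in C_i} G(h) = \max_{k=1}^{m_i} \nu_{i,k}(h)$, which is exactly a functional of the target form with center points drawn from $\mathcal{B}$.

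Combining the two parts, the prescribed functionals are center points and form a strong basis, so the space of (sub)normalized sublinear previsions under $\dKRH^a$ is algebraic Yoneda-complete. I expect the main obstacle to be the Scott-openness verification in the first part: it is what forces us to pass through the patch topology on $\Lform_1^a(X,d)$ and use the rather delicate continuity lemma Corollary~\ref{corl:cont:Lalpha:simple:2}, together with the fact (Lemma~\ref{lemma:AN:cont}) that directed suprema in the space of sublinear previsions are naive, a fact that itself depends on the retraction through $\Hoare(\Val_\bullet X)$.
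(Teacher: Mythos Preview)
Your argument is correct, but the paper takes a shorter route. Instead of reproving directly that each $F_0 = h \mapsto \max_i \sum_j a_{ij} h(x_{ij})$ is a center point via the compactness-of-$\Lform_1^a(X,d)^\patch$ argument, the paper simply checks the extra hypothesis of Lemma~\ref{lemma:retract:alg}: for every $\mathcal C \in \Hoare(\Val_\bullet X)$ and every (sub)normalized sublinear prevision $F'$, one has $\dKRH^a(r_\AN(\mathcal C), F') = \mH{(\dKRH^a)}(\mathcal C, s^\bullet_\AN(F'))$. This follows at once from Proposition~\ref{prop:AN:dKRH} because $s^\bullet_\AN(F')$ is always convex. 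With that identity, $B^{\dKRH^{a+}}_{(r_\AN(\mathcal C),0),<\epsilon}$ is precisely $\mathbf B^1(s^\bullet_\AN)^{-1}\bigl(B^{\mH{(\dKRH^a)}^+}_{(\mathcal C,0),<\epsilon}\bigr)$, so center points of $\Hoare(\Val_\bullet X)$ push forward to center points for free, and the whole theorem drops out of Lemma~\ref{lemma:retract:alg} in two lines.

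What your approach buys is uniformity: it is exactly the strategy the paper is \emph{forced} to use for the superlinear case (Theorem~\ref{thm:DN:alg}), where the analogue of the identity above fails (Proposition~\ref{prop:DN:dKRH} gives equality only when the \emph{first} argument is convex, which is the wrong side). So your compactness argument, powered by Corollary~\ref{corl:cont:Lalpha:simple:2}, would handle both the sublinear and superlinear cases with a single proof, whereas the paper's slicker AN argument does not transfer to DN. The paper's route, on the other hand, makes clearer \emph{why} the sublinear case is easier: the minimax equality in Proposition~\ref{prop:AN:dKRH} lines up with the retract structure in just the right way.
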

\proof For every $\mathcal C \in \Hoare (\Val_\bullet X)$ and every
(sub)normalized sublinear prevision $F'$, we have:
\begin{eqnarray*}
  \dKRH^a (r_\AN (\mathcal C), F')
  & = & \dKRH^a (r_\AN (\mathcal C), r_\AN (\mathcal C'))
        \qquad \text{where }\mathcal C' = s^\bullet_\AN (F') \\
  & = & \mH {(\dKRH^a)} (\mathcal C, \mathcal C')
        \qquad \text{by Proposition~\ref{prop:AN:dKRH}, since
        $\mathcal C'$ is convex} \\
  & = & \mH {(\dKRH^a)} (\mathcal C, s^\bullet_\AN (F')).
\end{eqnarray*}
This is exactly the additional assumption we require to apply the
final part of Lemma~\ref{lemma:retract:alg}.  It follows that the
space of (sub)normalized sublinear previsions is algebraic
Yoneda-complete, with a strong basis of elements of the form
$r_\AN (\dc \{\sum_{j=1}^{n_i} a_{ij} \delta_{x_{ij}} \mid 1\leq i\leq
m\})$, where $m \geq 1$, each
$\sum_{j=1}^{n_i} a_{ij} \delta_{x_{ij}}$ is a (sub)normalized simple
valuation and each $x_{ij}$ is in $\mathcal B$.  An easy check shows
that those elements are exactly the maps
$h \mapsto \max_{i=1}^m \sum_{j=1}^{n_i} a_{ij} h ({x_{ij}})$.  \qed

\begin{lem}
  \label{lemma:AN:functor}
  Let $X, d$ and $Y, \partial$ be two continuous Yoneda-complete
  quasi-metric spaces, and $f \colon X, d \mapsto Y, \partial$ be a
  $1$-Lipschitz continuous map.  Let also $a > 0$.  $\Prev f$
  restricts to a $1$-Lipschitz continuous map from the space of
  normalized, resp.\ subnormalized sublinear previsions on $X$ to the
  space of normalized, resp.\ subnormalized sublinear previsions on
  $Y$, with the $\dKRH^a$ and $\KRH\partial^a$ quasi-metrics.
\end{lem}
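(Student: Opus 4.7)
The plan is to leverage the machinery of Lemma~\ref{lemma:Pf:lip} and Lemma~\ref{lemma:Pf:lipcont} together with the fact, established earlier in the section, that directed suprema of formal balls in the spaces of (sub)normalized sublinear previsions are computed as naive suprema. First, Lemma~\ref{lemma:Pf:lip} already tells us that $\Prev f$ is $1$-Lipschitz from $\Prev X, \dKRH^a$ to $\Prev Y, \KRH\partial^a$, and that it preserves sublinearity, subnormalization and normalization. In particular, $\Prev f$ restricts to a well-defined $1$-Lipschitz map between the indicated subspaces, and $\mathbf B^1(\Prev f)$ is monotonic on their formal-ball spaces.

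The only remaining task is to prove Scott-continuity of $\mathbf B^1(\Prev f)$. I would take a directed family ${(F_i, r_i)}_{i \in I}$ of formal balls in the space of (sub)normalized sublinear previsions on $X, \dKRH^a$. By Theorem~\ref{thm:AN:cont} this space is Yoneda-complete, so the family has a supremum $(F, r)$, and by Lemma~\ref{lemma:AN:cont} that supremum is the naive supremum: $r = \inf_{i \in I} r_i$ and for every $\alpha > 0$ and every $h \in \Lform_\alpha^a(X, d)$,
\[
F(h) = \sup_{i \in I}(F_i(h) + \alpha r - \alpha r_i).
\]
Lemma~\ref{lemma:Pf:lipcont}, applied in $\mathbf B(\Prev X, \dKRH^a)$, then shows that $(\Prev f(F), r)$ is the naive supremum of ${(\Prev f(F_i), r_i)}_{i \in I}$ in $\mathbf B(\Prev Y, \KRH\partial^a)$. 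Since each $\Prev f(F_i)$ is a (sub)normalized sublinear prevision on $Y$ by Lemma~\ref{lemma:Pf:lip}, this naive supremum lives in the corresponding subspace on $Y$; by Lemma~\ref{lemma:AN:cont} applied to $Y, \partial$, it is actually the directed supremum there. Hence $\mathbf B^1(\Prev f)$ preserves directed suprema of formal balls, i.e., is Scott-continuous.

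Combining the $1$-Lipschitz property from Lemma~\ref{lemma:Pf:lip} with this Scott-continuity yields, by Definition~\ref{defn:Lipcont}, that $\Prev f$ is $1$-Lipschitz continuous between the indicated spaces of sublinear previsions, as required. The main potential obstacle, namely verifying that the abstract naive-supremum formula really computes suprema inside the subspace of sublinear previsions (not merely in all previsions), is already handled by Lemma~\ref{lemma:AN:cont}, so the argument reduces to a clean assembly of earlier results with no further analytic work.
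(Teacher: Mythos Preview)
Your proposal is correct and follows essentially the same route as the paper: invoke Lemma~\ref{lemma:Pf:lip} for the $1$-Lipschitz property and preservation of (sub)normalized sublinearity, use Lemma~\ref{lemma:Pf:lipcont} to see that $\mathbf B^1(\Prev f)$ preserves naive suprema, and appeal to Lemma~\ref{lemma:AN:cont} (on both $X$ and $Y$) to identify naive suprema with actual directed suprema in the subspaces, yielding Scott-continuity of $\mathbf B^1(\Prev f)$. The paper's proof is more terse but identical in substance.
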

\proof By Lemma~\ref{lemma:Pf:lip}, $\Prev f$ is $1$-Lipschitz and
maps (sub)normalized sublinear previsions to (sub)normalized sublinear
previsions.  Also, by Lemma~\ref{lemma:Pf:lipcont},
$\mathbf B^1 (\Prev f)$ preserves naive suprema.  By
Lemma~\ref{lemma:AN:cont}, suprema of directed families of formal
balls of (sub)normalized sublinear previsions are naive suprema, so
$\mathbf B^1 (\Prev f)$ is Scott-continuous.  It follows that
$\Prev f$ is $1$-Lipschitz continuous.  \qed

With Theorem~\ref{thm:AN:alg} and Theorem~\ref{thm:AN:cont}, we obtain
the following.
\begin{cor}
  \label{cor:AN:functor}
  There is an endofunctor on the category of continuous (resp.,
  algebraic) Yoneda-complete quasi-metric spaces and $1$-Lipschitz
  continuous maps, which sends every object $X, d$ to the space of
  (sub)normalized sublinear previsions on $X$ with the $\dKRH^a$-Scott
  topology ($a > 0$), and every $1$-Lipschitz continuous map $f$ to
  $\Prev f$.  \qed
\end{cor}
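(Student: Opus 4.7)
The plan is to assemble the pieces already established. The statement requires checking three things: (i) the object assignment $X,d \mapsto $ (the space of (sub)normalized sublinear previsions on $X$ with $\dKRH^a$) stays inside the stated category; (ii) the morphism assignment $f \mapsto \Prev f$ is well-defined as a $1$-Lipschitz continuous map between such spaces; and (iii) the functoriality equations $\Prev \identity X = \identity{\Prev X}$ and $\Prev (g \circ f) = \Prev g \circ \Prev f$.

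For (i), in the continuous Yoneda-complete case I would cite Theorem~\ref{thm:AN:cont}, which gives both Yoneda-completeness and continuity of the space of (sub)normalized sublinear previsions under $\dKRH^a$. In the algebraic Yoneda-complete case I would cite Theorem~\ref{thm:AN:alg}, which provides an explicit strong basis and algebraic Yoneda-completeness. Note that here the $d$-Scott topology on the space of previsions equals the $\dKRH^a$-Scott topology, so working with either description is consistent.

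For (ii), Lemma~\ref{lemma:AN:functor} is exactly the needed statement: if $f \colon X,d \to Y,\partial$ is $1$-Lipschitz continuous between continuous Yoneda-complete spaces (which covers the algebraic case as a subcase), then $\Prev f$ restricts to a $1$-Lipschitz continuous map between the corresponding spaces of (sub)normalized sublinear previsions under $\dKRH^a$ and $\KRH\partial^a$. The proof there already combines Lemma~\ref{lemma:Pf:lip} (for the Lipschitz part and preservation of the sublinearity and (sub)normalization properties), Lemma~\ref{lemma:AN:cont} (naive suprema are actual suprema), and Lemma~\ref{lemma:Pf:lipcont} (naive suprema are preserved) to get Scott-continuity of $\mathbf B^1(\Prev f)$, whence $1$-Lipschitz continuity via Proposition~\ref{prop:cont}.

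For (iii), the equations are purely formal, directly from the defining formula $\Prev f(F)(k) = F(k \circ f)$ in (\ref{eq:Prevf}): for every prevision $F$ and every $k$, $\Prev \identity X(F)(k) = F(k \circ \identity X) = F(k)$, and $\Prev(g \circ f)(F)(k) = F(k \circ g \circ f) = \Prev f(F)(k \circ g) = \Prev g(\Prev f(F))(k)$. There is no obstacle here; the only subtlety worth mentioning is that identities and composites of $1$-Lipschitz continuous maps are again $1$-Lipschitz continuous (for composition this is Lemma~\ref{lemma:comp:Lip}), which is needed so that the equations take place in our category. No further step is required, since all ingredients have been proved earlier in the excerpt.
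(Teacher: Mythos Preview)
Your proposal is correct and follows essentially the same approach as the paper, which simply cites Lemma~\ref{lemma:AN:functor} together with Theorem~\ref{thm:AN:alg} and Theorem~\ref{thm:AN:cont}; you have merely spelled out the functoriality equations that the paper leaves implicit. One minor inaccuracy in your parenthetical: the $1$-Lipschitz continuity in Lemma~\ref{lemma:AN:functor} follows directly from Definition~\ref{defn:Lipcont} once $\mathbf B^1(\Prev f)$ is Scott-continuous, not via Proposition~\ref{prop:cont}.
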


\subsection{Superlinear Previsions}
\label{sec:superl-prev}

Superlinear previsions are dealt with in a very similar fashion,
although, curiously, some of the steps will differ in essential ways.
We recall from \cite{JGL-mscs16} that superlinear previsions are in
bijection with the non-empty compact saturated convex subsets of
linear previsions.

Let $r_\DN$ be the function that maps every non-empty set $E$ of
linear previsions to the prevision
$h \in \Lform X \mapsto \inf_{G \in E} G (h)$.  The latter is a
superlinear prevision as soon as $E$ is compact, which is
subnormalized, resp.\ normalized, as soon as $E$ is a non-empty set of
subnormalized, resp.\ normalized linear previsions.

In the reverse direction, for every superlinear prevision $F$, let
$s_\DN (F)$ be the set of all linear previsions $G$ such that
$G \geq F$.  If $F$ is subnormalized, we write $s^{\leq 1}_\DN (F)$
for the set of all subnormalized linear previsions $G \geq F$, and if
$F$ is normalized, we write $s^1_\DN (F)$ for the set of all
normalized linear previsions $G \geq F$.  As in
Section~\ref{sec:sublinear-previsions}, we use the notation
$s^\bullet_\DN$.

Proposition~3.22 of loc.\ cit.\ states that:
\begin{itemize}
\item $r_\DN$ is continuous from the space of superlinear (resp.,
  subnormalized superlinear, resp., normalized superlinear) previsions
  on $X$ to $\Smyth (\Val_\bullet X)$ with its lower Vietoris
  topology, and where $\Val_\bullet X$ has the weak topology;
\item $s^\bullet_\DN$ is continuous in the reverse direction;
\item $r_\DN \circ s^\bullet_\DN = \identity \relax$;
\item $\identity \relax \geq s^\bullet_\DN \circ r_\DN$.
\end{itemize}
Hence the space of (possibly subnormalized, or normalized) superlinear
previsions occurs as a retract of $\Smyth (\Val_\bullet X)$.  This
holds for all topological spaces $X$, with no restriction.

Let $\Smyth^{cvx} (\Val_\bullet X)$ denote the subspace of
$\Smyth (\Val_\bullet X)$ consisting of convex sets.  Theorem~4.15 of
loc.\ cit.\ additionally states that $r_\DN$ and $s^\bullet_\DN$
define a homeomorphism between $\Smyth^{cvx} (\Val_\bullet X)$ and the
corresponding space of (possibly subnormalized, or normalized)
superlinear previsions.

The following is where we require a minimax theorem on a non-Hausdorff
space.  Although this looks very similar to
Lemma~\ref{lemma:AN:supinf}, the compact space we use in conjunction
with our minimax theorem is not $\Lform^a_\alpha (X, d)^\patch$, but a space
$\mathcal Q$ of linear previsions.  The latter is almost never Hausdorff.
\begin{lem}
  \label{lemma:DN:supinf}
  Let $X, d$ be a continuous quasi-metric space.  Let $\bullet$ be
  either ``$\leq 1$'' or ``$1$'', and $a > 0$.  For every continuous
  valuation $G'$ in $\Val_\bullet X$, for every $\alpha > 0$, for
  every compact convex subset $\mathcal Q$ of $\Val_\bullet X$ (with
  the weak topology),
  \[
    \sup_{h \in \Lform^a_\alpha X} \inf_{G \in \mathcal Q} (G
    (h) - G' (h))
    = \inf_{G \in \mathcal Q} \sup_{h \in \Lform^a_\alpha X} (G
    (h) - G' (h)).
  \]
\end{lem}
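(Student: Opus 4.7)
The plan is to apply the minimax theorem, Theorem~\ref{thm:minimax}, with $A = \mathcal{Q}$ endowed with its subspace topology from the weak topology on $\Val_\bullet X$, with $B = \Lform^a_\alpha (X, d)$, and with $f \colon A \times B \to \real$ defined by $f(G, h) = G(h) - G'(h)$. Since $h$ takes values in $[0, \alpha a]$ and $G$, $G'$ are subnormalized, both $G(h)$ and $G'(h)$ lie in $[0, \alpha a]$, so $f$ is bounded and real-valued. The space $A$ is compact by hypothesis (and we may assume non-empty, the empty case being trivial with both sides equal to $+\infty$), and $B$ contains the constant zero map, so it is non-empty.

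The verification of the hypotheses of Theorem~\ref{thm:minimax} proceeds as follows. For convexity of $f$ in the first argument, take $G_1, G_2 \in \mathcal{Q}$ and $\beta \in {]0, 1[}$: by convexity of $\mathcal{Q}$, the valuation $\beta G_1 + (1-\beta) G_2$ again lies in $\mathcal{Q}$, and by linearity of continuous valuations we have the equality $f(\beta G_1 + (1-\beta) G_2, h) = \beta f(G_1, h) + (1-\beta) f(G_2, h)$ for every $h$. Hence $f$ is Ky Fan convex in its first argument, in particular closely convex (see Remark~\ref{rem:closely}). For concavity in the second argument, given $h_1, h_2 \in \Lform^a_\alpha (X, d)$ and $\beta \in {]0,1[}$, the map $h = \beta h_1 + (1-\beta) h_2$ still lies in $\Lform^a_\alpha (X, d)$: it takes values in $[0, \alpha a]$ by convexity of the interval, and is $\alpha$-Lipschitz continuous by Proposition~\ref{prop:alphaLip:props}~(1) and~(2) together with the identity $\beta \alpha + (1-\beta) \alpha = \alpha$. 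Linearity of $G$ and $G'$ then gives $f(G, h) = \beta f(G, h_1) + (1-\beta) f(G, h_2)$, so $f$ is also Ky Fan concave in its second argument, hence closely concave.

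Finally, for each fixed $h \in B$, the map $G \mapsto G(h)$ is continuous from $\mathcal{Q}$ into $\creal$ with its Scott topology, by the very definition of the weak topology, hence is lower semicontinuous as a real-valued map (recall $G(h)$ is bounded); subtracting the constant $G'(h)$ preserves lower semicontinuity. All hypotheses of Theorem~\ref{thm:minimax} being satisfied, the theorem applies directly and yields the required equality.

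The only subtle point worth flagging is the choice of which variable to treat as the compactness variable. In contrast to the proof of Lemma~\ref{lemma:AN:supinf}, where compactness of $\Lform^a_\alpha (X, d)$ was invoked via its patch topology, here it is the set $\mathcal{Q}$ of valuations that plays the role of the compact space $A$. Because $\Val_\bullet X$ with the weak topology is almost never Hausdorff, it is precisely at this point that Theorem~\ref{thm:minimax}'s tolerance of non-Hausdorff compacts becomes essential: a classical Ky Fan or Frenk--Kassay-style theorem requiring a Hausdorff compact would not apply.
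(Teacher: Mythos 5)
Your proof is correct and follows essentially the same route as the paper's: same choice of $A = \mathcal Q$ (compact, non-Hausdorff) and $B = \Lform^a_\alpha (X, d)$, same function $f(G,h) = G(h) - G'(h)$, same verification of lower semicontinuity via the weak topology, and the same appeal to Theorem~\ref{thm:minimax}; you merely spell out the Ky Fan convexity/concavity checks (using convexity of $\mathcal Q$ and closure of $\Lform^a_\alpha (X,d)$ under convex combinations) that the paper dismisses with ``being linear in both''. Your closing remark about why the non-Hausdorff minimax theorem is indispensable here is exactly the point the paper itself makes just before the lemma.
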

\proof Let $A = \mathcal Q$.  This is compact by assumption.  Let
$B = \Lform^a_\alpha (X, d)$, and let
$f \colon (G, h) \in A \times B \mapsto (G (h) - G' (h))$.  This is a
function that takes its values in $[-a, a]$, hence in $\real$.  It is
also lower semicontinuous in $G$, since the set of elements
$G \in \Val_\bullet X$ such that $f (G, h) > t$ is equal to
$[h > G' (h) + t]$, an open set by the definition of the weak
topology.

The function $f$ is certainly convex in its first argument and concave
in its second argument, being linear in both.
Theorem~\ref{thm:minimax} then implies that
$\sup_{h \in \Lform_\alpha^a X} \inf_{G \in \mathcal Q} (G (h) - G'
(h)) = \inf_{G \in \mathcal Q} \sup_{h \in \Lform_\alpha^a X} (G (h) -
G' (h))$.  \qed

\begin{prop}
  \label{prop:DN:dKRH}
  Let $X, d$ be a continuous Yoneda-complete quasi-metric space.  Let
  $\bullet$ be either ``$\leq 1$'' or ``$1$'', and $a > 0$.  For all
  $\mathcal Q, \mathcal Q' \in \Smyth (\Val_\bullet X)$, where
  $\Val_\bullet X$ has the weak topology,
  \begin{equation}
    \label{eq:DN:dKRH}
    \mQ {(\dKRH^a)} (\mathcal Q, \mathcal Q')
    \geq \dKRH^a (r_\DN (\mathcal Q), r_\DN (\mathcal Q')),
  \end{equation}
  with equality if $\mathcal Q$ is convex.
\end{prop}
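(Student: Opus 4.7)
My plan is to rewrite both sides of (\ref{eq:DN:dKRH}) as nested $\sup$--$\inf$--$\sup$ expressions differing only in the order of a middle $\inf$ and $\sup$, and then apply the minimax Lemma~\ref{lemma:DN:supinf} to swap them.

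First, since $X,d$ is continuous Yoneda-complete, $\Val_\bullet X, \dKRH^a$ is Yoneda-complete (Theorem~\ref{thm:V:complete}) hence standard, and by Theorem~\ref{thm:V:weak=dScott} its $\dKRH^a$-Scott topology coincides with the weak topology; in particular $\mathcal Q, \mathcal Q'$ really are non-empty compact saturated subsets of the standard quasi-metric space $\Val_\bullet X, \dKRH^a$, and Lemma~\ref{lemma:dQ} applies, giving
\[\mQ{(\dKRH^a)}(\mathcal Q, \mathcal Q') = \sup_{G' \in \mathcal Q'} \inf_{G \in \mathcal Q} \dKRH^a(G, G').\]
Unfolding $\dKRH^a(G, G') = \sup_{h \in \Lform_1^a X} \dreal(G(h), G'(h))$, and observing that $\max(\_,0)$ commutes with $\sup$ while $h=0$ already contributes the value $0$, produces $\dKRH^a(G,G') = \sup_{h \in \Lform_1^a X}(G(h)-G'(h))$, so the left-hand side of (\ref{eq:DN:dKRH}) becomes
\[\mQ{(\dKRH^a)}(\mathcal Q, \mathcal Q') = \sup_{G' \in \mathcal Q'}\,\inf_{G \in \mathcal Q}\,\sup_{h \in \Lform_1^a X}(G(h) - G'(h)).\]
A parallel unfolding of the right-hand side, rewriting $\inf_G G(h) - \inf_{G'} G'(h)$ as $\sup_{G'} \inf_G (G(h) - G'(h))$ and again dropping the outer $\max(\_, 0)$ by evaluating at $h=0$, yields
\[\dKRH^a(r_\DN(\mathcal Q), r_\DN(\mathcal Q')) = \sup_{G' \in \mathcal Q'}\,\sup_{h \in \Lform_1^a X}\,\inf_{G \in \mathcal Q}(G(h) - G'(h)).\]

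With both sides in this form, (\ref{eq:DN:dKRH}) is immediate from the trivial $\inf_G \sup_h \geq \sup_h \inf_G$ applied inside the outer $\sup_{G'}$. For the equality when $\mathcal Q$ is convex, I fix $G' \in \mathcal Q'$ and invoke Lemma~\ref{lemma:DN:supinf} — whose hypothesis is precisely that $\mathcal Q$ be compact convex in the weak topology on $\Val_\bullet X$, which is in force — to obtain
\[\inf_{G \in \mathcal Q}\,\sup_{h \in \Lform_1^a X}(G(h) - G'(h)) = \sup_{h \in \Lform_1^a X}\,\inf_{G \in \mathcal Q}(G(h) - G'(h));\]
taking $\sup_{G' \in \mathcal Q'}$ on both sides then equates the two formulas above, delivering $\mQ{(\dKRH^a)}(\mathcal Q, \mathcal Q') = \dKRH^a(r_\DN(\mathcal Q), r_\DN(\mathcal Q'))$.

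The main observation, and the reason the proof avoids the algebraic-then-retract detour used in Proposition~\ref{prop:AN:dKRH}, is that the minimax exchange required here is exactly the one supplied by Lemma~\ref{lemma:DN:supinf}: compactness of $\mathcal Q$ in the non-Hausdorff weak topology is a hypothesis of the statement, and the convexity clause sits on the correct side. In the sublinear proof the $\sup$ ran over a closed (not compact) convex set $\mathcal C$, so the minimax theorem could not be invoked directly and an approximation by simple valuations on center points under algebraicity was needed, together with a retract step to descend to the continuous case. Here the two variables swap roles and the non-Hausdorff minimax Theorem~\ref{thm:minimax} does the whole job in one shot.
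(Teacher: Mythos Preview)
Your proof is correct and follows essentially the same route as the paper's: both hinge on Lemma~\ref{lemma:DN:supinf} to swap the $\inf_{G\in\mathcal Q}$ and $\sup_{h\in\Lform_1^a X}$ when $\mathcal Q$ is convex, after rewriting both sides via Lemma~\ref{lemma:dQ}. Your presentation is slightly more streamlined---you derive the inequality from the trivial $\inf\sup\geq\sup\inf$ directly, whereas the paper obtains it by embedding the functions $G\mapsto G(h)$ into $\Lform_1(\Val_\bullet X,\dKRH^a)$---and your closing remark correctly identifies why the algebraic-then-retract detour of Proposition~\ref{prop:AN:dKRH} is unnecessary here.
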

\proof The proof is extremely close to Proposition~\ref{prop:AN:dKRH}.
We develop the left-hand side, using Definition~\ref{defn:dQ} in the
first line:
\begin{eqnarray*}
  \mQ {(\dKRH^a)} (\mathcal Q, \mathcal Q')
  & = & \KRH {(\dKRH^a)} (F_{\mathcal Q}, F_{\mathcal Q'}) \\
  & = & \sup_{H \in \Lform_1 (\Val_\bullet X, \dKRH^a)}
        \dreal (F_{\mathcal Q} (H), F_{\mathcal Q'} (H)) \\
  & = & \sup_{H \in \Lform_1 (\Val_\bullet X, \dKRH^a)}
        \dreal (\inf_{G \in \mathcal Q} H (G), \inf_{G' \in \mathcal
        Q'} H (G')).
\end{eqnarray*}

The right-hand side is equal to:
\begin{eqnarray*}
  \dKRH^a (r_\DN (\mathcal Q), r_\DN (\mathcal Q'))
  & = & \sup_{h \in \Lform_1^a X} \dreal (r_\DN (\mathcal Q) (h), r_\DN
        (\mathcal Q') (h)) \\
  & = & \sup_{h \in \Lform_1^a X} \dreal (\inf_{G \in \mathcal Q} G (h),
        \inf_{G' \in \mathcal Q'} G' (h)).
\end{eqnarray*}
Among the elements $H$ of $\Lform_1 (\Val_\bullet X, \dKRH^a)$, we
find those obtained from $h \in \Lform^a_1 (X, d)$ by letting
$H (G) = G (h)$, according to Lemma~\ref{lemma:H}.  That shows 
inequality (\ref{eq:DN:dKRH}).

We now assume $\mathcal Q$ convex.  If the inequality were strict,
then there would be a real number $t$ such that
$\mQ {(\dKRH^a)} (\mathcal Q, \allowbreak \mathcal Q') > t > \dKRH^a
(r_\DN (\mathcal Q), r_\DN (\mathcal Q'))$.  Note that the second
inequality implies that $t$ is (strictly) positive.

By Lemma~\ref{lemma:dQ}, there is a linear prevision $G'$ in
$\mathcal Q'$ such that
$\inf_{G \in \mathcal Q} \sup_{h \in \Lform^a_1 X} \dreal (G (h),
\allowbreak G' (h)) > t$.  This means that there is a real number
$t' > t$ such that for every $G \in \mathcal Q$, there is an
$h \in \Lform_a^1 (X, d)$ such that $\dreal (G (h), G' (h)) > t'$, and
since $t' > t > 0$, $G (h) - G' (h) > t'$.  This implies that
$\inf_{G \in \mathcal Q} \sup_{h \in \Lform^a_1 X} (G (h) - G' (h)) >
t$.  We now use Lemma~\ref{lemma:DN:supinf} and obtain
$\sup_{h \in \Lform^a_1 X} \inf_{G \in \mathcal Q} (G (h) - G' (h)) >
t$.  Therefore, there is a map $h$ in $\Lform^a_1 (X, d)$ such that
$\inf_{G \in \mathcal Q} (G (h) - G' (h)) > t$, namely,
$\inf_{G \in \mathcal Q} G (h) - G' (h) > t$.  This implies
$\inf_{G \in \mathcal Q} G (h) - \inf_{G' \in \mathcal Q'} G' (h) >
t$, that is, $r_\DN (\mathcal Q) (h) - r_\DN (\mathcal Q') (h) > t$.
Recalling that $t > 0$, this entails
$\dreal (r_\DN (\mathcal Q) (h), r_\DN (\mathcal Q') (h)) > t$.
However, this is impossible since
$\dKRH^a (r_\DN (\mathcal Q), r_\DN (\mathcal Q')) = \sup_{h \in
  \Lform^1_a X} \dreal (r_\DN (\mathcal Q) (h), r_\DN (\mathcal Q')
(h))$ is less than $t$.  \qed

\begin{lem}
  \label{lemma:DN:s:lip}
  Let $X, d$ be a continuous Yoneda-complete quasi-metric space.  Let
  $\bullet$ be either ``$\leq 1$'' or ``$1$'', and $a > 0$.  Then
  \begin{enumerate}
  \item $r_\DN$ is $1$-Lipschitz from
    $\Smyth (\Val_\bullet X), \mQ {(\dKRH^a)}$ to the space of
    (sub)normalized superlinear previsions on $X$ with the $\dKRH^a$
    quasi-metric;
  \item $s^\bullet_\DN$ is preserves distances on the nose; in particular,
    $s^\bullet_\AN$ is $1$-Lipschitz.
  \end{enumerate}
\end{lem}
\proof (1) is by the first part of Proposition~\ref{prop:DN:dKRH}.  By
the second part of Proposition~\ref{prop:DN:dKRH}, using the fact that
$s^\bullet_\DN (F)$ is convex,
\[
\mQ {(\dKRH^a)} (s^\bullet_\DN (F), s^\bullet_\DN (F'))
= \dKRH^a (r_\DN (s^\bullet_\DN (F)), r_\DN (s^\bullet_\DN (F')))
= \dKRH^a (F, F'),
\]
and this shows (2).

As in the proof of Lemma~\ref{lemma:AN:s:lip}, there is a subtle issue
here.  In writing ``$\Smyth (\Val_\bullet X), \mQ {(\dKRH^a)}$'', we
silently assume that $\Val_\bullet X$ has the $\dKRH^a$-Scott
topology, and the elements of $\Smyth (\Val_\bullet X)$ are compact
subsets for that topology.  However Proposition~\ref{prop:DN:dKRH}
considers compact subsets for the weak topology.  Fortunately, the two
topologies are the same, by Theorem~\ref{thm:V:weak=dScott}.  \qed

\begin{lem}
  \label{lemma:DN:s:lipcont}
  Let $X, d$ be a continuous Yoneda-complete quasi-metric space.  Let
  $\bullet$ be either ``$\leq 1$'' or ``$1$'', and $a > 0$.  Then
  $s^\bullet_\DN$ is $1$-Lipschitz continuous from the space of
  (sub)normalized sublinear previsions on $X$ with the $\dKRH^a$
  quasi-metric to $\Smyth (\Val_\bullet X), \mQ {(\dKRH^a)}$.
\end{lem}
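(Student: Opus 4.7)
The plan is to mimic the proof of Lemma~\ref{lemma:AN:s:lipcont}, swapping $\Hoare$ for $\Smyth$, $r_\AN$ for $r_\DN$, and $s^\bullet_\AN$ for $s^\bullet_\DN$ throughout. Note that the statement in the excerpt mistakenly refers to ``sublinear'' previsions in the domain; we treat it as superlinear, matching the section heading and the available machinery. We already know from Lemma~\ref{lemma:DN:s:lip}~(2) that $s^\bullet_\DN$ preserves distances, hence in particular is $1$-Lipschitz, so $\mathbf B^1(s^\bullet_\DN)$ is monotonic. What remains is to show it is Scott-continuous, for which I will use Proposition~\ref{prop:cont} together with continuity in the underlying topologies.

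First I would identify the topologies at stake. By Theorem~\ref{thm:Vleq1:cont} or Theorem~\ref{thm:V1:cont}, $\Val_\bullet X, \dKRH^a$ is continuous Yoneda-complete, in particular sober, and by Theorem~\ref{thm:V:weak=dScott} its $\dKRH^a$-Scott topology coincides with the weak topology. Consequently, by Theorem~\ref{thm:Q:Vietoris}, the $\mQ{(\dKRH^a)}$-Scott topology on $\Smyth(\Val_\bullet X)$ coincides with the upper Vietoris topology. From the cited results of \cite{JGL-mscs16} (recalled before Lemma~\ref{lemma:DN:supinf}) the map $s^\bullet_\DN$ is continuous from the space of (sub)normalized superlinear previsions with the weak topology to $\Smyth(\Val_\bullet X)$ with the upper Vietoris topology.

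Next I need Yoneda-completeness (hence standardness) of the space of (sub)normalized superlinear previsions endowed with $\dKRH^a$, together with the fact that directed suprema of formal balls there are naive suprema. This is the analogue of Proposition~\ref{prop:AN:Ycomplete}, and I would prove it by the same argument: Proposition~\ref{prop:QX:Ycomplete} (or Remark~\ref{rem:QX:Ycomplete:a}) shows $\mathbf B(\Smyth(\Val_\bullet X),\mQ{(\dKRH^a)})$ is a dcpo, Lemma~\ref{lemma:DN:s:lip} makes $\mathbf B^1(r_\DN)$ and $\mathbf B^1(s^\bullet_\DN)$ a monotonic order-retraction onto the poset of formal balls of (sub)normalized superlinear previsions, and Lemma~\ref{lemma:orderretract} transfers the dcpo property. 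Once this is in hand, Proposition~\ref{prop:weak:dScott:a} applies and shows the $\dKRH^a$-Scott topology on superlinear previsions is finer than the weak topology.

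Putting the pieces together: $s^\bullet_\DN$ is continuous from weak to upper Vietoris, hence continuous from the finer $\dKRH^a$-Scott topology to the $\mQ{(\dKRH^a)}$-Scott topology. Combining this continuity with $1$-Lipschitzness from Lemma~\ref{lemma:DN:s:lip}, and using that both spaces are standard (being Yoneda-complete), Proposition~\ref{prop:cont} yields $1$-Lipschitz continuity of $s^\bullet_\DN$, as desired. The main obstacle is the auxiliary Yoneda-completeness fact for superlinear previsions, since no such statement has been recorded yet in this section; it is however a direct mirror of Proposition~\ref{prop:AN:Ycomplete}, so I expect no genuine difficulty, only the slightly delicate check that the naive supremum formula descends through $\mathbf B^1(r_\DN)$, handled by integrating against test functions of the form $H(G)=G(h)$ (Lemma~\ref{lemma:H}) exactly as in Lemma~\ref{lemma:AN:cont}.
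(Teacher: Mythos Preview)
Your approach is correct, but the paper takes a much shorter route that you overlooked. The key asymmetry between the Hoare and Smyth sides is that $\identity \leq s^\bullet_\AN \circ r_\AN$ while $\identity \geq s^\bullet_\DN \circ r_\DN$. In the Hoare case, that makes $\mathbf B^1(r_\AN)$ the left adjoint (Lemma~\ref{lemma:AN:r:lipcont}); here, it makes $\mathbf B^1(s^\bullet_\DN)$ the left adjoint. Concretely, $\mathbf B^1(r_\DN)\circ\mathbf B^1(s^\bullet_\DN)=\identity\relax$ gives the unit, and $s^\bullet_\DN(r_\DN(\mathcal Q))\supseteq\mathcal Q$ yields $\mQ{(\dKRH^a)}(s^\bullet_\DN(r_\DN(\mathcal Q)),\mathcal Q)=0$ by Lemma~\ref{lemma:dQ:spec}, hence $\mathbf B^1(s^\bullet_\DN)\circ\mathbf B^1(r_\DN)\leq\identity\relax$, which is the counit. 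Left adjoints preserve all existing suprema, so $\mathbf B^1(s^\bullet_\DN)$ is Scott-continuous, and we are done.

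Your route via Proposition~\ref{prop:cont} works too, but it forces you to establish Yoneda-completeness and naive suprema for the superlinear prevision space as a prerequisite (your analogue of Proposition~\ref{prop:AN:Ycomplete}), plus the topology identifications. The paper defers that material to Theorem~\ref{thm:DN:cont} and Lemma~\ref{lemma:DN:cont}, where it is obtained \emph{after} and partly \emph{using} the present lemma. So your ordering of the argument is legitimate but reshuffles the logical dependencies; the adjunction argument avoids all of it in two lines. The moral is: when mirroring the $\AN$ case, mirror Lemma~\ref{lemma:AN:r:lipcont}, not Lemma~\ref{lemma:AN:s:lipcont}.
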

\proof By Lemma~\ref{lemma:DN:s:lip}, $\mathbf B^1 (r_\DN)$ and
$\mathbf B^1 (s^\bullet_\DN)$ are monotonic.  Since
$r_\DN \circ s^\bullet_\DN = \identity\relax$,
$\mathbf B^1 (r_\DN) \circ \mathbf B^1 (s^\bullet_\DN) =
\identity\relax$.  Since
$\identity\relax \geq s^\bullet_\DN \circ r_\DN$, i.e., since
$\mathcal Q \subseteq s^\bullet_\DN (r_\DN (\mathcal Q))$ for every
$\mathcal Q$, we also have that
$\mathbf B^1 (s^\bullet_\DN) (\mathbf B^1 (r_\DN) (\mathcal Q, r))
\leq^{\mQ {(\dKRH^a)}^+} (\mathcal Q, r)$: indeed
$\mQ {(\dKRH^a)} (s^\bullet_\DN (r_\DN (\mathcal Q)), \mathcal Q) = 0$
since $s^\bullet_\DN (r_\DN (\mathcal Q)) \supseteq \mathcal Q$ (see
Lemma~\ref{lemma:dQ:spec}), and this is less than or equal to $r - r$.

This shows that $\mathbf B^1 (s^\bullet_\DN)$ is left adjoint to
$\mathbf B^1 (r_\DN)$.  We conclude because left adjoints
preserve all existing suprema, hence in particular directed suprema.
\qed

\begin{lem}
  \label{lemma:DN:r:lipcont}
  Let $X, d$ be a continuous Yoneda-complete quasi-metric space.  Let
  $\bullet$ be either ``$\leq 1$'' or ``$1$'', and $a > 0$.  Then
  $r_\DN$ is $1$-Lipschitz continuous from
  $\Smyth (\Val_\bullet X), \mQ {(\dKRH^a)}$ to the space of
  (sub)normalized sublinear previsions on $X$ with the $\dKRH^a$
  quasi-metric.
\end{lem}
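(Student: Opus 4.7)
The plan is to establish Scott-continuity of $\mathbf B^1(r_\DN)$ by a direct computation of the supremum of a directed family of formal balls in the codomain, thereby bypassing any attempt to transfer topologies via a section-retraction argument. By Lemma~\ref{lemma:DN:s:lip}(1), $r_\DN$ is $1$-Lipschitz, hence $\mathbf B^1(r_\DN)$ is monotonic. To upgrade this to Scott-continuity, let $(\mathcal Q_i, r_i)_{i \in I}$ be a directed family in $\mathbf B(\Smyth(\Val_\bullet X), \mQ{(\dKRH^a)})$. Since $X,d$ is continuous Yoneda-complete, Theorem~\ref{thm:Vleq1:cont} or Theorem~\ref{thm:V1:cont} ensures that $\Val_\bullet X, \dKRH^a$ is continuous Yoneda-complete, hence standard and sober in its $\dKRH^a$-Scott topology; thus Proposition~\ref{prop:QX:Ycomplete} (in the $\dQ^a$ version of Remark~\ref{rem:QX:Ycomplete:a}) supplies a naive supremum $(\mathcal Q, r)$, characterized by $r = \inf_i r_i$ and $F_{\mathcal Q}(H) = \sup_i (F_{\mathcal Q_i}(H) + r - r_i)$ for every $H \in \Lform_1^a(\Val_\bullet X, \dKRH^a)$.

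I would then show that $(r_\DN(\mathcal Q), r)$ is the least upper bound of $\{(r_\DN(\mathcal Q_i), r_i)\}$ in the poset of formal balls of (sub)normalized superlinear previsions. Monotonicity of $\mathbf B^1(r_\DN)$ yields that it is an upper bound. Given any other upper bound $(F', r')$, we have $r' \leq r$ and $\dKRH^a(r_\DN(\mathcal Q_i), F') \leq r_i - r'$, whence $r_\DN(\mathcal Q_i)(h) \leq F'(h) + r_i - r'$ for every $h \in \Lform_1^a(X,d)$. Fix such an $h$ and consider the map $H \colon G \mapsto G(h)$, which by Lemma~\ref{lemma:H} lies in $\Lform_1^a(\Val_\bullet X, \dKRH^a)$. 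The crucial identity is that $r_\DN(\mathcal Q')(h) = \inf_{G \in \mathcal Q'} G(h) = F_{\mathcal Q'}(H)$, so plugging into the naive sup formula:
\begin{align*}
r_\DN(\mathcal Q)(h) = F_{\mathcal Q}(H)
  &= \sup_i (F_{\mathcal Q_i}(H) + r - r_i)
   = \sup_i (r_\DN(\mathcal Q_i)(h) + r - r_i) \\
  &\leq \sup_i (F'(h) + r_i - r' + r - r_i)
   = F'(h) + r - r'.
\end{align*}
Since $h \in \Lform_1^a(X,d)$ is arbitrary, $\dKRH^a(r_\DN(\mathcal Q), F') \leq r - r'$, that is $(r_\DN(\mathcal Q), r) \leq^{\dKRH^{a+}} (F', r')$. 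This proves that $\mathbf B^1(r_\DN)$ is Scott-continuous, hence $r_\DN$ is $1$-Lipschitz continuous.

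The main subtle point, and the reason I would not mimic either of the two strategies used in the sublinear case, is a genuine asymmetry between $\AN$ and $\DN$. The adjunction argument of Lemma~\ref{lemma:AN:r:lipcont} worked because $\mathbf B^1(r_\AN)$ was the \emph{left} adjoint of $\mathbf B^1(s^\bullet_\AN)$ and so preserved suprema; here, by the proof of Lemma~\ref{lemma:DN:s:lipcont}, $\mathbf B^1(s^\bullet_\DN)$ is the left adjoint of $\mathbf B^1(r_\DN)$, so the retraction is only guaranteed to preserve infima. Likewise, the topological route of Lemma~\ref{lemma:AN:s:lipcont} exploited that $s^\bullet_\AN$, being continuous from the coarser weak topology to the coarser lower Vietoris topology, was automatically continuous from any finer Scott topology above weak; transposing this to $r_\DN$ would require $\dKRH^a$-Scott $\subseteq$ weak on superlinear previsions, the opposite inclusion to what Proposition~\ref{prop:weak:dScott:a} provides, and establishing that equality would circularly invoke the very Scott-continuity of $\mathbf B^1(r_\DN)$ sought here. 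The infimum-based computation above sidesteps both difficulties, since $r_\DN(\mathcal Q')(h)$ and $F_{\mathcal Q'}(H)$ are literally the same infimum under the identification provided by Lemma~\ref{lemma:H}.
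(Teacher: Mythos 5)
Your proof is correct and follows essentially the same route as the paper's: both establish monotonicity via Lemma~\ref{lemma:DN:s:lip}, use the naive-supremum characterization from Proposition~\ref{prop:QX:Ycomplete}, and evaluate at $H\colon G \mapsto G(h)$ (Lemma~\ref{lemma:H}) to identify $(r_\DN(\mathcal Q), r)$ as the naive supremum of the image family. The only inessential difference is that you verify the least-upper-bound property against an arbitrary $(F',r')$ by hand, where the paper simply cites Proposition~\ref{prop:LPrev:simplesup}~(3); your closing remarks on the $\AN$/$\DN$ asymmetry correctly explain why the adjunction and topological arguments are unavailable here.
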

\proof By Lemma~\ref{lemma:DN:s:lip}, $\mathbf B^1 (r_\DN)$ is
monotonic.  Consider a directed family of formal balls
${(\mathcal Q_i, r_i)}_{i \in I}$ on
$\Smyth (\Val_\bullet X), \mQ {(\dKRH^a)}$, with supremum
$(\mathcal Q, r)$.  We use Theorem~\ref{thm:Vleq1:cont}, resp.,
Theorem~\ref{thm:V1:cont} to claim that $\Val_\bullet X, \dKRH^a$ is
continuous Yoneda-complete, and then
Proposition~\ref{prop:QX:Ycomplete} to obtain that
$\Smyth (\Val_\bullet X), \mQ {(\dKRH^a)}$ is Yoneda-complete, and
that directed suprema of formal balls are naive suprema.  This means
that for every $H \in \Lform^a_\alpha (\Val_\bullet X, \dKRH^a)$
($\alpha > 0$),
$F_{\mathcal Q} (H) = \sup_{i \in I} (F_{\mathcal Q_i} (H) - \alpha
r_i + \alpha r)$, and $r = \inf_{i \in I} r_i$.  In particular, for
$H (G) = G (h)$, for any $h \in \Lform^a_1 (X, d)$
(Lemma~\ref{lemma:H}), so that
$F_{\mathcal Q} (H) = \min_{G \in \mathcal Q} H (G) = \min_{G \in
  \mathcal Q} G (h)$, we obtain
$\min_{G \in \mathcal Q} G (h) = \sup_{i \in I} (\min_{G \in \mathcal
  Q_i} G (h) - \alpha r_i + \alpha r)$, namely
$r_\DN (\mathcal Q) (h) = \sup_{i \in I} (r_\DN (\mathcal Q_i) (h) -
\alpha r_i + \alpha r)$.  This characterizes $(r_\DN (\mathcal Q), r)$
as the naive supremum of ${(r_\DN (\mathcal Q_i), r_i)}_{i \in I}$.
For every non-empty compact saturated set $\mathcal Q$,
$r_\DN (\mathcal Q)$ is a superlinear prevision.  It is in particular
continuous, so $r_\DN (\mathcal Q)$ is the supremum of
${(r_\DN (\mathcal Q_i), r_i)}_{i \in I}$ by
Proposition~\ref{prop:LPrev:simplesup}~(3) (and (4)).  Hence
$\mathbf B^1 (r_\DN)$ is Scott-continuous, which concludes the
argument.  \qed

\begin{thm}[Continuity for superlinear previsions]
  \label{thm:DN:cont}
  Let $X, d$ be a continuous Yoneda-complete quasi-metric space, and
  $a > 0$.

  The space of all subnormalized (resp., normalized) superlinear
  previsions on $X$ with the $\dKRH^a$ quasi-metric is continuous
  Yoneda-complete.

  It occurs as a $1$-Lipschitz continuous retract of
  $\Smyth (\Val_{\leq 1} X), \mQ {(\dKRH^a)}$ (resp.,
  $\Smyth (\Val_1 X), \allowbreak \mQ {(\dKRH^a)}$), and as an
  isometric copy of $\Smyth^{cvx} (\Val_{\leq 1} X), \mQ {(\dKRH^a)}$
  (resp., $\Smyth^{cvx} (\Val_1 X), \mQ {(\dKRH^a)}$) through $r_\DN$
  and $s^{\leq 1}_\DN$ (resp., $s^1_\DN$).

  The $\dKRH^a$-Scott topology on the space of subnormalized (resp.,
  normalized) superlinear previsions coincides with the weak topology.

  Finally, directed suprema of formal balls are computed as naive
  suprema.
\end{thm}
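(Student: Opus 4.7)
The plan is to mirror the proof of Theorem~\ref{thm:AN:cont}, substituting the Smyth powerdomain (and Lemma~\ref{lemma:DN:s:lip}, Lemma~\ref{lemma:DN:s:lipcont}, Lemma~\ref{lemma:DN:r:lipcont}) for the Hoare powerdomain. The essential fact packaged by those three lemmas is that $r_\DN$ and $s^\bullet_\DN$ are both $1$-Lipschitz continuous, that $s^\bullet_\DN$ is even an isometry, and that $r_\DN \circ s^\bullet_\DN = \identity\relax$. From this the retract and isometric copy parts follow at once: $s^\bullet_\DN$ always lands in $\Smyth^{cvx}(\Val_\bullet X)$ (convexity of $\{G\mid G\geq F\}$), and since on that subspace it is a distance-preserving inverse to $r_\DN$, the two restricted maps exhibit the isometric copy.

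For continuous Yoneda-completeness, I chain continuous Yoneda-completeness results. First, $\Val_\bullet X, \dKRH^a$ is continuous Yoneda-complete by Theorem~\ref{thm:Vleq1:cont} (or Theorem~\ref{thm:V1:cont} in the normalized case). Then Theorem~\ref{thm:Q:cont} gives that $\Smyth(\Val_\bullet X), \mQ{(\dKRH^a)}$ is continuous Yoneda-complete. Since $1$-Lipschitz continuous retracts of continuous Yoneda-complete spaces remain continuous Yoneda-complete (Corollary~8.3.37 of \cite{JGL-topology}, invoked via the retraction $r_\DN, s^\bullet_\DN$), the space of (sub)normalized superlinear previsions with $\dKRH^a$ is continuous Yoneda-complete.

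For the topological claim, I apply Fact~\ref{fact:retract:two} with $A$ the space of (sub)normalized superlinear previsions (with either the weak topology or the $\dKRH^a$-Scott topology) and $B = \Smyth(\Val_\bullet X)$, where on $B$ the upper Vietoris topology and the $\mQ{(\dKRH^a)}$-Scott topology agree by combining Theorem~\ref{thm:V:weak=dScott} with Theorem~\ref{thm:Q:Vietoris}. The map $s^\bullet_\DN$ embeds $A$ into $B$ in both cases (as a section of a continuous retraction with respect to the weak topologies, and as the section part of a $1$-Lipschitz continuous retraction with respect to the $\dKRH^a$-Scott and $\mQ{(\dKRH^a)}$-Scott topologies by Lemma~\ref{lemma:DN:s:lipcont} and Lemma~\ref{lemma:DN:r:lipcont}). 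Since the two topologies on $B$ agree, the two topologies on $A$ agree as well.

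Finally, for naive suprema, I argue as in Lemma~\ref{lemma:AN:cont}. Given a directed family ${(F_i,r_i)}_{i\in I}$ of formal balls over the space of (sub)normalized superlinear previsions, its supremum is $\mathbf B^1(r_\DN)(\mathcal Q, r)$, where $(\mathcal Q, r) = \sup_{i\in I} \mathbf B^1(s^\bullet_\DN)(F_i, r_i)$ computed in $\mathbf B(\Smyth(\Val_\bullet X), \mQ{(\dKRH^a)})$; here I use that $\mathbf B^1(r_\DN)$ is Scott-continuous (Lemma~\ref{lemma:DN:r:lipcont}) and that $\mathbf B^1(r_\DN)\circ \mathbf B^1(s^\bullet_\DN) = \identity\relax$. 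By Proposition~\ref{prop:QX:Ycomplete} the supremum in $\mathbf B(\Smyth(\Val_\bullet X),\mQ{(\dKRH^a)})$ is the naive one, giving $r = \inf_{i\in I} r_i$ and $F_{\mathcal Q}(H) = \sup_{i\in I}(F_{s^\bullet_\DN(F_i)}(H) + \alpha r - \alpha r_i)$ for $H\in\Lform_\alpha^a(\Val_\bullet X,\dKRH^a)$. Specializing to $H(G)=G(h)$ for $h\in\Lform^a_\alpha(X,d)$ (an element of $\Lform_\alpha^a(\Val_\bullet X,\dKRH^a)$ by Lemma~\ref{lemma:H}) turns the left side into $r_\DN(\mathcal Q)(h)$ and each summand $F_{s^\bullet_\DN(F_i)}(H)$ into $r_\DN(s^\bullet_\DN(F_i))(h) = F_i(h)$, yielding $r_\DN(\mathcal Q)(h) = \sup_{i\in I}(F_i(h) + \alpha r - \alpha r_i)$. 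This is precisely the naive-supremum formula. No step looks to be a genuine obstacle; the only mild subtlety, as in Lemma~\ref{lemma:AN:s:lip}, is the implicit identification of the weak and $\dKRH^a$-Scott topologies on $\Val_\bullet X$ when passing between the two views of $\Smyth(\Val_\bullet X)$, and this is handled once and for all by Theorem~\ref{thm:V:weak=dScott}.
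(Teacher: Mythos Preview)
Your proposal is correct and follows essentially the same route as the paper: establish the $1$-Lipschitz continuous retraction via $r_\DN$ and $s^\bullet_\DN$, deduce continuous Yoneda-completeness from that of $\Smyth(\Val_\bullet X)$, use Fact~\ref{fact:retract:two} for the coincidence of topologies, and unwind the naive-supremum formula by pushing through the retraction and specializing $H(G)=G(h)$. The only cosmetic differences are that the paper invokes Lemma~\ref{lemma:retract:alg} for continuous Yoneda-completeness (where you cite Corollary~8.3.37 directly, as in Theorem~\ref{thm:AN:cont}) and uses Lemma~\ref{lemma:orderretract} to identify the supremum (where you use Scott-continuity of $\mathbf B^1(r_\DN)$); both routes are equivalent here.
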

\proof We have already seen that $r_\AN$ and $s^\bullet_\AN$ are
continuous, with respect to the weak topologies.  By
Lemma~\ref{lemma:DN:r:lipcont} and Lemma~\ref{lemma:DN:s:lipcont},
they are also $1$-Lipschitz continuous with respect to
$\mQ {(\dKRH^a)}$ on $\Smyth (\Val_\bullet X)$ and $\dKRH^a$ on the
space of (sub)normalized superlinear previsions.

Recall also that $r_\DN \circ s^\bullet_\DN = \identity \relax$.  This
shows the $1$-Lipschitz continuous retract part, and also the
isometric copy part, since $s^\bullet_\DN$ and $r_\DN$ restrict to
bijections with the subspace $\Smyth^{cvx} (\Val_\bullet X)$.

Lemma~\ref{lemma:retract:alg} tells us that the space of
(sub)normalized superlinear previsions on $X$ is continuous
Yoneda-complete.

Let us show that the $\dKRH^a$-Scott topology on the space of
(sub)normalized superlinear previsions is the weak topology.  For
that, we recall that the $\mQ {(\dKRH^a)}$-Scott topology coincides
with the upper Vietoris topology on $\Smyth (\Val_\bullet X)$, and we
note that $s^\bullet_\DN$ is both a topological embedding for the weak
and upper Vietoris topologies, and the section part of a $1$-Lipschitz
continuous retraction, hence a topological embedding for the
$\dKRH^a$-Scott and $\mQ {(\dKRH^a)}$-Scott topologies.  The
conclusion follows from Fact~\ref{fact:retract:two}.

It remains to show that directed suprema of formal balls are computed
as naive suprema.  The argument is exactly as for
Proposition~\ref{prop:AN:Ycomplete}.  We use the fact that
$p = \mathbf B^1 (r_\DN)$ and $e = \mathbf B^1 (s^\bullet_\DN)$ form
an order-retraction, and that the supremum $(F, r)$ of a directed
family ${(F_k, r_k)}_{k \in K}$ is $p (\sup_{k \in K} e (F_k, r_k))$
by Lemma~\ref{lemma:orderretract}.  We invoke
Theorem~\ref{thm:Vleq1:cont}, resp.\ Theorem~\ref{thm:V1:cont},
together with Proposition~\ref{prop:QX:Ycomplete} to check that the
supremum $(\mathcal Q, r) = \sup_{k \in K} e (F_k, r_k)$, computed in
$\mathbf B (\Smyth (\Val_\bullet X), \mQ {(\dKRH^a)})$, is a naive
supremum.  In particular, $r = \inf_{k \in K} r_k$.  Let
$\mathcal Q_k = s^\bullet_\DN (F_k)$.  Then
$F_{\mathcal Q_k} (H) = \inf_{G \in \mathcal Q_k} H (G) = \inf_{G \in
  \Val_\bullet X, G \geq F_k} H (G)$ for every
$H \in \Lform^a_\alpha (\Val_\bullet X, \dKRH^a)$.  We have
$(F, r) = p (\mathcal Q, r)$, so $F = r_\DN (\mathcal Q)$, and that
implies that $F (h) = \inf_{G \in \mathcal Q} G (h)$ for every
$h \in \Lform^a_\alpha (X, d)$.
The map $H \colon G \mapsto G (h)$ is in
$\Lform^a_\alpha (\Val_\bullet X, \dKRH^a)$ by Lemma~\ref{lemma:H},
and:
\begin{eqnarray*}
  F (h)
  & =
  & \inf_{G \in \mathcal Q} H (G) \\
  & = & F_{\mathcal Q} (H) \\
  & = & \sup_{i \in I} (\inf_{G \in \Val_\bullet X, G \geq F_i} H (G)
        - \alpha r_i + \alpha r) \\
  & = & \sup_{i \in I} (\inf_{G \in \Val_\bullet X, G \geq F_i} G (h)
        - \alpha r_i + \alpha r) \\
  & = & \sup_{i \in I} (F_i (h) - \alpha r_i + \alpha r).
\end{eqnarray*}
\qed

\begin{lem}
  \label{lemma:DN:cont:removed}
  (This result removed.  It stated that suprema are naive suprema,
  repeating the end of the previous theorem.  This display kept in
  order to maintain the numbering of theorems.)
\end{lem}

As usual, algebraicity is preserved, too.
\begin{thm}[Algebraicity for superlinear previsions]
  \label{thm:DN:alg}
  Let $X, d$ be an algebraic Yoneda-complete quasi-metric space, with
  a strong basis $\mathcal B$.  Let $a > 0$.

  The space of all subnormalized (resp., normalized) superlinear
  previsions on $X$ with the $\dKRH^a$ quasi-metric is algebraic
  Yoneda-complete.

  All the superlinear previsions of the form
  $h \mapsto \min_{i=1}^m \sum_{j=1}^{n_i} a_{ij} h (x_{ij})$, with
  $m \geq 1$, $\sum_{j=1}^{n_i} a_{ij} \leq 1$ (resp., $=1$) for every
  $i$, and where each $x_{ij}$ is a center point, are center points,
  and they form a strong basis, even when each $x_{ij}$ is constrained
  to lie in $\mathcal B$.
\end{thm}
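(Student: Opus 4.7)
The plan is to prove the theorem directly, without attempting to invoke Lemma~\ref{lemma:retract:alg} as was done in Theorem~\ref{thm:AN:alg}. The obstruction to the latter route is a minor asymmetry: the ``additional assumption'' of that lemma specializes here to $\dKRH^a(r_\DN(\mathcal Q), F) = \mQ{(\dKRH^a)}(\mathcal Q, s^\bullet_\DN(F))$, and by Proposition~\ref{prop:DN:dKRH} this equality requires the \emph{first} argument $\mathcal Q$ to be convex, whereas the strong basis of $\Smyth(\Val_\bullet X), \mQ{(\dKRH^a)}$ produced by Theorem~\ref{thm:Q:alg} consists of non-convex finitary compacts $\upc\{\nu_1, \ldots, \nu_m\}$. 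I will instead transport a strong basis through $r_\DN$. Yoneda-completeness is already granted by Theorem~\ref{thm:DN:cont}, so all the work lies in identifying the center points and verifying the basis property. Throughout, let $\bullet$ be either ``$\leq 1$'' or ``$1$'' and let $Z_\bullet$ denote the space of (sub)normalized superlinear previsions on $X$ under $\dKRH^a$.

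For the center-point property I would mimic the proof of Lemma~\ref{lemma:Q:simple:center}. Fix $F_0(h) = \min_{i=1}^m \sum_{j=1}^{n_i} a_{ij} h(x_{ij})$ of the claimed form and $\epsilon > 0$, and show that $U = B^{\dKRH^{a+}}_{(F_0,0),<\epsilon}$ is Scott-open in $\mathbf B(Z_\bullet, \dKRH^a)$. Upwards-closure under $\leq^{\dKRH^{a+}}$ follows from the triangle inequality. For Scott-openness, given a directed family ${(F_k, r_k)}_{k \in K}$ with supremum $(F, r) \in U$, Lemma~\ref{lemma:DN:cont} tells us the supremum is naive, so $r = \inf_k r_k$ and $F(h) = \sup_k (F_k(h) + r - r_k)$ on $\Lform_1^a(X,d)$. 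The assumption $(F, r) \in U$ therefore forces, for each $h \in \Lform_1^a(X,d)$, the existence of some $k$ (with $r_k < \epsilon$) such that $\min_i \sum_j a_{ij} h(x_{ij}) < F_k(h) + \epsilon - r_k$. The corresponding sets $V_k$ form an open cover of $\Lform_1^a(X,d)^\patch$: each $V_k$ is the preimage of $[0, \epsilon - r_k[$ under $h \mapsto \dreal(\min_i \sum_j a_{ij} h(x_{ij}), F_k(h))$, which is continuous from $\Lform_1^a(X,d)^\patch$ to $(\creal)^\dG$ using Corollary~\ref{corl:cont:Lalpha:simple:2} together with continuity of $F_k$ and of $\dreal$, exactly as in the proof of Proposition~\ref{prop:dKRH:cont}. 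Since $\Lform_1^a(X,d)^\patch$ is compact by Lemma~\ref{lemma:cont:Lalpha:retr}~(4), we can extract a finite subcover, and directedness then collapses it to a single $k$ witnessing $(F_k, r_k) \in U$.

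For the basis property, given $F \in Z_\bullet$ set $\mathcal Q = s^\bullet_\DN(F)$. Combining Theorem~\ref{thm:V:alg} or Theorem~\ref{thm:V1:alg} (which gives that $\Val_\bullet X, \dKRH^a$ is algebraic Yoneda-complete with strong basis the (sub)normalized simple valuations on $\mathcal B$) with Theorem~\ref{thm:Q:alg} in the $\dKRH^a$ variant of Remark~\ref{rem:Qa:alg}, the space $\Smyth(\Val_\bullet X), \mQ{(\dKRH^a)}$ is algebraic Yoneda-complete with strong basis
\[
\bigl\{\upc\{\nu_1,\ldots,\nu_m\} : m \geq 1,\ \nu_j \text{ (sub)normalized simple valuation on } \mathcal B\bigr\}.
\]
Hence $(\mathcal Q, 0)$ is the supremum of a directed family ${(\mathcal Q_i, r_i)}_{i \in I}$ of formal balls with $\mathcal Q_i = \upc\{\nu_{i1},\ldots,\nu_{im_i}\}$ of this basic shape. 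The map $r_\DN$ is $1$-Lipschitz continuous by Lemma~\ref{lemma:DN:r:lipcont}, so $\mathbf B^1(r_\DN)$ is Scott-continuous, and using $r_\DN \circ s^\bullet_\DN = \identity\relax$,
\[
(F, 0) = (r_\DN(\mathcal Q), 0) = \sup_{i \in I} (r_\DN(\mathcal Q_i), r_i).
\]
Since $r_\DN(\mathcal Q_i)(h) = \inf_{G \in \mathcal Q_i} G(h) = \min_{j=1}^{m_i} \nu_{ij}(h) = \min_j \sum_\ell a_{ij\ell}\,h(x_{ij\ell})$, each $r_\DN(\mathcal Q_i)$ is of the form claimed in the statement, and combined with Step~1 this completes the proof. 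The main technical hurdle is concentrated in Step~1, namely ensuring patch-continuity of the composite functional $h \mapsto \dreal(\min_i \sum_j a_{ij} h(x_{ij}), F_k(h))$; Corollary~\ref{corl:cont:Lalpha:simple:2}, introduced earlier for exactly this sort of purpose, is what makes that step go through.
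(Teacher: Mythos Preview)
Your proposal is correct and follows essentially the same route as the paper: both identify the obstruction to invoking the final clause of Lemma~\ref{lemma:retract:alg}, both transport the strong basis of $\Smyth(\Val_\bullet X)$ through $r_\DN$ (the paper phrases this via the first clause of Lemma~\ref{lemma:retract:alg}, you via Scott-continuity of $\mathbf B^1(r_\DN)$, which amounts to the same), and both verify the center-point property directly by the compactness-of-$\Lform_1^a(X,d)^\patch$ argument using Corollary~\ref{corl:cont:Lalpha:simple:2}. One harmless imprecision: the reference to Remark~\ref{rem:Qa:alg} is unnecessary, since $\mQ{(\dKRH^a)}$ is the ordinary $\dQ$-quasi-metric on $\Smyth(\Val_\bullet X)$ for the underlying space $\Val_\bullet X, \dKRH^a$, so Theorem~\ref{thm:Q:alg} applies directly.
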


\proof We cannot appeal to the final part of
Lemma~\ref{lemma:retract:alg} in order to show algebraicity.  Indeed,
we would need to show
$\dKRH^a (r_\DN (\mathcal Q), F') = \mQ {(\dKRH^a)} (\mathcal Q,
s^\bullet_\DN (F'))$ to this end, but Proposition~\ref{prop:DN:dKRH}
only allows us to conclude the similarly looking, but different
equality
$\dKRH^a (F, r_\DN (\mathcal Q')) = \mQ {(\dKRH^a)} (s^\bullet_\DN
(F), \mathcal Q')$.

We can still use the fact that every (sub)normalized superlinear
prevision is a $\dKRH^a$-limit of a Cauchy-weightable net of points of
the form $r_\DN (\mathcal Q)$, where each $\mathcal Q$ is taken from a
strong basis of $\Smyth (\Val_\bullet X)$.  Using
Theorem~\ref{thm:V:alg} or Theorem~\ref{thm:V1:alg}, and
Theorem~\ref{thm:Q:alg}, we know that we can take $\mathcal Q$ of the
form
$\upc \{\sum_{j=1}^{n_1} a_{1j} \delta_{x_{1j}}, \cdots,
\sum_{j=1}^{n_m} a_{mj} \delta_{x_{mj}}\}$ where $m \geq 1$ and each
$x_{ij}$ is in $\mathcal B$.  Then $r_\DN (\mathcal Q)$ is exactly one
of the superlinear previsions mentioned in the theorem, namely it is
of the form
$h \mapsto \min_{i=1}^m \sum_{j=1}^{n_i} a_{ij} h (x_{ij})$.

To show that our spaces of superlinear previsions are algebraic, it
remains to show that
$\mathring F \colon h \mapsto \min_{i=1}^m \sum_{j=1}^{n_i} a_{ij} h
(x_{ij})$, where $m \geq 1$ and each $x_{ij}$ is a center point, is
itself a center point.  We cannot use the final part of
Lemma~\ref{lemma:retract:alg}.  Instead we shall show that directly,
in the manner of Lemma~\ref{lemma:V:simple:center} and related lemmas.
In other words, we show that
$B^{\dKRH^a}_{(\mathring F, 0), \epsilon}$ is Scott-open for every
$\epsilon > 0$.  This proceeds exactly as in
Lemma~\ref{lemma:V:simple:center} and related lemmas.

For that, we consider a directed family ${(F_k, r_k)}_{k \in K}$ where
each $F_k$ is a (sub)normalized superlinear prevision, with supremum
$(F, r)$ in $B^{\dKRH^a}_{(\mathring F, 0), \epsilon}$.  For every
$h \in \Lform^a_1 (X, d)$,
$\dreal (\mathring F (h), \allowbreak F (h)) < \epsilon - r$, that is,
$\epsilon > r$ and $\mathring F (h) - \epsilon + r < F (h)$.  In
particular, and using the fact that the supremum $(F, r)$ is a naive
supremum (final part of Theorem~\ref{thm:DN:cont}), there is a
$k \in K$ such that $\mathring F (h) - \epsilon + r_k < F_k (h)$, and
we can take $k$ so large that $\epsilon - r_k > 0$.

We note that $\mathring F$ is not just Scott-continuous, but that it
is also continuous from $\Lform_\alpha (X, d)^\dG$ (resp.,
$\Lform_\alpha^a (X, d)^\dG$) to $(\creal)^\dG$.  This is
Corollary~\ref{corl:cont:Lalpha:simple:2}.  It follows that the map
$h \in \Lform^a_1 (X, d)^\patch \mapsto F_k (h) - \mathring F (h)$ is
lower semicontinuous.  Therefore, the set $V_k$ of all
$h \in \Lform^a_1 (X, d)$ such that
$\mathring F (h) - \epsilon + r_k < F_k (h)$ is open in
$\Lform^a_1 (X, d)^\patch$.  The argument of the previous paragraph
shows that ${(V_k)}_{k \in K, \epsilon - r_k > 0}$ forms an open cover
of $\Lform^a_1 (X, d)^\patch$.  The latter is compact by
Lemma~\ref{lemma:cont:Lalpha:retr}~(4), so we can extract a finite
subcover.  This gives us a finite set $J$ of indices from $K$ such
that for every $h \in \Lform^a_1 (X, d)$, there is a $j \in J$ such
that $\mathring F (h) - \epsilon + r_j < F_j (h)$ and
$\epsilon - r_j > 0$.  Using the fact that ${(F_k, r_k)}_{k \in K}$ is
directed, there is even a single $k \in K$ such that
$(F_j, r_j) \leq^{\dKRH^{a+}} (F_k, r_k)$ for every $j \in J$, and
that implies that for every $h \in \Lform^a_1 (X, d)$,
$\mathring F (h) - \epsilon + r_k < F_k (h)$.  That shows that
$(F_k, r_k)$ is in $B^{\dKRH^a}_{(\mathring F, 0), \epsilon}$,
finishing our proof that $B^{\dKRH^a}_{(\mathring F, 0), \epsilon}$ is
Scott-open.  Therefore
$\mathring F \colon h \mapsto \min_{i=1}^m \sum_{j=1}^{n_i} a_{ij} h
(x_{ij})$ is a center point.  \qed

\begin{lem}
  \label{lemma:DN:functor}
  Let $X, d$ and $Y, \partial$ be two continuous Yoneda-complete
  quasi-metric spaces, and $f \colon X, d \mapsto Y, \partial$ be a
  $1$-Lipschitz continuous map.  Let also $a > 0$.  $\Prev f$
  restricts to a $1$-Lipschitz continuous map from the space of
  normalized, resp.\ subnormalized superlinear previsions on $X$ to the
  space of normalized, resp.\ subnormalized superlinear previsions on
  $Y$, with the $\dKRH^a$ and $\KRH\partial^a$ quasi-metrics.
\end{lem}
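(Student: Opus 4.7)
The plan is to mirror the proof of the sublinear case (Lemma~\ref{lemma:AN:functor}), replacing each ingredient by its superlinear counterpart. First I would invoke Lemma~\ref{lemma:Pf:lip} to conclude two things simultaneously: that $\Prev f$ is $1$-Lipschitz from $\Prev X, \dKRH^a$ to $\Prev Y, \KRH\partial^a$, and that $\Prev f$ preserves the algebraic properties defining our subspaces, in particular superlinearity, subnormalization, and normalization. Hence $\Prev f$ restricts to a well-defined $1$-Lipschitz map between the indicated spaces of (sub)normalized superlinear previsions, and $\mathbf B^1(\Prev f)$ is monotonic.

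Next, to upgrade $1$-Lipschitz to $1$-Lipschitz continuous, it remains to show $\mathbf B^1(\Prev f)$ is Scott-continuous. For this I would use Lemma~\ref{lemma:Pf:lipcont}, which tells us that $\mathbf B^1(\Prev f)$ sends naive suprema to naive suprema. The crucial input is then Lemma~\ref{lemma:DN:cont} (or equivalently the last clause of Theorem~\ref{thm:DN:cont}), which states that in the spaces of (sub)normalized superlinear previsions on $X$ and on $Y$, every directed family of formal balls has its supremum computed as the naive supremum. Combining these, $\mathbf B^1(\Prev f)$ maps directed suprema of formal balls in $\mathbf B(\text{superlinear previsions on }X, \dKRH^a)$ to directed suprema in $\mathbf B(\text{superlinear previsions on }Y, \KRH\partial^a)$.

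The conclusion $\Prev f$ is $1$-Lipschitz continuous then follows directly from Definition~\ref{defn:Lipcont}. There is no real obstacle here: all the genuine work (the minimax step, the retract-from-$\Smyth(\Val_\bullet X)$ construction, the identification of suprema as naive suprema) has already been carried out in establishing Theorem~\ref{thm:DN:cont} and Lemma~\ref{lemma:DN:cont}. The only point requiring a moment of care is to check that one can indeed apply Lemma~\ref{lemma:DN:cont} on both the source and the target: this is legitimate because $X, d$ and $Y, \partial$ are both continuous Yoneda-complete by assumption, which is exactly the hypothesis of that lemma.
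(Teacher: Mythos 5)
Your proposal is correct and follows essentially the same route as the paper's own proof: Lemma~\ref{lemma:Pf:lip} for the $1$-Lipschitz property and preservation of superlinearity/(sub)normalization, Lemma~\ref{lemma:Pf:lipcont} for preservation of naive suprema, and Lemma~\ref{lemma:DN:cont} to identify directed suprema of formal balls with naive suprema on both the source and target spaces, whence Scott-continuity of $\mathbf B^1 (\Prev f)$ and the conclusion.
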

\proof
By Lemma~\ref{lemma:Pf:lip}, $\Prev f$ is $1$-Lipschitz and maps
(sub)normalized superlinear previsions to (sub)normalized superlinear
previsions.  Also, $\mathbf B^1 (\Prev f)$ preserves naive suprema.
By Theorem~\ref{thm:DN:cont}, suprema of directed families of formal
balls of (sub)normalized sublinear previsions are naive suprema, so
$\mathbf B^1 (\Prev f)$ is Scott-continuous.  It follows that $\Prev
f$ is $1$-Lipschitz continuous.
\qed

With Theorem~\ref{thm:DN:alg} and Theorem~\ref{thm:DN:cont}, we obtain
the following.
\begin{cor}
  \label{cor:DN:functor}
  There is an endofunctor on the category of continuous (resp.,
  algebraic) Yoneda-complete quasi-metric spaces and $1$-Lipschitz
  continuous maps, which sends every object $X, d$ to the space of
  (sub)normalized superlinear previsions on $X$ with the $\dKRH^a$-Scott
  topology ($a > 0$), and every $1$-Lipschitz continuous map $f$ to
  $\Prev f$.  \qed
\end{cor}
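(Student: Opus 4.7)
The plan is to assemble this corollary from the three main inputs already proved: Theorem~\ref{thm:DN:alg}, Theorem~\ref{thm:DN:cont}, and Lemma~\ref{lemma:DN:functor}. Write $\Smyth^\bullet$ temporarily for the would-be endofunctor, on either the continuous or the algebraic Yoneda-complete category; here $\bullet \in \{\leq 1, 1\}$ selects the subnormalized or normalized variant.

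First, I would define the action on objects. Given a continuous (resp.\ algebraic) Yoneda-complete quasi-metric space $X,d$, let $\Smyth^\bullet(X,d)$ be the set of (sub)normalized superlinear previsions on $X$, equipped with the quasi-metric $\dKRH^a$. Theorem~\ref{thm:DN:cont} (resp.\ Theorem~\ref{thm:DN:alg}) ensures that this quasi-metric space is continuous (resp.\ algebraic) Yoneda-complete, so $\Smyth^\bullet(X,d)$ lives in the correct category. On morphisms, send $f \colon X,d \to Y,\partial$ to the restriction of $\Prev f$ to $\Smyth^\bullet(X,d)$, which by Lemma~\ref{lemma:DN:functor} is a $1$-Lipschitz continuous map into $\Smyth^\bullet(Y,\partial)$; hence it is a morphism of the category.

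Second, I would verify the two functoriality axioms. For the identity, $\Prev(\identity X)(F)(k) = F(k \circ \identity X) = F(k)$ for every $F \in \Smyth^\bullet(X,d)$ and every $k \in \Lform X$, so $\Prev(\identity X) = \identity{\Smyth^\bullet(X,d)}$. For composition, given $f \colon X,d \to Y,\partial$ and $g \colon Y,\partial \to Z,\mathfrak d$, both $1$-Lipschitz continuous, and any $F \in \Smyth^\bullet(X,d)$ and $k \in \Lform Z$,
\[
\Prev(g \circ f)(F)(k) = F(k \circ (g \circ f)) = F((k \circ g) \circ f) = \Prev f(F)(k \circ g) = \Prev g(\Prev f(F))(k),
\]
so $\Prev(g \circ f) = \Prev g \circ \Prev f$ on $\Smyth^\bullet(X,d)$. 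Both equalities are entirely formal, since the definition (\ref{eq:Prevf}) of $\Prev f$ is simply precomposition.

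The only nontrivial content that needs checking, and arguably the only step worth calling an obstacle, is that all these composites and identities land in the correct subcategory, namely that $\Prev f$ really does restrict to a morphism between the spaces of (sub)normalized superlinear previsions (not merely between the ambient spaces of previsions); but this is exactly what Lemma~\ref{lemma:DN:functor} delivers, since its proof combines Lemma~\ref{lemma:Pf:lip} (which handles preservation of superlinearity and of (sub)normalization, as well as $1$-Lipschitzness) with Lemma~\ref{lemma:Pf:lipcont} and Lemma~\ref{lemma:DN:cont} (which promote $1$-Lipschitzness to $1$-Lipschitz continuity via the fact that directed suprema of formal balls of such previsions are naive suprema, and $\mathbf B^1(\Prev f)$ preserves naive suprema). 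Once those preservation facts are in hand, there is nothing further to do: the functor axioms reduce to the trivial identities displayed above, and the corollary is proved.
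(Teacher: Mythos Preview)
Your proposal is correct and takes essentially the same approach as the paper, which simply cites Theorem~\ref{thm:DN:alg}, Theorem~\ref{thm:DN:cont}, and (implicitly) the immediately preceding Lemma~\ref{lemma:DN:functor}. You have merely spelled out the routine functoriality verifications that the paper leaves tacit.
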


\section{Lenses, Quasi-Lenses, and Forks}
\label{sec:forks-quasi-lenses}

A natural model for mixed angelic and demonic non-determinism, without
probabilistic choice, on a space $X$, is the Plotkin powerdomain.
That consists in the space of \emph{lenses} on $X$.  A lens $L$ is an
intersection $Q \cap C$ of a compact saturated set $Q$ and a closed
set $C$, where $Q$ intersects $C$.  The space of lenses is equipped
with the Vietoris topology, generated by subsets
$\Box U = \{L \mid L \subseteq U\}$ and
$\Diamond U = \{L \mid L \cap U \neq \emptyset\}$, $U$ open in $X$.
For every lens $L$, one can write $L$ canonically as $Q \cap C$, by
taking $Q = \upc L$ and $C = cl (L)$.

There are several competing variants of the Plotkin powerdomain.  An
elegant one is given by Heckmann's \emph{continuous
  $\mathbf A$-valuations} \cite{Heckmann:absval}.  Those are
Scott-continuous maps $\alpha \colon \Open X \to \mathbf A$ such that
$\alpha (\emptyset) = 0$, $\alpha (X) = 1$, and for all opens $U$,
$V$, $\alpha (U)=0$ implies $\alpha (U \cup V) = \alpha (V)$, and
$\alpha (U) = 1$ implies $\alpha (U \cap V) = \alpha (V)$.  Here
$\mathbf A$ is the poset $\{0, \mathsf M, 1\}$ ordered by
$0 < \mathsf M < 1$.  The \emph{Vietoris topology} on the space of
continuous $\mathbf A$-valuations is generated by the subsets $\Box U$
of continuous $\mathbf A$-valuations such that $\alpha (U)=1$, and the
subsets $\Diamond U$ of continuous $\mathbf A$-valuations such that
$\alpha (U) \neq 0$, where $U$ ranges over the open subsets of $X$.

It was shown in \cite[Fact~5.2]{JGL-mscs09} that, provided $X$ is
sober, the space of continuous $\mathbf A$-valuations is homeomorphic
to a space of so-called \emph{quasi-lenses}, namely, pairs $(Q, C)$ of
a compact saturated set $Q$ and a closed set $C$ such that $Q$
intersects $C$, $Q \subseteq \upc (Q \cap C)$ (whence
$Q = \upc (Q \cap C)$), and for every open neighborhood $U$ of $Q$,
$C \subseteq cl (U \cap C)$.  This is topologized by a topology that
we shall again call the \emph{Vietoris topology}, generated by subsets
$\Box U = \{(Q, C) \mid Q \subseteq U\}$ and
$\Diamond U = \{(Q, C) \mid C \cap U \neq \emptyset\}$.  The
homeomorphism maps $(Q, C)$ to $\alpha$ where $\alpha (U)$ is equal to
$1$ if $Q \subseteq U$, to $0$ if $C$ does not intersect $U$, and to
$\mathsf M$ otherwise.

The space of quasi-lenses is homeomorphic to the space of lenses when
$X$ is stably compact, through the map $L \mapsto (\upc L, cl (L))$
(Proposition~5.3, loc.\ cit.), but is in general larger.  Note, that,
in case $X$ is not only stably compact but also Hausdorff, i.e., when
$X$ is compact Hausdorff, then the lenses are just the compact subsets
of $X$.  It should therefore come as no surprise that we shall equip
our space of (quasi-)lenses with a quasi-metric resembling the
Hausdorff metric.

Quasi-lenses and continuous $\mathbf A$-valuations are a special case
of the notion of a \emph{fork} \cite{Gou-csl07}.  A fork on $X$ is by
definition a pair $(F^-, F^+)$ of a superlinear prevision $F^-$ and a
sublinear prevision $F^+$ satisfying \emph{Walley's condition}:
\[
F^- (h+h') \leq F^- (h) + F^+ (h') \leq F^+ (h+h')
\]
for all $h, h' \in \Lform X$.  This condition was independently
discovered by Keimel and Plotkin \cite{KP:predtrans:pow}, who call
such a pair \emph{medial} \cite{KP:mixed}.

By taking $h'=0$, or $h=0$, Walley's condition in particular implies
that $F^- \leq F^+$.  A fork $(F^-, F^+)$ is normalized,
subnormalized, or discrete if and only if both $F^-$ and $F^+$ are.

We shall call \emph{weak topology} on the space of forks the topology
induced by the inclusion in the product of the spaces of sublinear and
superlinear previsions, each equipped with their weak topology.  Its
specialization quasi-ordering is given by
$(F^-, F^+) \leq ({F'}^-, {F'}^+)$ if and only if $F^- \leq {F'}^-$
and $F^+ \leq {F'}^+$.

\begin{defi}[$\dKRH$, $\dKRH^a$ on forks]
  \label{defn:KRH:fork}
  Let $X, d$ be a quasi-metric space.  The $\dKRH$ quasi-metric on the
  space of forks on $X$ is defined by:
  \[
    \dKRH ((F^-, F^+), ({F'}^-, {F'}^+)) = \max (\dKRH (F^-, {F'}^-),
    \dKRH (F^+, {F'}^+)),
  \]
  and similarly for the $\dKRH^a$ quasi-metric, $a > 0$.
\end{defi}

Those are quasi-metrics indeed.  Lemma~\ref{lemma:KRH:qmet} and
Lemma~\ref{lemma:KRHa:qmet} indeed imply:
\begin{lem}
  \label{lemma:KRH:fork:qmet}
  Let $X, d$ be a standard quasi-metric space.  For all forks
  $(F^-, F^+)$ and $({F'}^-, {F'}^+)$ on $X$, the following are
  equivalent: $(a)$ $(F^-, F^+) \leq ({F'}^-, {F'}^+)$; $(b)$
  $\dKRH ((F^-, \allowbreak F^+), ({F'}^-, {F'}^+))=0$; $(c)$
  $\dKRH^a((F^-, F^+), ({F'}^-, {F'}^+))=0$, for any $a > 0$.  \qed
\end{lem}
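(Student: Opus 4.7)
The proof is a direct unpacking of definitions, reducing the statement for forks to the already-established Lemmas~\ref{lemma:KRH:qmet} and~\ref{lemma:KRHa:qmet} for individual previsions. First I would note that the definition of $\dKRH$ on forks (Definition~\ref{defn:KRH:fork}) is a pointwise maximum of the previsional $\dKRH$ on the two components, and that the maximum of two elements of $\creal$ equals $0$ if and only if both summands equal $0$. So $\dKRH((F^-, F^+), ({F'}^-, {F'}^+))=0$ is equivalent to the conjunction $\dKRH(F^-, {F'}^-)=0$ and $\dKRH(F^+, {F'}^+)=0$.

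Next, I would apply Lemma~\ref{lemma:KRH:qmet}, which applies since $X, d$ is standard and $F^-$, $F^+$, ${F'}^-$, ${F'}^+$ are all previsions (every superlinear or sublinear prevision is a fortiori a prevision). That lemma gives that $\dKRH(F^-, {F'}^-)=0 \liff F^- \leq {F'}^-$ and $\dKRH(F^+, {F'}^+)=0 \liff F^+ \leq {F'}^+$. Combined with the componentwise definition of the ordering on forks stated just before the lemma, this yields the equivalence of (a) and (b). The equivalence of (a) and (c) is entirely analogous, replacing the appeal to Lemma~\ref{lemma:KRH:qmet} with one to Lemma~\ref{lemma:KRHa:qmet}.

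Finally, to conclude that $\dKRH$ and $\dKRH^a$ are quasi-metrics on the space of forks, I would verify the two remaining axioms. Reflexivity ($\dKRH((F^-, F^+), (F^-, F^+))=0$) is immediate from the fact that each component is a quasi-metric. The triangular inequality is preserved by pointwise maxima: if $\dKRH(F^\pm, {F'}^\pm) \leq \dKRH(F^\pm, {F''}^\pm) + \dKRH({F''}^\pm, {F'}^\pm)$ for each sign, then bounding each right-hand summand by the corresponding max gives the triangular inequality for the max quasi-metric on forks. The separation axiom (that $\dKRH((F^-, F^+), ({F'}^-, {F'}^+))=0$ and $\dKRH(({F'}^-, {F'}^+),(F^-, F^+))=0$ together force $(F^-, F^+)=({F'}^-, {F'}^+)$) follows from the (a)$\liff$(b) equivalence applied in both directions, combined with antisymmetry of the pointwise ordering on previsions.

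There is no real obstacle here; the lemma is a bookkeeping consequence of the previsional case. The only subtlety worth pointing out is that the claim that $\dKRH^a((F^-, F^+),({F'}^-, {F'}^+))=0$ is equivalent to $(F^-, F^+) \leq ({F'}^-, {F'}^+)$ holds for \emph{any} fixed $a > 0$, not just uniformly in $a$; this is guaranteed because Lemma~\ref{lemma:KRHa:qmet} already delivers the componentwise equivalences uniformly in $a$.
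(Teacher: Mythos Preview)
Your proposal is correct and matches the paper's approach exactly: the paper states this lemma with no proof body (just a \qed), prefaced only by ``Lemma~\ref{lemma:KRH:qmet} and Lemma~\ref{lemma:KRHa:qmet} indeed imply:'', which is precisely your reduction via the componentwise maximum. Your additional paragraph verifying the remaining quasi-metric axioms is a harmless elaboration of what the paper leaves implicit in the sentence ``Those are quasi-metrics indeed.''
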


\subsection{The Plotkin Powerdomain}
\label{sec:plotkin-powerdomain}

To us, the Plotkin powerdomain will be the set of discrete normalized forks.
Equivalently, this is the space of quasi-lenses, which justifies the name.
\begin{prop}
  \label{prop:fork=lens}
  Let $X$ be a sober space.  There is a bijection between quasi-lenses
  on $X$ and discrete normalized forks on $X$, which maps the
  quasi-lens $(Q, C)$ to $(F_Q, F^C)$.
\end{prop}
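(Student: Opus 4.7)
The proof splits into three parts: well-definedness of the map, injectivity, and surjectivity.

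\emph{Well-definedness.} By Lemma~\ref{lemma:uQ}(1) the functional $F_Q$ is a normalized discrete superlinear prevision, and by Lemma~\ref{lemma:eH} the functional $F^C$ is a discrete sublinear prevision, which is normalized because $C \supseteq Q \cap C$ is non-empty. The real work is checking Walley's condition. For $F_Q(h+h') \leq F_Q(h) + F^C(h')$: Lemma~\ref{lemma:uQ}(1) supplies $x_0 \in Q$ with $F_Q(h) = h(x_0)$; since $Q \subseteq \upc(Q \cap C)$, pick $y_0 \in Q \cap C$ with $y_0 \leq x_0$; monotonicity of $h$ together with $y_0 \in Q$ force $h(y_0) = F_Q(h)$, and then $F_Q(h+h') \leq h(y_0) + h'(y_0) \leq F_Q(h) + F^C(h')$ since $y_0 \in C$. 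For $F_Q(h) + F^C(h') \leq F^C(h+h')$, given $\epsilon > 0$ pick $z_0 \in C$ with $h'(z_0) > F^C(h') - \epsilon$; then $U := h^{-1}(]F_Q(h) - \epsilon, +\infty])$ is an open neighborhood of $Q$, so by the density axiom $U \cap C$ is dense in $C$; letting $V$ be an open neighborhood of $z_0$ on which $h' > h'(z_0) - \epsilon$, the non-empty relative open $V \cap C$ meets $U \cap C$ at some $z$, giving $(h+h')(z) > F_Q(h) + F^C(h') - 3\epsilon$, which suffices as $\epsilon$ is arbitrary.

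\emph{Injectivity} is immediate from the uniqueness clauses in Lemma~\ref{lemma:uQ}(2) and Lemma~\ref{lemma:eH}(2).

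\emph{Surjectivity.} Given a discrete normalized fork $(F^-, F^+)$, Lemma~\ref{lemma:uQ}(2), invoking sobriety of $X$, yields a unique non-empty compact saturated $Q$ with $F^- = F_Q$, and Lemma~\ref{lemma:eH}(2) yields a unique closed $C$ with $F^+ = F^C$; $C$ is non-empty since $F^+$ is normalized. The three quasi-lens axioms are all verified by feeding Walley's condition suitable characteristic functions. (a) If $Q \cap C = \emptyset$, then $U := X \setminus C$ satisfies $Q \subseteq U$ and $U \cap C = \emptyset$, giving $F^-(\chi_U) = 1 > 0 = F^+(\chi_U)$, contradicting $F^- \leq F^+$ (which follows from Walley by setting $h' = 0$). (b) If $x_0 \in Q \setminus \upc(Q \cap C)$, then $x_0 \notin C$ (otherwise $x_0 \in Q \cap C \subseteq \upc(Q \cap C)$), so $W := X \setminus C$ is open, contains $x_0$, and $F^+(\chi_W) = 0$; since $\upc(Q \cap C)$ is saturated and excludes $x_0$, pick an open $U \supseteq \upc(Q \cap C)$ with $x_0 \notin U$, so $F^-(\chi_U) = 0$; Walley then forces $F^-(\chi_U + \chi_W) \leq 0$, hence some $y \in Q$ satisfies $(\chi_U + \chi_W)(y) = 0$, i.e.\ $y \in C \setminus U$, contradicting $Q \cap C \subseteq U$. (c) If $U \supseteq Q$ is open but $C \not\subseteq cl(U \cap C)$, set $V := X \setminus cl(U \cap C)$; then $V \cap C \neq \emptyset$ (so $F^+(\chi_V) = 1$) while $V \cap U \cap C = \emptyset$, and $F^-(\chi_U) = 1$, whence by Walley $F^+(\chi_U + \chi_V) \geq 2$; applying the discreteness of $F^+$ with $f = \chi_{]1, +\infty]}$ yields $F^+(\chi_{U \cap V}) = f(2) = 1$, so $U \cap V$ meets $C$, contradicting $V \cap U \cap C = \emptyset$.

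\emph{Main obstacle.} The delicate point is the trio of characteristic-function applications of Walley's condition in surjectivity, particularly axiom (b): it requires first noticing that $x_0$ must lie outside $C$, then separating $x_0$ from $\upc(Q \cap C)$ via an open produced by saturation, and finally combining the two into a sum of indicators that yields the contradiction. Axiom (c), by contrast, is the first place where the full strength of discreteness of $F^+$ (beyond mere $\{0,1\}$-valuedness on open sets) is invoked.
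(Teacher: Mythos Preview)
Your proof is correct and follows the same overall strategy as the paper: reduce the bijection to Lemmas~\ref{lemma:eH} and~\ref{lemma:uQ}, and establish the equivalence between Walley's condition and the quasi-lens axioms by testing against characteristic functions. There are two minor tactical differences worth noting. For axiom~(b), the paper instead feeds Walley $h' = \chi_{X \setminus C}$ with $h$ arbitrary to derive the general inequality $\inf_{y \in Q \cap C} h(y) \leq \inf_{y \in Q} h(y)$, then specializes $h = \chi_{X \setminus \dc x}$; your direct contradiction via a separating open $U$ around $\upc(Q\cap C)$ is arguably cleaner. For axiom~(c), the paper avoids the explicit appeal to discreteness: once $F^C(\chi_U + \chi_V) \geq 2$ and the function $\chi_U + \chi_V$ only takes the values $0,1,2$, the supremum over $C$ must actually be attained at a point with value $2$, which already lies in $U \cap V \cap C$. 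One small omission in your well-definedness argument: the $\epsilon$-approximation for $F_Q(h) + F^C(h') \leq F^C(h+h')$ tacitly assumes both summands are finite, but the infinite cases follow trivially from $h' \leq h+h'$ and $Q \cap C \neq \emptyset$.
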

\proof Fix a quasi-lens $(Q, C)$.  $F^C$ is a discrete normalized
sublinear prevision by Lemma~\ref{lemma:eH}, and $F_Q$ is a discrete
normalized superlinear prevision by Lemma~\ref{lemma:uQ}.  Moreover,
the maps $Q \mapsto F_Q$ and $C \mapsto F^C$ are bijections.  We need
to show that if $(Q, C)$ is a quasi-lens, then $(F_Q, F^C)$ is a fork,
and conversely.



We claim that $F_Q (h) = \inf_{x \in Q \cap C} h (x)$ for every
$h \in \Lform X$.  Indeed,
$F_Q (h) = \inf_{x \in Q} h (x) \leq \inf_{x \in Q \cap C} h (x)$
follows from the fact that $Q$ contains $Q \cap C$, and in the
reverse direction, we use the equality $Q = \upc (Q \cap C)$ to show
that for every $x \in Q$, there is a $y \in Q \cap C$ such that
$h (x) \geq h (y)$.

For all $h, h' \in \Lform X$, it follows that
$F_Q (h+h') = \inf_{x \in Q \cap C} (h (x) + h' (x)) \leq \inf_{x \in Q
  \cap C} (h (x) + \sup_{x' \in Q \cap C} h' (x')) \leq \inf_{x \in Q
  \cap C} (h (x) + \sup_{x' \in C} h' (x')) = F_Q (h) + F^C (h')$.

To show that $F_Q (h) + F^C (h') \leq F^C (h+h')$, we assume the
contrary.  There is a real number $t$ such that
$F_Q (h) + F^C (h') > t \geq F^C (h+h')$.  The second inequality
implies that $F^C (h')$, which is less than or equal to $F^C (h+h')$,
is a (non-negative) real number $a$.  The first inequality then
implies $F_Q (h) > t-a$.  It follows that there must be a real number
$b$ such that $F_Q (h) > b > t-a$.  Since $F_Q (h) > b$, every element
$x$ of $Q$ is such that $h (x) > b$.  Therefore $Q$ is included in
the open set $U = h^{-1} (]b, +\infty])$.  We know that
$C \subseteq cl (U \cap C)$, and this implies that
$F^C (h') = \sup_{x \in C} h' (x) \leq \sup_{x \in cl (U \cap C)} h'
(x) = \sup_{x \in U \cap C} h' (x)$ (using
Lemma~\ref{lemma:H:cl:sup}).  Since $b > t-a$ and $a = F^C (h')$,
there is a point $x$ in $U \cap C$ such that $h' (x) > t-b$.  Because
$x$ is in $U$, $h (x) > b$, so $h (x)+h'(x) > t$, contradicting the
inequality $t \geq F^C (h+h')$.

Hence if $(Q, C)$ is a quasi-lens, then $(F_Q, F^C)$ is a fork, which
is necessarily discrete and normalized.

Conversely, let us assume that $(F_Q, F^C)$ is a fork.  We have seen
that, in particular, $F_Q \leq F^C$.  Hence
$F_Q (\chi_U) \leq F^C (\chi_U)$ where $U$ is the complement of $C$.
That implies that $F_Q (\chi_U)=0$, i.e., $Q$ is not included in $U$,
so that $Q$ intersects $C$.

Let us use the inequality $F_Q (h+h') \leq F_Q (h) + F^C (h')$ on
$h' = \chi_U$ where $U$ is the complement of $C$ again.  Then
$F^C (h')=0$, so
$\inf_{y \in Q} (h (y) + \chi_U (y)) \leq \inf_{y \in Q} h (y)$.  In
other words,
$\min (\inf_{y \in Q \cap U} (h (y) + 1), \inf_{y \in Q \cap C} h (y))
\leq \inf_{y \in Q} h (y)$, or equivalently, one of the two numbers
$\inf_{y \in Q \cap U} (h (y) + 1)$, $\inf_{y \in Q \cap C} h (y)$ is
less than or equal to $\inf_{y \in Q} h (y)$.  That cannot be the
first one, which is larger than or equal to
$1+\inf_{y \in Q \cap U} h (y) \geq 1 + \inf_{y \in Q} h (y)$, so:
$(*)$ $\inf_{y \in Q \cap C} h (y) \leq \inf_{y \in Q} h (y)$.  Now,
for every $x \in Q$, $\dc x$ is closed, hence its complement $V$ is
open.  Take $h = \chi_V$.  Then $\inf_{y \in Q} h (y) \leq h (x) = 0$,
therefore $\inf_{y \in Q \cap C} h (y)=0$ by $(*)$.  Since $h$ only
takes the values $0$ or $1$ (or since $Q \cap C$ is compact), there is
a point $y$ in $Q \cap C$ such that $h (y)=0$, i.e., such that
$y \leq x$.  Therefore $Q$ is included in $\upc (Q \cap C)$.

Finally, let $U$ be any open neighborhood of $Q$.  We use the
inequality $F_Q (h) + F^C (h') \leq F^C (h + h')$ on $h = \chi_U$, so
that $1 + F^C (h') \leq F^C (h + h')$.  For every open subset $V$ that
intersects $C$, taking $h' = \chi_V$ we obtain that
$2 \leq F^C (\chi_U + \chi_V) = \sup_{x \in C} (\chi_U (x) + \chi_V
(x))$.  Since $\chi_U (x) + \chi_V (x)$ can only take the values $0$,
$1$, or $2$, there must be a point $x$ in $C$ such that
$\chi_U (x)+\chi_V (x)=2$.  Such a point must be both in $U$ and in
$V$.  We have shown that every open subset $V$ that intersects $C$
also intersects $U \cap C$.  Letting $V$ be the complement of
$cl (U \cap C)$, it follows that $C$ cannot intersect $V$, hence must
be included in $cl (U \cap C)$.  \qed

\begin{defi}[$\dP$, $\dP^a$]
  \label{defn:dP}
  Let $X, d$ be a quasi-metric space.  For any two quasi-lenses
  $(Q, C)$ and $(Q', C')$, let
  $\dP ((Q, C), (Q', C')) = \dKRH ((F_Q, F^C), (F_{Q'}, F^{C'}))$,
  and, for each $a > 0$, let
  $\dP^a ((Q, C), (Q', C')) = \dKRH^a ((F_Q, F^C), (F_{Q'}, F^{C'}))$.
  For every $a > 0$,
  $\dP^a ((Q, C), (Q', C')) = \max (\dQ^a (Q, Q'), \dH^a (C, C'))$.
\end{defi}

By definition of $\dQ$ (Definition~\ref{defn:dQ}) and of $\dQ^a$
(Remark~\ref{defn:dQa}), and by Proposition~\ref{prop:H:prev}
and Remark~\ref{rem:HX:dKRHa}, we obtain the following
characterization.
\begin{lem}
  \label{lem:dP}
  Let $X, d$ be a standard quasi-metric space.  Then $\dP ((Q, C),
  (Q', C')) = \max (\dQ (Q, Q'), \dH (C, C'))$.
\end{lem}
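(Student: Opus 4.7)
The plan is to prove this by unfolding the definitions and chaining two isometry results that were already established for the Hoare and Smyth powerdomains separately. The claim is essentially that the isometry $(Q, C) \mapsto (F_Q, F^C)$ of Proposition~\ref{prop:fork=lens} transports the "componentwise max" $\dKRH$ quasi-metric on forks to the componentwise max of $\dQ$ and $\dH$, and there is no hidden interaction between the two components.

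First I would expand $\dP((Q, C), (Q', C'))$ using Definition~\ref{defn:dP}, obtaining $\dKRH((F_Q, F^C), (F_{Q'}, F^{C'}))$. Then I would apply Definition~\ref{defn:KRH:fork} of $\dKRH$ on forks to split this as $\max(\dKRH(F_Q, F_{Q'}), \dKRH(F^C, F^{C'}))$.

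Next, I would identify the first term using Definition~\ref{defn:dQ}, which gives $\dKRH(F_Q, F_{Q'}) = \dQ(Q, Q')$ directly by definition. For the second term, I would invoke Proposition~\ref{prop:H:prev}, whose content is precisely that $C \mapsto F^C$ is an isometry, i.e., $\dH(C, C') = \dKRH(F^C, F^{C'})$. Combining these two equalities yields $\dP((Q, C), (Q', C')) = \max(\dQ(Q, Q'), \dH(C, C'))$.

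There is no real obstacle here: the proof is purely formal, a chain of definitional rewrites together with two previously established isometry results. The only subtle point worth flagging is that Proposition~\ref{prop:H:prev} requires $X, d$ to be standard, which is precisely the hypothesis of Lemma~\ref{lem:dP}, so the application is legitimate. (Definition~\ref{defn:dQ} introduces $\dQ$ on any quasi-metric space via the $\dKRH$ formula, so no extra hypothesis is needed for the first term.)
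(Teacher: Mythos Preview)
Your proof is correct and follows exactly the same approach as the paper: unfold Definition~\ref{defn:dP} and Definition~\ref{defn:KRH:fork}, then identify the two components via Definition~\ref{defn:dQ} and Proposition~\ref{prop:H:prev}. Your observation that standardness is needed precisely for the invocation of Proposition~\ref{prop:H:prev} is also accurate.
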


Together with Proposition~\ref{prop:fork=lens}, this implies the following.
\begin{cor}
  \label{corl:P:prev}
  Let $X, d$ be a sober standard quasi-metric space.  The bijection
  $(Q, C) \mapsto (F_Q, F^C)$ is an isometry between the space of
  quasi-lenses on $X$ with the $\dP$ quasi-metric, and the space of
  discrete normalized forks on $X$ with the $\dKRH$ quasi-metric.  \qed
\end{cor}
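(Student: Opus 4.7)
The plan is to note that this corollary is essentially an assembly of two pieces already laid out in the preceding material, with nothing new to verify of substance.

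First, for the bijection part, I would invoke Proposition~\ref{prop:fork=lens} directly: under the sobriety assumption on $X$, that proposition establishes that $(Q,C) \mapsto (F_Q, F^C)$ is a bijection from the set of quasi-lenses on $X$ to the set of discrete normalized forks on $X$. Sobriety is needed for the $F_Q$ side via Lemma~\ref{lemma:uQ}(2), where the Hofmann–Mislove theorem is used to recover the compact saturated set $Q$ from the associated normalized discrete superlinear prevision; the $F^C$ side needs no separation hypothesis (Lemma~\ref{lemma:eH}).

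Second, for the isometry assertion, there is literally nothing to check: by Definition~\ref{defn:dP}, $\dP((Q,C),(Q',C'))$ is \emph{defined} as $\dKRH((F_Q,F^C),(F_{Q'},F^{C'}))$, so the map preserves distances on the nose by fiat. Standardness of $X,d$ enters only to guarantee that both sides are genuine quasi-metrics (rather than merely hemimetrics), via Lemma~\ref{lemma:KRH:fork:qmet}, which in turn rests on Lemma~\ref{lemma:KRH:qmet} and Lemma~\ref{lemma:KRHa:qmet}.

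There is no real obstacle here; the only mild temptation would be to instead prove the isometry through the more concrete formula of Lemma~\ref{lem:dP}, namely $\dP((Q,C),(Q',C')) = \max(\dQ(Q,Q'),\dH(C,C'))$, and then combine this with the isometries $Q \mapsto F_Q$ of Proposition~\ref{prop:H:prev} (for the Smyth part, via $\dQ$'s very definition) and $C \mapsto F^C$ of Proposition~\ref{prop:H:prev} (for the Hoare part). That route reduces to observing that $\dKRH$ on forks is by definition the $\max$ of the $\dKRH$-distances on the two components, matching the $\max$ in Lemma~\ref{lem:dP}. Either route suffices; I would state the proof in a single sentence invoking Proposition~\ref{prop:fork=lens} for bijectivity and Definition~\ref{defn:dP} for the isometry, since the definition was manifestly designed to make this corollary hold.
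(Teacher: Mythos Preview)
Your proposal is correct and matches the paper's approach: the bijection comes from Proposition~\ref{prop:fork=lens} (using sobriety), and the isometry is immediate from Definition~\ref{defn:dP}, which defines $\dP$ precisely as $\dKRH$ on the images. One small slip in your alternative-route paragraph: the isometry $Q \mapsto F_Q$ is not Proposition~\ref{prop:H:prev} (that one is for $C \mapsto F^C$) but is built into Definition~\ref{defn:dQ} itself, as your parenthetical already notes.
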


\begin{rem}[The Hausdorff quasi-metric]
  \label{rem:dP:lens}
  Let $X, d$ be a standard quasi-metric space.  If we equate a lens
  $L$ with the quasi-lens $(\upc L, cl (L))$, then we can reinterpret
  Lemma~\ref{lem:dP} by saying that
  $\dP (L, L') = \max (\dQ (\upc L, \upc L'), \dH (cl (L), cl (L'))$.

  We have
  $\dQ (\upc L, \upc L') = \sup_{x' \in \upc L'} \inf_{x \in \upc L} d
  (x, x')$ by Lemma~\ref{lemma:dQ}, and this is also equal to
  $\sup_{x' \in L'} \inf_{x \in L} d (x, x')$, since $d$ is monotonic
  in its first argument and antitonic in its second argument.  We also
  have $\dH (cl (L), cl (L')) = \sup_{x \in cl (L)} d (x, cl (L'))$ by
  Definition~\ref{defn:dH}, and this is equal to
  $\sup_{x \in L} d (x, cl (L'))$ by Lemma~\ref{lemma:H:cl:sup}, since
  $d (\_, cl (L'))$ is $1$-Lipschitz continuous (by
  \cite[Lemma~6.11]{JGL:formalballs}, recalled at the beginning of
  Section~\ref{sec:hoare-quasi-metric}) hence continuous.

  If $x$ is a center point, we recall that
  $d (x, cl (L')) = \inf_{x' \in cl (L')} d (x, x')$.  Also the map
  $-d (x, \_)$ is Scott-continuous, because the inverse image of
  $]t, +\infty]$ is the open ball $B^d_{x, <-t}$ (meaning the empty
  set if $t\leq 0$), so
  $\inf_{x' \in cl (L')} d (x, x') = -\sup_{x' \in cl (L')} -d (x, x')
  = - \sup_{x' \in L'} \allowbreak -d (x, x') = \inf_{x' \in L'} d (x,
  x')$ by Lemma~\ref{lemma:H:cl:sup}.

  All that implies that, on a standard quasi-metric space $X, d$ where
  every point is a center point, the $\dP$ quasi-metric coincides with
  the familiar Hausdorff quasi-metric:
  \[
    \dP (L, L') = \max (\sup_{x \in L} \inf_{x' \in L'} d (x, x'),
    \sup_{x' \in L'} \inf_{x \in L} d (x, x')).
  \]
  This is the case, notably, if $X, d$ is Smyth-complete.  This is
  also the case if $X, d$ is a \emph{metric} space, in which case
  $\dP$ is exactly the Hausdorff metric on the space of non-empty
  compact subsets of $X$.
\end{rem}

\begin{lem}
  \label{lemma:PX:naivesup}
  Let $X, d$ be a sober, standard quasi-metric space.  For every
  directed family of formal balls ${((F^-_i, F^+_i), r_i)}_{i \in I}$
  on the space of forks, if ${(F^-_i, r_i)}_{i \in I}$ has a naive
  supremum $(F^-, r)$ where $F^-$ is continuous, and if
  ${(F^+_i, r_i)}_{i \in I}$ has a naive supremum $(F^+, r)$ where
  $F^+$ is continuous, then $(F^-, F^+)$ is a fork.
\end{lem}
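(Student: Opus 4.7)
The strategy is to verify Walley's condition $F^-(h+h') \leq F^-(h) + F^+(h') \leq F^+(h+h')$ for all $h, h' \in \Lform X$. Positive homogeneity, superlinearity of $F^-$, and sublinearity of $F^+$ are already free from Proposition~\ref{prop:LPrev:simplesup}~(4). Since both naive suprema are assumed continuous as $\Lform$-previsions, Proposition~\ref{prop:Gbar} lifts them uniquely to full previsions $\overline{F^-}$, $\overline{F^+}$ on $\Lform X$, preserving superlinearity and sublinearity respectively. I would first establish Walley's condition on Lipschitz continuous inputs and then extend to all lower semicontinuous inputs by Scott-continuity.

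For the Lipschitz case, take $h \in \Lform_\alpha(X,d)$, $h' \in \Lform_\beta(X,d)$, so that $h+h' \in \Lform_{\alpha+\beta}(X,d)$ by Proposition~\ref{prop:alphaLip:props}~(2). By the naive-supremum formula (Proposition~\ref{prop:LPrev:simplesup}), $F^-(h+h') = \sup_{i \in I} (F^-_i(h+h') + (\alpha+\beta)(r - r_i))$, with the family on the right monotone in the directed order $\sqsubseteq$ on $I$. Applying Walley's condition to each individual fork $(F^-_i, F^+_i)$ gives
$$F^-_i(h+h') + (\alpha+\beta)(r-r_i) \leq \bigl(F^-_i(h) + \alpha(r-r_i)\bigr) + \bigl(F^+_i(h') + \beta(r-r_i)\bigr).$$
Because each of the two summands on the right is monotone in $i$ along $\sqsubseteq$ and $I$ is directed, Scott-continuity of addition on $\creal$ yields $\sup_i\bigl[(F^-_i(h) + \alpha(r-r_i)) + (F^+_i(h') + \beta(r-r_i))\bigr] = F^-(h) + F^+(h')$. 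Taking $\sup_i$ on the left side of the displayed inequality then gives $F^-(h+h') \leq F^-(h) + F^+(h')$. The symmetric argument, starting from $F^-_i(h) + F^+_i(h') \leq F^+_i(h+h')$ and taking $\sup_i$, produces $F^-(h) + F^+(h') \leq F^+(h+h')$.

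To extend to arbitrary $h, h' \in \Lform X$, I would use that $h = \sup_\alpha h^{(\alpha)}$ is a directed supremum of Lipschitz continuous approximations (Theorem~6.17 of \cite{JGL:formalballs}, quoted just after Definition~\ref{defn:Lalpha:bnd}), and similarly for $h'$. Scott-continuity of $+$ on $\creal$ gives $h+h' = \sup_\alpha (h^{(\alpha)} + (h')^{(\alpha)})$ as a directed supremum, with each summand in $\Lform_{2\alpha}(X,d)$. Applying the previous step at each $\alpha$ and using the Scott-continuity of $\overline{F^-}$ and $\overline{F^+}$ (Proposition~\ref{prop:Gbar}~(1)), together with the directedness of both approximating chains, yields
$$\overline{F^-}(h+h') = \sup_\alpha F^-(h^{(\alpha)} + (h')^{(\alpha)}) \leq \sup_\alpha \bigl(F^-(h^{(\alpha)}) + F^+((h')^{(\alpha)})\bigr) = \overline{F^-}(h) + \overline{F^+}(h'),$$
and the other half of Walley's condition is proved identically.

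The only real subtlety is the commutation of directed suprema with sums that appears in both steps; this is legitimate because the relevant families are monotone along a common directed index, so that $\sup_i(a_i + b_i) = \sup_i a_i + \sup_j b_j$. Everything else is a direct application of the naive-supremum formula, of Walley's condition at the level $i$, and of Scott-continuity of the extended previsions.
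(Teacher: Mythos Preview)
Your proof is correct and follows essentially the same approach as the paper: establish Walley's condition first on Lipschitz continuous inputs using the naive-supremum formula, then extend to all of $\Lform X$ via the chain $h^{(\alpha)}$ and Scott-continuity. The only cosmetic difference is that for the Lipschitz step the paper invokes the ``ordinary limit'' description of the naive supremum (from the proof of Proposition~\ref{prop:LPrev:simplesup}) so that the inequality passes directly to the limit, whereas you work explicitly with the $\sup_i$ formula and the commutation of sums with directed suprema; both are equivalent realizations of the same idea.
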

\proof Note that, because the quasi-metric on forks is defined as a
maximum, the two families ${(F^-_i, r_i)}_{i \in I}$ and
${(F^+_i, r_i)}_{i \in I}$ are directed, so taking their naive suprema
makes sense.  Explicitly, define $i \preceq j$ as
$((F^-_i, F^+_i), r_i) \leq^{\dKRH^+} ((F^-_j, F^+_j), r_j)$.  Then
$i \preceq j$ implies that
$\dKRH ((F^-_i, F^+_i), (F^-_j, F^+_j)) \leq r_i - r_j$, hence that
$\dKRH (F^-_i, F^-_j) \leq r_i - r_j$ and
$\dKRH (F^+_i, F^+_j) \leq r_i - r_j$, so that
$(F^-_i, r_i) \leq^{\dKRH^+} (F^-_j, r_j)$ and
$(F^+_i, r_i) \leq^{\dKRH^+} (F^+_j, r_j)$.  For all $i, j \in I$,
there is a $k \in I$ such that $i, j \preceq k$, and that implies that
$(F^-_i, r_i), (F^-_j, r_j) \leq^{\dKRH^+} (F^-_k, r_k)$, and
similarly with $+$ instead of $-$.

Note also that the naive suprema $(F^-, r)$ and $(F^+, r)$ must have
the same radius $r = \inf_{i \in I} r_i$, by definition.

By Proposition~\ref{prop:LPrev:simplesup}~(4), $F^-$ is a
superlinear prevision and $F^+$ is a sublinear prevision.  We must
prove Walley's condition.

For all $h \in \Lform_\alpha (X, d)$ (resp., $\Lform_\alpha^a (X, d)$)
and $h' \in \Lform_\beta (X, d)$ (resp., $\Lform_\beta^a (X, d)$),
for all $\alpha, \beta > 0$, we have:
\[
  F_i^- (h + h') \leq F_i^- (h) + F_i^+ (h') \leq F_i^+ (h+h')
\]
for every $i \in I$, since $(F_i^-, F_i^+)$ is a fork.  Therefore:
\begin{align*}
  F_i^- (h + h') + (\alpha+\beta) r - (\alpha+\beta) r_i
  & \leq (F_i^- (h) +\alpha r - \alpha r_i) + (F_i^+ (h')+\beta r-\beta
    r_i) \\
  & \leq F_i^+ (h+h')  + (\alpha+\beta) r - (\alpha+\beta) r_i
\end{align*}
We note that $h+h'$ is $(\alpha+\beta)$-Lipschitz continuous.  By
taking (directed) suprema over $i \in I$, and noting that addition is
Scott-continuous on $\creal$, we obtain the desired inequalities:
\[
  F^- (h + h') \leq F^- (h) + F^+ (h') \leq F^+ (h+h')
\]
for all $h \in \Lform_\alpha (X, d)$ (resp., $\Lform_\alpha^a (X, d)$)
and $h' \in \Lform_\beta (X, d)$ (resp., $\Lform_\beta^a (X, d)$),

For all $h, h' \in \Lform X$, for every $\alpha > 0$, we then obtain
that $F^- (h^{(\alpha)} + {h'}^{(\alpha)}) \leq F^- (h^{(\alpha)}) +
F^+ ({h'}^{(\alpha)}) \leq F^- (h) + F^+ (h')$, hence, taking suprema
over $\alpha$, $F^- (h+h') \leq F^- (h) + F^+ (h)$.
Similarly, $F^- (h^{(\alpha)}) + F^+ ({h'}^{(\alpha)}) \leq F^+
(h^{(\alpha)} + {h'}^{(\alpha)}) \leq F^+ (h+h')$, hence, taking
suprema over $\alpha$, $F^- (h) + F^+ (h') \leq F^+ (h+h')$.  \qed

\begin{prop}[Yoneda-completeness, Plotkin powerdomain]
  \label{prop:PX:Ycomplete}
  Let $X, d$ be a sober standard Lipschitz regular quasi-metric space,
  or a continuous Yoneda-complete quasi-metric space.  The space of
  quasi-lenses on $X$, with the $\dKRH$ quasi-metric, is
  Yoneda-complete.

  The supremum of a directed family of formal balls
  ${((Q_i, C_i), r_i)}_{i \in I}$ is $((Q, C), r)$ where
  $r = \inf_{i \in I} r_i$, $(F_Q, r)$ is the naive supremum of
  ${(F_{Q_i}, r_i)}_{i \in I}$, and $(F^C, r)$ is the naive supremum
  of ${(F^{C_i}, r_i)}_{i \in I}$.  In particular, $(Q, r)$ is the
  supremum of ${(Q_i, r_i)}_{i \in I}$ and $(C, r)$ is the supremum of
  ${(C_i, r_i)}_{i \in I}$.
\end{prop}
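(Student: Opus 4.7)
The plan is to transport the problem via the isometry of Corollary~\ref{corl:P:prev} to the space of discrete normalized forks with $\dKRH$, and then to decompose formal balls of forks into their superlinear and sublinear components. Since $\dKRH$ on forks is the max of the $\dKRH$ quasi-metrics on the two components, Lemma~\ref{lem:dP} gives us that $((Q,C), r) \leq^{\dP^+} ((Q',C'),r')$ if and only if $(Q,r) \leq^{\dQ^+} (Q',r')$ \emph{and} $(C,r) \leq^{\dH^+} (C',r')$. Thus, any directed family ${((Q_i,C_i), r_i)}_{i \in I, \preceq}$ in $\mathbf B$ of quasi-lenses gives rise to two directed families, ${(Q_i, r_i)}_{i \in I, \preceq}$ in $\mathbf B(\Smyth X, \dQ)$ and ${(C_i, r_i)}_{i \in I, \preceq}$ in $\mathbf B(\Hoare X, \dH)$ (note that $C_i \in \Hoare X$ since $C_i \supseteq Q_i \cap C_i \neq \emptyset$).

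First I would apply Proposition~\ref{prop:QX:Ycomplete} to obtain a supremum $(Q, r_Q)$ in $\mathbf B(\Smyth X, \dQ)$, computed as a naive supremum: $r_Q = \inf_{i \in I} r_i$ and $(F_Q, r_Q)$ is the naive supremum of ${(F_{Q_i}, r_i)}_{i \in I}$. (Proposition~\ref{prop:QX:Ycomplete} applies to both cases: a sober standard Lipschitz regular space, and a continuous Yoneda-complete space, the latter being automatically sober.) Similarly, Proposition~\ref{prop:HX:Ycomplete} yields a supremum $(C, r_C)$ in $\mathbf B(\Hoare X, \dH)$ with $r_C = \inf_{i \in I} r_i = r_Q$ and $(F^C, r_C)$ naive-supremum of ${(F^{C_i}, r_i)}_{i \in I}$. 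Write $r$ for the common value $r_Q = r_C$.

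The delicate step is to verify that $(Q, C)$ is actually a quasi-lens. For this I invoke Lemma~\ref{lemma:PX:naivesup}: since $F_Q$ and $F^C$ are continuous (being previsions, provided by the Yoneda-completeness results above) and the two families of formal balls have naive suprema $(F_Q, r)$ and $(F^C, r)$ coming from the same directed family of forks $(F_{Q_i}, F^{C_i})$, we conclude that $(F_Q, F^C)$ is a fork. Moreover, by Proposition~\ref{prop:LPrev:simplesup}~(4), naive suprema preserve discreteness and normalization, so $(F_Q, F^C)$ is a discrete normalized fork. The converse direction of Proposition~\ref{prop:fork=lens} then produces a quasi-lens $(Q'', C'')$ with $F_{Q''} = F_Q$ and $F^{C''} = F^C$; uniqueness in Lemma~\ref{lemma:uQ} and Lemma~\ref{lemma:eH} forces $Q'' = Q$ and $C'' = C$, so $(Q, C)$ is itself a quasi-lens.

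Finally, the universal property follows painlessly from the product-like structure of $\dP$. By construction $((Q,C), r)$ is an upper bound of ${((Q_i, C_i), r_i)}$, using the factorization of $\leq^{\dP^+}$ noted above. For any other upper bound $((Q',C'), r')$, the same factorization gives that $(Q_i, r_i) \leq^{\dQ^+} (Q', r')$ and $(C_i, r_i) \leq^{\dH^+} (C', r')$ for all $i$, so by the universal properties in $\mathbf B(\Smyth X, \dQ)$ and $\mathbf B(\Hoare X, \dH)$ we have $(Q, r) \leq^{\dQ^+} (Q', r')$ and $(C, r) \leq^{\dH^+} (C', r')$, whence $((Q, C), r) \leq^{\dP^+} ((Q', C'), r')$. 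The main obstacle is really the middle step—checking that the two separately-constructed suprema $Q$ and $C$ glue together into a genuine quasi-lens—which is dispatched precisely by the Walley-condition machinery of Lemma~\ref{lemma:PX:naivesup} combined with the bijection of Proposition~\ref{prop:fork=lens}.
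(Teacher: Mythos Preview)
Your proof is correct and follows essentially the same approach as the paper: decompose the directed family of formal balls of quasi-lenses into its $\Smyth$ and $\Hoare$ components via the max structure of $\dP$, invoke Propositions~\ref{prop:QX:Ycomplete} and~\ref{prop:HX:Ycomplete} to obtain naive suprema $(Q,r)$ and $(C,r)$, use Lemma~\ref{lemma:PX:naivesup} to check that $(F_Q, F^C)$ is a fork, and conclude by the componentwise characterization of $\leq^{\dP^+}$. Your treatment is in fact slightly more explicit than the paper's in two places: you spell out that discreteness and normalization are preserved via Proposition~\ref{prop:LPrev:simplesup}~(4), and you explicitly invoke the converse direction of Proposition~\ref{prop:fork=lens} together with uniqueness to identify $(Q,C)$ as a quasi-lens, where the paper simply asserts this.
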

\proof Note that every continuous Yoneda-complete quasi-metric space is sober
\cite[Proposition~4.1]{JGL:formalballs}, so we do not have to assume
sobriety explicitly in that case.

Let ${((Q_i, C_i), r_i)}_{i \in I}$ be a directed family of formal
balls on the space of quasi-lenses.  The families
${(Q_i, r_i)}_{i \in I}$ and ${(C_i, r_i)}_{i \in I}$ are also
directed, because $\dP$ is defined as a maximum.  We also obtain a
directed family of formal balls $((F_{Q_i}, F^{C_i}), r_i)$ on the
space of discrete normalized forks, by
Proposition~\ref{prop:fork=lens}.  By
Proposition~\ref{prop:HX:Ycomplete} and
Proposition~\ref{prop:QX:Ycomplete}, the supremum $(C, r)$ of
${(C_i, r_i)}_{i \in I}$ exists and is a naive supremum, and the
supremum $(Q, r)$ (with the same $r$) of ${(Q_i, r_i)}_{i \in I}$
exists and is a naive supremum.  By Lemma~\ref{lemma:PX:naivesup},
$(F_Q, F^C)$ is a fork, and certainly it is discrete and normalized.

To show that $((Q, C), r)$ is the desired supremum, we observe that
the upper bounds $((Q', C'), r')$ of ${((Q_i, C_i), r_i)}_{i \in I}$
are exactly those formal balls such that $(Q', r')$ is an upper bound
of ${(Q_i, r_i)}_{i \in I}$ and $(C', r')$ is an upper bound of
${(C_i, r_i)}_{i \in I}$.  This is again because our $\dKRH$
quasi-metric on forks (or the $\dP$ quasi-metric on quasi-lenses) is
defined as a maximum.  Hence $((Q, C), r)$ is such an upper bound, and
if $((Q', C'), r')$ is any other upper bound, then
$(Q, r) \leq^{\dQ^+} (Q', r')$ and $(C, r) \leq^{\dH^+} (C', r')$, so
$((Q, C), r) \leq^{\dP^+} ((Q', C'), r')$.  \qed

\begin{rem}
  \label{rem:PX:Ycomplete}
  For complete metric spaces $X, d$,
  Proposition~\ref{prop:PX:Ycomplete} specializes to the well-known
  fact that the space of non-empty compact subsets of $X$, with the
  Hausdorff metric, is complete.
\end{rem}

In order to show that the Plotkin powerdomain of an algebraic
Yoneda-complete quasi-metric space is algebraic, we require the
following two lemmas.

For disambiguation purposes, we write $\Box_{\mathcal P} U$ for the set
of lenses $(Q, C)$ such that $Q \subseteq U$, reserving the notation
$\Box U$ for the set of non-empty compact saturated subsets $Q$ of $X$
such that $Q \subseteq U$.  We use a similar convention with
$\Diamond_{\mathcal P} U$ and $\Diamond U$.

\begin{customlemma}{\thesubsection.A}
  \label{lemma:QC:approx}
  Let $X, d$ be a standard algebraic quasi-metric space, with a strong
  basis $\mathcal B$.  Given any quasi-lens $(Q, C)$ on $X$, any
  $\epsilon > 0$, and given open subsets $U$, $U_1$, \ldots, $U_m$ of
  $X$ such that
  $(Q, C) \in \Box_{\mathcal P} U \cap \bigcap_{i=1}^m
  \Diamond_{\mathcal P} U_i$, there is a finite non-empty subset $E$
  of $\mathcal B$ and a number $r$ such that $0 < r \leq \epsilon$ and
  $(Q, C) \in \Box_{\mathcal P} (\bigcup_{x \in E} B^d_{x, <r}) \cap
  \bigcap_{x \in E} \Diamond_{\mathcal P} B^d_{x, <r} \subseteq
  \Box_{\mathcal P} U \cap \bigcap_{i=1}^m \Diamond_{\mathcal P} U_i$.
\end{customlemma}
\proof We use the following remark: (A) For every $\epsilon > 0$, the
open balls $B^d_{x, <r}$ with $x \in \mathcal B$ and
$0 < r < \epsilon$ form a base of the $d$-Scott topology on $X$.  In
order to prove this, let $U$ be any open subset of $X$, and $y \in U$.
Then $(y, 0)$ is in $\widehat U \cap V_\epsilon$, and is the supremum
of a directed family of formal balls of the form $(x, r)$ with
$x \in \mathcal B$ and $(x, r) \ll (y, 0)$ by
Lemma~\ref{lemma:B:basis}.  Hence one of them is in
$\widehat U \cap V_\epsilon$.  Since it is in $V_\epsilon$,
$r < \epsilon$, and since $(x, r) \ll (y, 0)$ where $x$ is a center
point in a standard algebraic space, $d (x, y) < r$, so $y$ is in
$B^d_{x, <r}$.  Moreover, $B^d_{x, <r}$ is open, still because $x$ is
a center point, and because
$B^d_{x, <r} = B^{d^+}_{(x, 0), <r} \cap X$.  Finally, $B^d_{x, <r}$
is included in $\upc (x, r) \cap X \subseteq \widehat U \cap X = U$.

Also, we will rely on the following: (B) for every center point $x$,
for every closed subset $C'$ of $X$, for every $r > 0$,
$d (x, C') < r$ if and only if $B^d_{x, <r}$ intersects $C'$.  Indeed,
by \cite[Proposition~6.12]{JGL:formalballs} and since $x$ is a center
point, $d (x, C') = \inf_{y \in C'} d (x, y)$, so $d (x, C') < r$ if
and only if $d (x, y) < r$ for some $y \in C'$.

For every $i$, $1\leq i\leq m$, $C$ intersects $U_i$.  Since
$Q \subseteq U$, by definition of quasi-lenses $C$ is included in
$cl (U \cap C)$, so $cl (U \cap C)$ intersects $U_i$.  It follows that
$U \cap C$ intersects $U_i$, so that $C$ is in
$\Diamond (U \cap U_i)$.  Hence $(Q, C)$ is in the open set
$\Box_{\mathcal P} U \cap \bigcap_{i=1}^m \Diamond_{\mathcal P} (U
\cap U_i)$.  Replacing $U_i$ by $U \cap U_i$ if necessary, we will
therefore assume that every $U_i$ is included in $U$.

For every $i$, $1\leq i\leq m$, $U_i$ intersects $C$.  Using (A),
there is an open ball $B^d_{x'_i, <r'_i}$ included in $U_i$ that
intersects $C$, with $x'_i \in \mathcal B$ and $0 < r'_i < \epsilon$.
By (B), $d (x'_i, C) < r'_i$.  Let $\epsilon_1$ be any positive number
such that $d (x'_i, C) < r'_i - \epsilon_1$ for every $i$,
$1\leq i\leq m$.  Using (B) again, this means that
$B^d_{x'_i, <r'_i - \epsilon_1}$ intersects $C$.

Since $Q$ is compact and $C$ is closed, $Q \cap C$ is compact.  Using
Lemma~\ref{lemma:Q:approx}, there are finitely many points
$y'_1, \cdots, y'_p \in \mathcal B$ and radii
$s'_1, \cdots, s'_p < \epsilon$ such that
$Q \cap C \subseteq \bigcup_{j=1}^p B^d_{y'_j, <s'_j} \subseteq U$.
By Lemma~\ref{lemma:Q:B:eps}, there is a positive number $\epsilon_2$
such that $\epsilon_2 < s'_j$ for every $j$, $1\leq j\leq n$, and such
that
$Q \cap C \subseteq \bigcup_{j=1}^p B^d_{y'_j, <s'_j-\epsilon_2}
\subseteq U$.

Let $r = \min (\epsilon, \epsilon_1, \epsilon_2)$.

We use (A) a second time.  For every $i$, $1\leq i\leq m$,
$B^d_{x'_i, <r'_i-\epsilon_1}$ intersects $C$, so there is an open
ball $B^d_{x_i, <r_i}$ included in $B^d_{x'_i, <r'_i-\epsilon_1}$ that
intersects $C$, with $x_i \in \mathcal B$ and $0 < r_i < r$.  Now the
larger open ball $B^d_{x_i, <r}$ also intersects $C$, and we claim
that it is included in $U_i$.  For every $z \in B^d_{x_i, <r}$,
$d (x_i, z) < r$, so
$d (x'_i, z) \leq d (x'_i, x_i) + d (x_i, z) < r'_i -\epsilon_1 + r
\leq r'_i - \epsilon_1 + \epsilon_1 = r'_i$; this shows that $z$ is in
$B^d_{x'_i, <r'_i}$, hence in $U_i$.

We now use Lemma~\ref{lemma:Q:approx} a second time.  Since
$Q \cap C \subseteq \bigcup_{j=1}^p B^d_{y'_j, <s'_j-\epsilon_2}$,
there are finitely many points $y_1, \cdots, y_q \in \mathcal B$ and
radii $s_1, \cdots, s_q < r$ such that
$Q \cap C \subseteq \bigcup_{k=1}^q B^d_{y_k, <s_k} \subseteq
\bigcup_{j=1}^p B^d_{y'_j, <s'_j-\epsilon_2}$.  We may assume that
every term $B^d_{y_k, <s_k}$ in the union
$\bigcup_{k=1}^q B^d_{y_k, <s_k}$ intersects $Q \cap C$: any term of
that form that does not intersect $Q \cap C$ can safely be removed
from the union.  In particular, $B^d_{y_k, <s_k}$ intersects $C$ for
every $k$, $1\leq k\leq q$.  It follows that the larger open ball
$B^d_{y_k, <r}$ also intersects $C$.

Since $Q \cap C$ is included in $\bigcup_{k=1}^q B^d_{y_k, <s_k}$, it
is included in the larger set $\bigcup_{k=1}^q B^d_{y_k, <r}$.  We
claim that the latter is included in $U$.  For every element $z$ of
$\bigcup_{k=1}^q B^d_{y_k, <r}$, there is an index $k$,
$1\leq k\leq q$, such that $d (y_k, z) < r$.  Since $y_k$ is in
$\bigcup_{j=1}^p B^d_{y'_j, <s'_j-\epsilon_2}$, there is an index $j$,
$1\leq j\leq p$, such that $d (y'_j, y_k) < s'_j-\epsilon_2$.  Then
$d (y'_j, z) \leq d (y'_j, y_k) + d (y_k, z) < s'_j - \epsilon_2 + r
\leq s'_j - \epsilon_2 + \epsilon_2 = s'_j$, so $z$ is in
$B^d_{y'_j, <s'_j}$, hence in $U$.

In summary, we have found finitely many points $x_1$, \ldots, $x_m$
and $y_1$, \ldots, $y_q$ in $\mathcal B$ and a number $r$ such that
$0 < r \leq \epsilon$ and: $(a)$ $B^d_{x_i, <r}$ intersects $C$ for
each $i$, $1\leq i\leq m$, $(b)$ $B^d_{x_i, <r}$ is included in $U_i$
for each $i$, $1\leq i\leq m$, $(c)$ $B^d_{y_k, <r}$ intersects $C$
for each $k$, $1\leq k\leq q$; $(d)$ $Q \cap C$ is included in
$\bigcup_{k=1}^q B^d_{y_k, <r}$, and $(e)$
$\bigcup_{k=1}^q B^d_{y_k, <r}$ is included in $U$.

Let $E = \{x_1, \cdots, x_m, y_1, \cdots, y_q\}$.  This is a finite
subset of $\mathcal B$.  By $(d)$ and since $Q \cap C$ is non-empty,
$q$ is different from $0$, so $E$ is non-empty.  Using $(d)$ again,
$Q \cap C$ is included in $\bigcup_{x \in E} B^d_{x, <r}$, and therefore
$Q$ also is included in $\bigcup_{x \in E} B^d_{x, <r}$, since
$Q \subseteq \upc (Q \cap C)$ by definition of quasi-lenses, and since
open sets such as $\bigcup_{x \in E} B^d_{x, <r}$ are upwards-closed.
For every $x \in E$, $B^d_{x, <r}$ intersects $C$, by $(a)$ and $(c)$.
Therefore $(Q, C)$ is in
$\Box_{\mathcal P} (\bigcup_{x \in E} B^d_{x, <r}) \cap \bigcap_{x \in
  E} \Diamond_{\mathcal P} B^d_{x, <r}$.

Finally, let $(Q', C')$ be any element of
$\Box_{\mathcal P} (\bigcup_{x \in E} B^d_{x, <r}) \cap \bigcap_{x \in
  E} \Diamond_{\mathcal P} B^d_{x, <r}$.  From $(b)$, $(e)$, and the
fact that each $U_i$ is included in $U$, we obtain that
$\bigcup_{x \in E} B^d_{x, <r} \subseteq U$.  Hence, and since
$Q' \subseteq \bigcup_{x \in E} B_{x, <r}$, $Q'$ is included in $U$.
Next, using (B), for every $x \in E$, $d (x, C') < r$.  In particular,
for every $i$, $1\leq i\leq m$, $d (x_i, C') < r$.  By (B), $C'$
intersects $B^d_{x_i, <r}$.  Since the latter is included in $U_i$ by
$(b)$, $C'$ intersects $U_i$.  It follows that $(Q', C')$ is in
$\Box_{\mathcal P} U \cap \bigcap_{i=1}^n \Diamond_{\mathcal P} U_i$.
\qed

\begin{customlemma}{\thesubsection.B}
  \label{lemma:QC:B:eps}
  Let $X, d$ be a standard algebraic quasi-metric space.  Given any
  quasi-lens $(Q, C)$ on $X$, for all center points $x_1$, \ldots,
  $x_m$ and all $r_1, \cdots, r_m > 0$ such that
  $(Q, C) \in \Box_{\mathcal P} (\bigcup_{j=1}^m B^d_{x_j, <r_j}) \cap
  \bigcap_{j=1}^m \Diamond_{\mathcal P} B^d_{x_j, <r_j}$, there is a
  number $\epsilon$ such that $0 < \epsilon < r_1, \cdots, r_m$ and
  $(Q, C) \in \Box_{\mathcal P} (\bigcup_{j=1}^m B^d_{x_j,
    <r_j-\epsilon}) \cap \bigcap_{j=1}^m \Diamond_{\mathcal P}
  B^d_{x_j, <r_j-\epsilon}$.
\end{customlemma}
\proof As in the proof of Lemma~\ref{lemma:QC:approx}, since $x_j$ is
a center point for every $j$, $d (x_j, C) < r_j$ if and only if $C$
intersects $B^d_{x_j, <r_j}$, by
\cite[Proposition~6.12]{JGL:formalballs}.  Therefore
$d (x_j, C) < r_j$ for every $j$, $1\leq j\leq m$.  Let
$\epsilon_1 > 0$ be such that $d (x_j, C) < r_j-\epsilon_1$ for every
$j$, $1\leq j\leq m$.  By Lemma~\ref{lemma:QC:B:eps},
$Q \subseteq \bigcup_{j=1}^m B^d_{x_j, <r_j-\epsilon_2}$ for some
$\epsilon_2$ such that $0 < \epsilon_2 < r_1, \cdots, r_m$.  It then
suffices to let $\epsilon$ be $\min (\epsilon_1, \epsilon_2)$.  \qed

Let us write $\langle E \rangle$ for the \emph{order-convex hull} of
the set $E$.  This is defined as $\dc E \cap \upc E$.  When $E$ is
finite and non-empty, $\dc E$ is closed and intersects $\upc E$, so
$\langle E \rangle$ is a lens.  Therefore
$(\upc \langle E \rangle, cl (\langle E \rangle))$ is a quasi-lens.
We observe that $\upc \langle E \rangle = \upc E$, and that
$cl (\langle E \rangle) = \dc E$.  (Only the latter needs an argument:
$\dc E$ is the closure of $E$ since $E$ is finite, and as
$E \subseteq \langle E \rangle$,
$\dc E \subseteq cl (\langle E \rangle)$; in the reverse direction,
$\langle E \rangle \subseteq \dc E$, and since $\dc E$ is closed, it
must also contain the closure of $\langle E \rangle$.)  It follows
that $(\upc E, \dc E)$ is a quasi-lens.

\begin{thm}[Algebraicity of Plotkin powerdomains]
  \label{thm:P:alg}
  Let $X, d$ be an algebraic Yoneda-complete quasi-metric space, with
  strong basis $\mathcal B$.  The space of quasi-lenses on $X, d$,
  with the $\dP$ quasi-metric, is algebraic Yoneda-complete.

  Every quasi-lens of the form $(\upc E, \dc E)$ where $E$ is a finite
  non-empty set of center points is a center point in the space of
  quasi-lenses, and those such that $E \subseteq \mathcal B$ form a
  strong basis.
\end{thm}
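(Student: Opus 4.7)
The plan is to combine the algebraicity arguments for the Smyth powerdomain (Theorem~\ref{thm:Q:alg}) and the Hoare powerdomain (Theorem~\ref{thm:H:alg}), exploiting the decomposition $\dP = \max(\dQ, \dH)$ from Lemma~\ref{lem:dP} together with the componentwise description of suprema of formal balls of quasi-lenses given by Proposition~\ref{prop:PX:Ycomplete}. Yoneda-completeness comes for free from that Proposition, so only algebraicity requires work. First I would verify that $(\upc E, \dc E)$ is indeed a quasi-lens whenever $E$ is finite non-empty: the three defining conditions reduce to $E \subseteq \upc E \cap \dc E$ (giving non-emptiness and $\upc E \subseteq \upc(\upc E \cap \dc E)$), and to $\dc E = cl(E) \subseteq cl(U \cap \dc E)$ for every open $U \supseteq \upc E$, since $E \subseteq U \cap \dc E$.

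Next I would show that $(\upc E, \dc E)$ is a center point when $E$ consists of center points. Because $\dP$ is defined as a max, the open ball $B^{\dP^+}_{((\upc E, \dc E), 0), <\epsilon}$ coincides with the intersection of the pullbacks along the two componentwise projections of $B^{\dQ^+}_{(\upc E, 0), <\epsilon}$ and $B^{\dH^+}_{(\dc E, 0), <\epsilon}$. These open balls are Scott-open in $\mathbf B(\Smyth X, \dQ)$ and $\mathbf B(\Hoarez X, \dH)$ by Lemma~\ref{lemma:Q:simple:center} and Lemma~\ref{lemma:H:simple:center} respectively, and the projections are Scott-continuous thanks to Proposition~\ref{prop:PX:Ycomplete} (suprema of formal balls in the quasi-lens space project componentwise), so the intersection is Scott-open.

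For the strong basis, given a quasi-lens $(Q, C)$, I would define $D$ as the set of formal balls $((\upc E, \dc E), r)$ with $E \subseteq \mathcal B$ finite non-empty, $Q \subseteq \bigcup_{y \in E} B^d_{y, <r}$, and $d(y, C) < r$ for every $y \in E$. Each such ball lies below $((Q, C), 0)$: the first condition, combined with $y$ being a center point, forces $\dQ(\upc E, Q) < r$, and the second yields $\dH(\dc E, C) \leq \max_{y \in E} d(y, C) < r$. A single compactness argument applied to the non-empty compact set $Q \cap C$ simultaneously produces elements of $D$ with arbitrarily small radius: for each $z \in Q \cap C$ and each $\epsilon > 0$, use Lemma~\ref{lemma:B:basis} to pick $y_z \in \mathcal B$ with $d(y_z, z) < \epsilon$, extract a finite subcover of $Q \cap C$ by the $B^d_{y_z, <\epsilon}$, and observe it also covers $Q = \upc(Q \cap C)$ because $y' \geq z \in B^d_{y_z, <\epsilon}$ gives $d(y_z, y') \leq d(y_z, z) + d(z, y') = d(y_z, z) < \epsilon$; meanwhile $d(y_z, C) \leq d(y_z, z) < \epsilon$ comes for free from $z \in C$.

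Directedness of $D$ is the main obstacle: given two elements, one must manufacture a single $E \subseteq \mathcal B$ that simultaneously Smyth-dominates and Hoare-dominates both. The construction merges the respective directedness arguments from Theorems~\ref{thm:H:alg} and~\ref{thm:Q:alg}: first apply Lemma~\ref{lemma:Q:B:eps} to tighten each given radius to $r_i - \epsilon'$ for some $\epsilon' > 0$ (small enough that the Hoare inequalities $d(y, C) < r_i - \epsilon'$ also persist), and then rerun the compactness construction above on $Q \cap C$ with covering radius $<\epsilon'$. The resulting $((\upc E, \dc E), \epsilon') \in D$ dominates both original formal balls by the same kind of covering/centre-point computation already used in those theorems. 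Finally, $\sup D = ((Q, C), 0)$: any upper bound $((Q', C'), r')$ must satisfy $r' = 0$ by arbitrary-small-radii, and then reapplying the closing arguments of Theorems~\ref{thm:Q:alg} and~\ref{thm:H:alg} to the $\upc E$- and $\dc E$-projections of $D$ yields $Q \supseteq Q'$ and $C \subseteq C'$, which via $\dP = \max(\dQ, \dH)$ is exactly $((Q, C), 0) \leq^{\dP^+} ((Q', C'), 0)$.
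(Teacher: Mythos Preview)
Your center-point argument and your definition of the family $D$ match the paper's, and the overall plan is right. The gap is in the directedness step, and it is not a matter of detail: the construction you describe can fail the Hoare half of the domination.

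Concretely, after tightening radii and covering $Q\cap C$ by balls $B^d_{z,<\epsilon'}$ with centers $z\in\mathcal B$ lying inside $U=\bigcup_{x\in E_1}B^d_{x,<r_1-\epsilon'}\cap\bigcup_{y\in E_2}B^d_{y,<r_2-\epsilon'}$, you do get membership in $D$ and Smyth domination (each $z\in U$, hence $d(x,z)<r_1-\epsilon'$ for some $x\in E_1$). But $\dH(\dc E_1,\dc E)\leq r_1-\epsilon'$ asks, for \emph{each} $x\in E_1$, for some $z\in E$ with $d(x,z)\leq r_1-\epsilon'$. You only know $d(x,C)<r_1-\epsilon'$, i.e.\ $B^d_{x,<r_1-\epsilon'}$ meets $C$; there is no reason it meets $Q\cap C$, so no reason any cover center lands there. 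Simply ``merging'' the Hoare directedness argument does not help either: approximating a point of $C$ near $x$ by some $z_x\in\mathcal B$ gives no control over the distance from $z_x$ to $E_2$, so $z_x$ need not lie in $U$ and Smyth domination by $E_2$ fails.

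This is exactly where the third quasi-lens axiom, $C\subseteq cl(U\cap C)$ for every open $U\supseteq Q$, becomes essential --- and it appears in neither the Smyth nor the Hoare directedness proofs you are combining. Since $B^d_{x,<r_1-\epsilon'}$ is open and meets $C\subseteq cl(U\cap C)$, it meets $U\cap C$; approximating that intersection point by $z_x\in\mathcal B$ yields simultaneously $d(x,z_x)<r_1-\epsilon'$, $d(z_x,C)<\epsilon'$, and (because $z_x\in U$) $d(y,z_x)<r_2-\epsilon'$ for some $y\in E_2$. The paper adjoins all such $z_x$ for $x\in E_1$, the symmetric $z_y$ for $y\in E_2$, \emph{and} your cover of $Q\cap C$, and takes their union as the new $E$. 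Without this additional set of ``Hoare witnesses'' produced via the quasi-lens axiom, directedness of $D$ need not hold.
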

\proof For every $\epsilon > 0$,
$B^{\dP^+}_{((\upc E, \dc E), 0), < \epsilon}$ is the set of formal
balls $((Q, C), r)$ such that $\dQ (\upc E, Q) < \epsilon-r$ and
$\dH (\dc E, C) < \epsilon-r$.  We show that it is Scott-open as
follows.  For every directed family ${((Q_i, C_i), r_i)}_{i \in I}$
with supremum $((Q, C), r)$, we have seen in
Proposition~\ref{prop:PX:Ycomplete} that $(Q, r)$ is the supremum of
${(Q_i, r_i)}_{i \in I}$.  Since $\dQ (\upc E, Q) < \epsilon-r$,
$(Q, r)$ is in $B^{\dQ^+}_{(\upc E, 0), <\epsilon}$, and that is
Scott-open because $\upc E$ is a center point, by
Lemma~\ref{lemma:Q:simple:center}.  Therefore $(Q_i, r_i)$ is in
$B^{\dQ^+}_{(\upc E, 0), <\epsilon}$ for some $i \in I$, namely,
$\dQ (\upc E, Q_i) < \epsilon - r_i$, or equivalently
$(\upc E, \epsilon) \leq^{\dQ^+} (Q_i, r_i)$.  Similarly, and using
Lemma~\ref{lemma:H:simple:center},
$(\dc E, \epsilon) \leq^{\dH^+} (Q_j, r_j)$ for some $j \in I$. By
directedness we can take the same value for $i$ and $j$, and it
follows that $\dP ((\upc E, \dc E), (Q_i, C_i)) < \epsilon - r_i$,
i.e., that $((Q_i, C_i), r_i)$ is in
$B^{\dP^+}_{((\upc E, \dc E), 0), < \epsilon}$.  This shows that
$(\upc E, \dc E)$ is a center point.

Let $(Q, C)$ be a quasi-lens.  We wish to show that $((Q, C), 0)$ is
the supremum of some family $D$ of formal balls of the form
$((\upc E, \dc E), r)$, with $E$ non-empty, finite, and included in
$\mathcal B$.  We define $D$ as the set of those formal balls of that
form such that $d (x, C) < r$ for every $x \in E$ and
$Q \subseteq \bigcup_{x \in E} B^d_{x, <r}$.  This is simply the
conjunction of the conditions we took in the proofs of
Theorem~\ref{thm:H:alg} and Theorem~\ref{thm:Q:alg}
As in the proof of Lemma~\ref{lemma:QC:approx}, since $x$ is a center
point for every $x \in E$, $d (x, C) < r$ if and only if $C$
intersects $B^d_{x, <r}$, by \cite[Proposition~6.12]{JGL:formalballs},
so $D$ is the set of formal balls $((\upc E, \dc E), r)$ with $E$
non-empty, finite, and included in $\mathcal B$, such that
$(Q, C) \in \Box_{\mathcal P} (\bigcup_{x \in E} B^d_{x, <r}) \cap
\bigcap_{x \in E} \Diamond_{\mathcal P} B^d_{x, <r}$.

By Lemma~\ref{lemma:QC:approx} with $U = X$, $m=0$, and $\epsilon > 0$
arbitrary, $D$ is non-empty.  We embark on showing that $D$ is
directed.  Let $((\upc E, \dc E), r)$ and $((\upc F, \dc F), s)$ be
two elements of $D$.  Let $U = \bigcup_{x \in E} B^d_{x, <r}$ and
$V = \bigcup_{y \in F} B^d_{y, <s}$.  Using the fact that
$\Box_{\mathcal P} U \cap \Box_{\mathcal P} V = \Box_{\mathcal P} (U
\cap V)$, $(Q, C)$ is in
$\Box_{\mathcal P} (U \cap V) \cap \bigcap_{x \in E}
\Diamond_{\mathcal P} B^d_{x, <r} \cap \bigcap_{y \in F}
\Diamond_{\mathcal P} B^d_{y, <s}$.  By Lemma~\ref{lemma:QC:B:eps},
there is a number $\epsilon$ such that $0 < \epsilon < \min (r, s)$
and such that $(Q, C)$ is in
$\mathcal U = \Box_{\mathcal P} (\bigcup_{x \in E} B^d_{x,
  <r-\epsilon} \cap \bigcup_{y \in F} B^d_{y, <s-\epsilon}) \cap
\bigcap_{x \in E} \Diamond_{\mathcal P} B^d_{x, <r-\epsilon} \cap
\bigcap_{y \in F} \Diamond_{\mathcal P} B^d_{y, <s-\epsilon}$.  By
Lemma~\ref{lemma:QC:approx}, there is a finite non-empty subset $G$ of
$\mathcal B$ and a number $t$ such that $0 < t \leq \epsilon$ and
$(Q, C) \in \Box_{\mathcal P} (\bigcup_{z \in G} B^d_{z, <t}) \cap
\bigcap_{z \in G} \Diamond_{\mathcal P} B^d_{z, <t} \subseteq \mathcal
U$.  In particular, $((\upc G, \dc G), t)$ is in $D$.

Let us verify that
$((\upc E, \dc E), r) \leq^{\dP^+} ((\upc G, \dc G), t)$.  The
quasi-lens $(\upc G, \dc G)$ is in
$\Box_{\mathcal P} (\bigcup_{z \in G} B^d_{z, <t}) \cap \bigcap_{z \in
  G} \Diamond_{\mathcal P} B^d_{z, <t}$, hence in $\mathcal U$.  By
definition of $\mathcal U$,
$\upc G \subseteq \bigcup_{x \in E} B^d_{x, <r-\epsilon}$, so for
every $z \in \upc G$, there is an $x \in E \subseteq \upc E$ such that
$d (x, z) < r-\epsilon$.  If follows that
$\dQ (\upc E, \upc G) < r-\epsilon \leq r-t$, by Lemma~\ref{lemma:dQ},
so $\dQ (\upc E, \upc G) \leq^{\dQ^+} r-t$.  By definition of
$\mathcal U$ and since $(\upc G, \dc G) \in \mathcal U$, $\dc G$
intersects $B^d_{x, <r-\epsilon}$ for every $x \in E$.  In other
words, for every $x \in E$, there is a $z \in \dc G$ such that
$d (x, z) < r-\epsilon$.  For every $x' \in \dc E$, there is a point
$x \in E$ such that $x' \leq x$, and we have just seen that
$\inf_{z \in \dc G} d (x, z) < r-\epsilon$.  Using
\cite[Proposition~6.12]{JGL:formalballs}, and since $x$ is a center
point, $d (x, \dc G) < r-\epsilon$, and then
$d (x', \dc G) \leq d (x', x) + d (x, \dc G)$ (using Lemma~6.11 of
\cite{JGL:formalballs}) $< r-\epsilon$.  Taking suprema over the
elements $x'$ of $\dc E$, we obtain that
$\dH (\dc E, \dc G) < r-\epsilon \leq r-t$.  Combining this with
$\dQ (\upc E, \upc G) < r-t$ through Lemma~\ref{lem:dP}, we obtain that
$\dP ((\upc E, \dc E), (\upc G, \dc G)) < r-t$, hence that
$((\upc E, \dc E), r) \leq^{\dP^+} ((\upc G, \dc G), t)$, as promised.

We verify that
$((\upc F, \dc F), s) \leq^{\dP^+} ((\upc G, \dc G), t)$ in a similar
fashion.  It follows that $D$ is directed.

We claim that $((Q, C), 0)$ is the supremum of $D$.

It is an upper bound: for every element $((\upc E, \dc E), r)$ of $D$,
$Q$ is included in $\bigcup_{x \in E} B^d_{x, <r}$, so
$\dQ (\upc E, Q) = \sup_{z \in Q} \inf_{x \in \upc E} d (x, z) \leq
\sup_{z \in Q} \inf_{x \in E} d (x, z) \leq r$; and for every
$x \in E$, $B^d_{x, <r}$ intersects $C$, so $d (x, C) < r$, so
$\dH (\dc E, C) = \sup_{x' \in \dc E} d (x', C) = \sup_{x \in E}
\sup_{x' \leq x} d (x', C) \leq \sup_{x \in E} \sup_{x' \leq x}
\underbrace {d (x', x)}_0 + d (x, C) \leq r$.  Therefore
$\dP ((\upc E, \dc E), (Q, C)) \leq r$.

If $((Q', C'), r')$ is another upper bound of $D$, then $r'=0$ since
$D$ contains elements with arbitrary small radii, by
Lemma~\ref{lemma:QC:approx}.  We claim that $Q \supseteq Q'$ and $C
\subseteq C'$.  This will imply that $\dQ (Q, Q')=0$ and $\dH (C,
C')=0$, so that $\dP ((Q, C), (Q', C'))$ by Lemma~\ref{lem:dP}, and
therefore $((Q, C), 0) \leq^{\dP^+} ((Q', C'), 0)$.  This will prove
that $((Q, C), 0)$ is the least upper bound of $D$, which will
terminate the proof.

Let $U$ be any open neighborhood of $Q$, and let $U'$ be any open set
that intersects $C$.  Then $(Q, C)$ is in
$\Box_{\mathcal P} U \cap \Diamond_{\mathcal P} U'$.  There is a
finite non-empty subset $E$ of $\mathcal B$ and a number $r > 0$ such
that
$(Q, C) \in \Box_{\mathcal P} (\bigcup_{x \in E} B^d_{x, <r}) \cap
\bigcap_{x \in E} \Diamond_{\mathcal P} B^d_{x, <r} \subseteq
\Box_{\mathcal P} U \cap \Diamond_{\mathcal P} U'$, by
Lemma~\ref{lemma:QC:approx}.  By Lemma~\ref{lemma:QC:B:eps}, there is
a number $\epsilon$ such that $0 < \epsilon < r$ and
$(Q, C) \in \Box_{\mathcal P} (\bigcup_{x \in E} B^d_{x, <r-\epsilon})
\cap \bigcap_{x \in E} \Diamond_{\mathcal P} B^d_{x, <r-\epsilon}$.
In particular, $((\upc E, \dc E), r-\epsilon)$ is in $D$.  Since
$((Q', C'), 0)$ is an upper bound of $D$, we have
$((\upc E, \dc E), r-\epsilon) \leq^{\dP^+} ((Q', C'), 0)$.  Hence
$\dQ (\upc E, Q') \leq r-\epsilon$ and
$\dH (\dc E, C') \leq r-\epsilon$, using Lemma~\ref{lem:dP}.  Using
Lemma~\ref{lemma:dQ}, the first of these inequalities implies that for
every $z \in Q'$, there is a point $x' \in \upc E$ such that
$d (x', z) \leq r-\epsilon$; in particular, there is a point $x \in E$
such that $x \leq x'$, so
$d (x, z) \leq d (x', z) \leq r-\epsilon < r$.  Therefore
$Q' \subseteq \bigcup_{x \in E} B^d_{x, <r}$.  From the second
inequality, $\dH (\dc E, C') \leq r-\epsilon$, we obtain that for
every $x \in E$, $d (x, C') \leq r-\epsilon < r$, so there is a point
$y \in C'$ such that $d (x, y) < r$, since $x$ is a center point.  In
other words, for every $x \in E$, $C'$ intersects $B^d_{x, <r}$.
Together with $Q' \subseteq \bigcup_{x \in E} B^d_{x, <r}$, this shows
that $(Q', C')$ is in
$\Box_{\mathcal P} (\bigcup_{x \in E} B^d_{x, <r}) \cap \bigcap_{x \in
  E} \Diamond_{\mathcal P} B^d_{x, <r}$, hence in
$\Box_{\mathcal P} U \cap \Diamond_{\mathcal P} U'$.  Hence $Q'$ is
included in $U$ and $C'$ intersects $U'$.  Since that holds for every
open neighborhood $U$ of $Q$ and for every open set $U'$ that
intersects $C$, we obtain that $Q \supseteq Q'$ and $C \subseteq C'$,
as promised. \qed



As for the Hoare and Smyth powerdomains, we reduce the study of
continuity to algebraicity.
\begin{lem}
  \label{lemma:P:functor}
  Let $X, d$ and $Y, \partial$ be two continuous Yoneda-complete
  quasi-metric spaces, and $f \colon X, d \mapsto Y, \partial$ be a
  $1$-Lipschitz continuous map.  The map $\Plotkin f$ defined by
  $\Plotkin f (Q, C) = (\Smyth f (Q), \Hoare f (C)) = (\upc f [Q], cl
  (f [C]))$ is a $1$-Lipschitz continuous map from the space of
  quasi-lenses on $X$ with the $\dP$ quasi-metric to the space of
  quasi-lenses on $Y$ with the $\mP\partial$ quasi-metric.
\end{lem}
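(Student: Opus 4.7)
The plan is to reduce all three requirements (well-definedness, $1$-Lipschitz property, Scott-continuity of $\mathbf B^1(\Plotkin f)$) to the corresponding facts already established for $\Smyth f$ and $\Hoare f$ in Lemma~\ref{lemma:Q:functor} and Lemma~\ref{lemma:H:functor}, using the componentwise structure of $\dP$.

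First I would verify that $\Plotkin f(Q, C) = (\Smyth f(Q), \Hoare f(C))$ is indeed a quasi-lens on $Y$. The most painless route is via the fork correspondence (Proposition~\ref{prop:fork=lens}), which applies because $Y, \partial$ is continuous Yoneda-complete, hence sober by \cite[Proposition~4.1]{JGL:formalballs}. Since $(Q, C)$ is a quasi-lens on $X$, $(F_Q, F^C)$ is a discrete normalized fork. A direct calculation shows that $\Prev f$ preserves Walley's condition: for all $h, h' \in \Lform Y$, $\Prev f(F_Q)(h+h') = F_Q((h+h') \circ f) = F_Q(h \circ f + h' \circ f) \leq F_Q(h \circ f) + F^C(h' \circ f) = \Prev f(F_Q)(h) + \Prev f(F^C)(h')$, and the other half is symmetric. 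Using the naturality identities $\Prev f(F_Q) = F_{\Smyth f(Q)}$ (Lemma~\ref{lemma:Q:functor}) and $\Prev f(F^C) = F^{\Hoare f(C)}$ (Lemma~\ref{lemma:H:functor}), the pair $(F_{\Smyth f(Q)}, F^{\Hoare f(C)})$ is a discrete normalized fork on $Y$, so Proposition~\ref{prop:fork=lens} in the converse direction guarantees that $(\Smyth f(Q), \Hoare f(C))$ is a quasi-lens on $Y$.

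Next, the $1$-Lipschitz property is immediate from Lemma~\ref{lem:dP}: for all quasi-lenses $(Q, C)$ and $(Q', C')$ on $X$,
\[
\mP\partial(\Plotkin f(Q, C), \Plotkin f(Q', C'))
= \max(\mQ\partial(\Smyth f(Q), \Smyth f(Q')), \mH\partial(\Hoare f(C), \Hoare f(C')))
\]
and each summand is bounded by $\dQ(Q, Q')$ and $\dH(C, C')$ respectively, thanks to the $1$-Lipschitz property of $\Smyth f$ and $\Hoare f$. Taking the max yields $\dP((Q, C), (Q', C'))$, so $\mathbf B^1(\Plotkin f)$ is monotonic.

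For Scott-continuity of $\mathbf B^1(\Plotkin f)$, let ${((Q_i, C_i), r_i)}_{i \in I}$ be a directed family of formal balls of quasi-lenses on $X$ with supremum $((Q, C), r)$. By Proposition~\ref{prop:PX:Ycomplete}, this sup is computed componentwise: $(Q, r)$ is the sup of ${(Q_i, r_i)}_{i \in I}$ in $\mathbf B(\Smyth X, \dQ)$ and $(C, r)$ is the sup of ${(C_i, r_i)}_{i \in I}$ in $\mathbf B(\Hoare X, \dH)$ (with common $r = \inf_{i\in I} r_i$). Applying the Scott-continuous maps $\mathbf B^1(\Smyth f)$ and $\mathbf B^1(\Hoare f)$ coordinate by coordinate, $(\Smyth f(Q), r)$ is the sup of ${(\Smyth f(Q_i), r_i)}_{i \in I}$ and $(\Hoare f(C), r)$ is the sup of ${(\Hoare f(C_i), r_i)}_{i \in I}$. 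Invoking Proposition~\ref{prop:PX:Ycomplete} now on the quasi-lens space over $Y$, $((\Smyth f(Q), \Hoare f(C)), r)$ is the sup of ${((\Smyth f(Q_i), \Hoare f(C_i)), r_i)}_{i \in I}$, which is exactly $\mathbf B^1(\Plotkin f)((Q, C), r) = \sup_{i \in I} \mathbf B^1(\Plotkin f)((Q_i, C_i), r_i)$, as required.

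The only step that is not purely mechanical is verifying that $\Plotkin f$ preserves the quasi-lens property, and the fork characterization collapses it to a one-line check of Walley's condition. Everything else is just the componentwise structure of $\dP$ combined with Proposition~\ref{prop:PX:Ycomplete} and the functoriality of $\Smyth$ and $\Hoare$.
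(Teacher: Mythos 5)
Your proof is correct and follows essentially the same route as the paper's: well-definedness via the fork correspondence and Walley's condition, the $1$-Lipschitz bound from the componentwise description of $\dP$ in Lemma~\ref{lem:dP}, and Scott-continuity of $\mathbf B^1(\Plotkin f)$ from the componentwise computation of suprema in Proposition~\ref{prop:PX:Ycomplete} together with the $1$-Lipschitz continuity of $\Smyth f$ and $\Hoare f$. The explicit remark that sobriety of $Y$ (from continuity and Yoneda-completeness) is what licenses the converse direction of Proposition~\ref{prop:fork=lens} is a small but welcome addition that the paper leaves implicit.
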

\proof We need to check that $\Plotkin f (Q, C)$ is a quasi-lens.  Let
$Q' = \upc f [Q]$, $C' = cl (f [C])$.  We rely on
Proposition~\ref{prop:fork=lens}: we know that $(F_Q, F^C)$ satisfies
Walley's condition, and we show that $(Q', C')$ is a quasi-lens by
showing that $(F_{Q'}, F^{C'})$ satisfies Walley's condition.  By
Lemma~\ref{lemma:H:functor} and Lemma~\ref{lemma:Q:functor},
$F^{C'} = \Prev f (F^C)$ and $F_{Q'} = \Prev f (F_Q)$.  Now
$F_{Q'} (k+k') = \Prev f (F_Q) (k+k') = F_Q ((k+k') \circ f) \leq F_Q
(k \circ f) + F^C (k' \circ f)$ (by Walley's condition, left
inequality, on $(F_Q, F^C)$)
$= \Prev f (F_Q) (k) + \Prev f (F^C) (k) = F_{Q'} (k) + F^{C'} (k)$.
The other part of Walley's condition is proved similarly.

$\Plotkin f$ is $1$-Lipschitz, because of Lemma~\ref{lem:dP} and the
fact that $\Smyth f$ and $\Hoare f$ are $1$-Lipschitz (special cases
of Lemma~\ref{lemma:H:functor} and Lemma~\ref{lemma:Q:functor}).
Given a directed family of formal balls
${((Q_i, C_i), r_i)}_{i \in I}$ on quasi-lenses, we have seen in
Proposition~\ref{prop:PX:Ycomplete} that its supremum $((Q, C), r)$ is
characterized by $r = \inf_{i \in I} r_i$, $(Q, r)$ is the supremum of
${(Q_i, r_i)}_{i \in I}$ in $\mathbf B (\Smyth X, \dQ)$ and $(C, r)$
is the supremum of ${(C_i, r_i)}_{i \in I}$ in
$\mathbf B (\Hoare X, \dH)$.  Since $\Smyth f$ and $\Hoarez f$ are
$1$-Lipschitz continuous (Lemma~\ref{lemma:H:functor} and
Lemma~\ref{lemma:Q:functor}), $(\Smyth f (Q), r)$ is the supremum of
${(\Smyth f (Q_i), r_i)}_{i \in I}$ and $(\Hoarez f (C), r)$ is the
supremum of ${(\Hoarez f (C_i), r_i)}_{i \in I}$.  By
Proposition~\ref{prop:PX:Ycomplete}, the supremum of
${(\Plotkin f (Q_i, C_i), r_i)}_{i \in I}$ is
$((\Smyth f (Q), \Hoarez f (C)), r) = (\Plotkin f (Q, C), r)$.
Therefore $\Plotkin f$ is $1$-Lipschitz continuous.  \qed

Let $X, d$ be a continuous Yoneda-complete quasi-metric space.  There
is an algebraic Yoneda-complete quasi-metric space $Y, \partial$ and
two $1$-Lipschitz continuous maps $r \colon Y, \partial \to X, d$ and
$s \colon X, d \to Y, \partial$ such that $r \circ s = \identity X$
\cite[Theorem~7.9]{JGL:formalballs}.

By Lemma~\ref{lemma:P:functor}, $\Plotkin r$ and $\Plotkin s$ are also
$1$-Lipschitz continuous, and clearly
$\Plotkin r \circ \Plotkin s = \identity \relax$, so the space of
quasi-lenses on $X$ with the $\dP$ quasi-metric is a $1$-Lipschitz
continuous retract of that on $Y$, with the $\mP\partial$
quasi-metric.  Theorem~\ref{thm:P:alg} states that the latter is
algebraic Yoneda-complete, whence:
\begin{thm}[Continuity for the Plotkin powerdomain]
  \label{thm:P:cont}
  Let $X, d$ be a continuous Yoneda-complete quasi-metric space.  The
  space of quasi-lenses on $X$ with the $\dP$ quasi-metric is
  continuous Yoneda-complete.  \qed
\end{thm}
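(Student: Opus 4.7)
The plan is to mirror the retract argument used for the Hoare, Smyth, and sublinear/superlinear powerdomains (Theorems~\ref{thm:H:cont}, \ref{thm:Q:cont}, \ref{thm:AN:cont}, \ref{thm:DN:cont}). Specifically, I would invoke \cite[Theorem~7.9]{JGL:formalballs} to represent $X,d$ as a $1$-Lipschitz continuous retract of an algebraic Yoneda-complete quasi-metric space $Y,\partial$, via $r\colon Y\to X$ and $s\colon X\to Y$ with $r\circ s=\identity X$. Then I would lift this retraction through $\Plotkin$ using Lemma~\ref{lemma:P:functor}, which already gives us that $\Plotkin r$ and $\Plotkin s$ are $1$-Lipschitz continuous maps between the respective spaces of quasi-lenses.

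The only routine calculation is that $\Plotkin r\circ \Plotkin s=\identity{}$ on the space of quasi-lenses on $X$. Writing out both sides, one must check $\upc r[\upc s[Q]] = \upc (r\circ s)[Q]$ and $\mathrm{cl}(r[\mathrm{cl}(s[C])])=\mathrm{cl}((r\circ s)[C])$. The first follows from monotonicity of $r$ (which holds because $r$ is $1$-Lipschitz), and the second from continuity of $r$ with respect to the $\partial$-Scott and $d$-Scott topologies (Proposition~\ref{prop:cont}): $s[Q]\subseteq \upc s[Q]$ gives one inclusion, and $r(y)\geq r(s(x))$ whenever $y\geq s(x)$ gives the other; similarly $r[\mathrm{cl}(s[C])]\subseteq \mathrm{cl}(r[s[C]])$ by continuity, while the reverse closure inclusion is trivial. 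Together with $\Plotkin \identity X=\identity{}$ (since every $Q$ is already saturated and every $C$ is already closed), this yields $\Plotkin r\circ \Plotkin s=\Plotkin(r\circ s)=\identity{}$.

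Hence the space of quasi-lenses on $X$ with $\dP$ is a $1$-Lipschitz continuous retract of the space of quasi-lenses on $Y$ with $\mP\partial$. By Theorem~\ref{thm:P:alg}, the latter is algebraic Yoneda-complete, hence in particular continuous Yoneda-complete. The final step is to invoke the general fact that a $1$-Lipschitz continuous retract of a continuous Yoneda-complete quasi-metric space is continuous Yoneda-complete — this is the same tool (Corollary~8.3.37 of \cite{JGL-topology}) used repeatedly above, and corresponds to the first assertion of Lemma~\ref{lemma:retract:alg}, noting that continuous dcpos are closed under continuous retractions.

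There is no serious obstacle: everything reduces to the algebraic case already handled in Theorem~\ref{thm:P:alg}, and the necessary functoriality of $\Plotkin$ on $1$-Lipschitz continuous maps is the substance of Lemma~\ref{lemma:P:functor}. The mildest subtlety is confirming that $\Plotkin$ strictly preserves identities and composition on the nose (not merely up to equivalence), which is the small computation outlined above.
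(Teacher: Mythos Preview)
Your proposal is correct and follows exactly the same approach as the paper: invoke \cite[Theorem~7.9]{JGL:formalballs} to realize $X,d$ as a $1$-Lipschitz continuous retract of an algebraic Yoneda-complete space $Y,\partial$, lift via Lemma~\ref{lemma:P:functor}, and conclude by Theorem~\ref{thm:P:alg} together with the closure of continuous Yoneda-completeness under $1$-Lipschitz continuous retracts. The paper in fact dismisses the verification of $\Plotkin r\circ\Plotkin s=\identity{}$ as ``clearly'', so your write-up supplies slightly more detail than the original.
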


\subsection{The Vietoris Topology}
\label{sec:vietoris-topology}

Recall that the Vietoris topology on the space of quasi-lenses on $X$
is generated by the subbasic open sets
$\Box U = \{(Q, C) \mid Q \subseteq U\}$ and
$\Diamond U = \{(Q, C) \mid C \cap U \neq \emptyset\}$, where $U$
ranges over the open subsets of $X$.  This generalizes the usual
Vietoris topology on spaces of lenses.

Recall also that the \emph{weak topology} on the space of forks is the
topology induced by the inclusion in the product of the spaces of
sublinear and superlinear previsions, each equipped with their weak
topology.  In other words, it has subbasic open sets
$[h > a]_- = \{(F^-, F^+) \mid F^- (h) > a\}$ and
$[h > a]_+ = \{(F^-, F^+) \mid F^+ (h) > a\}$, when $h$ ranges over
$\Lform X$ and $a \in \Rp$.

\begin{lem}
  \label{lemma:P:V=weak}
  Let $X, d$ be a sober, standard quasi-metric space.  The map $(Q, C)
  \mapsto (F_Q, F^C)$ is a homeomorphism of the space of quasi-lenses
  on $X$ with the Vietoris topology and the space of discrete
  normalized forks on $X$ with the weak topology.
\end{lem}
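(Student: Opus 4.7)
The plan is to leverage Proposition~\ref{prop:fork=lens}, which already establishes the bijection $\phi \colon (Q,C) \mapsto (F_Q, F^C)$ between quasi-lenses and discrete normalized forks. Hence only continuity in both directions remains, and each direction reduces to a short computation on subbasic open sets.

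For the forward direction (Vietoris $\to$ weak), I would show that $\phi^{-1}$ pulls each weak subbase element to a Vietoris open. Fix $h \in \Lform X$ and $a \in \Rp$. For the subbasic open $[h > a]_-$, its $\phi$-preimage consists of those $(Q,C)$ with $F_Q(h) > a$, i.e.\ $\min_{x \in Q} h(x) > a$ (the minimum is attained by Lemma~\ref{lemma:uQ}~(1), using that $X$ is sober standard and $Q$ is compact), which is equivalent to $Q \subseteq h^{-1}(]a,+\infty])$, hence to $(Q,C) \in \Box\, h^{-1}(]a,+\infty])$. Similarly, the preimage of $[h > a]_+$ is $\{(Q,C) \mid \sup_{x \in C} h(x) > a\} = \Diamond\, h^{-1}(]a,+\infty])$. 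Since $h^{-1}(]a,+\infty])$ is open in $X$, both preimages are Vietoris open.

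For the backward direction (weak $\to$ Vietoris), it suffices to show that $\phi$ sends each Vietoris subbase element to a weak open subset of the (image) space of discrete normalized forks. The key observation is that $F_Q(\chi_U)$ and $F^C(\chi_U)$ take only the values $0$ or $1$: for an open $U \subseteq X$, $F_Q(\chi_U) = 1$ iff $Q \subseteq U$, and $F^C(\chi_U) = 1$ iff $C \cap U \neq \emptyset$. Therefore $\phi(\Box U) = \{(F^-,F^+) \in \mathrm{image}(\phi) \mid F^-(\chi_U) > 1/2\} = [\chi_U > 1/2]_- \cap \mathrm{image}(\phi)$, and $\phi(\Diamond U) = [\chi_U > 1/2]_+ \cap \mathrm{image}(\phi)$; both are open in the subspace topology on the space of discrete normalized forks.

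There is no real obstacle here: the proof is a bookkeeping exercise once Proposition~\ref{prop:fork=lens} (which did the substantive work of matching the quasi-lens axioms to Walley's condition) is in hand. The only subtlety worth mentioning is that we use sobriety and standardness only implicitly, through Proposition~\ref{prop:fork=lens} and Lemma~\ref{lemma:uQ}; the topological computations themselves require no additional hypotheses.
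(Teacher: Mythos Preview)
Your proof is correct and is essentially the paper's approach unfolded: the paper simply invokes Lemma~\ref{lemma:H:V=weak} and Lemma~\ref{lemma:Q:V=weak} for the homeomorphism part, and your subbasic computations are exactly the content of those lemmas' proofs. One tiny remark: Lemma~\ref{lemma:uQ}~(1) needs only that $Q$ is non-empty compact saturated, not sobriety or standardness; sobriety enters solely through Proposition~\ref{prop:fork=lens}.
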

\proof This is a bijection by Proposition~\ref{prop:fork=lens}.  The
homeomorphism part is an easy consequence of
Lemma~\ref{lemma:H:V=weak} and Lemma~\ref{lemma:Q:V=weak}.  \qed

\begin{prop}
  \label{prop:P:Vietoris}
  Let $X, d$ be an algebraic Yoneda-complete quasi-metric space.  The
  $\dP$-Scott topology and the Vietoris topology coincide on the space
  of quasi-lenses on $X$.
\end{prop}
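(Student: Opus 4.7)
The plan is to mimic the pattern already used for the Hoare and Smyth powerdomains (Proposition~\ref{prop:H:Vietoris} and Proposition~\ref{prop:Q:Vietoris}): prove the inclusion Vietoris $\subseteq\dP$-Scott by transporting the projection maps, then prove $\dP$-Scott $\subseteq$ Vietoris by unpacking the basic open balls around the center points produced by Theorem~\ref{thm:P:alg}.

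For the first inclusion I would consider the two projections $\pi_Q\colon(Q,C)\mapsto Q$ and $\pi_C\colon(Q,C)\mapsto C$ from the space of quasi-lenses with $\dP$ to $\Smyth X,\dQ$ and $\Hoarez X,\dH$ respectively. Since $\dP$ is defined as the pointwise maximum of $\dQ$ and $\dH$ (Lemma~\ref{lem:dP}), both projections are $1$-Lipschitz. All three spaces are Yoneda-complete hence standard (Proposition~\ref{prop:PX:Ycomplete}, Proposition~\ref{prop:QX:Ycomplete}, Proposition~\ref{prop:HX:Ycomplete}), so Proposition~\ref{prop:cont} is not directly usable in the easy direction we need; however, the same monotonicity plus continuity of $\mathbf B^1(\pi_Q)$ and $\mathbf B^1(\pi_C)$ (which follows from the explicit naive-supremum description in Proposition~\ref{prop:PX:Ycomplete}) forces $\pi_Q$ and $\pi_C$ to be $1$-Lipschitz continuous. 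By Proposition~\ref{prop:Q:Vietoris} and Proposition~\ref{prop:H:Vietoris}, the $\dQ$-Scott and $\dH$-Scott topologies on the target spaces coincide with the upper and lower Vietoris topologies. Hence $\Box U=\pi_Q^{-1}(\Box U)$ and $\Diamond U=\pi_C^{-1}(\Diamond U)$ are $\dP$-Scott open for every open $U\subseteq X$, establishing Vietoris $\subseteq\dP$-Scott.

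For the reverse inclusion I would invoke Theorem~\ref{thm:P:alg}: the space of quasi-lenses is algebraic Yoneda-complete with strong basis given by the quasi-lenses $(\upc E,\dc E)$, where $E$ ranges over non-empty finite subsets of the strong basis $\mathcal B$ of $X,d$, and each such $(\upc E,\dc E)$ is a center point. By \cite[Lemma~5.8~(3)]{JGL:formalballs}, the open balls $B^{\dP^+}_{((\upc E,\dc E),0),<r}$ form (by intersection with the subspace of quasi-lenses) a base of the $\dP$-Scott topology. So it is enough to exhibit each of these open balls as a Vietoris open. Unfolding the definitions and using Lemma~\ref{lem:dP}, a quasi-lens $(Q,C)$ lies in $B^{\dP^+}_{((\upc E,\dc E),0),<r}$ iff $\dQ(\upc E,Q)<r$ and $\dH(\dc E,C)<r$. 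The first condition, as in the proof of Proposition~\ref{prop:Q:Vietoris} (with Lemma~\ref{lemma:Q:B:eps}), is equivalent to $Q\subseteq\bigcup_{x\in E}B^d_{x,<r}$. For the second, the computation already performed inside the proof of Theorem~\ref{thm:P:alg} gives $\dH(\dc E,C)=\max_{x\in E}d(x,C)$, because $d(x',x)=0$ whenever $x'\leq x$; and since every $x\in E$ is a center point, $d(x,C)=\inf_{y\in C}d(x,y)$ by \cite[Proposition~6.12]{JGL:formalballs}, so $d(x,C)<r$ iff $C\cap B^d_{x,<r}\neq\emptyset$. Combining these,
\[
B^{\dP^+}_{((\upc E,\dc E),0),<r}\cap\Plotkin X
=\Box\Bigl(\bigcup_{x\in E}B^d_{x,<r}\Bigr)\cap\bigcap_{x\in E}\Diamond B^d_{x,<r},
\]
each $B^d_{x,<r}$ being $d$-Scott open because $x$ is a center point. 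This is a Vietoris open, closing the argument.

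The heavy lifting has already been carried out in Theorem~\ref{thm:P:alg}; the present proof is essentially a bookkeeping exercise, and the only genuinely subtle point is the equality $\dH(\dc E,C)=\max_{x\in E}d(x,C)$, which relies on the interaction between the specialization ordering and the quasi-metric on the downward closure of a finite set of center points. Once that is in hand, the base-of-topology computation matches exactly the Hoare-and-Smyth template.
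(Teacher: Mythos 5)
Your proposal is correct and follows essentially the same route as the paper: the inclusion Vietoris $\subseteq$ $\dP$-Scott via the continuity of the two projections onto $\Smyth X,\dQ$ and $\Hoarez X,\dH$ (using the componentwise description of directed suprema from Proposition~\ref{prop:PX:Ycomplete}), and the reverse inclusion by expressing the open balls around the center points $(\upc E,\dc E)$ of Theorem~\ref{thm:P:alg} as Vietoris opens. The only cosmetic difference is that you unfold the ball $B^{\dP^+}_{((\upc E,\dc E),0),<r}$ explicitly into $\Box$- and $\Diamond$-sets, whereas the paper pulls back the corresponding $\dQ$- and $\dH$-balls through the projections and cites Propositions~\ref{prop:Q:Vietoris} and~\ref{prop:H:Vietoris}; the two computations are the same.
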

\proof
The Vietoris open set $\Box_{\mathcal P} U$, $U$ open in $X$, is the
inverse image of $\Box U$ by the first projection
$\pi_1 \colon (Q, C) \mapsto Q$.  Let us check that $\pi_1$ is
continuous with respect to the $\dP$-Scott and $\dQ$-Scott topologies.
First,
$\dQ (Q, Q') \leq \dP ((Q, C), (Q', C')) = \max (\dQ (Q, Q'), \dH (C,
C'))$, so $\pi_1$ is $1$-Lipschitz.  For every directed family
${((Q_i, C_i), r_i)}_{i \in I}$, its supremum $(Q, r)$ is given by
Proposition~\ref{prop:PX:Ycomplete}, and we have seen that $(Q, r)$
was then the (naive) supremum of ${(Q_i, r_i)}_{i \in I}$.  This shows
our continuity claim on $\pi_1$.  Since $\Box U$ is open not only in
the upper Vietoris topology, but also in the $\dQ$-Scott topology by
Theorem~\ref{thm:Q:Vietoris}, $\Box_{\mathcal P} U$ is open in the
$\dP$-Scott topology.

We reason similarly with the second projection, showing that it is
continuous with respect to the $\dP$-Scott and $\dH$-Scott topologies.
Using Theorem~\ref{thm:H:Vietoris}, $\Diamond_{\mathcal P} U$ is open
not only in the Vietoris topology, but also in the $\dP$-Scott
topology.  This shows that the Vietoris topology is coarser than the
$\dP$-Scott topology.

Conversely, a base of the $\dP$-Scott topology is given by the open
balls $B^{\dP}_{((\upc E, \dc E), 0), <\epsilon}$, where
$(\upc E, \dc E)$ ranges over the center points of the space of
quasi-lenses, as given in Theorem~\ref{thm:P:alg}.  That open ball is
the set of quasi-lenses $(Q, C)$ such that
$\dP ((\upc E, \dc E), (Q, C)) < \epsilon$, equivalently such that
$Q \in B^{\dQ}_{\upc E, <\epsilon}$ and
$C \in B^{\dH}_{\dc E, \epsilon}$.  Those are open balls centered at
center points (by Theorem~\ref{thm:Q:alg} and
Theorem~\ref{thm:H:alg}), hence are open in the $\dQ$-Scott and
$\dH$-Scott topologies, respectively.  By
Proposition~\ref{prop:Q:Vietoris} and
Proposition~\ref{prop:H:Vietoris}, they are open in the upper Vietoris
topology on $\Smyth X$ and in the lower Vietoris topology on
$\Hoare X$ respectively.  Then
$B^{\dP}_{((\upc E, \dc E), 0), <\epsilon}$ is the intersection of
$\pi_1^{-1} (B^{\dQ}_{\upc E, <\epsilon})$ and of
$\pi_2^{-1} (B^{\dH}_{\dc E, \epsilon})$, which are both open in the
Vietoris topology.  Indeed, $\pi_1$ and $\pi_2$ are continuous with
respect with the Vietoris topologies, since
$\pi_1^{-1} (\Box U) = \Box_{\mathcal P} U$ and
$\pi_1^{-1} (\Diamond U) = \Diamond_{\mathcal P} U$.  It follows that
$B^{\dP}_{((\upc E, \dc E), 0), <\epsilon}$ is open in the Vietoris
topology, whence the Vietoris topology is also finer than the
$\dP$-Scott topology.  \qed

\begin{thm}[$\dP$ quasi-metrizes the Vietoris topology]
  \label{thm:P:Vietoris}
  Let $X, d$ be a continuous Yoneda-complete quasi-metric space.  The
  $\dP$-Scott topology coincides with the Vietoris topology on the
  space of quasi-lenses on $X$.
\end{thm}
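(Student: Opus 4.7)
The plan is to reduce to the algebraic case, following the template established by Theorem~\ref{thm:V:weak=dScott}, Theorem~\ref{thm:H:Vietoris}, and Theorem~\ref{thm:Q:Vietoris}. By \cite[Theorem~7.9]{JGL:formalballs}, $X, d$ is a $1$-Lipschitz continuous retract of some algebraic Yoneda-complete quasi-metric space $Y, \partial$ through maps $r \colon Y \to X$ and $s \colon X \to Y$ with $r \circ s = \identity X$. By Lemma~\ref{lemma:P:functor}, $\Plotkin s$ and $\Plotkin r$ form a $1$-Lipschitz continuous section-retraction pair between the spaces of quasi-lenses equipped with the $\dP$ and $\mP\partial$ quasi-metrics, so $\Plotkin s$ is a topological embedding of the quasi-lenses on $X$, with the $\dP$-Scott topology, into the quasi-lenses on $Y$, with the $\mP\partial$-Scott topology.

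Next I would verify that $\Plotkin s$ and $\Plotkin r$ are also continuous with respect to the Vietoris topologies. For any continuous map $f \colon X \to Y$ (Proposition~\ref{prop:cont} ensures this for our $1$-Lipschitz continuous $r$ and $s$), and every open $V$ in the codomain, a direct computation gives $\Plotkin f^{-1}(\Box_{\mathcal P} V) = \Box_{\mathcal P} f^{-1}(V)$ (using $\upc f[Q] \subseteq V \iff f[Q] \subseteq V \iff Q \subseteq f^{-1}(V)$) and $\Plotkin f^{-1}(\Diamond_{\mathcal P} V) = \Diamond_{\mathcal P} f^{-1}(V)$ (using that the open set $V$ intersects $cl(f[C])$ iff it intersects $f[C]$ itself). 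Alternatively, via the homeomorphism of Lemma~\ref{lemma:P:V=weak}, this continuity follows from Fact~\ref{fact:Pf:weak} lifted to forks. Thus $\Plotkin s$ is also a topological embedding with respect to the Vietoris topologies.

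Finally, Proposition~\ref{prop:P:Vietoris} tells us that the $\mP\partial$-Scott topology and the Vietoris topology coincide on the space of quasi-lenses on the algebraic space $Y$. Apply Fact~\ref{fact:retract:two} with $A$ the space of quasi-lenses on $X$ (carrying either the Vietoris or the $\dP$-Scott topology), $B$ the space of quasi-lenses on $Y$ with its common topology, and $\Plotkin s$ as the common embedding: the two topologies on $A$ must coincide, giving the theorem.

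There is no real obstacle here: the substantive work lies entirely in Proposition~\ref{prop:P:Vietoris} (the algebraic case) and in Lemma~\ref{lemma:P:functor} (functoriality and continuity of $\Plotkin s, \Plotkin r$ with respect to the $\dP$ quasi-metrics). The only additional verification needed is the routine one that $\Plotkin f$ is continuous for the Vietoris topology whenever $f$ is continuous, which is immediate from the explicit inverse-image computations above.
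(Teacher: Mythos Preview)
Your proof is correct and follows essentially the same retraction-to-the-algebraic-case template as the paper, invoking Proposition~\ref{prop:P:Vietoris} on $Y$ and concluding via Fact~\ref{fact:retract:two}. The only cosmetic difference is that the paper passes through the isometry with discrete normalized forks (Corollary~\ref{corl:P:prev}) and uses $\Prev s$, $\Prev r$ together with Fact~\ref{fact:Pf:weak} and Lemma~\ref{lemma:P:V=weak}, whereas you work directly with $\Plotkin s$, $\Plotkin r$ via Lemma~\ref{lemma:P:functor} and an explicit inverse-image computation for Vietoris continuity; the two routes are equivalent.
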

\proof This is as for Theorem~\ref{thm:V:weak=dScott},
Theorem~\ref{thm:H:Vietoris}, or Theorem~\ref{thm:Q:Vietoris}.  By
\cite[Theorem~7.9]{JGL:formalballs}, $X, d$ is the $1$-Lipschitz
continuous retract of an algebraic Yoneda-complete quasi-metric space
$Y, \partial$.  Call $s \colon X \to Y$ the section and
$r \colon Y \to X$ the retraction.  Using Corollary~\ref{corl:P:prev},
we confuse quasi-lenses with discrete normalized forks.  Then
$\Prev s$ and $\Prev r$ form a $1$-Lipschitz continuous
section-retraction pair by Lemma~\ref{lemma:P:functor}, and in particular
$\Prev s$ is an embedding of the space of quasi-lenses on $X$ into the
space of quasi-lenses on $Y$, with their $\dP$-Scott topologies.
However, $s$ and $r$ are also just continuous, by
Proposition~\ref{prop:cont}, so $\Prev s$ and $\Prev r$ also form a
section-retraction pair between the same spaces, this time with their
weak topologies (as spaces of previsions), by Fact~\ref{fact:Pf:weak},
that is, with their Vietoris topologies, by
Lemma~\ref{lemma:P:V=weak}.  By Proposition~\ref{prop:P:Vietoris}, the
two topologies on the space of quasi-lenses on $Y$ are the same.
Fact~\ref{fact:retract:two} then implies that the two topologies on
the space of quasi-lenses on $X$ are the same as well.  \qed

\subsection{Forks}
\label{sec:forks}

Just as one would expect by analogy with spaces of sublinear and
superlinear previsions, the space of (sub)normalized forks on a space
$X$ arises as a retract of the Plotkin powerdomain on its space of
(sub)normalized linear previsions.

We will equate continuous valuations with linear previsions, as usual,
and we will therefore write $\Val X$ for the space of linear
previsions on $X$, $\Val_{\leq 1} X$ for the space of subnormalized
linear previsions on $X$, $\Val_1 X$ for the space of normalized
linear previsions on $X$, both with the weak topology.  We will in
general use the notation $\Val_\bullet X$, where $\bullet$ can be
nothing, ``$\leq 1$'', or ``$1$''.

For every quasi-lens $(\mathcal Q, \mathcal C)$ on $\Val_\bullet X$,
let $r_{\ADN} (\mathcal Q, \mathcal C)$ denote the pair
$(r_{\DN} (\mathcal Q), r_{\AN} (\mathcal C))$.  Conversely, for every
(sub)normalized fork $(F^-, F^+)$ on $X$, let
$s_{\ADN}^\bullet (F^-, F^+)$ be
$(s_\DN^\bullet (F^-), s_\AN^\bullet (F^+))$.  Those are close cousins
of the eponymous maps of \cite[Definition~3.23]{JGL-mscs16}.

Following \cite[Definition~4.6]{Keimel:topcones2}, we say that
addition is \emph{almost open} on $\Lform X$ if and only if, for any
two open subsets $U$ and $V$ of $\Lform X$, the upward closure
$\upc (U + V)$ is open.  This happens notably when $X$ is core-compact
and core-coherent \cite[Lemma~3.24]{JGL-mscs16}; for a definition of
core-coherence, see Definition~5.2.18 of \cite{JGL-topology}; every
locally compact, coherent space is core-compact and core-coherent,
where coherence means that the intersection of two compact saturated
sets is compact \cite[Lemma~5.2.24]{JGL-topology}.

The following is a variant of Proposition~3.32 and Proposition~4.8 of
\cite{JGL-mscs16}, using quasi-lenses instead of lenses.  We say that
a quasi-lens $(Q, C)$ is \emph{convex} if and only if both $Q$ and $C$
are.

\begin{customprop}{\thesubsection.A}
  \label{prop:rsADN}
  Let $X$ be a space such that $\Lform X$ is locally convex and has an
  almost open addition map, and let $\bullet$ be nothing,
  ``$\leq 1$'', or ``$1$''.  Additionally, if $\bullet$ is ``$1$'', we
  assume that $X$ is compact.

  The map $r_\ADN$ forms a retraction of the space of quasi-lenses on
  $\Val_\bullet X$ with the Vietoris topology onto the space of forks
  (resp., of subnormalized forks if $\bullet$ is ``$\leq 1$'', of
  normalized forks if $\bullet$ is ``$1$'') with the weak topology,
  with associated section $s_\ADN^\bullet$.

  This retraction restricts to a homeomorphism between the subspace of
  convex quasi-lenses on $\Val_\bullet X$ and the space of forks
  (resp., of subnormalized forks if $\bullet$ is ``$\leq 1$'', of
  normalized forks if $\bullet$ is ``$1$'') with the weak topology.
\end{customprop}
\proof We already know that $r_\DN$ forms a retraction of
$\Smyth (\Val_\bullet X)$ onto the space of superlinear previsions on
$X$ (with the required (sub)normalization requirement, depending on
$\bullet$) with the weak topology, and that $s_\DN^\bullet$ is the
associated section.  We also know that $r_\AN$ forms a retraction of
$\Hoare (\Val_\bullet X)$ onto the space of sublinear previsions on
$X$ (again with the required (sub)normalization requirement) with the
weak topology, and that $s_\AN^\bullet$ is the associated section.

\emph{The map $r_\ADN$ takes its values in a space of forks.}
For every quasi-lens $(\mathcal Q, \mathcal C)$ on $\Val_\bullet X$,
we need to check that $r_\ADN (\mathcal Q, \mathcal C)$ is a fork, and
the only thing that remains to be checked is Walley's conditions.

For every $h \in \Lform X$, let
$h^\perp \colon \Val_\bullet X \to \creal$ map $G$ to $G (h)$.  This
is a lower semicontinuous map, since
${h^\perp}^{-1} (]a, +\infty]) = [h > a]$ for every $a \in \Rp$.
Also, for all $h, h' \in \Lform X$,
$(h+h')^\perp = h^\perp + {h'}^\perp$, because every $G$ in
$\Val_\bullet X$ is linear.  We note that, for every quasi-lens
$(\mathcal Q, \mathcal C)$ on $\Val_\bullet X$, for every
$h \in \Lform X$,
$r_\DN (\mathcal Q) (h) = \inf_{G \in \mathcal Q} G (h) = \inf_{G \in
  \mathcal Q} h^\perp (G) = F_{\mathcal Q} (h^\perp)$, and similarly
$r_\AN (\mathcal C) (h) = F^{\mathcal C} (h^\perp)$.  By
Proposition~\ref{prop:fork=lens}, $(F_{\mathcal Q}, F^{\mathcal C})$
is a (discrete) normalized fork, so for all $h, h' \in \Lform X$,
\begin{align*}
  r_\DN (\mathcal Q) (h+h')
  & = F_{\mathcal Q} ((h+h')^\perp) = F_{\mathcal Q} (h^\perp +
    {h'}^\perp) \\
  & \leq F_{\mathcal Q} (h^\perp) + F^{\mathcal C} ({h'}^\perp)
  & \text{(Walley's condition)} \\
  & = r_\DN (\mathcal Q) (h) + r_\AN (\mathcal C) (h'),
\end{align*}
and
\begin{align*}
  r_\DN (\mathcal Q) (h) + r_\AN (\mathcal C) (h')
  & = F_{\mathcal Q} (h^\perp) + F^{\mathcal C} ({h'}^\perp) \\
  & \leq F^{\mathcal C} (h^\perp + {h'}^\perp)
  & \text{(Walley's condition)} \\
  & = F^{\mathcal C} ((h+h')^\perp) = r_\AN (h+h').
\end{align*}

\emph{The map $r_\ADN$ is continuous.}  We recall that $r_\DN$ and
$r_\AN$ are both continuous, being part of a retraction-section pair.
Since the weak topology on spaces of forks is induced by the product
topology on the product of the spaces of superlinear and sublinear
previsions, it suffices to show that the maps
$(\mathcal Q, \mathcal C) \mapsto r_\DN (\mathcal Q)$ and
$(\mathcal Q, \mathcal C) \mapsto r_\AN (\mathcal C)$ are continuous
in order to establish that $r_\ADN$ is continuous.  In turn, this
follows from the fact that the projection
maps$(\mathcal Q, \mathcal C) \mapsto \mathcal Q$ and
$(\mathcal Q, \mathcal C) \mapsto \mathcal C$ are continuous, which is
clear since the inverse image of any basic open set $\Box U$ by the
first map is $\Box_{\mathcal P} U$ and the inverse image of any
subbasic open set $\Diamond U$ by the second map is
$\Diamond_{\mathcal P} U$.

\emph{The map $s_\ADN^\bullet$ takes its values in the space of
  quasi-lenses.}  Let $(F^-, F^+)$ be an arbitrary fork on $X$
(resp. subnormalized, or normalized, depending on $\bullet$), and let
$(\mathcal Q, \mathcal C) = s_\ADN^\bullet (F^-, F^+)$.  We already
know that $\mathcal Q$ is a non-empty compact saturated subset of
$\Val_\bullet X$, and that $\mathcal C$ is a non-empty closed subset
of $\Val_\bullet X$.

We use Lemma~3.28 of \cite{JGL-mscs16}, which says that, for every
$G' \in \Val_\bullet X$ such that $F^- \leq G'$, there is a
$G \in \Val_\bullet X$ such that $F^- \leq G \leq F^+$ and
$G \leq G'$.  (This requires no condition on the space $X$.)
By definition of $\mathcal Q$ and $\mathcal C$, this can be rephrased
as: for every $G' \in \mathcal Q$, there is an element $G \in \mathcal
Q \cap \mathcal C$ such that $G \leq G'$.  In other words, $\mathcal Q
\subseteq \upc (\mathcal Q \cap \mathcal C)$.

Lemma~3.29 of \cite{JGL-mscs16} states that, given that $\Lform X$ is
locally convex and has an almost open addition map, and that $X$ is
compact if $\bullet$ is ``$1$'', then for every
$G' \in \Val_\bullet X$ such that $G' \leq F^+$, there is a
$G \in \Val_\bullet X$ such that $F^- \leq G \leq F^+$ and
$G' \leq G$.  This means that for every $G' \in \mathcal C$, $G'$ is
in $\dc (\mathcal Q \cap \mathcal C)$.  Therefore
$\mathcal C \subseteq \dc (\mathcal Q \cap \mathcal C) \subseteq cl
(\mathcal Q \cap \mathcal C)$.  In particular, for every open
neighborhood $\mathcal U$ of $\mathcal Q$, $\mathcal C$ is included in
$cl (\mathcal Q \cap \mathcal U)$, so $(\mathcal Q, \mathcal C)$ is a
quasi-lens.

\emph{The map $s_\ADN^\bullet$ is continuous.}
This follows from the fact that $s_\DN^\bullet$ and $s_\AN^\bullet$ are continuous.

\emph{$r_\ADN \circ s_\ADN^\bullet$ is the identity map.}  This
follows since $r_\DN \circ s_\DN^\bullet$ and
$r_\AN \circ s_\AN^\bullet$ are both identity maps.

\emph{$s_\ADN^\bullet \circ r_\ADN$ is the identity map on the space
  of convex quasi-lenses.}
It suffices to observes that $s_\DN^\bullet \circ r_\DN$ is the
identity map on the space of convex non-empty compact saturated
subsets of $X$, and that $s_\AN^\bullet \circ r_\AN$ is the
identity map on the space of convex non-empty closed
subsets of $X$.  Those are Propositions~4.5 and~4.3 of
\cite{JGL-mscs16}, respectively; the second one requires that $\Lform$
be locally convex, while the first one requires nothing from $X$.
\qed

\begin{customrem}{\thesubsection.B}
  \label{rem:lens:qlens}
  Under the assumption that $\Lform X$ is locally convex and has an
  almost open addition map (and that $X$ is compact in the case where
  $\bullet$ is ``$1$''), then the composition of the homeomorphism of
  Proposition~\ref{prop:rsADN} with the homeomorphism of
  \cite[Proposition~4.8]{JGL-mscs16} yields a homeomorphism from the
  space of convex lenses to the space of convex quasi-lenses on
  $\Val_\bullet X$.
\end{customrem}

\begin{thm}[Continuity for forks]
  \label{thm:ADN:cont}
  Let $X, d$ be a continuous Yoneda-complete quasi-metric space,
  $a > 0$, and $\bullet$ be ``$\leq 1$'' or ``$1$''.  Let us also
  assume that $\Lform X$ has an almost open addition map, and that $X$
  is compact in the case where $\bullet$ is ``$1$''.

  The space of all subnormalized (if $\bullet$ is ``$\leq 1$'',
  normalized if $\bullet$ is ``$1$'') forks on $X$ with the $\dKRH^a$
  quasi-metric is continuous Yoneda-complete.

  It arises as a $1$-Lipschitz continuous retract of the Plotkin
  powerdomain over $\Val_\bullet X$ through $r_\ADN$ and
  $s_\ADN^\bullet$.  That retraction cuts down to an isometry between
  the space of (sub)normalized forks on $X$ and the space of convex
  quasi-lenses over $\Val_\bullet X$, with the $\mP {(\dKRH^a)}$
  quasi-metric.

  The supremum of a directed family of formal balls
  ${((F^-_i, F^+_i), r_i)}_{i \in I}$ is $((F^-, F^+), r)$ where
  $r = \inf_{i \in I} r_i$, $F^+$ is the naive supremum of
  ${(F^+_i, r_i)}_{i \in I}$, and $F^-$ is the naive supremum of
  ${(F^-_i, r_i)}_{i \in I}$.  In particular, $(F^+, r)$ is the
  supremum of ${(F^+_i, r_i)}_{i \in I}$ in the space of
  (sub)normalized sublinear previsions and $(F^-, r)$ is the supremum
  of ${(F^-_i, r_i)}_{i \in I}$ in the space of (sub)normalized
  superlinear previsions.
\end{thm}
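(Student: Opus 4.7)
My plan is to reduce Theorem~\ref{thm:ADN:cont} to the continuity Yoneda-completeness of the Plotkin powerdomain on $\Val_\bullet X$ (Theorem~\ref{thm:P:cont} applied with the $\dKRH^a$ quasi-metric) by realizing the space of (sub)normalized forks as a $1$-Lipschitz continuous retract. The natural candidates for $r_\ADN$ and $s_\ADN$ are the coordinatewise combinations of the maps from Sections~\ref{sec:sublinear-previsions} and~\ref{sec:superl-prev}, namely
\[
  s_\ADN (F^-, F^+) = (s^\bullet_\DN (F^-), s^\bullet_\AN (F^+)),
  \qquad
  r_\ADN (Q, C) = (r_\DN (Q), r_\AN (C)).
\]
The first step is to check that $s_\ADN$ actually lands in the space of convex quasi-lenses over $\Val_\bullet X$: the component $s^\bullet_\DN (F^-)$ is a convex compact saturated set of linear previsions (with the weak topology, which by Theorem~\ref{thm:V:weak=dScott} coincides with the $\dKRH^a$-Scott topology), $s^\bullet_\AN (F^+)$ is a convex closed set of linear previsions, and Walley's condition on $(F^-, F^+)$ provides the non-empty intersection, upward-closure, and closure conditions defining a quasi-lens; this is the quasi-metric counterpart of the correspondence established for dcpos in~\cite{KP:mixed} and for topological spaces in~\cite{JGL-mscs16}. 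Conversely, that $r_\ADN (Q, C) = (r_\DN (Q), r_\AN (C))$ satisfies Walley's condition is checked exactly as in the discrete case at the end of the proof of Proposition~\ref{prop:fork=lens}. The identity $r_\ADN \circ s_\ADN = \identity\relax$ follows coordinatewise from $r_\DN \circ s^\bullet_\DN = \identity\relax$ and $r_\AN \circ s^\bullet_\AN = \identity\relax$.

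For the metric side, the definitions of $\dKRH^a$ on forks and $\mP {(\dKRH^a)}$ on quasi-lenses (Definition~\ref{defn:KRH:fork} and Lemma~\ref{lem:dP}) are both componentwise maxima, so everything reduces to coordinatewise inequalities. Lemmas~\ref{lemma:AN:s:lip} and~\ref{lemma:DN:s:lip} say that $s^\bullet_\AN$ and $s^\bullet_\DN$ preserve distances on the nose, so $s_\ADN$ does too. Propositions~\ref{prop:AN:dKRH} and~\ref{prop:DN:dKRH} give the $1$-Lipschitz inequality for $r_\ADN$ in general, and moreover equality exactly when the quasi-lens components are convex; restricting to convex quasi-lenses that lie in the image of $s_\ADN$ therefore yields the claimed isometry between the subspace of convex quasi-lenses and the space of (sub)normalized forks. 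For $1$-Lipschitz \emph{continuity}, I would mimic the argument of Lemma~\ref{lemma:AN:r:lipcont}: $(\mathbf B^1 (r_\ADN), \mathbf B^1 (s_\ADN))$ is an order-adjunction on formal balls, hence $r_\ADN$ preserves directed suprema (which exist in the Plotkin powerdomain over $\Val_\bullet X$ by Proposition~\ref{prop:PX:Ycomplete} and Theorem~\ref{thm:P:cont}); and continuity of $s_\ADN$ is obtained as in Lemma~\ref{lemma:AN:s:lipcont}, by combining Proposition~\ref{prop:cont} with continuity for the weak topologies and the coincidence of $\dKRH^a$-Scott and weak topologies on $\Val_\bullet X$ (Theorem~\ref{thm:V:weak=dScott}) and of $\mP {(\dKRH^a)}$-Scott with the Vietoris topology on quasi-lenses (Theorem~\ref{thm:P:Vietoris}).

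With this retraction established, continuity Yoneda-completeness of the space of (sub)normalized forks is immediate from Theorem~\ref{thm:P:cont} (applied to $\Val_\bullet X$, itself continuous Yoneda-complete by Theorem~\ref{thm:Vleq1:cont} or Theorem~\ref{thm:V1:cont}) and the fact that $1$-Lipschitz continuous retracts of continuous Yoneda-complete quasi-metric spaces are continuous Yoneda-complete (Corollary~8.3.37 of~\cite{JGL-topology}). The description of directed suprema as naive suprema is then obtained exactly as at the end of the proof of Theorem~\ref{thm:AN:cont} (or Theorem~\ref{thm:DN:cont}): write the supremum of ${((F^-_i, F^+_i), r_i)}_{i \in I}$ as $\mathbf B^1 (r_\ADN) (\sup_i \mathbf B^1 (s_\ADN) (F^-_i, F^+_i, r_i))$ via Lemma~\ref{lemma:orderretract}, use Proposition~\ref{prop:PX:Ycomplete} to unfold the supremum in the Plotkin powerdomain into componentwise naive suprema, and finally apply $r_\DN$ and $r_\AN$ to each component, using Lemma~\ref{lemma:H} to bring test maps $h \in \Lform^a_\alpha (X, d)$ into play as evaluation functionals $G \mapsto G (h)$ on $\Val_\bullet X$.

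The main obstacle I anticipate is the first step, namely proving that $s_\ADN (F^-, F^+)$ genuinely is a convex quasi-lens over $\Val_\bullet X$. The intersection and upward-closure conditions encode a Hahn--Banach style fact that Walley's condition on $(F^-, F^+)$ guarantees a separating linear prevision, while the condition $C \subseteq cl (U \cap C)$ for open neighborhoods $U$ of $Q$ requires a density argument inside $s^\bullet_\AN (F^+)$; both need careful handling, but they are established in the references~\cite{JGL-mscs16,KP:mixed} in closely related settings, so the contribution here is to lift those arguments into the quasi-metric framework using the coincidence of the $\dKRH^a$-Scott and weak topologies provided by Theorem~\ref{thm:V:weak=dScott}, Theorem~\ref{thm:AN:cont}, and Theorem~\ref{thm:DN:cont}.
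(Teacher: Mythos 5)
Your overall strategy --- realizing the fork space as a $1$-Lipschitz continuous retract of the Plotkin powerdomain over $\Val_\bullet X$ via the coordinatewise maps $r_\ADN = (r_\DN, r_\AN)$ and $s_\ADN = (s^\bullet_\DN, s^\bullet_\AN)$, and then invoking preservation of continuity under such retracts --- is exactly the paper's. The isometry-on-convex-quasi-lenses argument via Propositions~\ref{prop:AN:dKRH} and~\ref{prop:DN:dKRH} is also as in the paper, and your worry about $s_\ADN$ landing in quasi-lenses is something the paper simply imports from \cite{JGL-mscs16} rather than re-proving.

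There is, however, one genuine gap: your claim that $(\mathbf B^1 (r_\ADN), \mathbf B^1 (s_\ADN))$ is an order-adjunction is false, so the ``mimic Lemma~\ref{lemma:AN:r:lipcont}'' route to Scott-continuity of $\mathbf B^1 (r_\ADN)$ does not go through. The reason is that the two components' unit inequalities point in opposite directions: $\identity\relax \leq s^\bullet_\AN \circ r_\AN$ (inclusion of closed sets, which is the specialization order of $\dH$), but $s^\bullet_\DN \circ r_\DN \leq \identity\relax$ (the saturation $s^\bullet_\DN (r_\DN (Q))$ \emph{contains} $Q$, and the specialization order of $\dQ$ is \emph{reverse} inclusion). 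Hence $s_\ADN \circ r_\ADN$ is neither below nor above the identity in the quasi-lens ordering, and neither $r_\ADN \dashv s_\ADN$ nor $s_\ADN \dashv r_\ADN$ holds; this is also why the paper proves Lemma~\ref{lemma:DN:r:lipcont} by a direct naive-suprema computation rather than by adjunction. The fix is the paper's ordering of the argument: first show, using Lemma~\ref{lemma:PX:naivesup} (with sobriety of $X$ from continuity of $X, d$) together with Yoneda-completeness of the sublinear and superlinear prevision spaces, that directed suprema of formal balls of forks exist and are computed \emph{componentwise} as naive suprema; then Scott-continuity of both $\mathbf B^1 (r_\ADN)$ and $\mathbf B^1 (s_\ADN)$ follows componentwise from the $1$-Lipschitz continuity of $r_\DN$, $r_\AN$, $s^\bullet_\DN$, $s^\bullet_\AN$ and the componentwise description of directed suprema on both sides (Proposition~\ref{prop:PX:Ycomplete} for the Plotkin powerdomain). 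Note that deferring the supremum description to the very end, as you propose, also creates a chicken-and-egg problem for the topological argument you sketch for $s_\ADN$: that argument (as in Lemma~\ref{lemma:AN:s:lipcont}) presupposes that the fork space is already known to be Yoneda-complete with naive directed suprema.
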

\proof We recall that, since $X, d$ is continuous Yoneda-complete,
$\Lform X$ is automatically locally convex
(Proposition~\ref{prop:locconvex}).  We use Theorem~\ref{thm:AN:cont}
and Lemma~\ref{lemma:AN:cont}, resp.\ Theorem~\ref{thm:DN:cont}, which
state analogous results for $\Hoare X, \dKRH^a$ and
$\Smyth X, \dKRH^a$, without any further reference.

Given a directed family of formal balls
${((F^-_i, F^+_i), r_i)}_{i \in I}$, ${(F^-_i, r_i)}_{i \in I}$ is a
directed family of formal balls on the space of (sub)normalized
superlinear previsions on $X$ with the $\dKRH^a$ quasi-metric, and
${(F^+_i, r_i)}_{i \in I}$ is a directed family of formal balls on the
space of (sub)normalized sublinear previsions on $X$ with the
$\dKRH^a$ quasi-metric.  Let $(F^-, r)$ be the (naive) supremum of the
former and $(F^+, r)$ be the (naive) supremum of the latter.
Lemma~\ref{lemma:PX:naivesup} states that $(F^-, F^+)$ is a fork,
provided we check that $X$ is sober.  That is guaranteed by
\cite[Proposition~4.1]{JGL:formalballs}, which states that every
continuous Yoneda-complete quasi-metric space is sober.  One easily
checks that $(F^-, F^+)$ is the supremum of
${((F^-_i, F^+_i), r_i)}_{i \in I}$.

That characterization of directed suprema, together with the analogous
characterization of directed suprema of formal balls on the Plotkin
powerdomain (Proposition~\ref{prop:PX:Ycomplete}) and the fact that
$r_\AN$, $r_\DN$, $s_\AN$, $s_\DN$ are $1$-Lipschitz continuous, shows
that $r_\ADN$ and $s_\ADN$ are $1$-Lipschitz continuous.  Note that we
apply Proposition~\ref{prop:PX:Ycomplete} to the Plotkin powerdomain
over $\Val_\bullet X, \dKRH^a$, and that is legitimate since
$\Val_\bullet X, \dKRH^a$ is continuous Yoneda-complete, by
Theorem~\ref{thm:Vleq1:cont} or Theorem~\ref{thm:V1:cont}.

We deal with the case of $r_\ADN$ to give the idea of the proof.
Let ${((Q_i, C_i), r_i)}_{i \in I}$ be a directed family of formal
balls on the Plotkin powerdomain of $\Val_\bullet X$, with supremum
$((Q, C), r)$.  In particular, $r = \inf_{i \in I} r_i$, $(Q, r)$ is
the supremum of the directed family ${(Q_i, r_i)}_{i \in I}$ and $(C,
r)$ is the supremum of the directed family ${(C_i, r_i)}_{i \in I}$.
Since $r_\DN$ and $r_\AN$ are $1$-Lipschitz continuous,
$(r_\DN (Q), r)$ is the supremum of ${(r_\DN (Q_i), r_i)}_{i \in I}$ and
$(r_\AN (C), r)$ is the supremum of ${(r_\AN (C_i), r_i)}_{i \in I}$.
We have just seen that this implies that $(r_\ADN (Q, C), r) = ((r_\DN
(Q), r_\AN (C)), r)$ is the supremum of ${((r_\DN (Q_i), r_\AN (C_i)),
  r_i)}_{i \in I}$, that is, of ${(r_\ADN (Q_i, C_i), r_i)}_{i \in I}$.

We have
$r_\ADN (s_\ADN (F^-, F^+))= (r_\DN (s_\DN (F^-)), r_\AN (s_\AN
(F^+))) = (F^-, F^+)$, so $r_\ADN$ and $s_\ADN$ form a $1$-Lipschitz
continuous retraction.  That cuts down to an isometry when we restrict
ourselves to convex quasi-lenses, because $r_\DN$, $s_\DN$ and
$r_\AN$, $s_\AN$ form isometries when restricted to convex non-empty
compact saturated sets and convex non-empty closed sets respectively,
by the final part of Proposition~\ref{prop:rsADN}

Since $\Val_\bullet X, \dKRH^a$ is continuous Yoneda-complete,
Theorem~\ref{thm:P:cont} tells us that the Plotkin powerdomain over
$\Val_\bullet X$, with the $\mP{(\dKRH^a)}$ quasi-metric, is
continuous Yoneda-complete.  Every $1$-Lipschitz continuous retract of
a continuous Yoneda-complete quasi-metric space is continuous
Yoneda-complete \cite[Theorem~7.1]{JGL:formalballs}, therefore the
space of (sub)normalized forks over $X$ with the $\dKRH^a$
quasi-metric is continuous Yoneda-complete.  \qed

\begin{thm}[Algebraicity for forks]
  \label{thm:ADN:alg}
  Let $X, d$ be an algebraic Yoneda-complete quasi-metric space, with
  a strong basis $\mathcal B$.  Let $a > 0$, and $\bullet$ be
  ``$\leq 1$'' or ``$1$''.  Let us also assume that $\Lform X$ has an
  almost open addition map, and that $X$ is compact in the case where
  $\bullet$ is ``$1$''.

  The space of all subnormalized (resp., normalized) forks on $X$ with
  the $\dKRH^a$ quasi-metric is algebraic Yoneda-complete.

  All the forks of the form $(F^-, F^+)$ where
  $F^- (h) = \min_{i=1}^m \sum_{j=1}^{n_i} a_{ij} h (x_{ij})$ and
  $F^+ (h) = \max_{i=1}^m \sum_{j=1}^{n_i} a_{ij} h (x_{ij})$, with
  $m \geq 1$, $\sum_{j=1}^{n_i} a_{ij} \leq 1$ (resp., $=1$) for every
  $i$, and where each $x_{ij}$ is a center point, are center points,
  and they form a strong basis, even when each $x_{ij}$ is constrained
  to lie in $\mathcal B$.
\end{thm}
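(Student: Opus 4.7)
The plan is to mirror the proof structure of Theorems~\ref{thm:AN:alg} and~\ref{thm:DN:alg}, exploiting the componentwise characterization of directed suprema of formal balls of forks from Theorem~\ref{thm:ADN:cont}. Continuous Yoneda-completeness, and in particular Yoneda-completeness, is already granted by Theorem~\ref{thm:ADN:cont}, so what remains is to exhibit a strong basis of the advertised form and to verify that its elements are center points.

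For the strong basis, I would fix a (sub)normalized fork $(F^-, F^+)$ and use Theorem~\ref{thm:ADN:cont} to write $(F^-, F^+) = r_\ADN(Q, C)$ for a unique convex quasi-lens $(Q, C)$ on $\Val_\bullet X$, with $r_\ADN$ being $1$-Lipschitz continuous. By Theorem~\ref{thm:V:alg} (resp.~Theorem~\ref{thm:V1:alg}), $\Val_\bullet X, \dKRH^a$ is algebraic Yoneda-complete, with a strong basis of simple (sub)normalized valuations supported on points of $\mathcal B$. Applying Theorem~\ref{thm:P:alg} to $\Val_\bullet X$, the formal ball $((Q, C), 0)$ is the directed supremum of formal balls $((\upc E_i, \dc E_i), r_i)$, where each $E_i = \{\nu_{i1}, \ldots, \nu_{im_i}\}$ is a non-empty finite set of such basis simple valuations $\nu_{ij} = \sum_\ell a_{ij\ell} \delta_{x_{ij\ell}}$ with $x_{ij\ell} \in \mathcal B$. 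Since $\mathbf B^1(r_\ADN)$ is Scott-continuous, $((F^-, F^+), 0)$ is the directed supremum of the images $((r_\DN(\upc E_i), r_\AN(\dc E_i)), r_i)$. A short computation, using the monotonicity of $\nu \mapsto \int h\,d\nu$ to reduce the infimum over $\upc E_i$ to the minimum over $E_i$ and dually for the supremum over $\dc E_i$, gives $r_\DN(\upc E_i)(h) = \min_j \sum_\ell a_{ij\ell} h(x_{ij\ell})$ and $r_\AN(\dc E_i)(h) = \max_j \sum_\ell a_{ij\ell} h(x_{ij\ell})$. These are forks of the stated shape, with matching coefficients and base points across the two components.

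For the center-point claim, I would show that for any candidate $\mathring F = (\mathring F^-, \mathring F^+)$ of the advertised form, the set $U = B^{\dKRH^{a+}}_{(\mathring F, 0), <\epsilon}$ is Scott-open. Upward-closedness under $\leq^{\dKRH^{a+}}$ is a routine triangle-inequality argument. For Scott-openness, let ${((F^-_k, F^+_k), r_k)}_{k \in K}$ be directed with supremum $((F^-, F^+), r) \in U$. By Theorem~\ref{thm:ADN:cont}, $(F^-, r)$ is the supremum of ${(F^-_k, r_k)}_{k \in K}$ in the space of formal balls of (sub)normalized superlinear previsions, and analogously for $(F^+, r)$ among sublinear previsions. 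The assumption $((F^-, F^+), r) \in U$ expands as $\max(\dKRH^a(\mathring F^-, F^-), \dKRH^a(\mathring F^+, F^+)) < \epsilon - r$, so $(F^-, r)$ and $(F^+, r)$ lie in the analogous open balls around $(\mathring F^-, 0)$ and $(\mathring F^+, 0)$ respectively. Those balls are Scott-open since $\mathring F^-$ is a center point of the superlinear prevision space by Theorem~\ref{thm:DN:alg}, and $\mathring F^+$ is one for the sublinear prevision space by Theorem~\ref{thm:AN:alg}. Indices $k_1, k_2 \in K$ therefore exist with $(F^-_{k_1}, r_{k_1})$ in the first ball and $(F^+_{k_2}, r_{k_2})$ in the second; by directedness pick a common upper bound $k$, and then $((F^-_k, F^+_k), r_k) \in U$.

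The substantive point, rather than a real obstacle, is that the strong basis of the Plotkin powerdomain of $\Val_\bullet X$ produced by Theorem~\ref{thm:P:alg} forces the two components of the resulting fork to share the same underlying finite family $E_i$ of simple valuations, which is exactly what allows the theorem to be stated with a single set of coefficients $a_{ij}$ and base points $x_{ij}$ common to both $F^-$ and $F^+$; without this rigidity one would obtain basis elements whose two components involve unrelated data, for which Walley's condition would typically fail.
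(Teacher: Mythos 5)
Your proposal is correct and follows essentially the same route as the paper: the center-point claim is proved by showing the open ball around $(F_0^-,F_0^+)$ is Scott-open using the componentwise description of directed suprema from Theorem~\ref{thm:ADN:cont} together with the fact that $F_0^-$ and $F_0^+$ are center points by Theorems~\ref{thm:DN:alg} and~\ref{thm:AN:alg}, and the strong-basis claim is obtained by transporting the strong basis $(\upc E,\dc E)$ of the Plotkin powerdomain over $\Val_\bullet X$ (Theorem~\ref{thm:P:alg}) through the $1$-Lipschitz continuous retraction $r_\ADN$. The paper packages the latter step as an appeal to Lemma~\ref{lemma:retract:alg} rather than invoking Scott-continuity of $\mathbf B^1(r_\ADN)$ directly, but that is the same argument; your explicit computation of $r_\DN(\upc E)$ and $r_\AN(\dc E)$ is a welcome detail the paper leaves implicit.
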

\proof We start with the second statement.  Let $(F_0^-, F_0^+)$ where
$F_0^- (h) = \min_{i=1}^m \sum_{j=1}^{n_i} a_{ij} h (x_{ij})$ and
$F_0^+ (h) = \max_{i=1}^m \sum_{j=1}^{n_i} a_{ij} h (x_{ij})$, with
$m \geq 1$, $\sum_{j=1}^{n_i} a_{ij} \leq 1$ (resp., $=1$) for every
$i$, and where each $x_{ij}$ is a center point.  In order to show that
$(F_0^-, F_0^+)$ is a center point, we consider the open ball
$B^{\dKRH^{a+}}_{((F_0^-, F_0^+), 0) <\epsilon}$, and we claim that it
is Scott-open.  For every directed family of formal balls
${((F^-_i, F^+_i), r_i)}_{i \in I}$, with supremum $((F^-, F^+), r)$
inside $B^{\dKRH^{a+}}_{((F_0^-, F_0^+), 0) <\epsilon}$, we have
$r = \inf_{i \in I} r_i$, $\dKRH^a (F_0^-, F^-) < \epsilon-r$, and
$\dKRH^a (F_0^+, F^+) < \epsilon-r$.  Also, by
Theorem~\ref{thm:ADN:cont}, $(F_0^-, r)$ is the supremum of the
directed family ${(F^-_i, r_i)}_{i \in I}$.  Since $F_0^-$ is a center
point, $B^{\dKRH^{a+}}_{(F_0^-, 0), < \epsilon}$ is Scott-open, and
contains $(F_0^-, r)$, hence contains some $(F^-_i, r_i)$; similarly,
$B^{\dKRH^{a+}}_{(F_0^+, r), < \epsilon-r}$ contains some
$(F^+, r_i)$, and we can take the same $i$, by directedness.  It
follows that $\dKRH^a (F_0^-, F^-_i) < \epsilon-r_i$ and
$\dKRH^a (F_0^+, F_i^+) < \epsilon-r_i$.  Hence
$((F_i^-, F_i^+), r_i)$ is in
$B^{\dKRH^{a+}}_{((F_0^-, F_0^+), 0) <\epsilon}$.

Finally, we show that the forks $(F_0^-, F_0^+)$ as above, where
$x_{ij} \in \mathcal B$, form a strong basis.  By
Theorem~\ref{thm:V:alg} and Theorem~\ref{thm:V1:alg}, the
(sub)normalized simple valuations $\sum_{i=1}^n a_i \delta_{x_i}$
where $x_i \in \mathcal B$ form a strong basis of $\Val_\bullet X$.
Hence the forks of the form $(\upc E, \dc E)$, where $E$ is a finite
set of such simple valuations, form a strong basis of the Plotkin
powerdomain over $\Val_\bullet X$, by Theorem~\ref{thm:P:alg}.  By
Lemma~\ref{lemma:retract:alg}, every (sub)normalized fork is a
$\dKRH^a$-limit of a Cauchy-weightable net of points of the form
$r_\ADN (\upc E, \dc E)$, and this is exactly what we need to
conclude.  \qed

\begin{lem}
  \label{lemma:ADN:functor}
  Let $X, d$ and $Y, \partial$ be two continuous Yoneda-complete
  quasi-metric spaces, and $f \colon X, d \mapsto Y, \partial$ be a
  $1$-Lipschitz continuous map.  Let $a > 0$.  Let us also assume that
  $\Lform X$ and $\Lform Y$ have almost open addition maps, and that
  $X$ and $Y$ are compact in the case where $\bullet$ is ``$1$''.  The
  map $(F^-, F^+) \mapsto (\Prev f (F^-), \Prev f (F^+))$ is
  $1$-Lipschitz continuous from the space of normalized, resp.\
  subnormalized forks on $X$ to the space of normalized, resp.\
  subnormalized forks on $Y$, with the $\dKRH^a$ and $\KRH\partial^a$
  quasi-metrics.
\end{lem}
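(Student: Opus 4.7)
The plan is to mimic the strategy of Lemma~\ref{lemma:AN:functor} and Lemma~\ref{lemma:DN:functor}, handling the two components of a fork in parallel and exploiting the fact that $\dKRH^a$ on forks is defined as a $\max$ of $\dKRH^a$ on superlinear and sublinear previsions. Write $\Prev_\ADN f$ for the map in the statement, namely $(F^-,F^+) \mapsto (\Prev f(F^-),\Prev f(F^+))$.

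First I would verify that $\Prev_\ADN f$ does land in the space of (sub)normalized forks. By Lemma~\ref{lemma:Pf:lip}, $\Prev f$ sends superlinear previsions to superlinear previsions, sublinear to sublinear, and preserves (sub)normalization. The only non-trivial check is Walley's condition: for all $k,k' \in \Lform Y$, using the fact that $(F^-,F^+)$ is a fork,
\begin{align*}
\Prev f(F^-)(k+k') &= F^-((k+k')\circ f) = F^-(k\circ f + k'\circ f) \\
&\leq F^-(k\circ f) + F^+(k'\circ f) = \Prev f(F^-)(k) + \Prev f(F^+)(k'),
\end{align*}
and symmetrically $\Prev f(F^-)(k) + \Prev f(F^+)(k') \leq \Prev f(F^+)(k+k')$, so $(\Prev f(F^-),\Prev f(F^+))$ is a fork.

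Next I would show $\Prev_\ADN f$ is $1$-Lipschitz. By Definition~\ref{defn:KRH:fork}, for two (sub)normalized forks $(F^-,F^+)$, $({F'}^-,{F'}^+)$,
\[
\KRH\partial^a(\Prev_\ADN f(F^-,F^+),\Prev_\ADN f({F'}^-,{F'}^+))
= \max(\KRH\partial^a(\Prev f(F^-),\Prev f({F'}^-)),\KRH\partial^a(\Prev f(F^+),\Prev f({F'}^+))),
\]
and each term on the right is bounded by $\dKRH^a(F^-,{F'}^-)$, resp.\ $\dKRH^a(F^+,{F'}^+)$, because $\Prev f$ is $1$-Lipschitz for $\dKRH^a$ on the corresponding spaces (Lemma~\ref{lemma:Pf:lip}, or equivalently Lemma~\ref{lemma:AN:functor} and Lemma~\ref{lemma:DN:functor}). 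Taking the maximum gives $\dKRH^a((F^-,F^+),({F'}^-,{F'}^+))$.

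Finally I would upgrade $1$-Lipschitz to $1$-Lipschitz continuous by showing $\mathbf B^1(\Prev_\ADN f)$ is Scott-continuous. Monotonicity follows from $1$-Lipschitzness. For directed suprema: given a directed family of formal balls ${((F^-_i,F^+_i),r_i)}_{i \in I}$ in the space of (sub)normalized forks, Theorem~\ref{thm:ADN:cont} tells us its supremum $((F^-,F^+),r)$ satisfies $r=\inf_i r_i$, with $(F^-,r)$ the supremum of ${(F^-_i,r_i)}_{i\in I}$ in the space of (sub)normalized superlinear previsions and $(F^+,r)$ the supremum of ${(F^+_i,r_i)}_{i\in I}$ in the space of (sub)normalized sublinear previsions. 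By Lemma~\ref{lemma:DN:functor} and Lemma~\ref{lemma:AN:functor}, $\mathbf B^1(\Prev f)$ is Scott-continuous on each of these spaces, so $(\Prev f(F^-),r)$ and $(\Prev f(F^+),r)$ are the respective suprema of the images, and by Theorem~\ref{thm:ADN:cont} again, $((\Prev f(F^-),\Prev f(F^+)),r)$ is the supremum of ${(\Prev_\ADN f(F^-_i,F^+_i),r_i)}_{i\in I}$. I do not anticipate a serious obstacle here; the only delicate point is ensuring that Walley's condition transports through $\Prev f$, which amounts to the algebraic computation above.
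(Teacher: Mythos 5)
Your proposal is correct and follows essentially the same route as the paper's proof: Walley's condition transports through $\Prev f$ by the direct computation (which the paper dismisses as immediate), $1$-Lipschitzness follows from the $\max$ definition of $\dKRH^a$ on forks together with Lemma~\ref{lemma:Pf:lip}, and Scott-continuity of $\mathbf B^1$ of the map is obtained componentwise from Lemma~\ref{lemma:AN:functor} and Lemma~\ref{lemma:DN:functor} via the characterization of directed suprema in Theorem~\ref{thm:ADN:cont}.
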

\proof That $(\Prev f (F^-), \Prev f (F^+))$ satisfies Walley's
condition is immediate.  Therefore it is a fork, which is normalized
or subnormalized, depending on $(F^-, F^+)$.

By definition of $\dKRH^a$ as a maximum
(Definition~\ref{defn:KRH:fork}), and since $\Prev f$ is $1$-Lipschitz
by Lemma~\ref{lemma:Pf:lip}, the map
$(F^-, F^+) \mapsto (\Prev f (F^-), \Prev f (F^+))$ is $1$-Lipschitz.
For every directed family ${((F_i^-, F_i^+), r_i)}_{i \in I}$ of
formal balls on (sub)normalized forks, let $((F^-, F^+), r)$ be its
supremum, as given in the last part of Theorem~\ref{thm:ADN:cont}:
$(F^-, r)$ is the supremum of the directed family
${(F_i^-, r_i)}_{i \in I}$ in the space of formal balls over the space
of (sub)normalized superlinear previsions, and $(F^+, r)$ is the
supremum of ${(F_i^+, r_i)}_{i \in I}$.  By
Lemma~\ref{lemma:DN:functor}, $(\Prev f (F^-), r)$ is the supremum of
${(\Prev f (F_i^-), r_i)}_{i \in I}$, and by
Lemma~\ref{lemma:AN:functor}, $(\Prev f (F^+), r)$ is the supremum of
${(\Prev f (F_i^+), r_i)}_{i \in I}$, so
$((\Prev f (F^-), \Prev f (F^+)), r)$ is the supremum of
${((\Prev f (F_i^-), \Prev f (F_i^+)), r_i)}_{i \in I}$, by the last
part of Theorem~\ref{thm:ADN:cont} again.  \qed

With Theorem~\ref{thm:ADN:alg} and Theorem~\ref{thm:ADN:cont}, we obtain
the following.
\begin{cor}
  \label{cor:ADN:functor}
  There is a functor from the category of continuous (resp.,
  algebraic) Yoneda-complete quasi-metric spaces $X$ such that $\Lform
  X$ has an almost open addition map, and $1$-Lipschitz
  continuous maps, to the category of continuous (resp.,
  algebraic) Yoneda-complete quasi-metric spaces,
  which sends every object $X, d$ to the space of
  subnormalized forks on $X$ with the $\dKRH^a$-Scott
  quasi-metric ($a > 0$), and every $1$-Lipschitz continuous map $f$ to
  the map $(F^-, F^+) \mapsto (\Prev f (F^-), \Prev f (F^+))$.

  There is a functor from the category of compact continuous (resp.,
  algebraic) Yoneda-complete quasi-metric spaces $X$ such that
  $\Lform X$ has an almost open addition map, and $1$-Lipschitz
  continuous maps, to the category of compact continuous (resp.,
  algebraic) Yoneda-complete quasi-metric spaces, which sends every
  object $X, d$ to the space of normalized forks on $X$ with the
  $\dKRH^a$-Scott quasi-metric ($a > 0$), and every $1$-Lipschitz
  continuous map $f$ to the map
  $(F^-, F^+) \mapsto (\Prev f (F^-), \Prev f (F^+))$.  \qed
\end{cor}

\begin{lem}
  \label{lemma:Ff:weak}
  Let $f \colon X \to Y$ be a continuous map between topological
  spaces.  The map $(F^-, F^+) \mapsto (\Prev f (F^-), \Prev f (F^+))$
  is continuous from the space of (sub)normalized forks on $X$ to the
  space of (sub)normalized forks on $Y$, both spaces being equipped
  with the weak topology.
\end{lem}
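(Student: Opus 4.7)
The plan is to unfold the definitions of the weak topology on forks and on $\Prev f$, then invoke Fact~\ref{fact:Pf:weak} component-wise. Let $g$ denote the map $(F^-, F^+) \mapsto (\Prev f (F^-), \Prev f (F^+))$. The weak topology on the space of (sub)normalized forks on $Y$ is generated by subbasic opens of the form $[k > a]_-$ and $[k > a]_+$ with $k \in \Lform Y$ and $a \in \Rp$, so it suffices to check that the inverse image of each such subbasic open under $g$ is open in the weak topology on the space of (sub)normalized forks on $X$.

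First I would compute
\[
g^{-1}([k > a]_-) = \{(F^-, F^+) \mid \Prev f (F^-)(k) > a\}
= \{(F^-, F^+) \mid F^-(k \circ f) > a\} = [k \circ f > a]_-,
\]
where the middle equality uses the defining equation $\Prev f (F)(k) = F(k \circ f)$ from (\ref{eq:Prevf}). Since $f \colon X \to Y$ is continuous and $k \in \Lform Y$, the composition $k \circ f$ lies in $\Lform X$, so $[k \circ f > a]_-$ is by definition a subbasic open of the weak topology on (sub)normalized forks on $X$. The computation for $[k > a]_+$ is identical, yielding $g^{-1}([k > a]_+) = [k \circ f > a]_+$.

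No step here is an obstacle: the argument is essentially Fact~\ref{fact:Pf:weak} applied componentwise, and the fact that $g$ maps (sub)normalized forks to (sub)normalized forks is already recorded in the proof of Lemma~\ref{lemma:ADN:functor} (Walley's condition and (sub)normalization are preserved by precomposition with a continuous map). The only thing worth noting is that we do not need any Lipschitz hypothesis on $f$ here, only continuity, since the weak topology depends only on the continuous lower semicontinuous test functions $h \in \Lform X$, not on any quasi-metric structure.
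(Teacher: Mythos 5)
Your proof is correct and matches the paper's own argument exactly: both compute $g^{-1}([k > a]_{\pm}) = [k \circ f > a]_{\pm}$ from the defining equation $\Prev f(F)(k) = F(k \circ f)$. The extra remarks about $k \circ f \in \Lform X$ and preservation of forks are fine but not needed beyond what the paper records.
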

\proof The inverse image of
$[k > a]_- = \{(H^-, H^+) \mid H^- (k) > a\}$ is
$\{(F^-, F^+) \mid \Prev f (F^-) (k) > a\} = \{(F^-, F^+) \mid F^- (k
\circ f) > a\} = [k \circ f > a]_-$, and similarly the inverse image
of $[k > a]_+$ is $[k \circ f >a]_+$.  \qed

As in the proof of Proposition~\ref{prop:P:Vietoris}, we write
$\Box_{\mathcal P} U$ for the set of lenses $(Q, C)$ such that
$Q \subseteq U$, reserving the notation $\Box U$ for the set of
non-empty compact saturated subsets $Q$ of $X$ such that
$Q \subseteq U$.  We use a similar convention with
$\Diamond_{\mathcal P} U$ and $\Diamond U$.

\begin{prop}[$\dKRH^a$ quasi-metrizes the weak topology, forks]
  \label{prop:ADN:weak}
  Let $X, d$ be an continuous Yoneda-complete quasi-metric space.  Let us
  also assume that $\Lform X$ has an almost open addition map, and
  that $X$ is compact in the case where $\bullet$ is ``$1$''.  The
  $\dKRH^a$-Scott topology coincides with the weak topology on the
  space of (sub)normalized forks on $X$.
\end{prop}
\proof Let $Y$ denote the space of (sub)normalized forks on $X$, and
$Z$ denote the space of quasi-lenses on $\Val_\bullet X$.  By
Theorem~\ref{thm:V:weak=dScott}, the $\dKRH^a$-Scott topology
coincides with the weak topology on $\Val_\bullet X$, hence we may
take either topology in order to define the quasi-lenses on
$\Val_\bullet X$, hence the elements of $Z$.  Also,
$\Val_\bullet X, \dKRH^a$ is continuous Yoneda-complete by
Theorem~\ref{thm:Vleq1:cont} (if $\bullet$ is ``$\leq 1$''), or by
Theorem~\ref{thm:V1:cont} (if $\bullet$ is ``$1$'').  This allows us
to apply Theorem~\ref{thm:P:Vietoris} and deduce that the
$\mP {(\dKRH^a)}$-Scott and Vietoris topologies coincide on $Z$.

The pair of maps $r_\ADN$, $s_\ADN^\bullet$ defines a retraction of
$Z$ (with either topology) onto $Y$, with its weak topology, by
Proposition~\ref{prop:rsADN}.  By Theorem~\ref{thm:ADN:cont}, it also
defines a $1$-Lipschitz continuous retraction of $Z, \mP {(\dKRH^a)}$
onto $Y, \dKRH^a$.  Since the two spaces are continuous
Yoneda-complete, hence standard, this defines a topological retraction
of $Z$ onto $Y$, with its $\dKRH^a$-Scott topology.  This implies that
$s_\ADN^\bullet$ defines a topological embedding of $Y$, either with
its weak topology or with its $\dKRH^a$-Scott topology, into $Z$,
hence that the two topologies on $Y$ coincide, by
Fact~\ref{fact:retract:two}.  \qed

\begin{rem}
  \label{rem:ADN}
  We recall that every locally compact, coherent space $X$ is such
  that addition is almost open on $\Lform X$.  We may summarize some
  of our findings on forks in that useful situation as follows.  Let
  $a > 0$, and $\bullet$ be ``$\leq 1$'' or ``$1$''.  Let $X, d$ be a
  continuous Yoneda-complete quasi-metric space that is locally
  compact and coherent in its $d$-Scott topology, and also compact if
  $\bullet$ is ``$1$''.  Let $Z$ be the space of subnormalized forks
  on $X$ if $\bullet$ is ``$\leq 1$'', of normalized forks if
  $\bullet$ is ``$1$''.  Then:
  \begin{itemize}
  \item (Theorem~\ref{thm:ADN:cont}) $Z, \dKRH^a$ is continuous
    Yoneda-complete, and arises as a $1$-Lipschitz continuous retract
    of the Plotkin powerdomain over $\Val_\bullet X$ through $r_\ADN$
    and $s_\ADN^\bullet$.  That retraction cuts down to an isometry
    between $Z, \dKRH^a$ and the space of convex quasi-lenses over
    $\Val_\bullet X$, with the $\mP {(\dKRH^a)}$ quasi-metric.
    Directed suprema of formal balls are computed through naive
    suprema.
  \item (Theorem~\ref{thm:ADN:alg}) If $X, d$ is additionally
    algebraic, with a strong basis $\mathcal B$, then $Z, \dKRH^a$ is
    algebraic Yoneda-complete.
  \item (Proposition~\ref{prop:ADN:weak}) The $\dKRH^a$-Scott topology
    coincides with the weak topology on $Z$.
  \end{itemize}
\end{rem}

\section{Open Questions}
\label{sec:open-questions}

\begin{enumerate}
\item Assume $X, d$ standard algebraic.  Is $\Val_1 X, \dKRH$
  algebraic?  This is the case if $X$ has a root
  (Theorem~\ref{thm:V1:alg:root}), in particular if $d$ is bounded.
  Close results are that $\Val_1 X, \dKRH^a$ is algebraic for every
  $a > 0$ (Theorem~\ref{thm:V1:alg}), and that
  $\Val_{\leq 1} X, \dKRH$ is algebraic (Theorem~\ref{thm:V:alg}).
  Beware of Kravchenko's counterexample: the $\dKRH$-Scott topology on
  $\Val_1 X$ is in general different from the weak topology, and the
  coincidence with the weak topology in the rooted and $\dKRH^a$ cases
  followed more or less directly from algebraicity.
\item The above theorems apply to spaces of normalized, or
  subnormalized valuations.  Are there analogous results for the space
  $\Val X$ of \emph{all} continuous valuations?  I doubt it, since, by
  analogy, the space of all measures on a Polish space is in general
  not metrizable.
\item The nice properties we obtain on $\Val_1 X$ (algebraicity,
  continuity, retrieving the weak topology) were obtained either for
  $\dKRH$ under a rootedness assumption, or for $\dKRH^a$.  This
  prompted us to study sublinear previsions, superlinear previsions,
  and forks only through the $\dKRH^a$ quasi-metric.  I am pretty sure
  we would obtain analogous results with the $\dKRH$ quasi-metric in
  rooted cases.
\item Conversely, I have not studied the $\dKRH^a$ quasi-metric on the
  Plotkin powerdomain of quasi-lenses.  This should present no difficulty.
\item In case $X, d$ is complete metric (not just quasi-metric),
  $\Val_1 X, \dKRH$ is a complete metric space
  (Theorem~\ref{thm:V1:complete}).  We do not require any form of
  separability, and that is quite probably due to the fact that we consider
  valuations instead of measures.
  However, this does not say anything about a possible basis: is every
  probability valuation the ($\dKRH$-)limit of a Cauchy(-weightable)
  net of simple probability valuations in that case?  Note that every
  probability valuation is the $\dKRH^a$-limit of a Cauchy-weightable
  net of simple probability valuations.

\item The coupling Theorem~\ref{thm:coupling} is a form of linear
  duality theorem, in the style of Kantorovich and Rubinshte\u\i n,
  for quasi-metric spaces.  One should pursue this further, since the
  coupling $\Gamma$ is not known to arise from a probability valuation
  on $X \times X$ yet.  That extra effort was done in
  \cite{Gou-fossacs08b} for symcompact quasi-metric spaces, in their
  open ball topology.  Can we relax the assumptions?
\end{enumerate}




\bibliographystyle{alpha}
\newcommand{\etalchar}[1]{$^{#1}$}



\end{document}